%% LyX 2.3.1-1 created this file.  For more info, see http://www.lyx.org/.
%% Do not edit unless you really know what you are doing.
\documentclass[english]{article}
\usepackage{lmodern}
\usepackage[T1]{fontenc}
\usepackage[latin9]{inputenc}
\usepackage{mathtools}
\usepackage{geometry}
\geometry{verbose}
\setcounter{secnumdepth}{2}
\setcounter{tocdepth}{2}
\setlength{\parskip}{\smallskipamount}
\setlength{\parindent}{0pt}
\usepackage[usenames,dvipsnames,svgnames,table]{xcolor}
\usepackage{babel}
\usepackage{refstyle}
\usepackage{float}
\usepackage{mathtools}
\usepackage{amsmath}
\usepackage{amsthm}
\usepackage{amssymb}
\usepackage{graphicx}
\usepackage{caption}
\usepackage[all]{xy}
\usepackage[unicode=true,pdfusetitle,
 bookmarks=true,bookmarksnumbered=false,bookmarksopen=false,
 breaklinks=false,pdfborder={0 0 0},pdfborderstyle={},backref=false,colorlinks=true]
 {hyperref}

\makeatletter

%\usepackage{ntheorem}
%\usepackage[latin9]{inputenc}
%\usepackage{xcolor}
%\usepackage[color,matrix,arrow]{xy}

%%%%%%%%%%%%%%%%%%%%%%%%%%%%%% LyX specific LaTeX commands.

\AtBeginDocument{\providecommand\propref[1]{\ref{prop:#1}}}
\AtBeginDocument{\providecommand\thmref[1]{\ref{thm:#1}}}
\AtBeginDocument{\providecommand\exaref[1]{\ref{exa:#1}}}
\AtBeginDocument{\providecommand\corref[1]{\ref{cor:#1}}}
\AtBeginDocument{\providecommand\defref[1]{\ref{def:#1}}}
\AtBeginDocument{\providecommand\remref[1]{\ref{rem:#1}}}
\AtBeginDocument{\providecommand\conjref[1]{\ref{conj:#1}}}
\AtBeginDocument{\providecommand\lemref[1]{\ref{lem:#1}}}
\AtBeginDocument{\providecommand\subsecref[1]{\ref{subsec:#1}}}
\RS@ifundefined{subsecref}
  {\newref{subsec}{name = \RSsectxt}}
  {}
\RS@ifundefined{thmref}
  {\def\RSthmtxt{theorem~}\newref{thm}{name = \RSthmtxt}}
  {}
\RS@ifundefined{lemref}
  {\def\RSlemtxt{lemma~}\newref{lem}{name = \RSlemtxt}}
  {}

%%%%%%%%%%%%%%%%%%%%%%%%%%%%%% Textclass specific LaTeX commands.
\theoremstyle{plain}
\newtheorem{thm}{\protect\theoremname}[subsection]
\theoremstyle{definition}
\newtheorem{defn}[thm]{\protect\definitionname}
\newtheorem*{defn*}{\protect\definitionname}
\theoremstyle{definition}
\newtheorem{rem}[thm]{\protect\remarkname}
\theoremstyle{plain}

\theoremstyle{plain}
\newtheorem{conjecture}[thm]{\protect\conjecturename}
\theoremstyle{plain}
\newtheorem*{conjecture*}{\protect\conjecturename}
\newtheorem{lem}[thm]{\protect\lemmaname}
\theoremstyle{definition}

\theoremstyle{plain}
\newtheorem{prop}[thm]{\protect\propositionname}
\theoremstyle{plain}
\newtheorem*{prop*}{\protect\propositionname}
\newtheorem{cor}[thm]{\protect\corollaryname}
\theoremstyle{definition}
\newtheorem{example}[thm]{\protect\examplename}

\newtheorem*{rem*}{\protect\remarkname}
\newtheorem{theorem}{Theorem}

\@ifundefined{date}{}{\date{}}
%%%%%%%%%%%%%%%%%%%%%%%%%%%%%% User specified LaTeX commands.

\usepackage{dsfont}
\newref{prop}{name=Proposition~}
\newref{cor}{name=Corollary~}
\newref{rem}{name=Remark~}
\newref{def}{name=Definition~}
\newref{exa}{name=Example~}
\newref{thm}{name=Theorem~}
\newref{lem}{name=Lemma~}
\newref{conj}{name=Conjecture~}

\usepackage{hyperref}
\hypersetup{bookmarksnumbered,colorlinks=true}
\hypersetup{
     colorlinks   = true,
     linkcolor	  = DarkGreen,
     citecolor    = Purple
}

\makeatother

\providecommand{\conjecturename}{Conjecture}
\providecommand{\corollaryname}{Corollary}
\providecommand{\definitionname}{Definition}
\providecommand{\examplename}{Example}
\providecommand{\lemmaname}{Lemma}
\providecommand{\notationname}{Notation}
\providecommand{\propositionname}{Proposition}
\providecommand{\questionname}{Question}
\providecommand{\remarkname}{Remark}
\providecommand{\theoremname}{Theorem}

\usepackage{cjhebrew}
\DeclareFontFamily{U}{rcjhbltx}{}
\DeclareFontShape{U}{rcjhbltx}{m}{n}{<->s*[1.2]rcjhbltx}{}
\DeclareSymbolFont{hebrewletters}{U}{rcjhbltx}{m}{n}
\DeclareMathSymbol{\pretsadi}{\mathord}{hebrewletters}{118}

\makeatother

\newcommand{\cone}{\mathbin{\rotatebox[origin=c]{-90}{$\triangle$}}}
\newcommand{\es}{\operatorname{{\small \normalfont{\text{\O}}}}}
\newcommand{\iso}{\xrightarrow{\,\smash{\raisebox{-0.5ex}{\ensuremath{\scriptstyle\sim}}}\,}}
\DeclareMathOperator{\supp}{supp}

\newcommand{\Yo}{\text{\usefont{U}{min}{m}{n}\symbol{'110}}}

\DeclareFontFamily{U}{min}{}
\DeclareFontShape{U}{min}{m}{n}{<-> dmjhira}{}

\begin{document}
\global\long\def\iso{\overset{\sim}{\longrightarrow}}%
\global\long\def\into{\hookrightarrow}%
\global\long\def\onto{\twoheadrightarrow}%
\global\long\def\ss{\subseteq}%
\global\long\def\adj{\leftrightarrows}%
\global\long\def\oto#1{\xrightarrow{#1}}%
\global\long\def\ofrom#1{\xleftarrow{#1}}%
\global\long\def\bb#1{\mathbb{#1}}%
\global\long\def\es{\varnothing}%
\global\long\def\oo#1{\overset{\circ}{#1}}%
\global\long\def\cone{\triangleright}%
\global\long\def\from{\leftarrow}%
\global\long\def\nto{\rightsquigarrow}%

\global\long\def\map{\operatorname{Map}}%
\global\long\def\End{\operatorname{End}}%
\global\long\def\CMon{\operatorname{CMon}}%
\global\long\def\Sp{\operatorname{Sp}}%
\global\long\def\calg{\operatorname{CAlg}}%
\global\long\def\wcov{\operatorname{CAlg}^{\text{w.cov}}}%
\global\long\def\indwcov{\operatorname{CAlg}^{\text{ind(w.cov)}}}%
\global\long\def\cocalg{\operatorname{coCAlg}}%
\global\long\def\span{\operatorname{Span}}%
\global\long\def\cat{\operatorname{Cat}_{\infty}}%
\global\long\def\fun{\operatorname{Fun}}%
\global\long\def\nm{\operatorname{Nm}}%
\global\long\def\Id{\operatorname{Id}}%
\global\long\def\pt{\operatorname{pt}}%
\global\long\def\one{\mathds{1}}%
\global\long\def\bc{\operatorname{BC}}%
\global\long\def\red#1{\textcolor{red}{#1}}%
\global\long\def\white#1{\textcolor{white}{#1}}%
\global\long\def\can{\mathrm{can}}%
\global\long\def\tr{\operatorname{Tr}}%
\global\long\def\Ind{\operatorname{Ind}}%
\global\long\def\op{\text{\emph{op}}}%
\global\long\def\c{\colon}%
\global\long\def\aut{\operatorname{Aut}}%
\global\long\def\endo{\operatorname{End}}%
\global\long\def\alt{\operatorname{Alt}}%
\global\long\def\Fin{\operatorname{Fin}}%
\global\long\def\aug{\operatorname{Aug}}%
\global\long\def\pro{\operatorname{Pro}}%
\global\long\def\cts{\mathrm{cts}}%
\global\long\def\pb{\mathrm{pb}}%
\global\long\def\st{\mathrm{st}}%

\global\long\def\colim{\underrightarrow{\operatorname{lim}}\,}%
\global\long\def\lim{\underleftarrow{\operatorname{lim}\,}}%
\global\long\def\gal{\operatorname{gal}}%
\global\long\def\oplax{\operatorname{oplax}}%
\global\long\def\lax{\operatorname{lax}}%
\global\long\def\ev{\operatorname{ev}}%
\global\long\def\spec{\operatorname{Spec}}%
\global\long\def\LMod{\operatorname{LMod}}%
\global\long\def\BMod{\operatorname{BMod}}%
\global\long\def\RMod{\operatorname{RMod}}%
\global\long\def\Mod{\operatorname{Mod}}%
\global\long\def\alg{\operatorname{Alg}}%
\global\long\def\gal{\operatorname{Gal}}%
\global\long\def\pts{\operatorname{Pts}\,}%
\global\long\def\crng{\operatorname{CRing}\,}%
\global\long\def\set{\operatorname{Set}}%
\global\long\def\DM{\operatorname{DM}}%
\global\long\def\div{\operatorname{pDiv}}%
\global\long\def\wt{\operatorname{Wt}}%
\global\long\def\hopf{\operatorname{Hopf}}%
\global\long\def\fgrp{\operatorname{FGrp}}%
\global\long\def\cospec{\operatorname{coSpec}}%
\global\long\def\grpf{\operatorname{GrpFun}}%
\global\long\def\norm#1{||#1||}%
\global\long\def\sad#1{\scalebox{0.6}{\ensuremath{\oplus}}\text{-}#1}%
\global\long\def\psad#1{\scalebox{0.6}{\ensuremath{\oplus}}_{p}\text{-}#1}%
\global\long\def\hatotimes{\,\widehat{\otimes}\,}%
\global\long\def\Pic{\mathrm{Pic}}%
\global\long\def\GL{\mathrm{GL}}%
\global\long\def\supp{\mathrm{supp}}%
\global\long\def\et{\mathrm{et}}%
\global\long\def\pn#1{p_{(#1)}}%

\global\long\def\tsadi{\pretsadi}%
%\global\long\def\tsadi{\Upsilon}%

\global\long\def\acat{\mathrm{Cat}}%
\global\long\def\fin#1{#1\text{-}\mathrm{fin}}%
\global\long\def\htt{\mathrm{ht}}%
\global\long\def\Ht{\mathrm{Ht}}%
\global\long\def\MU{\text{\emph{MU}}}%
\global\long\def\cn{\mathrm{cn}}%
\global\long\def\mode{\mathrm{Mode}}%

\title{Ambidexterity and Height}
\author{Shachar Carmeli\thanks{Department of Mathematics, Weizmann Institute of Science.} \and Tomer M. Schlank\thanks{Einstein Institute of Mathematics, Hebrew University of Jerusalem.} \and Lior Yanovski\thanks{Max Planck Institute for Mathematics.}}

\maketitle
\begin{abstract}
We introduce and study the notion of \emph{semiadditive height} for
higher semiadditive $\infty$-categories, which generalizes the chromatic
height. We show that the higher semiadditive structure trivializes
above the height and prove a form of the redshift principle, in which
categorification increases the height by one. In the stable setting,
we show that a higher semiadditive $\infty$-category decomposes into
a product according to height, and relate the notion of height to semisimplicity properties of local systems. We place
the study of higher semiadditivity and stability in the general framework
of smashing localizations of $\Pr^{L}$, which we call \emph{modes}.
Using this theory, we introduce and study the universal stable $\infty$-semiadditive
$\infty$-category of semiadditive height $n$, and give sufficient conditions for
a stable $1$-semiadditive $\infty$-category to be $\infty$-semiadditive. 
\end{abstract}

\begin{figure}[H]
\centering{}\includegraphics[scale=0.174]{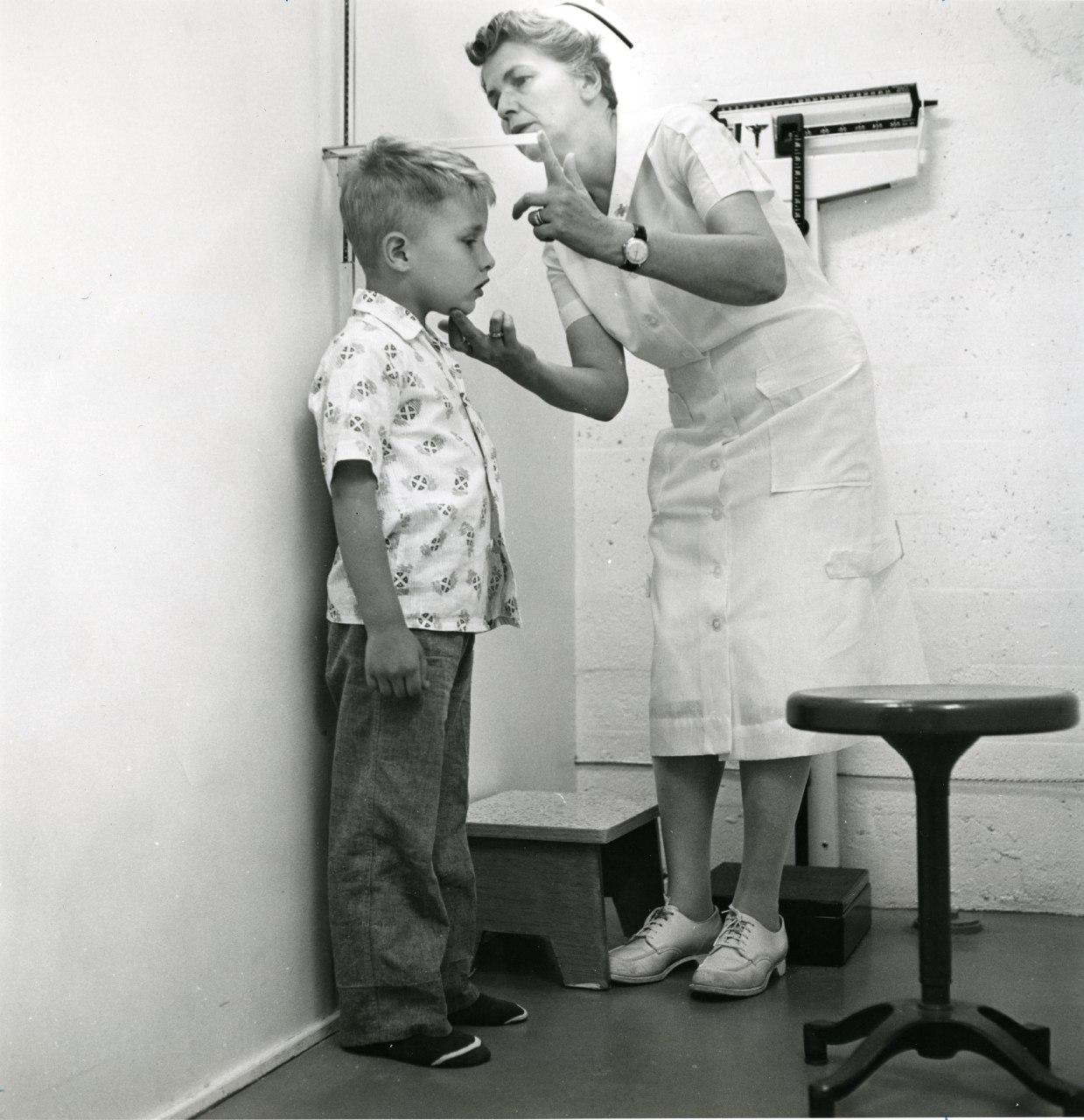}\caption*{Nurse measures height of child for Tecumseh Health study, 1959, HS15323, Alumni Association (University of Michigan) records, UM News Service, Bentley Historical Library, University of Michigan.
}
\end{figure}

\tableofcontents{}

\section{Introduction}

\subsection{Background \& Overview}

Chromatic homotopy theory springs from the deep and surprising connection
between the $\infty$-category of spectra and the stack of formal
groups. In particular, the height filtration on the latter is mirrored
by the ``chromatic height filtration'' on the former. This connection
begins with Quillen's work on the complex cobordism spectrum $\MU$,
showing that the ring $\pi_{*}\MU$ carries the universal formal group
law. Formal group laws admit a notion of ($p$-typical) height for
every prime $p$. This notion can be defined in terms of a certain
sequence of classes $v_{n}\in\pi_{2(p^{n}-1)}MU$ as follows: If $v_{0},\dots,v_{n-1}$
vanish, then the height is $\ge n$, and if $v_{n}$ is invertible,
then the height is $\le n$. This algebraic filtration has a spectrum
level manifestation in the form of the Morava $K$-theories $K(n)$, which
are certain $\MU$-algebras with the property $\pi_{*}K(n)\simeq\bb F_{p}[v_{n}^{\pm1}]$.
This suggests that the $K(n)$-s are concentrated at height exactly
$n$, and the corresponding Bousfield localizations $\Sp_{K(n)}\ss\Sp$
can then be considered as the ``monochromatic layers''\emph{ }of
the chromatic height filtration. The process of $K(n)$-localization
can be loosely thought of as completion with respect to $v_{0},\dots,v_{n-1}$
followed by the inversion of $v_{n}$. 

By the work of Hopkins, Devinatz, and Smith  (see \cite[Theorems 9 and 4.12]{nilp2}), the $v_{n}$-operations
can be inductively lifted to finite spectra, \emph{without} $\MU$-module
structure. More precisely, a finite $p$-local spectrum $F$ is said
to have type $n$, if $n$ is the lowest integer for which the $K(n)$-localization
of $F$ does not vanish. Given a type $n$ finite spectrum $F(n)$,
there exists a self map $\Sigma^{d}F(n)\to F(n)$, which induces a
power of $v_{n}$ on $K(n)$-homology. The cofiber of this self map
is then a type $(n+1)$ spectrum. This procedure allows us to construct a sequence
of $F(n)$-s as iterated Moore spectra, which we may suggestively
write as follows:
\[
F(n)\coloneqq\bb S/(p^{r_{0}},v_{1}^{r_{1}},\dots,v_{n-1}^{r_{n-1}}).
\]
Just as localization with respect to $\bb S/p^{r}$ (for any $r$)
has the effect of $p$-completion, one can think of the localization
with respect to $F(n)$, as completion with respect to $v_{0},\dots,v_{n-1}$.
Furthermore, localization with respect to the spectrum $T(n)=F(n)[v_{n}^{-1}]$,
can be thought of as completion with respect to $v_{0},\dots,v_{n-1}$,
followed by the inversion of $v_{n}$. It is known that the $K(n)$-localization
factors through the $T(n)$-localization, and that they coincide for
$\MU$-modules (and also in general when $n=0,1$). Furthermore, the
localizations $\Sp_{T(n)}$ turn out to be independent of all the
choices and thus naturally constitute another, potentially \emph{larger},
candidate for the ``monochromatic layers'' of the chromatic height
filtration. While the question of whether the inclusion $\Sp_{K(n)}\ss\Sp_{T(n)}$
is strict for $n\ge2$ is open (known as the Telescope Conjecture),
both candidates for the ``monochromatic layers'' play a pivotal
role in homotopy theory. 

The localizations $\Sp_{K(n)}$ and $\Sp_{T(n)}$ are known to possess
several rather special and remarkable properties. Among them, the
vanishing of the Tate construction for finite group actions (\cite{Kuhn,GState,HState,ClausenAkhil}).
In \cite{HopkinsLurie}, Hopkins and Lurie reinterpret this Tate vanishing property
as $1$\emph{-semiadditivity}, and vastly generalize it by showing
that the $\infty$-categories $\Sp_{K(n)}$ are $\infty$\emph{-semiadditive}.
In turn, this is exploited to obtain new structural results for $\Sp_{K(n)}$.
In \cite[Theorem B]{Ambi2018}, the authors extended on \cite{HopkinsLurie} by classifying all
the higher semiadditive localizations of $\Sp$ with respect to homotopy
rings. First, for all such localizations, $1$-semiadditivity was
shown to be equivalent to $\infty$-semiadditivity. Second, the telescopic
localizations $\Sp_{T(n)}$, for various primes $p$ and heights $n$,
were shown to be precisely the maximal examples of such localizations
(while the $\Sp_{K(n)}$ are the minimal). Concisely put, in the stable\emph{
}world, the higher semiadditive property singles out precisely the
monochromatic localizations, which are parameterized by the chromatic
height.

In this paper, we introduce a natural notion of\emph{ semiadditive
height} for higher semiadditive $\infty$-categories, which in the
examples $\Sp_{T(n)}$ and $\Sp_{K(n)}$ reproduces the usual chromatic
height $n$, without appealing to the theory of formal groups. We
then proceed to show that the semiadditive height is a fundamental
invariant of a higher semiadditive $\infty$-category, which controls
many aspects of its higher semiadditive structure, and the behavior
of local systems on $\pi$-finite spaces valued in it. We also show
that the semiadditive height exhibits a compelling form of the ``redshift
principle'', where categorification has the effect of increasing
the height exactly by one. When restricting to \emph{stable} $\infty$-categories,
we show that higher semiadditive $\infty$-categories decompose completely
according to the semiadditive height, which accounts for the monochromatic
nature of the higher semiadditive localizations of $\Sp$. Finally,
building on the work of Harpaz \cite{Harpaz}, we introduce and study universal constructions
of stable $\infty$-semiadditive $\infty$-categories of height $n$,
and initiate their comparison with the chromatic examples. 

The present work should be viewed as part of a more extensive program that
aims to place chromatic phenomena in the categorical context of the
interaction between higher semiadditivity and stability. Apart from
providing new tools for the study of $\Sp_{T(n)}$, we believe that this
approach can elucidate the chromatic picture and unfold
the rich and intricate structure hidden within. 

\subsection{Main Results}

\subsubsection{Height Theory}

Recall that \emph{ambidexterity} is a property of a space $A$ with
respect to an $\infty$-category $\mathcal{C}$, that allows ``integrating''
$A$-families of morphisms between pairs of objects in $\mathcal{C}$
in a canonical way \cite[Construction 4.0.7]{HopkinsLurie}. In particular, integrating the constant
$A$-family on the identity morphism of each object, produces a natural
endomorphism $|A|$ of the identity functor of $\mathcal{C}$. We
call $|A|$ the $\mathcal{C}$\emph{-cardinality} of $A$, and think
of it as multiplication by the ``size of $A$'' (the actual meaning of which depends on $\mathcal{C}$). 

An $\infty$-category $\mathcal{C}$ is called $m$-semiadditive if
every $m$-finite space is $\mathcal{C}$-ambidextrous. Our notion
of semiadditive height is defined in terms of cardinalities of such
spaces. For starters, let us begin with a $0$-semiadditive (i.e.
semiadditive) $p$-local $\infty$-category $\mathcal{C}$. If $p$
is \emph{invertible} in $\mathcal{C}$, then $\mathcal{C}$ is rational
and we consider it to be of ``height $0$''. In contrast, if all
objects of $\mathcal{C}$ are $p$\emph{-complete}, we consider it
to be of ``height $>0$''. To proceed, let us assume that $\mathcal{C}$
is $m$-semiadditive for some $m\ge0$. In such a case, we can consider
the $\mathcal{C}$-cardinalities of Eilenberg-MacLane spaces:
\[
p=|C_{p}|,|BC_{p}|,|B^{2}C_{p}|,\dots,|B^{m}C_{p}|.
\]
The definition of semiadditive height uses the maps $|B^{n}C_{p}|$
in a manner which is analogous to how the $v_{n}$-self maps are used
in the definition of the chromatic height:
\begin{defn*}[Semiadditive Height, \ref{def:Height_Obj}, \ref{def:Height_Cat}]
\label{def:Intro_Height}For every $0\le n\le m$, we write
\begin{enumerate}
\item $\Ht(\mathcal{C})\le n$, if $|B^{n}C_{p}|$ is invertible in $\mathcal{C}$.
\item $\Ht(\mathcal{C})>n$, if $\mathcal{C}$ is complete with respect
to $|C_{p}|,|BC_{p}|,\dots,|B^{n}C_{p}|$\footnote{by \propref{Height_Sense}, $\Ht(\mathcal{C})>n$, if and only if
$\mathcal{C}$ is $|B^{n}C_{p}|$-complete.}.
\item $\Ht(\mathcal{C})=n$, if $\mathcal{C}$ is of height $\le n$ and
$>n-1$.
\end{enumerate}
\end{defn*}

To show that the semiadditive height of $\Sp_{T(n)}$ and $\Sp_{K(n)}$
is indeed $n$, we need to get a handle on the $T(n)$-local and $K(n)$-local
cardinalities of the Eilenberg-Maclane spaces $B^{k}C_{p}$. In \cite[Lemma 5.3.3]{Ambi2018},
we have already shown that
\[
|B^{k}C_{p}| =p^{\binom{n-1}{k}}\quad\in\quad\pi_{0}E_{n},
\]
for the $\infty$-category of $K(n)$-local $E_{n}$-modules. Thus, this $\infty$-category is of height $n$. Since tensoring with $E_{n}$ is conservative on
$K(n)$-local spectra, this also readily implies that $\Sp_{K(n)}$
is of height $n$. However, to show that $\Sp_{T(n)}$ is of height
$n$, one has to know that the map $\pi_{0}\bb S_{T(n)}\to\pi_{0}\bb S_{K(n)}$,
induced by $K(n)$-localization, detects invertibility of elements.
This result was established in \cite[Propostion 5.1.17]{Ambi2018} using the notion of ``nil-conservativity''. Thus, we get that $\Sp_{T(n)}$ is of height $n$ as well. 

The notion of semiadditive height allows us to contextualize various
aspects of the $\infty$-categories $\Sp_{K(n)}$ and $\Sp_{T(n)}$
pertaining to the chromatic height. At the bottom of the hierarchy,
the $\infty$-category $\Sp_{T(0)}=\Sp_{K(0)}=\Sp_{\bb Q}$ can be
shown to be $\infty$-semiadditive by elementary arguments. This is
strongly related to the fact that all connected $\pi$-finite spaces
are $\bb Q$-acyclic and the cardinality of any (non-empty) $\pi$-finite
space $A$ is invertible. Thus, the higher semiadditive structure
of $\Sp_{\bb Q}$ is in a sense ``trivial''. The higher semiadditivity
of $\Sp_{K(n)}$ and $\Sp_{T(n)}$ for $n\ge1$ is more subtle precisely
because not all connected $\pi$-finite spaces are acyclic, and not
all cardinalities are invertible. One might roughly say, that the
complexity of the higher semiadditive structure grows with the height.
Our first main result formalizes this as follows:
\begin{theorem}[Bounded Height, \ref{thm:Height_Everything_p_Local}]
\label{thm:Intro_Bounded_Height}Let $\mathcal{C}$
be an $n$-semiadditive $p$-local $\infty$-category, which admits
all $\pi$-finite limits and colimits. If $\mathcal{C}$ is of height
$\le n$, then 
\begin{enumerate}
\item $\mathcal{C}$ is $\infty$-semiadditive. 
\item For every $(n-1)$-connected nilpotent $\pi$-finite space $A$, the
map $|A|$ is invertible.
\item For every $n$-connected $\pi$-finite space $A$ and $X\in\mathcal{C}$,
the fold map $A\otimes X\to X$ is invertible. 
\item For every principal fiber sequence of $\pi$-finite spaces 
\[
F\to A\to B,
\]
if $F$ is $(n-1)$-connected and nilpotent, then $|A|=|F|\cdot|B|$.
\end{enumerate}
\end{theorem}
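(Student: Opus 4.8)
The plan is to prove the four assertions simultaneously by induction on the degree of higher semiadditivity, climbing upward from $n$. The engine of the induction is the bootstrapping principle for higher semiadditivity coming from the general theory of ambidexterity: a $p$-local $m$-semiadditive $\infty$-category which admits all $\pi$-finite limits and colimits and in which $|B^{m}C_p|$ is invertible is automatically $(m+1)$-semiadditive. Indeed, under these hypotheses the space $B^{m+1}C_p$ is $\mathcal{C}$-ambidextrous — this is precisely where invertibility of $|B^{m}C_p| = |\Omega B^{m+1}C_p|$ enters, via the criterion that the delooping of a $\mathcal{C}$-amenable $\pi$-finite space is $\mathcal{C}$-ambidextrous — and then a Postnikov devissage (every connected $(m+1)$-finite $p$-space is built by iterated principal fibrations from copies of $B^{m+1}(\mathbb{Z}/p^{a})$ and lower Postnikov truncations, and $\mathcal{C}$-ambidextrous spaces are closed under such extensions) promotes this to full $(m+1)$-semiadditivity. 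Since the hypotheses of the theorem give $m$-semiadditivity and invertibility of $|B^{m}C_p|$ at $m = n$, assertion (1) will follow as soon as we know $|B^{m}C_p|$ is invertible for every $m \ge n$.

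The crux is therefore to establish assertion (4), the multiplicativity of cardinality, in tandem with the induction, since it is what transports invertibility from level $m$ to level $m+1$. Given a principal fibration $F \to A \to B$ of $\pi$-finite spaces with $F$ being $(n-1)$-connected and nilpotent, I would pass to a principal refinement of the Postnikov tower of $F$ (available because $F$ is nilpotent), whose successive fibers are Eilenberg--MacLane spaces $B^{k}(\mathbb{Z}/p^{a})$ with $k \ge n$; using the base-change and Fubini formalism for $\mathcal{C}$-ambidextrous maps, the desired identity $|A| = |F| \cdot |B|$ reduces to the case of such an Eilenberg--MacLane fiber. The role of the bound $k \ge n$ is to force the ``monodromy correction'' appearing in the Fubini formula to vanish in a category of height $\le n$, because that correction is controlled by the low-dimensional homotopy of the fiber, which is trivial once the fiber is $(n-1)$-connected. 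Applying the resulting formula to the path-loop fibration $B^{m}C_p \to \pt \to B^{m+1}C_p$, whose fiber $B^{m}C_p$ is $(m-1)$-connected (hence $(n-1)$-connected exactly when $m \ge n$) and is an infinite loop space (hence nilpotent), yields $1 = |B^{m}C_p| \cdot |B^{m+1}C_p|$, so that invertibility of $|B^{m}C_p|$ forces invertibility of $|B^{m+1}C_p|$. This closes the induction, and together with the first paragraph it gives (1).

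Granting (1) and the invertibility of all $|B^{k}C_p|$, $k \ge n$, the remaining assertions follow by devissage. For (2), if $A$ is an $(n-1)$-connected nilpotent $\pi$-finite space, a principal refinement of its Postnikov tower has layers $B^{k}(\mathbb{Z}/p^{a})$ with $k \ge n$, so by (4) the cardinality $|A|$ is a finite product of the $|B^{k}(\mathbb{Z}/p^{a})|$; each factor is invertible, being reduced to $|B^{k}C_p|$ by applying (4) to the fibrations $B^{k}(\mathbb{Z}/p) \to B^{k}(\mathbb{Z}/p^{a}) \to B^{k}(\mathbb{Z}/p^{a-1})$. For (3), I would show that an $n$-connected $\pi$-finite space $A$ is $\mathcal{C}$-acyclic, i.e. that the fold map $A \otimes X \to X$ is an equivalence for every $X$: $\mathcal{C}$-acyclicity propagates along principal fibration extensions (push the constant local system forward, using ambidexterity to identify $q_!$ with $q_*$), so by Postnikov devissage it suffices to treat the layers $B^{k}(\mathbb{Z}/p^{a})$ with $k \ge n+1$, which are classifying spaces of the $(n-1)$-connected nilpotent $\pi$-finite spaces $B^{k-1}(\mathbb{Z}/p^{a})$ whose cardinalities are invertible by (2); one concludes via the relationship between $\mathcal{C}$-amenability of a $\pi$-finite space and $\mathcal{C}$-acyclicity of its classifying space. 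Throughout, the hypothesis that $\mathcal{C}$ has all $\pi$-finite limits and colimits is what makes the pushforwards, pullbacks and norm maps available. I expect the main obstacle to be assertion (4): both arranging the simultaneous induction so that the implications at each level are genuinely non-circular, and, above all, identifying precisely why the ``$(n-1)$-connected and nilpotent'' hypothesis on the fiber is exactly what annihilates the monodromy correction in the Fubini formula — everything else is bookkeeping with Postnikov towers on top of the established ambidexterity machinery.
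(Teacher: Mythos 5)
Your overall architecture --- bootstrap the semiadditivity degree upward using the fact that the delooping of an amenable space is ambidextrous, and reduce everything else to Eilenberg--MacLane layers via principal refinements of Postnikov towers --- is indeed the paper's architecture. But the two places where you express uncertainty are exactly where the proposal has genuine gaps, and one further step is asserted that is false as stated.

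First, the multiplicativity lemma. The mechanism that kills the ``monodromy correction'' is not the connectivity of the fiber acting on the low-dimensional homotopy; it is the conjunction of \emph{principality} of $q\colon A\to B$ with \emph{invertibility} of $|F|$. The key observation (\propref{Amenable_Descent}) is that for a principal map the self-base-change $A\times_B A\to A$ is a principal map with a section, hence is the trivial bundle $F\times A\to A$; base-change invariance of cardinality then gives $|q|^2=|q|\cdot|\pi_B|$ in $\End(\Id_{\mathcal{C}^B})$, and once $|q|$ is invertible (a fiberwise condition, guaranteed by invertibility of $|F|$) one cancels to get $|q|=B^*|F|$, i.e.\ $|q|$ is constant, whence $|A|=\int_B|q|=|F|\cdot|B|$. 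Connectivity of $F$ enters only to guarantee that $|F|$ is invertible in a category of height $\le n$, and \exaref{Amenable_Composition} and \exaref{Amenable_Cancelation} show that amenability of the fiber without principality is not enough. Note also that the bootstrap does not need (4) in general: for the path--loop fibrations $B^kC_p\to\pt\to B^{k+1}C_p$ the identity $|B^{k+1}C_p|=|B^kC_p|^{-1}$ follows directly from the amenable/acyclic dictionary (\propref{Amenable_Acyclic_Object}), so one can first run the induction, then deduce (2), and only then obtain (4) as a corollary of (2) plus the descent lemma --- which removes the circularity you were worried about.

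Second, and more seriously, the claim that Postnikov devissage over $p$-spaces ``promotes this to full $(m+1)$-semiadditivity'' fails. The devissage only reaches spaces built by principal fibrations from $B^{k}(\mathbb{Z}/\ell^{a})$'s, i.e.\ nilpotent spaces; it yields $p$-typical (and, by $p$-locality, $\ell$-typical for every $\ell$) $(m+1)$-semiadditivity, but says nothing about $BG$ for a non-nilpotent finite group $G$, which is not an iterated principal extension of Eilenberg--MacLane spaces of cyclic groups. The passage from ``$B^{m+1}C_\ell$ is ambidextrous for all primes $\ell$'' to ``$\mathcal{C}$ is $(m+1)$-semiadditive'' is a genuinely non-formal input, namely \cite[Propositions 4.4.16 and 4.4.19]{HopkinsLurie}, proved by transfer arguments along Sylow subgroups; the paper isolates it as \propref{p_Local_n_Semiadd}. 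Relatedly, in your argument for (2) the Postnikov layers of a general nilpotent $\pi$-finite space involve $B^k(\mathbb{Z}/\ell^{a})$ for $\ell\ne p$ as well; these are handled by splitting $A$ into its $\ell$-primary factors and observing that their cardinalities are $p$-locally invertible, a step your sketch omits.
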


Informally speaking, \thmref{Intro_Bounded_Height} states that the
invertibility of $|B^{n}C_{p}|$ has the effect of ``trivializing''
the higher semiadditive structure at levels $\ge n$. In particular,
it shows that it exists, which is point (1). From point (2), we deduce
that having height $\le n$ implies having height $\le n+1$, so the conditions are of decreasing strength as the terminology suggests.
Point (3) articulates a useful categorical consequence (and, in fact, a characterization) of having height $\le n$, which does not refer
directly to the higher semiadditive structure. This can be seen as
a generalization of \cite[Theorem E]{Ambi2018}, which is essentially the special case
$\mathcal{C}=\Sp_{T(n)}$. Finally, point (4) can be used to reduce
the computation of the $\mathcal{C}$-cardinalities of nilpotent $\pi$-finite
spaces to those of $n$-finite ones, under the assumption that $\mathcal{C}$
is of height $\le n$. The case $n=0$ produces an explicit formula,
which recovers Baez-Dolan's classical homotopy cardinality (\exaref{Homotopy_Cardinality}).
We note that the possible failure of point (4) for the principal fiber
sequence $B^{n-1}C_{p}\to\pt\to B^{n}C_{p}$, is precisely the obstruction
for $\mathcal{C}$ to have height $\le n-1$.

In their work on algebraic $K$-theory of ring spectra, Ausoni and
Rognes have discovered a phenomena which they dubbed ``chromatic
redshift''. Roughly speaking, it is the tendency of $K(R)$, which
is a spectrum constructed from the $\infty$-category of perfect $R$-modules,
to be of chromatic complexity larger by one, than the ring spectrum
$R$ (appropriately measured). While more precise conjectures regarding
this phenomena were subsequently formulated and studied, a conceptual
source for the chromatic redshift phenomena seems to remain unrevealed.
Our next result concerns an analogue of the redshift phenomena for
the \emph{semiadditive} height. In this context the increase by one
in height is a formal consequence of \emph{categorification}.\emph{
}To state this formally, we first note that the definition of semiadditive
height makes sense for individual objects. Namely, an object $X$
in an $\infty$-semiadditive $\infty$-category $\mathcal{C}$ is
of height $\le n$ for some $n$, if $|B^{n}C_{p}|$ acts invertibly
on $X$ and of height $>n$, if it is complete with respect to $|C_{p}|,|BC_{p}|,\dots,|B^{n}C_{p}|$.
Second, we exploit the fact that the $\infty$-category $\acat^{\sad{\infty}}$,
of $\infty$-semiadditive $\infty$-categories and $\pi$-finite colimit
preserving functors, is itself $\infty$-semiadditive. Thus, given
an $\infty$-semiadditive $\infty$-category $\mathcal{C}$, we can
consider the height of $\mathcal{C}$ being lower equal (resp. greater
than) $n$, as an \emph{object} of $\acat^{\sad{\infty}}$ which we
shall denote by $\htt(\mathcal{C})\le n$ (resp. $\htt(\mathcal{C})>n$).
\begin{theorem}
[Semiadditive Redshift, \ref{thm:Redshift}]\label{thm:Intro_Redshift}Let $\mathcal{C}$
be an $\infty$-semiadditive $\infty$-category. We have that $\Ht(\mathcal{C})\le n$
(resp. $\Ht(\mathcal{C})>n$), if and only if $\htt(\mathcal{C})$ $\le n+1$
(resp. $\htt(\mathcal{C}) >n+1$).
\end{theorem}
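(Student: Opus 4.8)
The plan is to reduce the statement to a careful bookkeeping of what the ``categorified cardinality'' $|B^{k}C_{p}|$ is as an endomorphism of the identity functor of $\acat^{\sad{\infty}}$, and then to trace through what it means for a fixed object $\mathcal{C}\in\acat^{\sad{\infty}}$. The key structural input is that the higher semiadditivity of $\acat^{\sad{\infty}}$ is, in an appropriate sense, ``free'': for a $\pi$-finite space $A$, the functor $A\otimes(-)$ on $\acat^{\sad{\infty}}$ sends $\mathcal{C}$ to something built out of local systems $\mathcal{C}^{A}$ and the (co)limit functors along $A$, and under this description the natural transformation $|A|\colon \Id\to\Id$ is computed objectwise by an ``$A$-indexed (co)limit of identities'' construction. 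Concretely, I would first establish a Künneth-type formula identifying the action of $|B^{k}C_{p}|$ on the object $\mathcal{C}$ with an operation on $\mathcal{C}$ that is controlled by the cardinalities $|B^{j}C_{p}|$ for $j\le k$ already present in $\mathcal{C}$ — the expected shape being that multiplication by $|B^{k}C_{p}|$ \emph{on $\mathcal{C}$ regarded as an object} corresponds to something like ``twisting by $B^{k-1}C_{p}$-local systems,'' i.e. one degree down. This degree shift is exactly the source of the ``$+1$'' in the statement.

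Having set up that dictionary, the proof of the ``$\le$'' direction should go as follows. Suppose $\Ht(\mathcal{C})\le n$, i.e. $|B^{n}C_{p}|$ acts invertibly on every object of $\mathcal{C}$. I want to show $\htt(\mathcal{C})\le n+1$, i.e. that $|B^{n+1}C_{p}|$, viewed as an endomorphism of $\mathcal{C}$ \emph{as an object of $\acat^{\sad{\infty}}$}, is an equivalence. Using the dictionary above, the endomorphism $|B^{n+1}C_{p}|$ of $\mathcal{C}$ is the ``$B^{n+1}C_{p}$-indexed integration of identities'' in the $2$-categorical sense, which unwinds to an operation assembled from the functor $\mathcal{C}\to\mathcal{C}^{B^{n+1}C_{p}}$ together with the ambidextrous (co)limit back. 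The point is that this operation is controlled by the cardinality of the \emph{fibers} one level down — here I would invoke point (4) of \thmref{Intro_Bounded_Height} applied to the principal fibration $B^{n}C_{p}\to\pt\to B^{n+1}C_{p}$, together with point (3) and point (1) which guarantee $\mathcal{C}$ is actually $\infty$-semiadditive so all these constructions are available — to reduce the invertibility of $|B^{n+1}C_{p}|$ on $\mathcal{C}$-qua-object to the invertibility of $|B^{n}C_{p}|$ on $\mathcal{C}$-qua-category, which is the hypothesis. The converse implication, $\htt(\mathcal{C})\le n+1 \Rightarrow \Ht(\mathcal{C})\le n$, runs the same dictionary backwards: if $|B^{n+1}C_{p}|$ is invertible on the object $\mathcal{C}$, then evaluating the corresponding natural transformation on objects of $\mathcal{C}$ and using the fiber-sequence formula recovers invertibility of $|B^{n}C_{p}|$ on each object of $\mathcal{C}$.

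For the completeness statements, the strategy is parallel but phrased in terms of the completion/localization endofunctors. Recall $\Ht(\mathcal{C})>n$ means $\mathcal{C}$ is complete with respect to $|C_{p}|,\dots,|B^{n}C_{p}|$, equivalently (by the footnoted \propref{Height_Sense}) with respect to $|B^{n}C_{p}|$ alone; similarly $\htt(\mathcal{C})>n+1$ means $\mathcal{C}$, as an object of $\acat^{\sad{\infty}}$, is $|B^{n+1}C_{p}|$-complete. I would show that the $|B^{n+1}C_{p}|$-completion of $\mathcal{C}$ in $\acat^{\sad{\infty}}$ is computed objectwise as the $|B^{n}C_{p}|$-completion in $\mathcal{C}$, again via the one-degree-down dictionary and the fiber-sequence multiplicativity; then $\mathcal{C}$ is $|B^{n+1}C_{p}|$-complete as an object precisely when each of its objects is $|B^{n}C_{p}|$-complete, which is the definition of $\Ht(\mathcal{C})>n$. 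As a sanity check one can run the extremes: $n=-1$, where $\Ht(\mathcal{C})>-1$ is automatic and should match $\htt(\mathcal{C})>0$, i.e. every object-level category has categorified height $\ge 0$; and the rational case $\mathcal{C}=\Sp_{\bb Q}$, where the claimed shift $0\mapsto 1$ is consistent with the expectation that $\mathrm{Mod}_{\Sp_{\bb Q}}$-type categorifications sit at height $1$.

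The main obstacle, as I see it, is making the ``dictionary'' in the first paragraph precise: one needs a genuinely $2$-categorical computation identifying the higher-semiadditive integration over $B^{k}C_{p}$ performed \emph{in $\acat^{\sad{\infty}}$} with a twisted integration performed \emph{in $\mathcal{C}$} over $B^{k-1}C_{p}$, and this requires a clean description of the ambidextrous (co)limit functors for $\acat^{\sad{\infty}}$ — presumably realizing $A\otimes\mathcal{C}$ (the copower) as a category of $A$-local systems with a twist, and $\mathcal{C}^{A}$ likewise, so that the unit/counit of ambidexterity can be written down and the constant family on identities can be integrated explicitly. The interchange of ``integrate at the category level'' and ``evaluate on an object'' is the crux; once it is in hand, the rest is the fiber-sequence formula of \thmref{Intro_Bounded_Height}(4) plus formal completion-theory manipulations, which I expect to be routine.
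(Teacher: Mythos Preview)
Your instinct that the categorical cardinality $|B^{n+1}C_p|$ acting on $\mathcal{C}\in\acat^{\sad\infty}$ should unwind to an internal construction is correct, but the dictionary is simpler than you describe: by \exaref{Categorical_Cardinality}, the endomorphism $|A|$ of $\mathcal{C}$-as-object is literally the functor $X\mapsto X[A]=A_!A^*X$, with no degree shift at this stage. The shift enters through \corref{Amenable_Acyclic} (equivalently \propref{Height_Characterization}): $\Ht(\mathcal{C})\le n$, i.e.\ $|B^nC_p|$ invertible in $\mathcal{C}$, is the same as $B^{n+1}C_p$ being $\mathcal{C}$-acyclic, i.e.\ $(B^{n+1}C_p)^*$ fully faithful, i.e.\ the counit $B_!B^*\to\Id$ an isomorphism (with $B=B^{n+1}C_p$). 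Your invocation of the fiber-sequence multiplicativity (\thmref{Intro_Bounded_Height}(4)) is a red herring: that formula \emph{assumes} $|B^nC_p|$ is invertible, so it cannot be used to deduce it.

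This exposes a genuine gap in your converse for part (1). Knowing that the functor $B_!B^*$ is an \emph{equivalence} does not immediately tell you that the \emph{counit} $B_!B^*\to\Id$ is an isomorphism, which is what you need to conclude $B^*$ fully faithful and hence $\Ht(\mathcal{C})\le n$. ``Evaluating on objects and using the fiber-sequence formula'' does not bridge this; the paper closes the gap with a separate lemma (\lemref{Invertible_Monad}): for any adjunction $F\dashv G$, if $GF$ is an equivalence then $F$ is fully faithful. This is a short but non-obvious monoid argument and is the actual crux of the implication $\htt(\mathcal{C})\le n+1\Rightarrow\Ht(\mathcal{C})\le n$.

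For part (2), your proposed route via identifying completions objectwise is both different from the paper's and harder to make precise (it is unclear that $\acat^{\sad\infty}$ has the sequential colimits needed to build completions). The paper argues directly from the orthogonality definition. For $\Ht(\mathcal{C})>n\Rightarrow\htt(\mathcal{C})>n+1$: given any $\mathcal{D}$ with $\htt(\mathcal{D})\le n+1$ and a $\pi$-finite-colimit-preserving $F\colon\mathcal{D}\to\mathcal{C}$, each $X\in\mathcal{D}$ satisfies $X[B]\iso X$, hence $F(X)[B]\iso F(X)$, hence by \propref{Amenable_Acyclic_Object} one gets $\htt_{\mathcal{C}}(F(X))\le n$, forcing $F(X)=0$. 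For the converse, the paper simply notes that the full subcategory $\mathcal{C}_{\le n}\ss\mathcal{C}$ has $\Ht\le n$, hence by part (1) has $\htt\le n+1$ as an object; the assumption $\htt(\mathcal{C})>n+1$ then forces the inclusion $\mathcal{C}_{\le n}\into\mathcal{C}$ to be zero, so $\mathcal{C}_{\le n}=0$.
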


The higher semiadditive structure of $\acat^{\sad{\infty}}$ is essentially
given by taking colimits over $\pi$-finite spaces. Hence, \thmref{Intro_Redshift}
is closely related to point (3) of \thmref{Intro_Bounded_Height}.
As a concrete example, we can consider for a $T(n)$-local ring spectrum
$R$, the $\infty$-category of $T(n)$-local left $R$-modules. The
space of objects of this $\infty$-category is a commutative monoid
for the direct sum operation. Moreover, the higher semiadditivity
of the $\infty$-category of modules endows this space with a \emph{higher}
commutative monoid structure in the sense of \cite[Definition 5.10]{Harpaz}. As a consequence
of \thmref{Intro_Redshift}, this higher commutative monoid is of
height $\le n+1$ in the $\infty$-category of higher commutative
monoids. In a future work, we shall investigate the implications of
this to the chromatic redshift in algebraic $K$-theory in the sense
of Ausani-Rognes. 

Our main interest in the notion of higher semiadditivity is in its
application to \emph{stable} $\infty$-categories. As it turns out,
the two properties of higher semiadditivity and stability interact
in a highly non-trivial way. First and foremost, in the  presence of stability, the higher semiadditive structure turns out to decompose completely
according to height:
\begin{theorem}
[Height Decomposition, \ref{thm:Height_Decomposition}]\label{thm:Intro_Decomposition}Let $\mathcal{C}$
be a stable idempotent complete $m$-semiadditive $\infty$-category
for some $m\in\bb N$. There is a canonical equivalence
\[
\mathcal{C}\simeq\mathcal{C}_{0}\times\cdots\times\mathcal{C}_{m-1}\times\mathcal{C}_{>m-1},
\]
were $\mathcal{C}_{0},\dots,\mathcal{C}_{m-1}$ and $\mathcal{C}_{>m-1}$
are the full subcategories of objects of height $0,\dots,m-1$ and
$>m-1$ respectively.\footnote{We also treat the case $m=\infty$, which is somewhat more subtle.}
\end{theorem}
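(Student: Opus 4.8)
The plan is to peel the heights off one at a time, reducing the whole statement to a product decomposition along a single central natural endomorphism. For $0\le k\le m$ let $\mathcal{D}_{k}\ss\mathcal{C}$ be the full subcategory of objects complete with respect to $\pn 0,\dots,\pn{k-1}$, so that $\mathcal{D}_{0}=\mathcal{C}$ and $\mathcal{D}_{m}=\mathcal{C}_{>m-1}$; since $\mathcal{D}_{k}$ is $k$-semiadditive, the cardinality $\pn k=|B^{k}C_{p}|$ is a natural endomorphism of $\Id_{\mathcal{D}_{k}}$. The crux is the following single-step claim: \emph{if $\mathcal{D}$ is stable, idempotent complete, $p$-local and $(k+1)$-semiadditive, then the localization away from $\pn k$ and the $\pn k$-completion assemble into an equivalence}
\[
\mathcal{D}\;\iso\;\mathcal{D}[\pn k^{-1}]\times\mathcal{D}^{\wedge}_{\pn k}
\]
\emph{onto the product of these two full subcategories.} Granting this, one applies it to $\mathcal{D}=\mathcal{D}_{k}$ for $k=0,1,\dots,m-1$ in turn: each $\mathcal{D}_{k}$ is a product factor of $\mathcal{C}$ obtained at the previous stage (or $\mathcal{C}$ itself when $k=0$), hence is again stable, idempotent complete, $p$-local and $m$-semiadditive; the factor $\mathcal{D}_{k}[\pn k^{-1}]$ is exactly the subcategory of objects of height $k$ — invertibility of $\pn k$ forces height $\le k$, while completeness with respect to $\pn 0,\dots,\pn{k-1}$ forces height $>k-1$ — and the complementary factor is $\mathcal{D}_{k+1}$. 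Iterating yields $\mathcal{C}\simeq\mathcal{C}_{0}\times\cdots\times\mathcal{C}_{m-1}\times\mathcal{C}_{>m-1}$. (For $m=\infty$ one passes to the limit of the resulting compatible finite decompositions; as the footnote indicates, some extra care is then needed to pin down the residual factor, and I leave this aside.)

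For the single-step claim, set $\epsilon=\pn k$. The functor $(-)[\epsilon^{-1}]=\colim(\mathrm{id}\xrightarrow{\epsilon}\mathrm{id}\xrightarrow{\epsilon}\cdots)$ is always a smashing localization of $\mathcal{D}$, with $\one[\epsilon^{-1}]$ an idempotent commutative algebra and $K:=\mathrm{fib}(\one\to\one[\epsilon^{-1}])$ an associated nonunital idempotent algebra satisfying $K\otimes\one[\epsilon^{-1}]=0$; thus $\mathcal{D}$ is a recollement of $\mathcal{D}[\epsilon^{-1}]=\Mod_{\one[\epsilon^{-1}]}(\mathcal{D})$ and the $\epsilon$-acyclic objects $\Mod_{K}(\mathcal{D})$. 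This recollement is a \emph{product} precisely when the standard fracture square, exhibiting $\one$ as the pullback $\one\simeq\one[\epsilon^{-1}]\times_{(\one^{\wedge}_{\epsilon})[\epsilon^{-1}]}\one^{\wedge}_{\epsilon}$, degenerates — that is, when $(\one^{\wedge}_{\epsilon})[\epsilon^{-1}]=0$. In that case $K\simeq\one^{\wedge}_{\epsilon}$, the $\epsilon$-acyclic and $\epsilon$-complete objects coincide, $\one\simeq\one[\epsilon^{-1}]\times\one^{\wedge}_{\epsilon}$ as commutative algebras, and passing to module categories gives $\mathcal{D}\simeq\mathcal{D}[\epsilon^{-1}]\times\mathcal{D}^{\wedge}_{\epsilon}$. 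So the whole statement reduces to the vanishing $(\one^{\wedge}_{\pn k})[\pn k^{-1}]=0$ in $\mathcal{D}$ — equivalently, to the assertion that in this setting $\pn k$-completion is itself a smashing localization.

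This vanishing is the main obstacle, and it is exactly where $(k+1)$-semiadditivity, rather than mere stability, is used: for $\mathcal{C}=\Sp$ and $\epsilon=p$ it fails, since $(\bb S^{\wedge}_{p})[1/p]\neq0$. After inverting $\pn k$ one works inside $\mathcal{D}[\pn k^{-1}]$, which is $k$-semiadditive of height $\le k$, hence by \thmref{Height_Everything_p_Local} is $\infty$-semiadditive, with the cardinalities of all $(k-1)$-connected nilpotent $\pi$-finite spaces invertible and the fold maps of all $k$-connected $\pi$-finite spaces equivalences. One then studies the completion tower $\{\one/\pn k^{\,j}\}_{j}$: each term is $\pn k$-acyclic, so the localization kills the whole pro-object, and what must be shown is that it also kills the limit. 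This is achieved by using the fibre sequence $B^{k-1}C_{p}\to\pt\to B^{k}C_{p}$ together with the multiplicativity of cardinalities along it (point (4) of \thmref{Height_Everything_p_Local}, applied after inverting $\pn k$) to control $\one/\pn k$ in terms of $B^{k-1}C_{p}\otimes\one$, exhibiting it as uniformly dualizable; this makes $\pn k$-completion a smashing operation on $\mathcal{D}[\pn k^{-1}]$-modules, so it commutes with the localization, and forces $(\one^{\wedge}_{\pn k})[\pn k^{-1}]$ to be simultaneously $\pn k$-complete and $\pn k$-invertible, hence zero.

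Finally, canonicity of the equivalence is obtained by running the argument uniformly: one applies the single-step claim not to an individual $\mathcal{C}$ but to the mode classifying stable $m$-semiadditive $p$-local presentable $\infty$-categories, obtaining orthogonal central idempotents in that mode which realise the product decomposition; this then descends to every module over the mode and, after ind-completion and passage back to the idempotent complete core, to the given $\mathcal{C}$, with the factors matched to the height subcategories as above. Every step other than the vanishing in the previous paragraph is a formal manipulation of smashing localizations, recollements and fracture squares; that single input — the smashing-ness of $\pn k$-completion in the higher semiadditive $p$-local setting — is where the real work lies.
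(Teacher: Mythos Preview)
Your overall architecture matches the paper's: peel off one height at a time by showing that the divisible/complete recollement associated to $\pn k$ is \emph{split}, then iterate. You also correctly reduce the splitting to a single vanishing statement, which in your language is $(\one^{\wedge}_{\pn k})[\pn k^{-1}]=0$. The problem is entirely in how you argue for that vanishing.

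Your proposed argument there does not work as written. You invoke the fibre sequence $B^{k-1}C_{p}\to\pt\to B^{k}C_{p}$ and point (4) of \thmref{Height_Everything_p_Local} ``after inverting $\pn k$''. But inverting $\pn k$ puts you in height $\le k$, and point (4) requires the \emph{fibre} to be $(k-1)$-connected; $B^{k-1}C_{p}$ is only $(k-2)$-connected, so the multiplicativity statement does not apply to that sequence. Moreover, even granting some multiplicativity, the passage to ``$\one/\pn k$ is uniformly dualizable, hence completion is smashing'' is not justified: there is no reason for the cofiber of a central endomorphism of $\one$ to be dualizable in a general presentably symmetric monoidal stable $\infty$-category, and you give no argument for it here.

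The paper's route is considerably cleaner and avoids all of this. The key observation (\propref{Box_Semi_Inverse}) is that $\pn{k+1}$ is a \emph{semi-inverse} of $\pn k$: on any object $X$ with $\pn k$ invertible, one has $\pn{k+1}\cdot\pn k=\Id_{X}$. This follows directly from \propref{Amenable_Acyclic_Object} applied to $A=B^{k+1}C_{p}$ (so the relevant fibre sequence is $B^{k}C_{p}\to\pt\to B^{k+1}C_{p}$, not the one you wrote). Given a semi-inverse $\beta$ of $\alpha$, a short direct argument (\propref{Semi_Inevrse_Divisible} and \propref{Recollement}) shows that $(1-\alpha\beta)$ acts invertibly on every $\alpha$-complete object and as zero on every $\alpha$-divisible object; this immediately splits the recollement, with no fracture squares, no dualizability, and no discussion of whether completion is smashing. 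In particular, $\pn k$-completeness is characterized as $(1-\pn k\pn{k+1})$-\emph{divisibility}, which is manifestly a smashing condition and gives the product decomposition at once.

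Two further remarks. First, your single-step claim imposes $p$-locality on $\mathcal{D}$, but the theorem does not assume this and the paper's proof does not use it: the semi-inverse argument only needs $(k+1)$-semiadditivity. Second, your handling of the idempotent-complete (non-presentable) case via modes is more elaborate than necessary; the paper just embeds $\mathcal{C}$ fully faithfully and $m$-semiadditively into a presentable stable $m$-semiadditive $\widehat{\mathcal{C}}$ (\propref{Tsadi_Yoneda}), applies the presentable case there, and observes that the height-components of any $X\in\mathcal{C}$ are retracts of $X$, hence lie in $\mathcal{C}$ by idempotent completeness.
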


This result sheds light on the ``monochromatic nature'' of higher semiadditive
phenomena in the stable world. Loosely speaking, the fact that the
monochromatic layers, which have different heights, glue non-trivially
(by means of the chromatic fracture square), obstructs the higher
semiadditivity of non-monochromatic localizations of spectra. 

In view of \thmref{Intro_Decomposition}, it makes sense to focus
our attention on stable $\infty$-categories $\mathcal{C}$ of height
exactly $n$. In \cite[Section 5.4]{HopkinsLurie} it is shown that the behavior of local systems
of $K(n)$-local spectra on a $\pi$-finite space $A$, strongly depends
on the level of connectedness of $A$ compared with $n$. We show that
some of these results hold for general stable $\infty$-categories
$\mathcal{C}$ of height exactly $n$. First of all, from \thmref{Intro_Bounded_Height}(3),
it can be deduced that for an $n$-connected $\pi$-finite space $A$,
the inclusion functor $\mathcal{C}\into\mathcal{C}^{A}$ of constant
local-systems is \emph{fully faithful}. The right orthogonal complement
$\mathcal{C}^{\perp}\ss\mathcal{C}^{A}$ consists of local-systems
whose global sections object (i.e. limit over $A$) vanishes. We prove
the following:
\begin{theorem}
[Semisimplicity, \ref{thm:Height_Semisimple}]\label{thm:intro_Semisimple}Let $\mathcal{C}$ be
a stable $\infty$-semiadditive $\infty$-category such that  $\Ht(\mathcal{C}) =n$,
and let $A$ be an $n$-connected $\pi$-finite space. There is a
canonical equivalence $\mathcal{C}^{A}\simeq\mathcal{C}\times\mathcal{C}^{\perp}$.
\end{theorem}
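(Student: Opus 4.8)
The plan is to construct the product decomposition $\mathcal{C}^{A}\simeq\mathcal{C}\times\mathcal{C}^{\perp}$ by exhibiting a splitting of the fully faithful inclusion $i\c\mathcal{C}\into\mathcal{C}^{A}$ together with the complementary projection onto $\mathcal{C}^{\perp}$. Since $A$ is $n$-connected and $\pi$-finite and $\mathcal{C}$ is $\infty$-semiadditive (by \thmref{Intro_Bounded_Height}(1), as height $n$ in particular means height $\le n$... wait, no—here $\Ht(\mathcal{C})=n$ only gives $\le n$, which is exactly the hypothesis we need), the constant-local-system functor $i\c\mathcal{C}\to\mathcal{C}^{A}$ admits both a left adjoint $\colim_A$ and a right adjoint $\lim_A$, and ambidexterity supplies a canonical \emph{norm} equivalence $\nm_A\c\colim_A\iso\lim_A$. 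So $i$ has a two-sided adjoint $j\coloneqq\lim_A\simeq\colim_A$. The first key step is to show that the unit $\Id_{\mathcal{C}}\to j\circ i$ is an equivalence: this is precisely the statement that for every $X\in\mathcal{C}$, the canonical map $X\to X^A$ (global sections of the constant local system), equivalently the norm-twisted fold map, is an equivalence. By \thmref{Intro_Bounded_Height}(3) the fold map $A\otimes X\to X$ is invertible for $n$-connected $A$; dualizing (using stability, hence the norm identifies $\colim_A$ with $\lim_A$), this gives $X\iso X^A$, i.e. $j\circ i\simeq\Id_{\mathcal{C}}$.

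The second step is to assemble this into a recollement/semiorthogonal decomposition and then upgrade it to a product. Because $i$ is fully faithful with a \emph{right} adjoint $j$, the pair $(\mathcal{C},\mathcal{C}^{\perp})$—where $\mathcal{C}^{\perp}=\ker(j)=\ker(\lim_A)$ is the full subcategory of local systems with vanishing global sections—forms a semiorthogonal decomposition of the stable $\infty$-category $\mathcal{C}^A$. The point is that it is in fact an \emph{orthogonal} decomposition, i.e. a product: for this it suffices to show that $i$ \emph{also} has a left adjoint that coincides (up to the recollement structure) with $j$, so that there are no nontrivial extensions between $\mathcal{C}$ and $\mathcal{C}^{\perp}$. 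But $i$'s left adjoint is $\colim_A$, and the norm equivalence $\colim_A\simeq\lim_A=j$ is exactly this coincidence. Concretely: the counit $i\circ j\to\Id_{\mathcal{C}^A}$ has fiber landing in $\mathcal{C}^{\perp}$ and is split by the unit $\Id_{\mathcal{C}^A}\to i\circ\colim_A\simeq i\circ j$ (composed appropriately), so every object $M\in\mathcal{C}^A$ sits in a \emph{split} cofiber sequence $i j M\to M\to M'$ with $M'\in\mathcal{C}^{\perp}$, and the functors $M\mapsto jM$, $M\mapsto M'$ are jointly conservative and exhibit the equivalence $\mathcal{C}^A\simeq\mathcal{C}\times\mathcal{C}^{\perp}$.

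**The main obstacle** I expect is verifying that the cofiber sequence $ijM\to M\to M'$ genuinely \emph{splits}—equivalently, that $\map_{\mathcal{C}^A}(ijM, M')=0$ and $\map_{\mathcal{C}^A}(M', ijM)=0$, so that there is no gluing. The first vanishing, $\map(i(-),\mathcal{C}^{\perp})=0$, is the definition of $\mathcal{C}^{\perp}$ being the right orthogonal (via adjunction $\map_{\mathcal{C}^A}(iX,N)\simeq\map_{\mathcal{C}}(X,jN)=0$). The reverse vanishing $\map(\mathcal{C}^{\perp}, i(-))=0$ is the substantive one, and this is exactly where the \emph{left} adjoint $\colim_A$ and its identification with $j$ via the norm are essential: $\map_{\mathcal{C}^A}(N,iX)\simeq\map_{\mathcal{C}}(\colim_A N, X)\simeq\map_{\mathcal{C}}(j N, X)$, and for $N\in\mathcal{C}^{\perp}$ we have $jN\simeq 0$ by definition of $\mathcal{C}^{\perp}$ as $\ker(j)$, so this mapping space vanishes. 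Thus the two orthogonality conditions hold and the recollement degenerates to a product. A secondary point requiring care is checking that $\mathcal{C}^{\perp}$ is indeed well-behaved (closed under the relevant limits/colimits and idempotent complete, so that the product statement is meaningful as an equivalence of $\infty$-categories in the same ambient setting); this follows formally since $\mathcal{C}^{\perp}=\ker(j)$ is the kernel of an exact functor between stable $\infty$-categories, hence a stable, idempotent-complete full subcategory. I would also remark that the hypothesis $\Ht(\mathcal{C})=n$ (as opposed to merely $\le n$) is used only to ensure $A$ is genuinely in the ``critical range'' so that $i$ is fully faithful but not an equivalence; the decomposition itself only uses $\Ht(\mathcal{C})\le n$ together with $A$ being $n$-connected, via \thmref{Intro_Bounded_Height}(3).
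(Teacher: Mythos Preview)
Your proof is correct and follows essentially the same route as the paper: first show that $A^{*}\colon\mathcal{C}\to\mathcal{C}^{A}$ is fully faithful via the fold-map isomorphism (i.e.\ $\mathcal{C}$-acyclicity of $A$, from height $\le n$), and then use the norm equivalence $A_{!}\simeq A_{*}$ supplied by ambidexterity to conclude that the resulting recollement is split. The only cosmetic difference is that the paper packages the second step via the abstract criterion that a recollement splits precisely when the left and right adjoints of the fully faithful inclusion coincide, whereas you unpack this by verifying both orthogonality conditions directly---these are the same argument.
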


This result can be seen as a generalization of the ``semisimplicity'' of $\Sp_{K(n)}$-valued local systems on
$n$-connected $\pi$-finite spaces (compere \cite{LurieRep}). We also provide
an explicit formula for the composition $\mathcal{C}^{A}\onto\mathcal{C}\into\mathcal{C}^{A}$,
as a ``symmetrization'' of the action of the $\infty$-group $G=\Omega A$.
Intuitively, the ``order'' of $G$, by which one has to divide, is
precisely the $\mathcal{C}$-cardinality of $G$, which is invertible
by the assumption on the height of $\mathcal{C}$ and the connectivity
of $A$ (\thmref{Intro_Bounded_Height}).

Based on the classification of higher semiadditive localizations of
$\Sp$ with respect to homotopy rings in \cite[Theorem B]{Ambi2018}, the authors proposed
the conjecture that every stable $p$-local presentable $1$-semiadditive
$\infty$-category is in fact $\infty$-semiadditive \cite[Conjecture 1.1.5]{Ambi2018}.
In this paper, we prove a partial result in the direction of this
conjecture. Given a stable $p$-local presentable $\infty$-category
$\mathcal{C}$, we say that an object $X\in\mathcal{C}$ is of \emph{finite
stable height} if there exists a non-zero finite $p$-local spectrum
$F$, such that $F\otimes X=0$. We also denote by $\mathcal{C}_{\infty^{\st}}\ss\mathcal{C}$
the full subcategory of objects $Y\in\mathcal{C}$, for which $\map(X,Y)=\pt$
for all $X$ of finite stable height. 
\begin{theorem}
[Bounded Bootstrap, \ref{thm:Bounded_Bootstrap}]\label{thm:Intro_Bounded_Bootstrap}Let $\mathcal{C}$
be a stable $p$-local presentable $\infty$-category. If $\mathcal{C}$
is $1$-semiadditive and $\mathcal{C}_{\infty^{\st}}=0$, then it
is $\infty$-semiadditive. Moreover, in this case $\mathcal{C}\simeq\prod_{n\in\bb N}\mathcal{C}_{n}$.
\end{theorem}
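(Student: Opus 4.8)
The plan is to bootstrap from $1$-semiadditivity to $\infty$-semiadditivity by first splitting $\mathcal{C}$ into pieces of bounded height and then invoking the earlier results. First I would analyze the subcategory $\mathcal{C}_{\mathrm{fin}^{\mathrm{st}}}$ generated (under colimits) by objects of finite stable height, and its right-orthogonal complement $\mathcal{C}_{\infty^{\mathrm{st}}}$. Since $\mathcal{C}$ is presentable and stable, and finite $p$-local spectra form a thick subcategory of $\Sp^{\mathrm{fin}}_{(p)}$ with a well-understood thick-subcategory lattice (the chromatic filtration), I expect the subcategory of objects of finite stable height to further decompose: an object killed by some nonzero finite $p$-local spectrum is, by the thick subcategory theorem, killed by some $F(n)$, hence is ``supported in heights $< n$'', and localizing/completing along the telescopes $T(k)$ should produce a functorial decomposition. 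Concretely, for each $n$ one has the smashing-type localization $X \mapsto X[v_n^{-1}]$-analogue inside $\mathcal{C}$ (induced by the telescopic localization on $\Sp_{(p)}$ acting on $\mathcal{C}$ via the canonical $\Sp_{(p)}$-module structure), and the claim will be that the hypothesis $\mathcal{C}_{\infty^{\mathrm{st}}}=0$ forces $\mathcal{C} = \bigcup_n \mathcal{C}_{\le n}$ where $\mathcal{C}_{\le n}$ is the part of finite stable height bounded by $n$.

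The second step is to show each such bounded piece is $\infty$-semiadditive. Here I would argue that if $X$ has finite stable height $\le n$, then $X$ is annihilated by $F(n)$, which means all the telescopic cardinalities $|B^n C_p|$ and higher act invertibly on the relevant localized pieces — more precisely, after decomposing via the chromatic-type filtration inside $\mathcal{C}$, the monochromatic piece $\mathcal{C}_k$ ``at height $k$'' is $1$-semiadditive (being a retract/localization of $\mathcal{C}$) and of semiadditive height $\le k$ in the sense of the Height definition, because $|B^k C_p|$ is invertible there while $|C_p|,\dots,|B^{k-1}C_p|$ have been completed away. Then \thmref{Intro_Bounded_Height}(1) (Bounded Height), applied with $m = 1$ — or rather after first upgrading to $k$-semiadditivity, which itself should follow inductively from the bounded-height statement starting at $1$-semiadditive and height $\le 1$, then $\le 2$, etc. — upgrades $1$-semiadditivity to $\infty$-semiadditivity on each $\mathcal{C}_k$. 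Assembling, $\mathcal{C} \simeq \prod_{n\in\bb N}\mathcal{C}_n$ with each factor $\infty$-semiadditive, hence $\mathcal{C}$ is $\infty$-semiadditive and the product decomposition is exactly the asserted one, compatible with \thmref{Intro_Decomposition}.

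The key technical glue is that the formation of the chromatic-type pieces inside $\mathcal{C}$ is compatible with the semiadditive structure: the telescopic localizations and the finite-stable-height completions must be shown to preserve $1$-semiadditivity (equivalently, to be ``ambidextrously compatible'' — they are smashing or cosmashing localizations coming from $\Sp_{(p)}$-linearity, so this should reduce to known permanence properties of $m$-semiadditivity under localization/colocalization, e.g.\ that a localization of an $m$-semiadditive $\infty$-category at a set of morphisms of the form $|A|$ stays $m$-semiadditive). The main obstacle I anticipate is precisely the induction that climbs from height-bounded-by-$1$ to height-bounded-by-$k$: \thmref{Intro_Bounded_Height} gives $\infty$-semiadditivity only once we already know $n$-semiadditivity \emph{and} height $\le n$, so one must feed in at each stage that the completed-and-localized piece $\mathcal{C}_{\le k}$ is $k$-semiadditive — and the only source of higher semiadditivity available a priori is the conclusion of \thmref{Intro_Bounded_Height} at the previous stage. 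Making this induction precise, and checking that ``height $\le 1$ plus $1$-semiadditive'' genuinely bootstraps to ``$1$-semiadditive of height $\le 1$ is $\infty$-semiadditive'' without circularity, is where the real care is needed; I expect it to hinge on the fact that, on the piece where $|BC_p|$ is invertible and $|C_p|$ is complete, $2$-semiadditivity is forced by \thmref{Intro_Bounded_Height}(1) applied with $n=1$, and then one iterates.
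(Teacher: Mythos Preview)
Your induction is the real gap, and it is circular in exactly the way you fear. Knowing that $\mathcal{C}_{\le 1}$ (or $\mathcal{C}_{\le k-1}$) is $\infty$-semiadditive says nothing about the larger piece $\mathcal{C}_k$: the Bounded Height theorem needs $k$-semiadditivity as \emph{input} on a height-$\le k$ category, and there is no mechanism in your outline to upgrade $1$-semiadditivity to $k$-semiadditivity on $\mathcal{C}_k$ without already knowing what you want. The paper does not use Bounded Height here at all. Once the monochromatic decomposition is in place, each piece $\mathcal{C}_{n^{\st}} \simeq \mathcal{C} \otimes \Sp_{T(n)}$ is an $\Sp_{T(n)}$-module in $\Pr$, and $\Sp_{T(n)}$ is $\infty$-semiadditive by the main theorem of \cite{Ambi2018}; hence $\mathcal{C}_{n^{\st}}$ is $\infty$-semiadditive (and of semiadditive height $n$) for free, by mode theory rather than by bootstrap.

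The second gap is the decomposition itself, which you treat as routine chromatic bookkeeping. For a $1$-semiadditive $\mathcal{C}$ one only has a \emph{recollement} of $\mathcal{C}_{\le^{\st} n}$ and $\mathcal{C}_{>^{\st} n}$, and the content is that the gluing functor $\mathcal{C}_{>^{\st} n} \to \mathcal{C}_{\le^{\st} n}$ (a $1$-semiadditive functor) vanishes. This is the genuinely non-formal input: using the additive $p$-derivation $\delta$ on $\pi_0 \bb S_{T(k)}$ from \cite{Ambi2018}, the paper constructs for each $n$ an element $\alpha$ in the $\bb Z$-span $\mathcal{R}_1$ of the cardinalities $|BG|$ (hence preserved by \emph{any} $1$-semiadditive functor) such that every $\Sp_{T(k)}$-module is $\alpha$-complete for $k \le n$ and $\alpha$-divisible for $k > n$. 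That kills the gluing functor, yields $\CMon_1 \otimes L_n^f \Sp \simeq \prod_{k\le n} \Sp_{T(k)}$, and hence $\mathcal{C}_{\le^{\st} n} \simeq \prod_{k\le n} \mathcal{C}_{k^{\st}}$; the infinite product then follows from a recollement-chain argument together with the hypothesis $\mathcal{C}_{\infty^{\st}} = 0$. None of this is accessible through permanence-of-semiadditivity-under-localization alone.
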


The condition $\mathcal{C}_{\infty^{\st}}=0$ is satisfied if for
example for every $X,Y\in\mathcal{C}$, the mapping spectrum $\hom(X,Y)$
is $L_{n}^{f}$-local for some integer $n$. The proof of \thmref{Intro_Bounded_Bootstrap},
relies on the theory of \emph{modes}, which we shall review next. 

\subsubsection{Mode Theory}

In \cite[Proposition 4.8.1.15]{ha}, Lurie introduced a symmetric monoidal structure on the
$\infty$-category $\Pr^{L}$ of presentable $\infty$-categories
and colimit preserving functors. Moreover, he showed that many familiar
properties of presentable $\infty$-categories can be characterized
as having a (necessarily unique) module structure over certain idempotent
algebras in $\Pr^{L}$ \cite[Section 4.8.2]{ha}. We call such idempotent presentable
$\infty$-categories \emph{modes}. This notion was also considered
in \cite{GepUniv} from the perspective of \emph{smashing} \emph{localizations}
of $\Pr^{L}$. Given a mode $\mathcal{M}$, it is a \emph{property}
of a presentable $\infty$-category $\mathcal{C}$ to have a \emph{structure}
of a module over $\mathcal{M}$. The terminology is inspired by the
idea that modes classify the possible ``modes of existence'' in
which mathematical objects can occur, manifest, and behave. Most notably,
the property of \emph{stability} is equivalent to having a module
structure over $\Sp$. Consequently, every stable presentable $\infty$-category
is canonically enriched in $\Sp$ and colimit preserving functors
between stable presentable $\infty$-categories preserve this enrichment.
This structure naturally plays a significant role in the study of
stable $\infty$-categories. In \cite[Lemma 5.20]{Harpaz}, Harpaz showed that $m$-semiadditivity
is similarly characterized by having a module structure over the idempotent
algebra $\CMon_{m}$ of $m$\emph{-commutative} \emph{monoids}. The
case $m=0$ recovers the usual $\infty$-category of commutative (i.e.
$\bb E_{\infty}$) monoids in spaces, which classifies ordinary semiadditivity.
The mapping spaces of an $m$-semiadditive $\infty$-category obtain
a canonical $m$-commutative monoid structure, by analogy with the
$\Sp$-enrichment of stable $\infty$-categories. 

In the final section of this paper, we develop the theory of modes further and apply it to the study of height in stable presentable
higher semiadditive $\infty$-categories. First, by the general theory
of modes, $\CMon_{m}\otimes\Sp$ is also a mode, which classifies
the property of being at the same time stable and $m$-semiadditive.
Furthermore, using \thmref{Intro_Decomposition}, we show:
\begin{theorem}[\ref{thm:Tsadi_n_Mode}]
\label{thm:Intro_Tsadi_n}For every $n\ge0$, there exists a mode $\tsadi_{n}$\footnote{The letter $\tsadi$ (pronounced ``tsadi'') is the first letter
in the Hebrew word for ``color''. The notation was chosen to indicate
the close relationship with chromatic homotopy theory.}, which classifies the property of being stable, $p$-local, $\infty$-semiadditive
and of height $n$.
\end{theorem}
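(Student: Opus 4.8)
The plan is to realize $\tsadi_n$ as a smashing localization of the mode $\CMon_\infty \otimes \Sp$, which classifies stable $\infty$-semiadditive presentable $\infty$-categories, intersected with the $p$-local condition. Recall that by the general theory of modes (Lurie, and as reviewed above), a mode is precisely an idempotent object $\mathcal{M} \in \Pr^L$, equivalently a smashing localization $L\colon \Pr^L \to \Pr^L$; and modules over it are exactly those $\mathcal{C}$ with $L\mathcal{C} \iso \mathcal{C}$, this being a property. So it suffices to exhibit an idempotent $\mathbb{E}_\infty$-algebra $\tsadi_n$ in $\Pr^L$ whose modules are exactly the stable, $p$-local, $\infty$-semiadditive $\infty$-categories of height $n$. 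The first step is to form $\mathcal{A} \coloneqq \Mod_{S_{(p)}}(\CMon_\infty \otimes \Sp)$ — or more precisely to note that $p$-localization is itself a smashing localization of $\Pr^L$ given by an idempotent algebra, so $\mathcal{A} \coloneqq (\CMon_\infty \otimes \Sp)_{(p)}$ makes sense as a mode — and observe that $\mathcal{A}$-modules are exactly the stable, $p$-local, $\infty$-semiadditive presentable $\infty$-categories. This is where I invoke Harpaz's characterization of $m$-semiadditivity via $\CMon_m$ together with the stability-via-$\Sp$ characterization and the fact that the tensor product of idempotents is idempotent.

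The second, and main, step is to cut down to height exactly $n$. Here the key input is Theorem \ref{thm:Intro_Bounded_Height}, more specifically its point (3): for an $\infty$-semiadditive $\infty$-category, having height $\le n$ is equivalent to the purely categorical condition that for every $n$-connected $\pi$-finite space $A$ and every object $X$, the fold map $A \otimes X \to X$ is an equivalence. This reformulation is crucial because it exhibits ``height $\le n$'' as a localization condition: within the $\infty$-category of $\mathcal{A}$-modules, the full subcategory of those satisfying this condition is an accessible localization, and being a full subcategory of $\Mod_{\mathcal{A}}(\Pr^L)$ closed under the relevant operations, the localization is smashing — hence classified by an idempotent $\mathcal{A}$-algebra, which I will call $\tsadi_{\le n}$. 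Concretely one expects $\tsadi_{\le n} = \mathcal{A}/(\text{fold map } B^nC_p \otimes \one \to \one)$, i.e. the quotient of the unit by the appropriate cardinality-type map, mirroring the description in Theorem \ref{thm:Intro_Bounded_Height}(1) that height $\le n$ forces $\infty$-semiadditivity. Dually, ``height $> n-1$'' is the $|B^{n-1}C_p|$-complete condition by the footnote to the definition of semiadditive height, which is again a (this time completion-type) localization; by the general machinery of localizing/colocalizing a mode, the intersection of a localization and the complementary colocalization is again classified by an idempotent algebra. Since $\Ht(\mathcal{C}) = n$ means height $\le n$ and $> n-1$ simultaneously, intersecting $\tsadi_{\le n}$ with the $|B^{n-1}C_p|$-complete mode yields the desired $\tsadi_n$, and a presentable $\infty$-category is a $\tsadi_n$-module precisely when it is stable, $p$-local, $\infty$-semiadditive, and of height $n$.

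The main obstacle I anticipate is verifying that ``height $= n$'' genuinely corresponds to a \emph{smashing} localization — i.e. that the relevant subcategory of $\Mod_{\mathcal{A}}(\Pr^L)$ is closed under colimits and tensoring by arbitrary $\mathcal{A}$-modules, so that the localization functor is given by tensoring with a fixed idempotent algebra rather than merely being an accessible reflective localization of $\Pr^L$. The $\le n$ half should be straightforward since the fold-map condition of Theorem \ref{thm:Intro_Bounded_Height}(3) is manifestly stable under the tensor product of presentable $\infty$-categories and under colimits of module categories. The delicate point is the interaction of the completion ($> n-1$) condition with the localization ($\le n$) condition: a priori the completion half is a colocalization, not a localization, so one must check — presumably using the height decomposition of Theorem \ref{thm:Intro_Decomposition}, which splits a stable $m$-semiadditive $\infty$-category into a product of its height pieces — that on the subcategory already satisfying $\le n$ the completion condition becomes equivalent to a genuinely smashing condition (killing the lower-height idempotent factors), so that the two can be combined into a single idempotent algebra. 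Once the decomposition theorem is available, I expect this to go through: the product decomposition realizes $\tsadi_{\le n}$-modules as products indexed by heights $0,\dots,n$, and $\tsadi_n$ is the mode corresponding to the single $n$-th factor.
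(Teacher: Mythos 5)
Your overall strategy---realize $\tsadi_{n}$ as a symmetric monoidal localization of the stable, $p$-local, $\infty$-semiadditive mode---is the same as the paper's, but the two steps where the actual work happens are either missing or misidentified. First, the mechanism by which a localization of a mode is again a mode classifying a property of \emph{module categories} is not ``check that the subcategory of $\Mod_{\mathcal{A}}(\Pr)$ is closed under colimits and tensoring, hence smashing.'' The paper's \propref{Mode_Localization} shows that any accessible reflective localization $\mathcal{M}_{\circ}\ss\mathcal{M}$ of the mode \emph{itself}, compatible with its symmetric monoidal structure, is a mode classifying the property that all $\mathcal{M}$-enriched hom objects land in $\mathcal{M}_{\circ}$. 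To apply this you must know that ``every object of $\mathcal{C}$ has height $n$'' is detected on enriched hom objects valued in the mode; this is exactly \propref{Yoneda_Height}, which rests on the $\tsadi^{[m]}$-enriched Yoneda embedding and on the fact that height bounds in both directions are preserved and reflected by conservative semiadditive functors (\corref{Stable_Height_Functoriality}). Your proposal never addresses this translation, and without it the claim that the height-$n$ condition on $\mathcal{C}$ corresponds to a localization condition on $\mathcal{A}$ is unsupported.

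Second, your anticipated ``delicate point'' rests on a misconception: in a stable presentable $\infty$-category, $\alpha$-completion is a reflective (Bousfield) localization, not a colocalization---the left adjoint to $\widehat{\mathcal{C}}_{\alpha}\into\mathcal{C}$ is the completion functor $Y\mapsto\lim Y/\alpha^{r}$. The paper's \propref{Div_Comp_Mode} verifies that \emph{both} $\mathcal{C}[\alpha^{-1}]$ and $\widehat{\mathcal{C}}_{\alpha}$ are accessible localizations compatible with the symmetric monoidal structure, so the height-$n$ subcategory $\widehat{(\tsadi_{(p)}^{[m]})}_{\pn{n-1}}[\pn n^{-1}]$ is simply a composite of two such localizations and \propref{Mode_Localization} applies directly. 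No appeal to the height decomposition theorem is needed to combine the two conditions, and invoking it here would be both unnecessary and structurally backwards (the paper proves \thmref{Tsadi_n_Mode} from the localization machinery, not from the decomposition). A final small point: the paper obtains $\infty$-semiadditivity by starting from finite $m\ge n$ and invoking \thmref{Height_Everything_p_Local}; your choice to start from $\CMon_{\infty}\otimes\Sp$ sidesteps this but then the identification $\tsadi_{n}\simeq\tsadi_{(p),n}^{[m]}$ for finite $m$ (part of the theorem as stated in the body) still requires that bootstrapping result.
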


It is natural to compare $\tsadi_{n}$ with $\Sp_{T(n)}$, which is
in a sense the universal $p$-local height $n$ localization of spectra.
Since $\Sp_{T(n)}$ is also $\infty$-semiadditive and of semiadditive
height $n$, the theory of modes implies the existence of a unique
colimit preserving symmetric monoidal functor $L\colon\tsadi_{n}\to\Sp_{T(n)}$.
In the case $n=0$, the functor $L\colon\tsadi_{0}\to\Sp_{T(0)}$
is an equivalence and hence $\tsadi_{0}\simeq\Sp_{\bb Q}$ (\exaref{Tsadi0}). In general,
we show that $L$ exhibits $\Sp_{T(n)}$ as a \emph{smashing} \emph{localization}
of $\tsadi_{n}$ in the sense that $L$ admits a fully faithful right
adjoint $\Sp_{T(n)}\into\tsadi_{n}$ and there is a canonical isomorphism
$LX\simeq\bb S_{T(n)}\otimes X$ for all $X\in\tsadi_{n}$ (\corref{Tsadi_Tn_Smashing}).
For $n\ge1$, the $\infty$-category $\tsadi_{n}$ also resembles
$\Sp_{T(n)}$ in that the unique colimit preserving symmetric monoidal
functor $\Sp\to\tsadi_{n}$ vanishes on all bounded above spectra
(\propref{Tsadi_Bounded_Vanishing}), and that the right adjoint of
the unique colimit preserving symmetric monoidal functor $\mathcal{S}\to\tsadi_{n}$
is conservative (\corref{Tsadi_Conservative}). We consider $\tsadi_{n}$
to be a natural extension of $\Sp_{T(n)}$, which is a universal home for phenomena of height $n$. 

In a previous draft of this paper, we proposed the conjecture that for every $n\ge0$, the unique colimit preserving symmetric monoidal functor $L\colon\tsadi_{n}\to\Sp_{T(n)}$ 
is an equivalence. However, this conjecture was soon \emph{disproved} by Allen Yuan already in the case $n=1$. More precisely, using the Segal Conjecture (now a theorem \cite{carlsson1984}), he has constructed a higher commutative monoid structure of height $1$ on the $p$-complete sphere, as an object of the $\infty$-category of $p$-complete spectra. The details and some interesting applications of this example will appear in a separate paper by him.  

Finally, the theory of modes allows us not only to analyze the implications
of certain properties of presentable $\infty$-categories, but also
to \emph{enforce} them in a universal way. For every mode $\mathcal{M}$
and a presentable $\infty$-category $\mathcal{C},$ we can view $\mathcal{M}\otimes\mathcal{C}$
as the universal approximation of $\mathcal{C}$ by a presentable
$\infty$-category which satisfies the property classified by $\mathcal{M}$.
For example, $\Sp\otimes\,\mathcal{C}\simeq\Sp(\mathcal{C})$ is the
stabilization of $\mathcal{C}$ \cite[Example 4.8.1.23]{ha}, and similarly, $\CMon_{m}\otimes\,\mathcal{C}$
is the ``$m$-semiadditivization'' of $\mathcal{C}$ \cite[Corollary  5.18]{Harpaz}.
As alluded to above, the non-trivial gluing in the chromatic fracture
square, prevents $L_{n}^{f}\Sp$ from being higher semiadditive for
$n\ge1$. Employing the additive $p$-derivation $\delta$ on the
rings $\pi_{0}\bb S_{T(n)}$ constructed in \cite[Section 4]{Ambi2018}, we show that
forcing even $1$-semiadditivity on $L_{n}^{f}\Sp$, has the effect
of ``dissolving the glue'' in the chromatic fracture squares:
\begin{theorem}
[1-Semiadditive Splitting, \ref{thm:1Sad_Decomposition}]\label{thm:Intro_Chrom_Decomp}For every
$n\ge0$, there is a unique equivalence of presentably symmetric monoidal
$\infty$-categories
\[
\CMon_{1}\otimes L_{n}^{f}\Sp\simeq\prod_{k=0}^{n}\Sp_{T(k)}.
\]
\end{theorem}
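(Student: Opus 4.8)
The plan is to exploit the fact, just established in Theorem~\ref{thm:Intro_Tsadi_n}, that stability together with $p$-locality, $\infty$-semiadditivity, and height $n$ is classified by the mode $\tsadi_n$, and that $L_n^f\Sp$ is already stable and $p$-local. Since $\CMon_1$ is the mode for $1$-semiadditivity and $\Sp$ the mode for stability, the tensor $\CMon_1\otimes L_n^f\Sp$ is the universal presentably symmetric monoidal $\infty$-category which is stable, $p$-local, and $1$-semiadditive, and receives a symmetric monoidal colimit-preserving functor from $L_n^f\Sp$. By Theorem~\ref{thm:Intro_Bounded_Bootstrap}, any such $\infty$-category in which $\mathcal{C}_{\infty^{\st}}$ vanishes is automatically $\infty$-semiadditive and splits as $\prod_k \mathcal{C}_k$ according to height; so the first thing I would do is verify that $\CMon_1\otimes L_n^f\Sp$ has no objects of ``infinite stable height'', i.e. that $(\CMon_1\otimes L_n^f\Sp)_{\infty^{\st}}=0$. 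This should follow because $L_n^f\Sp$ has the property that $\hom(X,Y)$ is $L_n^f$-local for all $X,Y$ (being a smashing localization of $\Sp$ at a finite spectrum of type $n+1$), and the monoidal $\CMon_1$-localization cannot destroy this finiteness — the generators of $\CMon_1\otimes L_n^f\Sp$ are built from $\pi$-finite colimits of the unit, which are still killed by a type $n+1$ complex.

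Granting that, Theorem~\ref{thm:Intro_Bounded_Bootstrap} gives $\CMon_1\otimes L_n^f\Sp\simeq\prod_{k\in\bb N}\mathcal{C}_k$ with $\mathcal{C}_k$ the height-$k$ part, and each $\mathcal{C}_k$ is stable, $p$-local, $\infty$-semiadditive of height exactly $k$, hence a module over $\tsadi_k$. The next step is to identify each factor. For $k>n$ I would argue $\mathcal{C}_k=0$: the unit of $L_n^f\Sp$ is $L_n^f\bb S$, which has (telescopic/semiadditive) height $\le n$ in the appropriate sense, and tensoring with $\CMon_1$ only adds $1$-semiadditive structure — it cannot manufacture objects whose $|B^kC_p|$ is topologically nilpotent for $k>n$; more precisely, $L_n^f\bb S$ is already $|B^nC_p|$-divisible-free in a way that forces height $\le n$, and since the unit generates under colimits and tensor, the whole mode has height $\le n$, so $\mathcal{C}_k=0$ for $k>n$. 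For $0\le k\le n$, the composite $L_n^f\Sp\to\CMon_1\otimes L_n^f\Sp\to\mathcal{C}_k$ is a symmetric monoidal colimit-preserving functor to a $\tsadi_k$-module; I want to show $\mathcal{C}_k\simeq\Sp_{T(k)}$. Here is where the $p$-derivation $\delta$ on $\pi_0\bb S_{T(k)}$ from \cite[Section 4]{Ambi2018} enters: the key is that $1$-semiadditivity forces the cardinality $|BC_p|=p\cdot\delta$ to behave like an additive $p$-derivation, and this rigidity pins down the localization. Concretely, I would use that $\mathcal{C}_k$ receives a map from $\Sp_{T(k)}$ (since $\Sp_{T(k)}$ is the maximal $1$-semiadditive, hence $\infty$-semiadditive, localization of $\Sp$ of height $k$ by \cite[Theorem B]{Ambi2018}) and conversely maps to it, and that these are mutually inverse because both are smashing localizations of $L_n^f\Sp$ (resp.\ of a $\tsadi_k$-module) at the same idempotent — the monochromatic idempotent at height $k$.

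So the logical skeleton is: (i) $\CMon_1\otimes L_n^f\Sp$ is stable $p$-local $1$-semiadditive and, by the stable-height vanishing, satisfies the hypothesis of Theorem~\ref{thm:Intro_Bounded_Bootstrap}; (ii) hence it decomposes as $\prod_k\mathcal{C}_k$; (iii) the unit $L_n^f\bb S$ having height $\le n$ forces $\mathcal{C}_k=0$ for $k>n$; (iv) for each $0\le k\le n$, matching the universal property of $\tsadi_k$ against the classification of $1$-semiadditive localizations of $\Sp$ identifies $\mathcal{C}_k\simeq\Sp_{T(k)}$; (v) uniqueness of the equivalence follows from the uniqueness of symmetric monoidal colimit-preserving functors out of a mode-tensor, since $L_n^f\Sp$ generates. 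I expect the main obstacle to be step~(iv): showing that the height-$k$ factor of the forced $1$-semiadditivization is exactly $\Sp_{T(k)}$ and not some strictly larger $\tsadi_k$-module. This is precisely the point where the $p$-derivation argument of \cite{Ambi2018} must be leveraged — one needs that on $L_n^f\Sp$, imposing $1$-semiadditivity makes the relevant Bousfield class collapse to the telescopic one at each height, rather than producing the (a priori larger) universal object $\tsadi_k$. The contrast with Theorem~\ref{thm:Intro_Tsadi_n} and Allen Yuan's disproof of $\tsadi_1\simeq\Sp_{T(1)}$ is instructive: the splitting here works because we start from $L_n^f\Sp$, whose unit is already the telescopic sphere up to the fracture gluing, and the theorem says $1$-semiadditivity dissolves exactly that gluing — it does not claim the abstract mode $\tsadi_k$ collapses.
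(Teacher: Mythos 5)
There is a genuine structural problem with your plan: it is circular relative to the way these results actually depend on one another. You propose to deduce the splitting of $\CMon_{1}\otimes L_{n}^{f}\Sp$ from \thmref{Bounded_Bootstrap} (the bounded bootstrap). But that theorem is itself a \emph{consequence} of the splitting: its proof runs through \propref{Stable_Height_Increase_Functoriality}, which relies on \propref{Bounded_Stable_Height_Decomposition}, whose proof begins by tensoring the equivalence of \thmref{1Sad_Decomposition} with $\mathcal{C}$. So invoking the bootstrap here assumes the conclusion. The same circularity infects your steps (iii) and (iv): identifying the semiadditive-height factors $\mathcal{C}_{k}$ with the stable-height factors $\mathcal{C}_{k^{\st}}$ (and hence with $\Sp_{T(k)}$) is exactly what \propref{Stable_Sad_Recollement} does, again downstream of the theorem. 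Your step (i) is also more work than needed: $\CMon_{1}\otimes L_{n}^{f}\Sp$ is an $L_{n}^{f}\Sp$-module, so every object has stable height $\le n$ and $(\CMon_{1}\otimes L_{n}^{f}\Sp)_{\infty^{\st}}=0$ for free; no argument about generators and type $(n+1)$ complexes is required.

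The non-circular route is an induction on $n$ using the telescopic fracture recollement, and your closing paragraph shows you already see the essential mechanism. One base-changes the recollement $L_{n-1}^{f}\Sp\subseteq L_{n}^{f}\Sp$ along $\CMon_{1}$ (\propref{Recollement_Base_Change}), so that $\CMon_{1}\otimes L_{n}^{f}\Sp$ is a recollement of $\CMon_{1}\otimes L_{n-1}^{f}\Sp\simeq\prod_{k<n}\Sp_{T(k)}$ (inductive hypothesis) and of $\CMon_{1}\otimes\Sp_{T(n)}\simeq\Sp_{T(n)}$. The whole theorem then reduces to showing the gluing functor $\Sp_{T(n)}\to\prod_{k<n}\Sp_{T(k)}$ vanishes; it is $1$-semiadditive as a composite of adjoints between $1$-semiadditive $\infty$-categories, and the one genuinely non-formal input is \corref{Tn_One_Semiadditivity}: every $1$-semiadditive functor $\Sp_{T(n)}\to\Sp_{T(k)}$ with $n>k$ is zero. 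That corollary is where the $\delta$-argument you allude to actually lives — one produces an element of $\mathcal{R}_{1}$ (a $\bb Z$-linear combination of cardinalities $|BG|$, manipulated via the additive $p$-derivation and the valuation estimates of \propref{Inv_Valuation}) with respect to which $\Sp_{T(n)}$ is divisible and $\Sp_{T(k)}$ is complete, and which every $1$-semiadditive functor must preserve. Your step (iv), by contrast, tries to use the $p$-derivation to pin down $\mathcal{C}_{k}$ as a localization "at the same idempotent", which is not where the rigidity enters and is not precise enough to close the argument.
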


In particular, we see that forcing $1$-semiadditivity on $L_{n}^{f}\Sp$
makes it automatically $\infty$-semiadditive. Noticing that both
sides of \thmref{Intro_Chrom_Decomp} are \emph{modes}, we can reinterpret
it in terms of the properties classified by them. Namely, that every
$1$-semiadditive stable presentable $\infty$-category whose mapping
spectra are $L_{n}^{f}$-local, is $\infty$-semiadditive. With some additional effort, we deduce from it the stronger statement of \thmref{Intro_Bounded_Bootstrap}. 

\subsection{Organization}

We shall now outline the content of each section of the paper.

In section 2, we recall and expand the theory of higher semiadditivity.
We discuss the notion of cardinality for a $\pi$-finite space in
a higher semiadditive $\infty$-category and the corresponding notion
of amenability. We then give several examples of these notions in
various higher semiadditive $\infty$-categories, and relate the notion
of amenability to the behavior of local systems, through
the notion of acyclicity. 

In section 3, we discuss the main notion of this paper, that of \emph{height}
in a higher semiadditive $\infty$-category, defined in terms of the
cardinalities of Eilenberg-MacLane spaces. We show that the higher
semiadditive structure trivializes above the height (\thmref{Intro_Bounded_Height})
and exhibits a redshift principle of increasing by one under categorification
(\thmref{Intro_Redshift}).

In section 4, we study semiadditivity and height for \emph{stable} $\infty$-categories.
After a general discussion on recollement, we show that a stable higher
semiadditive $\infty$-category splits as a product according to height
(\thmref{Intro_Decomposition}). We then study local systems valued
in a stable higher semiadditive $\infty$-category of height $n$ and show
how the notion of height is related to the phenomenon of semisimplicity
of local systems (\thmref{intro_Semisimple}). Finally, we use nil-conservative functors to show that semiadditive and chromatic height coincide for monochromatic localizations of spectra.

In section 5, we study the theory of modes, i.e. that of idempotent
algebras in the category of presentable $\infty$-categories. We show
how algebraic operations on modes, such as tensor product and localization,
translate into operations on the properties of presentable $\infty$-categories
classified by them. We then show that the main notions studied in
this paper, higher semiadditivity and height, together with the more
classical notion of chromatic height, are all encoded by modes (e.g.
\thmref{Intro_Tsadi_n}). Using this theory, we study the interaction
between the chromatic and the semiadditive heights through the interactions
between the corresponding modes. In particular, we prove \thmref{Intro_Chrom_Decomp}
and deduce from it \thmref{Intro_Bounded_Bootstrap}. 

\subsection{Conventions}

Throughout the paper, we work in the framework of $\infty$-categories
(a.k.a. quasicategories), and in general follow the notation of \cite{htt}
and \cite{ha}. We shall also use the following terminology and notation
most of which is consistent with \cite{Ambi2018}:
\begin{enumerate}
\item We slightly diverge from  \cite{htt} and \cite{ha} in the following points:
\begin{enumerate}
\item We use the term \emph{isomorphism} for an invertible morphism in an
$\infty$-category (i.e. equivalence).
\item We denote by $\mathcal{C}^{\simeq}\ss\mathcal{C}$ the maximal $\infty$-subgroupoid
of an $\infty$-category $\mathcal{C}$. 
\item We write $\Pr$ for the $\infty$-category of presentable $\infty$-categories
and colimit preserving functors denoted in \cite{htt} by $\Pr^{L}$.
\item We denote by $\acat_{\st} \subset \acat$  the subcategory spanned by stable $\infty$-categories and exact functors.
Similarly,  we denote by $\Pr_{\st} \subseteq \Pr$  the full subcategory spanned by stable presentable $\infty$-categories.
\end{enumerate}
\item We say that a space $A\in\mathcal{S}$ is 
\begin{enumerate}
\item $m$-finite for $m\ge-2$, if $m=-2$ and $A$ is contractible, or
$m\ge-1$, the set $\pi_{0}A$ is finite and all the fibers of the
diagonal map $\Delta\colon A\to A\times A$ are $(m-1)$-finite\footnote{For $m\ge0$, this is equivalent to $A$ having finitely many components,
each of them $m$-truncated with finite homotopy groups.}. 
\item $\pi$-finite or $\infty$-finite, if it is $m$-finite for some integer
$m\ge-2$. For $-2\le m\le\infty$, we denote by $\mathcal{S}_{\fin m}\ss\mathcal{S}$
the full subcategory spanned by $m$-finite spaces.
\item $p$-space, if all the homotopy groups of $A$ are $p$-groups.
\end{enumerate}
\item Given an $\infty$-category $\mathcal{C}\in\cat$,
\begin{enumerate}
\item For every map of spaces $A\oto qB$, we write $q^{*}\colon\mathcal{C}^{B}\to\mathcal{C}^{A}$
for the pullback functor and $q_{!}$ and $q_{*}$ for the left and
right adjoints of $q^{*}$ whenever they exist.
\item Whenever convenient we suppress the canonical equivalence of $\infty$-categories
$\mathcal{S}_{/\pt}\iso\mathcal{S}$ by identifying a space $A$ with
the terminal map $A\oto q\pt$. In particular, for every $\infty$-category
$\mathcal{C}$, we write $A^{*}$ for $q^{*}$ and similarly $A_{!}$
and $A_{*}$ for $q_{!}$ and $q_{*}$ whenever they exist.
\item For every $X\in\mathcal{C}$ we write $X[A]$ for $A_{!}A^{*}X$ and
denote the fold (i.e. counit) map by $X[A]\oto{\nabla}X$. Similarly,
we write $X^{A}$ for $A_{*}A^{*}X$ and denote the diagonal (i.e.
unit) map by $X\oto{\Delta}X^{A}$.
\end{enumerate}
\item Given a map of spaces $q\colon A\to B$, we denote for every $b\in B$,
the homotopy fiber of $q$ over $b$ by $q^{-1}(b)$. We say that 
\begin{enumerate}
\item an $\infty$-category $\mathcal{C}$ admits all $q$-limits (resp.
$q$-colimits) if it admits all limits (resp. colimits) of shape $q^{-1}(b)$
for all $b\in B$. 
\item a functor $F\colon\mathcal{C}\to\mathcal{D}$ preserves $q$-colimits
(resp. $q$-limits) if it preserves all colimits (resp. limits) of
shape $q^{-1}(b)$ for all $b\in B$.
\end{enumerate}
\item For every $-2\le m\le\infty$,
\begin{enumerate}
\item by $m$-finite (co)limits we mean (co)limits indexed by an $m$-finite space. 
\item We let $\acat_{m\text{-fin}}\subset\cat$ (resp. $\acat^{m\text{-fin}}\subset\cat$)
be the subcategory spanned by $\infty$-categories which admit $m$-finite
colimits (resp. limits) and functors preserving them. 
\item For $\mathcal{C},\mathcal{D}\in\acat_{m\text{-fin}}$ (resp. $\acat^{m\text{-fin}}$)
we write $\fun_{\fin m}(\mathcal{C},\mathcal{D})$ (resp. $\fun^{\fin m}(\mathcal{C},\mathcal{D})$)
for the full subcategory of $\fun(\mathcal{C},\mathcal{D})$ spanned
by the $m$-finite colimit (resp. limit) preserving functors.
\item We let $\acat^{\sad m} \subset \cat$ be the subcategory  spanned by the
$m$-semiadditive $\infty$-categories and $m$-semiadditive (i.e. $m$-finite colimit preserving) functors.
\item Given an $\infty$-operad $\mathcal{O}$, we say that $\mathcal{C}\in\alg_{\mathcal{O}}(\cat)$ is compatible with $\mathcal{K}$-indexed colimits for some collection of $\infty$-categories $\mathcal{K}$, if $\mathcal{C}$ admits $\mathcal{K}$-indexed colimits and every tensor operation $\otimes\colon\mathcal{C}^{n}\to\mathcal{C}$
of $\mathcal{O}$ preserves $\mathcal{K}$-indexed colimits in each variable. 

\item An $m$-semiadditively $\mathcal{O}$-monoidal
$\infty$-category is an $\mathcal{O}$-monoidal $m$-semiadditive
$\infty$-category which is compatible with $m$-finite colimits.  
\end{enumerate}
\item If $\mathcal{C}$ is a monoidal $\infty$-category and $\mathcal{D}$
is an $\infty$-category enriched in $\mathcal{C}$, we write $\hom_{\mathcal{D}}^{\mathcal{C}}(X,Y)$
for the $\mathcal{C}$-mapping object of $X,Y\in\mathcal{D}$. We
omit the subscript or superscript when they are understood from the
context. In particular, when $\mathcal{C}$ is closed, $\hom_{\mathcal{C}}(X,Y)$
means $\hom_{\mathcal{C}}^{\mathcal{C}}(X,Y)$. For every $\infty$-category $\mathcal{C}$ we have  $\hom_{\mathcal{C}}^{\mathcal{S}}(X,Y) = \map_{\mathcal{C}}(X,Y)$.
\end{enumerate}

\subsection{Acknowledgments}

We would like to thank Tobias Barthel, Clark Barwick,  Agn{\`e}s Beaudry,
Jeremy Hahn, Gijs Heuts, Mike Hopkins, and Tyler Lawson for useful discussions. We thank Allen Yuan for sharing with us his ideas regarding the topics of this paper. We also like to thank the entire Seminarak group, especially Shay Ben Moshe, for useful comments on the paper's first draft. 

The first author is supported by the Adams Fellowship Program of the Israel
Academy of Sciences and Humanities. 
The second author is supported by ISF1588/18 and BSF 2018389.  

\section{Semiadditivity}

In this section, we collect general facts regarding the notion of ambidexterity
and its implications. We begin by reviewing some background material
and most importantly (re)introduce the notion of \emph{cardinality}
for ambidextrous $\pi$-finite spaces. We provide a variety of examples
of cardinality in both the stable and the unstable settings, including those
of relevance to chromatic homotopy theory. Of particular importance are the \emph{amenable }spaces, whose cardinality is invertible. We
continue the study of such spaces, which we began in \cite[Section 3]{Ambi2018}, and
in particular, establish its implications for the behavior of certain
$\pi$-finite limits and colimits. In the next section, the amenability
of Eilenberg-MacLane spaces will play a central role in the definition
of ``semiadditive height'' for higher semiadditive $\infty$-categories,
which is the main subject of this paper. 

\subsection{Preliminaries}

In this subsection, we review some basic definitions and facts regarding
ambidexterity from \cite[Section 4]{HopkinsLurie}, cardinality from \cite{Ambi2018}, and higher
commutative monoids from  \cite[Section 5.2]{Harpaz}. This subsection serves mainly
to set up notation, terminology, and a convenient formulation of
fundamental results. 

\subsubsection{Ambidexterity}

Recall from \cite[Section 4.1]{HopkinsLurie} the definition of\emph{ }ambidexterity:
\begin{defn}
\label{def:Ambidextrous}Let $\mathcal{C}\in\cat$. A $\pi$-finite
map $A\oto qB$ is called:
\begin{enumerate}
\item \textbf{weakly} $\mathcal{C}$\textbf{-ambidextrous} if it is an isomorphism,
or $A\oto{\Delta_{q}}A\times_{B}A$ is $\mathcal{C}$-ambidextrous.
\item $\mathcal{C}$\textbf{-ambidextrous} if it is weakly $\mathcal{C}$-ambidextrous,
$\mathcal{C}$ admits all $q$-limits and $q$-colimits and the the
norm map $q_{!}\oto{\nm_{q}}q_{*}$ is an isomorphism. 
\end{enumerate}
\end{defn}

\defref{Ambidextrous} should be understood inductively on the level
of truncatedness of $q$. A $(-2)$-finite map, i.e. an isomorphism,
is always $\mathcal{C}$-ambidextrous. If $q$ is $m$-finite, then
the diagonal map 
\[
A\oto{\Delta_{q}}A\times_{B}A
\]
is $(m-1)$-finite and the ambidexterity of $\Delta_{q}$ allows in
turn the definition of $\nm_{q}$ by \cite[Construction 4.1.8]{{HopkinsLurie}} (see also \cite[Definition  3.1.3]{Ambi2018}). 
\begin{rem}
\label{rem:Ambi_Space}A map $A\oto qB$ is $\mathcal{C}$-ambidextrous
if and only if all the fibers of $q$ are $\mathcal{C}$-ambidextrous
spaces \cite[Corollary 4.2.6(2), Corollary 4.3.6]{HopkinsLurie}. Moreover, the fibers of the diagonal $A\to A\times A$
are the path spaces of $A$. In other words, $A$ is weakly $\mathcal{C}$-ambidextrous
if and only if the path spaces of $A$ are $\mathcal{C}$-ambidextrous.
Thus, $\mathcal{C}$-ambidexterity is ultimately a property of \emph{spaces}.
\end{rem}

By \cite{HopkinsLurie}, the property of ambidexterity has the following useful
characterization, which avoids the explicit inductive construction
of the norm map:
\begin{prop}
\label{prop:Ambi_Criterion}Let $\mathcal{C}$ be an $\infty$-category
and let $A\oto qB$ be a $\pi$-finite map. The map $q$ is $\mathcal{C}$-ambidextrous
if and only if the following hold:
\begin{enumerate}
\item $q$ is weakly $\mathcal{C}$-ambidextrous.
\item $\mathcal{C}$ admits all $q$-limits and $q$-colimits.
\item Either $q_{*}$ preserves all $q$-colimits or $q_{!}$ preserves
all $q$-limits.
\end{enumerate}
\end{prop}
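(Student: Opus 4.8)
The plan is to prove both directions of the equivalence, reducing to the inductive construction of the norm map. The nontrivial content is the "if" direction: assuming (1), (2), and (3), we must produce an honest isomorphism $\nm_q \colon q_! \iso q_*$. The "only if" direction is essentially immediate: if $q$ is $\mathcal{C}$-ambidextrous then by definition it is weakly $\mathcal{C}$-ambidextrous and $\mathcal{C}$ admits all $q$-limits and $q$-colimits, and since $\nm_q$ is an isomorphism, $q_!$ is both a left and a right adjoint to $q^*$; hence $q_!$ preserves all limits (in particular $q$-limits) and $q_*$ preserves all colimits (in particular $q$-colimits), so (3) holds in its strong form.

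For the "if" direction, I would argue by induction on the truncation level $m$ of the $\pi$-finite map $q$, as is forced by the inductive nature of \defref{Ambidextrous}. The base case $m=-2$ (where $q$ is an isomorphism) is trivial. For the inductive step: condition (1) says precisely that the diagonal $\Delta_q \colon A \to A \times_B A$ is $\mathcal{C}$-ambidextrous, which is what is needed to even define the norm map $\nm_q$ via \cite[Construction 4.1.8]{HopkinsLurie}. So the map $\nm_q \colon q_! \to q_*$ exists; the issue is its invertibility. Here I would invoke the relevant results of \cite{HopkinsLurie} (in the vicinity of \cite[Proposition 4.4.16, Theorem 4.4.21]{HopkinsLurie}): once the norm map is defined, there is a criterion stating that $\nm_q$ is an isomorphism as soon as one of the two adjoint-exactness conditions in (3) holds — concretely, that $q_*$ preserves $q$-indexed colimits, or dually that $q_!$ preserves $q$-indexed limits. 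The point is that $\nm_q$ can be built as a composite/zig-zag involving the counit and unit of the $(q_!, q^*)$ and $(q^*, q_*)$ adjunctions together with the (already invertible, by the weak ambidexterity hypothesis) norm map of $\Delta_q$; checking that this composite is invertible amounts to checking a base-change/projection-formula type statement, and that statement follows once the appropriate functor commutes with the appropriate class of $q$-(co)limits.

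The main obstacle — really the only one — is the bookkeeping that translates condition (3) into invertibility of $\nm_q$ via the machinery of \cite[Section 4.4]{HopkinsLurie}: one must match up our hypotheses with the precise exactness conditions appearing there, and verify that "weakly $\mathcal{C}$-ambidextrous" supplies exactly the input (ambidexterity of all lower-dimensional diagonals, hence a well-defined and suitably compatible norm for $\Delta_q$) needed to run that argument. I expect no genuinely new idea is required beyond \cite{HopkinsLurie}; the proposition is a clean repackaging, and the proof should be a short citation-driven induction. One should also take care, in condition (2), to note that admitting $q$-limits and $q$-colimits for the $\pi$-finite map $q$ automatically entails the same for the lower-truncated diagonals appearing in the induction (their fibers are path spaces, which are pullbacks of fibers of $q$), so the inductive hypothesis applies without extra assumptions.
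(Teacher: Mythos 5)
Your proposal is correct and takes essentially the same approach as the paper: the ``only if'' direction is formal, and the ``if'' direction reduces to the Hopkins--Lurie criterion that a weakly $\mathcal{C}$-ambidextrous map is $\mathcal{C}$-ambidextrous once $q_*$ preserves the relevant colimits --- the paper simply reduces to $B=\pt$ via the fiberwise nature of ambidexterity and cites \cite[Proposition 4.3.9 and Remark 4.3.10]{HopkinsLurie}, noting that those proofs use only $q$-(co)limits rather than all small ones. The one small correction is that the relevant results live in Section 4.3 of \cite{HopkinsLurie}, not in the vicinity of Proposition 4.4.16.
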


\begin{proof}
By \remref{Ambi_Space}, we may assume that $B=\pt$, in which case
it is essentially \cite[Proposition 4.3.9]{HopkinsLurie} and its dual \cite[Remark 4.3.10]{HopkinsLurie}. We note that while
the claim in \cite{HopkinsLurie} is stated under the stronger assumption that
$\mathcal{C}$ admits, and $q_{*}$ (resp. $q_{!}$) preserves, all
small colimits (resp. limits), the proof uses only $q$-colimits (resp.
$q$-limits).
\end{proof}
As a consequence, we can easily deduce that ambidexterity enjoys the
following closure properties with respect to the $\infty$-category:
\begin{prop}
\label{prop:Ambi_Closure}Let $\mathcal{C}$ be an $\infty$-category
and let $A$ be a $\pi$-finite $\mathcal{C}$-ambidextrous space. The space
$A$ is also $\mathcal{D}$-ambidextrous for:
\begin{enumerate}
\item $\mathcal{D}=\mathcal{C}^{\op}$ the opposite $\infty$-category of
$\mathcal{C}$.
\item $\mathcal{D}=\fun(\mathcal{I},\mathcal{C})$ for an $\infty$-category $\mathcal{I}$.
\item $\mathcal{D}\ss\mathcal{C}$ containing the final object and closed
under $\Omega_{a}^{k}A$-limits for all $a\in A$ and $k\ge0$.
\item $\mathcal{D}\ss\mathcal{C}$ containing the initial object and closed
under $\Omega_{a}^{k}A$-colimits for all $a\in A$ and $k\ge0$.
\end{enumerate}
\end{prop}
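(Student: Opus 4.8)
The plan is to deduce all four statements from the characterization of ambidexterity in \propref{Ambi_Criterion}, together with \remref{Ambi_Space}, which reduces everything to the case $B = \pt$ and shows ambidexterity is a property of spaces built inductively on truncatedness. I would run a single induction on the truncatedness level $m$ of the $\pi$-finite space $A$: the base case $m = -2$ (contractible $A$) is trivial since every $\infty$-category under consideration has the relevant (empty-diagram) limits and colimits and the norm map is an isomorphism. For the inductive step, I would verify the three conditions of \propref{Ambi_Criterion} for $A$ in $\mathcal{D}$, using that the path spaces $\Omega_a A$ of $A$ are $(m-1)$-truncated and hence, by \remref{Ambi_Space} and the inductive hypothesis, are $\mathcal{D}$-ambidextrous — this handles condition (1), weak ambidexterity. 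Condition (2), that $\mathcal{D}$ admits $A$-limits and $A$-colimits, and condition (3), that $A_*$ preserves $A$-colimits or $A_!$ preserves $A$-limits, are where the four cases diverge.

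For (1), $\mathcal{D} = \mathcal{C}^{\op}$: here $A$-limits in $\mathcal{C}^{\op}$ are $A$-colimits in $\mathcal{C}$ and vice versa, and the norm map for $\mathcal{C}^{\op}$ is identified with the norm map for $\mathcal{C}$ (this is the self-duality of the ambidexterity setup already implicit in \cite{HopkinsLurie}); so ambidexterity of $A$ in $\mathcal{C}$ transports directly. For (2), $\mathcal{D} = \fun(\mathcal{I}, \mathcal{C})$: limits and colimits in a functor category are computed pointwise, so $\mathcal{D}$ admits $A$-limits and $A$-colimits, and the norm map in $\mathcal{D}$ is a pointwise natural transformation whose value at each $i \in \mathcal{I}$ is the norm map of $A$ in $\mathcal{C}$; being a pointwise isomorphism, it is an isomorphism. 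For (3) and (4), which are dual to each other (and (4) follows from (3) applied to $\mathcal{C}^{\op}$ via (1)), I would argue as follows for (3): the hypothesis that $\mathcal{D} \subseteq \mathcal{C}$ contains the final object and is closed under $\Omega_a^k A$-limits for all $a, k$ guarantees — again by induction on truncatedness — that $\mathcal{D}$ admits $A$-limits (and $A'$-limits for every path space $A'$ of $A$, needed to even form the norm map inductively). One then checks that for $X \in \mathcal{D}$ the limit $A_* A^* X$ computed in $\mathcal{C}$ lands in $\mathcal{D}$ and agrees with the limit computed in $\mathcal{D}$, so the inclusion $\mathcal{D} \hookrightarrow \mathcal{C}$ commutes with the relevant $A$-limits; since $A$ is $\mathcal{C}$-ambidextrous, $A_!$ in $\mathcal{C}$ preserves $A$-limits, and restricting to $\mathcal{D}$ gives that $A_!$ preserves $A$-limits in $\mathcal{D}$, which is condition (3) of \propref{Ambi_Criterion} for $\mathcal{D}$.

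The main subtlety — the step I expect to require the most care — is the bookkeeping in cases (3) and (4): one must track not only $A$-limits but all iterated path-space limits $\Omega_a^k A$ that enter the inductive construction of the norm map, and verify that at each stage the relevant (co)limits computed in $\mathcal{C}$ stay inside $\mathcal{D}$ and compute the (co)limits of $\mathcal{D}$, so that the norm map of $\mathcal{D}$ is literally the restriction of the norm map of $\mathcal{C}$. The closure hypotheses on $\mathcal{D}$ are tailored exactly so that this induction goes through: closure under $\Omega_a^k A$-limits for \emph{all} $k \ge 0$ is precisely what is needed to build $\nm_A$ for $\mathcal{D}$ from $\nm_{\Omega_a A}$, and so on down. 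Once this is set up correctly, the isomorphism claim is immediate because an isomorphism in $\mathcal{C}$ between two objects lying in the full subcategory $\mathcal{D}$ is an isomorphism in $\mathcal{D}$. I would present (1) and (2) first as warm-ups, then (3) with the full inductive argument, and finally note (4) as the dual of (3) via (1).
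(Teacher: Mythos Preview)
Your proposal is correct and follows essentially the same approach as the paper: induction on the truncation level of $A$, reducing (4) to (3) via (1), and verifying the hypotheses of \propref{Ambi_Criterion} in each case. For case (3) the paper's phrasing is slightly cleaner than yours: rather than tracking $A_!$ separately, it simply observes that since $A$ is $\mathcal{C}$-ambidextrous, $A$-limits and $A$-colimits in $\mathcal{C}$ coincide, so closure of $\mathcal{D}$ under $A$-limits automatically gives closure under $A$-colimits (with both computed as in $\mathcal{C}$), from which all three conditions of \propref{Ambi_Criterion} follow at once---this fills the one step you left implicit, namely why $\mathcal{D}$ admits $A$-colimits at all.
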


\begin{proof}
First, (4) follows from (3) and (1), so it suffices to consider (1)-(3).
In all cases, we proceed by induction on $m$, so we may assume by
induction that $A$ is weakly $\mathcal{D}$-ambidextrous. By \propref{Ambi_Criterion},
it suffices to verify that $A$-limits and $A$-colimit exist in $\mathcal{D}$
and that the functor $A_{*}$ preserves $A$-colimits or that $A_{!}$
preserves $A$-limits. For (1), the claim follows from the fact that
limits in $\mathcal{C}^{\op}$ are computed as colimits in $\mathcal{C}$
and vice versa. For (2), we use the fact that limits and colimits
in $\fun(\mathcal{I},\mathcal{C})$ are computed pointwise. For (3), since $A$-limits
in $\mathcal{C}$ coincide with $A$-colimits in $\mathcal{C}$, it
follows that $A$-limits and $A$-colimits are computed in $\mathcal{D}$
in the same way as in $\mathcal{C}$.
\end{proof}

\subsubsection{Cardinality}

The main feature of ambidexterity is that it allows us to integrate
families of morphisms in $\mathcal{C}$. That is, given a $\mathcal{C}$-ambidextrous
map $A\oto qB$ and $X,Y\in\mathcal{C}^{B}$ we have a map (see \cite[Definition 2.1.11]{Ambi2018})
\[
\int_{q}\colon\map_{\mathcal{C}}(q^{*}X,q^{*}Y)\to\map_{\mathcal{C}}(X,Y).
\]
When $B=\pt$, we can think of an element of $\map_{\mathcal{C}}(q^{*}X,q^{*}Y)$
as a map $A\oto f\map_{\mathcal{C}}(X,Y)$, and of $\int_{A}f\in\map_{\mathcal{C}}(X,Y)$
as the sum of $f$ over the points of $A$. In particular, we can
integrate the \emph{identity} morphism:
\begin{defn}
\label{def:Cardinality}Let $\mathcal{C}\in\cat$ and let $A\oto qB$
be a $\mathcal{C}$-ambidextrous map. We have a natural transformation
$\Id_{\mathcal{C}^{B}}\oto{|q|_{\mathcal{C}}}\Id_{\mathcal{C}^{B}}$
given by the composition 
\[
\Id_{\mathcal{C}^{B}}\oto{u_{*}}q_{*}q^{*}\oto{\nm_{q}^{-1}}q_{!}q^{*}\oto{c_{!}}\Id_{\mathcal{C}^{B}}.
\]
For a $\mathcal{C}$-ambidextrous space $A$, we write $\Id_{\mathcal{C}}\oto{|A|_{\mathcal{C}}}\Id_{\mathcal{C}}$
and call $|A|_{\mathcal{C}}$ the $\mathcal{C}$\textbf{-cardinality}
of $A$.
\end{defn}

The name ``\emph{$\mathcal{C}$}-cardinality''\emph{ }can be explained
as follows. For a given object $X\in\mathcal{C}$, the map $X\oto{|A|_{X}}X$
equals $\int_{A}\Id_{X}$. Thus, we think of $|A|_{X}$ as the sum
of the identity of $X$ with itself ``$A$ times''. Or in other
words, as the result of multiplying by the ``cardinality of $A$''
on $X$. The basic example which motivates the terminology and notation
is the following:
\begin{example}
\label{exa:Finite_Cardinality}Let $\mathcal{C}$ be a semiadditive
$\infty$-category. For a finite set $A$, viewed as a $0$-finite
space, the operation $|A|_{\mathcal{C}}$ is simply the multiplication
by the natural number, which is the cardinality of $A$ in the usual sense. 
\end{example}

\begin{rem}
\label{rem:Cardinality_Fiberwise}For a general $\mathcal{C}$-ambidextrous
map $A\oto qB$, the transformation $|q|_{\mathcal{C}}$ can be understood
as follows. Let $B\oto X\mathcal{C}$ be a $B$-family of objects
in $\mathcal{C}$. For each $b\in B$, let us denote the fiber of
$q$ at $b$ by $A_{b}$ and the evaluation of $X$ at $b$ by $X_{b}$.
By \cite[Proposition 3.1.13]{Ambi2018}, the map $X\oto{|q|_{\mathcal{C}}}X$ acts on each $X_{b}$
by $|A_{b}|_{\mathcal{C}}$. In other words, $|q|_{\mathcal{C}}$
is just the $B$-family of $\mathcal{C}$-cardinalities of the $B$-family
of spaces $A_{b}$.
\end{rem}

For a $\mathcal{C}$-ambidextrous space $A$, the $A$-limits and
$A$-colimits in $\mathcal{C}$ are canonically isomorphic. This can
be used to show the following:
\begin{prop}
\label{prop:A_Semiadd}Let $\mathcal{C},\mathcal{D}\in\cat$ and let
$A$ be a $\mathcal{C}$- and $\mathcal{D}$-ambidextrous space. A
functor $F\colon\mathcal{C}\to\mathcal{D}$ preserves all $A$-limits
if and only if it preserves all $A$-colimits. Moreover, if $F$ preserves
all $A$-(co)limits, then $F(|A|_{\mathcal{C}})=|A|_{\mathcal{D}}.$
\end{prop}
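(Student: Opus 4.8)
The plan is to prove both assertions by exploiting the identification of $A$-limits with $A$-colimits in an ambidextrous setting. First I would address the equivalence of preservation conditions. Since $A$ is $\mathcal{C}$-ambidextrous, the norm map $\nm_A\colon A_!A^* \iso A_*A^*$ is an isomorphism, and similarly for $\mathcal{D}$. So a diagram $A \to \mathcal{C}$ has its limit and colimit canonically identified (up to the norm), and likewise in $\mathcal{D}$. The key point is to check that under a functor $F$, the colimit-preservation square and the limit-preservation square correspond to each other via these norm isomorphisms. Concretely, for $X \in \mathcal{C}^A$ one has a comparison map $\mathrm{colim}_A F(X) \to F(\mathrm{colim}_A X)$ and a comparison map $F(\mathrm{lim}_A X) \to \mathrm{lim}_A F(X)$; I would show that conjugating one by $\nm_A$ (in $\mathcal{C}$) and $\nm_A$ (in $\mathcal{D}$) yields the other. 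For this, the cleanest route is to recall that $F$ need not a priori be compatible with the norm, but that the comparison maps are always defined from the unit/counit of the relevant adjunctions, and that the norm map is itself assembled out of these same units and counits (by \cite[Construction 4.1.8]{HopkinsLurie}). Tracing through the (inductive) definition of $\nm_A$, one sees that the colimit-comparison map is an isomorphism if and only if the limit-comparison map is, because each is obtained from the other by pre- and post-composing with the (iso)morphisms $\nm_A$.

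More carefully, I would proceed by induction on the truncatedness of $A$, in parallel with the inductive definition of the norm. The base case is $A$ a $(-2)$-finite space, i.e.\ contractible, where there is nothing to prove. For the inductive step, write $\nm_A$ in terms of the ambidexterity of the diagonal $\Delta_A\colon A \to A \times A$, whose fibers are the loop spaces $\Omega_a A$, which are $(m-1)$-finite and hence covered by the inductive hypothesis. The statement that $F$ preserves $A$-colimits is equivalent, via the Beck--Chevalley / base-change formalism for the correspondence $A \to \pt$, to a statement about $F$ commuting with $A_!$; dually for $A_*$. Since $A_! \simeq A_*$ via $\nm_A$ (an isomorphism built from lower-dimensional data on which $F$ is, by induction, already compatible), the two preservation statements are equivalent.

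For the second assertion, that $F(|A|_{\mathcal{C}}) = |A|_{\mathcal{D}}$ when $F$ preserves $A$-(co)limits: recall from \defref{Cardinality} that $|A|_{\mathcal{C}}$ is the composite $\Id \xrightarrow{u_*} A_*A^* \xrightarrow{\nm_A^{-1}} A_!A^* \xrightarrow{c_!} \Id$, where $u_*$ is a unit and $c_!$ a counit. Applying $F$ takes units to the comparison maps and counits to comparison maps; the hypothesis that $F$ preserves $A$-colimits (equivalently $A$-limits) says precisely that $F(A_!A^*X) \simeq A_!A^*F(X)$ and $F(A_*A^*X) \simeq A_*A^*F(X)$ compatibly with unit and counit. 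The only subtle point is that $F$ carries $\nm_A^{\mathcal{C}}$ to $\nm_A^{\mathcal{D}}$ under these identifications — but this is exactly the inductive compatibility established in the first part of the proof, since $\nm_A$ is assembled from lower-dimensional norms and the units/counits, all of which $F$ respects. Granting that, $F(|A|_{\mathcal{C}})$ is visibly the composite defining $|A|_{\mathcal{D}}$.

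The main obstacle I anticipate is the bookkeeping in showing that $F$ is automatically compatible with the norm maps up to the prescribed coherences — this is the heart of the matter and the place where one must genuinely unwind the inductive construction of $\nm_A$ rather than treat it as a black box. Everything else (the equivalence of the two preservation conditions, and the final computation with $|A|$) follows formally once this compatibility is in hand. I would expect the cleanest exposition to isolate a lemma: \emph{if $F$ preserves $\Omega_a^k A$-(co)limits for all $a$ and all $k \ge 0$, then $F$ intertwines $\nm_A^{\mathcal{C}}$ and $\nm_A^{\mathcal{D}}$}, proved by induction, and then deduce both halves of the proposition from it.
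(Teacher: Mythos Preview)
Your overall strategy—reduce everything to showing that $F$ intertwines the norm maps $\nm_A^{\mathcal{C}}$ and $\nm_A^{\mathcal{D}}$, then read off both conclusions—is exactly the right idea, and it is essentially what the cited results \cite[Corollary 3.2.4, Corollary 3.2.7]{Ambi2018} do. The paper's own proof is simply a pointer to those corollaries together with the remark that the arguments go through for an individual space $A$; so you are reconstructing the content of the citation rather than doing something different.

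There is, however, a genuine mismatch between the hypothesis of the proposition and the hypothesis of the lemma you propose to isolate. The proposition assumes only that $F$ preserves $A$-colimits (colimits of shape $A$), whereas your lemma assumes $F$ preserves $\Omega_a^k A$-colimits for all $a$ and $k$. These are not equivalent: for instance, if $A=BG$, preserving $BG$-colimits says nothing directly about preserving $|G|$-fold coproducts. Your inductive step, which unwinds $\nm_A$ via the ambidexterity of $\Delta_A$, needs exactly this loop-space input to apply the inductive hypothesis—so as written the induction does not close from the proposition's hypothesis alone.

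Two ways to repair this. First, you can check whether the arguments in \cite[\S3.2]{Ambi2018} really do localize to a single $A$ without any loop-space assumption; the paper asserts they do, and this is the route it takes. Second, if you want a genuinely self-contained argument, you should look for a \emph{non-inductive} proof of norm compatibility: the norm $\nm_A$, once $A$ is ambidextrous, is a fixed isomorphism $A_! \iso A_*$, and one can try to argue directly that transporting the colimit Beck--Chevalley map along $\nm_A^{\mathcal{C}}$ and $\nm_A^{\mathcal{D}}$ yields the limit Beck--Chevalley map, using only the adjunction data at level $A$ (not the inductive construction of $\nm_A$). Either way, you have correctly identified the crux; the remaining work is to pin down which hypothesis is actually needed.
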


\begin{proof}
\cite[Corollary 3.2.4]{Ambi2018}  shows that $F$ preserves $m$-finite limits if and only
if it preserves $m$-finite colimits and \cite[Corollary 3.2.7]{Ambi2018} shows that in this
case $F(|A|_{\mathcal{C}})=|A|_{\mathcal{D}}$ for every $m$-finite
space $A$. One easily checks that all the arguments are valid for an individual
space $A$ as well. 
\end{proof}
The $\mathcal{C}$-cardinality is additive in the following sense:
\begin{prop}
\label{prop:Cardinality_Additivity}Let $\mathcal{C}\in\cat$ and
$A\oto qB$ be a map of spaces. If $B$ and $q$ are $\mathcal{C}$-ambidextrous
then $A$ is $\mathcal{C}$-ambidextrous and for every $X \in \mathcal{C}$,
\[
|A|_{X}=\int_{B}|q|_{B^*X}.
\]
\end{prop}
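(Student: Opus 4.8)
The plan is to verify the three assertions of the proposition in order: first that $A$ is $\mathcal{C}$-ambidextrous, then the cardinality formula. For ambidexterity, I would use \propref{Ambi_Criterion}, so it suffices to check that $A$ is weakly $\mathcal{C}$-ambidextrous, that $\mathcal{C}$ admits $A$-limits and $A$-colimits, and that one of the two functors $A_*$, $A_!$ preserves the relevant (co)limits. The existence of $A$-limits and $A$-colimits is inherited from the corresponding statements for $q$ and for $B$ by factoring the terminal map $A \to \pt$ through $A \oto{q} B \to \pt$ and composing adjoints: $A_! = B_! \circ q_!$ and $A_* = B_* \circ q_*$. Weak ambidexterity of $A$ means, by \remref{Ambi_Space}, that all path spaces of $A$ are $\mathcal{C}$-ambidextrous; since the path space $\Omega_a A$ fits into a fiber sequence over the path space of $B$ with fiber a path space of a fiber of $q$, this follows inductively from the (weak) ambidexterity hypotheses on $B$ and $q$ — this is exactly the kind of composability already implicit in \defref{Ambidextrous}. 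Finally, $A_! q$-limit preservation (or $A_*$ colimit preservation) follows from the composite description $A_* = B_* q_*$ together with \propref{A_Semiadd} applied to $q$ and the assumption on $B$.

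Once $A$ is known to be $\mathcal{C}$-ambidextrous, the heart of the argument is the cardinality identity. The cleanest route is to unwind \defref{Cardinality} for $|A|$ as the composite
\[
\Id_{\mathcal{C}} \oto{u_*} A_* A^* \oto{\nm_A^{-1}} A_! A^* \oto{c_!} \Id_{\mathcal{C}},
\]
and then factor each of the three constituent natural transformations through the intermediate functors $B_* B^*$ and $B_! B^*$ using $A^* = q^* B^*$, $A_* = B_* q_*$, $A_! = B_! q_!$. The unit $u_*\colon \Id \to A_* A^*$ decomposes as $\Id \oto{} B_* B^* \oto{B_*(u_*^q) B^*} B_* q_* q^* B^*$, and dually for the counit. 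The subtle point is the norm map: one must know that $\nm_A = B_*(\nm_q) \circ (\nm_B \text{ whiskered appropriately})$, i.e. that the norm for a composite factors as the composite of norms. This compatibility of norm maps with composition is presumably available from \cite{HopkinsLurie} (it is the statement underlying the additivity of the whole ambidexterity formalism), or can be cited from \cite{Ambi2018}; I would invoke it. Granting it, the composite defining $|A|$ rearranges into $B_*$ applied (fiberwise, in the sense of \remref{Cardinality_Fiberwise}) to the cardinality transformation $|q|$ for the family $q$, sandwiched by the unit and counit for $B$ — which is precisely the definition of $\int_B$ applied to the morphism $|q|_{B^*X}\colon B^*X \to B^*X$. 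That is the asserted formula $|A|_X = \int_B |q|_{B^*X}$.

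The main obstacle I anticipate is the bookkeeping around the norm map's behavior under composition of $\pi$-finite maps: the norm $\nm_q$ is defined by an intricate induction on truncatedness (\cite[Construction 4.1.8]{HopkinsLurie}), and proving $\nm_{B \circ q} \simeq \nm_B * \nm_q$ directly from the construction would be painful. I would therefore lean on the already-established machinery — either an explicit composability lemma for norms in \cite{HopkinsLurie}/\cite{Ambi2018}, or, alternatively, sidestep it entirely by working with the integration operator $\int$ directly: \cite[Definition 2.1.11]{Ambi2018} presents $\int_q$ as a map on mapping spaces, and "Fubini"-type functoriality $\int_{B\circ q} = \int_B \circ\, \int_q$ (applied to the identity, and using \remref{Cardinality_Fiberwise} to identify $\int_q \Id = |q|$) may already be recorded there, giving the formula immediately. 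Once that compatibility is in hand, everything else is formal manipulation of adjunction units and counits and should be routine.
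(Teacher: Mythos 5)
Your proposal is correct, and the route you relegate to a fallback at the very end is precisely the paper's entire proof: the paper simply invokes the Higher Fubini Theorem (\cite[Proposition 2.1.15]{Ambi2018}), i.e.\ $\int_{B\circ q}=\int_{B}\circ\int_{q}$, applied to the identity morphism of $X$, which yields $|A|_{X}=\int_{A}\Id_{X}=\int_{B}\int_{q}\Id_{B^{*}X}=\int_{B}|q|_{B^{*}X}$ in one line (with the ambidexterity of $A$ being part of the standard closure of ambidextrous maps under composition from \cite{HopkinsLurie}, which Fubini presupposes). Your primary route --- unwinding $u_{*}$, $\nm_{A}^{-1}$, $c_{!}$ through the factorizations $A_{!}=B_{!}q_{!}$, $A_{*}=B_{*}q_{*}$ and the composability of norms --- is exactly the content that the Fubini theorem packages, so it would work given the cited compatibility $\nm_{B\circ q}\simeq\nm_{B}*\nm_{q}$, but it re-proves machinery that is already available; the paper buys brevity by citing it, while your explicit version buys nothing extra here beyond making the bookkeeping visible.
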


Informally, \propref{Cardinality_Additivity} says that the cardinality
of the total space $A$ is the ``sum over $B$'' of the cardinalities
of the fibers $A_{b}$ of $q$. 
\begin{proof}
This follows from the Higher Fubini's Theorem (\cite[Propostion 2.1.15]{Ambi2018}) applied
to the identity morphism. 
\end{proof}
\begin{rem}
\label{rem:Cardinality_Arithmetic}By \propref{Cardinality_Additivity}
and \remref{Cardinality_Fiberwise}, for every pair of $\mathcal{C}$-ambidextrous
spaces $A$ and $B$ we have 
\[
|A\times B|_{\mathcal{C}}=|A|_{\mathcal{C}}|B|_{\mathcal{C}} \quad \in \quad \End(\Id_{\mathcal{C}}).
\]
Similarly, by \cite[Remark 4.4.11]{HopkinsLurie}, for every $\mathcal{C}$-ambidextrous
space $A$ we have 
\[
|A|_{\mathcal{C}}=\sum_{a\in\pi_{0}A}|A_{a}|_{\mathcal{C}} \quad \in \quad \End(\Id_{\mathcal{C}}) .
\]
\end{rem}

The various naturality properties enjoyed by the operations $|A|_{\mathcal{C}}$
allow for useful abuses of notation:
\begin{enumerate}
\item Given an $A$-colimit preserving functor $F\colon\mathcal{C}\to\mathcal{D}$,
if $A$ is $\mathcal{C}$- and $\mathcal{D}$-ambidextrous, we get
by \propref{A_Semiadd} that $F(|A|_{\mathcal{C}})=|A|_{\mathcal{D}}$.
We therefore write $|A|$, dropping the subscript $\mathcal{C}$,
whenever convenient. As a special case, let $\mathcal{C}_{\circ}\ss\mathcal{C}$
be a full subcategory which is closed under $A$-(co)limits. The cardinality $|A|_{\mathcal{C}_{\circ}}$ coincides with the restriction of $|A|_{\mathcal{C}}$ to $\mathcal{C}_{\circ}$.

\item When $\mathcal{C}$ is monoidal and the tensor product preserves $A$-colimits
in each variable, the action of $|A|$ on any object $X\in\mathcal{C}$
can be identified with tensoring $\one\oto{|A|}\one$ with $X$ (see
\cite[Lemma 3.3.4]{Ambi2018}). We therefore sometimes identify $|A|$ with an element
of $\pi_{0}\one\coloneqq\pi_{0}\map(\one,\one)$. 
\item Furthermore, for $R\in\alg(\mathcal{C})$, the map $R\oto{|A|}R$
can be also identified with multiplication by the image of $|A|\in\pi_{0}\one$
under the map $\pi_{0}\one\to\pi_{0}R$, which we also denote by $|A|$. 
\end{enumerate}
All these abuses of notation are compatible with standard conventions
when $A$ is a finite set (see \exaref{Finite_Cardinality}). 

\subsubsection{Higher commutative monoids}

Of particular interest are $m$\textbf{-semiadditive} $\infty$-categories,
i.e. those for which all $m$-finite spaces are ambidextrous. For
$m=0$, we recover the ordinary notion of a semiadditive $\infty$-category.
The central feature of semiadditive $\infty$-categories is the existence
of a canonical summation operation on their spaces of morphisms, endowing
them with a commutative monoid structure. In  \cite[Section  5.2]{Harpaz}, an analogous theory of $m$-commutative monoids is developed and applied
to the study of $m$-semiadditivity for all $-2 \leq m < \infty$. In this section, we recall from \cite{Harpaz} a part of this theory of higher commutative monoids and extend it to the case $m=\infty$.

\begin{defn}{\cite[Definition  5.10, Proposition 5.14]{Harpaz}}\label{def:cmon_fin}
Let $-2 \leq m < \infty $, for $\mathcal{C}\in\acat^{\fin m}$, the $\infty$-category of $m$\textbf{-commutative}
\textbf{monoids} in $\mathcal{C}$ is given by
\[
\CMon_{m}(\mathcal{C})\coloneqq\fun^{m\text{-fin}}(\span(\mathcal{S}_{\fin m})^{\op},\mathcal{C}).
\]
In the case $\mathcal{C}=\mathcal{S}$, we simply write $\CMon_{m}$ and refer
to its objects as $m$-commutative monoids\footnote{For $m=0$, one indeed recovers the usual notion of a commutative
(i.e. $\bb E_{\infty}$) monoids in spaces by comparison with Segal
objects \cite[Section 2.4.2]{ha}.}.
\end{defn}
In the case $m=-2$, evaluating at $\pt$, the unique object of $\span(\mathcal{S}_{\fin {(-2)}})$, gives an equivalence  $\CMon_{-2}(\mathcal{C}) \simeq \mathcal{C}$.

\begin{rem}
We can understand the definition of $\CMon_{m}$ as follows. An object
$X\in\CMon_{m}$ consists of the ``underlying space'' $X(\pt)$,
together with a collection of coherent operations for summation of
$m$-finite families of points in it. Indeed, for every $A\in\mathcal{S}_{\fin m}$,
there is a canonical equivalence $X(A)\simeq X(\pt)^{A}$. Given $A\to B$
in $\mathcal{S}_{\fin m}$, the ``right way'' map $X^{B}\to X^{A}$
is given simply by restriction, while the ``wrong way'' map $X^{A}\to X^{B}$
encodes integration along the fibers. The functoriality with respect
to composition of spans encodes the coherent associativity and commutativity
of these integration operations.
\end{rem}

Higher commutative monoids of different levels are related by ``forgetful functors''.
\begin{prop}\label{prop:Rm_CMon}
Let $-2 \leq m < \infty $ and let  $\mathcal{C}\in\acat^{\fin {(m+1)}}$.
 The restriction along the inclusion functor $$\iota_m\colon\span(\mathcal{S}_{\fin m}) \hookrightarrow \span(\mathcal{S}_{\fin {(m+1)}})$$ induces a limit preserving  functor 
$$\iota_m^*\colon \CMon_{m+1}(\mathcal{C}) \to \CMon_{m}(\mathcal{C}).$$
\end{prop}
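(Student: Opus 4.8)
The plan is to verify directly that restriction along the fully faithful functor $\iota_m\colon\span(\mathcal{S}_{\fin m})\into\span(\mathcal{S}_{\fin{(m+1)}})$ carries $(m+1)$-finite-limit-preserving functors to $m$-finite-limit-preserving functors, and that the resulting functor on functor categories preserves limits. For the second, easier point: limits in both $\CMon_{m+1}(\mathcal{C})$ and $\CMon_{m}(\mathcal{C})$ are full subcategories of the respective functor $\infty$-categories $\fun(-,\mathcal{C})$ closed under limits (since the property of preserving a fixed class of limits is closed under limits of functors, as these are computed pointwise in $\mathcal{C}$), and restriction along any functor $\fun(\span(\mathcal{S}_{\fin{(m+1)}})^{\op},\mathcal{C})\to\fun(\span(\mathcal{S}_{\fin m})^{\op},\mathcal{C})$ preserves all limits and colimits, being computed pointwise. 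So once $\iota_m^*$ is shown to land in $\CMon_m(\mathcal{C})$, its limit-preservation is automatic. Thus the whole content is the well-definedness claim.

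For well-definedness, I would argue as follows. An object $X\in\CMon_{m+1}(\mathcal{C})$ is a functor $\span(\mathcal{S}_{\fin{(m+1)}})^{\op}\to\mathcal{C}$ preserving $(m+1)$-finite limits; I need $X\circ\iota_m^{\op}$ to preserve $m$-finite limits. The key is that $\iota_m$ preserves $m$-finite products — indeed, coproducts in $\span(\mathcal{S})$ are disjoint unions of spaces, and a finite coproduct of $m$-finite spaces is again $m$-finite, sitting inside $\mathcal{S}_{\fin{(m+1)}}$ the same way; more generally the span $\infty$-categories are (semi)additive, so their finite products agree with finite coproducts, and $\iota_m$ respects the monoidal/additive structure coming from disjoint union. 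Concretely, an $m$-finite limit in $\span(\mathcal{S}_{\fin m})^{\op}$, i.e. an $m$-finite colimit in $\span(\mathcal{S}_{\fin m})$, is built from the $m$-finite coproducts that span categories possess (by \cite[Section 5.2]{Harpaz}, $\span(\mathcal{S}_{\fin m})$ is itself $m$-semiadditive, so it admits and its ``identity'' detects $m$-finite (co)limits), and these are preserved by $\iota_m$ and computed the same way in $\span(\mathcal{S}_{\fin{(m+1)}})$. Therefore $X\circ\iota_m^{\op}$, being the composite of a functor preserving $m$-finite limits with one preserving (indeed, up to the relevant level, all) $(m+1)$-finite limits, preserves $m$-finite limits.

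The main obstacle — and the step requiring genuine care rather than formalities — is pinning down \emph{exactly} in what sense $\iota_m$ preserves the relevant (co)limits, since $\span(\mathcal{S}_{\fin m})$ is only \emph{semi}additive and not all $m$-finite limits in $\span(\mathcal{S}_{\fin m})^{\op}$ are "absolute": one must use that an $m$-finite limit diagram in $\span(\mathcal{S}_{\fin m})^{\op}$ is detected by and constructed from the $m$-semiadditive structure (finite biproducts plus integration along $m$-finite spaces), and verify that $\iota_m$, which by construction is a map of $m$-semiadditive $\infty$-categories (it is even symmetric monoidal for the Day-type structures used in \cite{Harpaz}), preserves precisely this structure. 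Once one grants that $\iota_m$ is $m$-semiadditive and that $m$-semiadditive functors preserve $m$-finite colimits (which is \propref{A_Semiadd} applied levelwise, or \cite[Corollary 3.2.4]{Ambi2018}), the conclusion drops out: restriction along an $m$-finite-colimit-preserving functor of source categories sends functors preserving $m$-finite limits to functors preserving $m$-finite limits. I expect the actual proof in the paper to invoke exactly the $m$-semiadditivity of the span categories from \cite{Harpaz} and formal properties of restriction, keeping the argument short.
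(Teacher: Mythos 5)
Your reduction is the right one, and it matches the paper's: the limit-preservation of $\iota_m^*$ is automatic (limits of functors are computed pointwise, and the property of preserving a fixed class of limits is closed under such limits), so everything comes down to showing that $\iota_m$ preserves $m$-finite colimits, since restriction along an $m$-finite-colimit-preserving functor carries $m$-finite-limit-preserving functors $\span(\mathcal{S}_{\fin{(m+1)}})^{\op}\to\mathcal{C}$ to $m$-finite-limit-preserving functors on $\span(\mathcal{S}_{\fin m})^{\op}$.

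The gap is that you never actually establish this key claim. Your justification is essentially ``$\iota_m$ is by construction a map of $m$-semiadditive $\infty$-categories,'' but that is precisely the statement to be proved, and the evidence you offer for it --- that $\iota_m$ respects disjoint unions and the (semi)additive/monoidal structure --- only accounts for $0$-finite colimits (finite biproducts). For $m\ge 1$ the relevant colimits are indexed by spaces such as $B^kC_p$, and nothing in the proposal explains why these are preserved; being symmetric monoidal for a Day-type structure does not imply preserving them. The paper closes exactly this gap with two specific inputs from \cite{Harpaz}: Corollary 2.16 there reduces the preservation of $m$-finite colimits by a functor out of $\span(\mathcal{S}_{\fin m})$ to the preservation of $m$-finite colimits by its precomposition with the canonical functor $\mathcal{S}_{\fin m}\to\span(\mathcal{S}_{\fin m})$; one then observes that this composite also factors as $\mathcal{S}_{\fin m}\hookrightarrow\mathcal{S}_{\fin{(m+1)}}\to\span(\mathcal{S}_{\fin{(m+1)}})$, where the first functor obviously preserves $m$-finite colimits of spaces and the second does so by Proposition 2.12 of \cite{Harpaz}. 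Without the first of these reductions (or a substitute for it), the assertion that $\iota_m$ preserves $m$-finite colimits remains unproven, so the proposal as written does not constitute a proof.
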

\begin{proof}
It suffices to show that the $\iota_m$ preserves $m$-finite colimits. By \cite[Corollary 2.16]{Harpaz}, it suffices  to show  that  the composite $$F\colon \mathcal{S}_{\fin m} \to  \span(\mathcal{S}_{\fin m}) \hookrightarrow \span(\mathcal{S}_{\fin {(m+1)}})$$ preserves $m$-finite colimits. Indeed,  $F$ factors also as the composite $$\mathcal{S}_{\fin m} \hookrightarrow \mathcal{S}_{\fin {(m+1)}}   \to \span(\mathcal{S}_{\fin {(m+1)}}).$$ The first functor clearly preserves  $m$-finite colimits while the second one preserves  $m$-finite colimits by \cite[Proposition 2.12]{Harpaz}. \end{proof}

We now extend the definition of $\CMon_{m}$ to $m=\infty$. 

\begin{defn}
	 For $\mathcal{C}\in \acat^{\fin \infty}$, we denote
	\[
	\CMon_{\infty}(\mathcal{C})\coloneqq \lim_{m} \CMon_m(\mathcal{C}).
	\footnote{In fact one can define $\CMon_{\infty}(\mathcal{C})$ by substituting $m=\infty$ in  \defref{cmon_fin} and obtain an equivalent notion (we shall not use this fact).}
	\]
	As above we write $\CMon_{\infty}:= \CMon_{\infty}(\mathcal{S})$. 
\end{defn}

When $\mathcal{C}$ is presentable, $  \CMon_{m}(\mathcal{C}) $ is  presentable for all $m$, by \cite[Lemma 5.17]{Harpaz}. Moreover,  $\CMon_{\infty}(\mathcal{C})$ can then be described as the colimit of $\  \CMon_{m}(\mathcal{C}) $ in $\Pr$.

\begin{lem}\label{lem:CMon_Pr}
For $\mathcal{C} \in \Pr$,  the forgetful functors 
$$\iota_m^* \colon \CMon_{m+1}(\mathcal{C}) \to \CMon_{m}(\mathcal{C}),$$
admit left adjoints and  the colimit of the sequence   
\[
\mathcal{C}  \simeq \CMon_{-2} \xrightarrow{} \CMon_{-1}(\mathcal{C}) \xrightarrow{} \cdots \to \CMon_{m}(\mathcal{C}) \xrightarrow{} \cdots 
\]
in $\Pr$ is $\CMon_{\infty}(\mathcal{C}) $. In particular, $\CMon_{\infty}(\mathcal{C}) $ is presentable.
\end{lem}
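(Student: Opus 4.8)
The claim has three parts: (a) each forgetful functor $\iota_m^*\colon \CMon_{m+1}(\mathcal{C}) \to \CMon_m(\mathcal{C})$ admits a left adjoint; (b) the colimit of the tower in $\Pr$ is $\CMon_\infty(\mathcal{C})$; (c) $\CMon_\infty(\mathcal{C})$ is presentable. The plan is to deduce (a) from general presentability principles, (b) by identifying the limit of a tower of right adjoints in $\widehat{\cat}$ with the colimit of the corresponding tower of left adjoints in $\Pr$, and (c) immediately from (b) since $\Pr$ is closed under (small) colimits.

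First I would establish that each $\CMon_m(\mathcal{C})$ is presentable for $\mathcal{C}\in\Pr$. This is cited as \cite[Lemma 5.17]{Harpaz} for finite $m$; one should note that $\span(\mathcal{S}_{\fin m})$ is an essentially small $\infty$-category admitting $m$-finite colimits, and $\CMon_m(\mathcal{C}) = \fun^{m\text{-fin}}(\span(\mathcal{S}_{\fin m})^{\op},\mathcal{C})$ is an accessible localization of a presentable functor category, hence presentable. By \propref{Rm_CMon}, $\iota_m^*$ is limit-preserving; since its source is presentable and the functor is accessible (restriction along a functor of small $\infty$-categories preserves filtered colimits, and one checks accessibility of the subcategory inclusions), the adjoint functor theorem \cite[Corollary 5.5.2.9]{htt} gives a left adjoint $\iota_{m,!}$. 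This proves (a).

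Next, for (b), I would invoke the standard equivalence relating limits and colimits of towers in $\Pr$: given a tower $\cdots \to \mathcal{D}_{m+1} \xrightarrow{f_m} \mathcal{D}_m \to \cdots$ in $\Pr$ whose transition functors $f_m$ admit left adjoints $g_m$, the colimit $\colim_m \mathcal{D}_m$ in $\Pr$ (equivalently, in $\cat$, since $\Pr \hookrightarrow \widehat{\cat}$ preserves filtered — in fact all — colimits along left adjoints, cf. \cite[Section 5.5.7, Theorem 5.5.3.18]{htt}) is computed as the limit $\lim_m \mathcal{D}_m$ in $\widehat{\cat}$ taken along the right adjoints $f_m$; concretely an object is a compatible system $(X_m)$ with $X_m \simeq f_m(X_{m+1})$. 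Applying this with $\mathcal{D}_m = \CMon_m(\mathcal{C})$, $f_m = \iota_m^*$, $g_m = \iota_{m,!}$, and using the definition $\CMon_\infty(\mathcal{C}) = \lim_m \CMon_m(\mathcal{C})$ (a limit in $\cat$, equivalently in $\widehat{\cat}$), we identify $\CMon_\infty(\mathcal{C}) \simeq \colim_m \CMon_m(\mathcal{C})$ in $\Pr$. Part (c) is then automatic: $\Pr$ has all small colimits \cite[Corollary 5.5.3.4]{htt}, so the colimit is presentable, and the identification in (b) shows $\CMon_\infty(\mathcal{C})$ is this colimit.

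The main obstacle is the bookkeeping in (b): one must be careful that the limit defining $\CMon_\infty(\mathcal{C})$ — a priori a limit in $\cat$ (or $\widehat{\cat}$) along the functors $\iota_m^*$ — genuinely agrees with the colimit in $\Pr$ along the left adjoints, which requires knowing that the $\iota_m^*$ are not merely right adjoints but that the relevant tower lies in the subcategory $\Pr^R \subset \widehat{\cat}$ and that the equivalence $\Pr^L \simeq (\Pr^R)^{\op}$ of \cite[Corollary 5.5.3.4]{htt} carries the colimit to the limit. Once the functors $\iota_{m,!}$ from part (a) are in hand and one invokes this duality, the rest is formal; the accessibility verification needed for the adjoint functor theorem in (a) is the only genuinely technical point, and it follows from the fact that $m$-finite colimit preservation is an accessible condition on functors out of a small $\infty$-category.
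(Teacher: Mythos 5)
Your proposal is correct and follows essentially the same route as the paper: part (a) via the adjoint functor theorem, using that $\iota_m^*$ is limit-preserving (Proposition \ref{prop:Rm_CMon}) and accessible (the paper phrases the accessibility as choosing $\kappa$ so that $(m+1)$-finite limits commute with $\kappa$-filtered colimits in $\mathcal{C}$, which is the same point you make), and part (b) via the identification of colimits in $\Pr$ with limits along the right adjoints from \cite[Theorem 5.5.3.18, Corollary 5.5.3.4]{htt}. No gaps.
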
 
\begin{proof}
Since $\CMon_{m}(\mathcal{C}) $ is  presentable for all $m$, by the adjoint functor theorem \cite[Corollary 5.5.2.9]{htt}, the functor $\iota_m^*$ admits a left adjoint if and only if it is accessible and limit preserving. The functor $\iota_m^*$ is $\kappa$-accessible for any $\kappa$ large enough such that  $(m+1)$-finite limits commute with 
 $\kappa$-filtered colimits in $\mathcal{C}$, and by \propref{Rm_CMon}, is limit preserving. The second claim follows from the description of colimts in $\Pr$ (see \cite[Theorem 5.5.3.18, Corollary 5.5.3.4]{htt}). 
\end{proof}

In the theory of $m$-semiadditivity, the $\infty$-category $\CMon_{m}$
plays an analogous role to that of the $\infty$-category $\CMon$
of commutative (i.e. $\bb E_{\infty}$) monoids in spaces, in the the
theory of ordinary (i.e. $0$) semiadditivity. In particular, the
mapping space between every two objects in an $m$-semiadditive $\infty$-category
has a canonical $m$-commutative monoid structure. To see this, we
begin by recalling the fundamental universal property of $\CMon_{m}$
from  \cite{Harpaz}. For each $\mathcal{C}\in\acat^{\fin m}$, we have
a forgetful functor $\CMon_{m}(\mathcal{C})\to \CMon_{-2}(\mathcal{C}) = \mathcal{C}$, given
by evaluation at $\pt\in\mathcal{S}_{\fin m}$.
\begin{prop}\label{prop:CMon_Universal_Property} Let $-2 \leq m \leq \infty$. For every  $\mathcal{C}\in\acat^{\sad m}$
and $\mathcal{D}\in\acat^{\fin m}$, post-composition with the forgetful
functor induces an equivalence of $\infty$-categories
\[
\fun^{\fin m}(\mathcal{C},\CMon_{m}(\mathcal{D}))\simeq \fun^{\fin m}(\mathcal{C},\mathcal{D}).
\]
\end{prop}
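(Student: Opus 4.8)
The plan is to reduce the case $m=\infty$ to the finite cases, which we treat as known: for $-2\le m<\infty$ the statement is the universal property of $\CMon_m$ recorded in \cite{Harpaz} (it exhibits $\CMon_m(\mathcal{D})$, equipped with its evaluation‑at‑$\pt$ functor, as the cofree $m$‑semiadditive $\infty$‑category on $\mathcal{D}$). So from here on I would fix $\mathcal{C}\in\acat^{\sad\infty}$ and $\mathcal{D}\in\acat^{\fin\infty}$ and unwind the definition $\CMon_\infty(\mathcal{D})=\lim_m\CMon_m(\mathcal{D})$, aiming to pass to the limit levelwise.

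First I would check that this limit interacts well with $\pi$‑finite limits. Each $\CMon_m(\mathcal{D})$ admits all $\pi$‑finite limits, being a full subcategory of $\fun(\span(\mathcal{S}_{\fin m})^{\op},\mathcal{D})$ closed under the pointwise $\pi$‑finite limits, which exist since $\mathcal{D}\in\acat^{\fin\infty}$. By \propref{Rm_CMon} the transition functors $\iota_m^{*}$ preserve \emph{all} limits, so $\CMon_\infty(\mathcal{D})$ admits all $\pi$‑finite limits, computed componentwise, and its forgetful functor to $\mathcal{D}=\CMon_{-2}(\mathcal{D})$ preserves them. Since $\fun(\mathcal{C},-)$ preserves limits and $\pi$‑finite limits in $\CMon_\infty(\mathcal{D})$ are detected componentwise, a functor $\mathcal{C}\to\CMon_\infty(\mathcal{D})$ preserves $\pi$‑finite limits iff each of its components $\mathcal{C}\to\CMon_m(\mathcal{D})$ does, whence
\[
\fun^{\fin\infty}(\mathcal{C},\CMon_\infty(\mathcal{D}))\;\simeq\;\lim_m\fun^{\fin\infty}(\mathcal{C},\CMon_m(\mathcal{D})).
\]

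The key step is then to replace $\fun^{\fin\infty}$ by $\fun^{\fin m}$ inside the limit. I would argue that a compatible family $(G_m)_m$ with $G_m\in\fun^{\fin m}(\mathcal{C},\CMon_m(\mathcal{D}))$ automatically has \emph{each} $G_m$ preserving all $\pi$‑finite limits: for $k\ge m$ the compatibility exhibits $G_m$ as $G_k$ followed by the forgetful functor $\CMon_k(\mathcal{D})\to\CMon_m(\mathcal{D})$, which is restriction along $\span(\mathcal{S}_{\fin m})\hookrightarrow\span(\mathcal{S}_{\fin k})$ and hence preserves all limits; as $G_k$ preserves $k$‑finite limits and every $\pi$‑finite space is $k$‑finite for some $k\ge m$, $G_m$ preserves all $\pi$‑finite limits. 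This identifies $\lim_m\fun^{\fin\infty}(\mathcal{C},\CMon_m(\mathcal{D}))$ with $\lim_m\fun^{\fin m}(\mathcal{C},\CMon_m(\mathcal{D}))$, the transition maps being post‑composition with $\iota_m^{*}$. Now, since $\mathcal{C}$ is $m$‑semiadditive for every $m$, the finite case gives equivalences $\fun^{\fin m}(\mathcal{C},\CMon_m(\mathcal{D}))\simeq\fun^{\fin m}(\mathcal{C},\mathcal{D})$ via post‑composition with the forgetful functor, and these commute with the transition maps because $\iota_m^{*}$ intertwines the evaluation‑at‑$\pt$ functors; so they assemble into an equivalence of towers. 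Passing to limits and using $\lim_m\fun^{\fin m}(\mathcal{C},\mathcal{D})=\bigcap_m\fun^{\fin m}(\mathcal{C},\mathcal{D})=\fun^{\fin\infty}(\mathcal{C},\mathcal{D})$ yields the equivalence, which by construction is post‑composition with $\CMon_\infty(\mathcal{D})\to\mathcal{D}$.

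The part I expect to demand the most care is the limit bookkeeping rather than any homotopy‑theoretic input: verifying that $\pi$‑finite limits in $\CMon_\infty(\mathcal{D})$ are genuinely componentwise and that the finite‑level equivalences are natural in $m$ (so that they form a map of towers, not just a levelwise equivalence). Both reduce to the fact supplied by \propref{Rm_CMon} that the transition functors are restrictions along inclusions of span categories and therefore preserve all limits and commute with the forgetful functors; beyond that, the only external ingredient is the finite case of \cite{Harpaz}.
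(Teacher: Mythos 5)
Your argument is correct and follows essentially the same route as the paper: the finite case is quoted from Harpaz, and the case $m=\infty$ is reduced to it by writing both $\fun^{\fin\infty}$ and $\CMon_\infty(\mathcal{D})$ as limits over $k$ and interchanging them, with \propref{Rm_CMon} (limit-preservation of the transition functors) supplying the needed compatibility. The paper phrases the interchange as a single chain of equivalences $\lim_k\lim_\ell\fun^{\fin k}(\mathcal{C},\CMon_\ell(\mathcal{D}))\simeq\lim_k\fun^{\fin k}(\mathcal{C},\CMon_k(\mathcal{D}))$ via the diagonal, whereas you carry out the same reduction in two explicit steps; this is only a difference in bookkeeping, not in substance.
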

\begin{proof}
The case $m <\infty$ is proved in \cite[Proposition 5.14]{Harpaz}. For $m=\infty$ we have
\[
\fun^{\fin \infty}(\mathcal{C},\CMon_{\infty}(\mathcal{D}))  \simeq   \lim_k \fun^{\fin k}(\mathcal{C},\CMon_{\infty}(\mathcal{D})) \simeq   \]
\[  \lim_k \lim_{\ell} \fun^{\fin k}(\mathcal{C}, \CMon_{\ell}(\mathcal{D})) \simeq  \lim_k \fun^{\fin k}(\mathcal{C},\CMon_{k}(\mathcal{D})) \simeq  \]
\[ \lim_k \fun^{\fin k}(\mathcal{C},\mathcal{D}) \simeq \fun^{\fin \infty}(\mathcal{C},\mathcal{D}) . \]
\end{proof}

As a corollary,  for every $m$-semiadditive
$\infty$-category we have a unique lift of the Yoneda embedding to a
$\CMon_{m}$-enriched Yoneda embedding.
\begin{cor}
\label{cor:CMon_Yoneda} Let $-2 \leq m \leq \infty$. For every $\mathcal{C}\in\acat^{\sad m}$,
there is a unique fully faithful $m$-semiadditive functor
\[
\Yo^{\CMon_{m}}\colon\mathcal{C}\into\fun(\mathcal{C}^{\op},\CMon_{m})
\]
whose composition with the forgetful functor $\CMon_{m}\to\mathcal{S}$
is the Yoneda embedding. 
\end{cor}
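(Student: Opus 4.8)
The plan is to obtain $\Yo^{\CMon_{m}}$ by factoring the ordinary Yoneda embedding through the full subcategory of $m$-finite-limit-preserving presheaves, where \propref{CMon_Universal_Property} upgrades the passage to $\CMon_{m}$-coefficients to an equivalence of $\infty$-categories.

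First I would note that $\mathcal{C}^{\op}$ is again $m$-semiadditive, by \propref{Ambi_Closure}(1). Since the presheaf $\map_{\mathcal{C}}(-,X)\colon\mathcal{C}^{\op}\to\mathcal{S}$ is corepresentable for each $X$, it preserves all limits of $\mathcal{C}^{\op}$, in particular all $m$-finite ones; hence the Yoneda embedding restricts to a fully faithful functor $\Yo\colon\mathcal{C}\into\mathcal{P}$, where $\mathcal{P}\coloneqq\fun^{\fin m}(\mathcal{C}^{\op},\mathcal{S})$ --- full faithfulness persists because $\mathcal{P}\ss\fun(\mathcal{C}^{\op},\mathcal{S})$ is a full subcategory closed under limits. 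Applying \propref{CMon_Universal_Property} with $\mathcal{C}^{\op}$ in place of $\mathcal{C}$ and with $\mathcal{D}=\mathcal{S}$ yields an equivalence $\widetilde{\mathcal{P}}\coloneqq\fun^{\fin m}(\mathcal{C}^{\op},\CMon_{m})\iso\mathcal{P}$ given by post-composition with the forgetful functor $U\colon\CMon_{m}\to\mathcal{S}$; write $L$ for its inverse. I then \emph{define}
\[
\Yo^{\CMon_{m}}\;\colon\;\mathcal{C}\xrightarrow{\Yo}\mathcal{P}\xrightarrow{L}\widetilde{\mathcal{P}}\into\fun(\mathcal{C}^{\op},\CMon_{m}).
\]
As a composite of a fully faithful functor, an equivalence, and the inclusion of a full subcategory, $\Yo^{\CMon_{m}}$ is fully faithful; and post-composing it with $U_{*}\colon\fun(\mathcal{C}^{\op},\CMon_{m})\to\fun(\mathcal{C}^{\op},\mathcal{S})$ gives back $\Yo$, since $U_{*}$ restricts on $\widetilde{\mathcal{P}}$ to $L^{-1}$.

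To see that $\Yo^{\CMon_{m}}$ is $m$-semiadditive, I would first recall that $\CMon_{m}$ is $m$-semiadditive (\cite{Harpaz}), hence so is $\fun(\mathcal{C}^{\op},\CMon_{m})$ by \propref{Ambi_Closure}(2), with $m$-finite limits and colimits computed pointwise. The subcategory $\widetilde{\mathcal{P}}$ contains the zero functor and is closed in $\fun(\mathcal{C}^{\op},\CMon_{m})$ under $m$-finite limits (limits commute with limits) and under $m$-finite colimits: for $m$-finite $A$ a pointwise colimit $\colim_{A}F_{a}$ in $\CMon_{m}$ agrees, via the norm isomorphism, with $\lim_{A}F_{a}$, and $Z\mapsto\lim_{A}F_{a}(Z)$ preserves the $m$-finite limits of $\mathcal{C}^{\op}$ whenever each $F_{a}$ does. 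By \propref{Ambi_Closure}(3),(4), $\widetilde{\mathcal{P}}$ is then $m$-semiadditive and the inclusion $\widetilde{\mathcal{P}}\into\fun(\mathcal{C}^{\op},\CMon_{m})$ preserves $m$-finite colimits. Since $L\circ\Yo\colon\mathcal{C}\to\widetilde{\mathcal{P}}$ preserves $m$-finite limits (Yoneda preserves all limits and $L$ is an equivalence), \propref{A_Semiadd} shows it preserves $m$-finite colimits; composing with the inclusion keeps this, so $\Yo^{\CMon_{m}}$ is $m$-semiadditive.

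For uniqueness, let $F\colon\mathcal{C}\to\fun(\mathcal{C}^{\op},\CMon_{m})$ be any $m$-semiadditive lift of $\Yo$. By \propref{A_Semiadd}, $F$ also preserves $m$-finite limits. Now $U\colon\CMon_{m}\to\mathcal{S}$ preserves limits and is conservative --- an object $M$ of $\CMon_{m}$ is recovered naturally on all of $\mathcal{S}_{\fin m}$ from $M(\pt)$ via $M(A)\simeq M(\pt)^{A}$ --- so for each $X$ the functor $F(X)\colon\mathcal{C}^{\op}\to\CMon_{m}$ preserves $m$-finite limits because $U\circ F(X)\simeq\map_{\mathcal{C}}(-,X)$ does; hence $F$ factors as $\iota\circ F'$ with $F'\colon\mathcal{C}\to\widetilde{\mathcal{P}}$. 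Then $L^{-1}\circ F'=U_{*}\circ F\simeq\Yo$, so $F'\simeq L\circ\Yo$ and $F\simeq\Yo^{\CMon_{m}}$. The argument is identical for $m=\infty$, using the $m=\infty$ case of \propref{CMon_Universal_Property} and that $\CMon_{\infty}=\lim_{m}\CMon_{m}$ is $\infty$-semiadditive with conservative, limit-preserving forgetful functor to $\mathcal{S}$. I expect the one genuinely non-routine step to be the first --- realizing that Yoneda must be regarded as valued in $\mathcal{P}$, so that the universal property of $\CMon_{m}$ applies; after that the only care needed is in matching the (a priori different) descriptions of $m$-finite colimits in $\widetilde{\mathcal{P}}$ and in $\fun(\mathcal{C}^{\op},\CMon_{m})$, while conservativity of $U$ and the closure statements are straightforward.
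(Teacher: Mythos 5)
Your proof is correct and follows essentially the same route as the paper: factor the Yoneda embedding through $\fun^{\fin m}(\mathcal{C}^{\op},\mathcal{S})$ and apply \propref{CMon_Universal_Property} with $\mathcal{D}=\mathcal{S}$ to lift it uniquely to $\fun^{\fin m}(\mathcal{C}^{\op},\CMon_{m})$. You simply spell out more of the routine verifications (that the lift is $m$-semiadditive and not merely $m$-finite limit preserving, and that uniqueness holds among all $m$-semiadditive lifts) than the paper does.
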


\begin{proof}
Taking $\mathcal{D}=\mathcal{S}$ in \propref{CMon_Universal_Property},
shows that the ordinary Yoneda embedding 
\[
\Yo \colon\mathcal{C}\into\fun^{\fin m}(\mathcal{C}^{\op},\mathcal{S})\ss\fun(\mathcal{C}^{\op},\mathcal{S})
\]
lifts uniquely to a fully faithful $m$-finite limit preserving functor
\[
\Yo^{\CMon_{m}}\colon\mathcal{C}\into\fun^{\fin m}(\mathcal{C}^{\op},\CMon_{m})\ss\fun(\mathcal{C}^{\op},\CMon_{m}).
\]
\end{proof}
The $\CMon_{m}$-enriched Yoneda embedding $\Yo^{\CMon_{m}}$ corresponds
to a functor

\[
\hom^{\CMon_{m}}(-,-)\colon\mathcal{C}\times\mathcal{C}^{\op}\to\CMon_{m},
\]
whose composition with the forgetful functor $\CMon_{m}\to\mathcal{S}$
is the functor $\map_{\mathcal{C}}(-,-)$. Thus, we obtain a canonical
structure of an $m$-commutative monoid on each mapping space in $\mathcal{C}$.
Informally, the ``wrong way'' maps for $A\oto qB$, in the higher
commutative monoid structure on $\map_{\mathcal{C}}(X,Y)$, are given
by integration
\[
\int_{q}\colon\map_{\mathcal{C}}(X,Y)^{A}\to\map_{\mathcal{C}}(X,Y)^{B}.
\]

\begin{rem}
It is overwhelmingly likely that an $m$-semiadditive $\infty$-category
$\mathcal{C}$ can be canonically enriched in $\CMon_{m}$ (for e.g. in the sense of \cite{Rune} or  \cite{Hinich}), such that the $\CMon_{m}$-valued mapping objects coincide
with our definition above. In case $\mathcal{C}$ is further assumed
to be presentable, this follows from the fact that $\mathcal{C}$
is left tensored over $\CMon_{m}$ (see  \cite[Lemma 5.20]{Harpaz}, \propref{CMon_Tensor} and \cite[Proposition 4.2.1.33]{ha}).
\end{rem}
\subsection{Examples}
We now review some examples of $m$-semiadditive $\infty$-categories
and the behavior of cardinalities of $m$-finite spaces in them. 

\subsubsection{Universal}

It is proved in  \cite{Harpaz}, that the following is the universal example
of an $m$-semiadditive $\infty$-category. In particular, it shows
that in general, the operations $|A|$ need not reduce to something
``classical'':
\begin{example}
[Universal case]\label{exa:Universal_Cardinality}By  \cite[Corollary 5.7]{Harpaz}, for $-2 \leq m < \infty $ the
symmetric monoidal $\infty$-category of spans $\mathcal{C}=\span(\mathcal{S}_{\fin m})$
is the universal $m$-semiadditive $\infty$-category. For every $A\in\mathcal{S}_{\fin m}$,
we have 
\[
|A|_{\pt}=(\pt\from A\to\pt)\quad\in\quad\pi_{0}\map_{\span(\mathcal{S}_{\fin m})}(\pt,\pt).
\]
Moreover, $\pi_{0}\map_{\span(\mathcal{S}_{\fin m})}(\pt,\pt)$ is
the set of isomorphism classes of $m$-finite spaces with the semiring
structure given by (see \remref{Cardinality_Arithmetic}):
\[
|A|+|B|=|A\sqcup B|,\quad|A|\cdot|B|=|A\times B|.
\]
\end{example}

Informally, the universality of \exaref{Universal_Cardinality} is
reflected in its construction as follows. The collection of spaces
$\mathcal{S}_{\fin m}\ss\mathcal{S}$ is generated under $m$-finite
colimits from the point $\pt\in\mathcal{S}$. The ``right way''
maps in $\span(\mathcal{S}_{\fin m})$ encode the usual covariant
functoriality of these colimits. The ``wrong way'' maps in $\span(\mathcal{S}_{\fin m})$
encode the contravariant functoriality arising from ``integration
along the fibers''. 
 
A closely related example is the $\infty$-category $\CMon_{m}$ of
$m$-commutative monoids, which is shown in  \cite[Corollary  5.19, Corollary 5.21]{Harpaz}, to be the universal
\emph{presentable} $m$-semiadditive $\infty$-category. The Yoneda
embedding induces a fully faithful $m$-semiadditive (symmetric monoidal)
functor 
\[
\span(\mathcal{S}_{\fin m})\into\CMon_{m},
\]
taking each $m$-finite space $A$ to the ``free $m$-commutative
monoid'' on $A$. From this we get that cardinalities in $\CMon_{m}$
are computed essentially in the same way as in $\span(\mathcal{S}_{\fin m})$\footnote{The relation between $\span(\mathcal{S}_{\fin m})$ and $\CMon_{m}$
is somewhat analogues to the relation between the $\infty$-category
$\Sp^{\omega}$ of finite spectra and the $\infty$-category $\Sp$
of all spectra. }. In section 5, we shall discuss more systematically the universality
of $\CMon_{m}$ (see \propref{CMon_Tensor}).

\subsubsection{Rational}

There are however some situations in which the operations $|A|$ can
be expressed in terms of classical invariants. 
\begin{example}
[Homotopy cardinality]\label{exa:Homotopy_Cardinality}For a $\pi$-finite
space $A$, Baez and Dolan \cite{Baez} define the \textbf{homotopy}
\textbf{cardinality} of $A$ to be the following non-negative rational
number
\begin{equation}
|A|_{0}\coloneqq\sum_{a\in\pi_{0}(A)}\prod_{n\ge1}|\pi_{n}(A,a)|^{(-1)^{n}}\quad\in\quad\bb Q_{\ge0}.\label{eq:Rational_Cardinality}
\end{equation}
This notion can be seen as a special case of the cardinality of a
$\pi$-finite space in a higher semiadditive $\infty$-category as
follows. We say that an $\infty$-category $\mathcal{C}$ is \textbf{semirational}
if it is 0-semiadditive and for each $n\in\bb N$, multiplication
by $n$ is invertible in $\mathcal{C}$ (e.g. $\mathcal{C}=\Sp_{\bb Q}$
or $\mathrm{Vec}_{\bb Q}$). We shall see that a semirational $\infty$-category which admits all 1-finite colimits is
automatically $\infty$-semiadditive and for every $\pi$-finite space
$A$, we have 
\[
|A|_{\mathcal{C}}=|A|_{0}\quad\in\quad\bb Q_{\ge0}\ss\End(\Id_{\mathcal{C}}).
\]
We note that formula (\ref{eq:Rational_Cardinality}) is completely
determined by the additivity of the cardinality under coproducts and
the following ``multiplicativity property'': For every fiber sequence
of $\pi$-finite spaces
\[
F\to A\to B,
\]
such that $B$ is connected, we have $|A|=|F||B|$\footnote{This follows from the long exact sequence in homotopy groups, and
is reminiscent of the ``additivity property'' of the Euler characteristic. }. 
\end{example}

\begin{rem}
\label{rem:Cardinality_Height_0}In fact, we shall prove in \propref{Cardinality_Height_0}
a somewhat sharper result. Let $\mathcal{C}$ be a $0$-semiadditive
$\infty$-category, which admits $\pi$-finite colimits, and let $A$
be a $\pi$-finite space. If $A$ satisfies the following condition:
\begin{itemize}
\item [$(*)$] The orders of the homotopy groups of $A$ are invertible
on all objects of $\mathcal{C}.$
\end{itemize}
Then $A$ is $\mathcal{C}$-ambidextrous and $|A|_{\mathcal{C}}=|A|_{0}$.
\end{rem}

From the perspective of the theory we are about to develop, semirational
$\infty$-categories are $0$-semiadditive $\infty$-categories of
``height 0''. One of our goals is to generalize the above phenomena
to ``higher heights'' (see \propref{Height_Bootstrap} and \propref{Height_Cardinality}). 

\subsubsection{Chromatic}

Examples of $\infty$-semiadditive $\infty$-categories of ``higher
height'' appear naturally in chromatic homotopy theory. For a given
prime $p$ and $0\le n<\infty$, let $K(n)$ be the Morava
$K$-theory spectrum of height $n$ at the prime $p$. One of the main results
of \cite{HopkinsLurie} is that the localizations $\Sp_{K(n)}$ are $\infty$-semiadditive.
In particular, we can consider $K(n)$-local cardinalities of $\pi$-finite
spaces. For $n=0$, we have $\Sp_{K(n)}\simeq\Sp_{\bb Q}$ and we
recover the homotopy cardinality (\exaref{Homotopy_Cardinality}).
Similarly, since $\Sp_{K(n)}$ is $p$-local for all $n$, for every
$\pi$-finite space $A$ whose homotopy groups have cardinality prime
to $p$, the $K(n)$-local cardinality of $A$ coincides with its
homotopy cardinality for all $n$ (see \remref{Cardinality_Height_0}).
In particular, it is independent of $n$. However, for $n\ge1$ the
prime $p$ is not invertible in $\Sp_{K(n)}$. Thus, there are $\pi$-finite
spaces $A$ (e.g. $\pi$-finite $p$-spaces), which are ambidextrous
even though they do not satisfy condition $(*)$ of \remref{Cardinality_Height_0}.
For such spaces $A$, the $K(n)$-local cardinality does depend on
$n$ and in general does not (and can not) agree with the homotopy
cardinality.\footnote{Note that the rationalization functor $L_{\bb Q}\colon\Sp_{K(n)}\to\Sp_{\bb Q}$
does not preserve colimits in general and so does not preserve cardinalities.
It does however preserve colimits which are indexed on $\pi$-finite
spaces whose homotopy groups have order prime to $p$.} 

To study the $K(n)$-local cardinalities of $\pi$-finite spaces, it
is useful to consider their image in Morava $E$-theory. For every
integer $n\ge1$, we let $E_{n}$ be the Morava $E$-theory associated
with some formal group of height $n$ over $\overline{\bb F}_{p}$,
viewed as an object of $\calg(\Sp_{K(n)})$. In particular, we have
a (non-canonical) isomorphism 
\[
\pi_{*}E_{n}\simeq\bb W(\overline{\bb F}_{p})[[u_{1},\dots,u_{n-1}]][u^{\pm1}]\qquad|u_{i}|=0,\quad|u|=2.
\]

\begin{example}
[Chromatic cardinality]\label{exa:Chromatic_Cardinality}The $\infty$-category
$\Theta_{n}=\Mod_{E_{n}}(\Sp_{K(n)})$ is $\infty$-semiadditive \cite[Theorem 5.3.1]{Ambi2018}
and hence one can consider cardinalities of $\pi$-finite spaces in
$\pi_{0}E_{n}$. We define the ($p$-typical) \textbf{height $n$
cardinality} of $A$ to be 
\[
|A|_{n}\coloneqq|A|_{\Theta_{n}}\quad\in\quad\pi_{0}E_{n}.
\]
It makes sense to consider $\overline{\bb Q}$ as $E_{0}$, in which
case we recover the homotopy cardinality (\exaref{Homotopy_Cardinality}).
The technology of \cite{Ell3} allows one to derive a rather explicit
formula for $|A|_{n}$, for heights $n>0$ as well. Let $\widehat{L}_{p}A\coloneqq\map({B\bb Z}_{p},A)$
be the $p$-adic free loop space of $A$. One can show that the element
$|A|_{n}\in\pi_{0}E_{n}$ belongs to the subring $\bb Z_{(p)}\ss\pi_{0}E_{n}$
and satisfies $|A|_{n}=|\widehat{L}_{p}A|_{n-1}$. Applying this relation
inductively we obtain the formula
\begin{equation}
|A|_{n}=|\map({B\bb Z}_{p}^{n},A)|_{0}\quad\in\quad\bb Z_{(p)}.\label{eq:Loop_Numbers}
\end{equation}
If $A$ happens to be a $p$-space, then $\widehat{L}_{p}A$ coincides
with the ordinary free loop space $LA\coloneqq\map(S^{1},A)$. Thus,
$|A|_{n}$ can be computed as the homotopy cardinality of the space
of maps from the $n$-dimensional torus to $A$. 
\end{example}

We shall not get here into the details of how formula (\ref{eq:Loop_Numbers})
is deduced from the results of \cite{Ell3}, as we shall only need the
following special case: 
\begin{prop}
\label{prop:Cardinality_EM}For all $k,n\ge0$ we have $|B^{k}C_{p}|_{n}=p^{\binom{n-1}{k}}$.\footnote{For height $n=0$ this should be interpreted via the identity $\binom{-1}{k}=(-1)^{k}.$}
\end{prop}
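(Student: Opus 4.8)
The plan is to reduce this to a purely rational computation and then to an elementary binomial identity. Since $B^{k}C_{p}=K(C_{p},k)$ is a $\pi$-finite $p$-space, \exaref{Chromatic_Cardinality} (more precisely the relation $|A|_{n}=|\widehat{L}_{p}A|_{n-1}$, which for a $p$-space identifies $\widehat{L}_{p}$ with the ordinary free loop space) gives
\[
|B^{k}C_{p}|_{n}\;=\;\bigl|\map(T^{n},B^{k}C_{p})\bigr|_{0},\qquad T^{n}:=(S^{1})^{\times n},
\]
where $|-|_{0}$ is the homotopy cardinality of \exaref{Homotopy_Cardinality}. Alternatively, if one prefers to stay independent of \cite{Ell3}, this proposition is exactly \cite[Lemma 5.3.3]{Ambi2018}, whose proof instead runs through the Ravenel--Wilson computation $\dim_{\pi_{0}E_{n}}E_{n}^{0}(B^{k}C_{p})=p^{\binom{n}{k}}$ together with an inductive cardinality formula for the principal fibration $B^{k-1}C_{p}\to\pt\to B^{k}C_{p}$; the binomial bookkeeping below is the same in either route.

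Granting the displayed reduction, I would first compute the iterated free loop space. Modelling $K(C_{p},k)$ as a topological abelian group, its free loop space splits canonically as $L\,K(C_{p},k)\simeq K(C_{p},k)\times\Omega K(C_{p},k)=K(C_{p},k)\times K(C_{p},k-1)$, with the convention $K(C_{p},j)=\pt$ for $j<0$. Since $\map(T^{n},-)=L^{n}$ and $L$ preserves products, an induction on $n$ governed by Pascal's rule gives
\[
\map(T^{n},B^{k}C_{p})\;\simeq\;\prod_{i=0}^{n}K(C_{p},k-i)^{\times\binom{n}{i}}.
\]
Homotopy cardinality is multiplicative (\remref{Cardinality_Arithmetic}), and straight from the definition in \exaref{Homotopy_Cardinality} one has $|K(C_{p},j)|_{0}=p^{(-1)^{j}}$ for $j\ge0$ (for $j=0$ this is the set $C_{p}$, of cardinality $p=p^{(-1)^{0}}$) and $|K(C_{p},j)|_{0}=1$ for $j<0$. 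Hence, writing $\binom{n}{i}=0$ for $i>n$,
\[
|B^{k}C_{p}|_{n}\;=\;\prod_{i=0}^{\min(n,k)}p^{(-1)^{k-i}\binom{n}{i}}\;=\;p^{\,\sum_{i=0}^{k}(-1)^{k-i}\binom{n}{i}}.
\]

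It remains to check the identity $\sum_{i=0}^{k}(-1)^{k-i}\binom{n}{i}=\binom{n-1}{k}$ for all $n,k\ge0$. Writing $S_{k}$ for the left-hand side one has $S_{k}=\binom{n}{k}-S_{k-1}$ and $S_{0}=1=\binom{n-1}{0}$, so by induction $S_{k}=\binom{n}{k}-\binom{n-1}{k-1}=\binom{n-1}{k}$ by Pascal's rule; for $n=0$ this reads $\binom{-1}{k}=(-1)^{k}$, matching the convention in the footnote to the statement. Substituting yields $|B^{k}C_{p}|_{n}=p^{\binom{n-1}{k}}$. I do not expect any real obstacle internal to this argument: once the reduction to homotopy cardinality is in hand, everything is a short computation, and all the genuine content of the proposition sits in that reduction --- i.e. in the identity $|A|_{n}=|\widehat{L}_{p}A|_{n-1}$, which rests on transchromatic character theory (the results of \cite{Ell3}) or, self-containedly, on the argument of \cite[Lemma 5.3.3]{Ambi2018}.
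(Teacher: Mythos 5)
Your computation is correct, but your primary route is precisely the one the paper deliberately avoids. The paper's entire ``proof'' of \propref{Cardinality_EM} is the remark immediately following it: the statement ``was proved independently in \cite[Lemma 5.3.3]{Ambi2018} by relating the cardinality to the symmetric monoidal dimension'' --- i.e.\ exactly your fallback citation. The reason is logical economy: formula (\ref{eq:Loop_Numbers}), on which your main argument rests, is itself only asserted in \exaref{Chromatic_Cardinality} (``One can show\dots'') with a pointer to \cite{Ell3}, and the paper explicitly states that it will not derive that formula and needs only the present special case, which is why it leans on the independent proof in \cite{Ambi2018}. So, taken as a proof \emph{within this paper}, your primary route replaces one black box with a strictly heavier one; you recognize this and flag it, which is the right instinct. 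Granting (\ref{eq:Loop_Numbers}), everything you do checks out: the splitting $L\,K(C_{p},k)\simeq K(C_{p},k)\times K(C_{p},k-1)$ for a topological abelian group, the Pascal-rule induction giving $\map(T^{n},B^{k}C_{p})\simeq\prod_{i}K(C_{p},k-i)^{\binom{n}{i}}$, the multiplicativity of homotopy cardinality, and the identity $\sum_{i=0}^{k}(-1)^{k-i}\binom{n}{i}=\binom{n-1}{k}$ (with the stated $n=0$ convention) are all correct. What your route buys is an actual computation showing where the binomial coefficient comes from (the mod $p$ cohomology of the torus); what the paper's citation buys is independence from \cite{Ell3}, the argument there running instead through the Ravenel--Wilson rank computation and the relation between cardinality and symmetric monoidal dimension (your gloss of that proof as resting on multiplicativity along the principal fibration $B^{k-1}C_{p}\to\pt\to B^{k}C_{p}$ is not quite how the paper characterizes it, but this is incidental to your own argument).
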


This was proved independently in \cite[Lemma 5.3.3]{Ambi2018} by relating the cardinality
to the symmetric monoidal dimension. However, we shall use the general
formula (\ref{eq:Loop_Numbers}) in some examples to illustrate interesting
phenomena.

While the structure of the rings $\pi_{0}\bb S_{K(n)}$ is not entirely
understood in general, it follows from \cite{BobkovaG} and \cite{Agnes2017}
that:
\begin{prop}
\label{prop:Goerss_Irena}For all $p$ and $n$, the image of the
unit map $\pi_{0}\bb S_{K(n)}\oto u\pi_{0}E_{n}$ is $\bb Z_{p}\ss\pi_{0}E_{n}$
and the kernel is precisely the nil-radical. 
\end{prop}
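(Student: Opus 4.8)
The plan is to deduce both assertions — $\operatorname{Im}(u)=\mathbb{Z}_p$ and $\ker(u)=\mathfrak{N}$ (the nil-radical of $\pi_0\mathbb{S}_{K(n)}$) — from the Devinatz--Hopkins presentation $\mathbb{S}_{K(n)}\simeq E_n^{h\mathbb{G}_n}$, with $\mathbb{G}_n$ the extended Morava stabilizer group, together with the structural input on $\pi_0\mathbb{S}_{K(n)}$ supplied by \cite{BobkovaG} and \cite{Agnes2017}. I use throughout that $\pi_0E_n\cong\mathbb{W}(\overline{\mathbb{F}}_p)[[u_1,\dots,u_{n-1}]]$ is a complete regular local \emph{domain}, hence reduced. (Here $n\ge1$.)

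\emph{The image.} The containment $\mathbb{Z}_p\subseteq\operatorname{Im}(u)$ is formal: $\pi_0\mathbb{S}_{K(n)}$ is a $\mathbb{Z}_p$-algebra (the $K(n)$-local sphere is $p$-complete), $u$ is a ring map, and the resulting $\mathbb{Z}_p\to\pi_0E_n$ is the standard injection. For the reverse containment I would use that under $\mathbb{S}_{K(n)}\simeq E_n^{h\mathbb{G}_n}$ the map $u$ is the edge homomorphism of the descent ($\mathbb{G}_n$-homotopy fixed point) spectral sequence $E_2^{s,t}=H^s_c(\mathbb{G}_n,\pi_tE_n)\Rightarrow\pi_{t-s}\mathbb{S}_{K(n)}$, hence factors through $H^0_c(\mathbb{G}_n,\pi_0E_n)=(\pi_0E_n)^{\mathbb{G}_n}$; and then invoke the classical computation $(\pi_0E_n)^{\mathbb{G}_n}=\mathbb{Z}_p$ (the Galois factor already cuts $\mathbb{W}(\overline{\mathbb{F}}_p)$ down to $\mathbb{Z}_p$, and the stabilizer group moves the deformation directions $u_i$ enough that no non-constant power series is invariant). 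The two containments give $\operatorname{Im}(u)=\mathbb{Z}_p$; as a byproduct $E_\infty^{0,0}=E_2^{0,0}=\mathbb{Z}_p$, so $\ker(u)$ is exactly the positive-filtration ideal $F^{\ge1}\pi_0\mathbb{S}_{K(n)}$.

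\emph{The kernel.} Here $\mathfrak{N}\subseteq\ker(u)$ is immediate ($\pi_0E_n$ reduced). For the reverse inclusion I would show every element of $\ker(u)=F^{\ge1}\pi_0\mathbb{S}_{K(n)}$ is nilpotent. The descent spectral sequence is multiplicative, so $F^{\ge i}\cdot F^{\ge j}\subseteq F^{\ge i+j}$; hence if it admits a horizontal vanishing line on the zero stem — $E_\infty^{s,s}=0$ for $s\ge N$, equivalently $F^{\ge N}\pi_0\mathbb{S}_{K(n)}=0$ — then $x\in F^{\ge1}$ forces $x^N\in F^{\ge N}=0$. (Equivalently: descendability of $\mathbb{S}_{K(n)}\to E_n$ makes $\operatorname{Spec}\pi_0E_n\to\operatorname{Spec}\pi_0\mathbb{S}_{K(n)}$ surjective, so $\ker(u)$ is contained in the contraction of every prime of the domain $\pi_0E_n$, hence in every prime of $\pi_0\mathbb{S}_{K(n)}$.) Either way $\pi_0\mathbb{S}_{K(n)}/\mathfrak{N}\cong\mathbb{Z}_p$.

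The main obstacle — indeed the only nontrivial point — is the existence of this horizontal vanishing line (equivalently, the descendability of $\mathbb{S}_{K(n)}\to E_n$) for all $p$ and $n$; the remaining ingredients (the Devinatz--Hopkins equivalence, the computation of $H^0_c$, multiplicativity of the filtration) are standard. This is precisely where \cite{BobkovaG} and \cite{Agnes2017} enter, handling the cases in which $\mathbb{G}_n$ has infinite cohomological dimension (so that a virtual-cohomological-dimension bound is unavailable): the finite algebraic and topological resolutions constructed there realize $E_n^{h\mathbb{G}_n}$ as a finite homotopy limit, which bounds the filtration on $\pi_0$ and thereby pins down $\pi_0\mathbb{S}_{K(n)}$ modulo nilpotents.
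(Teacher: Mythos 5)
Your proposal is correct and follows essentially the same route as the paper: factor $u$ through $H^0_c(\mathbb{G}_n,\pi_0E_n)=\mathbb{Z}_p$ (citing \cite{BobkovaG}), use the horizontal vanishing line of \cite{Agnes2017} together with multiplicativity of the descent spectral sequence to show positive-filtration elements are nilpotent, and get surjectivity onto $\mathbb{Z}_p$ from the ring map out of the $p$-complete sphere. The extra details you supply (reducedness of $\pi_0E_n$ for $\mathfrak{N}\subseteq\ker(u)$, the descendability reformulation) are consistent elaborations of the same argument.
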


\begin{proof}
Let $\Gamma_{n}$ be the Morava stabilizer group associated with $E_{n}$.
We have an action of $\Gamma_{n}$ on $E_{n}$ by commutative algebra
maps and thus, the map $u$ factors through the fixed points $(\pi_{0}E_{n})^{\Gamma_{n}}\ss\pi_{0}E_{n}$.
By \cite[Lemma 1.33]{BobkovaG}, we have 
\[
(\pi_{0}E_{n})^{\Gamma_{n}}=H_{c}^{0}(\Gamma;\pi_{0}E_{n})=\bb Z_{p}\ss\pi_{0}E_{n}.
\]

By \cite[Theorem 2.3.5]{Agnes2017}, the $E_{\infty}$-page of the descent spectral sequence
\[
H_{c}^{s}(\Gamma;(E_{n})_{t})\implies\pi_{t-s}(\bb S_{K(n)})
\]
has a horizontal vanishing line. Since the spectral sequence is multiplicative,
this implies that all elements in $\pi_{0}\bb S_{K(n)}$ with positive
filtration degree are nilpotent. Finally, since $\bb S_{K(n)}$ admits a ring map from the $p$-complete sphere, the map   $\pi_{0}\bb S_{K(n)}\oto u\bb Z_p$ is surjective.
\end{proof}
Thus, for every $\pi$-finite space $A$, the identity 
\[
|A|_{\Sp_{K(n)}}=|A|_{n}
\]

holds \emph{up to nilpotents}. We do not know, however, whether it holds
in $\pi_{0}\bb S_{K(n)}$.

\subsubsection{Categorical}

Another family of examples of higher semiadditive $\infty$-categories
arises from category theory itself.
%Tomer S %
\begin{prop}
For every  $-2 \leq m \leq \infty $ the $\infty$-category  $\acat_{m\text{-fin}}$  is $m$-semiadditive. 
\end{prop}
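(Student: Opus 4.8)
The plan is to verify the criterion of \propref{Ambi_Criterion} for the $\infty$-category $\acat_{m\text{-fin}}$ and every $m$-finite space $A$, proceeding by induction on the truncatedness of $A$, exactly as one expects for statements of this type. The base case, $A$ contractible, is trivial. For the inductive step, suppose $A$ is $k$-finite with $k\le m$ and that all $(k-1)$-finite spaces (in particular the path spaces $\Omega_a A$) are already known to be $\acat_{m\text{-fin}}$-ambidextrous; by \remref{Ambi_Space} this gives weak ambidexterity of $A$. It then remains to check the two substantive points: (i) $\acat_{m\text{-fin}}$ admits $A$-indexed limits and colimits, and (ii) the norm map $A_!\to A_*$ is an isomorphism, for which by \propref{Ambi_Criterion} it suffices that $A_*$ preserve $A$-indexed colimits (or dually that $A_!$ preserve $A$-indexed limits).

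For (i), the key observation is that $\acat_{m\text{-fin}}$ is closed under small limits inside $\cat$ — a limit of $\infty$-categories admitting $m$-finite colimits, taken along functors preserving them, again admits $m$-finite colimits and the forgetful functor to $\cat$ creates these. Thus $A$-shaped limits in $\acat_{m\text{-fin}}$ are computed as in $\cat$, i.e. $A^*\mathcal{C} = \mathcal{C}^A = \fun(A,\mathcal{C})$, which lies in $\acat_{m\text{-fin}}$ since $\mathcal{C}$ does and $m$-finite colimits in functor categories are computed pointwise. For $A$-shaped colimits, the natural candidate is again $\mathcal{C}^A$, using that $A$ is a space (an $\infty$-groupoid), so that a colimit over $A$ of a constant-ish diagram should coincide with the limit; more precisely, for a space $A$ and any $\infty$-category $\mathcal{E}$ admitting $A$-indexed (co)limits, the $\infty$-category $\fun(A,\mathcal{E})$ receives adjoints $A_!\dashv A^*\dashv A_*$ and one wants to identify $A_!$ with $A_*$ on $\acat_{m\text{-fin}}$. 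Here the crucial input is that each $\mathcal{C}\in\acat_{m\text{-fin}}$ already admits $\Omega_a^j A$-indexed colimits for all $a,j$ (these are $(k-1)$-finite hence, by induction and the fact that ambidexterity is really about spaces, these colimits exist and are ambidextrous in $\mathcal{C}$ itself), so $\mathcal{C}$ is itself "$A$-ambidextrous as an object of $\cat$" in the weak sense, and the Beck–Chevalley / higher-semiadditivity machinery of \cite{HopkinsLurie} applies internally to $\mathcal{C}$. This lets one build the colimit $A_!F$ as $A_*F$ and verify it represents the colimit.

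For (ii), the strategy is to show $A_*\colon \mathcal{C}^A\to\mathcal{C}$ preserves $A$-indexed colimits by reducing to the corresponding fact inside each $\mathcal{C}$: since ambidexterity of $A$ for the individual $\infty$-categories $\mathcal{C}\in\acat_{m\text{-fin}}$ is not assumed but rather the relevant $(k-1)$-truncated pieces are, one uses that $A$ is weakly ambidextrous plus the inductive hypothesis to get that $A_!\simeq A_*$ as functors $\cat\to\cat$ restricted to $\acat_{m\text{-fin}}$, and left adjoints preserve colimits automatically. Concretely, once $A_!\cong A_*$ the functor $A_*$ is a left adjoint (to $A^*$, via the ambidexterity equivalence) and hence preserves all colimits, in particular $A$-indexed ones, closing the induction via \propref{Ambi_Criterion}.

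\textbf{Main obstacle.} The genuinely delicate point is (i)–(ii) together: establishing that $A$-indexed colimits exist in $\acat_{m\text{-fin}}$ and that they agree with $A$-indexed limits. Colimits in $\cat$ (equivalently $\acat$) are not computed naively, so one cannot simply say "colimits are pointwise"; one must instead leverage that $A$ is a space and that the $(k-1)$-finite building blocks are already ambidextrous for every $\mathcal{C}\in\acat_{m\text{-fin}}$, invoking the internal higher-semiadditivity of such $\mathcal{C}$ (from \cite{HopkinsLurie}) to construct $A_!\mathcal{C}$ as $A_*\mathcal{C}=\mathcal{C}^A$ and to check it has the universal property of the colimit. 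I expect the cleanest route is: first prove by the same induction that \emph{every} $\mathcal{C}\in\acat_{m\text{-fin}}$ is in fact $m$-semiadditive as an $\infty$-category in its own right (this is presumably already available or immediate), then deduce that $\mathcal{C}^A\simeq \mathcal{C}^A$ carries matching adjoints and that the norm is an equivalence, and finally assemble this over all $\mathcal{C}$ using that the forgetful functor $\acat_{m\text{-fin}}\to\acat$ creates limits and detects the relevant structure. The bookkeeping of the Beck–Chevalley conditions across the two levels (spaces indexing diagrams \emph{of} $\infty$-categories, versus spaces indexing diagrams \emph{in} an $\infty$-category) is where the real care is needed.
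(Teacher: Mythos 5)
There is a genuine gap, and it sits exactly where you suspected the care was needed. Your construction of $A$-indexed colimits rests on the claim that for each $\mathcal{C}\in\acat_{m\text{-fin}}$ the $\Omega_a^jA$-indexed colimits ``exist and are ambidextrous in $\mathcal{C}$ itself,'' and later on the hope that every $\mathcal{C}\in\acat_{m\text{-fin}}$ is itself $m$-semiadditive (``presumably already available or immediate''). Both statements are false: objects of $\acat_{m\text{-fin}}$ are merely $\infty$-categories admitting $m$-finite colimits, and need not be even $0$-semiadditive ($\set$ and $\mathrm{Vec}_{\bb F_p}$ are counterexamples; $BC_p$ is not $\mathrm{Vec}_{\bb F_p}$-ambidextrous). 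You are conflating the inductive hypothesis — that $(k-1)$-finite spaces are $\acat_{m\text{-fin}}$-ambidextrous, a statement one categorical level up — with internal ambidexterity of those spaces in each object $\mathcal{C}$. The paper's redshift theorem (\thmref{Redshift}) makes precise that these two notions differ by one in height, so no argument that silently identifies them can be repaired. Your step (ii) is also circular as written: you invoke $A_!\simeq A_*$ to conclude that $A_*$ is a left adjoint and hence preserves $A$-colimits, but $A_!\simeq A_*$ is the norm isomorphism you are trying to establish via \propref{Ambi_Criterion}.

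The correct version of your strategy is the one sketched in the remark following the proposition: one shows that the adjunction $A^*\dashv A_*$ (with $A_*\mathcal{C}=\fun(A,\mathcal{C})$) is an ambidextrous adjunction of $(\infty,2)$-categories, because the unit and counit admit \emph{left adjoints as $1$-morphisms} — and these left adjoints are the $m$-finite colimit functors $\colim_A\colon\fun(A,\mathcal{C})\to\mathcal{C}$, whose existence requires only that each $\mathcal{C}$ \emph{admits} $m$-finite colimits, not that they be ambidextrous in $\mathcal{C}$. This exhibits $A_*$ as a left adjoint of $A^*$ non-circularly, and then \propref{Ambi_Criterion} closes the induction. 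The paper itself takes an entirely different, formal route: for $m<\infty$ it simply cites \cite[Proposition 5.26]{Harpaz}, and for $m=\infty$ it transports $k$-semiadditivity along the colimit-preserving symmetric monoidal functor $\acat_{k\text{-fin}}\to\acat_{\infty\text{-fin}}$ using \cite[Corollary 3.3.2(2)]{Ambi2018}.
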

\begin{proof}
The case $m<\infty$ is exactly \cite[Proposition 5.26]{Harpaz}. We now wish to show that  $\acat_{\infty\text{-fin}}$ is $k$-semiadditive for every $k < \infty $. By \cite[Remark 4.8.1.6]{ha}, both  $\acat_{k\text{-fin}}$ and $\acat_{\infty\text{-fin}}$   admit closed symmetric monoidal structures, and by \cite[Proposition  4.8.1.3]{ha}, there exists  a symmetric monoidal functor $\mathcal{P}\colon \acat_{k\text{-fin}} \to  \acat_{\infty\text{-fin}}$. 
By \cite[Remark 4.8.1.8]{ha} and   \cite[Proposition 5.3.6.2(2)]{htt},  $\mathcal{P}$ admits a right adjoint and thus preserves colimits. Hence,  $\acat_{\infty\text{-fin}}$ is $k$-semiadditive by \cite[Corollary 3.3.2(2)]{Ambi2018}.\end{proof}

\begin{example}[Categorical cardinality]\label{exa:Categorical_Cardinality} Let $-2 \leq m \leq \infty $ and let $\mathcal{C} \in \acat_{m\text{-fin}}$. For every  $m$-finite space $A$ the $m$-semiadditive structure of  $\acat_{m\text{-fin}}$ gives rise to a functor 
$|A|_{\mathcal{C}}\colon\mathcal{C}\to\mathcal{C}.$
 When $m < \infty$ it is shown in \cite[Section  5.2]{Harpaz} that $|A|_{\mathcal{C}}$ is given by taking the constant colimit on $A$. That is, it takes an object $X\in\mathcal{C}$ to the object $X[A]\in\mathcal{C}$.  Since  the forgetful functor $\acat_{\infty\text{-fin}} \to  \acat_{m\text{-fin}}$ preserves limits, and hence m-semiadditive, the same holds for $m=\infty$.  This is very suggestive of the idea that ``multiplication by $|A|$
on $\mathcal{C}$'' is given by ``summing each object $X\in\mathcal{C}$
with itself $A$ times''.  A closely related example is discussed
in \cite[Example 4.3.11]{HopkinsLurie}, where it is shown that $\Pr$ is $\infty$-semiadditive
(in fact, every $m$-truncated space, not necessarily $\pi$-finite,
is $\Pr$-ambidextrous).
\end{example}

%Tomer E %

\begin{rem}
There is a different approach to the higher semiadditivity of $\acat_{m\text{-fin}}$,
based on the notion of \emph{ambidextrous} \emph{adjunctions} of $(\infty,2)$-categories. We sketch the argument to demonstrate
the role of higher categorical structures as a useful perspective
on ambidexterity phenomena. Given an $m$-finite space $A$, the adjunction
\[
A^{*}\colon\acat_{\fin m}\adj\acat_{\fin m}^{A}\colon A_{*}
\]
can be naturally promoted to an adjunction of $(\infty,2)$-categories.
Moreover, the unit $u$ and counit $c$ of $A^{*}\dashv A_{*}$, as
$1$-morphisms in the respective $(\infty,2)$-categories of endofunctors,
can be shown to have left adjoints $u^{L}$ and $c^{L}$ respectively.
Thus, we are in a situation which is dual to the notion of an \emph{ambidextrous}
\emph{adjunction} of  \cite[Definition 2.1]{HSSS2018}. It follows by an elementary
argument that $u^{L}$ and $c^{L}$ exhibit $A_{*}$ as a \emph{left}
adjoint of $A^{*}$ (see \cite[Remark 2.2]{HSSS2018}). Hence, by \propref{Ambi_Criterion},
the space $A$ is $\acat_{m\text{-fin}}$-ambidextrous and so $\acat_{\fin m}$
is $m$-semiadditive. 
\end{rem}

%Tomer S %

The functor that takes an $\infty$-category to its opposite induces
an equivalence $\acat_{\fin m}\simeq\acat^{\fin m}$. Hence, the $\infty$-category
$\acat^{\fin m}$ is $m$-semiadditive as well and the higher semiadditive
structure is given by taking \emph{limits}. For every $\infty$-category
$\mathcal{C}$ with finite (co)products, the (co)product endows the
space of objects $\mathcal{C}^{\simeq}$ with a \emph{(co)Cartesian}
commutative monoid structure. Using the $m$-semiadditivity of $\acat_{\fin m}$
and $\acat^{\fin m}$ together with the $\CMon_{m}$-enriched Yoneda
embedding provided by \corref{CMon_Yoneda}, this too can be generalized
to all $0 \leq m \leq \infty$. Given $\mathcal{C}\in\acat_{\fin m}$, by the $m$-semiadditivity of $\acat_{\fin m}$, the mapping
space $\map_{\fin m}(\mathcal{S}_{\fin m},\mathcal{C})$ admits a
canonical structure of an $m$-commutative monoid. On the other hand,
since $\mathcal{S}_{\fin m}$ is freely generated from a point under
$m$-finite colimits \cite[Notation 4.8.5.2]{ha}, we have 
\[
\map_{\fin m}(\mathcal{S}_{\fin m},\mathcal{C})\simeq\map(\pt,\mathcal{C})\simeq\mathcal{C}^{\simeq}.
\]
%Tomer E %

\begin{defn}
\label{def:CoCart_CMon}For $\mathcal{C}\in\acat_{\fin m}$, we refer
to the above $m$-commutative monoid structure on the space of objects
$\mathcal{C}^{\simeq}$ as the \textbf{coCartesian }structure. A completely
analogous construction endows the space of objects of each $\mathcal{C}\in\acat^{\fin m}$
with a \textbf{Cartesian} $m$-commutative monoid structure.
\end{defn}

As explained in \cite[Section 5.2]{Harpaz}, 
the integration operations for the (co)Cartesian
$m$-commutative monoid structure on $\mathcal{C}^{\simeq}$ are given
by taking $m$-finite (co)limits. Finally, the $\infty$-category
$\acat^{\sad m}$ of $m$-semiadditive $\infty$-categories and $m$-semiadditive
functors is a \emph{full} subcategory of both $\acat_{\fin m}$ and
$\acat^{\fin m}$. Moreover, we have the following:
\begin{prop}
\label{prop:Sad_Sad}Let $-2 \leq m  \leq \infty$. The full subcategory $\acat^{\sad m}\ss\acat_{\fin m}$
(resp. $\acat^{\fin m}$) is closed under colimits and in particular
is $m$-semiadditive. 
\end{prop}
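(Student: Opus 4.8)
The plan is to prove the two assertions of \propref{Sad_Sad} in order: first that $\acat^{\sad m} \ss \acat_{\fin m}$ is closed under colimits, and then to deduce $m$-semiadditivity as a formal consequence. The key tool is that a colimit in $\acat_{\fin m}$ is computed, at the level of underlying $\infty$-categories with $m$-finite colimits, by a procedure that preserves the property of $m$-semiadditivity; the essential input is that for a diagram $p\colon I \to \acat^{\sad m}$, the colimit $\mathcal{C} = \colim_I \mathcal{C}_i$ taken in $\acat_{\fin m}$ already lands in the full subcategory $\acat^{\sad m}$, so that it also computes the colimit there.

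First I would reduce to checking ambidexterity of a single $m$-finite space $A$ in $\mathcal{C} = \colim_I \mathcal{C}_i$. Using \propref{Ambi_Criterion}, it suffices to verify (1) $A$ is weakly $\mathcal{C}$-ambidextrous, (2) $\mathcal{C}$ admits all $A$-limits and $A$-colimits, and (3) $A_*$ preserves $A$-colimits (or dually $A_!$ preserves $A$-limits). I would proceed by induction on the truncatedness level of $A$; the inductive hypothesis handles (1), since the path spaces of $A$ are of strictly smaller level and are $\mathcal{C}_i$-ambidextrous by assumption, hence $\mathcal{C}$-ambidextrous once we know the proposition for lower levels — but more carefully, weak ambidexterity is about the diagonal, so I would phrase the induction as: assume every $(m-1)$-finite space is $\mathcal{C}$-ambidextrous (vacuous base case), then prove it for $m$-finite $A$. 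For (2), $\mathcal{C}$ admits all $m$-finite colimits since it lies in $\acat_{\fin m}$, and it admits $A$-limits because $A$-limits in $\mathcal{C}$ coincide with $A$-colimits — here one uses that $A$ is already weakly $\mathcal{C}$-ambidextrous together with the fact that each $\mathcal{C}_i$ is $m$-semiadditive, so the transition functors preserve both $A$-limits and $A$-colimits (being $m$-finite colimit preserving, hence by \propref{A_Semiadd} also $A$-limit preserving). The crux is (3): I would argue that since the colimit in $\acat_{\fin m}$ is computed so that each $\mathcal{C}_i \to \mathcal{C}$ preserves $m$-finite colimits, and each $\mathcal{C}_i$ is $m$-semiadditive, the norm maps assemble compatibly. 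Concretely, the functor $A_* = A_! \circ (\text{norm})$ on $\mathcal{C}_i$ is identified with $A_!$ via $\nm_A$; these identifications are natural in $i$ and so descend to the colimit, showing $A_* \simeq A_!$ on $\mathcal{C}$ as well, whence $A_*$ preserves colimits because $A_!$ does (it is a left adjoint).

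The main obstacle — and the step I expect to require the most care — is controlling how colimits in $\acat_{\fin m}$ are computed and ensuring the norm isomorphisms glue. One clean way to finesse this, rather than manipulating explicit colimit formulas, is to invoke a naturality/transfer argument: the assignment sending $\mathcal{C} \in \acat_{\fin m}$ to the pair of functors $(A_!, A_*)$ together with the norm map is functorial, and a colimit of diagrams in which every norm map is an equivalence has an equivalence as its norm map, provided the relevant $A$-colimits and $A$-limits are preserved by the structure functors of the colimit cocone. This last preservation is exactly what holds in $\acat_{\fin m}$ by construction. Having established that $\mathcal{C}$ is $m$-semiadditive, i.e. $\mathcal{C} \in \acat^{\sad m}$, the cocone $(\mathcal{C}_i \to \mathcal{C})$ consists of $m$-semiadditive functors and exhibits $\mathcal{C}$ as the colimit in the full subcategory $\acat^{\sad m}$; since $\acat^{\sad m}$ is closed under colimits in $\acat_{\fin m}$ and $\acat_{\fin m}$ is $m$-semiadditive (by the previous proposition in the excerpt), \propref{Ambi_Closure}(4) — applied to the full subcategory $\acat^{\sad m} \ss \acat_{\fin m}$, which contains the initial object and is closed under the relevant colimits — yields that $\acat^{\sad m}$ is itself $m$-semiadditive. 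The case $m = \infty$ follows by passing to the limit over finite $m$, as in the proof of \propref{CMon_Universal_Property}, using that each $\acat^{\sad k}$ is closed under colimits and that $\acat_{\infty\text{-fin}} \to \acat_{k\text{-fin}}$ preserves limits. The statement for $\acat^{\fin m}$ follows by the opposite-category equivalence $\acat_{\fin m} \simeq \acat^{\fin m}$ together with \propref{Ambi_Closure}(1).
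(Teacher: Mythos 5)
Your reduction of the two cases to one another via $(-)^{\op}$ and your final deduction of $m$-semiadditivity from closure under colimits via \propref{Ambi_Closure} both match the paper. But the core of your argument --- the direct verification that a colimit $\mathcal{C}=\colim_{I}\mathcal{C}_{i}$ in $\acat_{\fin m}$ of $m$-semiadditive $\infty$-categories is again $m$-semiadditive --- has a genuine gap at exactly the step you flag as delicate. The claim that the identifications $A_{*}\simeq A_{!}$ on each $\mathcal{C}_{i}$ ``are natural in $i$ and so descend to the colimit'' is not justified by anything: the norm map for $\mathcal{C}$ is a transformation between functors out of $\mathcal{C}^{A}$, and $(-)^{A}=\fun(A,-)$ is a cotensor, which does not commute with colimits in $\cat$, so $\mathcal{C}^{A}$ is not built from the $\mathcal{C}_{i}^{A}$ in any way that lets you glue the norm equivalences. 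Worse, your appeal to \propref{A_Semiadd} to show the cocone functors $\mathcal{C}_{i}\to\mathcal{C}$ preserve $A$-limits is circular: that proposition requires $A$ to be ambidextrous in \emph{both} source and target, and ambidexterity of $A$ in $\mathcal{C}$ is precisely what you are trying to establish. Even the existence of $A$-limits in $\mathcal{C}$ is obtained in your sketch only by assuming they agree with $A$-colimits, i.e.\ by assuming the conclusion. Colimits in $\acat_{\fin m}$ admit no explicit description that would make this ``transfer argument'' go through.

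The paper avoids all of this with a one-line adjunction argument: by \propref{CMon_Universal_Property}, the inclusion $\iota\colon\acat^{\sad m}\hookrightarrow\acat^{\fin m}$ admits a right adjoint, namely $\mathcal{D}\mapsto\CMon_{m}(\mathcal{D})$, and a fully faithful left adjoint preserves (hence the subcategory is closed under) colimits; the statement for $\acat_{\fin m}$ then follows by applying the involution $(-)^{\op}$, which restricts to $\acat^{\sad m}$ by \propref{Ambi_Closure}. This also handles $m=\infty$ uniformly, since \propref{CMon_Universal_Property} is proved for $m=\infty$ as well, so your separate limiting argument over finite $m$ is unnecessary. I would recommend replacing the hands-on verification with this adjoint functor argument.
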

\begin{proof}
The functor $(-)^{\mathrm{op}} \colon \acat_{\fin m}\to \acat^{\fin m}$ that takes an $\infty$-category to its opposite is an equivalence. By  \propref{Ambi_Closure}, $(-)^{\mathrm{op}}$ restricts to an involution  of  $\acat^{\sad m}$. It thus suffices to consider only the inclusion  $\iota\colon \acat^{\sad m}\hookrightarrow \acat^{\fin m}$. By  \propref{CMon_Universal_Property}, $\iota$ admits a right adjoint given by $\CMon_m(-)$ and thus preserve colimits.
\end{proof}
%
%By \cite[Corollary 5.3]{Harpaz}, $\acat^{\sad m}$ can be identified with the $\infty$-category
%of modules over $\span(\mathcal{S}_{\fin m})$ as an algebra in $\acat_{\fin m}$.
%Moreover, the inclusion $\acat^{\sad m}\into\acat_{\fin m}$ can be
%identified with the forgetful functor 
%\[
%\Mod_{\span(\mathcal{S}_{\fin m})}(\acat_{\fin m})\to\acat_{\fin m}.
%\]
%Thus, it preserves limits by \cite[Corollary 4.2.3.3]{ha}. By \propref{Ambi_Closure}(3),
%it follows that $\acat^{\sad m}$ is $m$-semiadditive. The claim
%for $\acat^{\fin m}$ follows by taking the opposite $\infty$-category. 
%Thus, the higher semiadditive structure on $\acat^{\sad m}$ is given
%by taking either $m$-finite limits or colimits, which are canonically
%equivalent in this case. In particular, for $\mathcal{C}\in\acat^{\sad m}$,
%the Cartesian and coCartesian $m$-commutative monoid structures on
%$\mathcal{C}^{\simeq}$ coincide.

\subsection{Amenability}

Let $A\oto qB$ be a $\mathcal{C}$-ambidextrous map. Recall from
\cite[Definition 3.1.7]{Ambi2018}, that $q$ is called $\mathcal{C}$\textbf{-amenable} if
$|q|_{\mathcal{C}}$ is \emph{invertible}. As with ambidexterity,
amenability is a fiber-wise property (\cite[Corollary 3.1.16]{Ambi2018}). Thus, we shall be
mainly interested in $\mathcal{C}$-amenable \emph{spaces.} i.e. those
whose $\mathcal{C}$-cardinality is invertible.
\begin{rem}
We warn the reader not to confuse the condition that the natural transformation
$|q|_{\mathcal{C}}$ is invertible with the condition that the natural
transformation $\nm_{q}$ is invertible, which is equivalent to $q$
being $\mathcal{C}$-ambidextrous (which is itself a prerequisite
for defining $|q|_{\mathcal{C}}$).
\end{rem}

In this section, we extend some results from \cite{Ambi2018} regarding amenability.
The main point is that while $A$-ambidexterity allows us to sum over
$A$-families of maps, $A$-amenability allows us to \emph{average}
over $A$-families of maps, which in turn facilitates ``transfer
arguments'' along $A$. We shall explore how this condition affects
the higher semiadditive structure. 

\subsubsection{Closure properties}

For a map of spaces, the condition of $\mathcal{C}$-amenability,
as the condition of $\mathcal{C}$-ambidexterity, is fiber-wise.
However, \emph{unlike} $\mathcal{C}$-ambidextrous maps, $\mathcal{C}$-amenable
maps are not closed under composition. To understand the situation,
it suffices to consider the case $A\oto qB\to\pt$. By the additivity
of cardinality (\propref{Cardinality_Additivity}), we get $|A|=\int_{B}|q|$. Assume for simplicity that $B$ is connected and that the fiber of
$q$ is $F$. The transformation $|q|$ equals $|F|$ at each point
$b\in B$. Thus, it is tempting to presume that $|A|=|B|\cdot|F|$
and hence if both $|B|$ and $|F|$ are invertible, then so is $|A|$.
However, for this reasoning to hold we need to know that $|q|$ is
\emph{constant }on $B$ with value $|F|$. Alas, in general $|q|$
is not constant, even when $|F|$ is invertible, and $|A|$ need not
equal $|B|\cdot|F|$ (see \exaref{Amenable_Composition}). On the
positive side, we show that $|q|$ must be constant if we require
in addition to the invertibility of $|F|$ that $F\to A\oto qB$ is a \emph{principal} fiber sequence. 
\begin{defn}
We call a map $A\oto qB$ of spaces \textbf{principal} if it can be
extended to a fiber sequence $A\oto qB\oto fE$.
\end{defn}

We note that for a principal map all the fibers are isomorphic even
if the target is not connected. 
\begin{prop}
\label{prop:Amenable_Descent}Let $\mathcal{C}\in\cat$ and let $A\oto qB$
be a principal $\mathcal{C}$-ambidextrous map of $\mathcal{C}$-ambidextrous
spaces with fiber $F$. For $X\in\mathcal{C}$, if $|F|_{X}$ is invertible,
then 
\[
|q|_{B^{*}X}=|F|_{B^{*}X}\quad\in\quad\End(B^{*}X),
\]
and 
\[
|A|_{X}=|F|_{X}|B|_{X}.
\]
\end{prop}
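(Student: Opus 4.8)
The plan is to reduce everything to showing that the natural transformation $|q|_{B^{*}X}$ is \emph{constant} on $B$ with value $|F|_{B^{*}X}$, after which both displayed identities follow formally. Indeed, once we know $|q|_{B^{*}X} = |F|_{B^{*}X}$, the additivity of cardinality (\propref{Cardinality_Additivity}) gives
\[
|A|_{X} = \int_{B} |q|_{B^{*}X} = \int_{B} |F|_{B^{*}X},
\]
and since $|F|_{X}$ is an endomorphism of the identity functor it commutes with everything in sight, so $\int_{B}|F|_{B^{*}X} = |F|_{X}\cdot\int_{B}\Id = |F|_{X}|B|_{X}$, using that $\int_{B}\Id_{B^{*}X} = |B|_{X}$ by \defref{Cardinality}. (One should spell out that $|F|$ being invertible is not even needed for the identity $|A|_X = |F|_X|B|_X$ once constancy is established; invertibility is the tool for \emph{proving} constancy.)

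The heart of the matter is therefore the constancy of $|q|_{B^{*}X}$, and this is where the hypothesis that $q$ is \emph{principal} enters. Extend $q$ to a fiber sequence $A\oto{q} B\oto{f} E$. By \remref{Cardinality_Fiberwise}, $|q|_{B^{*}X}$ is the $B$-family of endomorphisms whose value at $b\in B$ is $|A_{b}|_{X}$, and all the fibers $A_{b}$ are abstractly isomorphic to $F$ since $q$ is principal — but this alone does \emph{not} give constancy as a natural transformation, which is exactly the subtlety flagged in the discussion before \exaref{Amenable_Composition}. The point is that principality gives a coherent trivialization: pulling back along $f\colon B\to E$, the map $q$ becomes (the base change of) the map $\Omega_{e}E \to \pt$ for a basepoint $e$, i.e. $B\times_{E}\pt \simeq A$ identifies $q$ with the projection $F\times \pt \to \pt$ after the base change $B\to E\to\pt$ is refined through $B\to E$. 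Concretely, I would argue that in $\mathcal{C}^{B}$ the transformation $|q|_{B^{*}X}$ is the restriction along $f^{*}\colon\mathcal{C}^{E}\to\mathcal{C}^{B}$ of the analogous transformation for the fiber sequence $F\to\pt\to E$, which at the level of $E$ is literally $f^{*}$ of something pulled back from a point — hence constant. The cleanest implementation: use that $A\simeq B\times_E \mathrm{pt}$, so $q$ is the base change of $\mathrm{pt}\to E$ along $f$; naturality of the norm and of cardinality under base change (\cite[Proposition 3.1.13]{Ambi2018}, as invoked in \remref{Cardinality_Fiberwise}) then shows $|q|_{B^{*}X} = f^{*}(|\,\mathrm{pt}\to E\,|_{?})$, but this needs $\mathrm{pt}\to E$ to be $\mathcal{C}$-ambidextrous, i.e. $\Omega_e E = F$ to be ambidextrous, which we do have.

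The main obstacle I anticipate is precisely making the constancy argument \emph{rigorous} rather than intuitive — i.e. producing an honest identification of $|q|_{B^{*}X}$ with something pulled back along $f$, and checking the base-change compatibility of cardinalities that this requires. There is a circularity to be careful about: $\mathrm{pt}\to E$ is $\mathcal{C}$-ambidextrous iff $F$ is a $\mathcal{C}$-ambidextrous space (\remref{Ambi_Space}), which is part of our hypothesis, so the relevant norm map and cardinality for $\mathrm{pt}\to E$ do exist; but one still must verify that base change along $f$ carries this cardinality to $|q|_{B^{*}X}$, which is a compatibility of the norm with pullback along the \emph{non-ambidextrous} map $f$ — this is the kind of statement that holds because the norm is built fiberwise and pullback along $f$ just reindexes fibers. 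If this base-change lemma is already available in \cite{Ambi2018} (it is, in the guise of the fiberwise description used throughout Section 3 there), the argument is short; otherwise one proves it directly by the inductive construction of the norm. Where exactly \emph{invertibility of} $|F|_{X}$ is used: it is not needed for constancy in the principal case at all — so I would double-check whether the intended proof instead argues constancy via invertibility (e.g. $|q|_{B^{*}X}$ and the constant $|F|_{B^{*}X}$ agree after some faithfully-flat-type descent that needs invertibility), and if so, follow that route instead; but the principal-fibration trivialization seems to me the more robust path, with invertibility of $|F|_X$ then only ensuring $|A|_X = |F|_X|B|_X$ is a statement about invertible-friendly data downstream (e.g. in applications to amenability).
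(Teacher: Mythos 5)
Your reduction of both displayed identities to the constancy of $|q|_{B^{*}X}$, and the final integration over $B$, match the paper exactly. The genuine gap is in how you propose to establish constancy: you assert that principality alone forces $|q|_{B^{*}X}=|F|_{B^{*}X}$ and that invertibility of $|F|_{X}$ plays no role there. This is false. Take $\mathcal{C}=\Theta_{1}=\Mod_{E_{1}}(\Sp_{K(1)})$ and the principal fiber sequence $\pt\to BC_{p}\oto{=}BC_{p}$, so that $q\colon\pt\to BC_{p}$ is a principal $\mathcal{C}$-ambidextrous map between $\mathcal{C}$-ambidextrous spaces with fiber $F=C_{p}$. Unwinding the definition, $|q|_{B^{*}\one}$ is the composite $\one\oto{\Delta}\one^{C_{p}}\simeq\one[C_{p}]\oto{\nabla}\one$ in $\mathcal{C}^{BC_{p}}$, i.e.\ the class of the regular representation in $E_{1}^{0}(BC_{p})$, which differs from the constant $p=|F|_{B^{*}\one}$ by a nonzero element of the augmentation ideal; correspondingly $|A|=|\pt|=1$ whereas $|F|\cdot|B|=p\cdot|BC_{p}|_{1}=p$. (The introduction makes the same point abstractly: multiplicativity for the principal sequence $B^{n-1}C_{p}\to\pt\to B^{n}C_{p}$ holds precisely when $|B^{n-1}C_{p}|$ is invertible.) Your base-change step $|q|_{B^{*}X}=f^{*}(|e|_{E^{*}X})$ for $e\colon\pt\to E$ is legitimate, but it only relocates the problem: $|e|_{E^{*}X}$ lives in $\End_{\mathcal{C}^{E}}(E^{*}X)$, which is in general strictly larger than $\End_{\mathcal{C}}(X)$, and it is \emph{not} visibly pulled back from the point -- deciding whether it is constant is exactly the original question, now for the universal principal map, and the example above shows the answer is no in general.

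The paper's proof uses invertibility at precisely this juncture, via a different base change: pull back $q$ along \emph{itself}. Since $q$ is principal, $A\times_{B}A\to A$ is a principal map with a section, hence the trivial bundle $F\times A\oto{\pi_{A}}A$; therefore $q\times_{B}q\simeq q\times_{B}\pi_{B}$ where $\pi_{B}\colon F\times B\to B$ is the projection. Multiplicativity of cardinality over fiber products over $B$ then gives $|q|^{2}=|q|\cdot|\pi_{B}|$ in $\End(\Id_{\mathcal{C}^{B}})$. Now $|q|_{B^{*}X}$ is invertible, because fiberwise it acts by $|F|_{X}$ (\remref{Cardinality_Fiberwise}) and invertibility of a natural transformation is detected pointwise; cancelling it yields $|q|_{B^{*}X}=|\pi_{B}|_{B^{*}X}=B^{*}(|F|_{X})$, after which your integration step concludes. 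So the invertibility hypothesis is not downstream decoration -- it is the engine of the cancellation that produces constancy, and any correct proof must use it before (not after) asserting that $|q|$ is constant.
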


\begin{proof}
The base change of $q$ along itself is a map $A\times_{B}A\oto{\tilde{q}}A$,
which is a principal map with a section. Therefore $\widetilde{q}$
is isomorphic to the projection $F\times A\oto{\pi_{A}}A.$ Hence,
$q\times_{B}q\simeq q\times_{B}\pi_{B}$, where $F\times B\oto{\pi_{B}}B$
is the projection. We get from  \cite[Corollary  3.1.14]{Ambi2018} that
\[
|q|^{2}=|q\times_{B}q|=|q\times_{B}\pi_{B}|=|q||\pi_{B}|\quad\in\quad\End(\Id_{\mathcal{C}^{B}}).
\]
 If $|F|_{X}$ is invertible, then $|q|_{B^{*}X}$ is invertible,
and thus by the above, we get (\cite[Proposition 3.1.13]{Ambi2018}) 
\[
|q|_{B^{*}X}=|\pi_{B}|_{B^{*}X}=B^{*}(|F|_{X})=|F|_{B^{*}X}\quad\in\quad\End(B^{*}X).
\]
We can now integrate along $B$ and get 
\[
|A|_{X}=\int\limits _{B}|q|_{B^{*}X}=\int\limits _{B}B^{*}(|F|_{X})=|F|_{X}|B|_{X}\quad\in\quad\End(X).
\]
\end{proof}
As a simple application, we deduce that when the homotopy groups of
a $\pi$-finite space have invertible cardinality in $\mathcal{C}$,
the notion of cardinality degenerates to the homotopy cardinality
(\remref{Cardinality_Height_0}).
\begin{prop}
\label{prop:Cardinality_Height_0}Let $\mathcal{C}\in\acat^{\sad 0}$
and let $A$ be a $\pi$-finite space. Assume that $\mathcal{C}$ admits $A$-colimits and the order of each
homotopy group of $A$ is invertible in $\End(\Id_{\mathcal{C}})$. Then, $A$
is $\mathcal{C}$-ambidextrous and $|A|_{\mathcal{C}}=|A|_{0}$. In
particular, if $A$ is connected, then it is $\mathcal{C}$-amenable.
\end{prop}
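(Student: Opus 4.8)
The plan is to prove the statement by induction on the truncation level $m$ such that $A$ is $m$-finite, simultaneously establishing $\mathcal{C}$-ambidexterity and the cardinality formula. Since $\mathcal{C}$ is only assumed $0$-semiadditive, the inductive step must build up ambidexterity through \propref{Ambi_Criterion}, and the arithmetic of cardinalities should come out of \propref{Amenable_Descent}. The base cases are immediate: a $(-1)$-finite space is either empty (cardinality $0=|A|_0$) or a point (cardinality $1=|A|_0$, and it is $\mathcal{C}$-ambidextrous since $\mathcal{C}$ is $0$-semiadditive), and a $0$-finite space is a finite set, handled by \exaref{Finite_Cardinality} together with the observation that homotopy-group orders being invertible is no constraint in degree $0$.

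For the inductive step, suppose $A$ is $m$-finite with $m\ge1$ and the claim holds for all $(m-1)$-finite spaces satisfying condition $(*)$. Working one connected component at a time (using additivity of cardinality over $\pi_0$, \remref{Cardinality_Arithmetic}, and the fact that each component again satisfies $(*)$), I may assume $A$ is connected. Then $A$ admits a principal fibration $K(\pi_m A, m)\to A\to B$ where $B$ is $(m-1)$-finite and again satisfies $(*)$, and the fiber $F=K(\pi_m A,m)=B^m(\pi_m A)$ also satisfies $(*)$ and is $(m-1)$-connected. First I would handle the Eilenberg--MacLane fiber: decomposing $\pi_m A$ as a product of cyclic groups and using multiplicativity of cardinality under products of spaces (\remref{Cardinality_Arithmetic}), it suffices to treat $F = B^m C_r$ with $r$ invertible in $\mathcal{C}$. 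For this, iterate the fiber sequence $B^{k-1}C_r\to \pt\to B^kC_r$: by induction on $k$, $B^kC_r$ is $\mathcal{C}$-ambidextrous and, via \propref{Amenable_Descent} applied to the principal map $\pt\to B^kC_r$ with fiber $B^{k-1}C_r$ (whose cardinality is invertible by the inductive hypothesis — it computes to a power of $r$ as in the rational case, matching $|B^{k-1}C_r|_0$), one gets the relation $1 = |B^{k-1}C_r|\cdot|B^kC_r|$, so $|B^kC_r|$ is invertible and equals $|B^kC_r|_0$.

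Now returning to the general connected $A$: the base $B$ is $(m-1)$-finite satisfying $(*)$, hence by the (outer) induction hypothesis it is $\mathcal{C}$-ambidextrous with $|B|_{\mathcal{C}}=|B|_0$. The fiber $F$ is $\mathcal{C}$-ambidextrous with $|F|_{\mathcal{C}}=|F|_0$ invertible, by the previous paragraph. Then $A\to B$ is a $\mathcal{C}$-ambidextrous map (its fibers are all $\simeq F$, $\mathcal{C}$-ambidextrous, so apply \remref{Ambi_Space}), it is principal by construction, so \propref{Cardinality_Additivity} gives that $A$ is $\mathcal{C}$-ambidextrous and \propref{Amenable_Descent} gives $|A|_{\mathcal{C}}=|F|_{\mathcal{C}}\,|B|_{\mathcal{C}} = |F|_0\,|B|_0 = |A|_0$, where the last equality is the multiplicativity of homotopy cardinality for principal (indeed any, with connected base) fibrations recorded in \exaref{Homotopy_Cardinality}. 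The final sentence — $A$ connected implies $\mathcal{C}$-amenable — is then immediate since for connected $\pi$-finite $A$ the rational number $|A|_0$ is a ratio of orders of homotopy groups, all invertible in $\mathcal{C}$ by $(*)$, hence $|A|_{\mathcal{C}}=|A|_0$ is invertible.

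The main obstacle I anticipate is bookkeeping the invertibility hypothesis $(*)$ correctly through the Postnikov/fibration decomposition: I must check that passing to a connected component, to the base $B$ of the top Postnikov fibration, and to the Eilenberg--MacLane fiber $F$ each preserves the property that all homotopy-group orders are invertible in $\End(\Id_{\mathcal{C}})$, and that the cyclic-group splitting is compatible with the cardinality arithmetic. A secondary subtlety is justifying that \propref{Amenable_Descent}, which is stated for a fixed object $X$, can be applied uniformly (it can, since $(*)$ makes $|F|_X$ invertible for every $X$ simultaneously), and that $\mathcal{C}$ indeed admits the relevant intermediate $q$-colimits — this follows from the hypothesis that $\mathcal{C}$ admits $A$-colimits together with closure of the ambidexterity formalism under the base changes appearing, but it should be spelled out that the loop spaces $\Omega^k_a A$ occurring in the inductive construction are themselves built from the same homotopy groups and so pose no new existence issue.
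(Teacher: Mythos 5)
Your overall strategy (induction on the truncation level, reduction to connected $A$, and leveraging \propref{Amenable_Descent} for a principal fibration with amenable fiber) is sound, but the specific decomposition you chose does not exist for a general $\pi$-finite space, and this is a genuine gap. You write a connected $m$-finite $A$ as a \emph{principal} fibration $K(\pi_m A,m)\to A\to B$ with $B=\tau_{\le m-1}A$, and then split $K(\pi_m A,m)$ into factors $B^mC_r$. The first step requires the fibration $A\to\tau_{\le m-1}A$ to extend to a fiber sequence $A\to B\to E$, i.e.\ to be classified by an untwisted $k$-invariant $B\to K(\pi_m A,m+1)$; this holds only when $\pi_1A$ acts trivially (or, after refinement into several principal stages, nilpotently) on $\pi_mA$. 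The second step requires $\pi_mA$ to be abelian, which fails already for $m=1$ and $A=BG$ with $G$ nonabelian. So your argument covers only nilpotent $\pi$-finite spaces, whereas the proposition has no such hypothesis and must apply, e.g., to $B\Sigma_3$ in $\mathrm{Vec}_{\bb Q}$, or to a $2$-finite space with $\pi_1=C_2$ acting by inversion on $\pi_2=C_3$ (all orders invertible in a semirational $\mathcal{C}$, yet no principal Postnikov refinement). Note that \propref{Amenable_Descent} genuinely needs principality — the paper's \exaref{Amenable_Composition} shows multiplicativity of cardinality can fail for non-principal fibrations even with amenable fiber — and that the paper's own uses of Postnikov refinements into principal fibrations (\propref{pTyp_Semiad}, \propref{Height_Everything}, \thmref{Height_Everything_p_Local}(1)) are confined to $p$-spaces or nilpotent spaces for exactly this reason.

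The paper's proof sidesteps all of this by inducting through the \emph{loop space} instead of the Postnikov base: for connected $m$-finite $A$, the component $(\Omega A)^{\circ}$ of the identity loop is a connected $(m-1)$-finite space satisfying the same invertibility hypothesis, so by induction it is amenable with $|(\Omega A)^{\circ}|_{\mathcal{C}}=|(\Omega A)^{\circ}|_0$; then $\Omega A$ is a disjoint union of $|\pi_1A|$ copies of it, so $|\Omega A|_{\mathcal{C}}=|\Omega A|_0$ is invertible, and one applies \propref{Amenable_Descent} to the fiber sequence $\Omega A\to\pt\to A$ — whose map $\pt\to A$ is \emph{always} principal (it extends to $\pt\to A\xrightarrow{\ \mathrm{id}\ }A$) — to get ambidexterity of $A$ and $|A|_{\mathcal{C}}=|\Omega A|_{\mathcal{C}}^{-1}=|A|_0$. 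If you want to salvage your architecture, you would need to replace the Postnikov step with this loop-space step (or otherwise handle non-nilpotent fundamental group actions); as written, the inductive step fails outside the nilpotent case.
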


\begin{proof}
By \remref{Cardinality_Arithmetic}, we may assume that $A$ is connected.
We prove the claim for all connected $m$-finite $A$ by induction
on $m\ge0$, where the case $m=0$ is trivial. We prove the claim
for some $m>0$, assuming it holds for $m-1$. Choose a base point
in $A$ and consider the connected component $(\Omega A)^{\circ}\ss\Omega A$
of the identity loop. The space $(\Omega A)^{\circ}$ satisfies the
assumptions of the inductive hypothesis. Hence, $(\Omega A)^{\circ}$
is amenable and $|(\Omega A)^{\circ}|_{\mathcal{C}}=|(\Omega A)^{\circ}|_{0}$.
Now, $\Omega A$ is just the coproduct of $|\pi_{1}A|$ copies of
$(\Omega A)^{\circ}$. Hence, by \remref{Cardinality_Arithmetic}
and the inductive hypothesis, we have 
\[
|\Omega A|_{\mathcal{C}}=|\pi_{1}A|\cdot|(\Omega A)^{\circ}|_{\mathcal{C}}=|\pi_{1}A|\cdot|(\Omega A)^{\circ}|_{0}=|\Omega A|_{0}.
\]
Moreover, $|\Omega A|_{\mathcal{C}}$ is invertible as $|\pi_{1}A|$
is invertible by assumption and $|(\Omega A)^{\circ}|_{\mathcal{C}}$
is invertible by the inductive hypothesis. That is, $\Omega A$ is
$\mathcal{C}$-amenable. Finally, consider the principal fiber sequence
\[
\Omega A\to\pt\to A.
\]
Since $\Omega A$ is $\mathcal{C}$-amenable, by \cite[Proposition  3.1.17]{Ambi2018}, the space
$A$ is $\mathcal{C}$-ambidextrous and by \propref{Amenable_Descent},
we have 
\[
|A|_{\mathcal{C}}=|\Omega A|_{\mathcal{C}}^{-1}=|\Omega A|_{0}^{-1}=|A|_{0}.
\]
\end{proof}
From \propref{Amenable_Descent}, we also deduce that $\mathcal{C}$-amenable
maps are partially closed under composition:
\begin{cor}
\label{cor:Amenable_Extensions}Let $\mathcal{C}\in\cat$ and let
$A\oto fB\oto gC$ be a pair of composable maps of spaces. If $f$
and $g$ are $\mathcal{C}$-amenable and $f$ is principal, then $g\circ f$
is $\mathcal{C}$-amenable. 
\end{cor}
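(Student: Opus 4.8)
The plan is to argue fiberwise over $C$ and then invoke \propref{Amenable_Descent}. Recall that both $\mathcal{C}$-ambidexterity of a map (\remref{Ambi_Space}) and $\mathcal{C}$-amenability of a map (\cite[Corollary 3.1.16]{Ambi2018}) are fiberwise properties, so it suffices to show that for every point $c\in C$ the fiber of $g\circ f$ over $c$ is a $\mathcal{C}$-amenable space. Fix such a $c$, set $B_c := g^{-1}(c)$, and let $f_c\colon A_c\to B_c$ denote the base change of $f$ along the inclusion $B_c\into B$, so that $A_c$ is precisely the fiber of $g\circ f$ over $c$.

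First I would check that the hypotheses of \propref{Amenable_Descent} hold for $f_c$. Since $g$ is $\mathcal{C}$-amenable, the space $B_c$ is $\mathcal{C}$-amenable, and since $f$ is $\mathcal{C}$-amenable, so is the map $f_c$ (both by fiberwise-ness); in particular $f_c$ is $\mathcal{C}$-ambidextrous and its common fiber $F$ is a $\mathcal{C}$-amenable, hence $\mathcal{C}$-ambidextrous, space with $|F|_{\mathcal{C}}$ invertible. The map $f_c$ is moreover principal: if $f$ is realised as the fiber of a pointed map $h\colon B\to E$, then $f_c$ is realised as the fiber of the composite $B_c\into B\oto h E$. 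Finally, \propref{Cardinality_Additivity}, applied to $A_c\oto{f_c}B_c\to\pt$ (both $f_c$ and $B_c$ being $\mathcal{C}$-ambidextrous), shows that $A_c$ is $\mathcal{C}$-ambidextrous.

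Now \propref{Amenable_Descent} applies to the principal map $f_c$ with fiber $F$, yielding for every $X\in\mathcal{C}$ the identity $|A_c|_X = |F|_X\cdot|B_c|_X$. Since $F$ and $B_c$ are $\mathcal{C}$-amenable, both $|F|_X$ and $|B_c|_X$ are invertible, hence so is $|A_c|_X$; as this holds for all $X$, the space $A_c$ is $\mathcal{C}$-amenable. This establishes the claim for every fiber of $g\circ f$, and therefore $g\circ f$ is $\mathcal{C}$-amenable. The only step that deserves genuine care is the stability of principality under base change, used to see that $f_c$ is again principal; everything else is a direct application of \propref{Cardinality_Additivity} and \propref{Amenable_Descent}, so I do not expect a real obstacle.
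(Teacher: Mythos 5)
Your proof is correct and follows essentially the same route as the paper: reduce to the fiber over each point of $C$ (i.e.\ to the case $C=\pt$), observe that the base-changed map is still principal and amenable with amenable target, and conclude by \propref{Amenable_Descent}. You spell out the base-change stability of principality and the ambidexterity of the total space more explicitly than the paper does, but the argument is the same.
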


\begin{proof}
We can check that $g\circ f$ is $\mathcal{C}$-amenable, by pulling
back along $\pt\to C$ for every point of $C$. In other words, we
can assume that $C=\pt$. Taking $F$ to be the fiber of $f$, we
have a principal fiber sequence $F\to A\to B$ where $F$ and $B$
are $\mathcal{C}$-amenable; thus the result follows from \propref{Amenable_Descent}. 
\end{proof}

\subsubsection{Counter examples}

We conclude this subsection with a discussion of the necessity of
the conditions in \propref{Amenable_Descent}. For starters, if $|F|$
is not invertible, then the identity $|A|=|F||B|$ is (very much) false in
general. The following examples show that the condition on the fiber
sequence to be \emph{principal} can also not be dropped. The first
example shows that $\mathcal{C}$-cardinality need not be multiplicative
even when the fiber and base space are $\mathcal{C}$-amenable. Moreover,
in such case, the total space need not even be $\mathcal{C}$-amenable
and so in particular, $\mathcal{C}$-amenable maps are not closed
under composition. 
\begin{example}
\label{exa:Amenable_Composition}Let $p$ be an odd prime and let
$\Theta_{n}\coloneqq\Mod_{E_{n}}(\Sp_{K(n)})$. We consider the map
$B^{2}C_{p}\oto fB^{4}C_{p}$ classifying the cup-square operation
$x\mapsto x\cup x$ on mod $p$ cohomology, and the associated fiber
sequence 
\[
F\to B^{2}C_{p}\oto fB^{4}C_{p}.
\]

The only non-trivial homotopy groups of $F$ are $\pi_{2}F\simeq\pi_{3}F\simeq C_{p}$,
but the Postnikov invariant represented by $f$ in $H^{4}(B^{2}C_{p};C_{p})$
is non-zero. Using \exaref{Chromatic_Cardinality}, we have $|F|_{n}=|L^{n}F|_{0}$
and we can compute it using the fiber sequence 
\[
L^{n}F\to L^{n}B^{2}C_{p}\to L^{n}B^{4}C_{p},
\]
via the induced long exact sequence on homotopy groups. The only complication
arises at the level of $\pi_{0}$, where we need to compute the size
of the kernel of the cup-square map
\[
\pi_{0}L^{n}B^{2}C_{p}=H^{2}(T^{n};C_{p})\to H^{4}(T^{n};C_{p})=\pi_{0}L^{n}B^{4}C_{p}.
\]
Namely, the number of $n$-dimensional 2-forms over $C_{p}$ that
square to zero. Since this is the number of 2-forms of rank lower
or equal 1, one can write down an explicit combinatorial formula for
it. This leads to the following explicit formula:
\[
|F|_{n}=p^{\binom{n-1}{3}}\cdot\frac{p^{3-n}+p^{n}-p-1}{p^{2}-1}.
\]
In particular, taking $n=4$, we get $|F|_{4}=p^{3}+p-1$, which is
an \emph{invertible} element in $\pi_{0}E_{4}$. It follows that  $F$ is $\Theta_4$-amenable.
Nevertheless, 
\[
|F|_{4}|B^{4}C_{p}|_{4}=p^{3}+p-1
\]
which differs from 
\[
|B^{2}C_{p}|_{4}=p^{\binom{3}{2}}=p^{3}.
\]

Moreover, $F$ and $B^{4}C_{p}$ are both $\Theta_4$-amenable,
but $B^{2}C_{p}$ is not. Thus, the maps $B^{2}C_{p}\oto fB^{4}C_{p}$
and $B^{4}C_{p}\to\pt$ are $\Theta_4$-amenable, but their composition
is not.
\end{example}

The next example shows that $\mathcal{C}$-cardinality need not be
multiplicative when the fiber and \emph{total} space are $\mathcal{C}$-amenable.
Moreover, the base space in this case need not be $\mathcal{C}$-amenable
and so in particular the class of $\mathcal{C}$-amenable maps does
not satisfy ``left cancellation'' (compare \cite[Theorem 2.4.5]{Ambi2018}).
\begin{example}
\label{exa:Amenable_Cancelation}Let $p=2$ and $\Theta_{1}=\Mod_{E_{1}}(\Sp_{K(1)})$.
Consider the (non-principal) fiber sequence 
\[
\Sigma_{3}/C_{2}\to BC_{2}\to B\Sigma_{3}.
\]

It can be shown using \exaref{Chromatic_Cardinality}, that for $p=2$
we have 
\[
|B\Sigma_{3}|_{1}=|\widehat{L}_{p}B\Sigma_{3}|_{0}=\frac{2}{3}.
\]
In particular,
\[
|\Sigma_{3}/C_{2}||B\Sigma_{3}|=2\neq1=|BC_{2}|_{1}.
\]
Moreover, $\Sigma_{3}/C_{2}$ and $BC_{2}$ are $\Theta_{1}$-amenable,
but $B\Sigma_{3}$ is not. Thus, the map $BC_{2}\to B\Sigma_{3}$
and the composition $BC_{2}\to B\Sigma_{3}\to\pt$ are $\Theta_{1}$-amenable,
but $B\Sigma_{3}\to\pt$ is not.
\end{example}

\subsection{\label{subsec:Acyclicity}Acyclic Maps}

In this subsection, we show that the amenability of a loop-space is
equivalent to the ``triviality'' of limits and colimits over its
classifying space. This characterization is interesting in that it
does not directly involve the higher semiadditive structure. 

\subsubsection{Definitions \& basic properties}

We begin by introducing the notions of ``acyclicity'' and ``triviality'', which are not immediately related to the theory of ambidexterity:
\begin{defn}
\label{def:Acyclic_Map}Let $A\oto qB$ be a map of spaces and let
$\mathcal{C}$ be an $\infty$-category. We say that $q$ is $\mathcal{C}$\textbf{-acyclic}
(resp. $\mathcal{C}$\textbf{-trivial}) if $\mathcal{C}$ admits all
$q$-limits and $q$-colimits and $q^{*}\colon\mathcal{C}^{B}\to\mathcal{C}^{A}$
is fully faithful (resp. an equivalence). 
\end{defn}

It is a standard fact about adjoints, that $q^{*}$ is fully faithful,
if and only if the counit $q_{!}q^{*}\oto{c_{!}^{q}}\Id_{\mathcal{C}^{B}}$
is an isomorphism and if and only if the unit $\Id_{\mathcal{C}^{B}}\oto{u_{*}^{q}}q_{*}q^{*}$
is an isomorphism. Similarly, $q^{*}$ is an equivalence, if furthermore
$q^{*}q_{*}\oto{c_{*}^{q}}\Id_{\mathcal{C}^{A}}$ is an isomorphism,
or equivalently $\Id_{\mathcal{C}^{A}}\oto{u_{!}^{q}}q^{*}q_{!}$
is an isomorphism. Like ambidexterity and amenability, acyclicity
and triviality are fiber-wise conditions:
\begin{prop}
\label{prop:Acyclicity_Fiberwise}Let $\mathcal{C}\in\cat$. A map
of spaces $A\oto qB$ is $\mathcal{C}$-acyclic (resp. $\mathcal{C}$-trivial)
if and only if each fiber of $q$ is $\mathcal{C}$-acyclic (resp.
$\mathcal{C}$-trivial).
\end{prop}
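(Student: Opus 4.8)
The plan is to reduce both statements to the case $B=\pt$ by a base-change argument, in the same spirit as the proofs that ambidexterity and amenability are fiber-wise (cf. \remref{Ambi_Space}, \cite[Corollary 3.1.16]{Ambi2018}). First, by our conventions on $q$-(co)limits, the clause ``$\mathcal{C}$ admits all $q$-limits and $q$-colimits'' in \defref{Acyclic_Map} literally asserts that $\mathcal{C}$ admits all limits and colimits of shape $q^{-1}(b)$ for every $b\in B$, which is precisely the corresponding clause for each fiber; so that part is automatically fiber-wise, and we may assume it, obtaining the adjunctions $q_!\dashv q^*\dashv q_*$ and, for each $b$, $(q_b)_!\dashv q_b^*\dashv (q_b)_*$, where $q_b\colon q^{-1}(b)\to\pt$.

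Next, recall from the discussion following \defref{Acyclic_Map} that $q$ is $\mathcal{C}$-acyclic iff the counit $c_!^q\colon q_!q^*\to\Id_{\mathcal{C}^B}$ is an isomorphism, and $\mathcal{C}$-trivial iff moreover the unit $u_!^q\colon\Id_{\mathcal{C}^A}\to q^*q_!$ is an isomorphism (and the same for each $q_b$). The key input is base change for (co)limits indexed by spaces: for the pullback square determined by a point $b\colon\pt\to B$ and the fiber inclusion $i_b\colon q^{-1}(b)\into A$, there are canonical isomorphisms $b^*q_!\iso(q_b)_!i_b^*$ and $b^*q_*\iso(q_b)_*i_b^*$ (the fact used, e.g., in \cite[Section 4]{HopkinsLurie} to prove that ambidexterity is fiber-wise). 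Combined with $i_b^*q^*\simeq q_b^*b^*$, the Beck--Chevalley formalism identifies $b^*c_!^q$ with $c_!^{q_b}$ whiskered by $b^*$, and identifies $i_b^*u_!^q$ with $u_!^{q_b}$ whiskered by $i_b^*$.

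The acyclic case now follows: an isomorphism of functors valued in $\mathcal{C}^B=\fun(B,\mathcal{C})$ is detected after applying $b^*$ for each $b\in B$, and each $b^*$ is essentially surjective (split by the constant-diagram functor), so $c_!^q$ is an isomorphism iff $c_!^{q_b}$ is one for all $b$, i.e. iff each fiber is $\mathcal{C}$-acyclic. For the trivial case, having settled acyclicity, it remains to compare ``$u_!^q$ is an isomorphism'' with ``$u_!^{q_b}$ is an isomorphism for all $b$'', which one attempts by detecting $u_!^q$ objectwise on $A$, writing $a=i_b(a_0)$ with $b=q(a)$ and using $a^*=a_0^*i_b^*$ together with the base-change identification above. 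The converse implication (trivial fibers $\Rightarrow$ $q$ trivial) is the easy one: the identifications $\mathcal{C}^A\simeq\lim_{b\in B}\mathcal{C}^{q^{-1}(b)}$ and $q^*\simeq\lim_{b\in B}q_b^*$ exhibit $q^*$ as a limit of an objectwise equivalence of $B$-diagrams of $\infty$-categories.

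The main obstacle is the forward direction of the trivial case. Full faithfulness of $q^*$ restricts cleanly to the fibers, but its essential surjectivity does not: the fiber inclusions $i_b$ need not be fully faithful, so $i_b^*\colon\mathcal{C}^A\to\mathcal{C}^{q^{-1}(b)}$ need not be essentially surjective, and hence ``$u_!^q$ is an isomorphism'' a priori only forces $u_!^{q_b}$ to be an isomorphism on the essential image of $i_b^*$. Closing this gap is the crux: one must show that, given that $q_b$ is already $\mathcal{C}$-acyclic, the essential image of $i_b^*$ — which always contains every constant $q^{-1}(b)$-diagram — is nonetheless sufficient to force $q_b^*$ to be an equivalence. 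I expect this to be handled by combining the $\mathcal{C}$-acyclicity of $q_b$ with the base-change relations, after first reducing to the case where $B$ is connected.
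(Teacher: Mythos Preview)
Your treatment of the acyclic case is correct and is essentially the paper's argument. The paper packages the pointwise pullbacks into a single pullback along a section $\sigma\colon\pi_0 B\to B$, but the content is the same: the Beck--Chevalley identity $f^*\star c_!^q = c_!^{\tilde q}\star f^*$ together with the fact that $f^*$ (resp.\ each $b^*$) is both conservative and essentially surjective---the latter because constant diagrams split the restriction---gives both implications.

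The obstacle you isolate in the forward direction of the trivial case is genuine, and in fact cannot be closed: that implication is \emph{false} as stated. Take $\mathcal{C}=\Delta^1=\{0\to 1\}$ and $q\colon\pt\to BC_2$. Since $\Delta^1$ is a poset, every functor $BC_2\to\Delta^1$ is constant and $\fun(BC_2,\Delta^1)\simeq\Delta^1$ with $q^*$ the identity; thus $q$ is $\Delta^1$-trivial. But the fiber is $C_2$, and $(C_2)^*\colon\Delta^1\to\Delta^1\times\Delta^1$ is the diagonal, which misses $(0,1)$ and $(1,0)$; so $C_2$ is not $\Delta^1$-trivial. The asymmetry with the acyclic case is exactly what you diagnosed: for the counit one needs $f^*\colon\mathcal{C}^B\to\mathcal{C}^{\tilde B}$ essentially surjective, which it is (constants suffice), whereas for the unit one would need $\tilde f^*\colon\mathcal{C}^A\to\mathcal{C}^{\tilde A}$ essentially surjective, and restriction to a fiber generally is not.

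The paper's proof has the same gap. The cited identity $\tilde f^*(u_!^q)=u_!^{\tilde q}$ from \cite[Lemma~2.2.3]{Ambi2018} is the whiskered equality $\tilde f^*\star u_!^q = u_!^{\tilde q}\star\tilde f^*$; from $u_!^q$ invertible one only gets $u_!^{\tilde q}$ invertible on the essential image of $\tilde f^*$. (The converse direction, using conservativity of $\tilde f^*$ when $\tilde f$ is $\pi_0$-surjective, is fine---and your limit-of-equivalences argument for it is equally valid.) Fortunately the paper never uses the ``only if'' direction for triviality: all applications of \propref{Acyclicity_Fiberwise} invoke either acyclicity or the direction ``fibers trivial $\Rightarrow$ $q$ trivial''. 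So your write-up is complete once you either drop that direction or note the counterexample.
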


\begin{proof}
For each map $\widetilde{B}\oto fB$ we can form the following pullback
square of spaces and the induced commutative square of $\infty$-categories
\[
\xymatrix{\widetilde{A}\ar[d]^{\tilde{q}}\ar[rr]^{\tilde{f}} &  & A\ar[d]^{q}\\
\widetilde{B}\ar[rr]^{f} &  & B & ,
}
\qquad\xymatrix{\mathcal{C}^{B}\ar[d]^{q^{*}}\ar[rr]^{f^{*}} &  & \mathcal{C}^{\widetilde{B}}\ar[d]^{\tilde{q}^{*}}\\
\mathcal{C}^{A}\ar[rr]^{\tilde{f}^{*}} &  & \mathcal{C}^{\widetilde{A}}.
}
\]
By \cite[Lemma 2.2.3]{Ambi2018}, we have 
\[
f^{*}(c_{!}^{q})=c_{!}^{\tilde{q}}\quad\in\quad\map(\tilde{q}_{!}\tilde{q}^{*},\Id_{\mathcal{C}^{\tilde{B}}}).
\]
\[
\tilde{f}^{*}(u_{!}^{q})=u_{!}^{\tilde{q}}\quad\in\quad\map(\Id_{\mathcal{C}^{\tilde{A}}},\tilde{q}^{*}\tilde{q}_{!}).
\]
Thus, if $q$ is $\mathcal{C}$-acyclic (resp. $\mathcal{C}$-trivial),
then $\tilde{q}$ is $\mathcal{C}$-acyclic (resp. $\mathcal{C}$-trivial).
If $f$ (and hence also $\tilde{f}$) is surjective, then by the conservativity
of $f^{*}$ (and $\tilde{f}^{*}$), the converse holds as well. Namely,
if $\tilde{q}$ is $\mathcal{C}$-acyclic (resp. $\mathcal{C}$-trivial),
then $q$ is $\mathcal{C}$-acyclic (resp. $\mathcal{C}$-trivial).
Applying this to any section of $B\to\pi_{0}B=\widetilde{B}$ yields
the claim. 
\end{proof}
The collections of $\mathcal{C}$-acyclic and $\mathcal{C}$-trivial
spaces are also closed under extensions:
\begin{cor}
\label{cor:Acyclic_Spaces_Closure}Let $\mathcal{C}\in\cat$ and let
$A\oto qB$. If $B$ is $\mathcal{C}$-acyclic (resp. $\mathcal{C}$-trivial)
and all the fibers of $q$ are $\mathcal{C}$-acyclic (resp. $\mathcal{C}$-trivial),
then $A$ is $\mathcal{C}$-acyclic (resp. $\mathcal{C}$-trivial). 
\end{cor}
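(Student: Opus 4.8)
The plan is to factor the terminal map of $A$ through $q$, i.e.\ to write it as $A\oto{q}B\to\pt$, so that on pullback functors $A^{*}=q^{*}\circ B^{*}\colon\mathcal{C}\to\mathcal{C}^{A}$, and then to glue together the acyclicity (resp.\ triviality) of the two factors. In other words, this corollary is just the statement that the class of maps of spaces whose fibrewise pullback functor is fully faithful (resp.\ an equivalence) is closed under composition, read through that factorization.

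First I would apply \propref{Acyclicity_Fiberwise} to upgrade the hypothesis ``every fiber of $q$ is $\mathcal{C}$-acyclic (resp.\ $\mathcal{C}$-trivial)'' to ``$q$ itself is $\mathcal{C}$-acyclic (resp.\ $\mathcal{C}$-trivial)''. By \defref{Acyclic_Map} this records that $\mathcal{C}$ admits all $q$-limits and $q$-colimits---hence that $q^{*}\colon\mathcal{C}^{B}\to\mathcal{C}^{A}$ has both adjoints $q_{!}\dashv q^{*}\dashv q_{*}$---and that $q^{*}$ is fully faithful (resp.\ an equivalence). Symmetrically, the hypothesis on $B$ says $\mathcal{C}$ admits all $B$-limits and $B$-colimits, that $B^{*}\colon\mathcal{C}\to\mathcal{C}^{B}$ has both adjoints $B_{!}\dashv B^{*}\dashv B_{*}$, and that $B^{*}$ is fully faithful (resp.\ an equivalence).

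Then I would assemble the conclusion: since adjoints compose, $A^{*}=q^{*}\circ B^{*}$ admits the left adjoint $A_{!}=B_{!}q_{!}$ and the right adjoint $A_{*}=B_{*}q_{*}$, which is exactly the assertion that $\mathcal{C}$ admits all $A$-colimits and $A$-limits; and a composite of fully faithful functors (resp.\ of equivalences) is again fully faithful (resp.\ an equivalence), so $A^{*}$ is fully faithful (resp.\ an equivalence). Hence $A$ is $\mathcal{C}$-acyclic (resp.\ $\mathcal{C}$-trivial). I do not expect a genuine obstacle; the only point needing slight care is that ``$\mathcal{C}$ admits all $q$-(co)limits'' really produces the global adjoints $q_{!},q_{*}$ to $q^{*}\colon\mathcal{C}^{B}\to\mathcal{C}^{A}$ (the relevant pointwise Kan extensions along the map of spaces $q$), not merely fibrewise (co)limits---but this is the standard fact already used implicitly in \propref{Acyclicity_Fiberwise} and throughout, so it may be taken for granted.
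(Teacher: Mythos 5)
Your proof is correct and follows the same route as the paper: both use \propref{Acyclicity_Fiberwise} to reduce the fiberwise hypothesis on $q$ to acyclicity (resp.\ triviality) of $q$ itself, and then conclude by observing that such maps are closed under composition, via $A^{*}=q^{*}\circ B^{*}$. The paper's proof is just a terser version of exactly this argument.
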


\begin{proof}
By \propref{Acyclicity_Fiberwise}, it suffices to show that $\mathcal{C}$-acyclic
(resp. $\mathcal{C}$-trivial) maps are closed under composition,
which is clear from the definition. 
\end{proof}
In presence of a compatible monoidal structure, one can check the
acyclicity property on the unit:
\begin{lem}
\label{lem:Acyclic_Monoidal}Let
$A$ be a space and let $\mathcal{C}\in\alg(\cat)$ which is compatible with $A$-colimits. The space $A$ is $\mathcal{C}$-acyclic
if and only if the fold map $\one[A]\oto{\nabla}\one$ is an isomorphism.
\end{lem}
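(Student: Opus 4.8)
The plan is to exploit the monoidal structure to reduce the statement about the functor $A^*$ on all of $\mathcal{C}^B$ (here $B=\pt$, so $\mathcal{C}^B = \mathcal{C}$) to the single object $\one$. First I would recall that since $\mathcal{C}$ is compatible with $A$-colimits, the functor $A_! A^* = (-)[A] \colon \mathcal{C}\to\mathcal{C}$ is given by $X \mapsto X\otimes \one[A]$, because $A^* X = A^*(\one \otimes X) \simeq (A^*\one)\otimes(A^*X)$ and $A_!$ of this, by the projection formula (the tensor product preserving $A$-colimits in each variable), is $\one[A]\otimes X$. Under this identification, the counit $c_!^A\colon A_!A^* \Rightarrow \Id_{\mathcal{C}}$ corresponds to tensoring the fold map $\nabla\colon\one[A]\to\one$ with $X$; this is the content of the projection-formula compatibility, analogous to \cite[Lemma 3.3.4]{Ambi2018}.

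Granting that, the argument is immediate in both directions. If $A$ is $\mathcal{C}$-acyclic, then by definition $A^*$ is fully faithful, equivalently $c_!^A$ is an isomorphism (the standard fact about adjunctions recalled just before \propref{Acyclicity_Fiberwise}); evaluating at $X=\one$ gives that $\nabla\colon\one[A]\to\one$ is an isomorphism. Conversely, if $\nabla\colon\one[A]\to\one$ is an isomorphism, then tensoring with an arbitrary $X\in\mathcal{C}$ — which is a colimit-preserving, in particular isomorphism-preserving, operation — shows $c_!^A$ is a pointwise isomorphism, hence an isomorphism of natural transformations, hence $A^*$ is fully faithful, i.e. $A$ is $\mathcal{C}$-acyclic. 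One should also note at the outset that $\mathcal{C}$ admits all $A$-limits and $A$-colimits: $A$-colimits are assumed (compatibility with $A$-colimits includes their existence), and $A$-limits are not needed for the definition of acyclicity — re-reading \defref{Acyclic_Map}, acyclicity does require $q$-limits to exist, so I would add that a monoidal $\infty$-category compatible with $A$-colimits need not a priori have $A$-limits; but in the intended applications $\mathcal{C}$ is presentable, so this is automatic. If one wants the lemma stated with full generality, the cleanest fix is to observe that acyclicity only uses the unit/counit of $A^*\dashv A_*$ being invertible, and by the dual (using $A_*$ and the comonad) the same reduction applies, or simply to include existence of $A$-limits as part of ``compatible''.

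The main obstacle is purely bookkeeping: verifying that the counit $c_!^A$ really does correspond, under the projection-formula identification $A_!A^* \simeq \one[A]\otimes(-)$, to $\nabla\otimes(-)$ rather than to something twisted by a unit isomorphism. This is the step that requires care with the coherences, but it is exactly the kind of statement already established in \cite[Lemma 3.3.4]{Ambi2018} for the cardinality map, and the same proof — tracing the counit through the symmetric monoidal structure of $A_!\dashv A^*$ — applies verbatim. Everything else is a formal consequence of the elementary fact that a natural transformation between functors out of $\mathcal{C}$ is invertible iff it is pointwise invertible, combined with the fact that $\one$ generates the relevant behavior via $-\otimes\one\simeq\Id$.
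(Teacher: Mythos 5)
Your argument is correct and is essentially the paper's proof: compatibility with $A$-colimits identifies the counit $A_!A^*X\to X$ with $\nabla\otimes X\colon\one[A]\otimes X\to\one\otimes X$, so invertibility of $\nabla$ on $\one$ propagates to all objects, and the converse is evaluation at $\one$. Your side remark about the existence of $A$-limits (required by \defref{Acyclic_Map} but not obviously supplied by compatibility with $A$-colimits alone) is a fair observation that the paper's one-line proof also glosses over, and is harmless in the presentable settings where the lemma is applied.
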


\begin{proof}
Assume $\one[A]\oto{\nabla}\one$ is an isomorphism. By assumption,
for every $X\in\mathcal{C}$, tensoring the isomorphism $\one[A]\oto{\nabla}\one$
with $X$ gives the fold map $X[A]\oto{\nabla}X$. Hence, $A_! A^*X=X[A]\oto{\nabla}X$
is an isomorphism for all $X\in\mathcal{C}$. 
\end{proof}
The following is the prototypical example of acyclicity:
\begin{example}
[Bousfield]\label{exa:Acyclic_Bousfield}For $E\in\Sp$, we can consider
the $\infty$-category $\Sp_{E}$ of $E$-local spectra. By \lemref{Acyclic_Monoidal},
a space is $\Sp_{E}$-acyclic if and only if it is $E$-acyclic in
the sense of Bousfield, i.e. has the $E$-homology of a point.
\end{example}

\begin{rem}
\label{rem:Acyclic_Warning}For an $\infty$-category $\mathcal{C}$,
a space $A$ is $\mathcal{C}$-acyclic if the following equivalent
conditions hold:
\begin{enumerate}
\item The fold map $X[A]\oto{\nabla}X$ is an isomorphism for all $X\in\mathcal{C}$. 
\item The diagonal map $X\oto{\Delta}X^{A}$ is an isomorphism for all $X\in\mathcal{C}$. 
\end{enumerate}
We warn the reader that for an \emph{individual} object $X$, it can
happen that the map $X\oto{\Delta}X^{A}$ is an isomorphism, but $X[A]\oto{\nabla}X$
is not (and vise versa). As a trivial example, consider $\mathcal{C}=\mathcal{S}$
with $X=\pt$ and any $A\neq\pt$. 
\end{rem}

\subsubsection{Relation to amenability}

Under suitable ambidexterity assumptions, the notion of $\mathcal{C}$-acyclicity
turns out to be closely related to that of $\mathcal{C}$-amenability.
To begin with, recall that for a $\mathcal{C}$-ambidextrous space
$A$ and an object $X\in\mathcal{C}$, the map $|A|_{X}$ is given
by the composition 
\[
X\oto{\Delta}X^{A}\oto{\nm_{A}^{-1}}X[A]\oto{\nabla}X.
\]
Thus, if $A$ is $\mathcal{C}$-acyclic, then it is in particular
$\mathcal{C}$-amenable. However, there is a deeper connection between
acyclicity and amenability, which we first state on an object-wise
level:
\begin{prop}
\label{prop:Amenable_Acyclic_Object} Let $\mathcal{C}\in\cat$ and
let $A$ be a connected $\mathcal{C}$-ambidextrous space. For every
$X\in\mathcal{C}$, the following are equivalent: 
\begin{enumerate}
\item The fold map $X[A]\oto{\nabla}X$ is an isomorphism. 
\item The diagonal $X\oto{\Delta}X^{A}$ is an isomorphism.
\item $|\Omega A|_{X}$ is invertible.
\end{enumerate}
Moreover, $|A|_{X}$ is then the inverse of $|\Omega A|_{X}$.
\end{prop}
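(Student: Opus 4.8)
The plan is to prove the three-way equivalence by reducing to a single fundamental identity relating the maps $|A|_{X}$ and $|\Omega A|_{X}$ via the loop fibration $\Omega A \to \pt \to A$. Since $A$ is connected, this is a principal fiber sequence, and the machinery of \propref{Amenable_Descent} (or rather, its object-wise refinement) applies. First I would recall that for the $\mathcal{C}$-ambidextrous connected space $A$, the space $\Omega A$ is $\mathcal{C}$-ambidextrous as well — this is because the fibers of the diagonal $A \to A \times A$ are the path spaces, which are copies of $\Omega A$, so weak ambidexterity of $A$ gives ambidexterity of $\Omega A$. Next, the key computation: by applying \propref{Amenable_Descent}-style reasoning to the principal map $\pt \to A$ with fiber $\Omega A$ — but being careful that we only know $|\Omega A|_X$ is invertible in direction (3)$\Rightarrow$ — I would instead establish unconditionally the relation
\[
|A|_{X}\circ|\Omega A|_{X} = \Id_{X} = |\Omega A|_{X}\circ|A|_{X}
\]
whenever both sides are defined, or more precisely that the composite $X \oto{\Delta} X^{A} \oto{\nm_A^{-1}} X[A] \oto{\nabla} X$ interacts with the corresponding composite for $\Omega A$ in a way that makes one a two-sided inverse of the other after suitable identification. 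Actually the cleanest route is: the map $|q|_{A^{*}X}$ for $q\colon \pt \to A$ equals $A^{*}(|A|_X)$ reinterpreted through the section, and simultaneously, since the fiber is $\Omega A$, the same transformation restricts to $|\Omega A|$ on each fiber; combining via \propref{Cardinality_Additivity} gives $|A|_X \cdot(\text{something}) = |\Omega A|_X$-related identity. Let me instead organize it as two implications plus the "moreover".

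For (1)$\Leftrightarrow$(2): this is immediate from the standard adjunction fact already recalled in the text — $q^{*}$ fully faithful is equivalent to counit $q_!q^{*}\simeq\Id$ being iso and equivalent to unit $\Id\simeq q_*q^{*}$ being iso — applied object-wise, noting that $X[A] = A_!A^{*}X$ and $X^{A}=A_*A^{*}X$, and that the relevant unit/counit components are exactly $\nabla$ and $\Delta$. One must check the object-wise version holds, but since everything in sight is computed by the $A$-colimit/$A$-limit of the constant diagram at $X$ (which exists by ambidexterity), this is formal. For (2)$\Rightarrow$(3) and (3)$\Rightarrow$(2): here I would use the factorization $|A|_{X} = \nabla \circ \nm_A^{-1}\circ \Delta$. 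If $\Delta\colon X \to X^{A}$ is an isomorphism, then by (1)$\Leftrightarrow$(2) so is $\nabla$, hence $|A|_X$ is invertible, being a composite of isomorphisms; then I need to leverage the principal fiber sequence to identify $|A|_X^{-1}$ with $|\Omega A|_X$. Conversely if $|\Omega A|_X$ is invertible, I want to deduce $\Delta$ is an isomorphism, which is where the real content lies.

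The main obstacle — and the step I expect to require the most care — is establishing the precise identity $|A|_{X}\cdot|\Omega A|_{X} = \Id_X$ (and the reverse composite), i.e., that when $|\Omega A|_X$ is invertible, not only is $A$ object-wise-ambidextrous-behaved but $|A|_X$ is literally its inverse. The argument should mirror the proof of \propref{Amenable_Descent}: base-changing the map $q\colon \pt\to A$ along itself gives $\Omega A \to \pt$ with a section, so $\widetilde{q}$ is the projection $\Omega A \times \pt \to \pt$, whence $|q|^{2} = |q|\cdot|\pi|$ type relations force $|q|_{A^{*}X}$ to be "constant" equal to $A^{*}(|\Omega A|_X)$ once the latter is invertible. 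Integrating along $A$ then yields $|\pt|_X = |\Omega A|_X \cdot |A|_X$, but $|\pt|_X = \Id_X$, giving the inverse relation in one direction; the other composite follows by a symmetric argument or by noting invertibility of one factor. Once this identity is in hand, both $(2)\Rightarrow(3)$ (via the factorization showing $|A|_X$ invertible, hence $|\Omega A|_X$ invertible as a factor — wait, need both factors, so really one argues $|A|_X$ invertible and uses the identity to get $|\Omega A|_X = |A|_X^{-1}$) and $(3)\Rightarrow(2)$ (from $|\Omega A|_X$ invertible conclude $|A|_X$ invertible via the identity, then $\nm_A^{-1}$ being always iso, back out that $\Delta$ composed with an iso is an iso, but $\Delta$ precedes $\nm_A^{-1}$, so $\Delta = \nm_A \circ \nabla^{-1}\circ|A|_X$... one must check $\nabla$ is iso too — use that $\nabla\circ\nm_A^{-1}\circ\Delta$ invertible with $\nm_A$ iso does not immediately split $\nabla$ and $\Delta$ individually, so I would instead invoke a Beck--Chevalley/self-base-change argument to get $\nabla$ and $\Delta$ simultaneously) fall out. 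The "moreover" is then just a restatement of the identity.
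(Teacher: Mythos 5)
Your overall strategy (exploit the loop fibration $\Omega A\to\pt\to A$ and the identity $|\Omega A|_{X}\cdot|A|_{X}=\Id_{X}$) is the right one, but the proposal has a genuine gap at its foundation: the claim that (1)$\Leftrightarrow$(2) is ``immediate from the standard adjunction fact\dots applied object-wise'' and ``formal''. It is not. The unit/counit criterion for full faithfulness is an all-objects statement; for an \emph{individual} $X$ it can happen that $\Delta\colon X\to X^{A}$ is invertible while $\nabla\colon X[A]\to X$ is not (take $\mathcal{C}=\mathcal{S}$, $X=\pt$, $A\ne\pt$), and ambidexterity of $A$ does not by itself repair this: knowing $\nm_{A}$ is invertible only tells you $X[A]\simeq X^{A}$, not that the unit and counit components at $X$ are simultaneously invertible. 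The paper never proves (1)$\Leftrightarrow$(2) directly; it closes the cycle (2)$\Rightarrow$(3)$\Rightarrow$(1)$\Rightarrow$(2), where the last step uses 2-out-of-3 on the factorization of $|A|_{X}$ \emph{after} invertibility of $|A|_{X}$ has already been secured. Since your proof of (2)$\Rightarrow$(3) leans on the unproved equivalence (1)$\Leftrightarrow$(2) to get $\nabla$ invertible, and your identification of $|A|_{X}^{-1}$ with $|\Omega A|_{X}$ via \propref{Amenable_Descent} requires $|\Omega A|_{X}$ to be invertible \emph{as a hypothesis}, the direction (2)$\Rightarrow$(3) is not actually established. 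The paper's trick here is different and essential: from $\Delta$ invertible one deduces that $e^{*}\colon\map(A^{*}X,A^{*}X)\to\map(X,X)$ is an isomorphism (2-out-of-3 against $A^{*}$), so the identity $|e|_{A^{*}X}=A^{*}(|\Omega A|_{X})$ need only be checked after applying $e^{*}$, where it follows from the base-change square $\Omega A=\pt\times_{A}\pt$; integrating over $A$ then gives $\Id_{X}=|A|_{X}|\Omega A|_{X}$ with no invertibility hypothesis on $|\Omega A|_{X}$.

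The second gap is (3)$\Rightarrow$(1)/(2). You correctly observe that invertibility of the composite $\nabla\circ\nm_{A}^{-1}\circ\Delta=|A|_{X}$ does not split into invertibility of $\Delta$ and $\nabla$ separately, but the ``Beck--Chevalley/self-base-change argument'' you invoke to resolve this is not supplied, and this is exactly where the real work sits: the paper outsources it to \cite[Proposition 3.1.18]{Ambi2018} (checked to be object-wise), which shows that invertibility of $|\Omega A|_{X}$ forces $\nabla$ to be an isomorphism. Once (3)$\Rightarrow$(1) is in hand, (1)$\Rightarrow$(2) does follow by your 2-out-of-3 argument, and the ``moreover'' is the integration identity. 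So the skeleton is salvageable, but you must (i) abandon the direct object-wise (1)$\Leftrightarrow$(2), (ii) prove $\Id_{X}=|A|_{X}|\Omega A|_{X}$ under hypothesis (2) (and symmetrically under (1)) without assuming $|\Omega A|_{X}$ invertible, and (iii) supply or cite an actual proof of (3)$\Rightarrow$(1).
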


\begin{proof}
Let $\pt\oto eA$ be a base point. First, we show that if the diagonal
map $X\oto{\Delta}X^{A}$ is an isomorphism, then $|A|_{X}|\Omega A|_{X}=\Id_{X}$.
We begin by reducing the claim to the fact that the map
\[
|e|_{A^{*}X}\colon A^{*}X\to A^{*}X
\]
equals the constant map on $|\Omega A|_{X}$. Indeed, given that,
by integrating along $A$, we get (\cite[Propositions 2.1.15 and 3.1.13]{Ambi2018})
\[
\Id_{X}=\int\limits _{A\circ e}\Id_{e^{*}A^{*}X}=\int\limits _{A}\left(\int\limits _{e}\Id_{e^{*}A^{*}X}\right)=\int\limits _{A}|e|_{A^{*}X}=\int\limits _{A}A^{*}(|\Omega A|_{X})=|A|_{X}|\Omega A|_{X}.
\]

Now, recall that the diagonal $X\oto{\Delta}X^{A}=A_{*}A^{*}X$ is
the unit $u_{*}^{A}$ of the adjunction $A^{*}\colon\mathcal{C}\adj\mathcal{C}^{A}\colon A_{*}.$
Thus, if $u_{*}^{A}$ is an isomorphism at $X$, then the map 
\[
\map(X,X)\to\map(A^{*}X,A^{*}X)
\]
is an isomorphism. Since $e^{*}A^{*}=\Id$, it follows by 2-out-of-3
that the map
\[
\map(A^{*}X,A^{*}X)\to\map(e^{*}A^{*}X,e^{*}A^{*}X)=\map(X,X)
\]
is an isomorphism as well. Thus, it suffices to check that the maps
$|e|_{A^{*}X}$ and $A^{*}(|\Omega A|_{X})$ coincide after applying
$e^{*}$. The pullback square of spaces
\[
\xymatrix{\Omega A\ar[d]\ar[r] & \pt\ar[d]^{e}\\
\pt\ar[r]^{e} & A
}
\]
gives by \cite[Proposition 3.1.13]{Ambi2018}
\[
e^{*}(|e|_{A^{*}X})=|\Omega A|_{e^{*}A^{*}X}=|\Omega A|_{X}.
\]

This concludes the proof of (2)$\implies$(3) and that $|A|_{X}$
is the inverse of $|\Omega A|_{X}$. A completely symmetric argument
using the adjunction $A_{!}\dashv A^{*}$ instead of $A^{*}\dashv A_{*}$
shows that (1)$\implies$(3) and that $|A|_{X}$ is the inverse of
$|\Omega A|_{X}$. The implication (3)$\implies$(1) in the case that
$|\Omega A|_{X}$ is invertible for \emph{all} $X\in\mathcal{C}$
is given by \cite[Proposition  3.1.18]{Ambi2018}. One easily checks that all the arguments are,
in fact, object-wise. Alternatively, one can run the argument on the
full subcategory of objects on which $|\Omega A|$ is invertible.
In particular, each of the conditions (1), (2), and (3) implies that
$|A|_{X}$ is the inverse of $|\Omega A|_{X}$ and so, in particular,
is invertible. Finally, the composition
\[
X\oto{\Delta}X^{A}\oto{\nm_{A}}X[A]\oto{\nabla}X
\]
is $|A|_{X}$. Thus, by 2-out-of-3, we also have the implication (1)$\implies$(2). 
\end{proof}
From this we get:
\begin{cor}
\label{cor:Amenable_Acyclic}Let $\mathcal{C}\in\cat$ and let $A$
be a connected space. The following conditions are equivalent:
\begin{enumerate}
\item $A$ is $\mathcal{C}$-acyclic and $\mathcal{C}$-ambidextrous. 
\item $\Omega A$ is $\mathcal{C}$-amenable and $\mathcal{C}$ admits $A$-colimits.
\end{enumerate}
In which case $|A|=|\Omega A|^{-1}$.
\end{cor}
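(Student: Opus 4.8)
The plan is to deduce the corollary from its object-wise refinement \propref{Amenable_Acyclic_Object} by quantifying over all $X\in\mathcal{C}$ and translating conditions on individual objects into conditions on natural transformations and on the functors $A^{*},A_{!},A_{*}$. The only step that is not a formal translation is promoting weak ambidexterity plus amenability of $\Omega A$ to full $\mathcal{C}$-ambidexterity of $A$, for which I would invoke \cite[Proposition 3.1.17]{Ambi2018} (the same step used in the proof of \propref{Cardinality_Height_0}).

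For $(1)\Rightarrow(2)$: assume $A$ is $\mathcal{C}$-acyclic and $\mathcal{C}$-ambidextrous. In particular $\mathcal{C}$ admits $A$-colimits, and $A$ is weakly $\mathcal{C}$-ambidextrous; since $A$ is connected its path spaces are all equivalent to $\Omega A$, so by \remref{Ambi_Space} the space $\Omega A$ is $\mathcal{C}$-ambidextrous. By $\mathcal{C}$-acyclicity the counit $A_{!}A^{*}\to\Id_{\mathcal{C}}$ is an isomorphism, i.e. the fold map $X[A]\to X$ is an isomorphism for every $X$, so \propref{Amenable_Acyclic_Object} gives that $|\Omega A|_{X}$ is invertible for every $X$. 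Hence $|\Omega A|\in\End(\Id_{\mathcal{C}})$ is invertible and $\Omega A$ is $\mathcal{C}$-amenable.

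For $(2)\Rightarrow(1)$: assume $\Omega A$ is $\mathcal{C}$-amenable and $\mathcal{C}$ admits $A$-colimits. As above, connectivity of $A$ together with the $\mathcal{C}$-ambidexterity of $\Omega A$ shows that $A$ is weakly $\mathcal{C}$-ambidextrous; then \cite[Proposition 3.1.17]{Ambi2018} shows that $A$ is in fact $\mathcal{C}$-ambidextrous, so $\mathcal{C}$ also admits $A$-limits. Now \propref{Amenable_Acyclic_Object} applies: since $|\Omega A|_{X}$ is invertible for every $X$, the fold map $X[A]\to X$ is an isomorphism for every $X$, i.e. the counit $A_{!}A^{*}\to\Id_{\mathcal{C}}$ is a natural isomorphism, so $A^{*}$ is fully faithful; together with the existence of $A$-limits and $A$-colimits this is exactly the statement that $A$ is $\mathcal{C}$-acyclic (\defref{Acyclic_Map}). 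Finally, in either case \propref{Amenable_Acyclic_Object} also yields $|A|_{X}=(|\Omega A|_{X})^{-1}$ for all $X$, and since composition of natural endotransformations of $\Id_{\mathcal{C}}$ is computed object-wise, $|A|=|\Omega A|^{-1}$ in $\End(\Id_{\mathcal{C}})$.

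I expect the main obstacle to be precisely the asymmetry in the hypotheses: condition $(2)$ only postulates $A$-colimits, while both $\mathcal{C}$-ambidexterity and $\mathcal{C}$-acyclicity also require $A$-limits, and bridging this gap is not formal — it rests on \cite[Proposition 3.1.17]{Ambi2018}. Everything else is bookkeeping: turning the object-wise equivalences of \propref{Amenable_Acyclic_Object} into statements about natural transformations, and using the standard fact (recorded after \defref{Acyclic_Map}) that full faithfulness of $q^{*}$ is detected by invertibility of its counit.
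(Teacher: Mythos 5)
Your proof is correct and follows essentially the same route as the paper, which likewise deduces the corollary from \propref{Amenable_Acyclic_Object} together with \cite[Proposition 3.1.17]{Ambi2018} to upgrade amenability of $\Omega A$ to ambidexterity of $A$. You have merely spelled out the bookkeeping (passing between object-wise statements and natural transformations, and using connectivity to identify the path spaces with $\Omega A$) that the paper leaves implicit.
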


\begin{proof}
This follows from \propref{Amenable_Acyclic_Object} and the fact
that if $\Omega A$ is $\mathcal{C}$-amenable, then $A$ is $\mathcal{C}$-ambidextrous
by \cite[Proposition 3.1.17]{Ambi2018}.
\end{proof}
We note that the assumption of $\mathcal{C}$-ambidexterity in (1)
of \corref{Amenable_Acyclic} can not be relaxed to \emph{weak} $\mathcal{C}$-ambidexterity.
The following is a simple counter-example:
\begin{example}
Let $\mathcal{C}=\mathrm{Vec}_{\bb F_{p}}$ be the $1$-category of
$\bb F_{p}$-vector spaces and $A=BC_{p}$. It is clear that $A$
is weakly $\mathcal{C}$-ambidextrous (as $\mathrm{Vec}_{\bb F_{p}}$
is semiadditive) and that $BC_{p}$ is $\mathcal{C}$-acyclic. However,
$BC_{p}$ is not $\mathcal{C}$-ambidextrous (and $C_{p}$ is not
$\mathcal{C}$-amenable). 
\end{example}

\begin{rem}
The notion of acyclicity can be considered for a general (not necessarily
$\pi$-finite) space in any $\infty$-category, without any assumptions
on ambidexterity. However, in the presence of ambidexterity, \corref{Amenable_Acyclic}
allows us to deduce the acyclicity of a given $\pi$-finite space
from the amenability of its loop space. This strategy was already
employed in the proof of \cite[Theorem E]{Ambi2018} (exploiting the $\infty$-semiadditivity
of $\Sp_{T(n)}$). 
\end{rem}

We conclude this subsection by showing an analogue of the equivalence
of (1) and (3) in \propref{Amenable_Acyclic_Object}, for the notions
of $\mathcal{C}$-acyclicity and $\mathcal{C}$-triviality. As before,
it is clear that a $\mathcal{C}$-trivial space is, in particular, $\mathcal{C}$-acyclic,
but there is a better statement:
\begin{prop}
\label{prop:Acyclic_Trivial} Let $A$ be a connected space and let
$\mathcal{C}$ be an $\infty$-category which admits $\Omega A$-colimits.
Then, $A$ is $\mathcal{C}$-trivial if and only if $\Omega A$ is
$\mathcal{C}$-acyclic.
\end{prop}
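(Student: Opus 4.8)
The plan is to prove both implications by comparing the two adjunctions $A^*\dashv A_*$ and $\Omega A$-level data, using the same "loop-space descent" technique that appears in \propref{Amenable_Acyclic_Object} and \corref{Amenable_Acyclic}, but now tracking \emph{all four} (co)unit maps rather than just cardinalities. Recall that $A$ is $\mathcal{C}$-trivial means $A^*\colon\mathcal{C}\to\mathcal{C}^A$ is an equivalence, equivalently that \emph{both} the counit $A_!A^*\to\Id$ and the unit $\Id\to A^*A_!$ are isomorphisms; $A$ is $\mathcal{C}$-acyclic means only the first of these (equivalently the unit $\Id\to A_*A^*$) is an isomorphism. So the content is: if $A^*$ is fully faithful, then it is automatically essentially surjective precisely when $\Omega A$ is $\mathcal{C}$-acyclic, and conversely.

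First I would fix a basepoint $\pt\oto eA$, with fiber $\Omega A=\pt\times_A\pt$, and set up the standard base-change square relating $e^*\colon\mathcal{C}^A\to\mathcal{C}$ to the projection $(\Omega A)\to\pt$. Since $e^*A^*=\Id_{\mathcal{C}}$ and $e^*$ is conservative (as $e$ is surjective onto the unique component of the connected space $A$), the functor $A^*$ is an equivalence if and only if it is fully faithful \emph{and} $e^*\colon\mathcal{C}^A\to\mathcal{C}$ is conservative-plus-the-essential-image condition holds — more precisely, $A^*$ is essentially surjective iff for every $Y\in\mathcal{C}^A$ the natural map $A^*e^*Y\to Y$ (the counit of $e^*\dashv e_*$ composed appropriately, or rather built from the $A^*\dashv A_*$ unit at $e^*Y$) is an isomorphism. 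The key step is to identify, via \cite[Lemma 2.2.3]{Ambi2018} applied to the pullback square with corners $\Omega A,\pt,\pt,A$, the restriction $e^*$ applied to the unit $u_*^A\colon\Id_{\mathcal{C}^A}\to A^*A_*$ — or the relevant comparison map — with the unit $\Id\to (\Omega A)_*(\Omega A)^*$ on $\mathcal{C}$, i.e.\ with the diagonal $Z\to Z^{\Omega A}$. This is exactly the mechanism by which "$\Omega A$-acyclicity" (the statement that $Z\to Z^{\Omega A}$ is an isomorphism for all $Z$, equivalently $Z[\Omega A]\to Z$ is an isomorphism by \remref{Acyclic_Warning} applied to $\Omega A$, though one must be slightly careful since $\Omega A$ need not be connected) enters.

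Concretely, for the direction "$A$ $\mathcal{C}$-trivial $\Rightarrow$ $\Omega A$ $\mathcal{C}$-acyclic": if $A^*$ is an equivalence then in particular it is fully faithful, so the unit $\Id_{\mathcal{C}}\to A_*A^*$ is an isomorphism, hence $A$ is $\mathcal{C}$-acyclic; but an equivalence $A^*$ also forces $e^*\colon\mathcal{C}^A\to\mathcal{C}$ to be an equivalence (it is a retraction of the equivalence $A^*$ up to the identification $e^*A^*=\Id$, and $e^*$ is conservative, so being a retract of an equivalence along a conservative functor it is an equivalence), and then transporting the unit of $A^*\dashv A_*$ across $A^*$ and restricting along $e^*$ identifies it with the diagonal $Z\to Z^{\Omega A}$, which is therefore an isomorphism for all $Z$ — i.e.\ $\Omega A$ is $\mathcal{C}$-acyclic. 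For the converse, suppose $\Omega A$ is $\mathcal{C}$-acyclic, so $Z\to Z^{\Omega A}$ is an isomorphism for all $Z\in\mathcal{C}$; I want to show $A^*$ is an equivalence. Fully faithfulness of $A^*$ follows because $\mathcal{C}^A\ni Y$: the composite $A^*A_*Y$-to-$Y$ analysis, restricted along the conservative $e^*$, reduces to the diagonal for $\Omega A$ by the base-change identity, giving that the counit $A^*A_*\to\Id_{\mathcal{C}^A}$ — hence by adjunction the unit $\Id_{\mathcal{C}}\to A_*A^*$ — is an isomorphism (this is essentially \corref{Amenable_Acyclic} packaged for the counit); and essential surjectivity follows from the same identity applied to an arbitrary $Y\in\mathcal{C}^A$ together with conservativity of $e^*$. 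The main obstacle I anticipate is the bookkeeping in the second paragraph: correctly matching, under \cite[Lemma 2.2.3]{Ambi2018}, the appropriate comparison $2$-morphism on $\mathcal{C}^A$ with the diagonal/fold map for $\Omega A$ on $\mathcal{C}$ — in particular handling the fact that $\Omega A$ is in general \emph{disconnected} so that $\mathcal{C}$-acyclicity of $\Omega A$ must be phrased via all fibers (each $\Omega^k A$-style component) as in \propref{Acyclicity_Fiberwise}, rather than assuming the clean "$X[\Omega A]\simeq X$" form — and making sure the conservativity-of-$e^*$ argument for essential surjectivity is watertight, since a fully faithful functor with a conservative retraction need not a priori be essentially surjective without using the acyclicity input.
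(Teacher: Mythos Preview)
Your plan can be pushed through, but it is far more elaborate than needed, and one step is not right as stated. You write that showing the counit $A^*A_*\to\Id_{\mathcal{C}^A}$ is an isomorphism gives ``hence by adjunction the unit $\Id_{\mathcal{C}}\to A_*A^*$'' is an isomorphism. These are genuinely different conditions: the first says $A_*$ is fully faithful, the second says $A^*$ is. They are not related by any zigzag identity alone. What \emph{does} bridge them is the observation that $A^*$ is conservative (since $e^*A^*=\Id$), and a conservative left adjoint whose right adjoint is fully faithful is an equivalence---but you never make this explicit, and the phrase ``by adjunction'' obscures that this is where the real work happens. Your worries at the end about bookkeeping and about ``fully faithful with a conservative retraction'' are symptoms of the same missing step.

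The paper's proof avoids all of this with one move: rather than analyze $A^*$, analyze $e^*$. Since $e^*A^*=\Id_{\mathcal{C}}$, the functor $A^*$ is an equivalence if and only if $e^*$ is (a one-sided inverse of an equivalence is an equivalence). Moreover $e^*$ is automatically essentially surjective, so $e^*$ is an equivalence if and only if it is fully faithful, i.e.\ if and only if $e$ is $\mathcal{C}$-acyclic. Since acyclicity is fibrewise (\propref{Acyclicity_Fiberwise}) and the fibre of $e$ is $\Omega A$, this is exactly $\Omega A$ being $\mathcal{C}$-acyclic. No base-change identities, no separate treatment of full faithfulness and essential surjectivity, no worry about $\Omega A$ being disconnected---that last point is absorbed entirely into the fibrewise statement.
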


\begin{proof}
Let $\pt\oto eA$ be a base point. The composition
\[
\mathcal{C}\oto{A^{*}}\mathcal{C}^{A}\oto{e^{*}}\mathcal{C}
\]
is the identity. This implies that $A^{*}$ is an equivalence if and
only if $e^{*}$ is. Moreover, it also implies that $e^{*}$ is essentially
surjective and hence an equivalence if and only if it is fully faithful. Namely, if and only if $e$ is $\mathcal{C}$-acyclic. Since $\mathcal{C}$-acyclicity
is a fiber-wise condition (\propref{Acyclicity_Fiberwise}), $e$
is $\mathcal{C}$-acyclic if and only if $\Omega A$ is $\mathcal{C}$-acyclic.
\end{proof}

\section{Height}

In this section, we introduce the notion of ``semiadditive height'' for objects in higher semiadditive $\infty$-categories, which is the  central object of study in this paper. 
We establish here the most general
properties of this notion, while those related to stability will be
differed to the next section. 

\subsection{Semiadditive Height}

The definition of height for a higher semiadditive $\infty$-category
depends on a choice of a prime $p$. In fact, it suffices to have
a certain ``$p$-typical'' version of $m$-semiadditivity, in which
one requires ambidexterity only for $m$-finite $p$-spaces. To emphasize
the relevant structure, and for some future applications, we shall
develop the basic theory of height in this level of generality. However,
for an $\infty$-category $\mathcal{C}$ which is $0$-semiadditive
and $p$-local, the ``$p$-typical'' version of higher semiadditivity
will turn out to be equivalent to ordinary higher semiadditivity (\propref{p_Local_n_Semiadd}).
Thus, for the applications considered in this paper, this point is
of minor importance. 

\subsubsection{$p$-Typical semiadditivity}

We begin with a definition of the following ``$p$-typical'' version
of higher semiadditivity:
\begin{defn}
\label{def:pTyp_Semiadd}Let $p$ be a prime and $0\le m\le\infty$.
We say that
\begin{enumerate}
\item An $\infty$-category $\mathcal{C}$ is $p$\textbf{-typically} $m$\textbf{-semiadditive}
if all $m$-finite $p$-spaces are $\mathcal{C}$-ambidextrous. 
\item A functor $F\colon\mathcal{C}\to\mathcal{D}$ between such is \textbf{$p$-typically
$m$-semiadditive} if it preserves all $m$-finite $p$-space colimits.
\item An $\mathcal{O}$-monoidal $\infty$-category $\mathcal{C}$, for
some $\infty$-operad\emph{ $\mathcal{O}$,} is \textbf{$p$-typically
$m$-semiadditively $\mathcal{O}$-monoidal} if it is $p$-typically
$m$-semiadditive and is compatible with  $m$-finite $p$-space colimits.
\end{enumerate}
We denote by $\acat^{\psad m}\subset\cat$ the subcategory of $p$-typically
$m$-semiadditive $\infty$-categories and $p$-typically $m$-semiadditive
functors. 
\end{defn}

It is clear that an $m$-semiadditive $\infty$-category or functor
are also $p$-typically $m$-semiadditive for every prime $p$. It
is useful to know that to verify $p$-typical $m$-semiadditivity,
it suffices to consider only the ``building blocks'':
\begin{prop}
\label{prop:pTyp_Semiad}Let $0\le m\le\infty$. 
\begin{enumerate}
\item An $\infty$-category $\mathcal{C}\in\acat^{\sad 0}$ is $p$-typically
$m$-semiadditive if and only if $B^{k}C_{p}$ is $\mathcal{C}$-ambidextrous
for all $k=1,\dots,m$.
\item For $\mathcal{C},\mathcal{D}\in\cat^{\psad m}$, a $0$-semiadditive
functor $F\colon\mathcal{C}\to\mathcal{D}$ is $p$-typically $m$-semiadditive
if and only if it preserves $B^{k}C_{p}$-(co)limits for all $k=1,\dots,m$.
\end{enumerate}
\end{prop}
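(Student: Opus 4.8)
The plan is to reduce $p$-typical $m$-semiadditivity to a statement about Eilenberg--MacLane $p$-spaces $B^k C_p$ by a double induction: an outer induction on the truncation level, using the fact (\remref{Ambi_Space}) that ambidexterity is a fiber-wise property, together with a "building-block" decomposition of $m$-finite $p$-spaces. The only-if direction of both (1) and (2) is trivial, since each $B^k C_p$ is an $m$-finite $p$-space (for $k \le m$) and hence $\mathcal{C}$-ambidextrous, and the $B^k C_p$-(co)limits are a special case of $m$-finite $p$-space colimits. So the content is the if direction.

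For (1), I would argue by induction on $k$ that every $k$-finite $p$-space $A$ is $\mathcal{C}$-ambidextrous, given that $B^j C_p$ is $\mathcal{C}$-ambidextrous for $j = 1,\dots,k$. The base case $k=0$: a $0$-finite $p$-space is a finite set, which is ambidextrous since $\mathcal{C}$ is $0$-semiadditive. For the inductive step, let $A$ be a connected $k$-finite $p$-space (by \remref{Cardinality_Arithmetic}-style additivity and \remref{Ambi_Space} we may reduce to the connected case, handling $\pi_0$ by finite coproducts). Then $\Omega A$ is a $(k-1)$-finite $p$-space, so by induction $\Omega A$ is $\mathcal{C}$-ambidextrous, and in fact the principal fiber sequence $\Omega A \to \pt \to A$ together with \cite[Proposition 3.1.17]{Ambi2018} would give that $A$ is $\mathcal{C}$-ambidextrous \emph{provided} $\Omega A$ is $\mathcal{C}$-amenable. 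So the real issue is to promote "ambidextrous" to "amenable" for $\Omega A$ when $A$ is connected, or — better — to directly show $A$ is ambidextrous without amenability. The cleaner route: filter $A$ by its Postnikov tower. Since $A$ is a connected $k$-finite $p$-space, it is built as an iterated principal fibration with fibers $B^j C_p^{\oplus r_j}$ for $1 \le j \le k$ (each Postnikov layer $B^j \pi_j A$ with $\pi_j A$ a finite abelian $p$-group splits as a finite product of $B^j C_p$'s, hence is ambidextrous by \remref{Cardinality_Arithmetic} and the hypothesis). A map $q\colon A \to B$ with connected $k$-finite $p$-space fibers thus factors through such principal stages; by \remref{Ambi_Space} it suffices to treat each stage, i.e. a principal fibration $B^j C_p^{\oplus r} \to E' \to E$. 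Using that ambidexterity is closed under base change and composition along ambidextrous maps (the inductive hypothesis handles the $(k-1)$-finite diagonal needed to even form $\nm_q$), and invoking \propref{Ambi_Criterion} to check the norm map via preservation of $q$-colimits, one concludes $A$ is $\mathcal{C}$-ambidextrous. I expect the bookkeeping in assembling the Postnikov filtration and verifying the norm map is an isomorphism at each stage to be the main obstacle — in particular, making precise why ambidexterity (not just amenability) propagates through principal fibrations with ambidextrous fiber, which is exactly \cite[Proposition 3.1.17]{Ambi2018}'s ambit but must be applied carefully.

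For (2), the argument is parallel but cleaner, since we are now given that both $\mathcal{C}$ and $\mathcal{D}$ are already $p$-typically $m$-semiadditive, so all the relevant (co)limits exist and are ambidextrous on both sides; we only need $F$ to preserve them. By \propref{A_Semiadd}, for an ambidextrous space $A$ the functor $F$ preserves $A$-colimits iff it preserves $A$-limits, so it suffices to show $F$ preserves $A$-colimits for every $m$-finite $p$-space $A$, given that it preserves $B^k C_p$-colimits for $k = 1,\dots,m$ (and finite coproducts, as a $0$-semiadditive functor). Again induct on the truncation level and use the Postnikov filtration of $A$: an $m$-finite $p$-space colimit is an iterated composite of $B^k C_p$-indexed colimits (using that $B^k C_p^{\oplus r}$-colimits are built from $B^k C_p$-colimits, and that colimits over the total space of a fibration are computed as iterated colimits over base and fiber, which is where one needs the fiber-wise structure and a Fubini-type reduction). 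Since $F$ preserves each building-block colimit by hypothesis, and composites of colimit-preserving operations are colimit-preserving, $F$ preserves all $m$-finite $p$-space colimits. The subtlety to watch is the $m = \infty$ case, where the "induction on truncation level" must be phrased so that each individual $\pi$-finite $p$-space, being $k$-finite for some finite $k$, is caught at a finite stage — this is exactly the kind of reduction already used in the proof of \propref{CMon_Universal_Property} in the excerpt, and I would mirror that.
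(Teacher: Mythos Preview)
Your ``cleaner route'' via the Postnikov tower is exactly the paper's argument, but you are overcomplicating the propagation step. The paper refines the Postnikov tower all the way down so that each successive fiber is a single $B^{k_i}C_p$ (not $B^{j}C_p^{\oplus r}$ --- a finite abelian $p$-group has a composition series with $C_p$ quotients), and then uses only two facts: ambidexterity is a fiber-wise condition (\remref{Ambi_Space}), so each map $A_i\to A_{i-1}$ is $\mathcal{C}$-ambidextrous because its fiber $B^{k_i}C_p$ is; and $\mathcal{C}$-ambidextrous maps are closed under composition. That's it. Your concern about ``making precise why ambidexterity (not just amenability) propagates through principal fibrations with ambidextrous fiber'' is misplaced: no amenability is needed anywhere, and \cite[Proposition~3.1.17]{Ambi2018} (which upgrades amenability of $\Omega A$ to ambidexterity of $A$) is irrelevant here --- you are not trying to deloop, you already have the fiber itself ambidextrous by hypothesis. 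Likewise \propref{Ambi_Criterion} is not needed; the fiber-wise characterization already gives you the norm isomorphism directly. Your argument for (2) is fine and matches the paper's one-line ``analogous argument.''
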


\begin{proof}
(1) The ``only if'' part is clear. Conversely, we need to show that
if $B^{k}C_{p}$ is $\mathcal{C}$-ambidextrous for all $k=1,\dots,m$,
then every $m$-finite $p$-space $A$ is $\mathcal{C}$-ambidextrous.
Since $\mathcal{C}$ is $0$-semiadditive, we are reduced to the case
that $A$ is connected. The Postnikov tower of $A$ can be refined
to a tower of principal fibrations
\[
A=A_{r}\to\dots\to A_{1}\to A_{0}=\pt,
\]
such that the fiber of each $A_{i}\to A_{i-1}$ is of the form $B^{k_{i}}C_{p}$
for some $1\le k_{i}\le m$. To show that $A$ is $\mathcal{C}$-ambidextrous,
it suffices to show that each $A_{i}\to A_{i-1}$ is $\mathcal{C}$-ambidextrous
(as $\mathcal{C}$-ambidextrous maps are closed under composition).
Finally, since $\mathcal{C}$-ambidexterity is a fiber-wise condition,
this follows from the fact that $B^{k_{i}}C_{p}$ is $\mathcal{C}$-ambidextrous. 

(2) Follows by an analogous argument to (1).
\end{proof}
In a $p$-typically $m$-semiadditive $\infty$-category $\mathcal{C}$,
one can discuss cardinalities of $m$-finite $p$-spaces. As one might
expect, the Eilenberg-MacLane spaces $B^{n}C_{p}$ play a fundamental
role, and so deserve a special notation:
\begin{defn}
Let $\mathcal{C}\in\acat^{\psad m}$. For every integer $0\le n\le m$,
we define $\pn n^\mathcal{C}\coloneqq|B^{n}C_{p}|$ as a natural endomorphism
of the identity functor of $\mathcal{C}$. We shall omit the superscript ``$\mathcal{C}$'', whenever $\mathcal{C}$ is clear from the context.
\end{defn}

A fundamental example to keep in mind is the following:
\begin{example}
For $\Theta_{n}=\Mod_{E_{n}}(\Sp_{K(n)})$, we have by \propref{Cardinality_EM}:
\[
\pn k^{\Theta_n}\coloneqq|B^{k}C_{p}|_{n}=p^{\binom{n-1}{k}}
\]
for all $n,k\ge0$.
\end{example}

\subsubsection{Semiadditive height}

In what follows it will be convenient to use the following terminology:
\begin{defn}
\label{def:Div_Comp}Let $\mathcal{C}\in\cat$ and let $\alpha\colon\Id_{\mathcal{C}}\to\Id_{\mathcal{C}}$
be a natural endomorphism. An object $X\in\mathcal{C}$ is called
\begin{enumerate}
\item \textbf{$\alpha$-divisible} if $\alpha_{X}$ is invertible.
\item \textbf{$\alpha$-complete} if $\map(Z,X)\simeq\pt$ for all $\alpha$-divisible
$Z$. 
\end{enumerate}
We denote by $\mathcal{C}[\alpha^{-1}]$ and $\widehat{\mathcal{C}}_{\alpha}$
the full subcategories of $\mathcal{C}$ spanned by the \textbf{$\alpha$}-divisible
and $\alpha$-complete objects respectively. 
\end{defn}

Using the operations $\pn n$ we can now define the semiadditive height:
\begin{defn}
\label{def:Height_Obj}Let $\mathcal{C}\in\acat^{\psad m}$ and let
$0\le n\le m<\infty$. For every $X\in\mathcal{C}$ we define and
denote the (semiadditive) \textbf{height} of $X$ as follows:
\begin{enumerate}
\item $\htt_{\mathcal{C}}(X)\le n$, if $X$ is $\pn n^{\mathcal{C}}$-divisible. 
\item $\htt_{\mathcal{C}}(X)>n$, if $X$ is $\pn n^{\mathcal{C}}$-complete. 
\item $\htt_{\mathcal{C}}(X)=n$, if $\htt_{\mathcal{C}}(X)\le n$ and $\htt_{\mathcal{C}}(X)>n-1$. 
\end{enumerate}
We also extend the definition to $n=m=\infty$
as follows. For every $X\in\mathcal{C}$, we write $\htt_{\mathcal{C}}(X)=\infty$
if and only if $\htt_{\mathcal{C}}(X)>k$ for all $k\in\bb N$.
Additionally, by convention $-1 < \htt_{\mathcal{C}}(X) \leq \infty$ for all $X$, and $\htt_{\mathcal{C}}(X)\le-1$ or $\htt_{\mathcal{C}}(X) > \infty$
if and only if $X=0$.  We
shall drop the subscript $\mathcal{C}$ in $\htt_{\mathcal{C}}$,
when the $\infty$-category is clear from the context.
\end{defn}

\begin{rem}
We emphasize that the notation $\htt(X)\le n$ (and similarly $\htt(X)>n$
etc.) asserts that $X$ satisfies a certain property, and does not
mean that $\htt(X)$ is a well-defined number, which can be compared
with $n$. We note that the only object in $\mathcal{C}$, which can
simultaneously have height $\le n$ and $>n$ is the zero object. 
\end{rem}
The motivating example for the definition of height is the following:
\begin{example}
\label{exa:p_Complete}Let $\mathcal{C}$ be a $0$-semiadditive $\infty$-category.
An object $X\in\mathcal{C}$ is of height $0$ if and only if $p=\pn 0$
acts invertibly on $X$, and of height $>0$ if it is $p$-complete.
\end{example}

The first thing to show is that the notion of height behaves as the
terminology suggests:
\begin{prop}
\label{prop:Height_Sense}Let $\mathcal{C}\in\acat^{\psad m}$ and
let $0\le n_{0}\le n_{1}\le m$ be some integers. For every $X\in\mathcal{C}$,
\begin{enumerate}
\item If $\htt(X)\le n_{0}$, then $\htt(X)\le n_{1}$.
\item If $\htt(X)>n_{1}$, then $\htt(X)>n_{0}$.
\end{enumerate}
\end{prop}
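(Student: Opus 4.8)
The plan is to reduce both assertions to the case $n_{1}=n_{0}+1$ by an evident induction, and then to exploit the relation between the two consecutive operations $\pn{n}$ and $\pn{n+1}$ coming from the path--loop fibration
\[
B^{n}C_{p}\longrightarrow \pt \longrightarrow B^{n+1}C_{p}.
\]
Note that for each $n_{0}\le k<n_{1}$ we have $k+1\le n_{1}\le m$, so $B^{k+1}C_{p}$ is an $m$-finite $p$-space and hence $\mathcal{C}$-ambidextrous; in particular $\pn{k}$ and $\pn{k+1}$ are both defined on $\mathcal{C}$, so the induction makes sense.

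For (1), suppose $\htt(X)\le n$, i.e. $(\pn{n})_{X}$ is invertible. I would apply \propref{Amenable_Acyclic_Object} to the connected $\mathcal{C}$-ambidextrous space $A=B^{n+1}C_{p}$, whose loop space is $\Omega A\simeq B^{n}C_{p}$, so that $|\Omega A|_{X}=(\pn{n})_{X}$. Our hypothesis is exactly condition (3) of that proposition, hence it applies and yields that $|A|_{X}=(\pn{n+1})_{X}$ is the inverse of $(\pn{n})_{X}$; in particular $(\pn{n+1})_{X}$ is invertible, i.e. $\htt(X)\le n+1$. Iterating from $n_{0}$ up to $n_{1}$ proves (1). (Alternatively one can invoke \propref{Amenable_Descent} for the principal fibre sequence above, exactly as in the proof of \propref{Cardinality_Height_0}.)

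For (2), I claim it follows formally from (1). Indeed, $\htt(X)>n_{1}$ means $X$ is $\pn{n_{1}}$-complete, i.e. $\map(Z,X)\simeq\pt$ for every $\pn{n_{1}}$-divisible object $Z$. Given any $\pn{n_{0}}$-divisible $Z$, applying (1) object-wise to $Z$ shows that $Z$ is also $\pn{n_{1}}$-divisible, whence $\map(Z,X)\simeq\pt$; as $Z$ was arbitrary, $X$ is $\pn{n_{0}}$-complete, i.e. $\htt(X)>n_{0}$. Equivalently, (1) says $\mathcal{C}[\pn{n_{0}}^{-1}]\ss\mathcal{C}[\pn{n_{1}}^{-1}]$, which forces the reverse inclusion $\widehat{\mathcal{C}}_{\pn{n_{1}}}\ss\widehat{\mathcal{C}}_{\pn{n_{0}}}$ of the right-orthogonal complements. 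There is no real obstacle here: all the content sits in \propref{Amenable_Acyclic_Object}, and the only point worth flagging is that (2) should not be attacked directly but read off from (1) together with the definition of completeness as right-orthogonality to the divisible objects.
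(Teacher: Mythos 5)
Your proof is correct and follows essentially the same route as the paper: the paper reduces to consecutive heights and applies \propref{Amenable_Descent} to the fiber sequence $B^{n}C_{p}\to\pt\to B^{n+1}C_{p}$ to get $(\pn{n+1})_{X}(\pn{n})_{X}=\Id_{X}$, which is the same relation you extract from \propref{Amenable_Acyclic_Object} (and you note the \propref{Amenable_Descent} alternative yourself). Your unwinding of (2) is exactly what the paper means by ``follows from (1) by definition.''
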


\begin{proof}
To prove (1), it suffices to show that if $\htt(X)\le n$ for some
$n\le m-1$, then $\htt(X)\le n+1$. We consider the principal fiber
sequence
\[
B^{n}C_{p}\to\pt\to B^{n+1}C_{p}.
\]
All maps and spaces in this sequence are $\mathcal{C}$-ambidextrous
by assumption. Since $\htt(X)\le n$, we have that $|B^{n}C_{p}|_{X}$
is invertible. By \propref{Amenable_Descent}, we get 
\[
|B^{n+1}C_{p}|_{X}|B^{n}C_{p}|_{X}=|\pt|_{X}=\Id_{X}.
\]
Thus, $|B^{n+1}C_{p}|_{X}$ is invertible as well, and hence $\htt(X)\le n+1$.
Claim (2) now follows from (1) by definition. 
\end{proof}
\begin{rem}
In any stable $\infty$-category $\mathcal{C}$, a non-zero
object can have height $>0$ for at most one prime $p$. In particular,
if $\mathcal{C}$ is $p$-local, every object has height $0$ for
every prime $\ell\neq p$ (in fact, this is `if an only if').
Nevertheless, in this case the notion of \emph{$\ell$-height} with
respect to different primes $\ell$ allows one to treat ``prime to
$p$ phenomena'' as ``height $0$ phenomena'' for primes $\ell\neq p$. 
\end{rem}

It is also useful to consider the corresponding subcategories of objects
having height in a certain range:
\begin{defn}
\label{def:Height_Cat}Let $\mathcal{C}\in\acat^{\psad m}$ and let
$0\le n\le m\le\infty$. We define 
\[
\mathcal{C}_{\le n}=\mathcal{C}[\pn n^{-1}]\quad,\quad\mathcal{C}_{>n}=\widehat{\mathcal{C}}_{\pn n},
\]
\[
\mathcal{C}_{n}=\mathcal{C}_{\le n}\cap\mathcal{C}_{>n-1}=\widehat{\mathcal{C}[\pn n^{-1}]}_{\pn{n-1}}=\widehat{\mathcal{C}}_{\pn{n-1}}[\pn n^{-1}].
\]
the full subcategories of $\mathcal{C}$ spanned by objects of height
$\le n$, $>n$ and $n$ respectively. We also write $\Ht(\mathcal{C})\le n$,
$>n$ or $n$, if $\mathcal{C}=\mathcal{C}_{\le n}$, $\mathcal{C}_{>n}$
or $\mathcal{C}_{n}$ respectively. 
\end{defn}

The above defined subcategories are themselves $p$-typically $m$-semiadditive:
\begin{prop}
\label{prop:Height_Pieces_Sad}Let $\mathcal{C}\in\acat^{\psad m}$.
For every $n=0,\dots,m$, the full subcategories $\mathcal{C}_{\le n},$
$\mathcal{C}_{>n}$ and $\mathcal{C}_{n}$ are closed under limits
in $\mathcal{C}$. In particular, they are $p$-typically $m$-semiadditive,
and are furthermore $m$-semiadditive if $\mathcal{C}$ is.
\end{prop}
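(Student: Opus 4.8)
The plan is to reduce the whole statement to one claim: each of $\mathcal{C}_{\le n}$, $\mathcal{C}_{>n}$ and $\mathcal{C}_{n}$ is closed under limits in $\mathcal{C}$. Granting this, the ``in particular'' is formal. A full subcategory $\mathcal{D}\subseteq\mathcal{C}$ that is closed under all limits existing in $\mathcal{C}$ contains the terminal object of $\mathcal{C}$ (the empty limit), and for any $m$-finite $p$-space $A$ --- which is $\mathcal{C}$-ambidextrous since $\mathcal{C}\in\acat^{\psad m}$ --- it is in particular closed under $\Omega_{a}^{k}A$-limits for all $a\in A$ and $k\ge0$, these limits existing in $\mathcal{C}$ precisely because $A$ is $\mathcal{C}$-ambidextrous. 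Then \propref{Ambi_Closure}(3) applies and shows that $A$ is $\mathcal{D}$-ambidextrous; ranging over all $m$-finite $p$-spaces gives that $\mathcal{D}$ is $p$-typically $m$-semiadditive, and if $\mathcal{C}$ is moreover $m$-semiadditive the identical argument run over all $m$-finite spaces yields that $\mathcal{D}$ is $m$-semiadditive.

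For the closure claim I would argue case by case. Let $X\simeq\lim_{i}X_{i}$ be a limit in $\mathcal{C}$ of a diagram landing in the relevant subcategory. If each $X_{i}\in\mathcal{C}_{\le n}=\mathcal{C}[\pn n^{-1}]$, then naturality of $\pn n\colon\Id_{\mathcal{C}}\to\Id_{\mathcal{C}}$ identifies $(\pn n)_{X}$ with $\lim_{i}(\pn n)_{X_{i}}$, a limit of isomorphisms and hence an isomorphism, so $X\in\mathcal{C}_{\le n}$. If each $X_{i}\in\mathcal{C}_{>n}=\widehat{\mathcal{C}}_{\pn n}$, then for every $\pn n$-divisible $Z$ the functor $\map_{\mathcal{C}}(Z,-)$ preserves limits, so $\map_{\mathcal{C}}(Z,X)\simeq\lim_{i}\map_{\mathcal{C}}(Z,X_{i})\simeq\lim_{i}\pt\simeq\pt$, giving $X\in\mathcal{C}_{>n}$. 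Finally $\mathcal{C}_{n}=\mathcal{C}_{\le n}\cap\mathcal{C}_{>n-1}$ is an intersection of two limit-closed subcategories, hence limit-closed; the same remark shows $\mathcal{C}_{\infty}=\bigcap_{k}\mathcal{C}_{>k}$ is limit-closed in the case $m=\infty$.

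I do not expect a genuine obstacle here: the argument is entirely soft. The one point that requires a little attention is the invocation of \propref{Ambi_Closure}(3) --- one should note that closure under limits already supplies every $\Omega_{a}^{k}A$-limit needed, and that no separate hypothesis about closure under the corresponding colimits is required, since for a $\mathcal{C}$-ambidextrous space those colimits coincide with limits already computed in $\mathcal{C}$, so $\mathcal{D}$ inherits closure under them for free.
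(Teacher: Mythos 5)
Your proof is correct and follows essentially the same route as the paper: establish closure under limits for each of $\mathcal{C}_{\le n}$, $\mathcal{C}_{>n}$ and their intersection $\mathcal{C}_{n}$, then invoke \propref{Ambi_Closure}(3) to conclude $p$-typical $m$-semiadditivity (resp. $m$-semiadditivity). You merely spell out in more detail the steps the paper treats as immediate (the limit-of-isomorphisms argument for divisibility and the mapping-space argument for completeness), and your closing remark about why \propref{Ambi_Closure}(3) applies is accurate.
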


\begin{proof}
An object $X\in\mathcal{C}$ belongs to $\mathcal{C}_{\le n}$ if
and only if $X$ is $\pn n$-divisible. Thus $\mathcal{C}_{\le n}$
is closed under all limits which exist in $\mathcal{C}$. By definition,
$\mathcal{C}_{>n}$ and $\mathcal{C}_{\infty}$ are closed under limits
in $\mathcal{C}$ as well. Thus, for $n<\infty$ we have that $\mathcal{C}_{n}=\mathcal{C}_{\le n}\cap\mathcal{C}_{>n-1}$
is also closed under limits in $\mathcal{C}$. Finally, by \propref{Ambi_Closure}(3),
it follows that all these subcategories are $p$-typically $m$-semiadditive
and are furthermore $m$-semiadditive if $\mathcal{C}$ is.
\end{proof}
Next, we consider the behavior of height with respect to higher semiadditive
functors. It turns out that the height can only go down:
\begin{prop}
\label{prop:Height_Functoriality}Let $F\colon\mathcal{C}\to\mathcal{D}$
be a map in $\acat^{\psad m}$. For all $X\in\mathcal{C}$ and $0\le n\le m$,
if $\htt_{\mathcal{C}}(X)\le n$ then $\htt_{\mathcal{D}}(F(X))\le n$.
If $F$ is conservative, then the converse holds as well. 
\end{prop}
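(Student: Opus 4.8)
The plan is to reduce the whole statement to the single identity $F(\pn n^{\mathcal{C}})=\pn n^{\mathcal{D}}$ of natural endomorphisms of the identity functors, after which both implications become formal consequences of the facts that every functor preserves isomorphisms and a conservative one reflects them.

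We may assume $n<\infty$: if $n=m=\infty$ the claim is vacuous, since by convention every object has height $\le\infty$. For $0\le n\le m<\infty$, the space $B^{n}C_{p}$ is a connected $n$-truncated space with (a single) finite homotopy group, hence an $m$-finite $p$-space, and therefore both $\mathcal{C}$- and $\mathcal{D}$-ambidextrous because $\mathcal{C},\mathcal{D}\in\acat^{\psad m}$. As a morphism in $\acat^{\psad m}$, the functor $F$ preserves all $m$-finite $p$-space colimits, in particular all $B^{n}C_{p}$-colimits. Hence \propref{A_Semiadd}, applied to the space $A=B^{n}C_{p}$, gives
\[
F\big(|B^{n}C_{p}|_{\mathcal{C}}\big)=|B^{n}C_{p}|_{\mathcal{D}},
\]
that is, $F(\pn n^{\mathcal{C}})=\pn n^{\mathcal{D}}$ as natural transformations of the identity functors; evaluated at an object $X\in\mathcal{C}$ this reads $F\big(|B^{n}C_{p}|_{X}\big)=|B^{n}C_{p}|_{F(X)}$ in $\map_{\mathcal{D}}(F(X),F(X))$.

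The forward implication is then immediate: by \defref{Height_Obj} and the definition of $\pn n$-divisibility, $\htt_{\mathcal{C}}(X)\le n$ means that $|B^{n}C_{p}|_{X}$ is an isomorphism, so its image $|B^{n}C_{p}|_{F(X)}=F\big(|B^{n}C_{p}|_{X}\big)$ under $F$ is again an isomorphism, which is precisely $\htt_{\mathcal{D}}(F(X))\le n$. For the converse, assume $F$ is conservative and $\htt_{\mathcal{D}}(F(X))\le n$; then $F\big(|B^{n}C_{p}|_{X}\big)=|B^{n}C_{p}|_{F(X)}$ is an isomorphism, and conservativity of $F$ forces $|B^{n}C_{p}|_{X}$ to be an isomorphism, i.e. $\htt_{\mathcal{C}}(X)\le n$.

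No step presents a genuine obstacle; the only point deserving attention is the applicability of \propref{A_Semiadd} — namely that $B^{n}C_{p}$ is ambidextrous over both $\mathcal{C}$ and $\mathcal{D}$ and that $F$ preserves the relevant colimits — and this is exactly what membership in $\acat^{\psad m}$ encodes.
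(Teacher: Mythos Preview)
Your proof is correct and follows exactly the paper's approach: the paper's one-line proof simply states that the result follows immediately from the fact that $F$ maps $\pn n^{\mathcal{C}}$ to $\pn n^{\mathcal{D}}$, and you have spelled out precisely this, justifying the key identity via \propref{A_Semiadd} and then unwinding the definition of height.
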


\begin{proof}
This follows immediately from the fact that $F$ maps $\pn n^{\mathcal{C}}$
to $\pn n^{\mathcal{D}}$.
\end{proof}
In contrast, the following example shows that a higher semiadditive
functor need not preserve \emph{lower} \emph{bounds} on height:
\begin{example}
\label{exa:Height_Decrease}The $0$-semiadditive functor $L_{\bb Q}\colon\Sp_{(p)}\to\Sp_{\bb Q}$
maps the $p$-complete sphere $\widehat{\bb S}_{p}$, which is of
height $>0$, to a non-zero object $\bb Q\otimes\widehat{\bb S}_{p}$
of height $0$. 
\end{example}

For an inclusion of a full subcategory, we can say a bit more:
\begin{prop}
\label{prop:Height_Subcategory}Let $\mathcal{C}\in\acat^{\psad m}$
and let $\mathcal{C}^{\prime}\ss\mathcal{C}$ be a full subcategory
closed under $m$-finite $p$-space (co)limits. Given $X\in\mathcal{C}^{\prime}$
and $0\le n\le m$, we have
\begin{enumerate}
\item $\htt_{\mathcal{C}^{\prime}}(X)\le n$ if and only if $\htt_{\mathcal{C}}(X)\le n$.
\item $\htt_{\mathcal{C}}(X)>n$ implies $\htt_{\mathcal{C}^{\prime}}(X)>n$.
\end{enumerate}
\end{prop}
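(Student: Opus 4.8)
The plan is to reduce both assertions to two elementary facts about the full inclusion $\iota\colon\mathcal{C}'\hookrightarrow\mathcal{C}$: that the endomorphisms $\pn n$ are compatible along $\iota$, and that the invertibility of an endomorphism of a fixed object, as well as the contractibility of a mapping space, are detected by a full subcategory.

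First I would check that $\mathcal{C}'\in\acat^{\psad m}$, so that the left-hand heights are defined. For $1\le k\le m$ the iterated loop spaces $\Omega_{a}^{j}B^{k}C_{p}$ are themselves $m$-finite $p$-spaces (each is some $B^{k-j}C_{p}$, or a point), so the hypothesis that $\mathcal{C}'$ is closed under $m$-finite $p$-space limits and colimits puts us exactly in the situation of \propref{Ambi_Closure}(3)--(4); hence each $B^{k}C_{p}$ is $\mathcal{C}'$-ambidextrous, and \propref{pTyp_Semiad} gives $\mathcal{C}'\in\acat^{\psad m}$ (this is the argument used for \propref{Height_Pieces_Sad}). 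Moreover, $\iota$ preserves $B^{n}C_{p}$-colimits, so \propref{A_Semiadd} yields $\iota(\pn n^{\mathcal{C}'})=\pn n^{\mathcal{C}}$; concretely, $(\pn n^{\mathcal{C}'})_{X}$ and $(\pn n^{\mathcal{C}})_{X}$ are literally the same morphism $X\to X$ for each $X\in\mathcal{C}'$. This is the only point at which closure under \emph{colimits}, and not merely limits, is used.

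For part (1): since $\mathcal{C}'\ss\mathcal{C}$ is full, a morphism $f\colon X\to X$ with $X\in\mathcal{C}'$ is invertible in $\mathcal{C}'$ if and only if it is invertible in $\mathcal{C}$, because any two-sided inverse, computed in $\mathcal{C}$, automatically lies in $\mathcal{C}'$. Taking $f=(\pn n)_{X}$ shows that $X$ is $\pn n^{\mathcal{C}'}$-divisible iff it is $\pn n^{\mathcal{C}}$-divisible, i.e.\ $\htt_{\mathcal{C}'}(X)\le n\iff\htt_{\mathcal{C}}(X)\le n$; the extreme values $n=\infty$ (always true) and $n=-1$ (i.e.\ $X=0$) reduce to the finite cases and to the fact that $X\cong 0$ is detected by $\iota$. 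For part (2): if $\htt_{\mathcal{C}}(X)>n$, i.e.\ $X$ is $\pn n^{\mathcal{C}}$-complete in $\mathcal{C}$, and $Z\in\mathcal{C}'$ is $\pn n^{\mathcal{C}'}$-divisible, then by part (1) $Z$ is $\pn n^{\mathcal{C}}$-divisible as an object of $\mathcal{C}$, whence $\map_{\mathcal{C}'}(Z,X)=\map_{\mathcal{C}}(Z,X)\simeq\pt$, the first equality again by fullness. Thus $X$ is $\pn n^{\mathcal{C}'}$-complete, i.e.\ $\htt_{\mathcal{C}'}(X)>n$; for $n=\infty$ one runs this for every $k\in\bb N$.

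I do not anticipate a genuine obstacle. The only point worth flagging is the asymmetry between the two parts: invertibility of $(\pn n)_{X}$ is \emph{reflected} by the full inclusion, which is why (1) is a biconditional, whereas $\pn n$-completeness is merely \emph{preserved} — the subcategory $\mathcal{C}'$ will in general contain far fewer $\pn n$-divisible test objects than $\mathcal{C}$, so the converse of (2) genuinely fails (as already illustrated, in spirit, by the height drop in \exaref{Height_Decrease}).
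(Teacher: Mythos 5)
Your proof is correct and follows essentially the same route as the paper: part (1) is the statement of \propref{Height_Functoriality} applied to the conservative inclusion $\mathcal{C}'\into\mathcal{C}$ (you simply unpack that citation into the observation that $\iota(\pn n^{\mathcal{C}'})=\pn n^{\mathcal{C}}$ and that fullness reflects invertibility), and your argument for part (2) is word-for-word the paper's. The preliminary verification that $\mathcal{C}'$ is $p$-typically $m$-semiadditive via \propref{Ambi_Closure} is a worthwhile explicit addition, matching how the paper handles the analogous point in \propref{Height_Pieces_Sad}.
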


\begin{proof}
(1) follows from \propref{Height_Functoriality} applied to the inclusion
$\mathcal{C}^{\prime}\into\mathcal{C}$. For (2), if $\htt_{\mathcal{C}}(X)>n$,
then for every $Z\in\mathcal{C}_{\le n}$, we have $\map(Z,X)\simeq\pt$.
We now observe that by (1), we have $\mathcal{C}_{\le n}^{\prime}=\mathcal{C}^{\prime}\cap\mathcal{C}_{\le n}$.
Thus, for every $Z^{\prime}\in\mathcal{C}_{\le n}^{\prime}$, we have
$\map(Z^{\prime},X)\simeq\pt$, which by definition means $\htt_{\mathcal{C}^{\prime}}(X)>n$.
\end{proof}
In presence of a $p$-typically $m$-semiadditively monoidal structure,
an upper bound on the height of the unit implies an upper bound on
the height of the $\infty$-category:
\begin{cor}
\label{cor:Height_Monoidal}Let $\mathcal{C}$ be $p$-typically $m$-semiadditively
monoidal $\infty$-category. For every $0\le n\le m$, we have $\Ht(\mathcal{C})\le n$
if and only if $\htt_{\mathcal{C}}(\one)\le n$.
\end{cor}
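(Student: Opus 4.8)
The plan is to reduce everything to the single observation that in a monoidal $\infty$-category whose tensor product is compatible with $B^{n}C_{p}$-colimits, the endomorphism $\pn n$ acts on an arbitrary object by tensoring with its action on the unit. Concretely, I would invoke item (2) of the list of abuses of notation following \remref{Cardinality_Arithmetic} (i.e.\ \cite[Lemma 3.3.4]{Ambi2018}): since $\mathcal{C}$ is $p$-typically $m$-semiadditively monoidal, $B^{n}C_{p}$ is $\mathcal{C}$-ambidextrous for $n\le m$ and $\otimes$ preserves $B^{n}C_{p}$-colimits in each variable, so for every $X\in\mathcal{C}$ one has a canonical identification $(\pn n)_{X}\simeq(\pn n)_{\one}\otimes\Id_{X}$ as endomorphisms of $X$.

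The forward implication is immediate from the definitions: if $\Ht(\mathcal{C})\le n$, then $\mathcal{C}=\mathcal{C}_{\le n}=\mathcal{C}[\pn n^{-1}]$ by \defref{Height_Cat}, so in particular $\one$ is $\pn n$-divisible, which is precisely $\htt_{\mathcal{C}}(\one)\le n$.

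For the converse, suppose $\htt_{\mathcal{C}}(\one)\le n$, i.e.\ $(\pn n)_{\one}\colon\one\to\one$ is an isomorphism. For an arbitrary $X\in\mathcal{C}$, the identification above writes $(\pn n)_{X}$ as $(\pn n)_{\one}\otimes\Id_{X}$; since the functor $(-)\otimes X$ (equivalently the bifunctor $\otimes$) carries isomorphisms to isomorphisms, $(\pn n)_{X}$ is an isomorphism. Hence every $X\in\mathcal{C}$ is $\pn n$-divisible, i.e.\ $\mathcal{C}=\mathcal{C}[\pn n^{-1}]=\mathcal{C}_{\le n}$, which is the statement $\Ht(\mathcal{C})\le n$.

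There is essentially no obstacle here; the only thing to be careful about is that the cited identification $(\pn n)_{X}\simeq(\pn n)_{\one}\otimes\Id_{X}$ genuinely applies, which requires exactly the hypotheses packaged into the phrase ``$p$-typically $m$-semiadditively monoidal'' (ambidexterity of $B^{n}C_{p}$ and compatibility of $\otimes$ with $B^{n}C_{p}$-colimits), so I would make that verification explicit. One could equivalently phrase the argument via \propref{Height_Functoriality} applied to the functors $(-)\otimes X\colon\mathcal{C}\to\mathcal{C}$, but the direct tensoring argument is cleaner.
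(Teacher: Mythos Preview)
Your proposal is correct and essentially the same as the paper's proof. The paper phrases the converse via \propref{Height_Functoriality} applied to the $p$-typically $m$-semiadditive functor $X\otimes(-)\colon\mathcal{C}\to\mathcal{C}$, which sends $\one\mapsto X$ and hence carries $\htt(\one)\le n$ to $\htt(X)\le n$; you unpack this one level further into the identification $(\pn n)_{X}\simeq(\pn n)_{\one}\otimes\Id_{X}$, and you even note the equivalence yourself at the end.
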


\begin{proof}
Given $X\in\mathcal{C}$, the functor $X\otimes(-)\colon\mathcal{C}\to\mathcal{C}$
is $p$-typically $m$-semiadditive. Thus, the claim follows from
\propref{Height_Functoriality}.
\end{proof}

\subsection{Bounded Height}

In this subsection, we study the implications for a higher semiadditive
$\infty$-category of having bounded height. These results generalize
the previously discussed facts regarding the $\infty$-semiadditive
structure of semirational $\infty$-categories (i.e. of height $0$),
and fall under the slogan that ``the higher semiadditive structure
is trivial above the height''. As a by-product, we shall see that
for a $0$-semiadditive $p$-local $\infty$-category, there is no
difference between $m$-semiadditivity and $p$-typical $m$-semiadditivity. 

\subsubsection{Amenability \& acyclicity}

The results on amenability and acyclicity from \subsecref{Acyclicity}
imply the following equivalent characterizations of height
$\le n$:
\begin{prop}
\label{prop:Height_Characterization}Let $\mathcal{C}\in\acat^{\psad n}$,
which admits $B^{n+1}C_{p}$-(co)limits. The following properties
are equivalent:
\begin{enumerate}
\item $\Ht(\mathcal{C})\le n$ (i.e. $B^{n}C_{p}$ is $\mathcal{C}$-amenable).
\item $B^{n+1}C_{p}$ is $\mathcal{C}$-acyclic.
\item $B^{n+2}C_{p}$ is $\mathcal{C}$-trivial.
\end{enumerate}
\end{prop}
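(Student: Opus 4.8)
The plan is to translate all three conditions into statements about the loop spaces $\Omega(B^{n+1}C_{p})=B^{n}C_{p}$ and $\Omega(B^{n+2}C_{p})=B^{n+1}C_{p}$ and then read them off from the amenability/acyclicity/triviality dictionary of \subsecref{Acyclicity}. Note first that $\mathcal{C}$ admits $B^{k}C_{p}$-limits and colimits for all $k\le n+1$: for $k\le n$ because $B^{k}C_{p}$ is an $n$-finite $p$-space and $\mathcal{C}\in\acat^{\psad n}$, and for $k=n+1$ by hypothesis. In particular $B^{n+1}C_{p}$ is weakly $\mathcal{C}$-ambidextrous, its path spaces being copies of the $\mathcal{C}$-ambidextrous space $B^{n}C_{p}$.

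For $(1)\Leftrightarrow(2)$ I would invoke \corref{Amenable_Acyclic} with $A=B^{n+1}C_{p}$. Concretely: if $\Ht(\mathcal{C})\le n$, then $B^{n}C_{p}=\Omega(B^{n+1}C_{p})$ is $\mathcal{C}$-amenable and $\mathcal{C}$ admits $B^{n+1}C_{p}$-colimits, so \corref{Amenable_Acyclic} gives that $B^{n+1}C_{p}$ is $\mathcal{C}$-acyclic (and $\mathcal{C}$-ambidextrous, via \cite[Proposition 3.1.17]{Ambi2018}); this is $(1)\Rightarrow(2)$. For $(2)\Rightarrow(1)$ one first upgrades the $\mathcal{C}$-acyclicity of $B^{n+1}C_{p}$ to $\mathcal{C}$-ambidexterity — combining the weak $\mathcal{C}$-ambidexterity above with \propref{Ambi_Criterion}, the missing colimit/limit-preservation clause being supplied by the full faithfulness of $(B^{n+1}C_{p})^{*}$ — and then applies \propref{Amenable_Acyclic_Object} objectwise to $A=B^{n+1}C_{p}$: the isomorphism $X[B^{n+1}C_{p}]\oto{\nabla}X$ holds for every $X$ by acyclicity, hence $|B^{n}C_{p}|_{X}$ is invertible for every $X$, i.e.\ $\Ht(\mathcal{C})\le n$.

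For $(2)\Leftrightarrow(3)$ I would invoke \propref{Acyclic_Trivial} with $A=B^{n+2}C_{p}$: since $\mathcal{C}$ admits $B^{n+1}C_{p}=\Omega(B^{n+2}C_{p})$-colimits, that proposition asserts precisely that $B^{n+2}C_{p}$ is $\mathcal{C}$-trivial if and only if $B^{n+1}C_{p}$ is $\mathcal{C}$-acyclic. (One can also go directly $(1)\Rightarrow(3)$ by chaining \corref{Amenable_Acyclic} and \propref{Acyclic_Trivial}.) The step I expect to be delicate is exactly the ambidexterity upgrade inside $(2)\Rightarrow(1)$: extracting, from the sole fact that $(B^{n+1}C_{p})^{*}\colon\mathcal{C}\to\mathcal{C}^{B^{n+1}C_{p}}$ is fully faithful together with the existence of the relevant limits and colimits, enough to satisfy the hypotheses of \propref{Ambi_Criterion}. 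Everything else is a direct citation of the equivalences established in \subsecref{Acyclicity}.
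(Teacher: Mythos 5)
Your route is the paper's own: $(1)\Leftrightarrow(2)$ via \corref{Amenable_Acyclic} and $(2)\Leftrightarrow(3)$ via \propref{Acyclic_Trivial}; the legs $(1)\Rightarrow(2)$ and $(2)\Leftrightarrow(3)$ are fine as you present them. The problem is exactly the step you flag as delicate: the ``ambidexterity upgrade'' inside $(2)\Rightarrow(1)$ does not work by the mechanism you propose. Full faithfulness of $A^{*}$ for $A=B^{n+1}C_{p}$ only says that the unit $\Id\to A_{*}A^{*}$ and counit $A_{!}A^{*}\to\Id$ are isomorphisms, i.e.\ it controls $A_{!}$ and $A_{*}$ on \emph{constant} diagrams; clause (3) of \propref{Ambi_Criterion} requires $A_{*}$ to preserve $A$-colimits of \emph{arbitrary} diagrams in $\mathcal{C}^{A}$ (or dually for $A_{!}$), and acyclicity supplies nothing of the sort. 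The paper's own example immediately after \corref{Amenable_Acyclic} is the precise obstruction: for $\mathcal{C}=\mathrm{Vec}_{\bb F_{p}}$, the space $BC_{p}$ is weakly $\mathcal{C}$-ambidextrous and $\mathcal{C}$-acyclic, and $\mathcal{C}$ has all limits and colimits, yet $BC_{p}$ is not $\mathcal{C}$-ambidextrous and $C_{p}$ is not $\mathcal{C}$-amenable.

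That same example shows the gap is not one you could have closed by a cleverer argument: taking $n=0$ and $\mathcal{C}=\mathrm{Vec}_{\bb F_{p}}$, all hypotheses of the proposition hold, condition (2) holds, and condition (1) fails. So $(2)\Rightarrow(1)$ requires the $\mathcal{C}$-ambidexterity of $B^{n+1}C_{p}$ to be supplied from outside --- e.g.\ by assuming $p$-typical $(n+1)$-semiadditivity, or by building ambidexterity into conditions (2) and (3) --- rather than derived from acyclicity. For what it is worth, the paper's proof is the bare citation of \corref{Amenable_Acyclic}, whose condition (1) reads ``acyclic \emph{and} ambidextrous,'' so it silently elides the same point; your write-up has the merit of making the missing step explicit, but the argument offered to close it is false, and the correct fix is to strengthen the hypotheses rather than to argue harder.
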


We can therefore characterize the height of an $\infty$-category
in ways that do not make an explicit reference to the higher semiadditive
structure.
\begin{proof}
The equivalence of (1) and (2) follows from \corref{Amenable_Acyclic}
and the equivalence of (2) and (3) from \propref{Acyclic_Trivial}.
\end{proof}
The following can be seen as a ($p$-typical) generalization of the
fact that a semirational $\infty$-category is automatically $\infty$-semiadditive: 
\begin{prop}
\label{prop:Height_Bootstrap}Let $\mathcal{C}\in\acat^{\psad n}$
such that $\Ht(\mathcal{C})\le n$, and assume $\mathcal{C}$ admits
$B^{n+1}C_{p}$-(co)limits. Then, $\mathcal{C}$ is $p$-typically
$\infty$-semiadditive.
\end{prop}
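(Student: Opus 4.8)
The plan is to bootstrap from $p$-typical $n$-semiadditivity to $p$-typical $\infty$-semiadditivity by showing inductively that every $m$-finite $p$-space is $\mathcal{C}$-ambidextrous, for all $m \ge n$. By \propref{pTyp_Semiad}(1), it suffices to show that $B^k C_p$ is $\mathcal{C}$-ambidextrous for all $k$. For $k \le n$ this is the hypothesis $\mathcal{C} \in \acat^{\psad n}$, and for $k = n+1$ it is guaranteed by the hypothesis that $\mathcal{C}$ admits $B^{n+1}C_p$-(co)limits together with $\Ht(\mathcal{C}) \le n$: indeed, \propref{Height_Characterization} tells us that $B^{n+1}C_p$ is $\mathcal{C}$-acyclic, hence in particular $\mathcal{C}$-ambidextrous (an acyclic space is ambidextrous since the norm is then trivially an isomorphism). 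So the real content is the inductive step: assuming $B^k C_p$ is $\mathcal{C}$-ambidextrous and $\Ht(\mathcal{C}) \le n$, deduce that $B^{k+1} C_p$ is $\mathcal{C}$-ambidextrous for $k \ge n+1$.

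First I would observe that $\Omega(B^{k+1} C_p) = B^k C_p$, and that for $k \ge n$ the space $B^k C_p$ is $\mathcal{C}$-amenable: by \propref{Height_Sense}, $\Ht(\mathcal{C}) \le n$ implies $\pn k$ is invertible for all $k \ge n$, i.e. $|B^k C_p| = \pn k$ is invertible, which is exactly $\mathcal{C}$-amenability of $B^k C_p$. Now I would invoke \cite[Proposition 3.1.17]{Ambi2018} (cited in the proof of \propref{Cardinality_Height_0}): if $\Omega A$ is $\mathcal{C}$-amenable and $\mathcal{C}$ admits $A$-colimits, then $A$ is $\mathcal{C}$-ambidextrous. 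Applying this with $A = B^{k+1}C_p$ gives that $B^{k+1}C_p$ is $\mathcal{C}$-ambidextrous, provided $\mathcal{C}$ admits $B^{k+1}C_p$-colimits. Combined with \corref{Amenable_Acyclic}, one moreover gets that $B^{k+1}C_p$ is $\mathcal{C}$-acyclic with $|B^{k+1}C_p| = |B^k C_p|^{-1}$, which feeds the induction forward.

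The subtlety to handle carefully is the existence of the relevant limits and colimits: the proposition hypothesizes only that $\mathcal{C}$ admits $B^{n+1}C_p$-(co)limits, not $B^{k}C_p$-(co)limits for all $k$. The hard part will be propagating this: once $B^{k+1}C_p$ is known to be $\mathcal{C}$-acyclic (equivalently $\mathcal{C}$-trivial one level up, by \propref{Acyclic_Trivial}), the functor $(B^{k+1}C_p)^* \colon \mathcal{C} \to \mathcal{C}^{B^{k+1}C_p}$ is an equivalence — actually triviality of $B^{k+2}C_p$ — so limits and colimits over these spaces reduce to limits and colimits in $\mathcal{C}$ itself, which exist trivially. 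More precisely, I would argue inductively that $B^{k}C_p$ is $\mathcal{C}$-trivial for all $k \ge n+2$ (starting from \propref{Height_Characterization}(3)), so that the needed $B^{k}C_p$-(co)limits exist automatically, clearing the way to apply \cite[Proposition 3.1.17]{Ambi2018} at each stage. Assembling: $B^kC_p$ ambidextrous for $k \le n+1$ by hypothesis and \propref{Height_Characterization}, then for $k \ge n+1$ ambidexterity of $B^{k+1}C_p$ follows from amenability of $B^k C_p$ plus existence of $B^{k+1}C_p$-colimits (which holds since $B^{k+1}C_p$ is $\mathcal{C}$-trivial, being one loop-level below the $\mathcal{C}$-trivial space $B^{k+2}C_p$). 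Hence $B^kC_p$ is $\mathcal{C}$-ambidextrous for all $k$, and by \propref{pTyp_Semiad}(1) the $\infty$-category $\mathcal{C}$ is $p$-typically $\infty$-semiadditive.
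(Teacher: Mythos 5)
Your proof is correct and follows essentially the same route as the paper's: an induction on $k\ge n$ whose inductive data is the amenability of $B^{k}C_{p}$ together with the existence of $B^{k+1}C_{p}$-(co)limits, with \cite[Proposition 3.1.17]{Ambi2018} supplying ambidexterity of $B^{k+1}C_{p}$, \corref{Amenable_Acyclic} (the paper uses \propref{Amenable_Descent}) propagating amenability, and the $\mathcal{C}$-triviality of $B^{k+2}C_{p}$ supplying the next level of (co)limits. One small caveat: \propref{Height_Sense} only applies in the range $k\le m=n$, so your upfront assertion that $\pn{k}$ is invertible for all $k\ge n$ is premature (the cardinality $|B^{k}C_{p}|$ is not even defined for $k>n$ until ambidexterity is established), but your induction supplies exactly this at each stage, so nothing is lost.
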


\begin{proof}
By \propref{pTyp_Semiad}(1), it suffice to prove that 
\begin{itemize}
\item [$(*)$]$B^{k}C_{p}$ is $\mathcal{C}$-amenable (and ,in particular,
$\mathcal{C}$-ambidextrous) and $\mathcal{C}$ admits all $B^{k+1}C_{p}$-colimits.
\end{itemize}
holds for all $k\ge n$. We shall prove this by induction on $k$.
The base case $k=n$ is given by assumption. Assume $(*)$ holds for
some $k\ge n$ and consider the fiber sequence 
\[
B^{k}C_{p}\to\pt\to B^{k+1}C_{p}.
\]
By the inductive hypothesis, $|B^{k}C_{p}|$ is invertible and $\mathcal{C}$
admits $B^{k+1}C_{p}$-(co)limits. Therefore by \cite[Propostion 3.1.17]{Ambi2018}, the space
$B^{k+1}C_{p}$ is $\mathcal{C}$-ambidextrous. Moreover, by \propref{Amenable_Descent},
$|B^{k+1}C_{p}|$ is invertible as well. Finally, by \propref{Height_Characterization}(3),
the diagonal functor $\mathcal{C}\to\mathcal{C}^{B^{k+2}C_{p}}$ is
an equivalence and hence in particular $\mathcal{C}$ admits $B^{k+2}C_{p}$-(co)limits. 
\end{proof}
We now show that claims (1)-(3) of \propref{Height_Characterization}
extend to much wider classes of spaces:
\begin{prop}
\label{prop:Height_Everything}Let $\mathcal{C}\in\cat^{\psad{\infty}}$
such that $\Ht(\mathcal{C})\le n$. For every $\pi$-finite $p$-space
$A$, the following hold:
\begin{enumerate}
\item If $A$ is $(n-1)$-connected, then $A$ is $\mathcal{C}$-amenable. 
\item If $A$ is $n$-connected, then $A$ is $\mathcal{C}$-acyclic. 
\item If $A$ is $(n+1)$-connected, then $A$ is $\mathcal{C}$-trivial.
\end{enumerate}
\end{prop}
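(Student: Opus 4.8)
The plan is to prove (1) first, by induction along a Postnikov decomposition of $A$, and then to derive (2) and (3) from it using the amenability/acyclicity/triviality dictionary of \subsecref{Acyclicity}. The only input from the height hypothesis is that $B^{k}C_{p}$ is $\mathcal{C}$-amenable (equivalently $\pn k=|B^{k}C_{p}|$ is invertible) for every $k\ge n$: this is precisely the inductive statement $(*)$ proved inside \propref{Height_Bootstrap}, which uses only $\Ht(\mathcal{C})\le n$ together with the $B^{k}C_{p}$-(co)limits supplied by $\mathcal{C}\in\cat^{\psad{\infty}}$.

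For (1), I would argue as follows. One reduces immediately to $A$ connected (automatic when $n\ge1$). A connected $\pi$-finite $p$-space is nilpotent — a finite $p$-group acts nilpotently on a finite abelian $p$-group — so, exactly as in the proof of \propref{pTyp_Semiad}(1), its Postnikov tower refines to a finite tower of principal fibrations
\[
A=A_{r}\to A_{r-1}\to\cdots\to A_{0}=\pt
\]
whose successive fibers are of the form $B^{k_{i}}C_{p}$. Since $A$ is $(n-1)$-connected we have $\pi_{j}A=0$ for $j\le n-1$, which forces $k_{i}\ge n$ for all $i$; hence each fiber $B^{k_{i}}C_{p}$ is $\mathcal{C}$-amenable by the previous paragraph, and therefore — amenability and ambidexterity being fiber-wise — each $A_{i}\to A_{i-1}$ is a principal, $\mathcal{C}$-amenable map between $\mathcal{C}$-ambidextrous spaces. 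By \corref{Amenable_Extensions}, a composite of $\mathcal{C}$-amenable maps is $\mathcal{C}$-amenable provided the first one is principal, so a downward induction along the tower shows that $A_{i}\to A_{0}=\pt$ is $\mathcal{C}$-amenable for every $i$; in particular $A$ is $\mathcal{C}$-amenable.

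To obtain (2), note that if $A$ is $n$-connected then it is connected and $\Omega A$ is an $(n-1)$-connected $\pi$-finite $p$-space, hence $\mathcal{C}$-amenable by (1); since $\mathcal{C}\in\cat^{\psad{\infty}}$ admits all $\pi$-finite $p$-space colimits, \corref{Amenable_Acyclic} then gives that $A$ is $\mathcal{C}$-acyclic (with $|A|=|\Omega A|^{-1}$). For (3), if $A$ is $(n+1)$-connected then $\Omega A$ is an $n$-connected $\pi$-finite $p$-space, so $\mathcal{C}$-acyclic by (2), and \propref{Acyclic_Trivial} yields that $A$ is $\mathcal{C}$-trivial. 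The one degenerate point is $n=0$ in (2), where $\Omega A$ may be disconnected: there I would split $\Omega A$ into its $|\pi_{1}A|$ identity components, observe that $|\pi_{1}A|$ is a power of $p=\pn 0$ and hence invertible, and run the argument of (1) on the connected identity component.

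I expect the real content to sit entirely in (1): the delicate part is matching the connectivity hypothesis on $A$ to the delooping degrees $k_{i}\ge n$ of the Postnikov fibers, which is exactly what makes each stage $\mathcal{C}$-amenable and permits the iteration of \corref{Amenable_Extensions}; the deductions of (2) and (3) are then formal, up to the minor disconnectedness bookkeeping for $\Omega A$ when $n=0$.
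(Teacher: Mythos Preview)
Your proof is correct and follows essentially the same route as the paper: refine the Postnikov tower of $A$ to a tower of principal fibrations with fibers $B^{k_i}C_p$ (with $k_i\ge n$ by connectivity), use the height hypothesis to get amenability of each fiber, climb via \corref{Amenable_Extensions}, and then deduce (2) and (3) from (1) by applying \corref{Amenable_Acyclic} and \propref{Acyclic_Trivial} to $\Omega A$. The paper invokes \propref{Height_Sense} rather than \propref{Height_Bootstrap} for the amenability of $B^{k_i}C_p$, and does not single out the $n=0$ bookkeeping for disconnected $\Omega A$, but otherwise the arguments coincide.
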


\begin{proof}
For (1), let $A$ be an $(n-1)$-connected $\pi$-finite $p$-space.
The Postnikov tower of $A$ can be refined to a tower of principal
fibrations 
\[
A=A_{r}\to\dots\to A_{1}\to A_{0}=\pt,
\]
such that the fiber of each $A_{i}\to A_{i-1}$ is of the form $B^{k_{i}}C_{p}$
for some $k_{i}\ge n$. Since we assumed $\Ht(\mathcal{C})\le n$,
we also have $\Ht(\mathcal{C})\le k_{i}$ (\propref{Height_Sense}),
and hence all the spaces $B^{k_{i}}C_{p}$ are $\mathcal{C}$-amenable.
By \corref{Amenable_Extensions}, the class of $\mathcal{C}$-amenable
spaces is closed under principal extensions, and therefore $A$ is
$\mathcal{C}$-amenable. Now, (2) follows from \corref{Amenable_Acyclic}
and (1) applied to $\Omega A$. Similarly, (3) follows from \propref{Acyclic_Trivial}
and (2) applied to $\Omega A$.
\end{proof}
\begin{example}
Let $\mathcal{C}$ be semirational, and so in particular of height $0$
(such as $\mathrm{Vec}_{\bb Q}$ or $\Sp_{\bb Q}$). By \exaref{Homotopy_Cardinality},
for every $\pi$-finite $p$-space $A$, the cardinality $|A|$ is
a sum of positive rational numbers. Thus, $|A|$ is invertible if
and only if $A$ is \emph{non-empty} (i.e. $(-1)$-connected). The
map $X\to X^{A}$ is an equivalence for all $X$, if and only if $A$
is \emph{connected},\emph{ }and $\mathcal{C}\to\mathcal{C}^{A}$ is
an equivalence if and only if $A$ is \emph{simply-connected}. 
\end{example}

\subsubsection{Cardinality}

Recall from \exaref{Homotopy_Cardinality}, that the formula for the
homotopy cardinality (\ref{eq:Rational_Cardinality}) could be deduced
solely from the ``multiplicativity property'' with respect to fiber
sequences. For higher heights, we have the following analogue: 
\begin{prop}
\label{prop:Height_Cardinality}Let $\mathcal{C}\in\acat^{\psad{\infty}}$
such that $\Ht(\mathcal{C})\le n$. Given a principal fibration of
$\pi$-finite $p$-spaces 
\[
F\to A\to B,
\]
if $F$ is $(n-1)$-connected, then $|A|=|F|\cdot|B|$. In particular,
\[
\pn k=\pn n^{(-1)^{k-n}},\quad\forall k\ge n.
\]
\end{prop}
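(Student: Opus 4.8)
The plan is to deduce the first assertion directly from \propref{Amenable_Descent}, using \propref{Height_Everything}(1) to supply the amenability of $F$, and then to read off the formula for the $\pn k$ by specializing to the Eilenberg--MacLane fibrations $B^{k}C_{p}\to\pt\to B^{k+1}C_{p}$.

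First I would note that, since $\mathcal{C}\in\acat^{\psad{\infty}}$, every $\pi$-finite $p$-space is $\mathcal{C}$-ambidextrous; in particular $A$ and $B$ are $\mathcal{C}$-ambidextrous, and because the fibration $F\to A\oto qB$ is principal, all fibers of $q$ are (isomorphic to) the $\pi$-finite $p$-space $F$, so $q$ is $\mathcal{C}$-ambidextrous by \remref{Ambi_Space}. Next, since $F$ is $(n-1)$-connected and $\Ht(\mathcal{C})\le n$, \propref{Height_Everything}(1) gives that $F$ is $\mathcal{C}$-amenable, i.e.\ $|F|_{X}$ is invertible for every $X\in\mathcal{C}$. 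With this in hand, \propref{Amenable_Descent} applies to $q$ and yields $|A|_{X}=|F|_{X}\,|B|_{X}$ for all $X$; since this holds objectwise, $|A|=|F|\cdot|B|$ as natural endomorphisms of $\Id_{\mathcal{C}}$.

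For the concluding formula I would induct on $k\ge n$. The base case $k=n$ is the tautology $\pn n=\pn n^{(-1)^{0}}$, noting that $\pn n$ is invertible by the hypothesis $\Ht(\mathcal{C})\le n$. For the inductive step, apply the part just proved to the principal fibration $B^{k}C_{p}\to\pt\to B^{k+1}C_{p}$, whose fiber $B^{k}C_{p}$ is $(k-1)$-connected and hence $(n-1)$-connected since $k\ge n$; this gives $\Id_{\mathcal{C}}=|\pt|=\pn k\cdot\pn{k+1}$, so that $\pn{k+1}=\pn k^{-1}=\pn n^{-(-1)^{k-n}}=\pn n^{(-1)^{k+1-n}}$, completing the induction.

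I do not anticipate a real obstacle: the substantive work sits in \propref{Amenable_Descent} and \propref{Height_Everything}, which are already available. The only things to be careful about are checking the hypotheses of \propref{Amenable_Descent} — that $q$ is a principal $\mathcal{C}$-ambidextrous map between $\mathcal{C}$-ambidextrous spaces, with $|F|$ invertible — and the elementary connectivity bookkeeping (a $(k-1)$-connected space is $(n-1)$-connected when $k\ge n$) that makes the Eilenberg--MacLane fibrations admissible inputs to the first part of the statement.
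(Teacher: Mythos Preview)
Your proposal is correct and follows essentially the same approach as the paper: use \propref{Height_Everything}(1) to see that $F$ is $\mathcal{C}$-amenable, invoke \propref{Amenable_Descent} to obtain $|A|=|F|\cdot|B|$, and then induct on $k\ge n$ via the principal fibration $B^{k}C_{p}\to\pt\to B^{k+1}C_{p}$. Your additional remarks verifying the ambidexterity hypotheses of \propref{Amenable_Descent} are accurate and harmless.
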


\begin{proof}
By \propref{Height_Everything}(1), the space $F$ is $\mathcal{C}$-amenable.
Thus, by \propref{Amenable_Descent}, we have $|A|=|F|\cdot|B|$.
By an inductive application of this to the fiber sequence
\[
B^{k}C_{p}\to\pt\to B^{k+1}C_{p},
\]
we obtain the formula $\pn k=\pn n^{(-1)^{k-n}}$ for all $k\ge n$.
\end{proof}
Furthermore, using \propref{Height_Cardinality} together with a principal
refinement of the Postnikov tower (as in the proof of \propref{Height_Bootstrap}),
one can reduce the computation of the $\mathcal{C}$-cardinality of
all $\pi$-finite $p$-spaces to those of connected $n$-finite $p$-spaces.

\subsubsection{The $p$-local Case}

In many situations of interest, the $\infty$-category $\mathcal{C}$
under consideration is $0$-semiadditive, $p$-local and admits all
$1$-finite colimits. In this case, $\mathcal{C}$ is automatically
$\ell$-typically $\infty$-semiadditive for all primes $\ell\neq p$
(\propref{Height_Bootstrap}). Moreover, in this case there is essentially
no difference between higher semiadditivity and $p$-typical higher
semiadditivity:
\begin{prop}
\label{prop:p_Local_n_Semiadd}Let $\mathcal{C}$ be a $0$-semiadditive
$p$-local $\infty$-category which admits all $1$-finite limits
and colimits. The $\infty$-category $\mathcal{C}$ is $p$-typically
$m$-semiadditive if and only if it is $m$-semiadditive. 
\end{prop}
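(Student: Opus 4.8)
We prove both directions. The implication ``$m$-semiadditive $\Rightarrow$ $p$-typically $m$-semiadditive'' is immediate, since every $m$-finite $p$-space is in particular $m$-finite. For the converse, assume $\mathcal{C}$ is $p$-typically $m$-semiadditive; we may assume $m\ge 1$, as for $m=0$ both conditions reduce to ordinary semiadditivity. The goal is to show that every $m$-finite space is $\mathcal{C}$-ambidextrous.

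\emph{Step 1: prime-to-$p$ heights vanish.} For each prime $\ell\ne p$, since $\mathcal{C}$ is $p$-local the endomorphism $|C_\ell|$ (multiplication by $\ell$) is invertible on $\mathcal{C}$, i.e.\ $\mathcal{C}$ has $\ell$-typical height $\le 0$; and $\mathcal{C}$ admits $BC_\ell$-(co)limits since $BC_\ell$ is $1$-finite. Hence \propref{Height_Bootstrap}, applied with the prime $\ell$ and $n=0$, shows that $\mathcal{C}$ is $\ell$-typically $\infty$-semiadditive. In particular $B^kC_\ell$ is $\mathcal{C}$-ambidextrous for all $k\ge 0$; together with the standing hypothesis on $p$, the space $B^kC_q$ is $\mathcal{C}$-ambidextrous for every prime $q$ and every $1\le k\le m$ (and for all $k$ when $m=\infty$).

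\emph{Step 2: classifying spaces of finite groups.} We claim $BG$ is $\mathcal{C}$-ambidextrous for every finite group $G$. This is the crux of the argument: $G$ need not be nilpotent, and then the iterated-extension strategy behind \propref{pTyp_Semiad}, which builds $BG$ from copies of $B^kC_q$ through a tower of fibrations, is unavailable. Instead I use a transfer. Choose a Sylow $p$-subgroup $G_p\le G$. Since $\mathcal{C}$ is $p$-typically $m$-semiadditive with $m\ge 1$, hence $p$-typically $1$-semiadditive, and $BG_p$ is a $1$-finite $p$-space, the space $BG_p$ is $\mathcal{C}$-ambidextrous. The map $BG_p\to BG$ is surjective, and each of its fibers is isomorphic to the finite set $G/G_p$, whose cardinality $[G:G_p]$ is prime to $p$ and therefore invertible in the $p$-local $\mathcal{C}$; so this map is $\mathcal{C}$-ambidextrous (as $\mathcal{C}$ is $0$-semiadditive, using \remref{Ambi_Space}) and $\mathcal{C}$-amenable. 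Invoking the descent of $\mathcal{C}$-ambidexterity along $\mathcal{C}$-amenable surjections --- the $\mathcal{C}$-linear analogue of the classical Sylow transfer (cf.\ \cite[Section 5.1]{HopkinsLurie} and \cite{Ambi2018}), whose mechanism is that $\mathcal{C}$-amenability of $i\colon BG_p\to BG$ makes $\Id_{\mathcal{C}^{BG}}$ a retract of $i_!i^*$ and ambidexterity is inherited through such retracts --- we conclude that $BG$ is $\mathcal{C}$-ambidextrous.

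\emph{Step 3: assembly via Postnikov towers.} Let $A$ be an $m$-finite space. Since ambidexterity is a fiberwise condition we may assume $A$ is connected. Its Postnikov tower $A=\tau_{\le m}A\to\cdots\to\tau_{\le 1}A=B\pi_1(A)\to\tau_{\le 0}A=\pt$ has first stage $B\pi_1(A)$, which is $\mathcal{C}$-ambidextrous by Step 2, and for $2\le k\le m$ the map $\tau_{\le k}A\to\tau_{\le k-1}A$ has fiber $B^k\pi_k(A)$; filtering the finite abelian group $\pi_k(A)$ by subgroups with cyclic primary quotients exhibits $B^k\pi_k(A)$ as a finite tower of fibrations with fibers $B^kC_q$ ($q$ prime, $k\le m$), each $\mathcal{C}$-ambidextrous by Step 1, so $B^k\pi_k(A)$ is $\mathcal{C}$-ambidextrous. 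Therefore every map in the Postnikov tower is $\mathcal{C}$-ambidextrous, hence so is their composite $A\to\pt$; that is, $\mathcal{C}$ is $m$-semiadditive. The case $m=\infty$ is identical, since any $\pi$-finite space is $m_0$-finite for some finite $m_0$ and thus has a finite Postnikov tower assembled from a $B\pi_1$ factor (Step 2) and stages $B^kC_q$ with $q=p$ (the hypothesis) or $q\ne p$ (Step 1). The one non-formal ingredient is the transfer of Step 2: it does not follow from the cancellation or multiplicativity results of \subsecref{Acyclicity}, which genuinely fail for non-principal maps (\exaref{Amenable_Composition}, \exaref{Amenable_Cancelation}) --- the point is precisely that invertibility of $[G:G_p]$ lets the $p$-local $\mathcal{C}$ ``see'' $BG$ only through $BG_p$.
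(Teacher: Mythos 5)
Your proof is correct and follows essentially the same route as the paper: the paper likewise first obtains $\ell$-typical $\infty$-semiadditivity for all $\ell\neq p$ from \propref{Height_Bootstrap} using $p$-locality, and then handles the fundamental group and the higher Postnikov stages exactly as you do, except that it simply cites \cite[Propositions 4.4.16 and 4.4.19]{HopkinsLurie} for the Sylow transfer (your Step 2) and the Eilenberg--MacLane induction (your Step 3) rather than inlining their proofs. Your identification of the transfer as the one non-formal ingredient is exactly right --- that is the content of \cite[Proposition 4.4.16]{HopkinsLurie} --- and your retract mechanism is made rigorous by feeding it into \propref{Ambi_Criterion} (the retract of the colimit-preserving functor $A_!i^*$ shows $(BG)_*$ preserves $BG$-colimits).
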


\begin{proof}
Assume that $\mathcal{C}$ is $p$-typically $m$-semiadditive. By
\cite[Proposition 4.4.16]{HopkinsLurie} and the fact that the space $BC_{p}$ is $\mathcal{C}$-ambidextrous,
we get that $\mathcal{C}$ is $1$-semiadditive. To get higher semiadditivity, we first have by assumption that
$\mathcal{C}$ is $p$-typically $m$-semiadditive. Moreover, since
$\mathcal{C}$ is $p$-local, it is $\ell$-typically $m$-semiadditive
for all $\ell\neq p$ (\propref{Height_Bootstrap}). Thus, $B^{k}C_{\ell}$
is $\mathcal{C}$-ambidextrous for all primes $\ell$ and all integers
$k=2,\dots m$. By inductive application of \cite[Proposition 4.4.19]{HopkinsLurie}, it follows
that $\mathcal{C}$ is $m$-semiadditive. We note that in both \cite[Proposition 4.4.16]{HopkinsLurie}
and \cite[Proposition 4.4.19]{HopkinsLurie}, one assumes that $\mathcal{C}$ admits all small limits
and colimits. However, the proofs use only the limits and colimits
which we assumed in the statement. 
\end{proof}
When applied to $p$-local $\infty$-categories, the main results
of this subsection can be summarized as follows: 
\begin{thm}
\label{thm:Height_Everything_p_Local}Let $\mathcal{C}$ be a $0$-semiadditive
$p$-local $\infty$-category, which admits all $(n+1)$-finite limits
and colimits. If $\mathcal{C}$ is $p$-typically $n$-semiadditive
such that $\Ht(\mathcal{C})\le n$, then $\mathcal{C}$ is $\infty$-semiadditive.
Moreover, for every $\pi$-finite space $A$:
\begin{enumerate}
\item If $A$ is $(n-1)$-connected and nilpotent, then $A$ is $\mathcal{C}$-amenable. 
\item If $A$ is $n$-connected, then $A$ is $\mathcal{C}$-acyclic. 
\item If $A$ is $(n+1)$-connected, then $A$ is $\mathcal{C}$-trivial.
\end{enumerate}
\end{thm}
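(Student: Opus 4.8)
The plan is to reduce the statement about general $\pi$-finite spaces to the $p$-typical case already handled in this subsection, and to upgrade $p$-typical $\infty$-semiadditivity to genuine $\infty$-semiadditivity using \propref{p_Local_n_Semiadd}. First I would observe that the hypotheses — $\mathcal{C}$ is $0$-semiadditive, $p$-local, admits all $(n+1)$-finite limits and colimits, and is $p$-typically $n$-semiadditive — are exactly those needed to invoke \propref{p_Local_n_Semiadd} with $m = n+1$: once we know $B^{n+1}C_p$ is $\mathcal{C}$-ambidextrous, that proposition will promote $p$-typical $(n+1)$-semiadditivity to honest $(n+1)$-semiadditivity. But actually we get much more. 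Since $\Ht(\mathcal{C}) \le n$, \propref{Height_Bootstrap} applies (we have $B^{n+1}C_p$-(co)limits by hypothesis), so $\mathcal{C}$ is $p$-typically $\infty$-semiadditive; in particular $B^{n+1}C_p$ is $\mathcal{C}$-ambidextrous. Combined with $p$-locality, \propref{p_Local_n_Semiadd} (applied with each $m$, noting the proof only uses the limits and colimits we have assumed plus those produced along the way) then yields that $\mathcal{C}$ is $m$-semiadditive for all $m$, i.e. $\infty$-semiadditive.

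Next I would turn to the three claims about $\pi$-finite spaces $A$. The subtlety is that \propref{Height_Everything} was stated only for $\pi$-finite $p$-\emph{spaces}, whereas here $A$ is an arbitrary $\pi$-finite (nilpotent, in case (1)) space. For (1), the key is that a $(n-1)$-connected \emph{nilpotent} $\pi$-finite space admits a refinement of its Postnikov tower into a tower of principal fibrations whose fibers are Eilenberg--MacLane spaces $B^{k}C_{\ell}$ with $k \ge n$ and $\ell$ ranging over primes. For the prime-$p$ fibers, $B^k C_p$ is $\mathcal{C}$-amenable by \propref{Height_Everything}(1). For a prime $\ell \ne p$, $p$-locality of $\mathcal{C}$ forces $\ell$ to be invertible on all objects, so by \propref{Cardinality_Height_0} the connected space $B^k C_\ell$ is $\mathcal{C}$-amenable as well. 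Now \corref{Amenable_Extensions} — amenable maps are closed under principal composition — lets me climb the tower to conclude $A$ is $\mathcal{C}$-amenable. Then (2) follows by applying (1) to $\Omega A$ (which is $(n-2)$-connected, hence in particular $(n-1)$-connected after... wait, more carefully: if $A$ is $n$-connected then $\Omega A$ is $(n-1)$-connected, and $\Omega A$ is automatically nilpotent) together with \corref{Amenable_Acyclic}; and (3) follows by applying (2) to $\Omega A$ together with \propref{Acyclic_Trivial}, using that $\mathcal{C}$ admits the relevant colimits since it is now known to be $\infty$-semiadditive.

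The main obstacle I anticipate is the nilpotence bookkeeping in step (1): one must be careful that the Postnikov refinement of a nilpotent space genuinely decomposes into \emph{principal} fibrations with Eilenberg--MacLane fibers (this is the standard characterization of nilpotent spaces, but it is where the hypothesis ``nilpotent'' is essential and cannot be dropped), and that $\corref{Amenable_Extensions}$ applies at each stage — which requires each fiber to be $\mathcal{C}$-amenable \emph{and} the relevant map to be principal, both of which hold by construction. A secondary point is checking that the non-$p$ Eilenberg--MacLane fibers are $\mathcal{C}$-ambidextrous to begin with (needed before ``amenable'' even makes sense): this again follows from \propref{Cardinality_Height_0} since their homotopy groups have order prime to $p$ and hence invertible in $\mathcal{C}$. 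Once these pieces are assembled the argument is essentially formal, being a combination of the closure properties of amenability, acyclicity, and triviality established in \subsecref{Acyclicity} with the $p$-typical results of the present subsection.
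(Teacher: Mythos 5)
Your treatment of the main claim and of parts (2) and (3) coincides with the paper's: \propref{Height_Bootstrap} gives $p$-typical $\infty$-semiadditivity, \propref{p_Local_n_Semiadd} upgrades it to genuine $\infty$-semiadditivity using $p$-locality, and then (2) and (3) follow by applying (1) to $\Omega A$ via \corref{Amenable_Acyclic} and \propref{Acyclic_Trivial}. For part (1), however, you take a genuinely different route. The paper decomposes the nilpotent $\pi$-finite space itself as a product $A\simeq\prod_{\ell}A_{(\ell)}$ of its $\ell$-primary components (a Sylow-type theorem for nilpotent $\pi$-finite spaces), uses $|A|=\prod_{\ell}|A_{(\ell)}|$ from \remref{Cardinality_Arithmetic}, and applies \propref{Height_Everything}(1) to each factor, the factors at primes $\ell\neq p$ being amenable because $\mathcal{C}$ has height $0$ at $\ell$. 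You instead run the principal-Postnikov-tower argument of \propref{Height_Everything}(1) directly on $A$, allowing Eilenberg--MacLane fibers $B^{k}C_{\ell}$ at all primes, handling the $\ell\neq p$ fibers by \propref{Cardinality_Height_0} and climbing the tower with \corref{Amenable_Extensions}. Both arguments rest on a nontrivial structural input about nilpotent $\pi$-finite spaces: yours on the existence of a principal refinement of the Postnikov tower with fibers $B^{k}C_{\ell}$ (the standard characterization of nilpotence, further refined along composition series of the finite abelian groups), the paper's on the primary product decomposition. Your version has the merit of reusing verbatim the tower-climbing mechanism already set up for the $p$-typical case; the paper's is shorter because multiplicativity of cardinality over an honest product is immediate and requires no principality hypothesis. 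You are also right to flag, and you correctly resolve, two points the paper leaves implicit: that $\Omega A$ is nilpotent when $A$ is $n$-connected (it is an H-space, hence simple), and that the spaces $B^{k}C_{\ell}$ for $\ell\neq p$ must be shown $\mathcal{C}$-ambidextrous before amenability is even meaningful.
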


\begin{proof}
Since $\Ht(\mathcal{C})\le n$ and $\mathcal{C}$ admits $B^{n+1}C_{p}$-limits
and colimits, it follows that $\mathcal{C}$ is $p$-typically $\infty$-semiadditive
(\propref{Height_Bootstrap}). Since $\mathcal{C}$ is $p$-local,
it follows that $\mathcal{C}$ is in fact $\infty$-semiadditive (\propref{p_Local_n_Semiadd}).
For (1), we observe that if $A$ is $\pi$-finite and nilpotent, then
$A=\prod_{\ell}A_{(\ell)}$ where $\ell$ ranges over primes and $A_{(\ell)}$
is a $\pi$-finite $\ell$-space which is contractible for almost
all $\ell$ (e.g. \cite[Theorem 5.7]{Sylow2017}). Since we have $|A|=\prod_{\ell}|A_{(\ell)}|$
(\remref{Cardinality_Arithmetic}), the $\mathcal{C}$-amenability
of $A$ follows from the $\mathcal{C}$-amenability of all the $A_{(\ell)}$-s
(see \propref{Height_Everything}(1)). Finally, as in the proof of
\propref{Height_Everything}, (2) follows from (1) and \corref{Amenable_Acyclic}
applied to $\Omega A$ and (3) follows from (2) and \propref{Acyclic_Trivial}.
\end{proof}
\begin{rem}
We note that the nilpotence condition in \thmref{Height_Everything_p_Local}(1)
is vacuous for simply connected spaces and hence relevant only for
height $n=1$. However, in this case it can not be dropped. Indeed,
by \exaref{Amenable_Cancelation}, at the prime $p=2$ we have $|B\Sigma_{3}|_{1}=\frac{2}{3},$
which is not-invertible. With a little more effort one can show that
if $\Ht(\mathcal{C})\le1$, then $|A|$ is invertible for every connected
$\pi$-finite space $A$, such that that the $p$-Sylow subgroup of
$\pi_{1}A$ is normal. In other words, the $p$-primary \emph{fusion} in
the fundamental group is the only obstruction for the invertibility
of $|A|$. 
\end{rem}

\subsection{Semiadditive Redshift}

By \exaref{Categorical_Cardinality}, the $\infty$-category $\acat_{\fin m}$
is $m$-semiadditive and hence given $\mathcal{C}\in\acat_{\fin m}$,
we can discuss $\htt(\mathcal{C})$ for various primes $p$, which
is the height of $\mathcal{C}$ as an object of $\acat_{\fin m}$.
However, if $\mathcal{C}$ itself is $p$-typically higher semiadditive,
then we have also defined $\Ht(\mathcal{C})$, as the height of the objects of $\mathcal{C}$. These two notions of height do \emph{not}
coincide. Rather, we shall now show that $\htt(\mathcal{C})$ exceeds
$\Ht(\mathcal{C})$ exactly by one. As the semiadditive height generalizes
the chromatic height, this can be viewed as a particular manifestation
of the ``redshift'' principle. Roughly speaking, categorification
tends to shift the height up by one. Before we begin, we need a general
categorical lemma:
\begin{lem}
\label{lem:Invertible_Monad}Let $F\colon\mathcal{C}\adj\mathcal{D}\colon G$
be an adjunction. If $GF$ is an equivalence, then $F$ is fully faithful. 
\end{lem}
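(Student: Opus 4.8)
The plan is to use the triangle identities for the adjunction $F \dashv G$ to show that the unit $\eta\colon \Id_{\mathcal{C}} \to GF$ is itself an isomorphism, and then deduce full faithfulness of $F$ from this. Recall that $F$ is fully faithful precisely when the unit $\eta$ is an isomorphism (a standard fact about adjunctions, already invoked for the dual statement in \defref{Acyclic_Map} and its surrounding discussion). So the whole task reduces to: \emph{if $GF$ is an equivalence, then $\eta$ is an isomorphism.}

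First I would record the relevant triangle identity. The counit $\varepsilon\colon FG \to \Id_{\mathcal{D}}$ and unit $\eta$ satisfy
\[
(G\varepsilon)\circ(\eta G) = \Id_{G}.
\]
Now whisker the unit with $GF$ on both sides and compare with $GF$ whiskered onto $\eta$. Naturality of $\eta$ applied to the morphism $\eta\colon \Id_{\mathcal{C}} \to GF$ gives a commuting square identifying $\eta_{GF} \colon GF \to GFGF$ with $GF(\eta)\colon GF \to GFGF$ (both are $(GF)(\eta)$ up to the canonical identification, since $\eta$ is a natural transformation out of the identity functor). In particular $\eta_{GF} = GF(\eta)$ as natural transformations $GF \to GFGF$.

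Next comes the key step. Since $GF$ is an equivalence by hypothesis, the functor $GF$ is in particular conservative and faithful on $2$-morphisms; more usefully, applying the inverse equivalence $(GF)^{-1}$ to the equation $\eta_{GF} = GF(\eta)$ shows that $\eta$ and $\eta_{GF}$ "agree" in a way that lets us cancel. Concretely: from the triangle identity, $G\varepsilon_F$ is a retraction of $\eta_{GF} = GF(\eta)$, i.e. $(G\varepsilon_F)\circ GF(\eta) = \Id_{GF}$. Applying $(GF)^{-1}$ (or rather using that $GF$ reflects isomorphisms and that $GF(\eta)$ being split epi together with $GF$ being an equivalence forces $\eta$ to be split epi) yields that $\eta$ is a split epimorphism. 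For the other direction, I would use the second triangle identity $\varepsilon_F \circ F(\eta) = \Id_F$: since $F(\eta)$ is split monic and $F$ followed by $G$ is an equivalence while $G$ is faithful (being right adjoint with $GF$ an equivalence, hence $F$ essentially surjective onto its image in a strong sense), one concludes $\eta$ is also split monic. A split mono that is a split epi is an isomorphism.

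The main obstacle, and the point that needs the most care, is the cancellation argument: from "$GF(\eta)$ is an isomorphism" (which follows once we know $GF$ is an equivalence and $\eta_{GF} = GF(\eta)$ is split epi, hence after cancelling is invertible) one wants to conclude "$\eta$ is an isomorphism." This is exactly where conservativity of $GF$ enters — an equivalence is conservative, so if $GF(\eta)$ is invertible then $\eta$ is invertible. Thus the cleanest route is: (i) $\eta_{GF} = GF(\eta)$ by naturality of $\eta$ as a transformation from $\Id_{\mathcal{C}}$; (ii) $G\varepsilon_F \circ \eta_{GF} = \Id_{GF}$ is a triangle identity, so $\eta_{GF}$ is split monic; (iii) symmetrically, using that $\eta_{GF}$ sits in the image of the equivalence, one shows it is also split epic, hence an isomorphism; (iv) therefore $GF(\eta)$ is an isomorphism; (v) $GF$ conservative (being an equivalence) gives $\eta$ an isomorphism; (vi) hence $F$ is fully faithful. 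I would present steps (ii)–(iii) carefully since the splitting in both directions must be extracted from the two triangle identities, and it is easy to conflate the unit and counit whiskerings.
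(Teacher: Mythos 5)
Your reduction of the statement to ``the unit $\eta\colon\Id_{\mathcal{C}}\to GF$ is an isomorphism'' is correct, and your split-mono half essentially matches the first half of the paper's proof (there: the multiplication $m$ is a left inverse of $1\otimes u$, and one cancels the equivalence $M\otimes(-)$ to get a left inverse of $u$). But the proposal has two genuine problems. First, the claim that naturality of $\eta$ gives $\eta_{GF}=GF(\eta)$ is false: the naturality square applied to the morphism $\eta_X$ only yields $\eta_{GFX}\circ\eta_X=GF(\eta_X)\circ\eta_X$, and the identity $T\eta=\eta T$ for a monad $T$ is in fact equivalent to the monad being idempotent --- which is essentially the conclusion you are trying to prove, so it cannot be assumed.

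The second and fatal gap is the split-epi direction. Both triangle identities, however you whisker them, produce \emph{retractions} of $F\eta$, $\eta_G$, $GF\eta$, $\eta_{GF}$ --- that is, only ``split mono'' statements. There is no symmetric counterpart giving a section of $\eta$, and your step (iii) asserts one without any argument. (The parenthetical claim that $G$ is faithful is also false in general: take $\mathcal{C}=\pt$ and $F$ the inclusion of a terminal object of any $\mathcal{D}$.) This is precisely where the paper has to do real work: it observes that $GF$, being an equivalence, is an \emph{invertible} (hence dualizable) object of the homotopy category of $(\fun(\mathcal{C},\mathcal{C}),\circ)$, and then runs a diagram chase with the duality data of $M\dashv M^{\vee}$ to manufacture a right inverse of the unit $\one\oto uM$. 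Nothing in your outline substitutes for that argument, so the proof as proposed does not close.
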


\begin{proof}
The functor $F$ is fully faithful if and only if the unit $\Id\oto uGF$
is an isomorphism. The unit is part of a monad structure on $GF$
making it a monoid in the homotopy category of $\fun(\mathcal{C},\mathcal{C})$,
whose monoidal structure is given by composition. It therefore suffices
to show that given a monoid $M$ in any monoidal (ordinary) category,
if $M$ is invertible with respect to the monoidal structure, then
the unit map $\one\oto uM$ is an isomorphism. We observe that the
multiplication map $M\otimes M\oto mM$ is always a left inverse of
$1\otimes u$. Thus, since $M\otimes(-)$ is an equivalence, it follows
that $u$ admits a left inverse as well. To conclude the proof, it
remains to show that $u$ admits a right inverse. Since $M$ is invertible,
it is in particular dualizable, and the dual $M^{\vee}$ of $M$ is
its inverse. More precisely, the duality datum maps $\one\oto{\eta}M\otimes M^{\vee}$
and $M^{\vee}\otimes M\oto{\varepsilon}\one$ are isomorphisms and
exhibit $M^{\vee}$ as the inverse of $M$. Consider the following
commutative diagram
\[
\xymatrix{M\otimes\one\ar[d]_{1\otimes\eta\otimes1}^{\wr}\ar[rr]^{1\otimes u} &  & M\otimes M\ar[d]_{\wr}^{1\otimes\eta\otimes1}\ar@{=}[rrd]\\
M\otimes M\otimes M^{\vee}\otimes\one\ar[d]_{m\otimes1\otimes1}\ar[rr]^{1\otimes1\otimes1\otimes u} &  & M\otimes M\otimes M^{\vee}\otimes M\ar[d]^{m\otimes1\otimes1}\ar[rr]^{1\otimes1\otimes\varepsilon} &  & M\otimes M\ar[d]^{m}\\
M\otimes M^{\vee}\otimes\one\ar[rr]^{1\otimes1\otimes u} &  & M\otimes M^{\vee}\otimes M\ar[rr]_{\sim}^{1\otimes\varepsilon} &  & M
}
\]
The clockwise composition is the identity and hence so is the counter-clockwise
composition. It follows that the map
\[
M\otimes M^{\vee}\otimes\one\oto{1\otimes1\otimes u}M\otimes M^{\vee}\otimes M
\]
admits a right inverse. Since the functor $M\otimes M^{\vee}\otimes(-)$
is an equivalence, it follows that $u$ admits a right inverse.
\end{proof}
We can now state and prove the following ``Semiadditive Redshift''
result, which can be informally summarized as ``$\htt=\Ht+1$''.
\begin{thm}
[Semiadditive Redshift]\label{thm:Redshift} Let $0\le n\le m \leq \infty$ be
integers and let $\mathcal{C}\in\acat_{\fin{(m+1)}}$. If $\mathcal{C}$
is $p$-typically $m$-semiadditive, then 
\begin{enumerate}
\item $\Ht(\mathcal{C})\le n$ if and only if $\htt(\mathcal{C})\le n+1$. 
\item $\Ht(\mathcal{C})>n$ if and only if $\htt(\mathcal{C})>n+1$.
\end{enumerate}
In particular, an $\infty$-semiadditive $\infty$-category is of
height $n$ if and only if, as an object of $\acat_{\infty\text{-}\mathrm{fin}}$
(and hence also $\acat^{\sad{\infty}}$), it is of height $n+1$. 
\end{thm}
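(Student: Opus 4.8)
The plan is to reduce the two equivalences to a single statement about acyclicity/triviality of Eilenberg--MacLane spaces inside $\acat_{\fin(m+1)}$, using the explicit description of its higher semiadditive structure from \exaref{Categorical_Cardinality}. Recall that for $A$ an $m$-finite space, the endofunctor $|A|_{\acat_{\fin m}}$ of $\acat_{\fin m}$ is given by $\mathcal{C}\mapsto \mathcal{C}[A] = A_!A^*\mathcal{C}$, i.e.\ by taking the constant colimit $\colim_A \mathcal{C}$, which is just $\mathcal{C}^A$ computed as a colimit (these agree since $A$ is $\acat_{\fin m}$-ambidextrous). In particular, $\pn{n+1}^{\acat_{\fin(m+1)}}$ acts on $\mathcal{C}$ as the canonical functor $\mathcal{C}[B^{n+1}C_p]\to\mathcal{C}$, which is the fold map, and its inverse (the norm) is controlled by whether $B^{n+1}C_p$ is $\mathcal{C}$-acyclic. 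So the statement ``$\htt(\mathcal{C})\le n+1$'' — namely that $\pn{n+1}^{\acat_{\fin(m+1)}}$ acts invertibly on $\mathcal{C}$ as an object — should be precisely the statement that the fold map $\mathcal{C}[B^{n+1}C_p]\to\mathcal{C}$ is an equivalence in $\acat_{\fin(m+1)}$, i.e.\ that $B^{n+1}C_p$ is $\mathcal{C}$-acyclic. By \propref{Height_Characterization}, $B^{n+1}C_p$ is $\mathcal{C}$-acyclic if and only if $\Ht(\mathcal{C})\le n$, which gives (1).

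First I would make precise the identification of $\pn{n+1}$ acting on $\mathcal{C}\in\acat_{\fin(m+1)}$ with the fold functor $\mathcal{C}[B^{n+1}C_p]\to\mathcal{C}$; this is \exaref{Categorical_Cardinality} together with the general fact (from \propref{Amenable_Acyclic_Object}, or directly) that for a connected $\mathcal{C}$-ambidextrous space $A$ the composite $X\xrightarrow{\Delta}X^A\xrightarrow{\nm^{-1}}X[A]\xrightarrow{\nabla}X$ computes $|A|_X$, so invertibility of $|A|_X$ with the fold and diagonal both being the relevant canonical maps forces the fold map itself to be invertible. The subtlety is that a priori invertibility of $\pn{n+1}$ on the object $\mathcal{C}$ only says the \emph{cardinality} map is an equivalence, not that the fold map $\mathcal{C}[B^{n+1}C_p]\to\mathcal{C}$ is; but by \propref{Amenable_Acyclic_Object} applied with the ambient $\infty$-category being $\acat_{\fin(m+1)}$ and the object being $\mathcal{C}$, invertibility of $|B^{n+1}C_p|_{\mathcal{C}}$ is equivalent to the fold map $\mathcal{C}[B^{n+1}C_p]\to\mathcal{C}$ being an isomorphism (this is exactly the equivalence of conditions (1) and (3) there, noting $B^{n+1}C_p$ is connected). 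Then by \lemref{Acyclic_Monoidal} --- or rather directly, since acyclicity is the condition that the fold map is an equivalence for all objects --- one must pass from ``the fold map is an equivalence for the single object $\mathcal{C}$'' to ``$B^{n+1}C_p$ is $\mathcal{C}$-acyclic for all objects of $\mathcal{C}$''. This is where one uses that the condition ``$B^{n+1}C_p$ is acyclic'' is a \emph{property of the $\infty$-category} $\mathcal{C}$ that can be checked after applying the conservative forgetful-type functor, or simply that $X[B^{n+1}C_p]\to X$ for varying $X\in\mathcal{C}$ is, by compatibility of colimits, the fold map tensored appropriately; concretely, acyclicity for all $X$ follows from $\mathcal{C}\in\acat^{\psad\infty}$ having a single ``generic'' behavior, which is precisely what \propref{Height_Characterization} packages.

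Having established (1), claim (2) follows formally: $\Ht(\mathcal{C})>n$ means every object of $\mathcal{C}$ is $\pn n^{\mathcal{C}}$-complete, equivalently (using \thmref{Intro_Redshift}, i.e.\ the object-level Redshift \thmref{Redshift} as already stated for heights of objects) that $\mathcal{C}$ has no nonzero subobject of height $\le n$; and $\htt(\mathcal{C})>n+1$ means $\mathcal{C}$ is $\pn{n+1}^{\acat_{\fin(m+1)}}$-complete as an object, i.e.\ $\map_{\acat_{\fin(m+1)}}(\mathcal{D},\mathcal{C})$ is contractible for every $\pn{n+1}$-divisible $\mathcal{D}$. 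By part (1), the $\pn{n+1}$-divisible objects of $\acat_{\fin(m+1)}$ are exactly those $\mathcal{D}$ with $\Ht(\mathcal{D})\le n$, and one checks that $\map_{\acat_{\fin(m+1)}}(\mathcal{D},\mathcal{C})$ is contractible for all such $\mathcal{D}$ iff $\mathcal{C}$ is $\pn n$-complete, by factoring through the subcategory of height-$\le n$ objects and its reflection (using \propref{Height_Pieces_Sad} and the fact that $\mathcal{C}_{\le n}\subseteq\mathcal{C}$ is closed under the relevant colimits, so that a functor $\mathcal{D}\to\mathcal{C}$ with $\mathcal{D}$ of height $\le n$ lands in $\mathcal{C}_{\le n}$). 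The main obstacle I expect is the bookkeeping in this last step: carefully matching the two notions of completeness --- completeness of the \emph{object} $\mathcal{C}$ in $\acat_{\fin(m+1)}$ versus completeness of \emph{all objects of} $\mathcal{C}$ with respect to $\pn n$ --- and in particular verifying that the only obstruction to $\map(\mathcal{D},\mathcal{C})$ being contractible, for $\mathcal{D}$ of height $\le n$, is a nonzero height-$\le n$ part inside $\mathcal{C}$. The final sentence of the theorem (the ``in particular'') is then immediate: $\Ht(\mathcal{C})=n$ means $\Ht(\mathcal{C})\le n$ and $\Ht(\mathcal{C})>n-1$, which by (1) and (2) translates to $\htt(\mathcal{C})\le n+1$ and $\htt(\mathcal{C})>n$, i.e.\ $\htt(\mathcal{C})=n+1$; and the parenthetical reduction from $\acat_{\fin\infty}$ to $\acat^{\sad\infty}$ uses \propref{Sad_Sad}, since $\acat^{\sad\infty}\hookrightarrow\acat_{\fin\infty}$ is a full subcategory closed under the relevant colimits and contains $\mathcal{C}$, so \propref{Height_Subcategory} applies.
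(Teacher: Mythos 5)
Your overall skeleton — identify $\pn{n+1}$ at the object $\mathcal{C}$ with the internal endofunctor $B_!B^*$ for $B=B^{n+1}C_p$, relate its invertibility to acyclicity of $B$, and close with \propref{Height_Characterization} — is the paper's strategy. But the crux of part (1) is exactly the step you flag as "the subtlety", and your resolution of it does not work. You invoke the equivalence of conditions (1) and (3) of \propref{Amenable_Acyclic_Object} to claim that invertibility of $|B^{n+1}C_p|_{\mathcal{C}}$ is equivalent to the fold map for $B^{n+1}C_p$ being an isomorphism. That proposition relates the fold map of $A$ to invertibility of $|\Omega A|$, \emph{not} of $|A|$: applied in $\acat_{\fin{(m+1)}}$ to $A=B^{n+1}C_p$ it relates the fold map to $\pn{n}$, one index too low (and indeed "$|A|_X$ invertible" does not imply "the fold map $X[A]\to X$ is invertible" in general: $|B^4C_p|=1$ in $\Theta_4$, yet $B^4C_p$ is not $\Theta_4$-acyclic since $|B^3C_p|=p$). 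The implication you actually need — that invertibility of the \emph{composite} endofunctor $B_!B^*$ forces the counit $B_!B^*\to\Id$ to be an isomorphism, i.e.\ $B^*$ to be fully faithful — is a genuine lemma; the paper proves it as (the dual of) \lemref{Invertible_Monad}, via a dualizability argument showing that the unit of an invertible monoid is invertible. Your route can be repaired by instead applying \propref{Amenable_Acyclic_Object} one level up, to $A=B^{n+2}C_p$ (giving $\mathcal{C}$-triviality of $B^{n+2}C_p$ and then \propref{Height_Characterization}(3)), but that requires $B^{n+2}C_p$ to be $\acat_{\fin{(m+1)}}$-ambidextrous and so does not cover $n=m$.

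In part (2) there is a second gap: you assert that "the $\pn{n+1}$-divisible objects of $\acat_{\fin{(m+1)}}$ are exactly those $\mathcal{D}$ with $\Ht(\mathcal{D})\le n$" and then verify contractibility of $\map(\mathcal{D},\mathcal{C})$ only for such $\mathcal{D}$. Part (1) gives this characterization only for $\mathcal{D}$ that are $p$-typically $m$-semiadditive; a general $\pn{n+1}$-divisible object of $\acat_{\fin{(m+1)}}$ need not have $\Ht(\mathcal{D})\le n$ (already $\mathrm{Vec}_{\bb F_p}$ is $\pn 1$-divisible, since $BC_p$ is $\mathrm{Vec}_{\bb F_p}$-acyclic, without $p$ being invertible). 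Since $\pn{n+1}$-completeness of $\mathcal{C}$ quantifies over \emph{all} divisible $\mathcal{D}$, your argument as structured does not establish $\htt(\mathcal{C})>n+1$. The paper avoids this by not reformulating divisibility of $\mathcal{D}$ as an internal height condition at all: for an arbitrary $(m+1)$-finite-colimit-preserving $F\colon\mathcal{D}\to\mathcal{C}$ it transports the isomorphism $X[B]\iso X$ (which holds for all $X\in\mathcal{D}$ once $B^*$ is fully faithful, with no semiadditivity of $\mathcal{D}$ needed) to $F(X)[B]\iso F(X)$, and only then applies \propref{Amenable_Acyclic_Object} \emph{inside} $\mathcal{C}$, where the requisite ambidexterity is available, to conclude $\htt(F(X))\le n$ and hence $F(X)=0$. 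Your converse direction (taking $\mathcal{D}=\mathcal{C}_{\le n}$ and using (1) plus \propref{Height_Pieces_Sad}) and the concluding "in particular" are in line with the paper.
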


\begin{proof}
Denote $B=B^{n+1}C_{p}$. For (1), we observe that $\Ht(\mathcal{C})\le n$
if and only if $B$ is $\mathcal{C}$-acyclic (\propref{Height_Characterization}),
namely that $B^{*}$ is fully faithful. On the other hand, the natural
endomorphism $\pn{n+1}=|B|$ of the identity functor of $\acat_{\fin{(m+1)}}$,
acts on $\mathcal{C}$ by the functor $B_{!}B^{*}\colon\mathcal{C}\to\mathcal{C}$
(\exaref{Categorical_Cardinality}). Thus, $\htt(\mathcal{C})\le n+1$,
if and only if $B_{!}B^{*}$ is invertible. Now, if $B^{*}$ is fully
faithful, then $B_{!}B^{*}=\Id_{\mathcal{C}}$ and is in particular
invertible. On the other hand, if $B_{!}B^{*}$ is invertible, then
$B^{*}$ is fully faithful by the dual of \lemref{Invertible_Monad}.
Thus, $\Ht(\mathcal{C})\le n$ if and only if $\htt(\mathcal{C})\le n+1$.

For (2), we first assume that $\Ht(\mathcal{C})>n$, and show that
$\htt(\mathcal{C})>n+1$. Given $\mathcal{D}\in\acat_{\fin{(m+1)}}$
such that $\htt(\mathcal{D})\le n+1$, we need to show that every
$(m+1)$-finite colimit preserving functor $F\colon\mathcal{D}\to\mathcal{C}$
must be zero. For every $X\in\mathcal{D}$, we have $X[B]=B_{!}B^{*}X\iso X$.
Since $F$ is $(m+1)$-finite colimit preserving, we get also $F(X)[B]\iso F(X)$.
Thus, by \propref{Amenable_Acyclic_Object}, we get $\htt(F(X))\le n$.
Since $\Ht(\mathcal{C})>n$, the only object of height $\le n$ in
$\mathcal{C}$ is zero and hence $F(X)=0$. Thus, $F$ is zero, which
proves that $\htt(\mathcal{C})>n+1$. Conversely, assume $\htt(\mathcal{C})>n+1$,
to show that $\Ht(\mathcal{C})>n$, consider the full subcategory
$\mathcal{C}_{\le n}\ss\mathcal{C}$ spanned by objects of height
$\le n$ in $\mathcal{C}$, which is also $p$-typically $m$-semiadditive
(\propref{Height_Pieces_Sad}). By definition, $\Ht(\mathcal{C}_{\le n})\le n$,
and hence by (1), $\htt(\mathcal{C}_{\le n})\le n+1$, so the inclusion
functor $\mathcal{C}_{\le n}\into\mathcal{C}$ must be zero. It follows
that $\mathcal{C}_{\le n}$ is zero and therefore $\Ht(\mathcal{C})>n$. 

It follows from (1) and (2) that if $\Ht(\mathcal{C})=n$, then $\htt(\mathcal{C})=n+1$
when $\mathcal{C}$ is considered as an object of $\acat_{\infty\text{-}\mathrm{fin}}$.
The parenthetical remark follows from \propref{Height_Subcategory} and \propref{Sad_Sad}.
\end{proof}
Recall from \defref{CoCart_CMon}, that for every $\mathcal{C}\in\acat^{\sad m}$,
the space of objects $\mathcal{C}^{\simeq}$ is endowed with an $m$-commutative
coCartesian monoid structure making it an object of the $m$-semiadditive
$\infty$-category $\CMon_{m}$. \thmref{Redshift} has the following
corollary:
\begin{cor}
\label{cor:CMon_Redshift}Let $\mathcal{C}\in\acat^{\sad{\infty}}$,
such that $\Ht(\mathcal{C})\le n$. The space of objects $\mathcal{C}^{\simeq}$
with the higher coCartesian structure
satisfies $\htt(\mathcal{C}^{\simeq})\le n+1$, as an object of $\CMon_{\infty}$.
\end{cor}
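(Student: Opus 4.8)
The plan is to exhibit the assignment $\mathcal{C}\mapsto(\mathcal{C}^{\simeq},\text{higher coCartesian structure})$ as the value on objects of an $\infty$-semiadditive functor $\Phi\colon\acat_{\fin\infty}\to\CMon_\infty$, and then to transport the height bound along $\Phi$ using \thmref{Redshift} and \propref{Height_Functoriality}.

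First I would pin down $\Phi$. By \defref{CoCart_CMon}, the higher coCartesian structure on $\mathcal{C}^{\simeq}$ is, \emph{by construction}, the $\CMon_\infty$-mapping object
\[
\hom^{\CMon_\infty}_{\acat_{\fin\infty}}(\mathcal{S}_{\fin\infty},\mathcal{C}),
\]
obtained from the $\CMon_\infty$-enriched Yoneda embedding $\Yo^{\CMon_\infty}$ of \corref{CMon_Yoneda} --- available because $\acat_{\fin\infty}$ is $\infty$-semiadditive --- by evaluating at the object $\mathcal{S}_{\fin\infty}$, which corepresents $(-)^{\simeq}$ since it is freely generated from a point under $\infty$-finite colimits. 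So I would set
\[
\Phi:=\ev_{\mathcal{S}_{\fin\infty}}\circ\Yo^{\CMon_\infty}\colon\acat_{\fin\infty}\longrightarrow\CMon_\infty,
\]
whose underlying space at $\mathcal{D}$ is $\map(\mathcal{S}_{\fin\infty},\mathcal{D})\simeq\mathcal{D}^{\simeq}$, so that $\Phi(\mathcal{C})$ is exactly $\mathcal{C}^{\simeq}$ with its higher coCartesian structure. The thing to verify is that $\Phi$ is $\infty$-semiadditive: $\Yo^{\CMon_\infty}$ preserves $\infty$-finite colimits (it is $\infty$-semiadditive by \corref{CMon_Yoneda}), and evaluation at an object of a functor $\infty$-category preserves all colimits, so $\Phi$ preserves $\infty$-finite colimits; since $\acat_{\fin\infty}$ and $\CMon_\infty$ are both $\infty$-semiadditive, this in particular makes $\Phi$ a morphism in $\acat^{\psad\infty}$.

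With $\Phi$ in hand the rest is formal. Since $\mathcal{C}$ is $\infty$-semiadditive, \thmref{Redshift}(1) applies with $m=\infty$: the hypothesis $\Ht(\mathcal{C})\le n$ gives that $\mathcal{C}$, regarded as an object of $\acat_{\fin\infty}$ for the prime $p$, satisfies $\htt(\mathcal{C})\le n+1$. Now \propref{Height_Functoriality}, applied to the morphism $\Phi$ in $\acat^{\psad\infty}$ with $X=\mathcal{C}$ and the integer $n+1$, yields $\htt_{\CMon_\infty}(\Phi(\mathcal{C}))\le n+1$, which is precisely the assertion $\htt(\mathcal{C}^{\simeq})\le n+1$. (One can also bypass \thmref{Redshift}: by \propref{Amenable_Acyclic_Object}, $\Ht(\mathcal{C})\le n$ forces the fold map $X[B^{n+1}C_p]\to X$ to be an isomorphism for all $X\in\mathcal{C}$, i.e.\ $\pn{n+1}$ acts invertibly on the object $\mathcal{C}\in\acat_{\fin\infty}$ by \exaref{Categorical_Cardinality}; then \propref{A_Semiadd} transports this invertibility along $\Phi$.)

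The only genuinely nontrivial step is the first one --- identifying $\Phi$ and checking it is $\infty$-semiadditive --- and even this is mostly bookkeeping: unwinding \defref{CoCart_CMon} through the enriched Yoneda embedding and using that evaluation functors preserve colimits. The one subtlety to keep in mind is that $\mathcal{S}_{\fin\infty}$ is not itself $\infty$-semiadditive, so $\Phi$ must be viewed as a functor out of $\acat_{\fin\infty}$ rather than $\acat^{\sad\infty}$; this is harmless, since $\mathcal{C}$ --- although $\infty$-semiadditive --- is in any case an object of $\acat_{\fin\infty}$, and \thmref{Redshift} already records its height there.
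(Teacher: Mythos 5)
Your proof is correct and is essentially the paper's own argument: the paper likewise identifies $\mathcal{C}^{\simeq}$ with its coCartesian structure as the image of $\mathcal{C}$ under the $\infty$-semiadditive functor $\hom(\mathcal{S}_{\fin{\infty}},-)\colon\acat_{\fin{\infty}}\to\CMon_{\infty}$ (which is exactly your $\Phi$), applies \thmref{Redshift} to get $\htt(\mathcal{C})\le n+1$ in $\acat_{\fin{\infty}}$, and transports the bound via \propref{Height_Functoriality}. Your extra care in verifying that $\Phi$ is $\infty$-semiadditive, and the alternative route through \propref{Amenable_Acyclic_Object}, are just explicit unwindings of steps the paper takes for granted.
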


\begin{proof}
We have seen in \thmref{Redshift}, that when we consider $\mathcal{C}$
as an object of $\acat_{\fin{\infty}}$, we have $\htt(\mathcal{C})\le n+1$.
The space of objects $\mathcal{C}^{\simeq}\in\CMon_{\infty}$ can
be identified with the image of $\mathcal{C}$ under the $\infty$-semiadditive
functor
\[
\hom(\mathcal{S}_{\fin{\infty}},-)\colon\acat_{\fin{\infty}}\to\CMon_{\infty}.
\]
Thus, by \propref{Height_Functoriality}, we have $\htt(\mathcal{C}^{\simeq})\le n+1$. 
\end{proof}
\begin{example}
\label{exa:Chromatic_Redshift}Let $R$ be a $T(n)$-local ring spectrum.
By \corref{Height_Modules} and \thmref{Height_Chrom}, the $\infty$-commutative monoid $\Mod_{R}(\Sp_{T(n)})^{\simeq}$
is of height $\le n+1$ in $\CMon_{\infty}$. In particular, this
applies to Morava $E$-theory $R=E_{n}$. This suggests a relation
between the ``semiadditive redshift'' of \thmref{Redshift} and
the ``chromatic redshift'' in algebraic $K$-theory of Ausani-Rognes
(see \cite{RedshiftRognes,RedshiftGood}). We shall explore this connection further in a future work.
\end{example}

The proof of \thmref{Redshift} relies ultimately on the fact that
$B^{n}C_{p}$ is $\mathcal{C}$\emph{-amenable} if and only if $B^{n+1}C_{p}$
is $\mathcal{C}$\emph{-acyclic} (\corref{Amenable_Acyclic}). In
\propref{Acyclic_Trivial} we categorified this fact by showing that
$B^{n+1}C_{p}$ is $\mathcal{C}$\emph{-acyclic}, if and only if $B^{n+2}C_{p}$
is $\mathcal{C}$\emph{-trivial}. Similarly, \thmref{Redshift} can
be categorified as follows. Let $\acat^{n\text{-}\mathrm{ht}}\ss\acat_{\infty\text{-}\mathrm{fin}}$
be the full subcategory spanned by the $\infty$-semiadditive $\infty$-categories
$\mathcal{C}$ such that $\Ht(\mathcal{C})=n$. 
\begin{lem}
\label{lem:Cat_Ht_Colim}The full subcategory $\acat^{n\text{-}\mathrm{ht}}\ss\acat_{\infty\text{-}\mathrm{fin}}$
is closed under $\pi$-finite colimits.
\end{lem}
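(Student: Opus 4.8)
The plan is to reduce the statement, via the Semiadditive Redshift theorem, to the already-established fact that height subcategories are closed under \emph{limits}, and then trade limits for colimits using the ambidexterity of $\acat^{\sad{\infty}}$ itself. First I would record that by \propref{Sad_Sad} the full subcategory $\acat^{\sad{\infty}}\ss\acat_{\infty\text{-}\mathrm{fin}}$ is closed under colimits and is itself $\infty$-semiadditive; in particular $\htt$ is defined for its objects, and in the notation of \defref{Height_Cat} we may speak of $(\acat^{\sad{\infty}})_{n+1}$. The concluding assertion of \thmref{Redshift} says precisely that an $\infty$-semiadditive $\infty$-category $\mathcal{C}$ has $\Ht(\mathcal{C})=n$ if and only if, as an object of $\acat^{\sad{\infty}}$, it has height $n+1$. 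Hence
\[
\acat^{n\text{-}\mathrm{ht}}=(\acat^{\sad{\infty}})_{n+1}.
\]

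Given this identification, the rest is formal. By \propref{Height_Pieces_Sad} applied to $\acat^{\sad{\infty}}$, the subcategory $(\acat^{\sad{\infty}})_{n+1}$ is closed under all limits that exist in $\acat^{\sad{\infty}}$, in particular under $\pi$-finite limits. Now for a $\pi$-finite space $I$ and a diagram $F\colon I\to(\acat^{\sad{\infty}})_{n+1}$, the $\infty$-semiadditivity of $\acat^{\sad{\infty}}$ supplies a norm isomorphism identifying $\colim_I F$ with $\lim_I F$ inside $\acat^{\sad{\infty}}$; the latter lies in $(\acat^{\sad{\infty}})_{n+1}$ by the previous sentence, so $\colim_I F$ does too. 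Finally, $\acat^{\sad{\infty}}\ss\acat_{\infty\text{-}\mathrm{fin}}$ being closed under colimits (\propref{Sad_Sad}) means this is also the colimit computed in $\acat_{\infty\text{-}\mathrm{fin}}$. Therefore $\acat^{n\text{-}\mathrm{ht}}$ is closed under $\pi$-finite colimits in $\acat_{\infty\text{-}\mathrm{fin}}$.

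The only genuine step is the first one: converting the condition ``every object of $\mathcal{C}$ has height $n$'' into the condition ``$\mathcal{C}$ has height $n+1$ as an object of the higher semiadditive $\infty$-category $\acat^{\sad{\infty}}$''. This is exactly the content of \thmref{Redshift}, so the main thing to be careful about is keeping the two layers of higher semiadditivity straight (cardinalities acting on objects of $\mathcal{C}$ versus cardinalities acting on $\acat^{\sad{\infty}}$), together with the degenerate case $n=0$, where $\Ht(\mathcal{C})>n-1$ holds vacuously and $\acat^{0\text{-}\mathrm{ht}}$ is cut out by the single condition $\htt_{\acat^{\sad{\infty}}}(\mathcal{C})\le 1$ --- still a height subcategory, hence still covered by \propref{Height_Pieces_Sad}. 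If one preferred to avoid invoking \thmref{Redshift}, one could instead argue directly: writing $\mathcal{D}=\colim_I\mathcal{C}_\bullet\simeq\lim_I\mathcal{C}_\bullet$ via the norm, the resulting evaluation functors $\mathcal{D}\to\mathcal{C}_i$ are jointly conservative and preserve $\pi$-finite colimits, and then \propref{A_Semiadd} together with $\Ht(\mathcal{C}_i)=n$ lets one check $\Ht(\mathcal{D})\le n$ and $\mathcal{D}_{\le n-1}=0$ fiberwise; but the route through \thmref{Redshift} is shorter.
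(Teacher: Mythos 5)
Your proof is correct and follows essentially the same route as the paper: identify $\acat^{n\text{-}\mathrm{ht}}=(\acat^{\sad{\infty}})_{n+1}$ via \thmref{Redshift}, invoke \propref{Height_Pieces_Sad} for closure under limits, and then use the ambidexterity of $\pi$-finite spaces to trade limits for colimits. The only cosmetic difference is that you perform the limit-to-colimit conversion inside $\acat^{\sad{\infty}}$ before descending to $\acat_{\infty\text{-}\mathrm{fin}}$, whereas the paper first establishes closure under limits in $\acat_{\infty\text{-}\mathrm{fin}}$ and converts there.
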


\begin{proof}
We have 
\[
\acat^{n\text{-}\mathrm{ht}}\ss\acat^{\sad{\infty}}\ss\acat_{\infty\text{-}\mathrm{fin}}.
\]

By \thmref{Redshift}, we have \[\acat^{n\text{-}\mathrm{ht}}=(\acat^{\sad{\infty}})_{n+1}.\]
Thus, by \propref{Height_Pieces_Sad}, $\acat^{n\text{-}\mathrm{ht}}$
is closed under limits in $\acat^{\sad{\infty}}$. Additionally, $\acat^{\sad{\infty}}$
is closed under limits in $\acat_{\infty\text{-}\mathrm{fin}}$ by
\propref{Sad_Sad}. Therefore, $\acat^{n\text{-}\mathrm{ht}}$ is
closed under limits in $\acat_{\infty\text{-}\mathrm{fin}}$. Since
$\acat_{\infty\text{-}\mathrm{fin}}$ is $\infty$-semiadditive, it
follows that $\acat^{n\text{-}\mathrm{ht}}$ is also closed under
$\pi$-finite colimits in $\acat_{\infty\text{-}\mathrm{fin}}$.
\end{proof}
Hence, in particular, $\acat^{n\text{-}\mathrm{ht}}$ admits $\pi$-finite
colimits and is therefore an object\emph{ }of the $\infty$-semiadditive
$\infty$-category $\widehat{\acat}_{\infty\text{-}\mathrm{fin}}$
of large $\infty$-categories, which admit $\pi$-finite colimits
and functors preserving them. 
\begin{prop}
\label{prop:Categorified_Redshift}The $\infty$-category $\acat^{n\text{-}\mathrm{ht}}$
is an object of height $n+2$ in $\widehat{\acat}_{\infty\text{-}\mathrm{fin}}$.
\end{prop}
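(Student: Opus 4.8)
The plan is to apply the Semiadditive Redshift theorem (\thmref{Redshift}) a second time, now to the $\infty$-category $\acat^{n\text{-}\mathrm{ht}}$ itself regarded as an \emph{object} of the $\infty$-semiadditive $\infty$-category $\widehat{\acat}_{\infty\text{-}\mathrm{fin}}$. For this I first need that $\acat^{n\text{-}\mathrm{ht}}$ is (at least $p$-typically) $\infty$-semiadditive, so that its \emph{internal} height $\Ht(\acat^{n\text{-}\mathrm{ht}})$ is defined, and then I must identify this internal height.

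First I would record that $\acat^{n\text{-}\mathrm{ht}}$ is $\infty$-semiadditive. As noted in the proof of \lemref{Cat_Ht_Colim}, we have $\acat^{n\text{-}\mathrm{ht}} = (\acat^{\sad{\infty}})_{n+1}$, and by \propref{Height_Pieces_Sad} the subcategory $(\acat^{\sad{\infty}})_{n+1}$ is closed under limits in $\acat^{\sad{\infty}}$ and is $\infty$-semiadditive, since $\acat^{\sad{\infty}}$ is (\propref{Sad_Sad}); in particular it is $p$-typically $\infty$-semiadditive. Together with \lemref{Cat_Ht_Colim} (closure under $\pi$-finite colimits) this makes $\acat^{n\text{-}\mathrm{ht}}$ a legitimate object of $\widehat{\acat}_{\infty\text{-}\mathrm{fin}}$ carrying the higher semiadditive structure needed below.

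Next I would show $\Ht(\acat^{n\text{-}\mathrm{ht}}) = n+1$. Since $\acat^{n\text{-}\mathrm{ht}} = (\acat^{\sad{\infty}})_{n+1}$ is a full subcategory of $\acat^{\sad{\infty}}$ closed under $\pi$-finite (co)limits, \propref{Height_Subcategory} applies: for every $\mathcal{D}\in\acat^{n\text{-}\mathrm{ht}}$ we have $\htt_{\acat^{n\text{-}\mathrm{ht}}}(\mathcal{D})\le n+1$ by part (1) (using $\htt_{\acat^{\sad{\infty}}}(\mathcal{D})\le n+1$) and $\htt_{\acat^{n\text{-}\mathrm{ht}}}(\mathcal{D})>n$ by part (2) (using $\htt_{\acat^{\sad{\infty}}}(\mathcal{D})>n$). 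Hence every object of $\acat^{n\text{-}\mathrm{ht}}$ has height exactly $n+1$ inside it, i.e. $\Ht(\acat^{n\text{-}\mathrm{ht}}) = n+1$. (Equivalently, one runs \thmref{Redshift} object-wise: each $\mathcal{D}\in\acat^{n\text{-}\mathrm{ht}}$ satisfies $\Ht(\mathcal{D})=n$, hence $\htt(\mathcal{D})=n+1$ as an object of $\acat^{\sad{\infty}}$, and this transfers to $\acat^{n\text{-}\mathrm{ht}}$.)

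Finally I would apply \thmref{Redshift} to $\mathcal{C}=\acat^{n\text{-}\mathrm{ht}}$ with $m=\infty$, inside $\widehat{\acat}_{\infty\text{-}\mathrm{fin}}$ — the proof of \thmref{Redshift} uses only that the ambient $\infty$-category of $\pi$-finite-cocomplete $\infty$-categories is $\infty$-semiadditive, so it applies verbatim to the large version. Taking the theorem's parameter to be $n+1$ gives $\Ht(\acat^{n\text{-}\mathrm{ht}})\le n+1 \iff \htt(\acat^{n\text{-}\mathrm{ht}})\le n+2$, and taking it to be $n$ gives $\Ht(\acat^{n\text{-}\mathrm{ht}})>n \iff \htt(\acat^{n\text{-}\mathrm{ht}})>n+1$. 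Since $\Ht(\acat^{n\text{-}\mathrm{ht}})=n+1$ satisfies both left-hand conditions, we conclude $\htt(\acat^{n\text{-}\mathrm{ht}})\le n+2$ and $\htt(\acat^{n\text{-}\mathrm{ht}})>n+1$, i.e. $\htt(\acat^{n\text{-}\mathrm{ht}})=n+2$ as an object of $\widehat{\acat}_{\infty\text{-}\mathrm{fin}}$. The main point requiring care is bookkeeping: one must consistently distinguish the \emph{internal} height $\Ht$ of an $\infty$-category from its height $\htt$ as an \emph{object} of a higher semiadditive $\infty$-category, and verify — which is exactly what the identification $\acat^{n\text{-}\mathrm{ht}} = (\acat^{\sad{\infty}})_{n+1}$ together with \propref{Height_Pieces_Sad} delivers — that $\acat^{n\text{-}\mathrm{ht}}$ really is $\infty$-semiadditive, so that \thmref{Redshift} is applicable to it.
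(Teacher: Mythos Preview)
Your proof is correct and follows essentially the same approach as the paper: establish that $\acat^{n\text{-}\mathrm{ht}}$ is $\infty$-semiadditive, show $\Ht(\acat^{n\text{-}\mathrm{ht}})=n+1$ by transferring the height of its objects via \propref{Height_Subcategory}, and then apply \thmref{Redshift} once more. The only cosmetic difference is that the paper deduces $\infty$-semiadditivity from \lemref{Cat_Ht_Colim} together with \propref{Ambi_Closure} and computes the object-wise height directly in $\acat_{\infty\text{-}\mathrm{fin}}$ via \thmref{Redshift}, whereas you route through the identification $\acat^{n\text{-}\mathrm{ht}}=(\acat^{\sad{\infty}})_{n+1}$ and \propref{Height_Pieces_Sad}; your parenthetical remark already notes these are equivalent.
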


\begin{proof}
By \lemref{Cat_Ht_Colim}, $\acat^{n\text{-}\mathrm{ht}}$ is closed
under $\pi$-finite colimits in $\acat_{\infty\text{-}\mathrm{fin}}$
and hence is $\infty$-semiadditive by \propref{Ambi_Closure}. Thus,
by \thmref{Redshift}, it suffices to show that for every $\mathcal{C}\in\acat^{n\text{-}\mathrm{ht}}$
we have $\htt(\mathcal{C})=n+1$ as an object of $\acat_{\infty\text{-}\mathrm{fin}}$,
and hence also as an object of $\acat^{n\text{-}\mathrm{ht}}$ (\propref{Height_Subcategory}).
This follows again from \thmref{Redshift} and the fact that $\Ht(\mathcal{C})=n$. 
\end{proof}

\section{Stability}

So far, we have been considering general higher semiadditive $\infty$-categories.
In this section, we specialize to the \emph{stable} world. First, using
the general results on height from the previous section, we shall
show that every stable higher semiadditive $\infty$-category decomposes
completely according to height. Second, inspired by \cite{LurieRep}, we study  semisimplicity properties of local systems valued in general stable $\infty$-categories
of semiadditive height $n$. Finally, we show that $\Sp_{K(n)}$ and
$\Sp_{T(n)}$ are indeed of semiadditive height $n$.

\subsection{Recollement}

A central tool, which will be used several times in the following subsections, is that of a \emph{recollement}
of stable $\infty$-categories following \cite{barwick2016note}. In this preliminary
subsection, we collect several general facts regarding this notion.
First, we provide criteria for a recollement to be \emph{split}, in the sense
that it has trivial ``gluing data''. Second, we show that a decreasing
intersection of a chain of recollements is again a recollement. Finally, we give special attention
to recollements arising from ``divisible'' and ``complete'' objects
with respect to a natural endomorphism of the identity functor in the sense of \defref{Div_Comp}. 

\subsubsection{(Split) Recollement}

We begin by recalling the notion of recollement in the context of
stable $\infty$-categories following the exposition in \cite[Section A.8.1]{ha}. 
\begin{defn}
\label{def:Orthogonal_Complement}Let $\mathcal{C}$ be a stable $\infty$-category
and $\mathcal{C}_{\circ}\ss\mathcal{C}$ a full stable subcategory.
We define the \textbf{right} \textbf{orthogonal} \textbf{complement}
$\mathcal{C}_{\circ}^{\bot}\ss\mathcal{C}$ to be the full subcategory
consisting of objects $Y\in\mathcal{C},$ such that $\map(X,Y)\simeq\pt$
for all $X\in\mathcal{C}_{\circ}$. 
\end{defn}

Recall from \cite[Proposition A.8.20]{ha}, that if the inclusion $\mathcal{C}_{\circ}\into\mathcal{C}$
admits both a left adjoint $L$  and a right adjoint $R$, then $\mathcal{C}$
is a recollement of $\mathcal{C}_{\circ}$ and $\mathcal{C}_{\circ}^{\perp}$
in the sense of \cite[Section A.8.1]{ha}. In particular, the $\infty$-category $\mathcal{C}$
can be identified with the $\infty$-category of sections of the cartesian
fibration over $\Delta^{1}$, classified by the functor $L|_{\mathcal{C}_{\circ}^{\perp}}\colon\mathcal{C}_{\circ}^{\perp}\to\mathcal{C}_{\circ}.$
\begin{defn}
We shall say that an inclusion of stable $\infty$-categories $\mathcal{C}_{\circ}\into\mathcal{C}$,
that admits both a left and a right adjoint, exhibits $\mathcal{C}$
as a \textbf{recollement} of $\mathcal{C}_{\circ}$ and $\mathcal{C}_{\circ}^{\perp}$.
\end{defn}

Recollements in stable homotopy theory typically arise from smashing
localizations, and amount to the existence of various fracture squares:
\begin{example}
[Arithmetic and chromatic squares]\label{exa:Chromatic_Recollement}For
$\mathcal{C}=\Sp_{(p)}$, the inclusion of the full subcategory $\mathcal{C}_{\circ}=\Sp_{\bb Q}\ss\Sp_{(p)}$
admits both a left and a right adjoint, and we have 
\[
\Sp_{\bb Q}^{\bot}=\widehat{\Sp}_{p}\ss\Sp_{(p)},
\]
is the full subcategory spanned by $p$-complete spectra. The recollement
statement in this case recovers the classical $p$-local arithmetic
square for spectra. More generally, the full subcategory 
\[
L_{n}^{f}\Sp\coloneqq\Sp_{\oplus_{k=0}^{n}T(k)}\ss\Sp_{(p)}
\]
exhibits $\Sp_{(p)}$ as a recollement of $L_{n}^{f}\Sp$ and $\Sp_{F(n+1)}$,
where $F(n+1)$ is a finite spectrum of type $n+1$. In particular,
the inclusion $L_{n-1}^{f}\Sp\ss L_{n}^{f}\Sp$ exhibits $L_{n}^{f}\Sp$
as a recollement of $L_{n-1}^{f}\Sp$ and $\Sp_{T(n)}$. This recovers
the classical telescopic fracture square at height $n$. By the Smash
Product Theorem, 
\[
L_{n}\Sp\coloneqq\Sp_{\oplus_{k=0}^{n}K(k)}\ss\Sp_{(p)}
\]
is also a smashing localization, and similarly, the inclusion $L_{n-1}\Sp\ss L_{n}\Sp$
exhibits $L_{n}\Sp$ as a recollement of $L_{n-1}\Sp$ and $\Sp_{K(n)}$.
This recovers the classical chromatic fracture square at height $n$. 
\end{example}

Given a recollement $\mathcal{C}_{\circ}\ss\mathcal{C}$, the functor
$L|_{\mathcal{C}_{\circ}^{\bot}}\colon\mathcal{C}_{\circ}^{\bot}\to\mathcal{C}_{\circ}$
encodes the ``gluing data'' in the construction of $\mathcal{C}$
from the $\infty$-categories $\mathcal{C}_{\circ}$ and $\mathcal{C}_{\circ}^{\bot}$.
A particularly simple instance of a recollement is when this gluing
data is trivial:
\begin{prop}
\label{prop:Split_Recollement}Given a recollement $\mathcal{C}_{\circ}\ss\mathcal{C}$,
the following are equivalent:
\begin{enumerate}
\item The functor $L|_{\mathcal{C}_{\circ}^{\bot}}\colon\mathcal{C}_{\circ}^{\bot}\to\mathcal{C}_{\circ}$
is zero. 
\item The left adjoints of the inclusions $\mathcal{C}_{\circ},\mathcal{C}_{\circ}^{\perp}\ss\mathcal{C}$
induce an equivalence $\mathcal{C}\iso\mathcal{C}_{\circ}\times\mathcal{C}_{\circ}^{\bot}$.
\item The left and right adjoints $L$ and $R$ of $\mathcal{C}_{\circ}\into\mathcal{C}$
are isomorphic.
\end{enumerate}
\end{prop}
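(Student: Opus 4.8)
Here is the plan. The idea is to unpack the full recollement structure attached to $\mathcal{C}_{\circ}\ss\mathcal{C}$ (\cite[Section A.8.1, Proposition A.8.20]{ha}). Write $i\colon\mathcal{C}_{\circ}\into\mathcal{C}$ for the inclusion, with left and right adjoints $L$ and $R$, and $j_{*}\colon\mathcal{C}_{\circ}^{\perp}\into\mathcal{C}$ for the inclusion of the right orthogonal complement; the recollement provides a left adjoint $j^{*}$ to $j_{*}$, together with the standard identities $Li\iso\Id$, $Ri\iso\Id$, $j^{*}j_{*}\iso\Id$, $j^{*}i\simeq0$, the equality $\mathcal{C}_{\circ}^{\perp}=\ker R$ (equivalently $Rj_{*}\simeq0$), and a natural fracture fiber sequence $iRY\to Y\to j_{*}j^{*}Y$ for $Y\in\mathcal{C}$. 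Since $j_{*}$ is the inclusion, the gluing functor $\gamma\coloneqq Lj_{*}\colon\mathcal{C}_{\circ}^{\perp}\to\mathcal{C}_{\circ}$ equals $L|_{\mathcal{C}_{\circ}^{\perp}}$, so (1) says exactly ``$\gamma\simeq0$''. Applying the exact functor $L=i^{*}$ to the fracture sequence and using $Li\iso\Id$ gives a natural fiber sequence of functors $\mathcal{C}\to\mathcal{C}_{\circ}$
\[
R\longrightarrow L\longrightarrow\gamma j^{*},
\]
whose first map is the canonical comparison $R\to L$. Since $j^{*}$ is essentially surjective onto $\mathcal{C}_{\circ}^{\perp}$ (as $j^{*}j_{*}\iso\Id$), we have $\gamma j^{*}\simeq0$ iff $\gamma\simeq0$, so the displayed sequence shows $\gamma\simeq0$ iff $R\to L$ is an equivalence. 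This already yields $(1)\Leftrightarrow(3)$: if $R\to L$ is an equivalence then $L\simeq R$, and conversely if $L\simeq R$ as functors then $\ker L=\ker R=\mathcal{C}_{\circ}^{\perp}$, whence $\gamma=L|_{\mathcal{C}_{\circ}^{\perp}}\simeq0$.

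Next I would prove $(2)\Rightarrow(3)$. Put $\Phi\coloneqq(L,j^{*})\colon\mathcal{C}\to\mathcal{C}_{\circ}\times\mathcal{C}_{\circ}^{\perp}$ and assume it is an equivalence. From $Li\iso\Id$ and $j^{*}i\simeq0$ we get $\Phi\circ i\simeq\iota_{1}$, where $\iota_{1}\colon\mathcal{C}_{\circ}\to\mathcal{C}_{\circ}\times\mathcal{C}_{\circ}^{\perp}$ is the first-factor inclusion $X\mapsto(X,0)$; hence $i\simeq\Phi^{-1}\iota_{1}$. Now $\iota_{1}$ admits the projection $\pi_{1}$ as \emph{both} a left and a right adjoint, since the inclusion of a factor of a finite product of pointed $\infty$-categories is an ambidextrous inclusion. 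Therefore both the left adjoint and the right adjoint of $i\simeq\Phi^{-1}\iota_{1}$ are $\pi_{1}\circ\Phi$; as these adjoints are $L$ and $R$, we conclude $L\simeq\pi_{1}\Phi\simeq R$, which is (3). In view of $(1)\Leftrightarrow(3)$ and $(2)\Rightarrow(3)$, it then remains only to prove $(1)\Rightarrow(2)$.

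So assume $\gamma\simeq0$. I would first show the fracture fiber sequence $iRY\to Y\to j_{*}j^{*}Y$ splits naturally, i.e. $\Id_{\mathcal{C}}\simeq iR\oplus j_{*}j^{*}$. A natural fiber sequence $iR\to\Id_{\mathcal{C}}\to j_{*}j^{*}$ in the stable functor category $\fun(\mathcal{C},\mathcal{C})$ is classified by its connecting transformation $j_{*}j^{*}\to(iR)[1]$, a point of $\map_{\fun(\mathcal{C},\mathcal{C})}(j_{*}j^{*},(iR)[1])$; by the adjunction $(L\circ{-})\dashv(i\circ{-})$ on functor categories this mapping space is equivalent to $\map_{\fun(\mathcal{C},\mathcal{C}_{\circ})}(Lj_{*}j^{*},R[1])\simeq\map(\gamma j^{*},R[1])\simeq\map(0,R[1])\simeq\pt$, so the connecting map is canonically null and the sequence splits. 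Consequently $\Psi\colon\mathcal{C}_{\circ}\times\mathcal{C}_{\circ}^{\perp}\to\mathcal{C}$, $(X,W)\mapsto iX\oplus j_{*}W$, is well-defined, and using only $Ri\iso\Id$, $Rj_{*}\simeq0$, $j^{*}i\simeq0$, $j^{*}j_{*}\iso\Id$ together with the splitting one checks $(R,j^{*})\circ\Psi\simeq\Id$ and $\Psi\circ(R,j^{*})\simeq\Id$; thus $(R,j^{*})$ is an equivalence. Finally $(1)\Rightarrow(3)$ gives $R\iso L$, so $(L,j^{*})\simeq(R,j^{*})$ is an equivalence as well — and since $L$ and $j^{*}$ are precisely the left adjoints of the two inclusions $\mathcal{C}_{\circ},\mathcal{C}_{\circ}^{\perp}\ss\mathcal{C}$, this is (2).

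The one genuinely delicate point is the \emph{naturality} of the splitting in $(1)\Rightarrow(2)$: passing from the objectwise vanishing $\map(j_{*}j^{*}Y,(iRY)[1])\simeq\pt$ to the vanishing of the connecting transformation as a whole. This is exactly where the identification — via the levelwise adjunction $L\dashv i$ — of the space of such natural transformations with a mapping space out of the zero functor $\gamma j^{*}\simeq0$ does the work; everything else is routine bookkeeping with the recollement adjunctions.
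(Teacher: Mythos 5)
Your proof is correct, but it takes a genuinely different route from the paper's for two of the three implications. The paper argues the cycle (1)$\Rightarrow$(2)$\Rightarrow$(3)$\Rightarrow$(1): for (1)$\Rightarrow$(2) it simply invokes the identification of $\mathcal{C}$ with the $\infty$-category of sections of the cartesian fibration over $\Delta^{1}$ classified by the gluing functor $L|_{\mathcal{C}_{\circ}^{\perp}}$ (a zero gluing functor makes the sections category a product), and for (3)$\Rightarrow$(1) it just observes that $R$ vanishes on $\mathcal{C}_{\circ}^{\perp}$. Your (2)$\Rightarrow$(3) coincides with the paper's argument (the inclusion of a factor of a product has the projection as an ambidextrous adjoint). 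What you do differently is: (a) you establish (1)$\Leftrightarrow$(3) via the fiber sequence $R\to L\to\gamma j^{*}$ obtained by applying the exact functor $L$ to the fracture sequence, which in particular identifies condition (3) with the invertibility of the canonical comparison $R\to L$; and (b) you prove (1)$\Rightarrow$(2) by splitting the fracture sequence $iR\to\Id_{\mathcal{C}}\to j_{*}j^{*}$ explicitly, showing the connecting transformation lies in a contractible space by transposing across the adjunction $(L\circ-)\dashv(i\circ-)$ on functor categories. This buys you a proof that never appeals to the lax-limit/sections description of a recollement, at the cost of more bookkeeping; the one delicate point you rightly flag --- naturality of the splitting --- is handled correctly precisely because you work in the stable functor category $\fun(\mathcal{C},\mathcal{C})$ rather than objectwise. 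The paper's route is shorter because the sections description packages exactly this splitting for free.
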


\begin{proof}
We prove (1)$\implies$(2)$\implies$(3)$\implies$(1). The implication
(1)$\implies$(2) follows from the identification of $\mathcal{C}$
with the $\infty$-category of sections of the cartesian fibration
classified by $L|_{\mathcal{C}_{\circ}^{\bot}}$. For the implication
(2)$\implies$(3), observe that the inclusion $\mathcal{C}_{\circ}\into\mathcal{C}_{\circ}\times\mathcal{C}_{\circ}^{\bot}$
can be identified with the functor $(\Id,0)$, for which the projection
$\mathcal{C}_{\circ}\times\mathcal{C}_{\circ}^{\bot}\to\mathcal{C}_{\circ}$
is both a left and a right adjoint. Finally, for every $X\in\mathcal{C}_{\circ}^{\bot}$,
we have $R(X)=0$. Thus, assuming (3) we have $R\simeq L$ and so
$L|_{\mathcal{C}_{\circ}^{\bot}}=0$, which proves (1).
\end{proof}
\begin{defn}
\label{def:Split_recollement}We say that a recollement $\mathcal{C}_{\circ}\into\mathcal{C}$
is \textbf{split} if it satisfies the equivalent conditions of \propref{Split_Recollement}.
\end{defn}

\subsubsection{Recollement chains}

We shall now study the behavior of \emph{chains} of recollements. 
\begin{defn}
\label{def:Recollement_Chain}For $\mathcal{C}\in\acat_{\st}$, we
say that a descending chain of full subcategories, 
\[
\dots\subseteq\mathcal{C}_{(n)}\ss\dots\subseteq{\cal C}_{(2)}\subseteq{\cal C}_{(1)}\subseteq{\cal C}_{(0)}\ss{\cal C}
\]
is a \textbf{recollement} \textbf{chain}, if each inclusion $\mathcal{C}_{(n)}\ss\mathcal{C}$
exhibits $\mathcal{C}$ as a recollement of $\mathcal{C}_{(n)}$ and
$\mathcal{C}_{(n)}^{\perp}$. We also set 
\[
\mathcal{C}_{(\infty)}\coloneqq\bigcap_{n\in\mathbb{N}}{\cal C}_{(n)}.
\]
\end{defn}

It turns out that under mild conditions, $\mathcal{C}_{(\infty)}\ss\mathcal{C}$
is itself a recollement.
\begin{lem}
\label{lem:Recollement_Intersection}Let ${\cal C}\in\acat_{\st}$,
which admits sequential limits and colimits. For a recollement chain
$\dots\subseteq{\cal C}_{(2)}\subseteq{\cal C}_{(1)}\subseteq{\cal C}_{(0)}\ss{\cal C}$,
the inclusion $\mathcal{C}_{(\infty)}\ss\mathcal{C}$ exhibits $\mathcal{C}$
as a recollement of $\mathcal{C}_{(\infty)}$ and $\mathcal{C}_{(\infty)}^{\perp}$.
\end{lem}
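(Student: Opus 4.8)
The plan is to prove directly that the inclusion $i_\infty\colon\mathcal{C}_{(\infty)}\hookrightarrow\mathcal{C}$ admits both a left and a right adjoint; by the definition of recollement recalled above (following \cite[Proposition A.8.20]{ha}) this is all that is needed, once one observes that $\mathcal{C}_{(\infty)}=\bigcap_{n}\mathcal{C}_{(n)}$ is a full stable subcategory of $\mathcal{C}$, being an intersection of full stable subcategories (it is closed under shifts and under cofibers, since each $\mathcal{C}_{(n)}$ is).

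First I would repackage the chain as a tower of (co)localization functors. For each $n$, the recollement hypothesis means $i_n\colon\mathcal{C}_{(n)}\hookrightarrow\mathcal{C}$ has a left adjoint $L_n$ and a right adjoint $R_n$; in particular $\mathcal{C}_{(n)}$ is both reflective and coreflective, hence closed under all limits and all colimits that exist in $\mathcal{C}$. Since $\mathcal{C}_{(n+1)}\subseteq\mathcal{C}_{(n)}$, the inclusion $\mathcal{C}_{(n+1)}\hookrightarrow\mathcal{C}_{(n)}$ is again reflective and coreflective (with left adjoint $L_{n+1}|_{\mathcal{C}_{(n)}}$ and right adjoint $R_{n+1}|_{\mathcal{C}_{(n)}}$), so composing the units, resp.\ counits, of the two layers of adjunctions produces natural transformations $L_n\to L_{n+1}$ under $\Id_{\mathcal{C}}$ and $R_{n+1}\to R_n$ over $\Id_{\mathcal{C}}$. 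Assembling these yields a diagram $L_0\to L_1\to L_2\to\cdots$ and a tower $\cdots\to R_2\to R_1\to R_0$ of endofunctors of $\mathcal{C}$, compatible with the canonical maps from, resp.\ to, the identity. (Formally one invokes that the strongly reflective subcategories of a fixed $\mathcal{C}$, ordered by inclusion, form a poset, so that a descending $\mathbb{N}$-chain of them determines an honest $\mathbb{N}$-indexed diagram of localization functors, and dually for coreflective subcategories.)

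Next set $L_\infty X\coloneqq\colim_n L_n X$ and $R_\infty X\coloneqq\lim_n R_n X$, which exist by the hypothesis that $\mathcal{C}$ admits sequential colimits and limits, and which come with natural maps $X\to L_\infty X$ and $R_\infty X\to X$. For each fixed $m$ the subdiagram on indices $n\ge m$ is cofinal and lies in $\mathcal{C}_{(m)}$; since $\mathcal{C}_{(m)}$ is closed under sequential colimits, resp.\ limits, we get $L_\infty X\in\mathcal{C}_{(m)}$ and $R_\infty X\in\mathcal{C}_{(m)}$ for every $m$, hence both land in $\mathcal{C}_{(\infty)}$. To see that $L_\infty$ is left adjoint to $i_\infty$ via the map $X\to L_\infty X$, fix $Y\in\mathcal{C}_{(\infty)}$ and compute $\map(L_\infty X,Y)\simeq\lim_n\map(L_n X,Y)$; as $Y\in\mathcal{C}_{(n)}$ for all $n$, each $\map(L_n X,Y)\to\map(X,Y)$ is an equivalence, and these identifications are compatible with the transition maps of the tower (because $X\to L_n X\to L_{n+1}X$ is the localization map $X\to L_{n+1}X$), so the tower of mapping spaces is essentially constant with value $\map(X,Y)$, giving $\map(L_\infty X,Y)\simeq\map(X,Y)$. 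The verification that $R_\infty$ is right adjoint to $i_\infty$ is dual, using that $\map(Y,-)$ preserves limits and that $\map(Y,R_n X)\to\map(Y,X)$ is an equivalence for $Y\in\mathcal{C}_{(\infty)}$.

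The only genuinely delicate point is the coherence in the second paragraph: upgrading the compatible system of maps $L_n\to L_{n+1}$ (and $R_{n+1}\to R_n$) from a homotopy-level statement to an actual $\mathbb{N}$-indexed diagram in $\fun(\mathcal{C},\mathcal{C})$. I expect this to be routine, since a reflective subcategory is equivalent data to an idempotent localization functor and nested reflective subcategories organize into a diagram indexed by a poset, but it is the step that must be stated with care; everything else is formal adjunction-chasing together with cofinality and the closure of (co)reflective subcategories under (co)limits.
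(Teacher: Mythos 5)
Your proposal is correct and follows essentially the same route as the paper: both construct the left adjoint as $L_\infty X=\colim_n L_nX$ along the composite unit maps, check that it lands in $\mathcal{C}_{(\infty)}$ using closure of each $\mathcal{C}_{(m)}$ under sequential colimits, and verify the universal property via $\map(\colim_n L_nX,Y)\simeq\lim_n\map(L_nX,Y)\simeq\map(X,Y)$ for $Y\in\mathcal{C}_{(\infty)}$ (the paper handles the right adjoint by symmetry rather than dualizing explicitly). The coherence point you flag is indeed the only subtlety, and the paper likewise treats it as routine.
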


\begin{proof}
It suffices to show that the inclusion $\mathcal{C}_{(\infty)}\into\mathcal{C}$
admits a left and a right adjoint. By symmetry, it suffices to consider
only the left adjoint. By \cite[Proposition 5.2.7.8]{htt}, for every $X\in\mathcal{C}$
we need to construct an object $L_{\infty}X\in\mathcal{C}_{(\infty)}$
and a morphism $X\oto{\eta}L_{\infty}X$, such that for all $Y\in\mathcal{C}_{(\infty)}$,
the map
\[
\map(L_{\infty}X,Y)\oto{(-)\circ\eta}\map(X,Y)
\]
is an isomorphism. Let $L_{n}\colon\mathcal{C}\to\mathcal{C}_{(n)}$
be the left adjoint of the inclusion $\mathcal{C}_{(n)}\into\mathcal{C}$
and denote by $\Id\oto{\eta_{n}}L_{n}$ the corresponding unit (i.e.
localization) map, where we suppress the embedding functor $\mathcal{C}_{(n)}\ss\mathcal{C}$.
Since the $\mathcal{C}_{(n)}$-s are nested, we have canonical isomorphisms
$L_{n}L_{n-1}\iso L_{n}$, and we abuse notation by denoting the composition
$L_{n-1}\oto{\eta_{n}}L_{n}L_{n-1}\iso L_{n}$ also by $\eta_{n}$.
We now define 
\[
L_{\infty}X\coloneqq\colim(X\oto{\eta_{1}}L_{1}X\oto{\eta_{2}}L_{2}X\oto{\eta_{3}}\dots)
\]
and take $X\oto{\eta}L_{\infty}X$ to be the cone map from the first
object to the colimit in the diagram defining $L_{\infty}X$. For every
$Y\in\mathcal{C}_{(\infty)}\ss\mathcal{C}_{(n)}$, the map 
\[
\map(L_{n}X,Y)\oto{(-)\circ\eta_{n}}\map(X,Y)
\]
is an isomorphism for each $n\in\bb N$. Thus, by taking the limit
over $n$, we get an isomorphism
\[
\lim_{\bb N}\map(L_{n}X,Y)\iso\lim_{\bb N}\map(X,Y)\simeq\map(X,Y).
\]
Precomposing with the isomorphism
\[
\map(L_{\infty}X,Y)=\map(\colim_{\bb N}L_{n}X,Y)\iso\lim_{\bb N}\map(L_{n}X,Y),
\]
we get an isomorphism
\[
\map(L_{\infty}X,Y)\iso\map(X,Y).
\]
Unwinding the definitions, this isomorphism is given by precomposition
with $\eta$. 
\end{proof}
To identify the right orthogonal complement of $\mathcal{C}_{(\infty)}$
in $\mathcal{C}$, we need the following general categorical fact:
\begin{lem}
\label{lem:Kernel_Image_Orthogonal}Let $F\colon\mathcal{C}\to\mathcal{D}$
be a functor in $\acat_{\st}$, and denote by $\ker(F)\ss\mathcal{C}$
the full subcategory spanned by the objects $X$, for which $F(X)=0$.
If $F$ admits a fully faithful right adjoint $G\colon\mathcal{D}\into\mathcal{C}$,
then $\mathrm{Im}(G)=\ker(F)^{\perp}$.
\end{lem}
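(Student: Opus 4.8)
The plan is to establish the two inclusions $\mathrm{Im}(G)\ss\ker(F)^{\perp}$ and $\ker(F)^{\perp}\ss\mathrm{Im}(G)$ separately. The one ingredient used throughout is the standard fact that, since $G$ is a fully faithful right adjoint of $F$, the counit $\epsilon\colon FG\to\Id_{\mathcal{D}}$ is an isomorphism, and consequently $F\eta\colon F\to FGF$ is an isomorphism as well, where $\eta\colon\Id_{\mathcal{C}}\to GF$ denotes the unit: indeed, the triangle identity $(\epsilon F)\circ(F\eta)=\Id_{F}$ exhibits $F\eta$ as a section of the isomorphism $\epsilon F$.

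For the inclusion $\mathrm{Im}(G)\ss\ker(F)^{\perp}$, given $Y\in\mathcal{D}$ and any $X\in\ker(F)$, adjunction yields
\[
\map_{\mathcal{C}}(X,GY)\simeq\map_{\mathcal{D}}(FX,Y)\simeq\map_{\mathcal{D}}(0,Y)\simeq\pt,
\]
so $GY\in\ker(F)^{\perp}$.

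For the reverse inclusion, the crux — and the only step needing a little care — is to show that for $Z\in\ker(F)^{\perp}$ the unit map $\eta_Z\colon Z\to GFZ$ is an isomorphism. Using stability of $\mathcal{C}$, I would form the fiber sequence $K\to Z\oto{\eta_Z}GFZ$. Applying the exact functor $F$ and using that $F\eta_Z$ is an isomorphism, we get $FK\simeq\mathrm{fib}(F\eta_Z)\simeq0$, i.e. $K\in\ker(F)$. Then orthogonality gives $\map_{\mathcal{C}}(K,Z)\simeq\pt$, while adjunction gives $\map_{\mathcal{C}}(K,GFZ)\simeq\map_{\mathcal{D}}(FK,FZ)\simeq\pt$. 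Rotating the fiber sequence to $\Sigma^{-1}GFZ\to K\to Z$ and applying $\map_{\mathcal{C}}(K,-)$, the outer terms $\map_{\mathcal{C}}(K,\Sigma^{-1}GFZ)\simeq\Omega\map_{\mathcal{C}}(K,GFZ)$ and $\map_{\mathcal{C}}(K,Z)$ are contractible, forcing $\map_{\mathcal{C}}(K,K)\simeq\pt$; in a stable $\infty$-category this makes $\Id_K$ null and hence $K\simeq0$. Therefore $\eta_Z$ is an isomorphism and $Z\simeq GFZ\in\mathrm{Im}(G)$.

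I expect no genuine obstacle here; the only mildly delicate point is the identification $K\in\ker(F)$, which rests on the general adjunction fact that a fully faithful right adjoint makes $F\eta$ an isomorphism. Everything else is a formal manipulation of fiber sequences and mapping spaces, using exactness of $F$ and stability of $\mathcal{C}$ and $\mathcal{D}$.
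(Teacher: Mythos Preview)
Your proof is correct and follows essentially the same approach as the paper: both prove the inclusion $\mathrm{Im}(G)\ss\ker(F)^{\perp}$ by adjunction, then take the fiber $K$ of the unit map $Z\to GFZ$, show $K\in\ker(F)$ using that $F\eta$ is an isomorphism, and conclude $K\simeq0$. The only cosmetic difference is in the last step: the paper observes that $K$, being the fiber of a map between two objects of $\ker(F)^{\perp}$, also lies in $\ker(F)^{\perp}$ (which is closed under limits), hence $K\in\ker(F)\cap\ker(F)^{\perp}=0$; you instead unwind this directly by computing $\map(K,K)\simeq\pt$ from the rotated fiber sequence.
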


\begin{proof}
In one direction, for $X\in\mathrm{Im}(G)$, we have $X=G(Y)$ for
some $Y\in\mathcal{D}$. Hence, for every $Z\in\ker(F)$ we have
\[
\map(Z,X)\simeq\map(Z,G(Y))\simeq\map(F(Z),Y)\simeq\map(0,Y)\simeq\pt.
\]
Thus, $\mathrm{Im}(G)\ss\ker(F)^{\perp}$. Conversely, let $\Id\oto uGF$
and $FG\oto c\Id$ be the unit and counit of the adjunction respectively.
Since $G$ is fully faithful, $c$ is an isomorphism. By the zig-zag
identities and 2-out-of-3, the map $F(u)$ is also an isomorphism.
Now, for every $X\in\mathcal{C}$ consider the fiber sequence
\[
X_{0}\to X\oto uGF(X).
\]
On the one hand, since $F(u)$ is an isomorphism, $F(X_{0})=0$ and hence $X_{0}\in\ker(F)$.
On the other hand, if $X\in\ker(F)^{\perp}$, then since $GF(X)\in\mathrm{Im}(G)\ss\ker(F)^{\perp}$,
we also have $X_{0}\in\ker(F)^{\perp}$ and thus $X_{0}=0$. This
implies that $u$ is an isomorphism and so $X\simeq GF(X)\in\mathrm{Im}(G)$.
\end{proof}
Given a recollement chain as in \defref{Recollement_Chain}, for each
$n\in\bb N$, we have a fully faithful embedding $\mathcal{C}_{(n)}^{\perp}\into\mathcal{C}$
with left adjoint $P_{n}\colon\mathcal{C}\to\mathcal{C}_{(n)}^{\perp}$.
We abuse notation by suppressing the inclusion $\mathcal{C}_{n}^{\perp}\ss\mathcal{C}$
and the canonical isomorphisms $P_{n}P_{n+1}\simeq P_{n}$. We thus obtain a tower 
\[
\dots\oto{P_{n+1}}\mathcal{C}_{(n)}^{\perp}\oto{P_{n}}\dots\oto{P_{2}}{\cal C}_{(2)}^{\perp}\oto{P_{1}}{\cal C}_{(1)}^{\perp}\oto{P_{0}}{\cal C}_{(0)}^{\perp}
\]
of $\infty$-categories under $\mathcal{C}$, which induces a functor
\[
P_{\infty}\colon\mathcal{C}\to\lim{\cal C}_{(n)}^{\perp}.
\]

\begin{prop}
\label{prop:Recallement_Chain}Let ${\cal C}\in\acat_{\st}$ which
admits sequential limits and colimits. Given a recollement chain $\dots\subseteq{\cal C}_{(2)}\subseteq{\cal C}_{(1)}\subseteq{\cal C}_{(0)}\ss{\cal C}$,
the functor ${\displaystyle P_{\infty}\colon{\cal C}\to\lim{\cal C}_{(n)}^{\bot}}$
admits a fully faithful right adjoint, whose essential image is $\mathcal{C}_{(\infty)}^{\perp}$.
Thus, ${\cal C}$ is a recollement of ${\displaystyle \mathcal{C}_{(\infty)}}$
and $\lim{\cal C}_{(n)}^{\bot}$. 
\end{prop}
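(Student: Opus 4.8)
The plan is to combine \lemref{Recollement_Intersection}, which already gives that $\mathcal{C}_{(\infty)}\ss\mathcal{C}$ is a recollement, with \lemref{Kernel_Image_Orthogonal}, which identifies right orthogonal complements in terms of kernels of left adjoints with fully faithful right adjoints. The key observation is that the functor $P_{\infty}\colon\mathcal{C}\to\lim\mathcal{C}_{(n)}^{\bot}$ should be, up to the recollement equivalence, precisely the left adjoint of the inclusion $\mathcal{C}_{(\infty)}^{\bot}\into\mathcal{C}$, so that \lemref{Kernel_Image_Orthogonal} applies once we know $P_{\infty}$ has a fully faithful right adjoint with the correct kernel.

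First I would construct the right adjoint $G_{\infty}$ of $P_{\infty}$ explicitly, dualizing the construction of $L_{\infty}$ in \lemref{Recollement_Intersection}: given a compatible system $(Y_{n})_{n}$ with $Y_{n}\in\mathcal{C}_{(n)}^{\bot}$ and isomorphisms $P_{n}Y_{n+1}\simeq Y_{n}$, set $G_{\infty}(Y_{\bullet})\coloneqq\lim_{n}Y_{n}$, the limit taken in $\mathcal{C}$ along the structure maps $Y_{n+1}\to Y_{n}$ coming from the units of $P_{n+1}\dashv(\text{inclusion})$. The adjunction identity $\map(X,\lim_{n}Y_{n})\simeq\lim_{n}\map(X,Y_{n})\simeq\lim_{n}\map(P_{n}X,Y_{n})$ — using that each $\mathcal{C}_{(n)}^{\bot}\ss\mathcal{C}$ is reflective — together with the limit description of $\map$ into $\lim\mathcal{C}_{(n)}^{\bot}$, exhibits $G_{\infty}$ as right adjoint to $P_{\infty}$. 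Full faithfulness of $G_{\infty}$ amounts to checking that the counit $P_{\infty}G_{\infty}\to\Id$ is an isomorphism, which reduces componentwise to $P_{m}(\lim_{n}Y_{n})\simeq Y_{m}$; this holds because $P_{m}$ is a left adjoint applied to a limit that is eventually constant after the appropriate localization — more precisely, $Y_{n}\in\mathcal{C}_{(n)}^{\bot}\ss\mathcal{C}_{(m)}^{\bot}$ for $n\ge m$, so $P_{m}Y_{n}\simeq Y_{n}$, and the transition maps stabilize the limit from the point of view of $\mathcal{C}_{(m)}^{\bot}$.

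Next I would identify the kernel of $P_{\infty}$. An object $X$ lies in $\ker(P_{\infty})$ iff $P_{n}X=0$ for all $n$, i.e. iff $X$ is left-orthogonal to every $\mathcal{C}_{(n)}^{\bot}$; but in a recollement $\mathcal{C}_{(n)}\ss\mathcal{C}$ the subcategory $\mathcal{C}_{(n)}$ is exactly the left orthogonal of $\mathcal{C}_{(n)}^{\bot}$ (this is part of the recollement data from \cite[Section A.8.1]{ha}), so $\ker(P_{\infty})=\bigcap_n\mathcal{C}_{(n)}=\mathcal{C}_{(\infty)}$. Then \lemref{Kernel_Image_Orthogonal}, applied to $P_{\infty}$ with its fully faithful right adjoint $G_{\infty}$, gives $\operatorname{Im}(G_{\infty})=\ker(P_{\infty})^{\bot}=\mathcal{C}_{(\infty)}^{\bot}$. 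Finally, since by \lemref{Recollement_Intersection} the inclusion $\mathcal{C}_{(\infty)}\into\mathcal{C}$ admits both adjoints, $\mathcal{C}$ is a recollement of $\mathcal{C}_{(\infty)}$ and $\mathcal{C}_{(\infty)}^{\bot}$, and the identification $\mathcal{C}_{(\infty)}^{\bot}\simeq\lim\mathcal{C}_{(n)}^{\bot}$ via $P_{\infty}\dashv G_{\infty}$ completes the proof.

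I expect the main obstacle to be the verification that $G_{\infty}$ is fully faithful — equivalently, that the componentwise maps $P_{m}(\lim_{n}Y_{n})\to Y_{m}$ are isomorphisms. This requires knowing that $P_{m}$ (a localization onto $\mathcal{C}_{(m)}^{\bot}$) commutes with the relevant sequential limit, which is not automatic for localizations and will use that the tower $(Y_n)$ becomes essentially constant once one projects to any fixed $\mathcal{C}_{(m)}^{\bot}$, combined with $\mathcal{C}$ admitting (and the functors interacting well with) sequential limits. A secondary subtlety is checking that $\lim\mathcal{C}_{(n)}^{\bot}$, computed in $\acat_{\st}$, really receives the functor $P_{\infty}$ as claimed — i.e. that the coherences $P_nP_{n+1}\simeq P_n$ assemble $P_\infty$ — but this is the standard construction of a functor into a sequential limit of $\infty$-categories and should be routine.
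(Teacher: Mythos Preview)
Your overall approach coincides with the paper's: invoke \lemref{Recollement_Intersection} to get the recollement at $\mathcal{C}_{(\infty)}$, construct $G_{\infty}$ as the sequential limit of the $Y_{n}$, check it is a fully faithful right adjoint of $P_{\infty}$, identify $\ker(P_{\infty})=\mathcal{C}_{(\infty)}$, and conclude via \lemref{Kernel_Image_Orthogonal}. The paper obtains the adjunction $P_{\infty}\dashv G_{\infty}$ and the formula $G_{\infty}(\{Y_{n}\})\simeq\lim_{n}Y_{n}$ by citing \cite[Theorem~B]{AD} rather than your direct mapping-space computation, but that is a cosmetic difference.

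There is, however, a genuine error in your sketch of full faithfulness. You claim ``$Y_{n}\in\mathcal{C}_{(n)}^{\perp}\subseteq\mathcal{C}_{(m)}^{\perp}$ for $n\ge m$, so $P_{m}Y_{n}\simeq Y_{n}$''. The inclusion goes the other way: since $\mathcal{C}_{(n)}\subseteq\mathcal{C}_{(m)}$ for $n\ge m$, passing to right orthogonals gives $\mathcal{C}_{(m)}^{\perp}\subseteq\mathcal{C}_{(n)}^{\perp}$, so $Y_{n}$ typically does \emph{not} lie in $\mathcal{C}_{(m)}^{\perp}$, and $P_{m}Y_{n}\simeq Y_{m}$ rather than $Y_{n}$. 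Thus the tower is not eventually constant in $\mathcal{C}$; it is only constant after applying $P_{m}$, and that alone does not force $P_{m}$ to commute with the limit. The paper's fix, which is exactly the mechanism you were groping for in your final paragraph, is to analyze fibers: the fiber of the transition map $Y_{n+1}\to P_{n}Y_{n+1}\simeq Y_{n}$ lies in $\mathcal{C}_{(n)}\subseteq\mathcal{C}_{(k)}$ (it is the value at $Y_{n+1}$ of the right adjoint $\mathcal{C}\to\mathcal{C}_{(n)}$), and since $\mathcal{C}_{(k)}$ is closed under sequential limits, the fiber of $\lim_{n\ge k}Y_{n}\to Y_{k}$ also lies in $\mathcal{C}_{(k)}$ and is therefore annihilated by $P_{k}$. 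This gives $P_{k}(\lim Y_{n})\iso Y_{k}$ and hence that the counit $P_{\infty}G_{\infty}\to\Id$ is an isomorphism.
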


\begin{proof}
By \lemref{Recollement_Intersection}, the inclusion $\mathcal{C}_{(\infty)}\into\mathcal{C}$
exhibits $\mathcal{C}$ as a recollement of $\mathcal{C}_{(\infty)}$
and its right orthogonal complement and hence it suffices to identify
$\mathcal{C}_{(\infty)}^{\perp}$. The objects of $\mathcal{C}_{(\infty)}$
are precisely the $X\in\mathcal{C}$ for which $P_{\infty}(X)=0$.
Thus, by \lemref{Kernel_Image_Orthogonal}, it suffices to show that
$P_{\infty}$ admits a fully faithful right adjoint. Since $P_{n}$
is a left adjoint for all $n$, by \cite[Theorem B]{AD}, the functor $P_{\infty}$
is a left adjoint and we denote its right adjoint by $G_{\infty}\colon\lim{\cal C}_{(n)}^{\bot}\to\mathcal{C}$.
We show that $G_{\infty}$ is fully faithful using the explicit description
of the adjunction $P_{\infty}\dashv G_{\infty}$ given in \cite{AD}.
An object of $\lim{\cal C}_{(n)}^{\bot}$ consists of a sequence of
objects $X_{n}\in\mathcal{C}_{(n)}^{\perp}$ together with structure
isomorphisms $P_{n}X_{n+1}\iso X_{n}$. We shall write $\{X_{n}\}\in\lim{\cal C}_{(n)}^{\bot}$
suppressing the structure isomorphisms. Composing the structure isomorphisms
of $\{X_{n}\}$ with the corresponding unit (i.e. localization) maps
$\Id\oto{u_{n}}P_{n}$, we get maps as follows:
\[
f_{n}\colon X_{n+1}\oto{u_{n}}P_{n}X_{n+1}\iso X_{n}.
\]
By \cite[Theorem B]{AD}, the functor $G_{\infty}$ can be described explicitly
on objects by the following formula:
\[
G_{\infty}(\{X_{n}\})\simeq\lim(\dots\oto{f_{n}}X_{n}\oto{f_{n-1}}\dots\oto{f_{2}}X_{2}\oto{f_{1}}X_{1}\oto{f_{0}}X_{0}).
\]

To prove that $G_{\infty}$ is fully faithful, it suffices to show
that the counit $P_{\infty}G_{\infty}\oto c\Id$ is an isomorphism.
Since the collection of projection functors $\pi_{k}\colon\lim{\cal C}_{(n)}^{\bot}\to\mathcal{C}_{(k)}^{\bot}$
for all $k\in\bb N$ is jointly conservative, it suffices to show
that
\[
P_{k}(\lim X_{n})\oto{\pi_{k}(c)}X_{k}
\]
is an isomorphism for all $k\in\bb N$ and $\{X_{n}\}\in\lim{\cal C}_{(n)}^{\bot}$.
By \cite[Theorem 5.5]{AD}, we can describe $\pi_{k}(c)$ as the composition 
\[
P_{k}(\lim X_{n})\to P_{k}(X_{k})\iso X_{k},
\]
where the first map is induced by the canonical projection $\lim X_{n}\to X_{k}$.
By cofinality, we can assume that the limit is taken over $n\ge k$.
By definition, for each $n\in\bb N$, the fiber of $X_{n+1}\oto{f_{n}}X_{n}$
lies in $\mathcal{C}_{(n)}\ss\mathcal{C}_{(k)}$. Since $\mathcal{C}_{(k)}$
is closed under sequential limits, it follows that the fiber of $\lim X_{n}\to X_{k}$
lies in $\mathcal{C}_{(k)}$ and hence becomes an isomorphism after
applying $P_{k}$. This concludes the proof that $G_{\infty}$ is
fully faithful and hence the proof of the claim.
\end{proof}
\begin{cor}
\label{cor:Recollement_Chain_Decomposition}Let ${\cal C}\in\acat_{\st}$
which admits sequential limits and colimits with a recollement chain
$\dots\subseteq{\cal C}_{(2)}\subseteq{\cal C}_{(1)}\subseteq{\cal C}_{(0)}\ss{\cal C}$.
If $\mathcal{C}_{(\infty)}=0$ then ${\cal C}\simeq\lim{\cal C}_{(n)}^{\bot}$.
\end{cor}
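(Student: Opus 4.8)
The plan is to obtain the statement as an immediate consequence of \propref{Recallement_Chain}. Applied to the given recollement chain (whose hypotheses are met: $\mathcal{C}$ is stable and admits sequential limits and colimits), that proposition supplies the functor $P_{\infty}\colon\mathcal{C}\to\lim{\cal C}_{(n)}^{\bot}$ together with a fully faithful right adjoint $G_{\infty}\colon\lim{\cal C}_{(n)}^{\bot}\to\mathcal{C}$ whose essential image is $\mathcal{C}_{(\infty)}^{\bot}$. So all that remains is to identify $\mathcal{C}_{(\infty)}^{\bot}$ under the extra hypothesis $\mathcal{C}_{(\infty)}=0$.

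First I would record that $\mathcal{C}_{(\infty)}=0$ forces $\mathcal{C}_{(\infty)}^{\bot}=\mathcal{C}$: the zero $\infty$-category has the zero object as its unique object, and since $\mathcal{C}$ is stable, hence pointed, $\map(0,Y)\simeq\pt$ for every $Y\in\mathcal{C}$; thus every object of $\mathcal{C}$ lies in the right orthogonal complement of $\mathcal{C}_{(\infty)}$. Next I would combine this with the conclusion of \propref{Recallement_Chain}: $G_{\infty}$ is fully faithful with essential image all of $\mathcal{C}$, hence also essentially surjective, and a fully faithful essentially surjective functor of $\infty$-categories is an equivalence. Therefore $G_{\infty}$ exhibits the desired equivalence $\lim{\cal C}_{(n)}^{\bot}\iso\mathcal{C}$.

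I do not expect any genuine obstacle here; the work is entirely carried by \propref{Recallement_Chain} (and \lemref{Recollement_Intersection} behind it). If one prefers to phrase the final step without referring to $\mathcal{C}_{(\infty)}^{\bot}$ at all, one can instead use that \propref{Recallement_Chain} exhibits $\mathcal{C}$ as a recollement of $\mathcal{C}_{(\infty)}=0$ and $\lim{\cal C}_{(n)}^{\bot}$, note that the inclusion $0\into\mathcal{C}$ has the zero functor as both its left and right adjoint so that the gluing functor $L|_{\mathcal{C}_{(\infty)}^{\bot}}$ is zero, and then invoke \propref{Split_Recollement} to split off the trivial factor $\mathcal{C}_{(\infty)}=0$. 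The argument via essential surjectivity of $G_{\infty}$ is, however, the most direct.
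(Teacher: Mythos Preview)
Your proposal is correct and takes essentially the same approach as the paper: both invoke \propref{Recallement_Chain} and then observe that a recollement with $\mathcal{C}_{(\infty)}=0$ forces $\mathcal{C}\simeq\lim\mathcal{C}_{(n)}^{\bot}$. In fact, the paper's one-line proof is precisely your ``alternative'' phrasing (recollement of $0$ and $\lim\mathcal{C}_{(n)}^{\bot}$), while your primary argument via essential surjectivity of $G_{\infty}$ is just a slightly more explicit unpacking of the same fact.
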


\begin{proof}
By \propref{Recallement_Chain}, ${\cal C}$ is a recollement of $\mathcal{C}_{(\infty)}=0$
and of $\lim{\cal C}_{(n)}^{\bot}$, so that ${\cal C}\simeq\lim{\cal C}_{(n)}^{\bot}$. 
\end{proof}

\subsubsection{Divisible and complete recollement}

One way to get a recollement is by taking the divisible and complete
objects with respect to a natural endomorphism of the identity functor.
That is, given a stable $\infty$-category $\mathcal{C}$ and $\Id_{\mathcal{C}}\oto \alpha\Id_{\mathcal{C}}$,
we have the full subcategories $\mathcal{C}[\alpha^{-1}]$ and $\widehat{\mathcal{C}}_{\alpha}=\mathcal{C}[\alpha^{-1}]^{\perp}$
of $\mathcal{C}$ (\defref{Div_Comp}). Assuming $\mathcal{C}$ admits
sequential limits and colimits, the inclusion $\mathcal{C}[\alpha^{-1}]\into\mathcal{C}$
admits both a left adjoint $L$ and a right adjoint $R$, given respectively
by ``inverting $\alpha$'' on $X$
\[
LX=\colim(X\oto{\alpha}X\oto{\alpha}X\oto{\alpha}\dots)
\]
and by taking the ``$\alpha$-divisible part'' of $X$

\[
RX=\lim(\dots\oto{\alpha}X\oto{\alpha}X\oto{\alpha}X).
\]

\begin{rem}
We warn the reader that although the above statements are well known
and fairly intuitive, they are not as tautological as one might think.
In particular, they might fail if $\mathcal{C}$ is not assumed to
be stable (or at least additive). We refer the reader to \cite[Appendix
C]{BNT}, for a comprehensive treatment of a closely related situation. 
\end{rem}

Note that an object $Y\in\mathcal{C}$ is $\alpha$-complete if and
only if $RY=0$ if and only if $Y\simeq\lim Y/\alpha^{r}$. In fact,
the $\alpha$\textbf{-completion} functor
\[
Y\mapsto\widehat{Y}_{\alpha}\coloneqq\lim Y/\alpha^{r},
\]
is the left adjoint to the inclusion $\widehat{\mathcal{C}}_{\alpha}\into\mathcal{C}$. 
\begin{prop}
\label{prop:Div_Comp_Recollemenet}Let $\mathcal{C}\in\acat_{\st}$
which admits sequential limits and colimit and let $\Id_{\mathcal{C}}\oto \alpha\Id_{\mathcal{C}}$.
Then $\mathcal{C}$ is a recollement of $\mathcal{C}[\alpha^{-1}]$
and $\widehat{\mathcal{C}}_{\alpha}$.
\end{prop}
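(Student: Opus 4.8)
The plan is to verify directly that the inclusion $\mathcal{C}[\alpha^{-1}]\into\mathcal{C}$ admits both a left and a right adjoint, since by the definition of recollement recalled just before \defref{Split_recollement} (and \cite[Proposition A.8.20]{ha}), this is exactly what needs to be checked; the identification $\widehat{\mathcal{C}}_{\alpha}=\mathcal{C}[\alpha^{-1}]^{\perp}$ is already built into \defref{Div_Comp}. So the task reduces to producing the two adjoints, and the natural candidates are the two formulas displayed in the text preceding the statement: the telescope $LX=\colim(X\oto{\alpha}X\oto{\alpha}\cdots)$ and the cotelescope $RX=\lim(\cdots\oto{\alpha}X\oto{\alpha}X)$. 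Both of these exist in $\mathcal{C}$ by the hypothesis that $\mathcal{C}$ admits sequential limits and colimits.

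First I would check that $LX$ lands in $\mathcal{C}[\alpha^{-1}]$, i.e. that $\alpha$ acts invertibly on $LX$. This is the standard ``shift'' argument: the endomorphism $\alpha$ of the telescope diagram is, up to the canonical equivalence given by reindexing the diagram by one step, the identity, so it induces an equivalence on the colimit; here stability (or at least additivity) is what lets one argue cleanly, since the relevant map and its inverse are constructed from the structure maps of the diagram. Next, for $Z\in\mathcal{C}[\alpha^{-1}]$ one shows the canonical map $\map(LX,Z)\to\map(X,Z)$ is an equivalence: writing $\map(LX,Z)\simeq\lim_n\map(X,Z)$ with transition maps given by precomposition with $\alpha$, and using that $\alpha$ acts invertibly on $Z$ (hence on $\map(X,Z)$), the limit is computed over a diagram all of whose maps are equivalences, so it is $\map(X,Z)$ itself. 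This exhibits $L$ as left adjoint to the inclusion. The argument for $R$ as right adjoint is formally dual — one checks $\alpha$ acts invertibly on $RX$ by the same reindexing trick, and that $\map(Z,RX)\simeq\lim_n\map(Z,X)\simeq\map(Z,X)$ for $Z\in\mathcal{C}[\alpha^{-1}]$ — or, even more economically, one can obtain it by applying the left-adjoint statement to $\mathcal{C}^{\op}$ with the endomorphism $\alpha^{\op}$, noting that $(\mathcal{C}^{\op})[(\alpha^{\op})^{-1}]=(\mathcal{C}[\alpha^{-1}])^{\op}$ and that $\mathcal{C}^{\op}$ again admits sequential limits and colimits.

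The step I expect to require the most care is precisely the invertibility of $\alpha$ on the (co)telescope and the identification of the transition maps in $\map(LX,-)$ with precomposition by $\alpha$ — these are the points where, as the \remref{} immediately after the statement warns, stability (not just the existence of the (co)limits) is genuinely used, and where one must be careful that the ``obvious'' reindexing equivalence is compatible with $\alpha$. Everything else is formal: once both adjoints are in hand, the recollement conclusion is immediate from the cited results, and $\widehat{\mathcal{C}}_{\alpha}$ is the right orthogonal complement by construction.
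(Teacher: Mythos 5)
Your proposal is correct and follows the same route as the paper: the paper's proof simply points back to the preceding discussion, which defines $L$ and $R$ by the telescope and cotelescope formulas and notes that these exhibit both adjoints of the inclusion $\mathcal{C}[\alpha^{-1}]\into\mathcal{C}$, after which the recollement is immediate from \cite[Proposition A.8.20]{ha} and the definition $\widehat{\mathcal{C}}_{\alpha}=\mathcal{C}[\alpha^{-1}]^{\perp}$. You have merely spelled out the verification (shift argument for divisibility of $L X$ and $RX$, and the identification of the transition maps in $\map(LX,Z)$ and $\map(Z,RX)$ via naturality of $\alpha$) that the paper leaves to the cited discussion and to \cite[Appendix C]{BNT}.
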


\begin{proof}
It follows from the discussion above that $\mathcal{C}[\alpha^{-1}]\into\mathcal{C}$
admits both adjoints.
\end{proof}
Our next goal is to give a characterization of when the said recollement
is \emph{split} in terms of the natural endomorphism $\alpha$. 
\begin{defn}
We say that a natural endomorphism $\beta\colon\Id_{\mathcal{C}}\to\Id_{\mathcal{C}}$
is a \textbf{semi-inverse} of $\alpha$, if for every $\alpha$-divisible
$X$, the map $\beta_{X}$ is an inverse of $\alpha_{X}$. 
\end{defn}

The usefulness of the notion of semi-inverse is in that it allows
us to characterize completeness in terms of divisibility:
\begin{prop}
\label{prop:Semi_Inevrse_Divisible} Let $\mathcal{C}\in\acat_{\st}$
which admits sequential limits and colimits. For every $\alpha,\beta\colon\Id_{\mathcal{C}}\to\Id_{\mathcal{C}}$, if an object $Y\in\mathcal{C}$ is $\alpha$-complete, then it is
$(1-\alpha\beta)$-divisible. If $\beta$ is a semi-inverse of $\alpha$,
then the converse holds as well. 
\end{prop}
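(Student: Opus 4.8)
The plan is to analyze the two directions separately, reducing everything to the behavior of $\alpha$ and the auxiliary endomorphism $1-\alpha\beta$ on objects. For the first implication, suppose $Y$ is $\alpha$-complete. I want to show $(1-\alpha\beta)_Y$ is an isomorphism. The key observation is that its cofiber is built from $\alpha\beta$, and more precisely that the telescope $\colim(Y \oto{1-\alpha\beta} Y \oto{1-\alpha\beta} \cdots)$, i.e.\ the localization $L_{1-\alpha\beta}Y$, is $\alpha$-divisible: on that colimit $1-\alpha\beta$ becomes invertible, so $\alpha\beta$ becomes invertible there, hence $\alpha$ admits a one-sided inverse and — since $\alpha$ is a natural endomorphism of the identity, its value on $L_{1-\alpha\beta}Y$ commutes with that semi-inverse — $\alpha$ is invertible on $L_{1-\alpha\beta}Y$. (Here I use that on the colimit $1 - \alpha\beta \simeq 0$ in the sense of being nulhomotopic after the shift, so $\alpha\beta \simeq \mathrm{id}$ up to the telescope structure; the cleanest formulation is that $L_{1-\alpha\beta}$ inverts $1-\alpha\beta$, whence it inverts $\alpha\beta$ and thus $\alpha$.) By $\alpha$-completeness of $Y$, $\map(L_{1-\alpha\beta}Y, Y) \simeq \pt$, and since the natural map $Y \to L_{1-\alpha\beta}Y$ has cofiber that is again a sequential colimit of copies of $Y$ glued along $1-\alpha\beta$... rather, the cleanest route: $\alpha$-complete objects are closed under sequential limits and colimits and form a localizing-type subcategory, and the fiber and cofiber of $(1-\alpha\beta)_Y$ lie in the full subcategory where the $\map$-out of any $\alpha$-divisible object vanishes; combined with $L_{1-\alpha\beta}Y$ being $\alpha$-divisible, $\map(L_{1-\alpha\beta}Y,Y)=\pt$ forces $L_{1-\alpha\beta}Y=0$, i.e.\ $(1-\alpha\beta)_Y$ is an isomorphism.

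For the converse, assume $\beta$ is a semi-inverse of $\alpha$ and that $Y$ is $(1-\alpha\beta)$-divisible; I want $Y$ to be $\alpha$-complete, equivalently $\map(Z,Y)\simeq\pt$ for every $\alpha$-divisible $Z$. Fix such a $Z$. Since $Z$ is $\alpha$-divisible, $\beta_Z$ is the inverse of $\alpha_Z$ by the defining property of a semi-inverse, so $(\alpha\beta)_Z = \mathrm{id}_Z$, hence $(1-\alpha\beta)_Z = 0$. Now use naturality of $1-\alpha\beta$: for any map $f\colon Z\to Y$ we get $f\circ (1-\alpha\beta)_Z = (1-\alpha\beta)_Y \circ f$, i.e.\ $0 = (1-\alpha\beta)_Y \circ f$. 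But $(1-\alpha\beta)_Y$ is an isomorphism by hypothesis, so $f=0$. Since $\mathcal{C}$ is stable and this holds for every map $Z\to Y$ — and indeed, applying the same argument to $Z$ shifted or to mapping spectra, it holds on all homotopy groups — we conclude $\map(Z,Y)\simeq\pt$. Therefore $Y$ is $\alpha$-complete.

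The main obstacle I anticipate is the first direction: specifically, promoting the pointwise "$\alpha\beta$ becomes invertible on the $(1-\alpha\beta)$-localization" to "$\alpha$ itself becomes invertible there," and then leveraging $\alpha$-completeness correctly. The subtlety is that $\beta$ is assumed to be a semi-inverse of $\alpha$ only in the converse direction — in the forward direction we have an arbitrary $\beta$, so I cannot directly say $\beta$ inverts $\alpha$ on $\alpha$-divisible objects; instead I must argue that on $L_{1-\alpha\beta}Y$ the element $\alpha$ is invertible purely from $\alpha\beta$ being invertible there together with the fact that $\alpha$ and $\beta$ are natural endomorphisms of $\Id_{\mathcal{C}}$ (hence their restrictions to any object commute with all endomorphisms natural in that object, in particular with $(\alpha\beta)^{-1}$). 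From invertibility of $\alpha\beta = \beta\alpha$ one gets that $\alpha$ has both a left and a right inverse, hence is invertible. After that, the endgame — that an $\alpha$-complete object receiving only the zero map from every $\alpha$-divisible object, and whose "defect" $L_{1-\alpha\beta}Y$ is $\alpha$-divisible — is clean, but one must be slightly careful to phrase it via the recollement of \propref{Div_Comp_Recollemenet} rather than by a naive two-out-of-three, since $Y$ itself need not be $(1-\alpha\beta)$-complete. I would finish by noting $Y \to L_{1-\alpha\beta}Y$ is a map from the $\alpha$-complete (hence from $Y$'s perspective "everything") into the $\alpha$-divisible, which must therefore be null, and its fiber sits in the $(1-\alpha\beta)$-complete part; a short diagram chase then yields that $(1-\alpha\beta)_Y$ is an isomorphism.
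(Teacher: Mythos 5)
Your converse direction is correct and is essentially the paper's argument: for an $\alpha$-divisible $Z$ the semi-inverse property gives $(1-\alpha\beta)_{Z}=0$, so by naturality $1-\alpha\beta$ acts both as zero and invertibly on $\map(Z,Y)$, which is therefore contractible.

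The forward direction, however, rests on a false claim. You assert that on the telescope $L_{1-\alpha\beta}Y=\colim(Y\oto{1-\alpha\beta}Y\oto{1-\alpha\beta}\cdots)$ the endomorphism $\alpha\beta$, and hence $\alpha$, becomes invertible ``because $1-\alpha\beta$ does.'' Inverting $1-\alpha\beta$ gives no control over $\alpha\beta$; the true implication goes the other way (if $\alpha\beta$ is \emph{nilpotent} then $1-\alpha\beta$ is invertible via the geometric series). Concretely, take $\mathcal{C}=\Sp$, $\alpha=p$, $\beta=\Id$ and $Y=\widehat{\bb S}_{p}$: since $1-p$ is a unit in $\bb Z_{p}=\pi_{0}\widehat{\bb S}_{p}$ we have $L_{1-p}Y\simeq Y$, which is not $p$-divisible. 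So $L_{1-\alpha\beta}Y$ need not be $\alpha$-divisible and your endgame (using $\alpha$-completeness of $Y$ against an $\alpha$-divisible localization) collapses; moreover, even granting that claim, $\map(L_{1-\alpha\beta}Y,Y)\simeq\pt$ alone would not yield invertibility of $(1-\alpha\beta)_{Y}$ without a further argument about the fiber of $Y\to L_{1-\alpha\beta}Y$. The correct argument is dual to what you attempted: $\alpha$-completeness gives $Y\simeq\lim Y/\alpha^{r}$; on each cofiber $Y/\alpha^{r}$ the endomorphism $\alpha^{2r}$ vanishes (standard cofiber-sequence factorization), hence $\alpha\beta$ is nilpotent there (all natural endomorphisms of $\Id_{\mathcal{C}}$ commute), hence $1-\alpha\beta$ is invertible on $Y/\alpha^{r}$ with inverse $\sum_{k}(\alpha\beta)^{k}$; passing to the limit shows $(1-\alpha\beta)_{Y}$ is invertible.
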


\begin{proof}
Note that all natural endomorphisms of $\Id_{\mathcal{C}}$ commute
by the interchange law, so in particular $\alpha\beta=\beta\alpha$.
For an $\alpha$-complete object $Y\in\mathcal{C}$ we have $Y=\lim Y/\alpha^{r}$.
For every $r\in\bb N$, the map $\alpha^{2r}$ is zero on $Y/\alpha^{r}$
and hence $1-\alpha\beta$ is invertible on $Y/\alpha^{r}$. By passing
to the limit, $1-\alpha\beta$ is invertible on $Y$. Conversely,
assume that $(1-\alpha\beta)$ acts invertibly on $Y$. If $\beta$ is
a semi-inverse of $\alpha$, then for every $\alpha$-divisible $X$,
the map $(1-\alpha\beta)$ acts as zero on $X$. Thus the pointed
space $\map(X,Y)$ must be contractible as $(1-\alpha\beta)$ acts
both invertibly and as zero on it. This implies that $Y$ is $\alpha$-complete.
\end{proof}
The above lemma leads us to the following characterization of split recollement:
\begin{prop}
\label{prop:Recollement}Let $\mathcal{C}$ be a stable $\infty$-category
which admits sequential limits and colimits. The recollement associated
with a natural endomorphism $\Id_{\mathcal{C}}\oto{\alpha} \Id_{\mathcal{C}}$
is split if and only if $\alpha$ admits a semi-inverse. In which
case,
\[
\mathcal{C}\simeq\mathcal{C}[\alpha^{-1}]\times\widehat{\mathcal{C}}_{\alpha}.
\]
\end{prop}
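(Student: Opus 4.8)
The plan is to reduce Proposition \ref{prop:Recollement} to the split-recollement criterion \propref{Split_Recollement} via the characterization of $\alpha$-completeness furnished by \propref{Semi_Inevrse_Divisible}. The recollement $\mathcal{C}[\alpha^{-1}] \ss \mathcal{C}$ exists by \propref{Div_Comp_Recollemenet}, so it remains only to decide when its gluing functor vanishes, equivalently (condition (3) of \propref{Split_Recollement}) when the left and right adjoints $L,R$ of the inclusion agree, equivalently (condition (2)) when $\mathcal{C} \simeq \mathcal{C}[\alpha^{-1}] \times \widehat{\mathcal{C}}_\alpha$.

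For the ``only if'' direction, suppose the recollement is split, so $\mathcal{C} \simeq \mathcal{C}[\alpha^{-1}] \times \widehat{\mathcal{C}}_\alpha$. On the first factor $\alpha$ is invertible by definition, so set $\beta$ to be $\alpha^{-1}$ on $\mathcal{C}[\alpha^{-1}]$ and, say, $0$ (or anything) on $\widehat{\mathcal{C}}_\alpha$; since every $\alpha$-divisible object lies in $\mathcal{C}[\alpha^{-1}]$ (as the inclusion $\mathcal{C}[\alpha^{-1}] \ss \mathcal{C}$ is fully faithful and contains exactly the $\alpha$-invertible objects), this $\beta$ is a semi-inverse of $\alpha$. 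One should check $\beta$ assembles into a genuine natural endomorphism of $\Id_{\mathcal{C}}$ using the product decomposition, which is routine.

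For the ``if'' direction, assume $\alpha$ has a semi-inverse $\beta$. By \propref{Semi_Inevrse_Divisible}, an object $Y$ is $\alpha$-complete if and only if it is $(1-\alpha\beta)$-divisible, i.e. $\widehat{\mathcal{C}}_\alpha = \mathcal{C}[(1-\alpha\beta)^{-1}]$. Now observe that on any $\alpha$-divisible object $X$ we have $(1-\alpha\beta)_X = 0$, so $X$ is $(1-\alpha\beta)$-complete (its mapping spaces into it from $(1-\alpha\beta)$-divisible objects are contractible, as $(1-\alpha\beta)$ is simultaneously zero and invertible on them); hence $\mathcal{C}[\alpha^{-1}] \ss \widehat{\mathcal{C}}_{1-\alpha\beta} = (\mathcal{C}[(1-\alpha\beta)^{-1}])^\perp = \widehat{\mathcal{C}}_\alpha{}^\perp$. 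Combined with the inclusion $\mathcal{C}[\alpha^{-1}] \ss (\mathcal{C}[\alpha^{-1}])^{\perp\perp}$ that is automatic, this says the two orthogonal subcategories $\mathcal{C}[\alpha^{-1}]$ and $\widehat{\mathcal{C}}_\alpha$ are mutually orthogonal in a strong sense. To turn this into the product decomposition, I would argue directly that the localization functors $L \colon \mathcal{C} \to \mathcal{C}[\alpha^{-1}]$ and $P \colon \mathcal{C} \to \widehat{\mathcal{C}}_\alpha$ assemble to an equivalence: every $X$ sits in a fiber sequence $RX \to X \to \widehat{X}_\alpha$ with $RX$ the $\alpha$-divisible part and $\widehat{X}_\alpha$ the completion, and the strong orthogonality just obtained shows this sequence splits and that $RX = LX$ (so $L \simeq R$, which is condition (3) of \propref{Split_Recollement}). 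Concretely: $(1-\alpha\beta)$ is zero on $RX$ and invertible on $\widehat X_\alpha$, so $\mathrm{Map}(RX, \widehat X_\alpha) = \pt$ and the fiber sequence splits; moreover applying the idempotent $L$ kills $\widehat X_\alpha$ and is the identity on the $\alpha$-invertible object $RX$, giving $LX \simeq RX$. Then invoke \propref{Split_Recollement} to conclude, and the displayed equivalence $\mathcal{C} \simeq \mathcal{C}[\alpha^{-1}] \times \widehat{\mathcal{C}}_\alpha$ is condition (2).

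The main obstacle I anticipate is the bookkeeping in the ``if'' direction: making sure the splitting of the fiber sequence $RX \to X \to \widehat X_\alpha$ is natural in $X$ and genuinely produces an equivalence of $\infty$-categories rather than merely an equivalence on objects. The cleanest route is probably not to split the sequence by hand but to verify condition (3) of \propref{Split_Recollement} — that $L \simeq R$ as functors — since $L$ and $R$ are both determined by universal properties and the computation $LX \simeq RX$ above is natural. Everything else (existence of the recollement, the divisibility/completeness dictionary) is already packaged in \propref{Div_Comp_Recollemenet} and \propref{Semi_Inevrse_Divisible}, so once $L \simeq R$ is in hand the proof closes immediately.
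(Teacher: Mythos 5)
Your argument is correct and is essentially the paper's proof: both directions rest on the observation that $1-\alpha\beta$ acts as zero on $\alpha$-divisible objects and invertibly on $\alpha$-complete ones, which forces $\map(Y,X)\simeq\pt$ for $Y\in\widehat{\mathcal{C}}_{\alpha}$ and $X\in\mathcal{C}[\alpha^{-1}]$ and hence the vanishing of the gluing functor, while the converse defines $\beta$ blockwise from the product decomposition exactly as you do. The only quibble is in your ``concrete'' splitting of $RX\to X\to\widehat{X}_{\alpha}$: the obstruction lives in $\map(\widehat{X}_{\alpha},\Sigma RX)$ rather than $\map(RX,\widehat{X}_{\alpha})$ (both vanish by the same computation), and in any case the paper sidesteps this bookkeeping by verifying condition (1) of \propref{Split_Recollement} directly.
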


\begin{proof}
Let $\beta$ be a semi-inverse of $\alpha$. To show that $LY=0$
for all $Y\in\widehat{\mathcal{C}}_{\alpha}$ it suffices to show
that $\map(Y,X)$ is contractible for all $X\in\mathcal{C}[\alpha^{-1}]$.
By definition, $1-\alpha\beta$ is zero on $X$, so it suffices to
observe that $1-\alpha\beta$ is invertible on $Y$ by \propref{Semi_Inevrse_Divisible}.
Conversely, if $\mathcal{C}\simeq\mathcal{C}[\alpha^{-1}]\times\widehat{\mathcal{C}}_{\alpha}$,
we have for every $X\in\mathcal{C}$ a natural decomposition $X\simeq X[\alpha^{-1}]\oplus\widehat{X}_{\alpha}$
with $X[\alpha^{-1}]\in\mathcal{C}[\alpha^{-1}]$ and $\widehat{X}_{\alpha}\in\widehat{\mathcal{C}}_{\alpha}$.
In this case, the map $\beta=(\alpha|_{X[\alpha^{-1}]})^{-1}\oplus0_{\widehat{X}_{\alpha}}$
is a semi-inverse of $\alpha$. 
\end{proof}
\begin{rem}
In \exaref{Chromatic_Recollement}, the recollement $\Sp_{\bb Q}\ss\Sp_{(p)}$
corresponds to the endomorphism $\Id_{\Sp_{(p)}}\oto p\Id_{\Sp_{(p)}}$.
However, not every recollement arises in such a way. For example,
for $n\ge1$ the recollement $L_{n}\Sp\ss\Sp_{(p)}$ is not induced
by any endomorphism $\alpha$ of the identity functor. We do note
however, that every \emph{split} recollement $\mathcal{C}_{\circ}\ss\mathcal{C}$
must arise from an endomorphism $\alpha$ of $\Id_{\mathcal{C}}$,
because we can take $\alpha$ to be the idempotent $\varepsilon\colon\Id_{\mathcal{C}}\to\Id_{\mathcal{C}}$
projecting onto $\mathcal{C}_{\circ}$. In this case, $\varepsilon$
itself is a semi-inverse of $\varepsilon$. 
\end{rem}

\subsection{Height Decomposition}

Let $\mathcal{C}$ be now a stable $m$\emph{-semiadditive} $\infty$-category.
We shall use the general machinery of (split) recollement to show
that $\mathcal{C}$ splits into a product of $\infty$-categories
according to height. By definition, an object $X\in\mathcal{C}$ is
of height $\le n$ if it is $\pn n$-divisible. Similarly, $X$ is
of height $>n$ if it is $\pn n$-complete, which by \propref{Height_Sense},
is if and only if it is complete with respect to all of $p=\pn 0,\pn 1,\dots,\pn n$.
Accordingly, 
\begin{prop}
\label{prop:Sad_Height_Recollement} Let $\mathcal{C}\in\acat_{\st}^{\sad m}$
and let $0\le n\le m$. If $\mathcal{C}$ admits sequential limits and
colimits, then $\mathcal{C}$ is a recollement of $\mathcal{C}_{\le n}$
and $\mathcal{C}_{>n}$. 
\end{prop}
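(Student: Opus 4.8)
The plan is to recognize the two subcategories in the statement as the ``divisible'' and ``complete'' subcategories attached to a single natural endomorphism of the identity functor, and then quote \propref{Div_Comp_Recollemenet}. Since $n\le m$ and $\mathcal{C}$ is (in particular $p$-typically) $m$-semiadditive, the space $B^{n}C_{p}$ is $n$-finite --- hence $m$-finite --- and therefore $\mathcal{C}$-ambidextrous. Consequently $\pn n=|B^{n}C_{p}|$ is a well-defined natural endomorphism of $\Id_{\mathcal{C}}$, and by \defref{Height_Cat} we have $\mathcal{C}_{\le n}=\mathcal{C}[\pn n^{-1}]$ and $\mathcal{C}_{>n}=\widehat{\mathcal{C}}_{\pn n}$, the full subcategories of $\pn n$-divisible and $\pn n$-complete objects of $\mathcal{C}$ in the sense of \defref{Div_Comp}. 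In particular $\mathcal{C}_{>n}=\mathcal{C}_{\le n}^{\perp}$ by the definition of $\widehat{\mathcal{C}}_{\alpha}$.

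Next I would observe that $\mathcal{C}_{\le n}\ss\mathcal{C}$ is a full \emph{stable} subcategory: because $\pn n$ is a natural transformation it commutes with suspension and with finite (co)limits, so the class of objects on which $\pn n$ is invertible is closed under these operations in $\mathcal{C}$. With this in hand, the proposition is a direct instance of \propref{Div_Comp_Recollemenet} applied to $\alpha=\pn n$: by hypothesis $\mathcal{C}$ is stable and admits sequential limits and colimits, so that result shows that the inclusion $\mathcal{C}[\pn n^{-1}]\into\mathcal{C}$ admits both a left adjoint $LX=\colim(X\xrightarrow{\pn n}X\xrightarrow{\pn n}\cdots)$ and a right adjoint $RX=\lim(\cdots\xrightarrow{\pn n}X\xrightarrow{\pn n}X)$, hence exhibits $\mathcal{C}$ as a recollement of $\mathcal{C}[\pn n^{-1}]=\mathcal{C}_{\le n}$ and $\widehat{\mathcal{C}}_{\pn n}=\mathcal{C}_{>n}$; the reflection onto the complement is $\pn n$-completion $X\mapsto\lim X/\pn n^{r}$.

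There is no real obstacle here: all the substantive work has been packaged into the preliminary subsection on divisible-and-complete recollement, and the only things that genuinely need checking are that the hypotheses of \propref{Div_Comp_Recollemenet} are met (stability and the existence of sequential limits and colimits, both assumed) and that $\pn n$ is a legitimate endomorphism of $\Id_{\mathcal{C}}$ --- which is exactly the ambidexterity of $B^{n}C_{p}$ guaranteed by $m$-semiadditivity together with $n\le m$. If one wishes to be fully explicit, the single point worth spelling out is why $\mathcal{C}_{\le n}$ is a stable subcategory, and this is immediate from the naturality of $\pn n$ as noted above.
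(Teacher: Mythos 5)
Your proposal is correct and follows exactly the paper's own argument: identify $\mathcal{C}_{\le n}$ and $\mathcal{C}_{>n}$ as the $\pn n$-divisible and $\pn n$-complete subcategories of \defref{Height_Cat} and invoke \propref{Div_Comp_Recollemenet}. The extra remarks you add (that $\pn n$ is well defined since $B^{n}C_{p}$ is $\mathcal{C}$-ambidextrous, and that $\mathcal{C}_{\le n}$ is a stable subcategory by naturality of $\pn n$) are correct sanity checks that the paper leaves implicit.
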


\begin{proof}
The full subcategory $\mathcal{C}_{\le n}\ss\mathcal{C}$ consists
of the $\pn n$-divisible objects and $\mathcal{C}_{>n}\ss\mathcal{C}$
is the full subcategory of $\pn n$-complete objects. Thus, the result
follows from \propref{Div_Comp_Recollemenet}.
\end{proof}
Our next goal is to show that under suitable assumptions, this recollement
is in fact \emph{split}. For this we need the following:
\begin{prop}
\label{prop:Box_Semi_Inverse}Let $\mathcal{C}\in\acat_{\st}^{\sad m}$
and assume it admits sequential limits and colimits. For all $n=0,\dots,m-1$,
the map $\pn{n+1}$ is a semi-inverse of $\pn n$. In particular,
for $X\in\mathcal{C}$, we have $\htt(X)>n$ if and only if $X$ is
$(1-\pn n\pn{n+1})$-divisible.
\end{prop}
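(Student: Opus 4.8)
The plan is to prove the statement $\pn{n+1}$ is a semi-inverse of $\pn n$ by reducing to the defining property of ``semi-inverse'' from \defref{Semi_Inevrse_Divisible}: we must show that whenever $X\in\mathcal{C}$ is $\pn n$-divisible (i.e.\ $\htt(X)\le n$), the endomorphism $(\pn{n+1})_X$ is a two-sided inverse of $(\pn n)_X$. All natural endomorphisms of $\Id_{\mathcal{C}}$ commute (interchange law, as recalled in the proof of \propref{Semi_Inevrse_Divisible}), so it suffices to check one relation, namely $(\pn n)_X \circ (\pn{n+1})_X = \Id_X$.

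First I would invoke \propref{Amenable_Descent} applied to the principal fiber sequence of $\pi$-finite $p$-spaces
\[
B^{n}C_{p}\to\pt\to B^{n+1}C_{p},
\]
all of whose terms and maps are $\mathcal{C}$-ambidextrous since $\mathcal{C}$ is (stable and) $m$-semiadditive and $n+1\le m$. The fiber here is $F=B^{n}C_{p}$, and for $X$ of height $\le n$ the cardinality $|F|_X = (\pn n)_X$ is invertible by hypothesis. \propref{Amenable_Descent} then gives the identity $|\pt|_X = |F|_X\cdot|B^{n+1}C_{p}|_X$ in $\End(X)$, that is, $\Id_X = (\pn n)_X\circ(\pn{n+1})_X$. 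This is exactly the relation needed, and by commutativity it also gives the reversed composite, so $(\pn{n+1})_X$ is the inverse of $(\pn n)_X$. Hence $\pn{n+1}$ is a semi-inverse of $\pn n$ in the sense of the definition preceding \propref{Semi_Inevrse_Divisible}. (This step is essentially the object-wise version of the argument already used in the proof of \propref{Height_Sense}, so the reasoning is not new; the only additional input is that it holds object-wise, which \propref{Amenable_Descent} is stated to provide.)

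For the ``in particular'' clause, I would simply feed the conclusion just obtained into \propref{Semi_Inevrse_Divisible} with $\alpha=\pn n$ and $\beta=\pn{n+1}$. Since $\mathcal{C}$ is stable and admits sequential limits and colimits, that proposition applies verbatim and yields: $Y$ is $\pn n$-complete if and only if $Y$ is $(1-\pn n\pn{n+1})$-divisible. By \defref{Height_Obj}, $\pn n$-completeness of $Y$ is precisely the assertion $\htt(Y)>n$, which gives the stated equivalence.

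I do not anticipate a genuine obstacle here: the proof is a short assembly of \propref{Amenable_Descent} (to get the semi-inverse relation on divisible objects) and \propref{Semi_Inevrse_Divisible} (to transport it to a statement about completeness). The only point requiring a modicum of care is making sure \propref{Amenable_Descent} is being applied object-wise rather than to the whole category — i.e.\ using its conclusion $|A|_X=|F|_X|B|_X$ for the fixed object $X$ on which $|F|$ happens to be invertible — but the statement of \propref{Amenable_Descent} is already phrased in exactly that object-wise form, so no extra work is needed.
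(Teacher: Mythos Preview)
Your proposal is correct and follows essentially the same approach as the paper. The only cosmetic difference is that the paper cites the ``Moreover'' clause of \propref{Amenable_Acyclic_Object} (with $A=B^{n+1}C_p$, so $\Omega A=B^{n}C_p$) to obtain that $(\pn{n+1})_X$ is the inverse of $(\pn n)_X$ on $\pn n$-divisible $X$, whereas you derive the same identity from \propref{Amenable_Descent} applied to the principal fiber sequence $B^{n}C_p\to\pt\to B^{n+1}C_p$; the two citations yield the same object-wise relation and the ``in particular'' clause is handled identically via \propref{Semi_Inevrse_Divisible}.
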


\begin{proof}
If 
\[
\pn n=|B^{n}C_{p}|=|\Omega B^{n+1}C_{p}|
\]
is invertible on $X\in\mathcal{C}$, then by \propref{Amenable_Acyclic_Object},
$\pn{n+1}=|B^{n+1}C_{p}|$ is the inverse of $\pn n$ on $X$. Thus,
$\pn{n+1}$ is a semi-inverse of $\pn n$. Therefore, the claim follows
from \propref{Semi_Inevrse_Divisible}.
\end{proof}
Using \propref{Box_Semi_Inverse} we can improve on \propref{Height_Functoriality}
in the stable case as follows:
\begin{cor}
\label{cor:Stable_Height_Functoriality}Let $F\colon\mathcal{C}\to\mathcal{D}$
be a functor in $\acat^{\sad m}$ and assume $\mathcal{C}$ and $\mathcal{D}$
are stable. For every $X\in\mathcal{C}$ and $0\le n\le m$, if $X$
is of height $\le n$ or $>n-1$, then so is $F(X)$. The converse
holds if $F$ is conservative.
\end{cor}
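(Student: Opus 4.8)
The plan is to reduce everything to \propref{Height_Functoriality} together with the stable reformulation of completeness as divisibility. The ``height $\le n$'' half of the assertion --- if $\htt_{\mathcal{C}}(X)\le n$ then $\htt_{\mathcal{D}}(F(X))\le n$, and conversely when $F$ is conservative --- is literally \propref{Height_Functoriality} and uses no stability, so I would dispose of it in one sentence. All the content is in the ``height $>n-1$'' half, and this is precisely where stability is needed: $\htt_{\mathcal{C}}(X)>n-1$ is a \emph{completeness} condition, phrased via mapping out of \emph{all} $\pn{n-1}$-divisible objects, and a functor $F$ with no adjoints has no reason to respect such a condition on the nose. The device that removes this difficulty is \propref{Box_Semi_Inverse}: in a stable $m$-semiadditive $\infty$-category, $\pn n$ is a semi-inverse of $\pn{n-1}$, so an object $Y$ (in $\mathcal{C}$ or in $\mathcal{D}$) satisfies $\htt(Y)>n-1$ if and only if $Y$ is $(1-\pn{n-1}\pn n)$-divisible, a condition that only mentions divisibility with respect to a single natural endomorphism.

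So I would first dispatch the trivial case $n=0$: every object has height $>-1$ by convention, so there is nothing to prove. Assume then $1\le n\le m$, and set
\[
\beta^{\mathcal{C}}\coloneqq 1-\pn{n-1}^{\mathcal{C}}\pn n^{\mathcal{C}}\colon\Id_{\mathcal{C}}\to\Id_{\mathcal{C}},\qquad \beta^{\mathcal{D}}\coloneqq 1-\pn{n-1}^{\mathcal{D}}\pn n^{\mathcal{D}}\colon\Id_{\mathcal{D}}\to\Id_{\mathcal{D}},
\]
both of which are defined since $n-1,n\le m$. By \propref{Box_Semi_Inverse}, $\htt_{\mathcal{C}}(X)>n-1$ iff $(\beta^{\mathcal{C}})_X$ is invertible, and $\htt_{\mathcal{D}}(F(X))>n-1$ iff $(\beta^{\mathcal{D}})_{F(X)}$ is invertible. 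Hence it suffices to establish, for every $X\in\mathcal{C}$, the identity $F((\beta^{\mathcal{C}})_X)=(\beta^{\mathcal{D}})_{F(X)}$ of self-maps of $F(X)$: indeed, $F$ then carries the isomorphism $(\beta^{\mathcal{C}})_X$ to the isomorphism $(\beta^{\mathcal{D}})_{F(X)}$, which gives the forward implication, and when $F$ is conservative it also reflects this isomorphism, which gives the converse.

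For the identity $F((\beta^{\mathcal{C}})_X)=(\beta^{\mathcal{D}})_{F(X)}$ I would combine three routine observations. Since $F$ is $m$-semiadditive and $n-1,n\le m$, \propref{A_Semiadd} gives $F((\pn k^{\mathcal{C}})_X)=(\pn k^{\mathcal{D}})_{F(X)}$ for $k\in\{n-1,n\}$. As $F$ is a functor it preserves the composite, so $F((\pn{n-1}^{\mathcal{C}})_X\circ(\pn n^{\mathcal{C}})_X)=(\pn{n-1}^{\mathcal{D}})_{F(X)}\circ(\pn n^{\mathcal{D}})_{F(X)}$. Finally, $F$ preserves finite biproducts (being in particular $0$-semiadditive) and $\mathcal{C}$, $\mathcal{D}$ are additive (being stable), so $F$ preserves the addition --- and hence the subtraction --- of parallel morphisms; in particular $F(\Id_X-g)=\Id_{F(X)}-F(g)$ for any $g\colon X\to X$. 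Taking $g=(\pn{n-1}^{\mathcal{C}})_X\circ(\pn n^{\mathcal{C}})_X$ and assembling the three observations yields $F((\beta^{\mathcal{C}})_X)=(\beta^{\mathcal{D}})_{F(X)}$, as required.

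The only non-formal ingredient here is the passage from completeness to divisibility, i.e. \propref{Box_Semi_Inverse} (which itself rests on \propref{Amenable_Acyclic_Object} and \propref{Semi_Inevrse_Divisible}, and on $\mathcal{C}$, $\mathcal{D}$ admitting sequential limits and colimits); granting it, the corollary is a short chase with natural endomorphisms of the identity. I therefore do not expect a real obstacle: the care required is merely in tracking which $\pn k$ are defined --- this is what pins down the hypothesis $n\le m$ --- and in observing that $0$-semiadditivity of $F$ together with stability of $\mathcal{C}$ and $\mathcal{D}$ already suffices to make $F$ compatible with differences of parallel maps.
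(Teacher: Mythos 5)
Your proof is correct and follows the paper's own argument exactly: the $\le n$ half is \propref{Height_Functoriality}, and the $>n-1$ half is reduced via \propref{Box_Semi_Inverse} to invertibility of the single endomorphism $1-\pn{n-1}\pn n$, which an $m$-semiadditive functor between stable $\infty$-categories preserves (and reflects when conservative). The paper states this in two sentences; your version merely makes explicit the routine verification that $F$ commutes with $1-\pn{n-1}\pn n$.
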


\begin{proof}
The facts about height $\le n$ follow from \propref{Height_Functoriality}.
The facts about height $>n-1$ follow similarly using \propref{Box_Semi_Inverse}.
Namely, that $X$ is of height $>n-1$, if and only if $(1-\pn{n-1}\pn n)$
acts invertibly on it.
\end{proof}
\begin{rem}
\corref{Stable_Height_Functoriality} does not cover the case
$\htt(X)>m$. This indeed can not be guaranteed even in the stable
case as witnessed by \exaref{Height_Decrease}. 
\end{rem}

Similarly, we can improve on \corref{Height_Monoidal} as follows:
\begin{cor}
\label{cor:Stable_Height_Monoiidal}Let $\mathcal{C}\in\acat_{\st}$
be $p$-typically $m$-semiadditively monoidal $\infty$-category.
For every $0\le n\le m$, we have $\Ht(\mathcal{C})\le n$ or $\Ht(\mathcal{C})>n-1$
if and only if $\htt_{\mathcal{C}}(\one)\le n$ or $\htt_{\mathcal{C}}(\one)>n-1$ respectively. 
\end{cor}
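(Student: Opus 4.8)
The plan is to mimic the proof of \corref{Height_Monoidal}, replacing the appeal to \propref{Height_Functoriality} by the stable refinement \corref{Stable_Height_Functoriality}; this is exactly what upgrades the statement about height $\le n$ to also cover the lower bound ``height $>n-1$''.

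One direction is immediate: since $\one\in\mathcal{C}$, if $\Ht(\mathcal{C})\le n$ (resp. $\Ht(\mathcal{C})>n-1$), then in particular the unit satisfies $\htt_{\mathcal{C}}(\one)\le n$ (resp. $\htt_{\mathcal{C}}(\one)>n-1$).

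For the converse, fix an arbitrary $X\in\mathcal{C}$ and consider the functor $F=X\otimes(-)\colon\mathcal{C}\to\mathcal{C}$. Since $\mathcal{C}$ is $p$-typically $m$-semiadditively monoidal, $F$ preserves $m$-finite $p$-space colimits — in particular colimits over finite sets, i.e.\ finite coproducts — so, $\mathcal{C}$ being semiadditive, $F$ is additive and a morphism in $\acat^{\psad m}$. The proof of \corref{Stable_Height_Functoriality} applies to such an $F$: it uses only that $F$ carries $\pn{k}^{\mathcal{C}}$ to $\pn{k}^{\mathcal{C}}$ (true by \propref{A_Semiadd}, since $F$ preserves $B^{k}C_{p}$-colimits) and that $F$ is additive, so that $F$ preserves both the $\pn{n}$-divisibility condition characterizing height $\le n$ and, via \propref{Box_Semi_Inverse}, the $(1-\pn{n-1}\pn{n})$-divisibility condition characterizing height $>n-1$. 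Hence $F(\one)\simeq X$ has height $\le n$ (resp. $>n-1$) whenever $\one$ does, and since $X$ was arbitrary we conclude $\Ht(\mathcal{C})\le n$ (resp. $\Ht(\mathcal{C})>n-1$).

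The only thing to check with care is that \corref{Stable_Height_Functoriality}, stated for functors in $\acat^{\sad m}$, also applies to the a priori merely $p$-typically $m$-semiadditive functor $X\otimes(-)$; this is unproblematic, since that corollary (through \propref{Box_Semi_Inverse}, \propref{Height_Functoriality} and \propref{Amenable_Acyclic_Object}) refers only to cardinalities of Eilenberg--MacLane $p$-spaces, so its proof is insensitive to the $p$-typical restriction. There is accordingly no substantive obstacle; the content is entirely contained in \corref{Stable_Height_Functoriality}.
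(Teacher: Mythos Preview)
Your proof is correct and takes essentially the same approach as the paper: apply \corref{Stable_Height_Functoriality} to the functor $X\otimes(-)$. You are in fact more careful than the paper's own proof, which simply notes that $X\otimes(-)$ is $p$-typically $m$-semiadditive and invokes \corref{Stable_Height_Functoriality} without commenting on the discrepancy between the $p$-typical hypothesis and the corollary's stated assumption of full $m$-semiadditivity; your explanation of why this is harmless is a welcome addition.
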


\begin{proof}
Given $X\in\mathcal{C}$, the functor $X\otimes(-)\colon\mathcal{C}\to\mathcal{C}$
is $p$-typically $m$-semiadditive. Thus, the claim follows from
\corref{Stable_Height_Functoriality}.
\end{proof}
\begin{rem}
Again, in \corref{Stable_Height_Monoiidal} it is \emph{not} true
that if $\htt(\one)>m$ then $\Ht(\mathcal{C})>m$. For example, $\Mod_{\widehat{\bb S}_{p}}(\Sp)$
is a presentably symmetric monoidal $0$-semiadditive $\infty$-category,
whose unit $\widehat{\bb S}_{p}$ has height $>0$, although the $\infty$-category
itself does not. 
\end{rem}

The following is our main structure theorem for stable higher semiadditive
$\infty$-categories:
\begin{thm}
[Height Decomposition]\label{thm:Height_Decomposition}Let $\mathcal{C}\in\acat_{\st}^{\sad m}$
for some $0\le m\le\infty$.
\begin{enumerate}
\item For $m<\infty$, and $\mathcal{C}$ which is idempotent complete,
the inclusions $\mathcal{C}_{0},\dots,\mathcal{C}_{m-1},\mathcal{C}_{>m-1}\ss\mathcal{C}$
determine an equivalence of $\infty$-categories
\[
\mathcal{C}\simeq\mathcal{C}_{0}\times\dots\times\mathcal{C}_{m-1}\times\mathcal{C}_{>m-1}.
\]
Moreover, $\mathcal{C}_{0}$ is $p$-typically $\infty$-semiadditive
and $\mathcal{C}_{1},\dots,\mathcal{C}_{m-1}$ are $\infty$-semiadditive. 
\item For $m=\infty$, and $\mathcal{C}$ which admits sequential limits
and colimits, $\mathcal{C}$ is a recollement of $\mathcal{C}_{\infty}$
and $\prod_{n\in\bb N}\mathcal{C}_{n}$. In particular, if $\mathcal{C}_{\infty}=0$,
then $\mathcal{C}\simeq\prod_{n\in\bb N}\mathcal{C}_{n}.$
\end{enumerate}
If in addition $\mathcal{C}$ is $m$-semiadditively $\mathcal{O}$-monoidal
for some $\infty$-operad $\mathcal{O},$ then $\mathcal{C}_{>m-1}$
and $\mathcal{C}_{n}$ for all $n=0,\dots,m-1$, are compatible with
the $\mathcal{O}$-monoidal structure and the equivalences in both
(1) and (2) promote naturally to an equivalence of $\mathcal{O}$-monoidal
$\infty$-categories.
\end{thm}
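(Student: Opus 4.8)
The plan is to treat the two cases separately and then deal with the monoidal refinement. For (1), I would argue by induction on $m$. The base case $m=0$ is trivial, since then the only piece is $\mathcal{C}_{>-1}=\mathcal{C}$. For the inductive step, apply \propref{Sad_Height_Recollement} with $n=0$ to exhibit $\mathcal{C}$ as a recollement of $\mathcal{C}_{\le 0}=\mathcal{C}_0$ and $\mathcal{C}_{>0}$; the hypotheses are available because a stable idempotent complete $m$-semiadditive $\infty$-category with $m\ge 1$ admits $BC_p$-colimits, hence sequential colimits (a sequential colimit can be built from countable coproducts and a cofiber, but more to the point the relevant $\lim$/$\colim$ here are the $\mathbb{N}$-indexed towers of $\pn 0$ multiplications, which exist once one has $\pn 0$-divisible and $\pn 0$-complete objects — one should spell out that idempotent completeness plus the ambidextrous structure suffices, or simply note the relevant towers converge because $\pn 0 = p$ and one is doing $p$-completion/$p$-inversion inside a stable category). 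The key point is that this recollement is \emph{split}: by \propref{Box_Semi_Inverse}, $\pn 1$ is a semi-inverse of $\pn 0$, so \propref{Recollement} gives $\mathcal{C}\simeq\mathcal{C}_0\times\mathcal{C}_{>0}$. Now $\mathcal{C}_{>0}$ is again stable (closed under limits in $\mathcal{C}$ by \propref{Height_Pieces_Sad}, and a retract of $\mathcal{C}$, hence idempotent complete) and $m$-semiadditive, and moreover $\mathcal{C}_{>0}=(\mathcal{C}_{>0})_{\ge 1}$ is $p$-locally $1$-semiadditive of height $\ge 1$; applying the inductive hypothesis to it — after reindexing heights, using that inside $\mathcal{C}_{>0}$ all objects are already $p$-complete so $\pn 0$ acts as zero, making it effectively an $(m-1)$-stage problem — decomposes $\mathcal{C}_{>0}\simeq\mathcal{C}_1\times\cdots\times\mathcal{C}_{m-1}\times\mathcal{C}_{>m-1}$. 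The statement that $\mathcal{C}_0$ is $p$-typically $\infty$-semiadditive and $\mathcal{C}_1,\dots,\mathcal{C}_{m-1}$ are $\infty$-semiadditive follows from \propref{Height_Bootstrap}: each $\mathcal{C}_k$ has $\Ht\le k\le m$ hence, being a summand of $\mathcal{C}$ it admits the requisite $(k+1)$-finite colimits, so it is $p$-typically $\infty$-semiadditive, and for $k\ge 1$ it is $p$-local (all objects are $p$-complete, hence $\pn 0 = p$ acts invertibly only on the zero object — more precisely $\mathcal{C}_k$ is $p$-local for $k\ge 1$ because $p$ acts invertibly on no nonzero object there being absurd; rather one notes $\mathcal{C}_k\subseteq\mathcal{C}_{>0}$ and $\mathcal{C}_{>0}$ is $p$-local as a smashing localization killing the rationalization), so \propref{p_Local_n_Semiadd} upgrades this to genuine $\infty$-semiadditivity.

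For (2), I would set $\mathcal{C}_{(n)}\coloneqq\mathcal{C}_{>n-1}=\mathcal{C}_{\ge n}$, giving a descending chain of full subcategories, each of which exhibits $\mathcal{C}$ as a recollement of $\mathcal{C}_{(n)}$ and $\mathcal{C}_{(n)}^{\perp}$: indeed $\mathcal{C}_{(n)}=\widehat{\mathcal{C}}_{\pn{n-1}}$ is the $\pn{n-1}$-complete part, so \propref{Div_Comp_Recollemenet} applies, and one checks $\mathcal{C}_{(n)}^{\perp}=\mathcal{C}[\pn{n-1}^{-1}]=\mathcal{C}_{\le n-1}$. This is a recollement chain in the sense of \defref{Recollement_Chain}. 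By \propref{Recallement_Chain}, $\mathcal{C}$ is then a recollement of $\mathcal{C}_{(\infty)}=\bigcap_n\mathcal{C}_{\ge n}=\mathcal{C}_\infty$ and of $\lim_n\mathcal{C}_{(n)}^{\perp}=\lim_n\mathcal{C}_{\le n-1}$. It remains to identify this inverse limit with $\prod_{n\in\bb N}\mathcal{C}_n$: an object of $\lim_n\mathcal{C}_{\le n-1}$ is a compatible sequence $X_n\in\mathcal{C}_{\le n}$ with $P_nX_{n+1}\iso X_n$, where $P_n$ projects onto the height $\le n$ part; by part (1) applied at each finite stage, $\mathcal{C}_{\le n}\simeq\mathcal{C}_0\times\cdots\times\mathcal{C}_n$ and the transition functors $P_n$ are the obvious projections forgetting the top factor, so the limit is $\prod_{n\in\bb N}\mathcal{C}_n$. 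When $\mathcal{C}_\infty=0$ the recollement degenerates to the product by \propref{Split_Recollement} (a recollement of $0$ and $\mathcal{D}$ is just $\mathcal{D}$). The main obstacle in this step is the bookkeeping of the transition functors in the inverse limit and verifying that the description of $G_\infty$ from \propref{Recallement_Chain} matches the naive ``diagonal'' inclusion $\prod\mathcal{C}_n\to\mathcal{C}$; this is where one has to be careful, since a priori the product decomposition at finite stages and the recollement-chain formalism produce the equivalence via different routes.

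For the $\mathcal{O}$-monoidal refinement, the strategy is to observe that each $\pn n$ is a map of the unit $\one\to\one$ after the identification of \cite[Lemma 3.3.4]{Ambi2018}, so that $\mathcal{C}_{\le n}=\mathcal{C}[\pn n^{-1}]$ is a smashing localization — inverting an element of $\pi_0\one$ — and $\mathcal{C}_{>n}=\widehat{\mathcal{C}}_{\pn n}$ is its complementary ``torsion'' part, which is automatically an idempotent-algebra localization (a recollement whose localization functor is smashing has both factors compatible with the tensor structure, being $\otimes$-ideals). Concretely, $\mathcal{C}_{\le n}\simeq\Mod_{\one[\pn n^{-1}]}(\mathcal{C})$ and $\mathcal{C}_{>n}\simeq\Mod_{\widehat{\one}_{\pn n}}(\mathcal{C})$, and the idempotent decomposition $\one\simeq\one[\pn n^{-1}]\times\widehat{\one}_{\pn n}$ in $\calg(\mathcal{C})$ — valid exactly when the recollement is split, by \propref{Recollement} — induces the $\mathcal{O}$-monoidal product decomposition. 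Iterating over $n=0,\dots,m-1$ (resp. over all $n$) and invoking that a product of $\mathcal{O}$-monoidal $\infty$-categories, each compatible with $m$-finite colimits, is again such, one obtains the $\mathcal{O}$-monoidal equivalences of (1) and (2); one then checks that each $\mathcal{C}_n$ and $\mathcal{C}_{>m-1}$ is $m$-semiadditively $\mathcal{O}$-monoidal, which follows since it is a smashing localization of $\mathcal{C}$ and hence inherits compatibility of $\otimes$ with $m$-finite colimits.
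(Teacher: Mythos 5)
Your overall strategy for (2) and the split-recollement mechanism via \propref{Box_Semi_Inverse} and \propref{Recollement} is the same as the paper's, but there is a genuine gap in part (1). The theorem only assumes $\mathcal{C}$ is stable, idempotent complete and $m$-semiadditive; such a $\mathcal{C}$ need \emph{not} admit sequential limits and colimits, which are indispensable for \propref{Sad_Height_Recollement} and \propref{Recollement} (the adjoints to $\mathcal{C}[\pn n^{-1}]\into\mathcal{C}$ are built as $\bb N$-indexed (co)limits of towers of $\pn n$-multiplications). Your suggestion that $BC_p$-colimits yield sequential colimits is not correct: $m$-finite colimits are colimits indexed by $\pi$-finite spaces, and give you nothing countable --- think of the subcategory of compact objects in $\Sp_{T(n)}$, which is idempotent complete and closed under $\pi$-finite colimits but has no countable coproducts. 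The circularity you half-acknowledge (``the towers exist once one has the divisible and complete objects'') is real and cannot be patched from inside $\mathcal{C}$. The paper's resolution is to first prove (1) under the extra hypothesis of sequential (co)limits exactly as you do, and then handle the general idempotent complete case by embedding $\mathcal{C}$ fully faithfully, exactly and $m$-semiadditively into the presentable $\widehat{\mathcal{C}}=\fun(\mathcal{C}^{\op},\tsadi^{[m]})$ (\propref{Tsadi_Yoneda}), decomposing there via \corref{Stable_Height_Functoriality}, and observing that the height components of any $X\in\mathcal{C}$ are retracts of $X$ in $\widehat{\mathcal{C}}$, hence lie in $\mathcal{C}$ by idempotent completeness; essential surjectivity of $\prod_n\mathcal{C}_n\times\mathcal{C}_{>m-1}\into\mathcal{C}$ follows. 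Your part (2) is essentially the paper's argument (recollement chain, \propref{Recallement_Chain}, and identification of $\lim_n\mathcal{C}_{\le n}$ with $\prod_n\mathcal{C}_n$ via (1)), up to the minor point that \propref{Div_Comp_Recollemenet} gives both adjoints for the inclusion of the \emph{divisible} part, not the complete part; you need the splitness from (1) to switch the roles, which is what the paper does.

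The monoidal refinement as you propose it also does not go through as stated: $\mathcal{O}$ is an arbitrary $\infty$-operad, so there is no $\calg(\mathcal{C})$ in which to form the idempotent decomposition $\one\simeq\one[\pn n^{-1}]\times\widehat{\one}_{\pn n}$, and the smashing-localization language presupposes (symmetric) monoidality. The paper's argument is more elementary and works in this generality: $\mathcal{C}_{\le n}$ is the full subcategory of $\pn n$-divisible objects and, by \propref{Box_Semi_Inverse}, $\mathcal{C}_{>n}$ is the full subcategory of $(1-\pn n\pn{n+1})$-divisible objects; divisibility with respect to an endomorphism of the identity induced from the unit is preserved by every tensor operation of $\mathcal{O}$, so both subcategories (hence each $\mathcal{C}_n$) are compatible with the $\mathcal{O}$-monoidal structure, and \cite[Proposition 2.2.1.9]{ha} then produces the $\mathcal{O}$-monoidal structures on the factors and the $\mathcal{O}$-monoidal projections.
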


\begin{proof}
(1) We first prove the claim under the additional assumption that
$\mathcal{C}$ admits all sequential limits and colimits (and hence
in particular idempotent complete). First, by \propref{Box_Semi_Inverse},
the map $\pn m$ is a semi-inverse of $\pn{m-1}$. Hence, by \propref{Recollement},
we obtain a direct product decomposition $\mathcal{C}\simeq\mathcal{C}_{\le m-1}\times\mathcal{C}_{>m-1}$.
The category $\mathcal{C}_{\le m-1}$ is itself $m$-semiadditive
(\propref{Height_Pieces_Sad}) and admits all sequential limits and
colimits. Thus, we can continue decomposing $\mathcal{C}_{\le m-1}$
inductively and get $\mathcal{C}_{\le n}\simeq\mathcal{C}_{\le n-1}\times\mathcal{C}_{n}$
for all $n=0,\dots,m-1$. Finally, by \propref{Height_Bootstrap},
each $\mathcal{C}_{\le n}$ is in fact $p$-typically $\infty$-semiadditive.
By \propref{Height_Pieces_Sad}, $\mathcal{C}_{n}$ is also $p$-typically
$\infty$-semiadditive and since it is also $p$-complete for all
$n\ge1$, it is in particular $p$-local, and hence $\infty$-semiadditive
by \propref{p_Local_n_Semiadd}.

For a general $\mathcal{C}$ as in the claim, we use a semiadditive
version of the Yoneda embedding to reduce to the presentable case.
Namely, we shall show in \propref{Tsadi_Yoneda}, that there exists
a presentable stable $m$-semiadditive $\infty$-category $\widehat{\mathcal{C}}$
and an $m$-semiadditive fully faithful embedding $\mathcal{C}\into\widehat{\mathcal{C}}$.
By \corref{Stable_Height_Functoriality}, for each $n=0,\dots,m-1$,
we have fully faithful embedding $\mathcal{C}_{n}\into\widehat{\mathcal{C}}_{n}$
and $\mathcal{C}_{>m-1}\into\widehat{\mathcal{C}}_{>m-1}$, hence
also 
\[
\mathcal{C}_{0}\times\dots\times\mathcal{C}_{m-1}\times\mathcal{C}_{>m-1}\into\widehat{\mathcal{C}}_{0}\times\dots\times\widehat{\mathcal{C}}_{m-1}\times\widehat{\mathcal{C}}_{>m-1}\simeq\widehat{\mathcal{C}}.
\]
By the left cancellation property of fully faithful embeddings, we
get a fully faithful embedding 
\[
\mathcal{C}_{0}\times\dots\times\mathcal{C}_{m-1}\times\mathcal{C}_{>m-1}\into\mathcal{C}.
\]

For each object $X\in\mathcal{C}$, the height $n=0,\dots,m-1$ and
$>m-1$ components of $X$ in $\widehat{\mathcal{C}}$, are retracts
of $X$ in $\widehat{\mathcal{C}}$. Thus, if $\mathcal{C}$ is idempotent
complete, these components belong to $\mathcal{C}$. It follows that
the above fully faithful embedding is also essentially surjective.

(2) For every $n<\infty$, we have by (1), that ${\cal C}\simeq{\cal C}_{\le n}\times{\cal C}_{>n}$.
Hence, we can switch the roles of $\mathcal{C}_{\le n}$ and $\mathcal{C}_{>n}$,
and consider the embedding $\mathcal{C}_{>n}\ss\mathcal{C}$ as exhibiting
$\mathcal{C}$ as a recollement of $\mathcal{C}_{>n}$ and $(\mathcal{C}_{>n})^{\bot}=\mathcal{C}_{\le n}$.
We thus obtain a recollement chain 
\[
\dots\ss\mathcal{C}_{>2}\subseteq{\cal C}_{>1}\subseteq{\cal C}_{>0}\ss{\cal C}.
\]
By definition, $\mathcal{C}_{\infty}=\bigcap\limits _{n\in\bb N}\mathcal{C}_{>n}$,
and so by \propref{Recallement_Chain}, $\mathcal{C}$ is a recollement
of $\mathcal{C}_{\infty}$ and 
\[
\lim_{\mathbb{n\in N}}(\mathcal{C}_{\le n})\simeq\lim_{n\in\bb N}\left(\prod_{0\le k\le n}\mathcal{C}_{k}\right)\simeq\prod_{n\in\bb N}\mathcal{C}_{n}.
\]
Finally, assume that $\mathcal{C}$ is $m$-semiadditively $\mathcal{O}$-monoidal.
The full subcategories $\mathcal{C}_{\le n}$ and $\mathcal{C}_{>n}$ for
all $n=0,\dots,m-1$ consist of objects which are $\pn n$-divisible
and $(1-\pn n\pn{n+1})$-divisible respectively. It follows that $\mathcal{C}_{\le n}$
and $\mathcal{C}_{>n}$ are compatible with the $\mathcal{O}$-monoidal
structure and hence so is $\mathcal{C}_{n}$ for all $n=0,\dots,m-1$.
Thus, by \cite[Proposition 2.2.1.9]{ha}, the $\mathcal{C}_{n}$-s and $\mathcal{C}_{>m-1}$
inherit an $\mathcal{O}$-monoidal structure such that the projections
$\mathcal{C}\to\mathcal{C}_{n}$ and $\mathcal{C}\to\mathcal{C}_{>m-1}$
are $\mathcal{O}$-monoidal.
\end{proof}
We conclude with some remarks regarding the sharpness of \thmref{Height_Decomposition}.
In the case $m<\infty$, the fact that $\mathcal{C}$ is $m$-semiadditive
also implies that $\mathcal{C}$ is a recollement of $\mathcal{C}_{\le m}$
and $\mathcal{C}_{>m}$, but there is no guaranty that the ``gluing
data'' is trivial. That is, that $\mathcal{C}$ decomposes as a direct
product\emph{ }of $\mathcal{C}_{\le m}$ and $\mathcal{C}_{>m}$.
Indeed, consider the $0$-semiadditive $\infty$-category $\mathcal{C}=\Sp_{(p)}.$
The case $m=0$ corresponds to the recollement $\Sp_{\bb Q}\ss\Sp_{(p)}$
of \exaref{Chromatic_Recollement}. In this case $\mathcal{C}_{>0}=\widehat{\Sp}_{p}$
and the gluing data is \emph{not} trivial, as the rationalization
of the $p$-completion does not vanish in general. Having said that,
for $m\ge1$ we do not know whether there even exists a stable $p$-local
presentable $\infty$-category that is $m$-semiadditive, but not
$(m+1)$-semiadditive \cite[Conjecture 1.1.5]{Ambi2018}. 

In the case $m=\infty$, we do
not know whether there exists a stable $\infty$-semiadditive $\infty$-category
$\mathcal{C}$ for which $\mathcal{C}_{\infty}\neq0$. We hence propose the following:

\begin{conjecture}
[Height Finiteness]\label{conj:Finitness}
For every $\mathcal{C}\in\Pr_{\st}^{\sad \infty}$, the full subcategory 
\[
\mathcal{C}_{\infty}\coloneqq\bigcap_{n\ge0}\mathcal{C}_{>n}\ss\mathcal{C},
\]
of objects of height $\infty$, is trivial.
\end{conjecture}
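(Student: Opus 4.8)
The plan is to rephrase the conjecture as the vanishing of a single universal object in the language of modes and then to attempt to kill that object. Since $\mathcal{C}$ is presentable and $\infty$-semiadditive, \propref{Box_Semi_Inverse} shows that the element $\gamma_{n}\coloneqq 1-\pn n\pn{n+1}$ of $\pi_{0}\End(\Id_{\mathcal{C}})$ acts as zero on $\mathcal{C}_{\le n}$ and invertibly on $\mathcal{C}_{>n}$; hence $\mathcal{C}_{>n}=\mathcal{C}[\gamma_{n}^{-1}]$ is a smashing localization and $\mathcal{C}_{\infty}=\bigcap_{n}\mathcal{C}[\gamma_{n}^{-1}]$ is the smashing localization of $\mathcal{C}$ obtained by inverting the entire family $\{\gamma_{n}\}_{n\ge 0}$, with local idempotent $\colim(\one[\gamma_{0}^{-1}]\to\one[\gamma_{1}^{-1}]\to\dots)$ (using that localization commutes with the tensor product in each variable). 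By the general theory of modes from Section~5, ``$\mathcal{C}=\mathcal{C}_{\infty}$'' is then the property of being a module over the mode
\[
\mathcal{M}_{\infty}\;\simeq\;(\CMon_{\infty}\otimes\Sp\otimes\Sp_{(p)})[\gamma_{n}^{-1}:n\ge 0],
\]
so \conjref{Finitness} becomes equivalent to the single assertion $\mathcal{M}_{\infty}\simeq 0$; equivalently, the mode-theoretic analogue of \thmref{Height_Decomposition}(2) exhibits $\CMon_{\infty}\otimes\Sp\otimes\Sp_{(p)}$ as a recollement of $\mathcal{M}_{\infty}$ and $\prod_{n\ge 0}\tsadi_{n}$, and the conjecture asks that this recollement be degenerate, i.e. that $\CMon_{\infty}\otimes\Sp\otimes\Sp_{(p)}\simeq\prod_{n\ge 0}\tsadi_{n}$.

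To prove $\mathcal{M}_{\infty}\simeq 0$ it would suffice to show that its unit $R\coloneqq\one_{\mathcal{M}_{\infty}}$ is the zero $\bb E_{\infty}$-ring. By construction $R$ is stable, $p$-local and $\infty$-semiadditive, and every $\gamma_{n}=1-\pn n\pn{n+1}$ is a unit in $\pi_{0}R$; by \propref{Box_Semi_Inverse} this is precisely the condition that $R$ be $\pn n$-complete for every $n$, i.e. complete with respect to $p=\pn 0,\pn 1,\pn 2,\dots$ all at once. I would then try to feed this infinite family of completeness conditions into the arithmetic of the cardinalities $\pn n$ --- using the additive $p$-derivation $\delta$ of \cite[Section 4]{Ambi2018} together with the redshift relation ``$\htt=\Ht+1$'' of \thmref{Redshift} to transfer information between consecutive heights --- and hope to conclude that no nonzero $p$-local $\infty$-semiadditive ring can be complete with respect to \emph{all} of the $\pn n$. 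A partial positive result is already in hand: whenever the target has ``finite stable height'' --- for instance if its mapping spectra are $L_{n}^{f}$-local for some $n$ --- \thmref{Intro_Bounded_Bootstrap} gives $\mathcal{C}\simeq\prod_{k}\mathcal{C}_{k}$ and hence $\mathcal{M}_{\infty}\otimes\mathcal{C}=0$; thus the true content of the conjecture is the absence of ``infinite stable height'' phenomena in the stable $\infty$-semiadditive world.

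The step I expect to be the genuine obstacle is exactly this last one: controlling \emph{all} of the $\pn n$ simultaneously rather than finitely many of them. This seems of essentially the same difficulty as the full, unbounded form of \cite[Conjecture 1.1.5]{Ambi2018}, and I would not expect an unconditional proof without genuinely new input --- e.g. a quantitative form of redshift, a sharpening of the classification of higher semiadditive localizations of $\Sp$ in \cite[Theorem B]{Ambi2018} beyond homotopy rings, or chromatic information about the telescopic localizations ``at infinite height''. A complementary angle, which localizes the difficulty rather than resolving it, is to transfer the problem to spectra: the unique colimit-preserving symmetric monoidal functor $\mathcal{S}\to\mathcal{C}$ factors through $\Sp_{(p)}$, and $\mathcal{C}_{\infty}$ pulls back along it to a localizing $\otimes$-ideal of $\Sp_{(p)}$; one can hope, via the nil-conservativity techniques of Section~4, to detect the triviality of $\mathcal{C}_{\infty}$ from the triviality of this ideal, and the remaining --- still open --- point is to rule out nonzero $p$-local spectra lying ``beyond every chromatic height''.
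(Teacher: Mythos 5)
This statement is stated in the paper as an open conjecture; the paper offers no proof of it, and you correctly do not claim one. Your mode-theoretic reformulation --- that the conjecture is equivalent to the vanishing of the universal stable $p$-local $\infty$-semiadditive mode of height $\infty$ --- is exactly the paper's own observation (the remark following \corref{Tsadi_Conservative} states that \conjref{Finitness} is equivalent to $\tsadi_{\infty}=0$), your identification of $\mathcal{C}_{>n}$ with the $(1-\pn n\pn{n+1})$-divisible objects is \propref{Box_Semi_Inverse}, and the partial result you cite is \thmref{Bounded_Bootstrap}. Your diagnosis of where the difficulty lies (ruling out ``infinite stable height'' phenomena, i.e.\ controlling all the $\pn n$ simultaneously) is consistent with what the paper proves and leaves open.
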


\subsection{Semisimplicity}

Classical representation theory tells us that in characteristic $0$,
representations of a finite group $G$ are \emph{semisimple}. The
$\infty$-category of $\mathcal{C}$-representations of $G$ in any
$\infty$-category $\mathcal{C}$ is equivalent to the $\infty$-category
$\mathcal{C}^{BG}$ of $\mathcal{C}$-valued local systems on $BG$.
From the point of view of higher semiadditivity, characteristic $0$
corresponds to semiadditive height $0$, and so it is natural to consider
the analogous situation for higher heights. We shall show that given
a stable $\infty$-semiadditive $\infty$-category $\mathcal{C}$
of height $n$, certain analogous semisimplicity phenomena hold for $\mathcal{C}^{A}$
for every $\pi$-finite $n$\emph{-connected} space $A$ (e.g. $A=B^{n+1}G$). For the case $\mathcal{C} = \Mod_{K(n)}(\Sp)$ these ideas were discussed in \cite{LurieRep}.
\subsubsection{Splitting local systems }

The main result of this subsection is the following relation between acyclic maps and
split recollement in the stable setting:
\begin{prop}
\label{prop:Acyclic_Semisimple}Let $\mathcal{C}\in\acat_{\st}$ and
let $A\oto qB$ be a $\mathcal{C}$-acyclic and weakly $\mathcal{C}$-ambidextrous
map. The functor $q^{*}\colon\mathcal{C}^{B}\to\mathcal{C}^{A}$ is
fully faithful and exhibits $\mathcal{C}^{A}$ as a recollement of
$\mathcal{C}^{B}$ and $(\mathcal{C}^{B})^{\bot}\ss\mathcal{C}^{A}$.
The recollement is split if and only if $q$ is $\mathcal{C}$-ambidextrous,
in which case there is a canonical equivalence:
\[
\mathcal{C}^{A}\simeq\mathcal{C}^{B}\times(\mathcal{C}^{B})^{\bot}.
\]
\end{prop}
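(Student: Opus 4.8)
The plan is to reduce everything to the object-wise analysis already developed for acyclicity and ambidexterity. First I would establish that $q^*\colon\mathcal{C}^B\to\mathcal{C}^A$ is fully faithful with both adjoints, which makes it a recollement. Full faithfulness is exactly the hypothesis that $q$ is $\mathcal{C}$-acyclic (\defref{Acyclic_Map}). Since $q$ is $\mathcal{C}$-acyclic it admits $q$-limits and $q$-colimits, so $q_!$ and $q_*$ both exist; being adjoints of a fully faithful functor between stable $\infty$-categories, they exhibit $\mathcal{C}^A$ as a recollement of $\mathcal{C}^B = \mathrm{Im}(q^*)$ and its right orthogonal complement $(\mathcal{C}^B)^\perp$, by \cite[Proposition A.8.20]{ha} as recalled before \defref{Split_recollement}. (The subcategory $\mathcal{C}^B\ss\mathcal{C}^A$ here means the essential image of $q^*$; since $q^*$ is fully faithful this is harmless.)

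Next I would identify when this recollement is split using \propref{Split_Recollement}: it is split if and only if the left adjoint $q_!$ and the right adjoint $q_*$ of $q^*$ agree, i.e. the norm map $\nm_q\colon q_!\to q_*$ is an isomorphism — which is precisely the statement that $q$ is $\mathcal{C}$-ambidextrous (given that it is already weakly $\mathcal{C}$-ambidextrous and admits the relevant limits and colimits, by \defref{Ambidextrous}). One direction is immediate: if $q$ is $\mathcal{C}$-ambidextrous then $q_!\simeq q_*$, so condition (3) of \propref{Split_Recollement} holds and we get the splitting $\mathcal{C}^A\simeq\mathcal{C}^B\times(\mathcal{C}^B)^\perp$. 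Conversely, if the recollement is split then by condition (3) of \propref{Split_Recollement} the left and right adjoints $q_!$ and $q_*$ of $q^*$ are isomorphic; I would then argue that under this isomorphism the comparison map is the norm map $\nm_q$ — this is a compatibility one has to check, but it follows from the construction of $\nm_q$ in \cite[Construction 4.1.8]{HopkinsLurie}, since the norm is the unique natural transformation $q_!\to q_*$ compatible with the unit and counit data, and the splitting produces exactly such a natural isomorphism.

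The main obstacle is this last point: verifying that ``$q_!\simeq q_*$ abstractly'' implies ``$\nm_q$ is an isomorphism'', i.e. that the abstract equivalence of adjoints is realized by the specific norm transformation rather than some other natural isomorphism. The cleanest route is to reduce to the fiberwise situation via \remref{Ambi_Space} and \propref{Acyclicity_Fiberwise}, so that $B=\pt$ and $A$ is a connected acyclic weakly ambidextrous space; then one can invoke \propref{Amenable_Acyclic_Object}: acyclicity of $A$ (condition (1) or (2) there) forces $|\Omega A|_X$ to be invertible for every $X$, hence $\Omega A$ is $\mathcal{C}$-amenable, hence $A$ is $\mathcal{C}$-ambidextrous by \cite[Proposition 3.1.17]{Ambi2018}. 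Wait — that would make $q$ \emph{always} $\mathcal{C}$-ambidextrous, which is too strong; the subtlety is that acyclicity of $q$ does not imply acyclicity of $A$ as a \emph{space} unless $B=\pt$. So instead I would keep $B$ general and argue: the recollement is split iff $q_!q^*\simeq q_*q^*$ naturally compatibly with the fold and diagonal maps, and then note that $|q|_{\mathcal{C}} = c_! \circ \nm_q^{-1}\circ u_*$ is an isomorphism precisely when $\nm_q$ is (since $q^*$ fully faithful makes $c_!$ and $u_*$ isomorphisms), tying the splitting directly to ambidexterity of $q$ via the cardinality transformation. I would also remark that the resulting equivalence is canonical because both adjoints and the norm map are canonical.
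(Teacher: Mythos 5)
The recollement part and the forward direction of your argument are fine and match the paper. The genuine gap is in the converse direction, and you correctly sense where it is but do not close it. Your first suggestion --- that the norm is ``the unique natural transformation $q_!\to q_*$ compatible with the unit and counit data'', so that any abstract isomorphism $q_!\simeq q_*$ forces $\nm_q$ to be one --- is unjustified: the norm of \cite[Construction 4.1.8]{HopkinsLurie} is built by a specific inductive procedure and is not characterized by such a uniqueness property; a priori there could be an isomorphism $q_!\simeq q_*$ while $\nm_q$ fails to be one. Your final fallback via the cardinality is circular: the transformation $|q|_{\mathcal{C}}=c_!\circ\nm_q^{-1}\circ u_*$ is only \emph{defined} once $\nm_q$ is known to be invertible, i.e.\ once $q$ is already $\mathcal{C}$-ambidextrous, so it cannot be used to establish ambidexterity.

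The paper closes the gap by never comparing the abstract isomorphism with the norm at all. It invokes \propref{Ambi_Criterion}: since $q$ is weakly $\mathcal{C}$-ambidextrous by hypothesis and $\mathcal{C}$ admits all $q$-limits and $q$-colimits by acyclicity, $q$ is $\mathcal{C}$-ambidextrous as soon as $q_*$ preserves all $q$-colimits. If the recollement is split, then $q_*\simeq q_!$ as functors, and $q_!$ preserves all colimits because it is a left adjoint; hence so does $q_*$, and \propref{Ambi_Criterion} yields ambidexterity of $q$. You cite \propref{Ambi_Criterion} elsewhere in your write-up, so the missing step is simply to apply it here instead of trying to identify the splitting isomorphism with $\nm_q$.
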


\begin{proof}
By definition of acyclicity, $q^{*}$ is fully faithful. Hence, the
recollement is split if and only the left and right adjoints $q_{!}$
and $q_{*}$ respectively of $q^{*}$ are isomorphic (\propref{Split_Recollement}).
If $q$ is $\mathcal{C}$-ambidextrous then $q_{!}$ and $q_{*}$
are isomorphic and the recollement is split. Conversely, if we have $q_{!}\simeq q_{*}$,
then $q_{*}$ preserves all $q$-colimits and hence $q$ is $\mathcal{C}$-ambidextrous (\propref{Ambi_Criterion}).
\end{proof}
As a special case we obtain:
\begin{thm}
\label{thm:Height_Semisimple}Let $\mathcal{C}\in\acat_{\st}^{\sad{\infty}}$
be $p$-local such that $\Ht(\mathcal{C})=n$. For every map of spaces $A\oto qB$
with $n$-connected $\pi$-finite fibers, we have a canonical equivalence
\[
\mathcal{C}^{A}\simeq\mathcal{C}^{B}\times(\mathcal{C}^{B})^{\bot}.
\]
\end{thm}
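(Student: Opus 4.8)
The plan is to deduce \thmref{Height_Semisimple} from \propref{Acyclic_Semisimple} by verifying its hypotheses for the map $q$. Since acyclicity, ambidexterity, and the formation of the right orthogonal complement are all fiber-wise conditions (by \propref{Acyclicity_Fiberwise}, \remref{Ambi_Space}, and a base-change argument for the recollement), it suffices to treat the case $B = \pt$, so that $A$ is an $n$-connected $\pi$-finite space and we must show $A$ is both $\mathcal{C}$-acyclic and $\mathcal{C}$-ambidextrous.

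First I would use the hypothesis that $\mathcal{C}$ is $\infty$-semiadditive to conclude immediately that every $\pi$-finite space, and in particular $A$, is $\mathcal{C}$-ambidextrous; this also gives weak $\mathcal{C}$-ambidexterity for free. Next, I would invoke \thmref{Height_Everything_p_Local}(2): since $\mathcal{C}$ is $0$-semiadditive, $p$-local, admits all $\pi$-finite limits and colimits (being $\infty$-semiadditive), is $p$-typically $\infty$-semiadditive, and has $\Ht(\mathcal{C}) \le n$ (which holds since $\Ht(\mathcal{C}) = n$), any $n$-connected $\pi$-finite space is $\mathcal{C}$-acyclic. Here one should note that the full strength $\Ht(\mathcal{C}) = n$ is only used to make the statement non-vacuous for the applications; the acyclicity part only needs $\Ht(\mathcal{C}) \le n$, and indeed the full \thmref{Height_Everything_p_Local} applies because $p$-locality upgrades $p$-typical $\infty$-semiadditivity to genuine $\infty$-semiadditivity.

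With both hypotheses of \propref{Acyclic_Semisimple} verified for the fibers of $q$, I would conclude that $q^{*}\colon \mathcal{C}^{B} \to \mathcal{C}^{A}$ is fully faithful and exhibits $\mathcal{C}^{A}$ as a split recollement — split precisely because $q$ is $\mathcal{C}$-ambidextrous, which we already have — yielding the desired equivalence $\mathcal{C}^{A} \simeq \mathcal{C}^{B} \times (\mathcal{C}^{B})^{\perp}$. The only genuinely non-trivial point, and hence the step deserving the most care, is the reduction to the case $B = \pt$: one must check that $q$-acyclicity and $q$-ambidexterity being fiber-wise (already established) is compatible with the base-change behavior of the recollement, i.e. that forming $(\mathcal{C}^{B})^{\perp}$ inside $\mathcal{C}^{A}$ is compatible with pulling back along sections of $B \to \pi_{0}B$ and along points of $B$. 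This is a routine diagram chase using that pullback functors $f^{*}$ are conservative when $f$ is surjective (as in the proof of \propref{Acyclicity_Fiberwise}), together with the compatibility of adjoints $q_{!}, q_{*}$ with base change along $q$ itself; once this bookkeeping is in place the result follows formally.
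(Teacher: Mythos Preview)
Your proposal is correct and follows exactly the paper's route: show that $q$ is $\mathcal{C}$-acyclic (via \thmref{Height_Everything_p_Local}(2) applied to the fibers, combined with \propref{Acyclicity_Fiberwise}) and $\mathcal{C}$-ambidextrous (immediate from $\infty$-semiadditivity), then invoke \propref{Acyclic_Semisimple}. The bookkeeping you flag as ``the only genuinely non-trivial point'' is in fact vacuous: \propref{Acyclic_Semisimple} is already formulated for an arbitrary map $q\colon A\to B$, so once its hypotheses hold for $q$ (and it is those \emph{hypotheses}, not the conclusion, that are fiber-wise), the split recollement $\mathcal{C}^{A}\simeq\mathcal{C}^{B}\times(\mathcal{C}^{B})^{\perp}$ follows directly --- there is no need to assemble the orthogonal complement or the recollement from the fibers, nor to check any base-change compatibility.
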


\begin{proof}
Since $\mathcal{C}$ is of height $n$, the map $q$ is $\mathcal{C}$-acyclic
by \thmref{Height_Everything_p_Local}(2) and \propref{Acyclicity_Fiberwise}.
Hence, the claim follows from \propref{Acyclic_Semisimple}.
\end{proof}
In the case $B= \pt$, we can interpret \thmref{Height_Semisimple} from the perspective of ``higher representation theory'' (see  \cite{LurieRep}). For every space $A$, we call an  object $X\in\mathcal{C}^{A}$  \textbf{unipotent} if it belongs
to the full subcategory $\mathcal{C}_{\mathrm{unip}}^{A}\ss\mathcal{C}^{A}$
generated by colimits from the trivial representations (i.e. constant
local systems). It can be shown that every object $X\in\mathcal{C}^{A}$ fits into an essentially unique cofiber sequence
\[
X_{\mathrm{unip}}\to X\to X^{0} \qquad(*)
\]
with $X_{\mathrm{unip}}\in\mathcal{C}_{\mathrm{unip}}^{A}$ and $X^{0}\in (\mathcal{C}_{\mathrm{unip}}^{A})^{\perp}$. 
If $A$
is $n$-connected, then  \thmref{Height_Semisimple} implies that $X_{\mathrm{unip}}$ is constant and $(*)$
canonically splits. 

\subsubsection{Transfer idempotents}

Given a stable $\infty$-category $\mathcal{C}$ and an equivalence
$\mathcal{C}\simeq\mathcal{C}^{\prime}\times\mathcal{C}^{\prime\prime}$,
every object $X\in\mathcal{C}$ has an essentially unique decomposition
$X\simeq X^{\prime}\oplus X^{\prime\prime}$, such that $X^{\prime}\in\mathcal{C}^{\prime}$
and $X^{\prime\prime}\in\mathcal{C}^{\prime\prime}$. This allows
us to define a natural endomorphism $\varepsilon\colon\Id_{\mathcal{C}}\to\Id_{\mathcal{C}}$
by the formula 
\[
X^{\prime}\oplus X^{\prime\prime}\oto{\Id_{X^{\prime}}\oplus0_{X^{\prime\prime}}}X^{\prime}\oplus X^{\prime\prime}.
\]
The natural endomorphism $\varepsilon$ is idempotent and realizes
internally to $\mathcal{C}$ the projection onto the essential image
of $\mathcal{C}^{\prime}$ in $\mathcal{C}$. Our next goal is to
provide an explicit description of the idempotent $\varepsilon$ for
the split recollement $q^{*}\colon\mathcal{C}^{B}\into\mathcal{C}^{A}$
of \propref{Acyclic_Semisimple}. To help guide the intuition, we
begin with a closely related elementary example:
\begin{example}
\label{exa:Rational_Semisimple}Let $\mathcal{C}=\mathrm{Vec}_{\bb Q}$
and let $G$ be a finite group with a normal subgroup $N\triangleleft G$.
The map $q\colon BG\to B(G/N)$ induces a fully faithful embedding
\[
q^{*}\colon\mathrm{Vec}_{\bb Q}^{B(G/N)}\to\mathrm{Vec}_{\bb Q}^{BG}
\]
which is the ``inflation'' functor that takes a vector space $V$
with a $G/N$-action to $V$ itself with the $G$-action induced via
$G\to G/N$. The essential image of $q^{*}$ consists of $G$-representations
on which the subgroup $N\triangleleft G$ acts trivially. The adjoints
$q_{!}$ and $q_{*}$ can be identified with the vector spaces of
$N$-coinvairant and $N$-invariants respectively, equipped with the
residual $G/N$-action. Thus, the full subcategory $(\mathrm{Vec}_{\bb Q}^{B(G/N)})^{\bot}\ss\mathrm{Vec}_{\bb Q}^{BG}$
is spanned by the $G$-representations without non-trivial $N$-fixed
vectors and we have an equivalence
\[
\mathrm{Vec}_{\bb Q}^{BG}\simeq\mathrm{Vec}_{\bb Q}^{B(G/N)}\times(\mathrm{Vec}_{\bb Q}^{B(G/N)})^{\perp}.
\]
This equivalence is realized explicitly as follows. Since $\mathrm{Vec}_{\bb Q}^{BG}$
is semi-simple, we can split each $G$-representation $V$ uniquely
as $V\simeq V^{N}\oplus V^{N\text{-}\mathrm{free}}$ with $V^{N}$
the space of $N$-fixed vectors and $V^{N\text{-}\mathrm{free}}\in(\mathrm{Vec}_{\bb Q}^{B(G/N)})^{\bot}$.
Consider the natural endomorphism:
\[
\alpha\colon\Id\to(-)_{N}\oto{\nm_{q}}(-)^{N}\to\Id,
\]
and its normalization $\varepsilon\coloneqq|N|^{-1}\alpha$. Unwinding
the definitions, we get
\[
\varepsilon(x)=\frac{1}{|N|}\sum_{g\in N}gx\quad\in V,
\]
which is an explicit formula for the $G$-equivariant projection $\varepsilon\colon V\to V$
onto the subspace $V^{N}\ss V$. 
\end{example}

In a similar fashion, we have:
\begin{prop}
\label{prop:Transfer_Idempotent}Let $\mathcal{C}\in\acat_{\st}$
and let $A\oto qB$ be a $\mathcal{C}$-acyclic and $\mathcal{C}$-ambidextrous
map. Consider
\[
\alpha\colon\Id_{\mathcal{C}^{A}}\oto{u_{!}}q^{*}q_{!}\oto{\nm_{q}}q^{*}q_{*}\oto{c_{*}}\Id_{\mathcal{C}^{A}}.
\]
The natural endomorphism $\varepsilon\coloneqq q^{*}(|q|)\alpha$
of $\Id_{\mathcal{C}^{A}}$ is idempotent, and it realizes the projection
onto the essential image of $q^{*}\colon\mathcal{C}^{B}\into\mathcal{C}^{A}$. 
\end{prop}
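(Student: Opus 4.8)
The plan is to identify $\varepsilon\coloneqq q^{*}(|q|)\,\alpha$ with the canonical idempotent $\varepsilon_{0}\colon\Id_{\mathcal{C}^{A}}\to\Id_{\mathcal{C}^{A}}$ projecting onto the essential image of $q^{*}$. By \propref{Acyclic_Semisimple}, $\mathcal{C}$-acyclicity makes $q^{*}$ fully faithful and $\mathcal{C}$-ambidexterity makes $\nm_{q}$ an isomorphism and the resulting recollement split, so that $\mathcal{C}^{A}\simeq\mathcal{C}^{B}\times(\mathcal{C}^{B})^{\perp}$ with $\mathcal{C}^{B}\simeq\mathrm{Im}(q^{*})$; this product decomposition is exactly what produces $\varepsilon_{0}$. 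Note that $(\mathcal{C}^{B})^{\perp}$ is precisely the full subcategory of $Z$ with $q_{*}Z=0$ (equivalently $q_{!}Z=0$, since $\nm_{q}$ is invertible). Since $\mathrm{End}(\Id_{\mathcal{C}^{A}})$ decomposes as a product according to the product decomposition of $\mathcal{C}^{A}$, and $\varepsilon_{0}$ is the identity on the first factor and zero on the second, it suffices to verify that $\varepsilon$ restricts to $\Id$ on objects $q^{*}Y$ and to $0$ on objects of $(\mathcal{C}^{B})^{\perp}$.

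The vanishing is immediate: for $Z\in(\mathcal{C}^{B})^{\perp}$ the composite $\alpha_{Z}$ runs through $q^{*}q_{*}Z=0$, so $\varepsilon_{Z}=0$. The substantial step is the computation on the essential image. Fix $Y\in\mathcal{C}^{B}$. Full faithfulness of $q^{*}$ forces the unit $u_{!}$ and counit $c_{!}$ of $q_{!}\dashv q^{*}$, and the unit $u_{*}$ and counit $c_{*}$ of $q^{*}\dashv q_{*}$, to be isomorphisms at the relevant objects; the triangle identities then give $(u_{!})_{q^{*}Y}=q^{*}(c_{!,Y})^{-1}$ and $(c_{*})_{q^{*}Y}=q^{*}(u_{*,Y})^{-1}$. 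On the other hand, unwinding \defref{Cardinality}, $|q|_{Y}=c_{!,Y}\circ(\nm_{q}^{-1})_{q^{*}Y}\circ u_{*,Y}$. Applying $q^{*}$ to this identity and comparing factor-by-factor with the defining composite $\alpha_{q^{*}Y}=(c_{*})_{q^{*}Y}\circ q^{*}((\nm_{q})_{q^{*}Y})\circ(u_{!})_{q^{*}Y}$, the triangle identities above exhibit each of the three factors of $\alpha_{q^{*}Y}$ as the inverse of the corresponding factor of $q^{*}(|q|_{Y})$, so that $\alpha_{q^{*}Y}=q^{*}(|q|_{Y})^{-1}$. Since $q^{*}(|q|)$ acts on the summand $\mathrm{Im}(q^{*})$ as $q^{*}(|q|_{Y})$ on $q^{*}Y$ (and $\alpha$ factors through that summand, so the composition is legitimate), we conclude $\varepsilon_{q^{*}Y}=q^{*}(|q|_{Y})\circ q^{*}(|q|_{Y})^{-1}=\Id_{q^{*}Y}$.

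Combining the two checks, $\varepsilon=\varepsilon_{0}$; in particular $\varepsilon$ is idempotent and realizes the projection onto $\mathrm{Im}(q^{*})$. I expect the only genuine obstacle to be the diagram chase in the second paragraph: tracking the several (co)units through the triangle identities and full faithfulness to match the composite defining $\alpha_{q^{*}Y}$ against $q^{*}(|q|_{Y})^{-1}$, together with being careful about the precise meaning of $q^{*}(|q|)$ as an endomorphism of $\Id_{\mathcal{C}^{A}}$ (it is $|q|$ transported along the equivalence $q^{*}\colon\mathcal{C}^{B}\simeq\mathrm{Im}(q^{*})$, extended however one likes over $(\mathcal{C}^{B})^{\perp}$ since $\alpha$ already dies there). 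The remaining recollement bookkeeping is formal, and the rational prototype in \exaref{Rational_Semisimple} is a faithful model of the computation.
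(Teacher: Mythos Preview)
Your proof is correct and follows essentially the same approach as the paper: show $\varepsilon$ vanishes on $(\mathcal{C}^{B})^{\perp}$ via $q_{*}Z=0$, and equals the identity on $\mathrm{Im}(q^{*})$ by unwinding the definition of $|q|$ together with the triangle identities. The paper packages this second computation into a single commutative diagram rather than spelling out the triangle identities verbally, but the content is the same.

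One small caveat on your phrasing: the claim that ``each of the three factors of $\alpha_{q^{*}Y}$ is the inverse of the corresponding factor of $q^{*}(|q|_{Y})$'' is not literally correct --- the factors match up as inverses in \emph{reversed} order (the first factor $(u_{!})_{q^{*}Y}=q^{*}(c_{!,Y})^{-1}$ is the inverse of the \emph{last} factor of $q^{*}(|q|_{Y})$, etc.). This reversed matching is exactly what gives $\alpha_{q^{*}Y}=q^{*}(|q|_{Y})^{-1}$, since $(fgh)^{-1}=h^{-1}g^{-1}f^{-1}$; your conclusion is right, only the justification sentence needs rewording. Also, regarding the meaning of $q^{*}(|q|)\alpha$: rather than extending $q^{*}(|q|)$ arbitrarily over $(\mathcal{C}^{B})^{\perp}$, note that $\alpha$ factors through $q^{*}q_{!}$ (equivalently $q^{*}q_{*}$), so one can canonically insert the self-map $q^{*}(|q|)$ of the functor $q^{*}$ at that stage; this produces a well-defined endomorphism of $\Id_{\mathcal{C}^{A}}$ with no choices involved.
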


\begin{proof}
First, for every $Y\in(\mathcal{C}^{B})^{\bot},$ we have $q_{*}Y=0$
and hence $\alpha|_{(\mathcal{C}^{B})^{\bot}}=0$. Next, consider
the commutative diagram (see \defref{Cardinality}):
\[
\xymatrix{q^{*}\ar[r]^{u_{!}}\ar@{=}[rd] & q^{*}q_{!}q^{*}\ar[d]_{\wr}^{c_{!}}\ar[rr]_{\sim}^{\nm_{q}} &  & q^{*}q_{*}q^{*}\ar[r]^{c_{*}} & q^{*}\\
 & q^{*}\ar[rr]_{\sim}^{|q|^{-1}} &  & q^{*}\ar[u]_{u_{*}}^{\wr}\ar@{=}[ru]
}
\]
The composition along the top row computes the restriction of $\alpha$
along $q^{*}$, which is therefore invertible and coincides with $q^{*}(|q|^{-1})$.
It follows that $q^{*}(|q|)\alpha$ is the identity on the essential
image of $\mathcal{C}^{B}$ and is zero on $(\mathcal{C}^{B})^{\bot}$.
Thus, $q^{*}(|q|)\alpha$ equals the idempotent $\varepsilon$ which
projects onto the essential image of $\mathcal{C}^{A}$.
\end{proof}
\begin{rem}
\label{rem:Principal_Transfer_Idempotent}We note that if we furthermore
assume that $A\oto qB$ is \emph{principal} and $B$ is connected,
then by \propref{Amenable_Descent}, we have 
\[
q^{*}|q|=q^{*}B^{*}|F|=A^{*}|F|.
\]
Namely, $q^{*}|q|$ is constant with value $|F|$. Moreover, by \corref{Amenable_Acyclic}
and \propref{Acyclicity_Fiberwise}, the space $\Omega F$ is $\mathcal{C}$-amenable
and thus we get $|F|=|\Omega F|^{-1}$. To conclude, we can also write
$\varepsilon=|\Omega F|^{-1}\alpha$. 
\end{rem}

Our main motivation for \propref{Transfer_Idempotent}, is to generalize
\exaref{Rational_Semisimple} to higher heights: 
\begin{example}
\label{exa:Transfer_Idempotent}Let $\mathcal{C}\in\acat_{\st}^{\sad{\infty}}$
be of height $n$. For an abelian $p$-group $G$ with a subgroup
$N\le G$, we consider the fiber sequence 
\[
B^{n+1}N\to B^{n+1}G\oto qB^{n+1}(G/N).
\]
The map $q$ satisfies the conditions of \thmref{Height_Semisimple},
and hence $\mathcal{C}^{B^{n+1}G}$ splits as a product of $\mathcal{C}^{B^{n+1}(G/N)}$
and its right orthogonal complement. Moreover, by \propref{Transfer_Idempotent}
and \remref{Principal_Transfer_Idempotent}, the idempotent $\varepsilon$
is given by $\varepsilon=|B^{n}N|^{-1}\alpha$. The case $n=0$ is
a ``derived version'' of \exaref{Rational_Semisimple}.
\end{example}

We note that \exaref{Transfer_Idempotent} is essentially the only
interesting case of \thmref{Height_Semisimple}, as in view of \propref{Height_Everything}(3),
for any $n$-connected space $A$, we have $\mathcal{C}^{A}\simeq\mathcal{C}^{B^{n+1}\pi_{n+1}A}$.

\subsection{Chromatic Examples}

The main source of stable higher semiadditive $\infty$-categories is chromatic homotopy theory. In this subsection, we shall address the semiadditive height of such $\infty$-categories, using the properties of nil-conservative functors and the Nilpotence Theorem (see \cite{nilp2}). We begin by recalling the following notion:

\begin{defn}
[{\cite[Definition 4.4.1]{Ambi2018}}]
\label{def:Nil_Conservativity}We call a functor $F\colon\mathcal{C}\to\mathcal{D}$
in $\alg(\Pr_{\st})$ \textbf{nil-conservative}, if for every ring
$R\in\alg(\mathcal{C})$, if $F(R)=0$ then $R=0$\footnote{This notion is closely related  to the notion of ``nil-faithfulness'' defined in \cite{BalmerNil}.}.
\end{defn}

We also recall that nil-conservative functors are conservative on the full subcategory of dualizable objects. In particular, we have the following consequence regarding height:

\begin{prop}
\label{prop:Height_Detection} Let $F\colon{\cal C}\to{\cal D}$ be
a map in $\alg(\Pr_{\st})$. If \emph{$\mathcal{C}$} is $m$-semiadditive,
then so is $\mathcal{D}$. Furthermore, for every $0\le n\le m$,
if ${\cal C}$ is of height $\le n$ or $>n-1$ then so is ${\cal D}$
and the converse holds if $F$ is nil-conservative.
\end{prop}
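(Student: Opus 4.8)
\textbf{Proof plan for Proposition \ref{prop:Height_Detection}.}

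The plan is to reduce everything to statements about the action of the natural endomorphisms $\pn n$ (and the semi-inverse expression $1-\pn{n-1}\pn n$) on a single object, namely the unit, and then invoke nil-conservativity. First I would treat $m$-semiadditivity: since $F$ is a morphism in $\alg(\Pr_{\st})$, it is a colimit-preserving symmetric monoidal functor, hence in particular $0$-semiadditive; by \cite[Corollary 3.3.2]{Ambi2018} (or the argument used for $\acat_{\infty\text{-fin}}$ earlier in the excerpt) the property of being $m$-semiadditive is inherited by the target of such a functor. Concretely, one argues inductively that $B^kC_p$ is $\mathcal{D}$-ambidextrous for $k\le m$: the norm map in $\mathcal{D}$ is obtained by applying $F$ to the corresponding structure in $\mathcal{C}$, using that $F$ preserves the relevant limits and colimits, so ambidexterity transports along $F$. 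This gives the first sentence.

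Next, for the height statements, the key point is \corref{Height_Monoidal} together with its stable refinement \corref{Stable_Height_Monoiidal}: $\Ht(\mathcal{C})\le n$ (resp. $>n-1$) if and only if $\htt_{\mathcal{C}}(\one_{\mathcal{C}})\le n$ (resp. $>n-1$), i.e. if and only if $\pn n$ acts invertibly on $\one_{\mathcal{C}}$ (resp. $1-\pn{n-1}\pn n$ acts invertibly on $\one_{\mathcal{C}}$, by \propref{Box_Semi_Inverse}). Since $F$ is symmetric monoidal we have $F(\one_{\mathcal{C}})\simeq\one_{\mathcal{D}}$, and since $F$ is a $p$-typically $m$-semiadditive functor it sends $\pn n^{\mathcal{C}}$ to $\pn n^{\mathcal{D}}$ (by \propref{Height_Functoriality}, or directly by naturality of cardinalities under colimit-preserving functors, \propref{A_Semiadd}). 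Hence if $\pn n$ is invertible on $\one_{\mathcal{C}}$ then $\pn n$ is invertible on $\one_{\mathcal{D}}$, giving $\Ht(\mathcal{D})\le n$; the argument for $>n-1$ is identical using the endomorphism $1-\pn{n-1}\pn n$.

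For the converse, suppose $F$ is nil-conservative and $\Ht(\mathcal{D})\le n$, i.e. $\pn n$ acts invertibly on $\one_{\mathcal{D}}$; we want $\pn n$ invertible on $\one_{\mathcal{C}}$. Here I would use the recollement decomposition. By \propref{Sad_Height_Recollement} (after first enlarging $\mathcal{C}$ if necessary to a presentable model, or simply using that $\mathcal{C}\in\Pr_{\st}$ is cocomplete) the full subcategory $\mathcal{C}_{>n}=\widehat{\mathcal{C}}_{\pn n}$ is a smashing-type localization, and in particular $R:=\widehat{\one_{\mathcal{C}}}_{\pn n}$ is a ring object of $\mathcal{C}$ (the $\pn n$-completion of the unit is an idempotent algebra). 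I claim $F(R)=0$: indeed $F$ commutes with the colimits and limits defining $\pn n$-completion (it preserves sequential limits and cofibers as a map in $\Pr_{\st}$ — one must be slightly careful and may instead use that $\mathcal{C}_{>n}$ is the kernel of the localization $\mathcal{C}\to\mathcal{C}_{\le n}$ and that $F$ carries this localization to the corresponding one on $\mathcal{D}$), so $F(R)$ is the $\pn n$-completion of $\one_{\mathcal{D}}$, which vanishes since $\pn n$ is already invertible there. Nil-conservativity then forces $R=0$, i.e. $\mathcal{C}_{>n}=0$, and since $\mathcal{C}=\mathcal{C}_{\le n}\times\mathcal{C}_{>n}$ we get $\mathcal{C}=\mathcal{C}_{\le n}$, that is $\Ht(\mathcal{C})\le n$. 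The same scheme handles $\Ht(\mathcal{D})>n-1\Rightarrow\Ht(\mathcal{C})>n-1$: by \propref{Box_Semi_Inverse}, $\mathcal{C}_{\le n-1}=\mathcal{C}[\pn{n-1}^{-1}]$ is a smashing localization, its unit $\one_{\mathcal{C}}[\pn{n-1}^{-1}]$ is an idempotent algebra, $F$ sends it to $\one_{\mathcal{D}}[\pn{n-1}^{-1}]=0$, and nil-conservativity gives $\mathcal{C}_{\le n-1}=0$, hence $\mathcal{C}=\mathcal{C}_{>n-1}$.

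The main obstacle I anticipate is the careful handling of the converse direction: one needs that $F$ takes the smashing localization $\mathcal{C}\to\mathcal{C}_{\le n}$ (resp.\ the completion $\mathcal{C}\to\mathcal{C}_{>n-1}$) to the analogous construction on $\mathcal{D}$, so that the image of the relevant idempotent algebra is again the relevant idempotent algebra. This is where one uses that $F$ is colimit-preserving and symmetric monoidal together with \propref{Box_Semi_Inverse} (to express $\mathcal{C}_{\le n}$ and $\mathcal{C}_{>n-1}$ as localizations at $\pn n$ and at $1-\pn{n-1}\pn n$, both of which are endomorphisms preserved by $F$), plus the fact that for a colimit-preserving symmetric monoidal functor the image of an idempotent algebra is an idempotent algebra and localization commutes with $F$; invoking nil-conservativity then closes the argument. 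Everything else is a routine transport of structure along $F$.
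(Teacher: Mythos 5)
Your first two steps (transport of $m$-semiadditivity via \cite[Corollary 3.3.2(2)]{Ambi2018}, and the forward height implications via \corref{Stable_Height_Monoiidal}, \propref{Box_Semi_Inverse} and the fact that $F$ sends $\pn{n}^{\mathcal{C}}$ to $\pn{n}^{\mathcal{D}}$) match the paper's proof. For the converse your route diverges: the paper reduces both height conditions to the invertibility of a single element of $\pi_{0}\one$ (namely $\pn{n}$, resp.\ $1-\pn{n-1}\pn{n}$) and then cites \cite[Corollary 4.4.5]{Ambi2018}, which says that nil-conservative functors detect invertibility of elements of $\pi_{0}\one$. Your idempotent-algebra argument for the ``$>n-1$'' converse does work: $\one_{\mathcal{C}}[\pn{n-1}^{-1}]$ is a filtered colimit, hence preserved by $F$; its image in $\mathcal{D}$ is simultaneously $\pn{n-1}$-divisible and $\pn{n-1}$-complete, hence zero; and nil-conservativity then kills the smashing localization $\mathcal{C}_{\le n-1}$.

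The ``$\le n$'' converse, however, has a genuine gap. You take $R=\widehat{\one_{\mathcal{C}}}_{\pn{n}}=\lim\one/\pn{n}^{r}$ and assert $F(R)\simeq\widehat{\one_{\mathcal{D}}}_{\pn{n}}$. But this is a sequential \emph{limit}, and $F$, being merely a morphism in $\alg(\Pr_{\st})$, has no reason to preserve it: for instance $L_{\bb Q}\colon\Sp_{(p)}\to\Sp_{\bb Q}$ sends $\widehat{\bb S}_{p}$ to $\bb Q\otimes\widehat{\bb S}_{p}\neq 0$, while the $p$-completion of $\one_{\Sp_{\bb Q}}$ vanishes. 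Your proposed fallback does not repair this: $\mathcal{C}_{>n}$ is the \emph{right} orthogonal complement of $\mathcal{C}_{\le n}$, not the kernel of the localization $X\mapsto X[\pn{n}^{-1}]$ (again, $\widehat{\bb S}_{p}$ is $p$-complete but $\widehat{\bb S}_{p}[p^{-1}]\neq 0$), so identifying ``the image of the completion'' with ``the completion of the image'' is exactly the point at issue. The correct repair is the paper's: it suffices to show $\one_{\mathcal{C}}/\pn{n}=0$; this object is dualizable and is killed by $F$, and nil-conservative functors are conservative on dualizable objects (apply the definition to the ring $\End(\one/\pn{n})$) --- which is precisely the content of \cite[Corollary 4.4.5]{Ambi2018}. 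With that substitution your argument closes; as written, the divisibility half of the converse is not proved.
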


\begin{proof}
Since $\mathcal{C}$ is $m$-semiadditive and $F\colon{\cal C}\to{\cal D}$
is a monoidal $m$-semiadditive functor, by \cite[Corollary 3.3.2(2)]{Ambi2018}, $\mathcal{D}$
is $m$-semiadditive as well. By \corref{Stable_Height_Monoiidal},
the height of $\mathcal{C}$ (resp. $\mathcal{D}$) is determined
by the height of $\one_{\mathcal{C}}$ (resp. $\one_{\mathcal{D}}$).
Moreover, $\htt(\one)\le n$ if and only if $\pn n\in\pi_{0}\one$
is invertible and $\htt(\one)>n-1$, if and only if $(1-\pn{n-1}\pn n)\in\pi_{0}\one$
is invertible. Thus, the claim follows from \cite[Corollary 4.4.5]{Ambi2018}.
\end{proof}

As a special case, we get:
\begin{cor}
\label{cor:Height_Modules}Let $\mathcal{C}\in\calg(\Pr_{\st})$ be
$m$-semiadditive and let $R\in\calg(\mathcal{C})$. For every integer $0\le n\le m$,
if ${\cal C}$ is of height $\le n$ or $>n-1$ then so is $\Mod_{R}(\mathcal{C})$.
The converse holds if tensoring with $R$ is nil-conservative.
\end{cor}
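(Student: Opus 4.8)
The plan is to obtain \corref{Height_Modules} as the special case of \propref{Height_Detection} in which $F$ is the free-module (extension-of-scalars) functor
\[
F = R\otimes(-)\colon \mathcal{C}\to\Mod_{R}(\mathcal{C})
\]
associated to the unit map $\one\to R$. The first step is to check that $F$ is a morphism in $\alg(\Pr_{\st})$, in fact in $\calg(\Pr_{\st})$: since $\mathcal{C}$ is presentable and stable, $\Mod_{R}(\mathcal{C})$ is again presentable and stable by \cite{ha}; the functor $F$ is colimit preserving, being left adjoint to the (conservative) forgetful functor $\Mod_{R}(\mathcal{C})\to\mathcal{C}$; and $F$ is symmetric monoidal because $F(X)\otimes_{R}F(Y)\simeq F(X\otimes Y)$ naturally. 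Granting this, the hypothesis that $\mathcal{C}$ is $m$-semiadditive together with \propref{Height_Detection} immediately yields that $\Mod_{R}(\mathcal{C})$ is $m$-semiadditive, and that for each $0\le n\le m$, if $\mathcal{C}$ is of height $\le n$ (resp. $>n-1$) then so is $\Mod_{R}(\mathcal{C})$.

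For the reverse implication I would match up the nil-conservativity hypotheses. For $A\in\alg(\mathcal{C})$ the underlying object of $F(A)=R\otimes A\in\Mod_{R}(\mathcal{C})$ is $R\otimes A\in\mathcal{C}$, and since the forgetful functor $\Mod_{R}(\mathcal{C})\to\mathcal{C}$ is conservative, $F(A)=0$ if and only if $R\otimes A=0$ in $\mathcal{C}$. Hence $F$ is nil-conservative precisely when tensoring with $R$ is nil-conservative, and under that assumption the converse clauses of \propref{Height_Detection} apply and conclude the proof.

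I do not anticipate a genuine obstacle here: the whole argument is the identification of $R\otimes(-)$ as an object of $\calg(\Pr_{\st})$ together with a translation of the nil-conservativity condition through the conservative forgetful functor. The one point worth recording explicitly is that the ``height $>n-1$'' case is already packaged inside \propref{Height_Detection} via \corref{Stable_Height_Monoiidal} and the criterion ``$\htt_{\mathcal{C}}(\one)>n-1$ if and only if $1-\pn{n-1}\pn{n}$ acts invertibly on $\one$'', so no extra computation is needed beyond invoking that proposition.
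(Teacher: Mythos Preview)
Your proposal is correct and matches the paper's own proof, which simply invokes \propref{Height_Detection} for the colimit preserving symmetric monoidal functor $R\otimes(-)\colon\mathcal{C}\to\Mod_{R}(\mathcal{C})$. Your additional remarks verifying that this functor lies in $\calg(\Pr_{\st})$ and that its nil-conservativity coincides with nil-conservativity of tensoring with $R$ are reasonable elaborations of what the paper leaves implicit.
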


\begin{proof}
The claim follows from \propref{Height_Detection} for the colimit
preserving symmetric monoidal functor $R\otimes(-)\colon\mathcal{C}\to\Mod_{R}(\mathcal{C})$. 
\end{proof}

We now apply the above to higher semiadditive $\infty$-categories arising in chromatic homotopy theory. We begin with a special case in which cardinalities of Eilenberg-MacLane spaces can be computed explicitly.

\begin{prop}
\label{prop:En_Height}The $\infty$-category $\Theta_{n}=\Mod_{E_{n}}(\Sp_{K(n)})$
satisfies $\Ht(\Theta_{n})=n$.
\end{prop}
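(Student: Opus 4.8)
The plan is to read off the claim from the explicit values of the cardinalities of Eilenberg--MacLane spaces in $\Theta_n$. By \propref{Cardinality_EM}, together with the identification $\pn k^{\Theta_n}=|B^kC_p|_n$ of \exaref{Chromatic_Cardinality}, the natural endomorphism $\pn k^{\Theta_n}$ of $\Id_{\Theta_n}$ is multiplication by the element $p^{\binom{n-1}{k}}\in\pi_0 E_n$. For the bound $\Ht(\Theta_n)\le n$ I would check that $\pn n^{\Theta_n}$ acts invertibly on all of $\Theta_n$: when $n\ge 1$ we have $\binom{n-1}{n}=0$, so $\pn n^{\Theta_n}=p^{0}=1$; when $n=0$ the convention $\binom{-1}{0}=1$ gives $\pn 0^{\Theta_0}=p$, which is a unit since $\Theta_0$ is rational. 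Either way $\Theta_n=(\Theta_n)_{\le n}$, i.e.\ $\Ht(\Theta_n)\le n$.

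For the complementary bound $\Ht(\Theta_n)>n-1$, note that for $n=0$ there is nothing to prove, since $\Ht(\mathcal C)>-1$ for any nonzero $\mathcal C$ by the conventions of \defref{Height_Obj}. For $n\ge 1$, by \defref{Height_Cat} we must show that $\Theta_n$ is $\pn{n-1}^{\Theta_n}$-complete; since $\binom{n-1}{n-1}=1$, the endomorphism $\pn{n-1}^{\Theta_n}$ is just multiplication by $p$, so this amounts to $p$-completeness in the sense of \defref{Div_Comp}, i.e.\ to $\map_{\Theta_n}(Z,X)\simeq\pt$ for every $p$-divisible $Z\in\Theta_n$ and every $X\in\Theta_n$. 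I would deduce this from the stronger assertion that the only $p$-divisible object of $\Theta_n$ is $0$. Since the forgetful functor $\Theta_n\to\Sp_{K(n)}$ is conservative and exact, it suffices to show that a $p$-divisible object $Z\in\Sp_{K(n)}$ with $n\ge 1$ vanishes. As $p$ acts nilpotently on $K(n)$ (indeed $p=0$ in $\pi_0K(n)\cong\bb F_p$, hence $p\cdot\Id_{K(n)}=0$), the endomorphism $p$ of $K(n)\otimes Z$ is simultaneously invertible and nilpotent, so $K(n)\otimes Z=0$; thus $Z$ is $K(n)$-acyclic and $Z=L_{K(n)}Z=0$. Hence the completeness condition is vacuous and $\Ht(\Theta_n)>n-1$.

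Combining the two bounds with clause (3) of \defref{Height_Obj} gives $\Ht(\Theta_n)=n$. I do not expect a serious obstacle: the argument rests entirely on the closed-form computation $\pn k^{\Theta_n}=p^{\binom{n-1}{k}}$ of \propref{Cardinality_EM}, which is cited, and on the elementary fact that a $K(n)$-local spectrum with $n\ge 1$ admits no nonzero $p$-divisible object. The only point that needs a little care is the edge case $n=0$, where one must keep track both of the convention $\binom{-1}{k}=(-1)^k$ and of the rationality of $\Theta_0$; both are handled by the case split above.
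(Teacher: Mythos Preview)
Your proposal is correct and follows essentially the same approach as the paper: both compute $\pn k^{\Theta_n}=p^{\binom{n-1}{k}}$ via \propref{Cardinality_EM}, read off invertibility for $k=n$, and use $p$-completeness of $\Theta_n$ for $n\ge 1$ to get the lower bound. The only difference is that you spell out the $p$-completeness of $\Theta_n$ explicitly (via the vanishing of $p$-divisible $K(n)$-local spectra), whereas the paper simply asserts it.
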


\begin{proof}
For $n=0$, the claim is clear, so assume $n\ge1$. This follows from
the explicit formula $\pn k=p^{\binom{n-1}{k}}$ given in \propref{Cardinality_EM}.
Indeed, for $k\ge n$, we have $\pn k=1$ and hence invertible. For
$0\le k<n$, the element $\pn k$ is a non-zero power of $p$, and
hence every $X\in\Theta_{n}$ is complete with respect to it as the
$\infty$-category $\Theta_{n}$ is $p$-complete. 
\end{proof}

In \cite[Theorem C]{Ambi2018}  we have shown that for a homotopy ring spectrum $R$,
the $\infty$-category $\Sp_{R}$ is $1$-semiadditive if and only
if it is $\infty$-semiadditive if and only if $\supp(R)=\{n\}$ for
some integer $n$. We now show that this $n$ is in fact the semiadditive height of $\Sp_R$.

\begin{thm}
\label{thm:Height_Chrom}Let $R$ be a homotopy ring spectrum\footnote{In fact, it suffices to assume that $R$ is a \emph{weak} \emph{ring}
in the sense of \cite[Definition 5.1.4]{Ambi2018}.}. If $\supp(R)=\{n\}$ for some integer $n$, then $\Ht(\Sp_{R})=n$.
In particular, we have 
\[
\Ht(\Sp_{K(n)})=\Ht(\Sp_{T(n)})=n.
\]
\end{thm}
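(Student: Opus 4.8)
The plan is to deduce \thmref{Height_Chrom} from the already-established machinery about nil-conservative functors and the chromatic case $\Theta_n$. First I would recall that by \cite[Theorem C]{Ambi2018}, the hypothesis $\supp(R)=\{n\}$ implies that $\Sp_R$ is $\infty$-semiadditive, so it makes sense to speak of its height at all. The key input is the Nilpotence Theorem, which provides a nil-conservative functor relating $\Sp_R$ to the Morava $K$-theory world. Concretely, since $\supp(R)=\{n\}$, the localization $R\otimes(-)$ factors through $\Sp_{K(n)}$ (as $R$ is $K(m)$-acyclic for all $m\neq n$ and not $K(n)$-acyclic), and the composite
\[
\Sp_R \longrightarrow \Mod_R(\Sp) \xrightarrow{\ -\otimes E_n\ } \Mod_{R\otimes E_n}(\Sp_{K(n)})
\]
or some variant thereof, should be nil-conservative by the standard argument using the Nilpotence Theorem (this is essentially \cite[Section 4.4, 5.1]{Ambi2018}).

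Next I would invoke \propref{Height_Detection} (or rather \corref{Height_Modules}) applied to a chain of nil-conservative symmetric monoidal functors landing in $\Theta_n = \Mod_{E_n}(\Sp_{K(n)})$, whose height is exactly $n$ by \propref{En_Height}. Since height $\le n$ and height $>n-1$ are each detected along nil-conservative functors, and $\Theta_n$ has both properties, we conclude $\Sp_R$ has both, i.e.\ $\Ht(\Sp_R) = n$. More carefully: one direction, that $\Ht(\Sp_R)\le n$ and $>n-1$ \emph{imply} the same for the target, is the easy ``forward'' direction of \propref{Height_Detection}; for the reverse we need that the functor $\Sp_R \to \Theta_n$ (suitably constructed) is nil-conservative, which is where the Nilpotence Theorem enters. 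The point is that $\pn n$ and $1-\pn{n-1}\pn n$ are elements of $\pi_0\one$, and a nil-conservative functor detects invertibility of such elements by \cite[Corollary 4.4.5]{Ambi2018}.

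For the special cases $\Sp_{K(n)}$ and $\Sp_{T(n)}$: these are both $\infty$-semiadditive with $\supp = \{n\}$ (for $K(n)$ this is immediate; for $T(n)$ this uses that $T(n)$ and $K(n)$ have the same Bousfield class support in the relevant sense, or one argues directly that $\Sp_{T(n)}$ is a homotopy ring localization with singleton support). Hence $\Ht(\Sp_{K(n)}) = \Ht(\Sp_{T(n)}) = n$ follows from the general statement. Alternatively, for $\Sp_{K(n)}$ one can argue even more directly: tensoring with $E_n$ is conservative on $\Sp_{K(n)}$ and lands in $\Theta_n$, giving $\Ht(\Sp_{K(n)}) = n$ via \propref{Height_Functoriality}; and then the unit map $\pi_0\bb S_{T(n)} \to \pi_0 \bb S_{K(n)}$ detects invertibility (using \propref{Goerss_Irena} or the nil-conservativity of $\Sp_{T(n)} \to \Sp_{K(n)}$), which transfers the height computation to $\Sp_{T(n)}$.

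The main obstacle I anticipate is constructing the nil-conservative functor out of $\Sp_R$ in the generality of an arbitrary homotopy (or weak) ring $R$ with $\supp(R) = \{n\}$, rather than just for $R = K(n)$ or $R = T(n)$. One cannot simply tensor with $E_n$ since $R$ need not be an $E_n$-algebra or even $K(n)$-local a priori; instead one must use the support hypothesis to show $R$-localization agrees with $T(n)$- or $K(n)$-localization up to the relevant nil-conservativity, and then compose with the known nil-conservative functor $\Sp_{T(n)} \to \Theta_n$. This is precisely the kind of argument developed around \cite[Proposition 5.1.17]{Ambi2018} and the discussion of nil-conservativity in Section 4.4 of \cite{Ambi2018}, so I expect the proof to be short modulo citing those results, with the real content being the correct bookkeeping of which functors are nil-conservative and why $\Theta_n$ is the right target with known height $n$.
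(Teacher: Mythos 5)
Your proposal is correct and takes essentially the same approach as the paper: the paper computes $\Ht(\Theta_n)=n$ via \propref{En_Height}, transfers this to $\Sp_{K(n)}$ by \corref{Height_Modules} applied to the faithful algebra $E_n$, and for general $R$ with $\supp(R)=\{n\}$ uses that $L_{K(n)}\colon\Sp_R\to\Sp_{K(n)}$ is nil-conservative together with \propref{Height_Detection}. Your tentative intermediate functor through $\Mod_R(\Sp)$ is unnecessary (and ill-defined for a mere homotopy ring), but your final paragraph correctly identifies the actual route.
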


\begin{proof}
We first consider the case $R=K(n)$. This follows by applying \corref{Height_Modules}
to the faithful commutative algebra $E_{n}\in\calg(\Sp_{K(n)})$ and
the fact that $\Ht(\Theta_{n})=n$ (\propref{En_Height}). For a general
homotopy ring $R$ with $\supp(R)=\{n\}$ (such as $R=T(n)$), consider
the functor $L_{K(n)}\colon\Sp_{R}\to\Sp_{K(n)}$. It is nil-conservative
by \cite[Proposition 5.1.15]{Ambi2018}. Since $\Sp_{K(n)}$ is of height
$n$, the claim follows from \propref{Height_Detection}. 
\end{proof}

\section{Modes}

In this section, we use the theory of idempotent algebras in $\Pr$,
which we call \emph{modes}, to further study the interaction of stability
and higher semiadditivity. 

\subsection{Idempotent Algebras}

We begin with a general discussion about idempotent algebras in symmetric
monoidal $\infty$-categories, as a means to encode properties, which
induce ``canonical structure''. 

\subsubsection{Definitions \& characterizations}

Following \cite[Definition 4.8.2.1]{ha}, given a symmetric monoidal $\infty$-category
${\cal C}$, we say that a morphism $\one\oto uX$ in $\mathcal{C}$
exhibits $X$ as an \textbf{idempotent object} of ${\cal C}$, if
\[
X\simeq X\otimes\one\oto{1\otimes u}X\otimes X
\]
is an isomorphism. By \cite[Proposition 4.8.2.9]{ha}, an idempotent object $\one\oto uX$
admits a unique commutative algebra structure for which $u$ is the
unit. Conversely, the unit $\one\oto uR$ of a commutative algebra
$R$, exhibits it as an idempotent object if and only if the multiplication
map $R\otimes R\oto mR$ is an isomorphism. In this case we call $R$
an \textbf{idempotent} \textbf{algebra}. More precisely, the functor
$\calg(\mathcal{C})\to\mathcal{C}_{\one/}$ which forgets the algebra
structure and remembers only the unit map, induces an equivalence
of $\infty$-categories from the full subcategory of idempotent algebras  $\calg^{\mathrm{idem}}(\mathcal{C}) \subseteq \calg(\mathcal{C})$
to the full subcategory of idempotent objects \cite[Proposition  4.8.2.9]{ha}. The fundamental
feature of an idempotent algebra $R$, is that the forgetful functor
$\Mod_{R}(\mathcal{C})\to\mathcal{C}$ is fully faithful, and its
essential image consists of those objects for which the map $Y\otimes\one\oto{1\otimes u}Y\otimes R$
is an isomorphism \cite[Proposition 4.8.2.10]{ha}. Thus, it is a \emph{property} of an object
in $\mathcal{C}$ to have the\emph{ structure} of an $R$-module.
We shall say that $R$ \textbf{classifies} the property of being an
$R$-module. 
\begin{example}
For $\mathcal{C}=\mathrm{Ab}$, the idempotent algebras are classically
known as \emph{solid} \emph{rings} \cite[Definition 2.1]{BKCore}. These include
for example $\bb Q$ and $\bb F_{p}$. We note that for $\mathcal{C}=\Sp$,
the ring $\bb Q$ is still idempotent, classifying the property of
being rational, but $\bb F_{p}$ is \emph{not} idempotent. The idempotent
rings in $\Sp$ correspond precisely to the \emph{smashing} \emph{localizations}.
\end{example}

Given an idempotent ring $R\in\calg(\mathcal{C}),$ the forgetful
functor $\Mod_{R}(\mathcal{C})\to\mathcal{C}$ admits a left adjoint
\[
R\otimes(-)\colon\mathcal{C}\to\Mod_{R}(\mathcal{C}).
\]
This is a \emph{localization} functor, which can be thought of as
forcing the property classified by $R$ in a universal way. In line
with the standard terminology for localizations of spectra, we set:
\begin{defn}
\label{def:Smashing}Let $L\colon\mathcal{C}\to\mathcal{D}$ be a
map in $\calg(\cat)$. We say that $L$ is a \textbf{smashing} \textbf{localization}
if there exists an idempotent algebra $R$ in $\mathcal{C}$ and an
isomorphism $\Mod_{R}(\mathcal{C})\iso\mathcal{D}$ in $\calg(\cat)$,
such that $L$ is the composition 
\[
\mathcal{C}\oto{R\otimes(-)}\Mod_{R}(\mathcal{C})\iso\mathcal{D}.
\]
\end{defn}

We note that for a smashing localization $L\colon\mathcal{C}\to\mathcal{D}$,
there is always a fully faithful right adjoint $F\colon\mathcal{D}\to\mathcal{C}$,
which is lax symmetric monoidal \cite[Corollary 7.3.2.7]{ha}, and we can identify the
idempotent algebra $R$ with $FL(\one_{\mathcal{C}})$. 
To characterize smashing localizations, we first introduce some terminology. 
Let $L\colon\mathcal{C}\to\mathcal{D}$ in $\calg(\cat)$,
which admits a (lax symmetric monoidal) right adjoint $F\colon\mathcal{D}\to\mathcal{C}$. For every $X\in\mathcal{C}$ and $Y\in\mathcal{D}$, we have a natural
map 
\[
X\otimes F(Y)\oto{\alpha}F(L(X)\otimes Y),
\]
which is the mate of
\[
L(X\otimes F(Y))\simeq L(X)\otimes LF(Y)\oto{L(X)\otimes c_{Y}}L(X)\otimes Y,
\]
where $c\colon LF\to\Id$ is the counit of the adjunction. We say
that the adjunction $L\dashv F$ satisfies the \textbf{projection
formula} if the map $\alpha$ is an isomorphism for all $X,Y$. The
map $\alpha$ is compatible with the unit of the adjunction $\Id\oto uFL$
in the following sense:

\begin{lem}
\label{lem:Unit_Proj_Formula} For all $X,Z\in\mathcal{C}$, the following
diagram is commutative
\[
\xymatrix{ & X\otimes Z\ar[ld]_{X\otimes u_{Z}}\ar[rd]^{u_{X\otimes Z}}\\
X\otimes FL(Z)\ar[r]^{\alpha\quad} & F(L(X)\otimes L(Z))\ar[r]^{\quad \sim} & FL(X\otimes Z)
}
\]
\end{lem}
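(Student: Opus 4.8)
The plan is to unwind the definition of $\alpha$ as a mate and to reduce the claim to one of the triangle identities of the adjunction $L\dashv F$. Write $u\colon\Id_{\mathcal{C}}\to FL$ and $c\colon LF\to\Id_{\mathcal{D}}$ for the unit and counit, and let $\mu_{A,B}\colon L(A)\otimes L(B)\iso L(A\otimes B)$ denote the (invertible) structure isomorphisms of the symmetric monoidal functor $L$; the isomorphism $F(L(X)\otimes L(Z))\iso FL(X\otimes Z)$ on the bottom row of the diagram is $F(\mu_{X,Z})$, and the isomorphism $L(X\otimes F(Y))\simeq L(X)\otimes LF(Y)$ implicit in the definition of $\alpha$ is $\mu_{X,F(Y)}^{-1}$. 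By the definition of the mate, $\alpha\colon X\otimes F(Y)\to F(L(X)\otimes Y)$ is the composite
\[
X\otimes F(Y)\xrightarrow{\,u_{X\otimes F(Y)}\,}FL(X\otimes F(Y))\xrightarrow{\,F(\mu_{X,F(Y)}^{-1})\,}F(L(X)\otimes LF(Y))\xrightarrow{\,F(L(X)\otimes c_{Y})\,}F(L(X)\otimes Y).
\]

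First I would specialize to $Y=L(Z)$ and precompose with $X\otimes u_{Z}$. By naturality of the unit $u$ applied to the morphism $X\otimes u_{Z}\colon X\otimes Z\to X\otimes FL(Z)$, we have $u_{X\otimes FL(Z)}\circ(X\otimes u_{Z})=FL(X\otimes u_{Z})\circ u_{X\otimes Z}$. Combining this with the formula above and the functoriality of $F$, the composite $\alpha\circ(X\otimes u_{Z})\colon X\otimes Z\to F(L(X)\otimes L(Z))$ factors as $u_{X\otimes Z}$ followed by $F$ applied to the morphism
\[
L(X\otimes Z)\xrightarrow{\,L(X\otimes u_{Z})\,}L(X\otimes FL(Z))\xrightarrow{\,\mu_{X,FL(Z)}^{-1}\,}L(X)\otimes LFL(Z)\xrightarrow{\,L(X)\otimes c_{L(Z)}\,}L(X)\otimes L(Z).
\]
Thus it suffices to prove that this morphism of $\mathcal{D}$ equals $\mu_{X,Z}^{-1}$; applying $F$ and then composing with $F(\mu_{X,Z})$ and $u_{X\otimes Z}$ yields exactly the right-hand leg $u_{X\otimes Z}$ of the diagram.

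For the remaining identity I would use naturality of $\mu$ in its second variable with respect to $u_{Z}\colon Z\to FL(Z)$, which gives $\mu_{X,FL(Z)}^{-1}\circ L(X\otimes u_{Z})=(L(X)\otimes L(u_{Z}))\circ\mu_{X,Z}^{-1}$, so the displayed composite becomes $(L(X)\otimes(c_{L(Z)}\circ L(u_{Z})))\circ\mu_{X,Z}^{-1}$. The triangle identity for $L\dashv F$ yields $c_{L(Z)}\circ L(u_{Z})=\Id_{L(Z)}$, and hence the composite collapses to $\mu_{X,Z}^{-1}$, as required. The only genuine work is organizing the coherences of the symmetric monoidal functor $L$ and of the mate construction in the $\infty$-categorical setting — the naturality of $\mu$ and its compatibility with the associativity and unit constraints and with $u_{X\otimes Z}$ — which is handled by the standard formalism; I do not anticipate any real obstacle beyond this bookkeeping.
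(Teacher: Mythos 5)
Your argument is correct and is exactly the paper's (very terse) proof carried out in detail: the paper simply says the claim follows by passing to mates, unwinding definitions, and applying the zigzag identities, and your computation — unwinding $\alpha$ as the mate, using naturality of $u$ and of the monoidal structure map $\mu$, and collapsing $c_{L(Z)}\circ L(u_Z)$ via the triangle identity — is precisely that unwinding. No gaps.
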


\begin{proof}
Passing to the mates under the adjunction $L\dashv F$ and unwinding
the definitions, this follows from the zigzag identities.
\end{proof}
We can now characterize the smashing localizations among all localizations
as those that satisfy the projection formula (compare with \cite[Proposition 5.29]{MNN17}):
\begin{prop}
\label{prop:Idemp_Proj_Formula}A map $L\colon\mathcal{C}\to\mathcal{D}$
in $\calg(\cat)$ is a smashing localization if and only if it admits
a fully faithful right adjoint $F\colon\mathcal{D}\to\mathcal{C}$
(i.e. it is a localization) and the adjunction $L\dashv F$ satisfies
the projection formula.
\end{prop}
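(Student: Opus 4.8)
The plan is to establish the two implications separately, with the forward direction (smashing $\Rightarrow$ projection formula) being the routine one and the reverse direction (projection formula $\Rightarrow$ smashing) being where the real work lies. For the forward direction, suppose $L$ is a smashing localization, so $\mathcal{D}\simeq\Mod_R(\mathcal{C})$ for an idempotent algebra $R$ with $L=R\otimes(-)$ and right adjoint $F$ the (fully faithful) forgetful functor. Then for $X\in\mathcal{C}$ and $Y\in\Mod_R(\mathcal{C})$, the map $\alpha\colon X\otimes F(Y)\to F(L(X)\otimes Y)$ becomes, after applying the forgetful functor (which is conservative and symmetric monoidal onto its image), the canonical map $X\otimes Y\to (R\otimes X)\otimes_R Y$; since $Y$ is already an $R$-module, $(R\otimes X)\otimes_R Y\simeq X\otimes Y$ and this map is an isomorphism. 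I would phrase this cleanly using \cite[Proposition 4.8.2.10]{ha} to identify the essential image of $F$ and the fact that for $R$-modules the relative tensor product $-\otimes_R-$ agrees with $R\otimes(-\otimes-)$.

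For the reverse direction, assume $L\colon\mathcal{C}\to\mathcal{D}$ is a localization (fully faithful right adjoint $F$) satisfying the projection formula. First I would set $R\coloneqq FL(\one_{\mathcal{C}})$ with its lax-symmetric-monoidal structure inherited from $F$, making it a commutative algebra in $\mathcal{C}$ with unit $u_{\one}\colon\one\to R$. The key claim is that $R$ is \emph{idempotent}, i.e.\ the multiplication $R\otimes R\to R$ is an isomorphism, equivalently (by \cite[Proposition 4.8.2.9]{ha}) that $1\otimes u\colon R\simeq R\otimes\one\to R\otimes R$ is an isomorphism. Applying the projection formula with $X=R$ and $Y=\one_{\mathcal{D}}$ (so $F(Y)=R$) gives an isomorphism $R\otimes R=R\otimes F(\one_{\mathcal{D}})\xrightarrow{\alpha}F(L(R)\otimes\one_{\mathcal{D}})=F(L(R))=FLFL(\one)$; since $F$ is fully faithful the counit $LF\to\Id$ is an isomorphism, so $FLFL(\one)\simeq FL(\one)=R$, and one checks via \lemref{Unit_Proj_Formula} that the composite $R\otimes R\to R$ obtained this way is the multiplication map. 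This shows $R$ is idempotent. Then I must identify $\mathcal{D}\simeq\Mod_R(\mathcal{C})$ as symmetric monoidal $\infty$-categories compatibly with $L$: the adjunction $L\dashv F$ factors through $\Mod_R(\mathcal{C})$ because $F(Y)$ is naturally an $R=FL(\one)$-module (again via the lax structure of $F$ and the projection formula identifying $R\otimes F(Y)\simeq F(L(Y))$-ish data), yielding a functor $\mathcal{D}\to\Mod_R(\mathcal{C})$; its inverse is induced by the universal property of $\Mod_R(\mathcal{C})$ as the initial $\mathcal{C}$-algebra category under $R$ (\cite[Section 4.8.2]{ha}).

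The main obstacle I anticipate is the bookkeeping in the reverse direction: promoting the projection-formula isomorphism to an \emph{equivalence of symmetric monoidal $\infty$-categories} $\mathcal{D}\simeq\Mod_R(\mathcal{C})$, rather than merely a statement about underlying objects. Concretely, one needs to check that the comparison functor $\mathcal{D}\to\Mod_R(\mathcal{C})$ is symmetric monoidal (not just lax), which uses the projection formula to see that $F$ of a tensor product in $\mathcal{D}$ is the relative tensor product over $R$ of the $F$-values; and one needs essential surjectivity and full faithfulness, both of which follow from $F$ being fully faithful with essential image exactly the $R$-modules—here I would invoke \cite[Proposition 4.8.2.10]{ha} once $R$ is known to be idempotent, so that $\Mod_R(\mathcal{C})\hookrightarrow\mathcal{C}$ has the same essential image as $F\colon\mathcal{D}\hookrightarrow\mathcal{C}$, forcing the comparison to be an equivalence. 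The coherence of all of this is standard once the idempotence of $R$ is in hand, so I expect the crux to be the clean derivation of idempotence of $FL(\one)$ from the projection formula together with \lemref{Unit_Proj_Formula}, and then citing \cite{ha} for the rest.
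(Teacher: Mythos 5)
Your proposal is correct and follows essentially the same route as the paper: the forward direction rests on the observation that $X\otimes F(M)$ is already an $R$-module (so the localization unit is invertible on it), and the reverse direction applies the projection formula at $Y=\one_{\mathcal{D}}$ together with \lemref{Unit_Proj_Formula} to see that $FL\simeq(-)\otimes R$ with unit induced by $\one\to R$, then cites \cite[Proposition 4.8.2.10]{ha} for the identification $\mathcal{D}\simeq\Mod_R(\mathcal{C})$. The only cosmetic difference is that you verify idempotence by the hands-on computation at $X=R$ (which does require the extra check that the resulting map is the multiplication, as you note), whereas the paper short-circuits this by invoking \cite[Proposition 4.8.2.4]{ha}, which states precisely that a unit map inducing a localization via tensoring exhibits an idempotent object.
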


\begin{proof}
In the ``only if'' direction, the right adjoint of $R\otimes(-)\colon\mathcal{C}\to\Mod_{R}(\mathcal{C})$
is the forgetful functor $F\colon\Mod_{R}(\mathcal{C})\to\mathcal{C}$,
which is fully faithful since $R$ is idempotent. For $X\in\mathcal{C}$
and $M\in\Mod_{R}(\mathcal{C})$, the projection formula transformation
is the composition
\[
X\otimes F(M)\oto{u_{(X\otimes F(M))}}FL(X\otimes F(M))\iso F(L(X)\otimes LF(M))\oto{L(X)\otimes c_{M}}F(L(X)\otimes M).
\]

The map $c_{M}$ is an isomorphism since $F$ is fully faithful. The
map $u_{(X\otimes F(M))}$ is an isomorphism because $X\otimes F(M)$
admits a structure of an $R$-module and hence in the essential image
of $L$.

For the ``if'' direction, consider the commutative ring 
\[
R=FL(\one_{\mathcal{C}})=F(\one_{\mathcal{D}}).
\]
By the projection formula, we have a natural isomorphism
\[
X\otimes R\simeq X\otimes F(\one_{\mathcal{D}})\simeq F(L(X)\otimes\one_{\mathcal{D}})\simeq FL(X).
\]
Since $F$ is fully faithful, the unit map $\Id\oto uFL$ exhibits
$FL\colon\mathcal{C}\to\mathcal{C}$ as a localization (as in \cite[Proposition 5.2.7.4]{htt}). Moreover, by \lemref{Unit_Proj_Formula},
the unit map $\Id\oto uFL$ is induced from tensoring with the unit
$\one_{\mathcal{C}}\oto uR$ of $R$. Thus, by  \cite[Proposition 4.8.2.4]{ha}, $R$ is
an idempotent ring. Moreover, by \cite[Proposition 4.8.2.10]{ha}, the forgetful functor
$\Mod_{R}(\mathcal{C})\to\mathcal{C}$ is a symmetric monoidal equivalence
onto the essential image of $FL$ with the localized symmetric monoidal
structure, which is equivalent to $\mathcal{D}$.
\end{proof}
\begin{rem}
\label{rem:Smashing_Recollement}Let $\mathcal{C}$ be presentably
symmetric monoidal and let $R$ be an idempotent algebra in $\mathcal{C}$.
The fully faithful forgetful functor $F\colon\Mod_{R}(\mathcal{C})\into\mathcal{C}$
admits also a \emph{right }adjoint. Hence, if $\mathcal{C}$ is moreover
\emph{stable}, then $F$ exhibits $\mathcal{C}$ as a recollement
of $\Mod_{R}(\mathcal{C})$ and its right orthogonal complement.
\end{rem}

\subsubsection{Poset structure}

Another nice characterization of idempotent algebras is as the $(-1)$-cotruncated
objects of $\calg(\mathcal{C})$: 
\begin{prop}
\label{prop:Idemp_Poset}Let $\mathcal{C}\in\calg(\cat)$. A commutative
algebra $R\in\calg(\mathcal{C})$ is idempotent, if and only if
for all $S\in\calg(\mathcal{C})$ the
space $\map_{\calg(\mathcal{C})}(R,S)$ is either empty or contractible. Moreover, it is non-empty if and
only if $S$ is an $R$-module.
\end{prop}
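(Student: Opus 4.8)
The plan is to reduce the first assertion to the observation that, in any symmetric monoidal $\infty$-category $\mathcal{C}$, the algebra $R\otimes R$ (with its usual multiplication) is the coproduct $R\sqcup R$ in $\calg(\mathcal{C})$ — a standard fact not requiring any cocompleteness hypothesis, cf.\ \cite{ha} — and that under this identification the multiplication $R\otimes R\oto{m}R$ is the fold (codiagonal) map, namely the unique algebra map restricting to $\Id_{R}$ along each of the two coproduct inclusions $i_{1},i_{2}\colon R\to R\otimes R$; this last point uses only that $R$ is commutative, together with the unit axioms, which give $m\circ i_{1}=m\circ i_{2}=\Id_{R}$. Hence $R$ is idempotent if and only if the fold map $R\sqcup R\to R$ is an isomorphism. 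First I would invoke the (co)Yoneda lemma in $\calg(\mathcal{C})$: the fold map is an isomorphism if and only if, for every $S\in\calg(\mathcal{C})$, precomposition with it induces an equivalence $\map_{\calg(\mathcal{C})}(R,S)\to\map_{\calg(\mathcal{C})}(R\otimes R,S)$. Since $\map_{\calg(\mathcal{C})}(R\otimes R,S)\simeq\map_{\calg(\mathcal{C})}(R,S)\times\map_{\calg(\mathcal{C})}(R,S)$ via the coproduct, and the fold map induces the diagonal under this equivalence, $R$ is idempotent if and only if the diagonal $\map_{\calg(\mathcal{C})}(R,S)\to\map_{\calg(\mathcal{C})}(R,S)^{2}$ is an equivalence for all $S$. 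Finally I would use the elementary fact that a space $X$ is $(-1)$-truncated — i.e.\ empty or contractible — if and only if its diagonal $X\to X\times X$ is an equivalence; this yields the stated equivalence.

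For the ``moreover'' clause, if $\map_{\calg(\mathcal{C})}(R,S)$ is non-empty I would choose an algebra map $f\colon R\to S$ and note that restriction of scalars along $f$ exhibits $S$ (indeed any $S$-module) as an $R$-module. Conversely, if $S$ is an $R$-module, then by the description of the essential image of the fully faithful forgetful functor $\Mod_{R}(\mathcal{C})\to\mathcal{C}$ recalled above \cite[Proposition 4.8.2.10]{ha}, the algebra $S$ lifts to an object of $\calg(\Mod_{R}(\mathcal{C}))$; since the induced forgetful functor $\calg(\Mod_{R}(\mathcal{C}))\to\calg(\mathcal{C})$ is fully faithful and carries the unit $\one_{\Mod_{R}(\mathcal{C})}=R$ to $R$, we obtain $\map_{\calg(\mathcal{C})}(R,S)\simeq\map_{\calg(\Mod_{R}(\mathcal{C}))}(\one,S)$, which is contractible because the unit object is initial in $\calg$ of any symmetric monoidal $\infty$-category. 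Together these show $\map_{\calg(\mathcal{C})}(R,S)$ is non-empty exactly when $S$ is an $R$-module, and contractible in that case.

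I do not anticipate a genuine obstacle; the substance is in assembling standard inputs, and the step needing the most care is the identification of $R\otimes R$ with the coproduct $R\sqcup R$ in $\calg(\mathcal{C})$ and of $m$ with the fold map in the generality of an arbitrary symmetric monoidal $\infty$-category. Once that is in place, the argument is just the Yoneda lemma plus the triviality that ``diagonal is an equivalence'' characterises subterminal spaces. As an alternative to the module-theoretic half of the converse, one can instead take $S=R\otimes R$ directly: the hypothesis forces $\map_{\calg(\mathcal{C})}(R,R\otimes R)$ to be contractible, hence $i_{1}\simeq i_{2}$; comparing $i_{1}\circ m$ with $\Id_{R\otimes R}$ through the coproduct equivalence $\map_{\calg(\mathcal{C})}(R\otimes R,R\otimes R)\simeq\map_{\calg(\mathcal{C})}(R,R\otimes R)^{2}$ — the former restricts to $(i_{1},i_{1})$ and the latter to $(i_{1},i_{2})$ along the two inclusions, using $m\circ i_{j}=\Id_{R}$ — shows $i_{1}\circ m\simeq\Id_{R\otimes R}$, so $m$ is an equivalence; this route avoids $\Mod_{R}(\mathcal{C})$ entirely.
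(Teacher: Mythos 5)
Your proof is correct, and the core of the argument is the same as the paper's: identify $R\otimes R$ with the coproduct $R\sqcup R$ in $\calg(\mathcal{C})$ and the multiplication with the fold map, so that idempotency becomes $(-1)$-cotruncatedness of $R$, i.e.\ the condition that every $\map_{\calg(\mathcal{C})}(R,S)$ is $(-1)$-truncated. The paper cites the dual of \cite[Lemma 5.5.6.15]{htt} for this last equivalence (with a remark that the only limit needed is the one defining the diagonal, which here is the coproduct defining the fold map), whereas you spell out the Yoneda/diagonal argument directly; that is a presentational difference only, and your version has the virtue of making the absence of any (co)completeness hypothesis manifest. The one genuine divergence is in the converse of the ``moreover'' clause: the paper exhibits an explicit algebra map $(u_{R}\otimes 1)^{-1}\circ(1\otimes u_{S})\colon R\to S$ using that $u_{R}\otimes 1\colon S\to R\otimes S$ is invertible when $S$ is an $R$-module, while you lift $S$ to $\calg(\Mod_{R}(\mathcal{C}))$ and invoke initiality of the unit $R$ there --- which is precisely the argument the paper uses for the noncommutative analogue, \propref{Idemp_Alg}. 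Your route gives contractibility and non-emptiness in one stroke (at the cost of invoking that the symmetric monoidal localization $\Mod_{R}(\mathcal{C})\subseteq\mathcal{C}$ induces a fully faithful functor on $\calg$), while the paper's explicit map is more elementary; your closing alternative via $S=R\otimes R$ is also a valid, self-contained way to recover idempotency from the mapping-space condition.
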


\begin{proof}
By \cite[Proposition 3.2.4.7]{ha}, the tensor product of commutative algebras is the coproduct
in $\calg(\mathcal{C})$. Moreover, the multiplication map 
\[
R\sqcup R=R\otimes R\oto mR
\]
is the categorical fold map. Thus, it follows by the dual of \cite[Lemma .5.5.6.15]{htt},
that $R$ is idempotent if and only if it is $(-1)$-cotruncated.
We note that \cite[Lemma .5.5.6.15]{htt} requires the $\infty$-category to admit
finite limits; however, the only limit used in the proof is the one defining the diagonal map, which in our case corresponds to
the coproduct defining the fold map. Now, if there exists a map $R\to S$
in $\calg(\mathcal{C})$, then $S$ is an $R$-algebra and in particular
an $R$-module. Conversely, for every $S\in\calg(\mathcal{C})$, we
have maps
\[
R=R\otimes\one\oto{1\otimes u_{S}}R\otimes S
\]
\[
S=\one\otimes S\oto{u_{R}\otimes1}R\otimes S
\]
in $\calg(\mathcal{C})$. If $S$ is an $R$-module, then the map
$u_{R}\otimes1$ is an isomorphism, and thus 
\[
(u_{R}\otimes1)^{-1}\circ(1\otimes u_{S})\colon R\to S
\]
is a map in $\calg(\mathcal{C})$.
\end{proof}
We also have the following non-commutative analogue of \propref{Idemp_Poset}:
\begin{prop}
\label{prop:Idemp_Alg}Let $\mathcal{C}\in\calg(\cat)$ and let $R\in\calg^{\mathrm{idem}}(\mathcal{C})$. For every algebra $S\in\alg(\mathcal{C})$, the space $\map_{\alg(\mathcal{C})}(R,S)$
is either empty or contractible and it is non-empty if and only if
$S$ is an $R$-module.
\end{prop}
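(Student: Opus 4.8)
The plan is to imitate the proof of \propref{Idemp_Poset}, but since the tensor product is not the coproduct in $\alg(\mathcal{C})$, I replace the $(-1)$-cotruncatedness argument used there by a direct analysis carried out inside $\Mod_{R}(\mathcal{C})$. Since $R$ is an idempotent commutative algebra, the forgetful functor $\Mod_{R}(\mathcal{C})\to\mathcal{C}$ is fully faithful with essential image the full subcategory of $R$-modules, i.e. of those $Y$ for which $Y\to Y\otimes R$ is an isomorphism \cite[Proposition 4.8.2.10]{ha}; moreover, precisely because $R\otimes R\iso R$, the relative tensor product on $\Mod_{R}(\mathcal{C})$ agrees with the restriction of $\otimes$, so this is a \emph{full symmetric monoidal subcategory} of $\mathcal{C}$, closed under $\otimes$, with unit $R$ (theory of idempotent algebras, \cite[\S4.8.2]{ha}). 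Applying $\alg(-)$ yields a fully faithful functor $\alg(\Mod_{R}(\mathcal{C}))\into\alg(\mathcal{C})$ whose essential image consists exactly of the $S\in\alg(\mathcal{C})$ whose underlying object is an $R$-module.

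Granting this, the proposition follows quickly. For the clause ``non-empty if and only if $S$ is an $R$-module'': if $f\colon R\to S$ is a map in $\alg(\mathcal{C})$, then the action $R\otimes S\oto{f\otimes 1}S\otimes S\to S$ endows the underlying object of $S$ with an $R$-module structure; conversely, if $S$ is an $R$-module then by the first paragraph $S$ lies in the essential image of $\alg(\Mod_{R}(\mathcal{C}))\into\alg(\mathcal{C})$. For the dichotomy, recall that the unit object of any monoidal $\infty$-category is initial in its $\infty$-category of algebras (\cite[\S3.2.1]{ha}); in particular $R=\one_{\Mod_{R}(\mathcal{C})}$ is initial in $\alg(\Mod_{R}(\mathcal{C}))$. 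Hence if the underlying object of $S$ is an $R$-module,
\[
\map_{\alg(\mathcal{C})}(R,S)\simeq\map_{\alg(\Mod_{R}(\mathcal{C}))}(R,S)\simeq\pt,
\]
the first equivalence by full faithfulness and the second by initiality; and if the underlying object of $S$ is not an $R$-module, then $\map_{\alg(\mathcal{C})}(R,S)$ must be empty, since any point of it would produce an $R$-module structure on the underlying object of $S$ by the first clause. This is exactly the asserted statement.

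The only step requiring genuine care is the identification of $\alg(\Mod_{R}(\mathcal{C}))$ with the full subcategory of $\alg(\mathcal{C})$ spanned by the algebras with underlying $R$-module, and this is precisely where idempotence of $R$ is used: without it the forgetful functor $\Mod_{R}(\mathcal{C})\to\mathcal{C}$ is only lax symmetric monoidal, and the identification fails. Concretely, full faithfulness of $\alg(\Mod_{R}(\mathcal{C}))\into\alg(\mathcal{C})$ is immediate since algebra mapping spaces are computed as limits of mapping spaces in the underlying categories, which $\Mod_{R}(\mathcal{C})\into\mathcal{C}$ preserves; and the statement about essential images holds because $\Mod_{R}(\mathcal{C})\ss\mathcal{C}$ is a full subcategory closed under all tensor powers, so that any $\bb E_{1}$-structure on an object of $\mathcal{C}$ lying in $\Mod_{R}(\mathcal{C})$ lifts (uniquely) to $\Mod_{R}(\mathcal{C})$. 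Everything else is formal manipulation of adjunctions. One could also phrase the same argument via the smashing localization $R\otimes(-)$ of \propref{Idemp_Proj_Formula}, localizing $\alg(\mathcal{C})$ at the morphisms inverted by it.
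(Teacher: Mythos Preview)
Your proof is correct and takes essentially the same approach as the paper: both identify $\alg(\Mod_{R}(\mathcal{C}))$ with a full subcategory of $\alg(\mathcal{C})$ via the idempotence of $R$, use that $R$ is initial there, and conclude $\map_{\alg(\mathcal{C})}(R,S)\simeq\map_{\alg(\Mod_{R}(\mathcal{C}))}(R,S)\simeq\pt$ when $S$ is an $R$-module. The paper's version is considerably terser, simply asserting the key equivalence and citing the analogous step in \propref{Idemp_Poset} for the ``map implies module'' direction, whereas you spell out why $\Mod_{R}(\mathcal{C})\into\mathcal{C}$ being a full symmetric monoidal subcategory yields full faithfulness and the essential image description at the level of $\alg(-)$.
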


\begin{proof}
As in the proof of \propref{Idemp_Poset}, $S$ is an $R$-module
if and only if there exists a map $R\to S$ in $\alg(\mathcal{C})$.
If $S$ is an $R$-module, then $S\in\alg(\Mod_{R}(\mathcal{C}))$
and 
\[
\map_{\alg(\mathcal{C})}(R,S)\simeq\map_{\alg(\Mod_{R}(\mathcal{C}))}(R,S)\simeq\pt,
\]
since $R$ is the initial object of $\alg_{R}(\mathcal{C})=\alg(\Mod_{R}(\mathcal{C}))$.
\end{proof}
As a consequence of \propref{Idemp_Poset}, the $\infty$-category $\calg^{\mathrm{idem}}(\mathcal{C})$ of idempotent algebras
is a  \emph{poset}. This poset admits binary meets:
\begin{prop}
\label{prop:Idemp_Tensor}Let $\mathcal{C}\in\calg(\cat)$ and $R,S \in  \calg^{\mathrm{idem}}(\mathcal{C})$. Then $R\otimes S$ is an
idempotent algebra which classifies the conjunction of the properties
classified by $R$ and $S$.
\end{prop}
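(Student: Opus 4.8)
The plan is to verify the two assertions separately: first that $R \otimes S$ is an idempotent algebra, and second that it classifies the conjunction of the two properties. For the first part, I would work directly with the definition of idempotent object via \cite[Proposition 4.8.2.9]{ha}. Since $R$ and $S$ are commutative algebras, $R \otimes S$ carries a natural commutative algebra structure with unit $u_{R \otimes S} = u_R \otimes u_S \colon \one \to R \otimes S$. To show this is idempotent, it suffices to check that the multiplication map $(R \otimes S) \otimes (R \otimes S) \to R \otimes S$ is an isomorphism. Using the symmetry of the monoidal structure to rearrange factors, this multiplication map is identified with $(m_R \otimes m_S) \colon (R \otimes R) \otimes (S \otimes S) \to R \otimes S$, which is an isomorphism because $m_R$ and $m_S$ are isomorphisms by the idempotence of $R$ and $S$ respectively. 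Alternatively, and perhaps more cleanly, I would invoke \propref{Idemp_Poset}: it suffices to show that for every $T \in \calg(\mathcal{C})$ the space $\map_{\calg(\mathcal{C})}(R \otimes S, T)$ is empty or contractible. Since $R \otimes S$ is the coproduct $R \sqcup S$ in $\calg(\mathcal{C})$ (by \cite[Proposition 3.2.4.7]{ha}), this mapping space is $\map_{\calg(\mathcal{C})}(R,T) \times \map_{\calg(\mathcal{C})}(S,T)$, a product of two spaces each empty or contractible by \propref{Idemp_Poset} applied to $R$ and to $S$; hence it is empty or contractible as well.

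For the second part — that $R \otimes S$ classifies the conjunction — I would unwind what "classifies" means: an object $X \in \mathcal{C}$ is an $(R\otimes S)$-module if and only if it is both an $R$-module and an $S$-module. One direction: if $X$ is an $(R \otimes S)$-module, then restriction of scalars along the algebra maps $R \to R \otimes S$ and $S \to R \otimes S$ (which exist since $R \otimes S$ is an $R$-algebra and an $S$-algebra) makes $X$ into an $R$-module and an $S$-module. For the converse, suppose $X$ is both an $R$-module and an $S$-module. By \cite[Proposition 4.8.2.10]{ha}, being an $R$-module is equivalent to the map $X \otimes \one \xrightarrow{1 \otimes u_R} X \otimes R$ being an isomorphism, and similarly for $S$. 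I want to conclude that $X \otimes \one \xrightarrow{1 \otimes u_{R \otimes S}} X \otimes (R \otimes S)$ is an isomorphism. Factor this map as
\[
X \xrightarrow{1 \otimes u_R} X \otimes R \xrightarrow{1 \otimes u_R \otimes u_S \,\cong\, (1 \otimes \mathrm{id}_R) \otimes u_S} X \otimes R \otimes S,
\]
more precisely as the composite of $1 \otimes u_R \colon X \to X \otimes R$ with $(1_X \otimes 1_R) \otimes u_S \colon X \otimes R \to X \otimes R \otimes S$. The first map is an isomorphism since $X$ is an $R$-module. The second map is obtained from the map $X \xrightarrow{1 \otimes u_S} X \otimes S$ (an isomorphism since $X$ is an $S$-module) by applying the functor $(-) \otimes R$, hence is also an isomorphism. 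Therefore the composite $1 \otimes u_{R \otimes S}$ is an isomorphism, so $X$ is an $(R \otimes S)$-module by \cite[Proposition 4.8.2.10]{ha} again.

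The argument is essentially formal, so I do not anticipate a serious obstacle; the only point requiring a little care is the bookkeeping of the coherence isomorphisms when identifying the multiplication map on $R \otimes S$ with $m_R \otimes m_S$ and when factoring $1 \otimes u_{R \otimes S}$, i.e. making sure the associativity and symmetry constraints of the symmetric monoidal structure are threaded through correctly. This is exactly the kind of step where the poset characterization of \propref{Idemp_Poset} shines, since it replaces the explicit diagram chase for idempotence with a clean statement about mapping spaces; I would therefore present the idempotence via \propref{Idemp_Poset} and only the module-classification claim via the explicit factorization above.
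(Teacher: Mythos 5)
Your proof is correct, and the two halves relate to the paper differently. For the module--classification half you do essentially what the paper does: restriction of scalars along the two coproduct inclusions in one direction, and in the other direction the factorization of $1\otimes u_{R\otimes S}$ through $1\otimes u_{R}$ followed by $(-)\otimes u_{S}$, which is verbatim the paper's argument. For the idempotence half, however, the paper argues directly from the definition of an idempotent \emph{object}: it factors $u_{R\otimes S}$ as $(u_{R}\otimes 1)\circ$(swap)$\circ(1\otimes u_{S})$ and checks that each factor becomes an isomorphism after tensoring with $R\otimes S$. Your preferred route instead invokes \propref{Idemp_Poset}: since $R\otimes S$ is the coproduct $R\sqcup S$ in $\calg(\mathcal{C})$, the space $\map_{\calg(\mathcal{C})}(R\otimes S,T)\simeq\map_{\calg(\mathcal{C})}(R,T)\times\map_{\calg(\mathcal{C})}(S,T)$ is a product of $(-1)$-truncated spaces, hence $(-1)$-truncated, so $R\otimes S$ is $(-1)$-cotruncated and therefore idempotent. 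This is a clean and valid alternative that trades the diagram chase (and the coherence bookkeeping you rightly flag in your first variant, where the multiplication is identified with $m_{R}\otimes m_{S}$) for the universal property of the coproduct; the paper's direct argument buys slightly more in that the same factorization of the unit is then reused word for word in the converse direction of the classification step, whereas your argument establishes idempotence and classification by two independent mechanisms.
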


\begin{proof}
Let $\one\oto{u_{R}}R$ be the unit of $R$ and $\one\oto{u_{S}}S$
the unit of $S$. We consider the composition
\[
u_{R\otimes S}\colon\one\simeq\one\otimes\one\oto{u_{R}\otimes1}R\otimes\one\oto{1\otimes u_{S}}R\otimes S.
\]
After tensoring with $R$ the first map becomes an isomorphism and
after tensoring with $S$ the second map becomes an isomorphism.
Thus, $u_{R\otimes S}$ exhibits $R\otimes S$ as an idempotent object.
Given an $R\otimes S$-module $M$, tensoring $\one\oto{u_{R\otimes S}}R\otimes S$
with $M$ is an isomorphism. Thus, $M\simeq R\otimes S\otimes M$
and hence $M$ is both an $R$-module and an $S$-module. Conversely,
tensoring $\one\oto{u_{R\otimes S}}R\otimes S$ with $M$ is given
by the composition:

\[
\one\otimes\one\otimes M\oto{u_{R}\otimes1\otimes1}R\otimes\one\otimes M\oto{1\otimes u_{S}\otimes1}R\otimes S\otimes M
\]
Hence, if $M$ is both an $R$-module and an $S$-module, then tensoring
$\one\oto{u_{R}}R$ and $\one\oto{u_{S}}S$ with $M$ is an isomorphism,
and so tensoring $\one\oto{u_{R\otimes\mathcal{S}}}R\otimes S$ with
$M$ is an isomorphism as well. Thus, $M$ is an $R\otimes S$-module.
\end{proof}
Idempotent objects are also closed under sifted colimits in $\mathcal{C}$.
\begin{prop}\label{prop:sifted_mode}
Let $\mathcal{C}\in\calg(\cat)$, and let $\mathcal{J}$ be a sifted $\infty$-category  such that $\mathcal{C}$ is compatible with $\mathcal{J}$-indexed colimits. Then
\begin{enumerate}
\item The $\infty$-category $\calg^{\mathrm{idem}}(\mathcal{C})$ admits $\mathcal{J}$-indexed colimits.
\item The forgetful functor $\calg^{\mathrm{idem}}(\mathcal{C}) \to \mathcal{C}$ preserves $\mathcal{J}$-indexed colimits.
\item Given a functor $F\colon \mathcal{J} \to \calg^{\mathrm{idem}}(\mathcal{C})$, the idempotent algebra $\colim  F$ classifies the conjunction of the properties classified by $F(j)$ for all $j \in \mathcal{J}$.
\end{enumerate}
\end{prop}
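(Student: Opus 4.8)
The plan is to mimic the proof of \propref{Idemp_Tensor}, exploiting the fact that in a sifted $\infty$-category the diagonal functor $\mathcal{J}\to\mathcal{J}\times\mathcal{J}$ is cofinal, so that $\colim_{\mathcal{J}}(X_j\otimes X_j)\simeq(\colim_{\mathcal{J}}X_j)\otimes(\colim_{\mathcal{J}}X_j)$ whenever the tensor product is compatible with $\mathcal{J}$-indexed colimits. First I would reduce everything to a single computation. Let $F\colon\mathcal{J}\to\calg^{\mathrm{idem}}(\mathcal{C})$ be a diagram, write $u_j\colon\one\to F(j)$ for the units, and set $R\coloneqq\colim_{\mathcal{J}}F(j)$, the colimit computed in $\calg(\mathcal{C})$ (which exists since $\mathcal{C}$ is compatible with $\mathcal{J}$-indexed colimits, hence so is $\calg(\mathcal{C})$ by \cite[Corollary 3.2.3.2]{ha}, and the forgetful functor $\calg(\mathcal{C})\to\mathcal{C}$ preserves sifted colimits by \cite[Proposition 3.2.3.1]{ha}). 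The unit $u\colon\one\to R$ is the colimit of the $u_j$. Tensoring the cocone $\{u_j\}$ with $R$, using compatibility with $\mathcal{J}$-colimits in the second variable and siftedness to identify $R\otimes R\simeq\colim_{\mathcal{J}}(F(j)\otimes R)\simeq\colim_{\mathcal{J}}(F(j)\otimes F(j))$, reduces the claim that $1\otimes u\colon R\to R\otimes R$ is an isomorphism to the statement that each $F(j)$ is idempotent, which holds by hypothesis. This proves that $R$ is an idempotent object, hence by \cite[Proposition 4.8.2.9]{ha} an idempotent algebra, and since the forgetful functor $\calg(\mathcal{C})\to\mathcal{C}$ creates this colimit and idempotent algebras form a full subcategory, $R$ is the colimit of $F$ in $\calg^{\mathrm{idem}}(\mathcal{C})$ as well. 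This simultaneously gives (1) and (2).

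For (3), I would argue exactly as in the second half of the proof of \propref{Idemp_Tensor}. If $M$ is an $R$-module, then tensoring $u\colon\one\to R$ with $M$ is an isomorphism; precomposing with the map $\one\to F(j)\to R$ exhibits tensoring with $u_j$ as a (split) factor of an isomorphism, but since $F(j)$ is idempotent the map $M\to F(j)\otimes M$ is itself the relevant localization unit, so one concludes $M$ is an $F(j)$-module for every $j$. Conversely, if $M$ is an $F(j)$-module for all $j$, then $F(j)\otimes M\to R\otimes M$ is obtained by tensoring $M$ with the cocone maps $F(j)\to R$; taking the colimit over $\mathcal{J}$ and using that $M$ is already $F(j)$-local so that $M\iso F(j)\otimes M$ compatibly in $j$, one gets $M\iso\colim_{\mathcal{J}}(F(j)\otimes M)\simeq R\otimes M$, i.e. $M$ is an $R$-module. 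Here the only subtlety is checking that these identifications are compatible with the transition maps of the diagram, which is where siftedness and the monoidality of the forgetful functor get used; alternatively one can phrase this via \propref{Idemp_Poset}, noting that $\map_{\calg(\mathcal{C})}(F(j),S)$ is $(-1)$-truncated for all $j$, so $\map_{\calg(\mathcal{C})}(R,S)\simeq\lim_{\mathcal{J}^{\op}}\map_{\calg(\mathcal{C})}(F(j),S)$ is a limit of $(-1)$-truncated spaces, hence $(-1)$-truncated and nonempty precisely when each factor is, i.e. when $S$ is an $F(j)$-module for all $j$.

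I expect the main obstacle to be purely bookkeeping: making the cofinality of the diagonal $\mathcal{J}\to\mathcal{J}^{\times k}$ interact cleanly with the lax symmetric monoidal structure on the colimit functor, so that the equivalence $\colim(F(j)\otimes F(j))\simeq R\otimes R$ is genuinely the one compatible with the algebra structure and with $u$. Once that identification is in place, every step is a direct transcription of the corresponding step in \propref{Idemp_Tensor} (which is the case $\mathcal{J}=\{\ast,\ast\}$, a discrete — hence sifted only after replacing by a cofinal sifted category, but morally the same) with "$\otimes$" replaced by "$\mathcal{J}$-colimit", and the poset reformulation via \propref{Idemp_Poset} sidesteps even this by turning the whole question into an elementary statement about limits of $(-1)$-truncated spaces.
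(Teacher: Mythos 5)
Your proposal is correct and follows essentially the same route as the paper: reduce (1) and (2) to showing that $\calg^{\mathrm{idem}}(\mathcal{C})\subseteq\calg(\mathcal{C})$ is closed under $\mathcal{J}$-indexed colimits via cofinality of the diagonal $\mathcal{J}\to\mathcal{J}\times\mathcal{J}$, and prove (3) by exhibiting $1_M\otimes u$ as the colimit of the maps $1_M\otimes u_j$. The only (cosmetic) differences are that the paper tests idempotency on the multiplication maps $F(j)\otimes F(j)\to F(j)$ --- which form an honest natural transformation of $\mathcal{J}$-diagrams, dissolving the bookkeeping issue you flag about $1\otimes u$ --- and gets the forward direction of (3) immediately by restriction of scalars along the cocone maps $F(j)\to\colim F$ rather than by your retract argument (note also that your alternative via the poset criterion only covers commutative algebras $S$, not arbitrary modules $M$).
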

\begin{proof}
By \cite[Corollary 3.2.3.2]{ha} the $\infty$-category $\calg(\mathcal{C})$ admits $\mathcal{J}$-indexed colimits and the forgetful functor $\calg(\mathcal{C}) \to \mathcal{C}$ preserves  $\mathcal{J}$-indexed colimits. 
Thus, to prove (1) and (2) it  suffices to show that $\calg^{\mathrm{idem}}(\mathcal{C}) \subseteq \calg(\mathcal{C})$ is closed under $\mathcal{J}$-indexed colimits. 
Since $\mathcal{J}$ is sifted, the diagonal map $\mathcal{J} \to \mathcal{J}\times \mathcal{J}$ is cofinal. Therefore,  given a functor $F\colon \mathcal{J} \to \calg(\mathcal{C})$,  the multiplication map 
\[
\colim F \otimes \colim F \to \colim F
\]
can be identified with the colimit of the multiplication maps $F(j)\otimes F(j) \to F(j)$ for $j \in \mathcal{J}$. Hence, if $F(j)$ is an idempotent algebra for every $j\in \mathcal{J}$, so is $\colim F$. We shall now prove (3). 
First,  for every $j \in \mathcal{J}$ there is a canonical algebra map $F(j) \to \colim F$. Thus, every ($\colim F$)-module admits an $F(j)$-module structure for every $j \in \mathcal{J}$. 
It remains to prove that if $M \in \mathcal{C}$ admits an $F(j)$-module  structure for every $j \in \mathcal{J}$, then $$ M\otimes \one \xrightarrow{1 \otimes u} M \otimes \colim F$$ is an isomorphism. 
Since $\mathcal{C}$ is compatible with $\mathcal{J}$-indexed colimits, the map $1 \otimes u$ above is the colimit of the isomorphisms $ M\otimes \one \xrightarrow{1 \otimes u_j}  M \otimes  F(j)$.
\end{proof}
Consequently, under mild conditions on $\mathcal{C}$, the poset $\calg^{\mathrm{idem}}(\mathcal{C})$ admits arbitrary  meets.
\begin{cor}\label{cor:coco_mode}
Let $\mathcal{C}\in\calg(\cat)$  which is compatible with filtered colimits. Then the poset $\calg^{\mathrm{idem}}(\mathcal{C})$  is cocomplete. Moreover,  given a functor $F\colon \mathcal{J} \to \calg^{\mathrm{idem}}(\mathcal{C})$, the idempotent algebra $$\colim  F = \sup \limits_{ j \in \mathcal{J}}F(j)$$ classifies the conjunction of the properties classified by $F(j)$ for all $j \in \mathcal{J}$.
\end{cor}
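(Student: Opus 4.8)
The plan is to build on the fact, established in \propref{Idemp_Poset}, that $\calg^{\mathrm{idem}}(\mathcal{C})$ is a \emph{poset}: in a poset every colimit is a supremum, a diagram's colimit depends only on the set of objects appearing in it (the morphism spaces being contractible), and the poset is cocomplete as soon as it admits an initial object, binary joins, and filtered joins, since an arbitrary supremum $\sup_{i\in I}R_{i}$ is the filtered join $\sup_{I_{0}}\bigl(\bigvee_{i\in I_{0}}R_{i}\bigr)$ taken over the directed poset of finite subsets $I_{0}\ss I$. Thus it suffices to exhibit these three kinds of joins inside $\calg^{\mathrm{idem}}(\mathcal{C})$, together with the fact that they classify the corresponding conjunctions of properties.

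First I would treat the initial object: the unit $\one$ with its tautological algebra structure is idempotent, admits a (necessarily unique) map to every commutative algebra by \propref{Idemp_Poset}, hence is initial in $\calg^{\mathrm{idem}}(\mathcal{C})$, and it classifies the empty conjunction since every object is a $\one$-module. For binary joins, \propref{Idemp_Tensor} says that for $R,S\in\calg^{\mathrm{idem}}(\mathcal{C})$ the tensor $R\otimes S$ is idempotent and that an object is an $(R\otimes S)$-module if and only if it is both an $R$-module and an $S$-module; under the poset structure (where $R\le T$ means $T$ is an $R$-module) this is exactly the statement that $R\otimes S=R\vee S$ and that it classifies the conjunction. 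Iterating, for a finite family $\{R_{i}\}_{i\in I_{0}}$ the iterated tensor $\bigotimes_{i\in I_{0}}R_{i}$ is the join $\bigvee_{i\in I_{0}}R_{i}$ and classifies the conjunction over $I_{0}$; since joins are monotone, the assignment $I_{0}\mapsto\bigotimes_{i\in I_{0}}R_{i}$ is monotone in $I_{0}$ and therefore defines a genuine functor from the directed poset of finite subsets of $I$ into $\calg^{\mathrm{idem}}(\mathcal{C})$.

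For filtered joins I would invoke \propref{sifted_mode}: a filtered $\infty$-category is sifted, and since $\mathcal{C}$ is compatible with filtered colimits, parts (1) and (2) of that proposition show $\calg^{\mathrm{idem}}(\mathcal{C})$ is closed under filtered colimits in $\calg(\mathcal{C})$ (computed on underlying objects in $\mathcal{C}$), while part (3) identifies such a colimit as classifying the conjunction of the properties of its terms, i.e.\ as their supremum. Applying this to the finite-subset diagram of the previous paragraph produces $\colim F=\sup_{i\in I}R_{i}$ as an idempotent algebra classifying the conjunction over all of $I$ — a conjunction of conjunctions over finite subsets being the total conjunction. Running this for every small index set gives cocompleteness, and since a general diagram $F\colon\mathcal{J}\to\calg^{\mathrm{idem}}(\mathcal{C})$ into a poset has colimit equal to the supremum $\sup_{j\in\mathcal{J}}F(j)$ of its values, the ``moreover'' clause follows at once. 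The only real bookkeeping point — which I view as the main, though mild, obstacle — is the reduction of an arbitrary supremum to a filtered supremum of finite joins and the verification that the finite-subset diagram is well defined as a functor into the poset (monotonicity of its transition maps), so that \propref{sifted_mode} applies cleanly; everything else is a direct assembly of \propref{Idemp_Poset}, \propref{Idemp_Tensor}, and \propref{sifted_mode}.
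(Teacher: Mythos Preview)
Your proof is correct and follows essentially the same approach as the paper: both establish cocompleteness by exhibiting an initial object ($\one$), binary joins (via \propref{Idemp_Tensor}), and filtered joins (via \propref{sifted_mode}, using that filtered $\infty$-categories are sifted), then appeal to the poset structure to conclude. The paper's version is terser, leaving the reduction of arbitrary suprema to filtered suprema of finite joins implicit, whereas you spell out this bookkeeping explicitly.
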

\begin{proof}
First, $\calg^{\mathrm{idem}}(\mathcal{C})$ admits an initial object which is $\one_\mathcal{C}$. Second, by \propref{Idemp_Tensor}, $\calg^{\mathrm{idem}}$ admits binary coproducts. Since every filtered $\infty$-category is sifted by \cite[Example 5.5.8.3]{htt} we get by \propref{sifted_mode} that $\calg^{\mathrm{idem}}(\mathcal{C})$ admits filtered colimits. Since $\calg^{\mathrm{idem}}(\mathcal{C})$ is a poset, we deduce that it is cocomplete. Furthermore, \propref{Idemp_Tensor} and \propref{sifted_mode} also imply that the colimit classifies the conjunction of the properties classified by the idempotent algebras in the diagram.
\end{proof}

Under some conditions, \emph{disjoint} idempotent algebras have also
binary joins.
\begin{prop}
\label{prop:Idem_Prod}Let $\mathcal{C}\in\calg(\cat)$ which is compatible with all small colimits and is $0$-semiadditive. Let $R,S$ be idempotent algebras in $\mathcal{C}$.
If $R\otimes S\simeq0$, then $R\times S$ is an idempotent algebra,
which classifies the property of an object $X\in\mathcal{C}$ to be
of the form $X_{R}\oplus X_{S}$, where $X_{R}$ is an $R$-module
and $X_{S}$ is an $S$-module.
\end{prop}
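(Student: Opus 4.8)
The plan is to verify directly that the unit map exhibits $R\times S$ as an idempotent object, and then to identify the module category. First I would describe the unit. Since $\mathcal{C}$ is $0$-semiadditive, finite products and coproducts agree, so $R\times S\simeq R\oplus S$ and I write $u\colon\one\to R\times S$ for the map whose two components are the units $u_R\colon\one\to R$ and $u_S\colon\one\to S$. To show $1\otimes u\colon (R\times S)\otimes\one\to(R\times S)\otimes(R\times S)$ is an isomorphism, I would use that $\otimes$ distributes over the finite biproduct in each variable (because $\mathcal{C}$ is compatible with small colimits and is $0$-semiadditive), so the target decomposes as $(R\otimes R)\oplus(R\otimes S)\oplus(S\otimes R)\oplus(S\otimes S)$. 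The hypothesis $R\otimes S\simeq 0$ (hence also $S\otimes R\simeq 0$) kills the cross terms, and the map $1\otimes u$ becomes, on the $R$-summand, $R\otimes u_R\colon R\otimes\one\to R\otimes R$, which is an isomorphism since $R$ is idempotent, and similarly on the $S$-summand. Hence $R\times S$ is idempotent, and by \cite[Proposition 4.8.2.9]{ha} it carries a canonical commutative algebra structure with unit $u$.

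Next I would identify $\Mod_{R\times S}(\mathcal{C})$, or equivalently (by \cite[Proposition 4.8.2.10]{ha}) the full subcategory of $X\in\mathcal{C}$ for which $X\otimes u\colon X\otimes\one\to X\otimes(R\times S)$ is an isomorphism. Tensoring $u$ with $X$ and using distributivity, $X\otimes(R\times S)\simeq (X\otimes R)\oplus(X\otimes S)$, and under this identification $X\otimes u$ has components $X\otimes u_R$ and $X\otimes u_S$. Now set $X_R\coloneqq R\otimes X$ and $X_S\coloneqq S\otimes X$; these are an $R$-module and an $S$-module respectively, and the map $X\to X_R\oplus X_S$ is the candidate decomposition. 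I would argue: if $X$ is an $(R\times S)$-module then $X\otimes u$ is an isomorphism, so $X\simeq (X\otimes R)\oplus(X\otimes S)=X_R\oplus X_S$ of the desired form. Conversely, if $X\simeq X_R\oplus X_S$ with $X_R$ an $R$-module and $X_S$ an $S$-module, then I need $X\otimes u$ to be an isomorphism. On the $X_R$-summand, $X_R\otimes u_R$ is an isomorphism (as $X_R$ is an $R$-module), while $X_R\otimes u_S\colon X_R\otimes\one\to X_R\otimes S$ lands in $R\otimes S\otimes(\text{stuff})\simeq 0$ so its source $X_R$ must also vanish? — this is not quite right, so instead I would argue more carefully that $X_R\otimes S\simeq R\otimes X_R'\otimes S\simeq 0$ when $X_R$ is a retract of $R\otimes(-)$ applied to something; the cleanest route is that $X_R$ being an $R$-module means $X_R\simeq R\otimes X_R$, hence $X_R\otimes S\simeq R\otimes S\otimes X_R\simeq 0$. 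With that, $X\otimes(R\times S)\simeq (X_R\oplus X_S)\otimes(R\times S)$ collapses — using $X_R\otimes S\simeq 0$ and $X_S\otimes R\simeq 0$ — to $X_R\oplus X_S\simeq X$, compatibly with $X\otimes u$, so $X\otimes u$ is an isomorphism and $X$ is an $(R\times S)$-module.

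Finally I would note uniqueness of the decomposition: given $X\simeq X_R\oplus X_S$ with $X_R\in\Mod_R(\mathcal{C})$ and $X_S\in\Mod_S(\mathcal{C})$, applying $R\otimes(-)$ recovers $X_R$ (since $R\otimes X_R\simeq X_R$ and $R\otimes X_S\simeq 0$) and similarly $S\otimes(-)$ recovers $X_S$, so the summands are determined functorially by $X$. This also shows the decomposition is natural in $X$, which is implicit in the claim that $R\times S$ "classifies the property" in the sense of the section.

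The main obstacle I anticipate is the bookkeeping in showing that $X_R\otimes S\simeq 0$ for an $R$-module $X_R$ — the point being that "$X_R$ is an $R$-module" must be unpacked as $X_R\simeq R\otimes X_R$ via the idempotent unit, after which $X_R\otimes S\simeq R\otimes S\otimes X_R\simeq 0\otimes X_R\simeq 0$ follows from $R\otimes S\simeq 0$ and the fact that $\otimes$ preserves the zero object (which holds since it preserves finite colimits and $0$ is the empty colimit, or directly since $0\otimes Y$ is a retract of $0$). Everything else is a routine application of distributivity of $\otimes$ over biproducts together with the idempotence of $R$ and $S$ and \cite[Propositions 4.8.2.9, 4.8.2.10]{ha}.
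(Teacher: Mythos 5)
Your proof is correct, and it reaches the paper's conclusion by a partly different route. For idempotence you verify the defining condition on the unit map $1\otimes u$ directly, expanding $(R\times S)\otimes(R\times S)$ into four summands via distributivity of $\otimes$ over biproducts and killing the cross terms with $R\otimes S\simeq0$; the paper instead checks that the multiplication $m_{R\times S}=m_{R}\times m_{S}$ is an isomorphism, using that $\otimes$ preserves binary products in each variable. These are equivalent by \cite[Proposition 4.8.2.9]{ha}. The more substantive difference is in the second half: you characterize the $(R\times S)$-modules object-by-object via the criterion of \cite[Proposition 4.8.2.10]{ha}, with the key computation $X_{R}\otimes S\simeq R\otimes X_{R}\otimes S\simeq0$ (your mid-proof self-correction lands on exactly the right argument: ``$X_{R}$ is an $R$-module'' must be unpacked as $X_{R}\simeq R\otimes X_{R}$ before tensoring with $S$), whereas the paper constructs the extension-of-scalars functor $F\colon\Mod_{R\times S}(\mathcal{C})\to\Mod_{R}(\mathcal{C})\times\Mod_{S}(\mathcal{C})$, identifies its right adjoint $G(X_{R},X_{S})\simeq X_{R}\times X_{S}$ via \cite{AD}, and proves $F$ is an equivalence by showing $GF\simeq\Id$ (hence $F$ fully faithful by \lemref{Invertible_Monad}) and $G$ conservative. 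Your approach is more elementary and fully suffices for the statement as phrased; the paper's approach buys the slightly stronger packaging of an explicit adjoint equivalence $\Mod_{R\times S}(\mathcal{C})\simeq\Mod_{R}(\mathcal{C})\times\Mod_{S}(\mathcal{C})$, with the decomposition $X\simeq X_{R}\oplus X_{S}$ arising functorially --- a point you recover at the end by observing that $R\otimes(-)$ and $S\otimes(-)$ retrieve the two summands.
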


\begin{proof}
By assumption, the tensor product preserves binary coproducts in each variable. Since $\mathcal{C}$ is $0$-semiadditive, we get that the tensor product also preserves binary \emph{products} in each variable. Thus, 
\[
(R\otimes R)\times (S\otimes S)\simeq(R\times S)\otimes(R\times S)\oto{m_{R\times S}}R\times S
\]
coincides with $m_{R}\times m_{S}$, which is an isomorphism. Thus,
$m_{R\times S}$ is an isomorphism, and therefore, $R\times S$ is an
idempotent algebra. The projection maps $R\times S\to R$ and $R\times S\to S$
induce the extension of scalars functors
\[
\xymatrixrowsep{1em}
\xymatrixcolsep{4em}
\xymatrix{ & \Mod_{R\times S}(\mathcal{C})\ar[ld]_{F_{R}}\ar[rd]^{F_{S}}\\
\Mod_{R}(\mathcal{C}) &  & \Mod_{S}(\mathcal{C})
}
\]
which in turn induce a functor 
\[
F\colon\Mod_{R\times S}(\mathcal{C})\to\Mod_{R}(\mathcal{C})\times\Mod_{S}(\mathcal{C}).
\]
Since $F_{R}$ and $F_{S}$ are left adjoints, by \cite[Theorem B]{AD} the functor
$F$ admits a right adjoint $G$, which is given object-wise by $G(X_{R},X_{S})\simeq X_{R}\times X_{S}$.
To complete the proof of the claim, it would suffice to show that $G$
is an equivalence. We do this by showing that $G$ is conservative
and $F$ is fully faithful. By \lemref{Invertible_Monad}, in order
to show that $F$ is fully faithful, it suffices to show that $GF$
is an equivalence. For every $(R\times S)$-module $X$, we have
\[
GF(X)=(R\otimes_{R\times S}X)\times(S\otimes_{R\times S}X)\simeq(R\times S)\otimes_{R\times S}X\simeq X.
\]
To show that $G$ is conservative, it suffices to observe that the
underlying $\mathcal{C}$-object of $G(X_{R},X_{S})$ is the direct
sum $X_{R}\oplus X_{S}$ and both $X_{R}$ and $X_{S}$ are retracts
of $X_{R}\oplus X_{S}$.
\end{proof}

\subsection{Theory of Modes }

We now specialize the notion of idempotent algebras to the $\infty$-category
$\Pr$ of presentable $\infty$-categories and colimit preserving
functors. 

\subsubsection{Tensor of presentable $\infty$-categories}

Recall from \cite[Proposition 4.8.1.15]{ha}, that the $\infty$-category $\Pr$ admits a
closed symmetric monoidal structure. The unit is  $\mathcal{S}\in\Pr$, and for every $\mathcal{C},\mathcal{D}\in\Pr$, the internal
hom and tensor product are given respectively by 
\[
\hom(\mathcal{C},\mathcal{D})=\fun^{L}(\mathcal{C},\mathcal{D}),\qquad\mathcal{C}\otimes\mathcal{D}=\fun^{R}(\mathcal{C}^{\op},\mathcal{D})\simeq\fun^{R}(\mathcal{D}^{\op},\mathcal{C}).
\]

It is worth spelling out in what sense the above formula for the tensor
product is functorial. Given a functor $\mathcal{D}_{1}\oto F\mathcal{D}_{2}$
in $\Pr$ with right adjoint $G$, the induced functor $\mathcal{C}\otimes\mathcal{D}_{1}\oto{\Id_{\mathcal{C}}\otimes F}\mathcal{C}\otimes\mathcal{D}_{2}$
is the left adjoint of 
\[
\fun^{R}(\mathcal{C}^{\op},\mathcal{D}_{2})\oto{G\circ(-)}\fun^{R}(\mathcal{C}^{\op},\mathcal{D}_{1}).
\]
From this we get that tensoring with $\mathcal{C}$ preserves reflective
localizations:
\begin{lem}
\label{lem:FF_Right_Adj}Let $\mathcal{C}$ and $\mathcal{D}_{1}\oto F\mathcal{D}_{2}$
in $\Pr$. If $F$ admits a fully faithful (resp. conservative) right
adjoint, then so does $\Id_{\mathcal{C}}\otimes F$ .
\end{lem}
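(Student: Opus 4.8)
The plan is to exploit the explicit description of $\Id_{\mathcal{C}}\otimes F$ recorded just above the statement. Write $G\colon\mathcal{D}_{2}\to\mathcal{D}_{1}$ for the right adjoint of $F$. Post-composition with $G$ carries $\fun^{R}(\mathcal{C}^{\op},\mathcal{D}_{2})$ into $\fun^{R}(\mathcal{C}^{\op},\mathcal{D}_{1})$: a right adjoint is limit-preserving and accessible, so $G\circ H$ is again limit-preserving and accessible whenever $H$ is. Under the identifications $\mathcal{C}\otimes\mathcal{D}_{i}\simeq\fun^{R}(\mathcal{C}^{\op},\mathcal{D}_{i})$, the functor $\Id_{\mathcal{C}}\otimes F$ is by definition the left adjoint of $G\circ(-)$, so its right adjoint is exactly the post-composition functor $G\circ(-)\colon\fun^{R}(\mathcal{C}^{\op},\mathcal{D}_{2})\to\fun^{R}(\mathcal{C}^{\op},\mathcal{D}_{1})$. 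Thus the whole claim reduces to the assertion that post-composition with a fully faithful (resp. conservative) functor is fully faithful (resp. conservative).

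For this it is harmless to replace $\fun^{R}$ by the ambient functor categories, since $\fun^{R}(\mathcal{C}^{\op},\mathcal{D}_{i})\subseteq\fun(\mathcal{C}^{\op},\mathcal{D}_{i})$ is a full subcategory, and both ``fully faithful'' and ``conservative'' are inherited when a functor is restricted to a full subcategory (for conservativity one also uses that a morphism in $\fun^{R}$ is an isomorphism iff it is one in $\fun$, as equivalences of functors are detected pointwise). So I would work with $G\circ(-)\colon\fun(\mathcal{C}^{\op},\mathcal{D}_{2})\to\fun(\mathcal{C}^{\op},\mathcal{D}_{1})$. If $G$ is conservative: a natural transformation $\phi$ with $G\circ\phi$ an isomorphism has $G(\phi_{x})$ invertible for every object $x$, hence $\phi_{x}$ invertible for every $x$, hence $\phi$ invertible. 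If $G$ is fully faithful: the counit $FG\to\Id_{\mathcal{D}_{2}}$ is an isomorphism, the adjunction $F\dashv G$ induces by post-composition an adjunction $F\circ(-)\dashv G\circ(-)$ on functor $\infty$-categories whose counit is post-composition with $FG\to\Id_{\mathcal{D}_{2}}$, and this is a pointwise isomorphism, hence an isomorphism; therefore $G\circ(-)$ is fully faithful. (Alternatively one computes mapping spaces in the functor category as ends and invokes $\map_{\mathcal{D}_{1}}(Gx,Gy)\simeq\map_{\mathcal{D}_{2}}(x,y)$.) Restricting back to $\fun^{R}$ concludes.

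There is no substantial obstacle here; the only points needing a little care are checking that $G\circ(-)$ genuinely lands in $\fun^{R}$ (so that it is the right adjoint inside $\Pr$, not merely inside $\cat$) and that passing to the full subcategory $\fun^{R}$ preserves the two properties in question.
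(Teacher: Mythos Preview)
Your proposal is correct and follows exactly the same approach as the paper: identify the right adjoint of $\Id_{\mathcal{C}}\otimes F$ as post-composition with $G$ via the formula $\mathcal{C}\otimes\mathcal{D}_i\simeq\fun^{R}(\mathcal{C}^{\op},\mathcal{D}_i)$, and then observe that post-composition with a fully faithful (resp.\ conservative) functor is fully faithful (resp.\ conservative). The paper's proof is terser and simply asserts this last step, whereas you spell out the verification (passing to the ambient $\fun$ and using the counit/pointwise criteria), but the argument is the same.
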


\begin{proof}
By the above formula for $\Id_{\mathcal{C}}\otimes F$, its right adjoint is given by
\[
\fun^{R}(\mathcal{C}^{\op},\mathcal{D}_{2})\oto{G\circ(-)}\fun^{R}(\mathcal{C}^{\op},\mathcal{D}_{1}),
\]
where $G$ is the right adjoint of $F$. Thus, if $G$ is fully faithful
(resp. conservative), then post composition with $G$ is fully faithful
(resp. conservative) as well.
\end{proof}
\begin{rem}
On the other hand, if $\mathcal{D}_{1}\oto F\mathcal{D}_{2}$ is itself
fully faithful or conservative, then $\Id_{\mathcal{C}}\otimes F$
need not be. For example, let $\Sp^{\cn}\ss\Sp$ be the full subcategory
of connective spectra. One can show that $\Sp^{\cn}\otimes\set\simeq\mathrm{Ab}$,
while $\Sp\otimes\set\simeq0$ (e.g. by \propref{Mode_Localization}).
Thus, tensoring the fully faithful inclusion $\Sp^{\cn}\into\Sp$
with the category $\set$, produces the zero functor $\mathrm{Ab}\to0$.
\end{rem}

Another general fact which we shall require is the preservation of
recollements under base change. Recall that $\Pr_{\st}\ss\Pr$ is
the full subcategory of stable presentable $\infty$-categories.
\begin{prop}
\label{prop:Recollement_Base_Change}Let $\mathcal{C}\in\Pr_{\st}$
and assume it is a recollement of $\mathcal{C}_{\circ}\ss\mathcal{C}$
and $\mathcal{C}_{\circ}^{\perp}\ss\mathcal{C}$. For every $\mathcal{D}\in\Pr$,
the morphism $\mathcal{C}_{\circ}\otimes\mathcal{D}\to\mathcal{C}\otimes\mathcal{D}$
exhibits $\mathcal{C}\otimes\mathcal{D}$ as a recollement of $\mathcal{C}_{\circ}\otimes\mathcal{D}$
and $(\mathcal{C}_{\circ}\otimes\mathcal{D})^{\perp}\simeq\mathcal{C}_{\circ}^{\perp}\otimes\mathcal{D}$.
\end{prop}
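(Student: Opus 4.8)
The plan is to transport the recollement structure of $\mathcal{C}_{\circ}\ss\mathcal{C}$ through the functor $-\otimes\mathcal{D}$ using \lemref{FF_Right_Adj} and \lemref{Kernel_Image_Orthogonal}, and then to identify the right orthogonal complement by passing to the model $\mathcal{C}\otimes\mathcal{D}\simeq\fun^{R}(\mathcal{D}^{\op},\mathcal{C})$, where the computation becomes pointwise. First I would recall from \cite[Section A.8.1]{ha} that the recollement amounts to: the inclusion $i\colon\mathcal{C}_{\circ}\into\mathcal{C}$ is fully faithful with a left adjoint $L$ and a right adjoint $R$; writing $j\colon\mathcal{C}_{\circ}^{\perp}\into\mathcal{C}$ for the inclusion of $\mathcal{C}_{\circ}^{\perp}=\ker(R)$ and $P\colon\mathcal{C}\to\mathcal{C}_{\circ}^{\perp}$ for its left adjoint, there is a fundamental fiber sequence of endofunctors $iR\to\Id_{\mathcal{C}}\oto{u}jP$, with $jP$ the reflective localization onto $\mathcal{C}_{\circ}^{\perp}$, $\ker(jP)=\ker(P)=\mathrm{Im}(i)$, and $Pi\simeq0$. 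Since $i$, $L$ and $P$ are left adjoints they preserve colimits, so $i\otimes\Id_{\mathcal{D}}$, $L\otimes\Id_{\mathcal{D}}$ and $P\otimes\Id_{\mathcal{D}}$ are morphisms of $\Pr$, with $L\otimes\Id_{\mathcal{D}}\dashv i\otimes\Id_{\mathcal{D}}$.

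The first point is that the recollement survives. By \lemref{FF_Right_Adj} applied to $L$ (whose right adjoint $i$ is fully faithful), the functor $i\otimes\Id_{\mathcal{D}}$ is fully faithful; by the adjoint functor theorem \cite[Corollary 5.5.2.9]{htt} it also admits a right adjoint (being a morphism of $\Pr$); and $\mathcal{C}\otimes\mathcal{D}$ is stable, being a module over $\Sp$. Hence $\mathcal{C}_{\circ}\otimes\mathcal{D}\into\mathcal{C}\otimes\mathcal{D}$ exhibits $\mathcal{C}\otimes\mathcal{D}$ as a recollement of $\mathcal{C}_{\circ}\otimes\mathcal{D}$ and $(\mathcal{C}_{\circ}\otimes\mathcal{D})^{\perp}$. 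Next I would apply \lemref{FF_Right_Adj} to $P$ (whose right adjoint $j$ is fully faithful) to get that $P\otimes\Id_{\mathcal{D}}$ has a fully faithful right adjoint $G$, so that by \lemref{Kernel_Image_Orthogonal} one has $\mathrm{Im}(G)=\ker(P\otimes\Id_{\mathcal{D}})^{\perp}$ with $G$ inducing an equivalence $\mathcal{C}_{\circ}^{\perp}\otimes\mathcal{D}\simeq\mathrm{Im}(G)$. On the other hand $(P\otimes\Id_{\mathcal{D}})\circ(i\otimes\Id_{\mathcal{D}})=(Pi)\otimes\Id_{\mathcal{D}}\simeq0$, so $\mathrm{Im}(i\otimes\Id_{\mathcal{D}})\ss\ker(P\otimes\Id_{\mathcal{D}})$, whence $(\mathcal{C}_{\circ}\otimes\mathcal{D})^{\perp}=\mathrm{Im}(i\otimes\Id_{\mathcal{D}})^{\perp}\supseteq\ker(P\otimes\Id_{\mathcal{D}})^{\perp}\simeq\mathcal{C}_{\circ}^{\perp}\otimes\mathcal{D}$. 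Everything therefore reduces to the reverse inclusion, i.e. to the equality $\ker(P\otimes\Id_{\mathcal{D}})=\mathrm{Im}(i\otimes\Id_{\mathcal{D}})$.

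For this last equality I would work in the model $\mathcal{C}\otimes\mathcal{D}\simeq\fun^{R}(\mathcal{D}^{\op},\mathcal{C})$. Since $\mathcal{C}_{\circ}$ and $\mathcal{C}_{\circ}^{\perp}$ are closed under limits in $\mathcal{C}$ (a reflective subcategory, resp. a right orthogonal complement), the subcategories $\mathcal{C}_{\circ}\otimes\mathcal{D}$ and $\mathcal{C}_{\circ}^{\perp}\otimes\mathcal{D}$ are, in this model, the functors landing in $\mathcal{C}_{\circ}$ resp. $\mathcal{C}_{\circ}^{\perp}$, and (by the description of the functoriality of $\otimes$ recalled in the excerpt) $i\otimes\Id_{\mathcal{D}}$ and $G$ are post-composition with $i$ and $j$. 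Now $iR$ is limit-preserving (both $i$ and $R$ are), hence so is $jP=\operatorname{cofib}(iR\to\Id_{\mathcal{C}})$, since $\operatorname{cofib}$ preserves limits in the stable setting; thus $jP$ also acts on $\fun^{R}(\mathcal{D}^{\op},\mathcal{C})$ by post-composition, and it is accessible, so $jP\circ(-)$ indeed lands in $\fun^{R}$. Because mapping spaces in $\fun^{R}(\mathcal{D}^{\op},\mathcal{C})$ are ends of the pointwise mapping spaces, and at each $d$ the map $Fd\to jP(Fd)$ is the $\mathcal{C}_{\circ}^{\perp}$-localization, the endofunctor $jP\circ(-)$ is the reflective localization onto the functors landing in $\mathcal{C}_{\circ}^{\perp}$, i.e. onto $\mathrm{Im}(G)$. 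By uniqueness of the reflective localization onto a fixed subcategory, $G\circ(P\otimes\Id_{\mathcal{D}})\simeq jP\circ(-)$, and therefore $\ker(P\otimes\Id_{\mathcal{D}})=\{F:jP\circ F\simeq0\}=\{F:Fd\in\ker(jP)=\mathcal{C}_{\circ}\ \forall d\}=\mathrm{Im}(i\otimes\Id_{\mathcal{D}})$. Combined with the previous paragraph this gives $(\mathcal{C}_{\circ}\otimes\mathcal{D})^{\perp}=\mathrm{Im}(G)\simeq\mathcal{C}_{\circ}^{\perp}\otimes\mathcal{D}$.

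The first two steps are routine; the main obstacle is the inclusion $\ker(P\otimes\Id_{\mathcal{D}})\ss\mathrm{Im}(i\otimes\Id_{\mathcal{D}})$ in the last step. The difficulty is exactly that $R$ and $j$ are \emph{not} colimit-preserving, so the functors $iR$ and $jP$ cannot be tensored with $\Id_{\mathcal{D}}$ as morphisms of $\Pr$; the remedy above is to move to the functor-category model, where these limit-preserving endofunctors act by post-composition, and to recognize $G\circ(P\otimes\Id_{\mathcal{D}})$ as pointwise localization via an end computation. An alternative would be to present $\mathcal{C}$ as the lax pullback $\fun(\Delta^{1},\mathcal{C}_{\circ})\times_{\mathcal{C}_{\circ}}\mathcal{C}_{\circ}^{\perp}$ along $L|_{\mathcal{C}_{\circ}^{\perp}}$ and argue that $-\otimes\mathcal{D}$ commutes with this construction, but verifying that commutation runs into essentially the same issue.
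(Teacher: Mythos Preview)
Your proof is correct, but takes a longer route than the paper's for the identification of the orthogonal complement. Both arguments use \lemref{FF_Right_Adj} for the recollement part and pass to the model $\fun^{R}(\mathcal{D}^{\op},\mathcal{C})$, and both identify the right adjoint $G$ of $P\otimes\Id_{\mathcal{D}}$ with post-composition by $j$. The paper's shortcut is to observe that since $R$ (the right adjoint of $i$) is itself a right adjoint, the adjunction $i\circ(-)\dashv R\circ(-)$ on $\fun(\mathcal{D}^{\op},-)$ restricts to $\fun^{R}$; uniqueness of adjoints then identifies the right adjoint of $i\otimes\Id_{\mathcal{D}}$ as $R\circ(-)$, so $(\mathcal{C}_{\circ}\otimes\mathcal{D})^{\perp}=\ker(R\circ(-))$ consists of the functors landing in $\ker(R)=\mathcal{C}_{\circ}^{\perp}$, which is exactly $\mathrm{Im}(j\circ(-))\simeq\mathcal{C}_{\circ}^{\perp}\otimes\mathcal{D}$. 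This bypasses your detour through \lemref{Kernel_Image_Orthogonal}, the fiber sequence $iR\to\Id\to jP$, and the end computation. In fact the obstacle you single out---that $R$ is not colimit-preserving and so cannot be tensored in $\Pr$---is precisely the feature the paper exploits: $R$ \emph{is} limit-preserving, so post-composition with it is how one models its effect on $\fun^{R}$. Your argument is nonetheless valid, and has the minor bonus of explicitly exhibiting the reflective localization onto $(\mathcal{C}_{\circ}\otimes\mathcal{D})^{\perp}$ as pointwise application of $jP$.
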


\begin{proof}
Let $\mathcal{C}_{\circ}\oto F\mathcal{C}$ be the inclusion functor.
We denote by $\mathcal{C}\oto L\mathcal{C}_{\circ}$ and $\mathcal{C}\oto R\mathcal{C}_{\circ}$
the left and right adjoints of $F$ respectively. We observe that
$\mathcal{C}_{\circ}$ is presentable as an accessible localization
of $\mathcal{C}$, and hence both functors $F$ and $L$ are morphisms
in $\Pr$. The adjunction $F\dashv R$ induces an adjunction
\[
F\circ(-)\colon\fun(\mathcal{D}^{\op},\mathcal{C}_{\circ})\adj\fun(\mathcal{D}^{\op},\mathcal{C})\colon R\circ(-).
\]
Since $F$ and $R$ are both right adjoints, this adjunction restricts
to an adjunction on the full subcategories spanned by the right adjoints
on both sides,
\[
F\circ(-)\colon\mathcal{D}\otimes\mathcal{C}_{\circ}\simeq\fun^{R}(\mathcal{D}^{\op},\mathcal{C}_{\circ})\adj\fun^{R}(\mathcal{D}^{\op},\mathcal{C})\simeq\mathcal{D}\otimes\mathcal{C}\colon R\circ(-).
\]
On the other hand, the left adjoint of $R\circ(-)$ is $F\otimes\mathcal{D}$
and the left adjoint of $F\circ(-)$ is $L\otimes\mathcal{D}$. It
follows that $L\otimes\mathcal{D}$ is the left adjoint of $F\otimes\mathcal{D}$.
To conclude, $F\otimes\mathcal{D}$ admits a right adjoint (as a morphism
in $\Pr)$ and also a left adjoint, given by $L\otimes\mathcal{D}$.
Furthermore, by \lemref{FF_Right_Adj}, $F\otimes\mathcal{D}$ is
also fully faithful. The $\infty$-categories $\mathcal{C}_{\circ}\otimes\mathcal{D}$
and $\mathcal{C}\otimes\mathcal{D}$ are stable by \cite[Proposition 4.8.2.18]{ha} because
$\mathcal{C}_{\circ}$ and $\mathcal{C}$ are. Hence, we deduce that
$F\otimes\mathcal{D}$ exhibits $\mathcal{C}\otimes\mathcal{D}$ as
a recollement of $\mathcal{C}_{\circ}\otimes\mathcal{D}$ and $(\mathcal{C}_{\circ}\otimes\mathcal{D})^{\perp}.$ 

It remains to identify $(\mathcal{C}_{\circ}\otimes\mathcal{D})^{\perp}$
with $\mathcal{C}_{\circ}^{\perp}\otimes\mathcal{D}$. Note that
$(\mathcal{C}_{\circ}\otimes\mathcal{D})^{\perp}$ is the full subcategory
of $\mathcal{C}\otimes\mathcal{D}$ spanned by objects on which the
right adjoint of $F\otimes\mathcal{D}$ is zero. This right adjoint
is given by 
\[
R\circ(-)\colon\fun^{R}(\mathcal{D}^{\op},\mathcal{C})\to\fun^{R}(\mathcal{D}^{\op},\mathcal{C}_{\circ}).
\]

On the other hand, the inclusion $F^{\perp}\colon\mathcal{C}_{\circ}^{\perp}\into\mathcal{C}$
is right adjoint to its left adjoint $L^{\perp}\colon\mathcal{C}\to\mathcal{C}_{\circ}^{\perp}$,
and hence the right adjoint of $L^{\perp}\otimes\mathcal{D}$ is given
by 
\[
F^{\perp}\circ(-)\colon\fun^{R}(\mathcal{D}^{\op},\mathcal{C}_{\circ}^{\perp})\to\fun^{R}(\mathcal{D}^{\op},\mathcal{C}).
\]
This is a fully faithful functor whose essential image consists precisely
of objects on which $R\circ(-)$ is zero. Thus, we have canonically
identified $(\mathcal{C}_{\circ}\otimes\mathcal{D})^{\perp}$ with
$\mathcal{C}_{\circ}^{\bot}\otimes\mathcal{D}$.
\end{proof}

\subsubsection{Definition \& examples of modes}
We are now ready to introduce the central notion of this section:
\begin{defn}
\label{def:Mode}A \textbf{mode} is an idempotent algebra in $\Pr$. We denote by $$\mathrm{Mode} \coloneqq \calg^{\mathrm{idem}}(\Pr)\ss\calg(\Pr)$$
the
full subcategory spanned by modes. 
\end{defn}

Applying the preceding results on idempotent algebras
to $\Pr$, we get the following:
\begin{prop}
\label{prop:Mode_Lattice}
\hfill 
\begin{enumerate}
\item $\mode$ is a (large) poset.
\item $\mode$ is co-complete. Moreover, the colimit of a diagram of modes classifies the conjunction  of the properties classified by the modes in the diagram.
\item $\mathcal{S}\in\mode$ is the initial mode and it classifies the
empty property (which is always satisfied).
\item $ \mathcal{M} \sqcup \mathcal{N} = \mathcal{M} \otimes \mathcal{N}$ for all $\mathcal{M},\mathcal{N}\in\mode$.
\item The forgetful functor $\mode \to \Pr$ preserves  sifted colimits.
\item  $0\in\mode$ is the terminal mode and it classifies the property
of being equivalent to $0$.
\item For $\mathcal{M},\mathcal{N}\in\mode$, if $\mathcal{M}\otimes\mathcal{N}=0$,
then their join is given by $\mathcal{M}\times\mathcal{N}$, and it
classifies the property of being a direct product of an $\mathcal{M}$-module
and an $\mathcal{N}$-module.
\end{enumerate}
\end{prop}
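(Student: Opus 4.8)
The plan is to recognize \propref{Mode_Lattice} as the specialization to $\mathcal{C}=\Pr$ of the general facts about idempotent algebras proved in this subsection, so that essentially all of the work has already been done. The first step is to record the two structural properties of $\Pr$ (with Lurie's tensor product \cite[Proposition 4.8.1.15]{ha}) that those general statements require as hypotheses. First, $\Pr$ admits all small colimits \cite[Theorem 5.5.3.18]{htt}, and since its monoidal structure is closed, each functor $(-)\otimes\mathcal{D}$ is a left adjoint (with right adjoint $\fun^{L}(\mathcal{D},-)$) and hence preserves them; thus $\Pr$ is compatible with all small colimits, in particular with filtered and with sifted ones. Second, $\Pr$ is $0$-semiadditive -- indeed $\infty$-semiadditive -- by \exaref{Categorical_Cardinality} (cf.\ \cite[Example 4.3.11]{HopkinsLurie}). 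Once these are in place, every clause reduces to a citation.

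Concretely, I would dispatch the clauses as follows. Clause (1) is \propref{Idemp_Poset} for $\Pr$. Clause (3): the unit $\mathcal{S}\in\Pr$ carries its canonical idempotent algebra structure as the initial object of $\calg(\Pr)$, and every presentable $\infty$-category is an $\mathcal{S}$-module, so $\mathcal{S}$ is the least mode and classifies the vacuous property. Clause (6): the zero $\infty$-category $0\in\Pr$ is terminal and satisfies $0\otimes 0\simeq 0$, hence is idempotent; by \propref{Idemp_Poset} a mode $\mathcal{M}$ receives a map from $0$ precisely when it is a $0$-module, i.e.\ when $\mathcal{M}\simeq\mathcal{M}\otimes 0\simeq 0$, so $0$ is the greatest mode and classifies triviality. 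Clause (4): the coproduct in $\calg(\mathcal{C})$ is the tensor product \cite[Proposition 3.2.4.7]{ha}, and $\mathcal{M}\otimes\mathcal{N}$ is again a mode by \propref{Idemp_Tensor}, so it is also the coproduct inside $\mode\subseteq\calg(\Pr)$, i.e.\ the join. Clauses (2) and (5): apply \corref{coco_mode} and \propref{sifted_mode}(2) and (3) to $\Pr$, using compatibility with filtered, resp.\ sifted, colimits; here the forgetful functor $\mode\to\Pr$ of (5) is the restriction of $\calg(\Pr)\to\Pr$ to $\calg^{\mathrm{idem}}(\Pr)$, so \propref{sifted_mode}(2) applies verbatim. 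Finally, clause (7) is \propref{Idem_Prod} for $\Pr$: its hypotheses -- compatibility with all small colimits and $0$-semiadditivity -- are exactly the two facts recorded in the first step, and it yields that $\mathcal{M}\times\mathcal{N}$ is an idempotent algebra classifying the property of being a direct product of an $\mathcal{M}$-module and an $\mathcal{N}$-module.

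There is no genuine obstacle: the substance has already been extracted into \propref{Idemp_Poset}, \propref{Idemp_Tensor}, \propref{sifted_mode}, \corref{coco_mode} and \propref{Idem_Prod}, and what remains is organizational. The only non-formal inputs are the $0$-semiadditivity of $\Pr$ (used in (7), and morally the reason the splitting in (7) exists at all) and the compatibility of $\otimes$ on $\Pr$ with small colimits (used in (2), (5), (7)); both are available to us, the former from \exaref{Categorical_Cardinality} and the latter from closedness of the monoidal structure. I would also take care, in clause (6), to verify that the unit map $\mathcal{S}\to 0$ really does exhibit $0$ as an idempotent object rather than merely asserting it; to confirm the lattice interpretation of clause (7) (the point of \propref{Idem_Prod} is precisely that $\mathcal{M}\times\mathcal{N}$ lands back inside $\mode$); and, throughout, to keep the usual universe conventions in mind so that the ``large poset'' $\mode$ and its cocompleteness are meaningful.
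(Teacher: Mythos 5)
Your proposal matches the paper's proof essentially verbatim: each clause is dispatched by the same citation (\propref{Idemp_Poset} for (1), \corref{coco_mode} for (2), the unit/zero object of $\Pr$ for (3) and (6), \propref{Idemp_Tensor} for (4), \propref{sifted_mode} for (5), and \propref{Idem_Prod} together with the $0$-semiadditivity of $\Pr$ for (7)), with the compatibility of $\otimes$ with colimits supplied by closedness of the monoidal structure. Correct, and no meaningful difference from the paper's argument.
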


\begin{proof}
(1) follows from \propref{Idemp_Poset}. (2) follows from \corref{coco_mode}  and the fact that $\Pr$ is closed symmetric monoidal. (3) follows from the fact that $\mathcal{S}$ is the unit of $\Pr$. (4) follows from  \propref{Idemp_Tensor}. (5) follows from  \propref{sifted_mode}.
(6) follows from the fact that $0$ is a zero object of $\Pr$.
(7) follows from \propref{Idem_Prod}, since $\Pr$ is $0$-semiadditive (\cite[Example 4.3.11]{HopkinsLurie}). 
\end{proof}
In addition to the initial and terminal modes, we
also have the following (far from exhaustive) list of modes:\footnote{All these can be found in \cite[Section 4.8.2]{ha} with the exception of (3), which
can be deduced from \propref{Mode_Localization}.}
\begin{example}
\label{exa:Modes}
\hfill
\begin{enumerate}
\item $(0\to1)$ is the \emph{boolean }mode which classifies the property
of being equivalent to a poset (i.e. the mode of propositional logic).
\item $\set$ is the \emph{discrete }mode, which classifies the property
of being equivalent to an ordinary category (i.e. the mode of ordinary,
as opposed to ``higher'', mathematics). 
\item $\mathrm{Ab}$ is the \emph{discrete} \emph{additive }mode\emph{,
}which classifies the property of being equivalent to an ordinary
additive category. 
\item $\mathcal{S}_{*}$ is the \emph{pointed }mode, which classifies the
property of having a zero object. 
\item $\Sp$ is the \emph{stable }mode, which classifies the property of
being stable. 
\end{enumerate}
\end{example}

Given a mode $\mathcal{M}$, the fully faithful forgetful functor
$\Mod_{\mathcal{M}}(\Pr)\into\Pr$ admits a left adjoint (i.e. localization),
which is given by 
\[
\mathcal{C}\mapsto\mathcal{M}\otimes\mathcal{C}=\fun^{R}(\mathcal{M}^{\op},\mathcal{C}).
\]
This procedure should be thought of as forcing $\mathcal{C}$ to be
in the mode $\mathcal{M}$ in a universal way. 
\begin{example}
For the stable mode $\Sp$, the $\infty$-category $\Sp\otimes\,\mathcal{C}$
is the \emph{stabilization} $\Sp(\mathcal{C})\in\Pr_{\st}$ \cite[Example 4.8.1.23]{ha}.
Similarly, for the discrete mode $\set$, the $\infty$-category $\set\otimes\,\mathcal{C}$
is the $0$-truncation $\tau_{\le0}\mathcal{C}$ \cite[Remark 4.8.2.17]{ha}.

The general results for idempotent algebras have the following implication:
\end{example}

\begin{prop}
\label{prop:Mode_Hereditary}Let $\mathcal{M}$ be a mode and $\mathcal{C}\in\alg(\Pr)$
which is an $\mathcal{M}$-module. The fully faithful embedding $\Mod_{\mathcal{M}}(\Pr)\into\Pr$
induces an equivalence of $\infty$-categories 
\[
\LMod_{\mathcal{C}}(\Mod_{\mathcal{M}}(\Pr))\iso\LMod_{\mathcal{C}}(\Pr).
\]
In particular, every $\mathcal{D}\in\LMod_{\mathcal{C}}(\Pr)$ is an
$\mathcal{M}$-module.
\end{prop}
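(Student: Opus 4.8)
The plan is to exhibit $\LMod_{\mathcal C}(\Mod_{\mathcal M}(\Pr)) \to \LMod_{\mathcal C}(\Pr)$ as the restriction of the fully faithful forgetful functor $\Mod_{\mathcal M}(\Pr)\into\Pr$, and then show this restriction is essentially surjective. Since $\mathcal C$ is assumed to be an $\mathcal M$-module and an algebra, it is an algebra in $\Mod_{\mathcal M}(\Pr)$ (because $\Mod_{\mathcal M}(\Pr) \subseteq \Pr$ is a full symmetric monoidal subcategory closed under the relevant structure, by the general theory of idempotent algebras in \cite[Section 4.8.2]{ha}), so the left-hand side makes sense. The forgetful functor $U\colon \Mod_{\mathcal M}(\Pr)\into\Pr$ is symmetric monoidal and fully faithful, hence induces a fully faithful functor on left-module $\infty$-categories $\LMod_{\mathcal C}(U)\colon \LMod_{\mathcal C}(\Mod_{\mathcal M}(\Pr)) \to \LMod_{\mathcal C}(\Pr)$ over the image of $\mathcal C$. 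Fully faithfulness is formal from fully faithfulness of $U$ together with the fact that mapping spaces of left modules are computed as limits (equalizers/cobar) of mapping spaces in the ambient category; I would cite \cite[Corollary 4.2.3.2]{ha} or argue directly that $U$ creates the relevant limits.

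So the crux is essential surjectivity: every $\mathcal D \in \LMod_{\mathcal C}(\Pr)$, as a presentable $\infty$-category with a $\mathcal C$-action, is already an $\mathcal M$-module, i.e.\ lies in the essential image of $U$. Here is where I would use that $\mathcal M$ is \emph{idempotent}. The unit map $\one_{\Pr}=\mathcal S \oto{u}\mathcal M$ induces, after tensoring with $\mathcal D$, a morphism $\mathcal D \simeq \mathcal S \otimes \mathcal D \oto{u\otimes \mathcal D} \mathcal M \otimes \mathcal D$; by \cite[Proposition 4.8.2.10]{ha}, $\mathcal D$ is an $\mathcal M$-module precisely when this morphism is an equivalence. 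Now the left $\mathcal C$-action on $\mathcal D$ gives an action map $a\colon \mathcal C \otimes \mathcal D \to \mathcal D$ which, together with the unit of the action, realizes $\mathcal D$ as a retract of $\mathcal C \otimes \mathcal D$ in $\Pr$ (the composite $\mathcal D \to \mathcal C \otimes \mathcal D \oto{a} \mathcal D$ being the identity via the unit constraint of the action). But $\mathcal C$ is an $\mathcal M$-module, hence $\mathcal C \otimes \mathcal D \simeq (\mathcal M \otimes \mathcal C)\otimes \mathcal D \simeq \mathcal M \otimes (\mathcal C \otimes \mathcal D)$ \emph{is} an $\mathcal M$-module. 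Since $\Mod_{\mathcal M}(\Pr)\subseteq \Pr$ is a reflective (indeed accessible) localization — equivalently, the property of being an $\mathcal M$-module is closed under retracts, as it is detected by the invertibility of a natural transformation — the retract $\mathcal D$ is an $\mathcal M$-module as well. This gives essential surjectivity, hence the equivalence; the final sentence of the proposition is then immediate.

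The main obstacle I anticipate is the bookkeeping around fully faithfulness of $\LMod_{\mathcal C}(U)$ and the identification of its essential image as all of $\LMod_{\mathcal C}(\Pr)$ rather than some full subcategory — one must be careful that a left $\mathcal C$-module structure in $\Pr$ on an object that happens to be an $\mathcal M$-module automatically refines to a left $\mathcal C$-module structure \emph{internal to} $\Mod_{\mathcal M}(\Pr)$, with no loss of data. This follows because $U$ is a fully faithful symmetric monoidal right adjoint whose image is closed under the tensor product, so $\LMod_{\mathcal C}(\Mod_{\mathcal M}(\Pr))$ is literally the full subcategory of $\LMod_{\mathcal C}(\Pr)$ on those modules whose underlying $\infty$-category is an $\mathcal M$-module; combined with the retract argument above showing that \emph{every} object of $\LMod_{\mathcal C}(\Pr)$ satisfies this, we are done. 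The retract step itself is the conceptual heart and is short once one recalls that "being an $\mathcal M$-module" is a property closed under retracts (a retract of an object on which $u\otimes(-)$ is invertible still has $u\otimes(-)$ invertible).
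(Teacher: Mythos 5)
Your proof is correct, but its key step differs from the paper's. The paper's entire proof is one line: since $\mathcal{C}$ is an algebra in $\Pr$ which is an $\mathcal{M}$-module, \propref{Idemp_Alg} supplies an algebra map $\mathcal{M}\to\mathcal{C}$, and every left $\mathcal{C}$-module in $\Pr$ then becomes an $\mathcal{M}$-module by restriction of scalars along that map; the identification of the two module $\infty$-categories follows as you describe. You instead avoid \propref{Idemp_Alg} altogether and argue at the level of underlying objects: $\mathcal{D}$ is a retract of $\mathcal{C}\otimes\mathcal{D}$ via the unit and action maps, $\mathcal{C}\otimes\mathcal{D}$ is an $\mathcal{M}$-module because $\mathcal{C}$ is, and the property that $u\otimes(-)$ is invertible, which characterizes $\mathcal{M}$-modules by \cite[Proposition 4.8.2.10]{ha}, is closed under retracts. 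Both arguments are valid. The paper's route is shorter given that \propref{Idemp_Alg} is already in hand, and it produces the $\mathcal{M}$-module structure on $\mathcal{D}$ directly as a restriction of the $\mathcal{C}$-action rather than verifying a property of the underlying object; your route uses only the definition of idempotence together with retract-stability of invertibility, so it is marginally more self-contained. Your care about why a left $\mathcal{C}$-module structure in $\Pr$ on an object that happens to be an $\mathcal{M}$-module automatically refines, with no loss of data, to one internal to $\Mod_{\mathcal{M}}(\Pr)$ is the right thing to worry about, and you handle it correctly via the full, symmetric monoidal, reflective embedding $\Mod_{\mathcal{M}}(\Pr)\into\Pr$.
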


\begin{proof}
Since $\mathcal{C}$ is an $\mathcal{M}$-module, by \propref{Idemp_Alg},
there is a map of algebras $\mathcal{M}\to\mathcal{C}$ and the claim
follows. 
\end{proof}
The $(\infty,2)$-categorical structure of $\Pr$ allows further constructions
of modes beyond those provided by \propref{Mode_Lattice}. Primarily,
modes can be \emph{localized}. 

\subsubsection{Localization of modes}

Given a mode $\mathcal{M}$, every $\mathcal{M}$-module $\mathcal{C}$
is by definition left-tensored over $\mathcal{M}$, and hence in particular
\emph{enriched} over $\mathcal{M}$ \cite[Proposition 4.2.1.33]{ha}. For every $X,Y\in\mathcal{C},$
we denote by $\hom^{\mathcal{M}}(X,Y)$ the corresponding hom-object
in $\mathcal{M}$. The $\mathcal{M}$-enrichment of an $\mathcal{M}$-module
$\mathcal{C}$ can be explicitly described via the $\mathcal{M}$-enriched
Yoneda embedding:
\[
\mathcal{C}\simeq\mathcal{C}\otimes\mathcal{M}\simeq\fun^{R}(\mathcal{C}^{\op},\mathcal{M})\into\fun(\mathcal{C}^{\op},\mathcal{M}).
\]

\begin{defn}
Let $\mathcal{M}$ be a mode, let $\mathcal{M}_{\circ}\ss\mathcal{M}$
be a reflective full subcategory, and let $\mathcal{C}$ be an $\mathcal{M}$-module.
We say that an object $X\in\mathcal{C}$ is $\mathcal{M}_{\circ}$\textbf{-local}
if for every $Z\in\mathcal{C}$, the enriched hom-object $\hom^{\mathcal{M}}(Z,X)$
lies in $\mathcal{M}_{\circ}.$ Furthermore, we say that $\mathcal{C}$
itself is $\mathcal{M}_{\circ}$-local, if every object of $\mathcal{C}$
is $\mathcal{M}_{\circ}$-local.
\end{defn}

\begin{prop}
\label{prop:Mode_Localization}Let $\mathcal{M}$ be a mode and $\mathcal{M}_{\circ}\ss\mathcal{M}$
an accessible reflective full subcategory, which is compatible with
the symmetric monoidal structure of $\mathcal{C}$. Let $L$ be the
left adjoint of the inclusion $\mathcal{M}_{\circ}\into\mathcal{M}$.
The composition
\[
\mathcal{S}\oto u\mathcal{M}\oto L\mathcal{M}_{\circ}
\]
exhibits $\mathcal{M}_{\circ}$ as a mode. Moreover, for every $\mathcal{M}$-module
$\mathcal{C},$ the $\infty$-category $\mathcal{M}_{\circ}\otimes\mathcal{C}$
can be canonically identified with the full subcategory of $\mathcal{M}_{\circ}$-local
objects in $\mathcal{C}$. In particular, $\mathcal{M}_{\circ}$ classifies
the property of being an $\mathcal{M}_{\circ}$-local $\mathcal{M}$-module. 
\end{prop}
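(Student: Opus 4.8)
The plan is to verify the three assertions in turn: that $L \colon \mathcal{M} \to \mathcal{M}_\circ$ exhibits $\mathcal{M}_\circ$ as a mode, that $\mathcal{M}_\circ \otimes \mathcal{C}$ is the full subcategory of $\mathcal{M}_\circ$-local objects of $\mathcal{C}$, and the resulting statement about the classified property. First I would observe that since $\mathcal{M}_\circ \subseteq \mathcal{M}$ is an accessible reflective localization compatible with the symmetric monoidal structure, the localization functor $L$ is symmetric monoidal (by \cite[Proposition 2.2.1.9]{ha}), so $\mathcal{M}_\circ$ inherits a symmetric monoidal structure with unit $L(\one_\mathcal{M}) = L u(\pt)$ and $L$ is a morphism in $\calg(\Pr)$; accessibility guarantees $\mathcal{M}_\circ$ is presentable, so it lives in $\Pr$. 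To see it is a mode, i.e. that the unit $\mathcal{S} \oto{} \mathcal{M}_\circ$ is idempotent, I would apply \propref{Idemp_Proj_Formula}: the composite $L \circ (\mathcal{M}\otimes(-))\colon \mathcal{S}\text{-modules}=\Pr \to \mathcal{M}_\circ$ is a localization (its right adjoint is the composite of the fully faithful inclusions $\mathcal{M}_\circ \into \mathcal{M} \into \Pr$, which is fully faithful), so it suffices to check the projection formula. Alternatively, and more cleanly, I would argue directly that the unit $L u \colon \mathcal{S} \to \mathcal{M}_\circ$ is idempotent by showing $\mathcal{M}_\circ \otimes \mathcal{M}_\circ \to \mathcal{M}_\circ$ is an equivalence, using that $\mathcal{M}$ is already idempotent and $L$ is a smashing-type localization of $\mathcal{M}$; concretely, $\mathcal{M}_\circ \otimes_{\mathcal{S}} \mathcal{M}_\circ$ receives a map from $\mathcal{M}\otimes \mathcal{M} \simeq \mathcal{M}$ and one checks the induced multiplication is invertible because $L$ is symmetric monoidal and idempotent localizations of idempotent algebras remain idempotent.

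For the second and main assertion, fix an $\mathcal{M}$-module $\mathcal{C}$. Using the $\mathcal{M}$-enriched Yoneda embedding $\mathcal{C} \simeq \fun^R(\mathcal{C}^{\op}, \mathcal{M}) \into \fun(\mathcal{C}^{\op}, \mathcal{M})$ recalled just before the statement, I would identify $\mathcal{M}_\circ \otimes \mathcal{C} = \fun^R(\mathcal{M}_\circ^{\op}, \mathcal{C})$. The key computation is to run the enriched Yoneda description through the tensor: $\mathcal{M}_\circ \otimes \mathcal{C} \simeq \mathcal{M}_\circ \otimes_{\mathcal{M}} (\mathcal{M} \otimes \mathcal{C}) \simeq \mathcal{M}_\circ \otimes_{\mathcal{M}} \mathcal{C}$, and then use that tensoring over $\mathcal{M}$ with the reflective localization $\mathcal{M}_\circ$ implements, pointwise in the $\mathcal{M}$-enriched Yoneda picture, postcomposition with $L$. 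Thus $\mathcal{M}_\circ \otimes \mathcal{C}$ is equivalent to the full subcategory of $\fun^R(\mathcal{C}^{\op}, \mathcal{M})$ consisting of those presheaves landing in $\mathcal{M}_\circ$; by definition of $\hom^{\mathcal{M}}(-,-)$ via the enriched Yoneda embedding (an object $X$ corresponds to the presheaf $Z \mapsto \hom^{\mathcal{M}}(Z,X)$), this is exactly the full subcategory of $\mathcal{M}_\circ$-local objects of $\mathcal{C}$. To make this rigorous I would invoke \lemref{FF_Right_Adj} (applied to $L \colon \mathcal{M}\to\mathcal{M}_\circ$ with its fully faithful right adjoint) to see that $\Id_{\mathcal{C}} \otimes (\mathcal{M}_\circ \into \mathcal{M})^{\text{adjoint}}$ exhibits $\mathcal{M}_\circ \otimes \mathcal{C}$ as a reflective subcategory of $\mathcal{M}\otimes\mathcal{C} \simeq \mathcal{C}$, and then match the essential image with the $\mathcal{M}_\circ$-local objects by inspecting the right adjoint $F \circ (-)$ on enriched presheaves.

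For the final clause, note $\mathcal{M}_\circ$-modules are precisely the $\mathcal{M}$-modules $\mathcal{C}$ with $\mathcal{M}_\circ \otimes \mathcal{C} \simeq \mathcal{C}$ (since $\mathcal{M}_\circ$ is a mode, being an $\mathcal{M}_\circ$-module is a property), which by the identification just established means every object of $\mathcal{C}$ is $\mathcal{M}_\circ$-local — i.e. $\mathcal{C}$ is an $\mathcal{M}_\circ$-local $\mathcal{M}$-module. Conversely an $\mathcal{M}_\circ$-local $\mathcal{M}$-module has all objects local hence equals $\mathcal{M}_\circ \otimes \mathcal{C}$, hence is an $\mathcal{M}_\circ$-module. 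The main obstacle I anticipate is the second step: carefully justifying that tensoring the module $\mathcal{C}$ over $\mathcal{M}$ with the reflective localization $\mathcal{M}_\circ$ corresponds, under the enriched Yoneda embedding, to pointwise application of $L$ — this requires unwinding the functoriality of $-\otimes-$ on $\Pr$ spelled out in the excerpt (that $\Id_\mathcal{C}\otimes F$ is left adjoint to $G\circ(-)$ on right-adjoint presheaf categories) and checking it interacts correctly with the enrichment, rather than being a purely formal consequence. Everything else is a routine application of \propref{Idemp_Proj_Formula}, \lemref{FF_Right_Adj}, and the cited results of Lurie.
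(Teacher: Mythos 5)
Your treatment of the second and third assertions is essentially the paper's argument: tensor the localization $L$ with $\mathcal{C}$, use \lemref{FF_Right_Adj} to get a fully faithful right adjoint of $\mathcal{C}\otimes L$, and identify its essential image with the $\mathcal{M}_{\circ}$-local objects via the commutative square of ($\mathcal{M}$-enriched) Yoneda embeddings; the detour through $\mathcal{M}_{\circ}\otimes_{\mathcal{M}}\mathcal{C}$ is unnecessary but harmless, since $\mathcal{M}\otimes\mathcal{C}\simeq\mathcal{C}$ lets one work with $\mathcal{C}\otimes L$ directly.

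The gap is in the first assertion, that $u_{\circ}\colon\mathcal{S}\to\mathcal{M}_{\circ}$ is idempotent, and neither of your two routes closes it. The ``clean'' alternative rests on calling $L$ a ``smashing-type localization of $\mathcal{M}$'' and on the slogan that ``idempotent localizations of idempotent algebras remain idempotent''; but $\mathcal{M}_{\circ}\ss\mathcal{M}$ is only assumed to be an accessible reflective localization compatible with the tensor product, and such localizations are typically \emph{not} smashing (e.g.\ $\widehat{\Sp}_{p}\ss\Sp$). The claim that ``one checks the induced multiplication is invertible'' is exactly the statement to be proved, so as written the argument is circular. The first alternative via \propref{Idemp_Proj_Formula} does not typecheck: $L$ is a functor $\mathcal{M}\to\mathcal{M}_{\circ}$, not an endofunctor of $\Pr$, so the composite ``$L\circ(\mathcal{M}\otimes(-))\colon\Pr\to\mathcal{M}_{\circ}$'' is not a localization of $\Pr$ onto a module category, and identifying its right adjoint with a composite of fully faithful inclusions $\Mod_{\mathcal{M}_{\circ}}(\Pr)\into\Mod_{\mathcal{M}}(\Pr)\into\Pr$ presupposes that $\mathcal{M}_{\circ}$ is idempotent. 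The missing idea is the paper's two-step analysis of the right adjoint $G_{\circ}$ of $u_{\circ}\otimes\mathcal{M}_{\circ}\colon\mathcal{S}\otimes\mathcal{M}_{\circ}\to\mathcal{M}_{\circ}\otimes\mathcal{M}_{\circ}$: essential surjectivity follows because $m_{\circ}\circ(u_{\circ}\otimes\mathcal{M}_{\circ})$ is an equivalence, and full faithfulness follows by factoring $u_{\circ}\otimes\mathcal{M}_{\circ}$ as $(L\otimes\mathcal{M}_{\circ})\circ(u\otimes\mathcal{M}_{\circ})$, applying \lemref{FF_Right_Adj} to both $L\otimes\mathcal{M}_{\circ}$ and $\mathcal{M}\otimes L$, and using the idempotency of $\mathcal{M}$ to know that $u\otimes\mathcal{M}\colon\mathcal{S}\otimes\mathcal{M}\to\mathcal{M}\otimes\mathcal{M}$ is an equivalence. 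Without some version of this computation at the level of the Lurie tensor product on $\Pr$, the idempotency of $\mathcal{M}_{\circ}$ is not established.
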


\begin{proof}
By \cite[Proposition 2.2.1.9]{ha}, the $\infty$-category $\mathcal{M}_{\circ}$ admits
a canonical symmetric monoidal structure, such that the left adjoint
$\mathcal{M}\oto L\mathcal{M}_{\circ}$ promotes to a symmetric monoidal
functor. In particular, the unit of this symmetric monoidal structure
$\mathcal{S}\oto{u_{\circ}}\mathcal{M}_{\circ}$ is the composition
$$\mathcal{S}\oto u\mathcal{M}\oto L\mathcal{M}_{\circ},$$
where $u$
is the unit of the symmetric monoidal structure of $\mathcal{M}$.
We need to show that $u_{\circ}\otimes\mathcal{M}_{\circ}$ is an
equivalence, or equivalently, that its right adjoint $G_{\circ}$ is
an equivalence. Let $\mathcal{M}_{\circ}\otimes\mathcal{M}_{\circ}\oto{m_{\circ}}\mathcal{M}_{\circ}$
be the tensor product functor. Since the composition 
\[
\mathcal{S}\otimes\mathcal{M}_{\circ}\oto{u_{\circ}\otimes\mathcal{M}_{\circ}}\mathcal{M}_{\circ}\otimes\mathcal{M}_{\circ}\oto{m_{\circ}}\mathcal{M}_{\circ}
\]
is an equivalence, so is the composition of the right adjoints. It
follows that $G_{\circ}$ is essentially surjective. To complete the proof, we shall
show that $G_{\circ}$ is also fully faithful. Write $u_{\circ}\otimes\mathcal{M}_{\circ}$
as the composition 
\[
\mathcal{S}\otimes\mathcal{M}_{\circ}\oto{u\otimes\mathcal{M}_{\circ}}\mathcal{M}\otimes\mathcal{M}_{\circ}\oto{L\otimes\mathcal{M}_{\circ}}\mathcal{M}_{\circ}\otimes\mathcal{M}_{\circ}.
\]
Let $G$ be the right adjoint of $u\otimes\mathcal{M}_{\circ}$. It
follows that $G_{\circ}$ is the composition of the right adjoint
of $L\otimes\mathcal{M}_{\circ}$ and $G$. Since $L$ admits a fully
faithful right adjoint, by \lemref{FF_Right_Adj}, the functor $L\otimes\mathcal{M}_{\circ}$
has a fully faithful right adjoint as well. Thus, it suffices to show
that $G$ is fully faithful. For this, consider the commutative diagram
\[
\xymatrix{\mathcal{S}\otimes\mathcal{M}\ar[d]_{u\otimes1}^{\wr}\ar[rr]^{1\otimes L} &  & \mathcal{S}\otimes\mathcal{M}_{\circ}\ar[d]^{u\otimes1}\\
\mathcal{M}\otimes\mathcal{M}\ar[rr]^{1\otimes L} &  & \mathcal{M}\otimes\mathcal{M}_{\circ}.
}
\]
Taking the right adjoints, we see that the composition of $G$ with
the right adjoint of $\mathcal{M}\otimes L$, which is fully faithful
by \lemref{FF_Right_Adj}, is fully faithful. It follows that $G$
must be fully faithful as well. This concludes the proof that $u_{\circ}$
exhibits $\mathcal{M}$ as a mode.

Now, we want to analyze the property classified by $\mathcal{M}_{\circ}$.
Since $L$ is symmetric monoidal, $\mathcal{M}_{\circ}$ is a commutative
algebra over $\mathcal{M}$. Thus, every $\mathcal{M}_{\circ}$-module
is, in particular, an $\mathcal{M}$-module. Given an $\mathcal{M}$-module
$\mathcal{C}$, it is an $\mathcal{M}_{\circ}$-module if and only
if the composition 
\[
\mathcal{C}\otimes\mathcal{S}\oto{\mathcal{C}\otimes u}\mathcal{C}\otimes\mathcal{M}\oto{\mathcal{C}\otimes L}\mathcal{C}\otimes\mathcal{M}_{\circ}
\]
is an equivalence. The first functor is an equivalence since $\mathcal{C}$
is an $\mathcal{M}$-module. Thus, by 2-out-of-3, the composition
is an equivalence if and only if $\mathcal{C}\otimes L$ is an equivalence.
The functor $\mathcal{C}\otimes L$ admits by \lemref{FF_Right_Adj}
a fully faithful right adjoint. To describe its essential image, we
consider the commutative diagram of Yoneda embeddings
\[
\xymatrix{\mathcal{C}\otimes\mathcal{M}_{\circ}\ar[d]^{\wr}\ar[rr] &  & \mathcal{C}\otimes\mathcal{M}\ar[d]^{\wr}\\
\fun^{R}(\mathcal{C}^{\op},\mathcal{M}_{\circ})\ar[d]\ar@{^{(}->}[rr] &  & \fun^{R}(\mathcal{C}^{\op},\mathcal{M})\ar[d]\\
\fun(\mathcal{C}^{\op},\mathcal{M}_{\circ})\ar@{^{(}->}[rr] &  & \fun(\mathcal{C}^{\op},\mathcal{M}).
}
\]
We see that $\mathcal{C}\otimes\mathcal{M}_{\circ}$ is identified
with the full subcategory of $\mathcal{M}_{\circ}$-local objects
in $\mathcal{C}$.
\end{proof}
\begin{rem}
\label{rem:Mode_Reflective}\propref{Mode_Localization} need not
hold for a reflective subcategory $\mathcal{M}_{\circ}\ss\mathcal{M}$,
which is not assumed in advance to be compatible with the symmetric
monoidal structure. Indeed, the inclusion of \emph{co-connective}
spectra $\Sp^{\mathrm{co-cn}}\ss\Sp$ is reflective with a left adjoint
$\tau_{\le0}$. However, we have 
\[
\Sp^{\mathrm{co-cn}}\otimes\Sp^{\mathrm{co-cn}}\simeq0.
\]
Indeed, $\Sp^{\mathrm{co-cn}}\otimes\Sp$ is the $\infty$-category
of spectrum objects in $\Sp^{\mathrm{co-cn}}$, and thus is zero.
But by \lemref{FF_Right_Adj}, the functor 
\[
0=\Sp^{\mathrm{co-cn}}\otimes\Sp\oto{\Id\otimes\tau_{\le0}}\Sp^{\mathrm{co-cn}}\otimes\Sp^{\mathrm{co-cn}}
\]
admits a fully faithful right adjoint, so $\Sp^{\mathrm{co-cn}}\otimes\Sp^{\mathrm{co-cn}}$
must be zero as well. 
\end{rem}

Many further examples of modes can be constructed using \propref{Mode_Localization}:
\begin{example}
Consider the subcategory $\mathcal{S}_{\le d}\ss\mathcal{S}$ of $d$-truncated
spaces. Every $\mathcal{C}\in\Pr$ is an $\mathcal{S}$-module and
an object $X\in\mathcal{C}$ is $\mathcal{S}_{\le d}$-local if and
only if it is $d$-truncated. Thus, $\mathcal{S}_{\le d}$ is a mode
which classifies the property that every object is $d$-truncated.
Namely, the property of being equivalent to a $(d+1)$-category (compare
with \remref{Mode_Reflective}). The cases $d=-2,$ $-1$ and $0$,
reproduce the terminal mode $0$, the boolean mode $(0\to1)$, and
the discrete mode $\set$ respectively. 
\end{example}

Another important family of examples is the Bousfield localizations: 
\begin{example}
\label{exa:Bousfield_Modes}For every $E\in\Sp,$ the full subcategory
$\Sp_{E}\ss\Sp$ of $E$-local spectra is a mode and we have that
$\Sp_{E_{1}}\simeq\Sp_{E_{2}}$ in $\calg(\Pr)$, if and only if $E_{1}$
and $E_{2}$ are Bousfield equivalent. For every $\mathcal{C}\in\Pr_{\st}$,
we write $\mathcal{C}_{E}\coloneqq\Sp_{E}\otimes\,\mathcal{C}$ and
$\mathcal{C}_{(p)}\coloneqq\Sp_{(p)}\otimes\,\mathcal{C}$ for a prime
$p$.
\end{example}

\begin{rem}
Given $E_{1},E_{2}\in\Sp$, if $\Sp_{E_{1}}\otimes\Sp_{E_{2}}=0$,
then $\Sp_{E_{1}}\times\Sp_{E_{2}}$ is a mode (\propref{Mode_Lattice}(7)).
However, it is usually not a localization of $\Sp$, and in particular, it is not the same as $\Sp_{E_{1}\oplus E_{2}}$. For example, for every $n\ge1$ we have 
\[
L_{n}^{f}\Sp\simeq\Sp_{\oplus_{k=0}^{n}T(k)}\not\simeq\prod_{k=0}^{n}\Sp_{T(k)}.
\]
Note that the right-hand side is $\infty$-semiadditive, while the
left-hand side is not even $1$-semiadditive. In \thmref{1Sad_Decomposition},
we shall show that in a sense, this is \emph{the }difference between
the left and right-hand sides.
\end{rem}

As with Bousfield localization, a particularly nice kind of localizations
of modes, is given by the ones which are \emph{smashing} in the sense
of \defref{Smashing}. The smashing localizations of modes have a very simple characterization:
\begin{prop}
\label{prop:Mode_Smashing_Localization}A localization of modes $L\colon \mathcal{M}\to \mathcal{N}$
is smashing, if and only if the (fully faithful) right adjoint of $L$
admits a further right adjoint.
\end{prop}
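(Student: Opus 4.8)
The statement to prove is Proposition~\ref{prop:Mode_Smashing_Localization}: a localization of modes $L\colon\mathcal{M}\to\mathcal{N}$ is smashing if and only if the fully faithful right adjoint $F\colon\mathcal{N}\into\mathcal{M}$ admits a further right adjoint.

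The plan is to combine \propref{Idemp_Proj_Formula} (smashing $\iff$ localization satisfying the projection formula) with the characterization, inside $\Pr$, of when a lax symmetric monoidal right adjoint has a right adjoint of its own. First I would treat the ``only if'' direction, which should be essentially formal: if $L$ is smashing, then $\mathcal{N}\simeq\Mod_{R}(\Pr)$ for an idempotent algebra (mode) $R$, and $F$ is the forgetful functor $\Mod_{R}(\Pr)\to\Pr$. By \remref{Smashing_Recollement} (applied with the presentably symmetric monoidal $\infty$-category $\mathcal{C}=\Pr$ — note $\Pr$ is indeed presentably symmetric monoidal, and moreover \emph{stable} is not needed here, only the existence of the further right adjoint), the fully faithful forgetful functor $F\colon\Mod_{R}(\mathcal{M})\into\mathcal{M}$ admits a right adjoint. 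Concretely this right adjoint sends $\mathcal{C}\in\Pr$ to the $\infty$-category of $R$-modules in $\mathcal{C}$, or equivalently to $\fun^{R}(R^{\op},\mathcal{C})$; one can also see its existence abstractly because $F$ preserves all colimits (it is a left adjoint as well, being the forgetful functor out of modules over an idempotent algebra in $\Pr$) and $\Mod_{R}(\mathcal{M})$ is presentable, so the adjoint functor theorem applies. So the ``only if'' direction reduces to citing \remref{Smashing_Recollement} together with \cite[Corollary~4.2.3.7]{ha} (existence of $R$-modules internal to a presentable tensored category).

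The substance is the ``if'' direction. Assume $F\colon\mathcal{N}\into\mathcal{M}$ is fully faithful with a right adjoint $F^{R}$ in addition to its left adjoint $L$. By \propref{Idemp_Proj_Formula} it suffices to verify the projection formula: for all $X\in\mathcal{M}$, $Y\in\mathcal{N}$ the natural map
\[
\alpha\colon X\otimes F(Y)\to F(L(X)\otimes Y)
\]
is an isomorphism in $\mathcal{M}$. The key point is that in $\Pr$ the tensor product $(-)\otimes(-)$ preserves colimits in each variable, and $L$, being a left adjoint (localization), preserves colimits; moreover $F$ now preserves colimits as well, since it has a right adjoint $F^{R}$. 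The strategy is: fix $Y$ and consider the two functors $X\mapsto X\otimes F(Y)$ and $X\mapsto F(L(X)\otimes Y)$ from $\mathcal{M}$ to $\mathcal{M}$. Both are colimit-preserving (the first because $\otimes$ preserves colimits in the left variable; the second because $L$, $(-)\otimes Y$, and now $F$ all preserve colimits). The natural transformation $\alpha$ between them is an isomorphism when evaluated on the unit object $X=\mathbf{1}_{\mathcal{M}}=\mathcal{S}$: there $\alpha$ becomes $F(Y)\to F(L(\mathcal{S})\otimes Y)=F(\mathcal{N}$'s unit $\otimes Y)\simeq F(Y)$, which is an equivalence since $\mathcal{M}$ is a mode so $L(\mathcal{S})$ is the unit of $\mathcal{N}$. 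Since $\mathcal{M}$ is a mode, it is generated under colimits by its unit object $\mathcal{S}$ (this is because every presentable $\infty$-category, and in particular any mode $\mathcal{M}$, is a localization of presheaves on a point only after tensoring — more carefully: $\mathcal{M}=\mathcal{M}\otimes\mathcal{S}$ and the image of the unit generates $\mathcal{M}$ under colimits, which holds since $\mathcal{M}$ is a mode, equivalently $\mathcal{M}\otimes\mathcal{M}\iso\mathcal{M}$ via multiplication). Hence a natural transformation of colimit-preserving endofunctors of $\mathcal{M}$ that is an equivalence on $\mathcal{S}$ is an equivalence everywhere, giving the projection formula.

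The main obstacle is making precise the claim that ``a mode is generated under colimits by its unit''. This is where I would be most careful. The cleanest route: for a mode $\mathcal{M}$, the multiplication $\mathcal{M}\otimes\mathcal{M}\to\mathcal{M}$ is an equivalence, and tensoring with the unit map $\mathcal{S}\to\mathcal{M}$ shows the forgetful functor $\Mod_{\mathcal{M}}(\Pr)\to\Pr$ is fully faithful with essential image those $\mathcal{C}$ on which $\mathcal{C}\to\mathcal{C}\otimes\mathcal{M}$ is an equivalence; in particular, by \cite[Proposition~4.8.2.10]{ha}, the colimit-preserving functors out of $\mathcal{M}$ are detected on the unit, i.e. for any $\mathcal{D}\in\Pr$ and colimit-preserving $G_1,G_2\colon\mathcal{M}\to\mathcal{D}$, a natural transformation $G_1\to G_2$ is an equivalence iff it is so on $\mathbf{1}_{\mathcal{M}}$. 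Equivalently, $\fun^{L}(\mathcal{M},\mathcal{D})\simeq\Mod_{\mathcal{M}}$-objects of $\mathcal{D}$, and evaluation at the unit is fully faithful. Alternatively one avoids the ``generation'' language entirely: $L\colon\mathcal{M}\to\mathcal{N}$ is a localization, so by \cite[Proposition~5.2.7.4]{htt} and \lemref{Unit_Proj_Formula}, it suffices to show the localization endofunctor $FL$ is given by tensoring with the idempotent-to-be $R=F(\mathbf{1}_{\mathcal{N}})$; this in turn follows from the projection formula applied only with $Y=\mathbf{1}_{\mathcal{N}}$, namely $X\otimes R=X\otimes F(\mathbf{1}_{\mathcal{N}})\xrightarrow{\sim}F(L(X))$ — and this instance of $\alpha$ is an equivalence precisely because both sides are colimit-preserving in $X$ (using that $F$ has a right adjoint) and agree on $X=\mathcal{S}$. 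Then \cite[Proposition~4.8.2.4]{ha} and \cite[Proposition~4.8.2.10]{ha} identify $R$ as idempotent and $\mathcal{N}\simeq\Mod_{R}(\mathcal{M})$, so $L$ is smashing. I would present the ``if'' direction via this second route, as it parallels exactly the ``if'' direction of \propref{Idemp_Proj_Formula} and isolates the single new input: $F$ preserving colimits, which is exactly the hypothesis that $F$ has a further right adjoint.
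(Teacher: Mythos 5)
Your overall strategy is the paper's: reduce via \propref{Idemp_Proj_Formula} to the projection formula, observe that the extra right adjoint makes $F$ colimit preserving, and then reduce the projection formula to the case $X=\one_{\mathcal{M}}$, where it is essentially the identity; the ``only if'' direction is likewise the same (the paper cites \cite[Remark 4.2.3.8]{ha} directly rather than \remref{Smashing_Recollement}, but the content is identical, and note that your appeal to the adjoint functor theorem is not literally available since $\Mod_{R}(\Pr)$ is not presentable). The one step that needs repair is your first justification of the reduction to the unit: a mode is \emph{not} in general generated under colimits by its unit object. For $\mathcal{M}=\Sp$ the closure of $\bb S$ under colimits is $\Sp^{\cn}$, since desuspensions are not colimits; so ``generation under colimits by $\one_{\mathcal{M}}$'' is false and cannot be the reason the reduction works. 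Your fallback is the correct one: for an idempotent $\mathcal{M}$ and an $\mathcal{M}$-\emph{module} $\mathcal{E}$, evaluation at the unit $\fun^{L}(\mathcal{M},\mathcal{E})\to\mathcal{E}$ is an equivalence (full faithfulness of $\Mod_{\mathcal{M}}(\Pr)\subseteq\Pr$ combined with the free--forgetful adjunction applied to $\mathcal{M}\simeq\mathcal{M}\otimes\mathcal{S}$), applied here with $\mathcal{E}=\mathcal{M}$ --- but you should restrict this claim to $\mathcal{M}$-module targets rather than asserting it ``for any $\mathcal{D}\in\Pr$'', where it is at best unclear. The paper packages the same input more economically: it regards the two sides of the projection formula as colimit-preserving functors $\mathcal{M}\otimes\mathcal{N}\to\mathcal{M}$ and whiskers $\alpha$ along the equivalence $\mathcal{N}\simeq\mathcal{S}\otimes\mathcal{N}\iso\mathcal{M}\otimes\mathcal{N}$ (an equivalence because $\mathcal{N}$ is an $\mathcal{M}$-module and $\mathcal{M}$ is idempotent), which identifies ``checking $\alpha$ everywhere'' with ``checking $\alpha$ at $X=\one_{\mathcal{M}}$'' without invoking any detection-on-objects principle. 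Your second route --- establishing only the instance $Y=\one_{\mathcal{N}}$ and then rerunning the tail of the proof of \propref{Idemp_Proj_Formula} via \lemref{Unit_Proj_Formula} and \cite[Propositions 4.8.2.4 and 4.8.2.10]{ha} --- is also valid, since that proof indeed uses the projection formula only at $Y=\one_{\mathcal{N}}$.
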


\begin{proof}
In one direction, the forgetful functor $\Mod_{R}(\mathcal{M})\to\mathcal{M}$
admits a right adjoint for every $R\in\calg(\mathcal{M})$ by \cite[Remark 4.2.3.8]{ha}.
Conversely, by \propref{Idemp_Proj_Formula}, it suffices to show
that if the right adjoint $F$ of $L$ admits a further right adjoint,
the adjunction $F\dashv L$ satisfies the projection formula. Since
$F$ is then colimit preserving, the natural transformation in the
projection formula
\[
X\otimes F(Y)\oto{\alpha}F(L(X)\otimes Y)
\]
is a natural transformation between two functors $\mathcal{M}\times\mathcal{N}\to\mathcal{M}$,
which are colimit preserving in each variable. Equivalently, these
are colimit preserving functors $\mathcal{M}\otimes\mathcal{N}\to\mathcal{M}$ \cite[Section 4.8.1]{ha}.
Thus, it suffices to check that $\alpha$ becomes an isomorphism after
whiskering along the equivalence $$\mathcal{N}\simeq\mathcal{S}\otimes\mathcal{N}\iso\mathcal{M}\otimes\mathcal{N}.$$
This amounts to verifying the case $X=\one_{\mathcal{M}}$, in which,
by unwinding the definitions, $\alpha$ is the identity and so in
particular an isomorphism.
\end{proof}
\begin{rem}
Every mode $\mathcal{M}$ provides by definition a smashing localization
$\Pr\to\Mod_{\mathcal{M}}(\Pr)$. Furthermore, every map of modes
$\mathcal{M}\to\mathcal{N}$ induces a smashing localization of the
$\infty$-categories of modules $\Mod_{\mathcal{M}}(\Pr)\to\Mod_{\mathcal{N}}(\Pr)$.
\propref{Mode_Smashing_Localization} characterizes those smashing
localizations of $\Mod_{\mathcal{M}}(\Pr)$, which arise from smashing
localizations of $\mathcal{M}.$
\end{rem}

A particular instance of mode localizations arises from 
divisible and complete objects with respect to a natural endomorphism
of the identity functor. More generally,
\begin{prop}
\label{prop:Div_Comp_Mode}Let $\mathcal{C}\in\Pr$ and $\Id_{\mathcal{C}}\oto{\alpha, }\Id_{\mathcal{C}}$.
The $\infty$-categories $\mathcal{C}[\alpha^{-1}]$ and $\widehat{\mathcal{C}}_{\alpha}$
are accessible localizations of $\mathcal{C}$ and hence in particular
presentable. If moreover $\mathcal{C}\in\calg(\Pr)$,  and $\alpha$
is given by tensoring with $\one\oto{\alpha_{\one}}\one$, then the
full subcategories $\mathcal{C}[\alpha^{-1}],\widehat{\mathcal{C}}_{\alpha}\ss\mathcal{C}$
are compatible with the symmetric monoidal structure of $\mathcal{C}$
and are thus symmetric monoidal localizations of $\mathcal{C}$. 
\end{prop}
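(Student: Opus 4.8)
The plan is to realize each of $\mathcal{C}[\alpha^{-1}]$ and $\widehat{\mathcal{C}}_{\alpha}$ as the $\infty$-category of $S$-local objects for a suitable small set $S$ of morphisms, so that accessibility and presentability follow from \cite[Proposition 5.5.4.15]{htt}; and then, in the presentably symmetric monoidal case, to promote these to symmetric monoidal localizations by checking the compatibility criterion of \cite[Proposition 2.2.1.9]{ha} via a strongly-saturated-class argument. No stability or recollement is invoked.

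For the divisible part, fix a regular cardinal $\kappa$ for which $\mathcal{C}$ is $\kappa$-accessible, write $\mathcal{C}^{\kappa}\ss\mathcal{C}$ for the essentially small full subcategory of $\kappa$-compact objects, and set $S\coloneqq\{\alpha_{Z}\colon Z\in\mathcal{C}^{\kappa}\}$. Since $\alpha$ is a natural endomorphism of the identity, for $g\colon Z\to X$ one has $\alpha_{X}\circ g=g\circ\alpha_{Z}$, so $X$ is $S$-local if and only if post-composition with $\alpha_{X}$ is an equivalence on $\map(Z,X)$ for every $\kappa$-compact $Z$. Writing an arbitrary object as a $\kappa$-filtered colimit of $\kappa$-compacts and using that $\map(-,X)$ carries colimits to limits, this holds if and only if the natural transformation $\alpha_{X}\circ(-)\colon\map(-,X)\to\map(-,X)$ of presheaves on $\mathcal{C}$ is an equivalence, which by the Yoneda lemma holds if and only if $\alpha_{X}$ is an equivalence. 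Hence $\mathcal{C}[\alpha^{-1}]=S^{-1}\mathcal{C}$, and by \cite[Proposition 5.5.4.15]{htt} it is an accessible reflective localization of $\mathcal{C}$, in particular presentable; let $L\colon\mathcal{C}\to\mathcal{C}[\alpha^{-1}]$ be the resulting reflection.

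For the complete part, let $\es$ be the initial object of $\mathcal{C}$ and set $S'\coloneqq\{\es\to LZ\colon Z\in\mathcal{C}^{\kappa}\}$. The crucial observation is that the $\alpha$-divisible objects are closed under all colimits in $\mathcal{C}$, since $\alpha_{C}\simeq\operatorname{colim}_{i}\alpha_{C_{i}}$ for $C=\operatorname{colim}_{i}C_{i}$ is a colimit of equivalences. Consequently every $\alpha$-divisible object is a $\kappa$-filtered colimit in $\mathcal{C}$ of objects of the form $LZ$ with $Z$ $\kappa$-compact: writing such an object as $\operatorname{colim}_{i}Z_{i}$ with $Z_{i}\in\mathcal{C}^{\kappa}$, the colimit $\operatorname{colim}_{i}LZ_{i}$ (formed in $\mathcal{C}$) is again $\alpha$-divisible, and the canonical comparison map from $\operatorname{colim}_{i}Z_{i}$ is an $S$-local equivalence into an $S$-local object, hence an equivalence. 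It follows that $X$ is $S'$-local if and only if $\map(LZ,X)\simeq\pt$ for all $\kappa$-compact $Z$, equivalently $\map(W,X)\simeq\pt$ for every $\alpha$-divisible $W$, i.e. $X$ is $\alpha$-complete. Thus $\widehat{\mathcal{C}}_{\alpha}=S'^{-1}\mathcal{C}$, again an accessible reflective localization, hence presentable.

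Finally suppose $\mathcal{C}\in\calg(\Pr)$ and $\alpha=\alpha_{\one}\otimes(-)$. For either localization, let $\mathcal{W}$ be the class of morphisms $f$ such that $f\otimes\Id_{Y}$ is a local equivalence for every $Y\in\mathcal{C}$. Since $(-)\otimes Y$ preserves colimits, $\mathcal{W}$ is an intersection of preimages of the strongly saturated class of local equivalences under colimit-preserving endofunctors, hence is itself strongly saturated; so it suffices to check $S\ss\mathcal{W}$, respectively $S'\ss\mathcal{W}$. For $S$: $\alpha_{Z}\otimes\Id_{Y}=\alpha_{Z\otimes Y}$, and by the same naturality argument as above $\alpha_{V}$ is an $S$-local equivalence for every object $V$. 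For $S'$: the map $(\es\to LZ)\otimes\Id_{Y}$ identifies with $\es\to LZ\otimes Y$ (using $\es\otimes Y\simeq\es$), and $LZ\otimes Y$ is $\alpha$-divisible since $\alpha_{LZ\otimes Y}=\alpha_{LZ}\otimes\Id_{Y}$ with $\alpha_{LZ}$ an equivalence; hence $\map(LZ\otimes Y,X)\simeq\pt$ for every $\alpha$-complete $X$, which is exactly the statement that $\es\to LZ\otimes Y$ is an $S'$-local equivalence. By \cite[Proposition 2.2.1.9]{ha} this exhibits $\mathcal{C}[\alpha^{-1}]$ and $\widehat{\mathcal{C}}_{\alpha}$ as symmetric monoidal localizations of $\mathcal{C}$, with $L$ and the $\alpha$-completion functor symmetric monoidal. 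The step I expect to require the most care is the identification of the $\alpha$-complete objects with the $S'$-local objects, which rests entirely on the colimit-closure of the $\alpha$-divisible objects; the remainder is formal bookkeeping with strongly saturated classes and the Yoneda lemma.
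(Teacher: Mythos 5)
Your proof is correct and follows essentially the same route as the paper: both subcategories are exhibited as the $S$-local objects for small sets of morphisms (the maps $\alpha_{Z}$ for $\kappa$-compact $Z$, and maps out of the initial object into a generating set of $\mathcal{C}[\alpha^{-1}]$), so that accessibility follows from \cite[Proposition 5.5.4.15]{htt}, and monoidal compatibility is then checked using $\alpha_{Z\otimes Y}=\alpha_{Z}\otimes\Id_{Y}$. The only cosmetic differences are that the paper generates $\widehat{\mathcal{C}}_{\alpha}$ by maps $0\to Z$ with $Z$ ranging over $\tau$-compact objects of $\mathcal{C}[\alpha^{-1}]$ rather than over the objects $LZ$ for $Z\in\mathcal{C}^{\kappa}$, and verifies compatibility by showing the local objects are closed under internal hom (so that $Z\otimes f$ is a local equivalence for \emph{every} local equivalence $f$, by adjunction) instead of your check on generators combined with strong saturation -- both standard and equivalent devices.
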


\begin{proof}
To show that $\mathcal{C}[\alpha^{-1}]$ and $\widehat{\mathcal{C}}_{\alpha}$
are accessible localizations of $\mathcal{C}$, we use \cite[Propostion 5.5.4.15]{htt},
by which it suffices to realize them as the full subcategories of
$S$-local objects with respect to a suitable (small) set of morphisms
in $\mathcal{C}$. Since $\mathcal{C}$ is presentable, it is $\kappa$-compactly
generated for some cardinal $\kappa$. In particular, the subcategory
$\mathcal{C}^{\kappa}\ss\mathcal{C}$ of $\kappa$-compact objects
is essentially small and generates $\mathcal{C}$ under colimits.
For $\mathcal{C}[\alpha^{-1}]$, we take $S$ to be the collection
of maps $X\oto{\alpha}X$ for $X$ in (a set of representatives of) $\mathcal{C}^{\kappa}$. For $\widehat{\mathcal{C}}_{\alpha}$,
we can take $S^{\prime}$ to be the collection of maps $0\to Z$ for
$Z$ in (a set of representatives of) $\tau$-compact objects for
$\tau$ large enough so that $\mathcal{C}[\alpha^{-1}]$ is $\tau$-compactly
generated and the inclusion $\mathcal{C}[\alpha^{-1}]\into\mathcal{C}$
is $\tau$-accessible.

For $\mathcal{C}\in\calg(\Pr)$, the assumption on $\alpha$
implies that for all $X,Y\in\mathcal{C}$ we have $\alpha_{X\otimes Y}=X\otimes\alpha_{Y}$
and similarly for the adjoint $\alpha_{\hom(X,Y)}=\hom(X,\alpha_{Y})$.
In particular, the class of $\alpha$-divisible objects is closed
under tensoring and exponentiation by any object of $\mathcal{C}$.
Now, the class of morphisms $\overline{S}$ in $\mathcal{C}$, which
are mapped to isomorphisms under the localization $\mathcal{C}\to\mathcal{C}[\alpha^{-1}]$,
is the set of maps $X\oto fY$ in $\mathcal{C}$ such that for every
$W\in\mathcal{C}[\alpha^{-1}]$, the map 
\[
\map(Y,W)\oto{(-)\circ f}\map(X,W)
\]
is an isomorphism. For any $Z\in\mathcal{C}$, we have that $\hom(Z,W)\in\mathcal{C}[\alpha^{-1}]$.
Thus, by adjointness, $Z\otimes f\in\overline{S}$. The argument for
$\widehat{\mathcal{C}}_{\alpha}$ is similar but simpler. It again
suffices to show that for $Z\in\mathcal{C}$ and $W\in\widehat{\mathcal{C}}_{\alpha}$,
the object $\hom^{\mathcal{C}}(W,Z)$ is $\alpha$-complete. This
follows from the fact that for any $\alpha$-divisible $X$, the object
$W\otimes X$ is $\alpha$-divisible. 
\end{proof}

\subsection{Modes of Semiadditivity}

In this subsection, we apply the general theory of modes to study the interaction
of stability and higher semiadditivity. In particular, we introduce
and study the mode which classifies the property of being stable,
$\infty$-semiadditive and of semiadditive height $n$, and compare
it with $\Sp_{T(n)}$. 
\subsubsection{Semiadditivity \& stability}

It is a fundamental result of \cite{Harpaz}, that higher semiadditivity
is classified by a mode. More precisely,  by \lemref{CMon_Pr} the forgetful functor $$\CMon_{m} \to \CMon_{-2} \simeq\mathcal{S}$$
admits a left adjoint $\one_{\Pr} = \mathcal{S} \to \CMon_{m}$. 
We consider $\CMon_{m}$ as an object of $\Pr_{\one/}$ via this left adjoint. 

\begin{prop}
\label{prop:CMon_Tensor} Let $-2 \leq m \leq \infty$. $\CMon_{m}$ is a mode, which classifies
$m$-semiadditivity. Moreover, for every $\mathcal{C}\in\Pr$, there
is a canonical equivalence\footnote{This can be compared with the fact that the stabilization $\Sp\otimes\,\mathcal{C}$
can be identified with $\Sp(\mathcal{C})$, the $\infty$-category
of spectrum objects in $\mathcal{C}$.}
\[
\CMon_{m}\otimes\,\mathcal{C}\simeq\CMon_{m}(\mathcal{C}).
\]
\end{prop}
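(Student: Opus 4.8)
The plan is to prove the two claims --- that $\CMon_m$ is a mode classifying $m$-semiadditivity, and that $\CMon_m \otimes \mathcal{C} \simeq \CMon_m(\mathcal{C})$ --- simultaneously, by first establishing the tensor formula as a computation and then deriving the rest formally. For $-2 \le m < \infty$ the strategy is as follows. First I would recall from \lemref{CMon_Pr} that $\CMon_m(\mathcal{C})$ is presentable whenever $\mathcal{C}$ is, so that $\CMon_m(-)\colon \Pr \to \Pr$ is a well-defined endofunctor; by \propref{CMon_Universal_Property} (with the roles of the variables arranged appropriately) it is right adjoint to the fully faithful inclusion $\acat^{\psad m}\cap\Pr \hookrightarrow \Pr$ restricted to presentable $\infty$-categories, hence $\CMon_m(-)$ is a colocalization, i.e. a coreflection onto the $m$-semiadditive presentable $\infty$-categories. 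Wait --- more carefully: \propref{CMon_Universal_Property} gives $\fun^{\fin m}(\mathcal{C}, \CMon_m(\mathcal{D})) \simeq \fun^{\fin m}(\mathcal{C},\mathcal{D})$ for $\mathcal{C}$ $m$-semiadditive, which exhibits $\CMon_m$ as a right adjoint to the inclusion of $m$-semiadditive $\infty$-categories into $\acat^{\fin m}$; restricting to presentable $\infty$-categories and colimit-preserving functors (using \cite[Lemma 5.17]{Harpaz}), this says the inclusion $\Pr^{\sad m} \hookrightarrow \Pr$ admits a right adjoint $\CMon_m(-)$, so $\Pr^{\sad m} \subseteq \Pr$ is a reflective--coreflective (indeed, an accessible localization via the right adjoint) subcategory.

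Next, the key point is to identify the natural map $\mathcal{C} \to \CMon_m(\mathcal{C})$ and show it is a left adjoint as well, or better, to produce the idempotent algebra directly. I would take $\mathcal{M} \coloneqq \CMon_m = \CMon_m(\mathcal{S})$, with unit $u\colon \mathcal{S} \to \CMon_m$ the left adjoint to the forgetful (evaluation-at-$\pt$) functor. To see $u$ exhibits $\CMon_m$ as an idempotent object of $\Pr$, I must show $\CMon_m \otimes \CMon_m \xrightarrow{1 \otimes u}$ --- sorry, that $\CMon_m \simeq \CMon_m \otimes \mathcal{S} \xrightarrow{1\otimes u} \CMon_m \otimes \CMon_m$ is an equivalence. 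This is where the tensor formula does the work: I claim $\CMon_m \otimes \mathcal{C} \simeq \CMon_m(\mathcal{C})$ naturally. One route: $\CMon_m(\mathcal{C}) = \fun^{m\text{-fin}}(\span(\mathcal{S}_{\fin m})^{\op}, \mathcal{C})$, and since $\span(\mathcal{S}_{\fin m})$ is the free $m$-semiadditive $\infty$-category on a point (\exaref{Universal_Cardinality}, \cite[Corollary 5.7]{Harpaz}), functors out of it preserving $m$-finite colimits into a cocomplete $\mathcal{C}$ are computed by a relative tensor/Kan extension; explicitly $\CMon_m(\mathcal{C}) \simeq \fun^{R}(\CMon_m^{\op}, \mathcal{C})$ by identifying $\CMon_m = \mathrm{Ind}$-completion or $\mathcal{P}$-style presheaf completion of $\span(\mathcal{S}_{\fin m})$ for $m$-finite colimits (\cite[Corollary 5.19]{Harpaz}, the universal presentable $m$-semiadditive $\infty$-category), and then $\fun^R(\CMon_m^{\op},\mathcal{C})$ is by definition $\CMon_m \otimes \mathcal{C}$ in $\Pr$. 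So the formula $\CMon_m \otimes \mathcal{C} \simeq \CMon_m(\mathcal{C})$ follows from the universal property of $\CMon_m$ as the free presentable $m$-semiadditive $\infty$-category, exactly parallel to $\Sp \otimes \mathcal{C} \simeq \Sp(\mathcal{C})$.

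Granting the formula, idempotency is immediate: $\CMon_m \otimes \CMon_m \simeq \CMon_m(\CMon_m)$, and since $\CMon_m$ is already $m$-semiadditive, the canonical map $\CMon_m \to \CMon_m(\CMon_m)$ induced by the unit is an equivalence (a presentable $m$-semiadditive $\infty$-category is fixed by the coreflection $\CMon_m(-)$; equivalently $\CMon_m(\mathcal{D}) \simeq \mathcal{D}$ for $m$-semiadditive $\mathcal{D}$, which is \cite[Corollary 5.21]{Harpaz} or follows from \propref{CMon_Universal_Property} by Yoneda). Hence $u$ exhibits $\CMon_m$ as an idempotent object, so it is a mode by \cite[Proposition 4.8.2.9]{ha}. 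The property it classifies --- being a $\CMon_m$-module --- coincides with being $m$-semiadditive: by \cite[Proposition 4.8.2.10]{ha} a $\CMon_m$-module is exactly a $\mathcal{C}$ with $\mathcal{C} \otimes \mathcal{S} \to \mathcal{C}\otimes \CMon_m$ an equivalence, i.e. $\mathcal{C} \simeq \CMon_m(\mathcal{C})$, which holds iff $\mathcal{C}$ is $m$-semiadditive (again \cite[Corollary 5.21]{Harpaz}; this is also precisely \cite[Lemma 5.20]{Harpaz}, which I may simply cite). Finally, for the case $m = \infty$: here $\CMon_\infty = \colim_m \CMon_m$ in $\Pr$ by \lemref{CMon_Pr}, and the forgetful functor $\mode \to \Pr$ preserves sifted (in particular filtered) colimits by \propref{Mode_Lattice}(5) / \propref{sifted_mode}, so $\CMon_\infty$ is a mode and, by \propref{sifted_mode}(3), classifies the conjunction over all $m$ of "$m$-semiadditive", which is "$\infty$-semiadditive"; the tensor formula $\CMon_\infty \otimes \mathcal{C} \simeq \colim_m (\CMon_m \otimes \mathcal{C}) \simeq \colim_m \CMon_m(\mathcal{C}) \simeq \CMon_\infty(\mathcal{C})$ follows since $\otimes$ preserves colimits and $\CMon_\infty(\mathcal{C})$ is the colimit of the $\CMon_m(\mathcal{C})$ in $\Pr$ (\lemref{CMon_Pr}). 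The main obstacle is the first, "computational" step --- carefully extracting $\CMon_m \otimes \mathcal{C} \simeq \CMon_m(\mathcal{C})$ from Harpaz's identification of $\CMon_m$ as the free presentable $m$-semiadditive $\infty$-category --- since everything else is then formal nonsense about idempotent algebras; but in fact this too is essentially \cite[Corollary 5.19]{Harpaz} combined with the defining property of $\otimes$ on $\Pr$, so I expect the proof to be quite short, reducing to citations of \cite{Harpaz} for $m < \infty$ and to \lemref{CMon_Pr} plus \propref{sifted_mode} for the passage to $m = \infty$.
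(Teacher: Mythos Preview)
Your proposal is correct and, for $m=\infty$, identical to the paper's argument. For $m<\infty$ you end up citing the same results from \cite{Harpaz} as the paper, but you arrange them differently and there is one point of confusion worth noting.

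The paper simply cites \cite[Corollary~5.21]{Harpaz} directly for the statement that $\CMon_m$ is a mode classifying $m$-semiadditivity, and then obtains the tensor formula by uniqueness of adjoints: \cite[Corollary~5.21]{Harpaz} identifies $\CMon_m \otimes (-)$ as the \emph{left} adjoint to the inclusion $\iota\colon \Pr^{\sad m} \hookrightarrow \Pr$, while \cite[Corollary~5.18]{Harpaz} identifies $\CMon_m(-)$ as that same left adjoint; hence they agree. You instead try to compute $\CMon_m\otimes\mathcal{C}\simeq\CMon_m(\mathcal{C})$ directly via $\fun^{R}(\CMon_m^{\op},\mathcal{C})$ and then deduce idempotency from it. This can be made to work but is longer, and the paper's uniqueness-of-adjoints argument renders the direct computation unnecessary.

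Your slip about adjoint direction is relevant here. From \propref{CMon_Universal_Property} you conclude that $\CMon_m(-)$ is a \emph{right} adjoint (a coreflection) to the inclusion of $m$-semiadditive $\infty$-categories. That is correct in $\acat^{\fin m}$, where morphisms are $m$-finite \emph{limit} preserving functors; but in $\Pr$, where morphisms are \emph{colimit} preserving, the adjunction goes the other way: $\CMon_m(-)$ is the \emph{left} adjoint to $\iota$ (this is \cite[Corollary~5.18]{Harpaz}). Your argument never actually uses the wrong direction, so nothing breaks, but once you have the correct direction the paper's two-line proof of the tensor formula becomes available and your hands-on identification is not needed.
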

\begin{proof}
We first treat the case $m <\infty $. The fact that $\CMon_{m}$ is a mode, which classifies $m$-semiadditivity is exactly \cite[Corollary 5.21]{Harpaz}. 
Consider now the inclusion $\iota\colon \Pr^{\sad m} \hookrightarrow \Pr$. 
To prove that    
$\CMon_{m}\otimes\,\mathcal{C}\simeq\CMon_{m}(\mathcal{C})$,
note that  \cite[Corollary 5.21]{Harpaz} and  \cite[Corollary 5.18]{Harpaz} identify  the left-hand side and the right-hand side respectively as left adjoints to $\iota$.  

We now consider the case $m=\infty$. For verious $k$, the left adjoints of the forgetful functors $\CMon_{k+1} \to \CMon_{k}$ can be considered as maps in $\calg^{\mathrm{idem}}(\Pr)\subseteq \Pr_{\one/}$.  Thus, by \lemref{CMon_Pr} and  \propref{Mode_Lattice}(5), we have that
\[
\CMon_{\infty} \simeq \colim_k \CMon_{k}\quad \in \quad \calg^{\mathrm{idem}}(\Pr)
\]
is a mode, classifying the property of being $k$-semiadditive for every $k$. In other words, $\CMon_{\infty}$ is a mode classifying $\infty$-semiadditivity.   Finally, since $\Pr$ is closed symmetric monoidal, for every $\mathcal{C}\in\Pr$ we have

\[
\CMon_{\infty} \otimes\, \mathcal{C} \simeq (\colim_k \CMon_{k} )\otimes \mathcal{C} \simeq \colim_k (\CMon_{k}\otimes\, \mathcal{C}  ) \simeq \colim_k \CMon_{k}(\mathcal{C})   \simeq 
\CMon_{\infty}(C).
\]
\end{proof}
\begin{rem}
The mere fact that $m$-semiadditivity is classified by a mode has already
non-trivial implications. For example, given $\mathcal{C}\oto F\mathcal{D}$
in $\alg(\Pr)$, we get by \propref{Mode_Hereditary}, that if $\mathcal{C}$
is $m$-semiadditive then so is $\mathcal{D}$ (compare \cite[Corollary  3.3.2(2)]{Ambi2018}).
\end{rem}

To study the interaction of higher semiadditivity and stability we
introduce the stable $m$-semiadditive mode:
\begin{defn}
For every $0\le m\le\infty$, we define the $\infty$-category of
$m$-commutative monoids in spectra:
\[
\tsadi^{[m]}\coloneqq\CMon_{m}(\Sp).
\]
We denote $\tsadi^{[\infty]}$ simply by $\tsadi$ (and observe that
$\tsadi^{[0]}\simeq\Sp$). 
\end{defn}

It is an immediate consequence of \propref{CMon_Tensor} and \propref{Mode_Lattice}(4),
that for all values $0\le m\le\infty$, the $\infty$-category $\tsadi^{[m]}$
is a mode which classifies the property of being $m$-semiadditive and
stable. 

The mode $\tsadi^{[m]}$ plays an analogous role to $\CMon_{m}$ for
\emph{stable} $m$-semiadditive $\infty$-categories (even for the
non-presentable ones). In particular, the $\CMon_{m}$-enriched Yoneda
embedding of an $m$-semiadditive $\infty$-category can be further
lifted to $\tsadi^{[m]}$, provided that the $\infty$-category is
stable. To see this, for a stable $\infty$-category $\mathcal{C}$,
we already have a limit preserving spectral Yoneda embedding (e.g. as can be deduced from \cite[Corollary 1.4.2.23]{ha})
$$\Yo^{\Sp}\colon \mathcal{C}\into\fun(\mathcal{C}^{\op},\Sp).$$
By \propref{CMon_Universal_Property},
if $\mathcal{C}$ is in addition $m$-semiadditive, we have a canonical
equivalence 
\[
\fun^{\fin m}(\mathcal{C}^{\op},\CMon_{m}(\Sp))\simeq\fun^{\fin m}(\mathcal{C}^{\op},\Sp)).
\]
Thus, we get a $\tsadi^{[m]}$-enriched Yoneda embedding functor
\[
\Yo^{\tsadi^{[m]}}\colon\mathcal{C}\to\fun^{\fin m}(\mathcal{C}^{\op},\tsadi^{[m]})\ss\fun(\mathcal{C}^{\op},\tsadi^{[m]}).
\]
We note that this functor is fully faithful, exact and $m$-semiadditive.
Using the $\tsadi^{[m]}$-enriched Yoneda embedding, we can characterize
the semiadditive height of an object.
\begin{prop}
\label{prop:Yoneda_Height}Let ${\cal C}\in\acat_{\st}^{\sad m}$
and let $X\in{\cal C}$. For every $0\le n\le m,$ the object $X$
is of height $\le n$ (resp. $>n-1$) if and only if $\hom^{\tsadi^{[m]}}(Z,X)\in\tsadi^{[m]}$
is of height $\le n$ (resp. $>n-1$), for every object $Z\in\mathcal{C}$.
\end{prop}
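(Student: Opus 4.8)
The plan is to unwind the definition of the height of $X$ in terms of the action of $\pn n^{\mathcal{C}}$ and $(1 - \pn{n-1}^{\mathcal{C}}\pn n^{\mathcal{C}})$, and transport it through the $\tsadi^{[m]}$-enriched Yoneda embedding. The key point is that $\Yo^{\tsadi^{[m]}}$ is a fully faithful, exact, $m$-semiadditive functor, so it carries $\pn n^{\mathcal{C}}$ to the corresponding natural endomorphism of the identity of $\fun^{\fin m}(\mathcal{C}^{\op},\tsadi^{[m]})$, which acts on each mapping object $\hom^{\tsadi^{[m]}}(Z,X)$ by the ``$p_{(n)}$'' of $\tsadi^{[m]}$ (applied pointwise, this is just $\pn n^{\tsadi^{[m]}}$ acting on $\hom^{\tsadi^{[m]}}(Z,X)$, since $\tsadi^{[m]}$ is evaluated at each $Z$). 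Concretely: recall that $\htt_\mathcal{C}(X) \le n$ means $\pn n^{\mathcal{C}}$ acts invertibly on $X$, and by \propref{Box_Semi_Inverse}, $\htt_\mathcal{C}(X) > n-1$ means $(1-\pn{n-1}^{\mathcal{C}}\pn n^{\mathcal{C}})$ acts invertibly on $X$ (using stability of $\mathcal{C}$). Both of these are conditions ``$\gamma_X$ is invertible'' for a natural endomorphism $\gamma$ of $\Id_{\mathcal{C}}$ built out of cardinalities of Eilenberg--MacLane spaces.

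First I would reduce to the following lemma: for a fully faithful $m$-semiadditive exact functor $F \colon \mathcal{C} \into \mathcal{D}$ between stable $m$-semiadditive $\infty$-categories, and a natural endomorphism $\gamma$ of $\Id_{\mathcal{D}}$ obtained from $m$-finite space cardinalities, $\gamma_{F(X)}$ is invertible if and only if $\gamma_X$ is (when $F$ additionally reflects isomorphisms; but this only needs the ``if'' direction, which is automatic from functoriality and the ``only if'' from full faithfulness). Here $F = \Yo^{\tsadi^{[m]}}$ and $\mathcal{D} = \fun^{\fin m}(\mathcal{C}^{\op},\tsadi^{[m]})$. Since $F$ maps $\pn n^{\mathcal{C}}$ to $\pn n^{\mathcal{D}}$ (as $F$ is $m$-semiadditive, \propref{A_Semiadd}/\propref{Height_Functoriality}), and $F$ is fully faithful, $\htt_\mathcal{C}(X) \le n$ iff $\htt_{\mathcal{D}}(F(X)) \le n$ by \propref{Height_Functoriality}; likewise for $> n-1$ using \corref{Stable_Height_Functoriality} together with the fact that $F$ is fully faithful hence conservative onto its image. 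Next I would observe that a natural endomorphism $\alpha$ of $\Id_{\mathcal{D}}$ where $\mathcal{D} = \fun^{\fin m}(\mathcal{C}^{\op},\tsadi^{[m]})$ acts pointwise: for $G \in \mathcal{D}$ and $Z \in \mathcal{C}^{\op}$, the component $\alpha_G$ evaluated at $Z$ is $\alpha^{\tsadi^{[m]}}_{G(Z)}$, because limits, colimits, and hence cardinality operations in functor categories are computed pointwise (\propref{Ambi_Closure}(2), \remref{Cardinality_Fiberwise}). Therefore $\alpha_G$ is an isomorphism iff $\alpha^{\tsadi^{[m]}}_{G(Z)}$ is an isomorphism for every $Z$. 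Applying this with $\alpha = \pn n$ (resp. $1 - \pn{n-1}\pn n$) and $G = \Yo^{\tsadi^{[m]}}(X)$, which sends $Z$ to $\hom^{\tsadi^{[m]}}(Z,X)$, gives exactly the claim.

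Putting it together: $\htt_\mathcal{C}(X) \le n$ iff (full faithfulness) $\htt_{\mathcal{D}}(\Yo^{\tsadi^{[m]}}(X)) \le n$ iff (pointwise computation) $\pn n^{\tsadi^{[m]}}$ acts invertibly on $\hom^{\tsadi^{[m]}}(Z,X)$ for all $Z$ iff $\htt_{\tsadi^{[m]}}(\hom^{\tsadi^{[m]}}(Z,X)) \le n$ for all $Z$; and the analogous chain of equivalences for $> n-1$ runs through \corref{Stable_Height_Functoriality} in place of \propref{Height_Functoriality}. The main obstacle I anticipate is bookkeeping in the second step: verifying carefully that the $\tsadi^{[m]}$-enriched Yoneda embedding really does intertwine the cardinality natural transformations on both sides and that these are computed pointwise in the functor category $\fun^{\fin m}(\mathcal{C}^{\op},\tsadi^{[m]})$ — i.e. that passing to the (non-full) subcategory of $m$-finite-limit preserving functors does not disturb the pointwise formula for $m$-finite limits and colimits, which it does not since $m$-finite (co)limits of $m$-finite-limit-preserving functors are again such and are computed pointwise. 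Once that is pinned down, the rest is formal manipulation of the definitions of height.
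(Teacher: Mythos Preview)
Your proposal is correct and rests on the same essential ingredients as the paper: the $\tsadi^{[m]}$-enriched Yoneda embedding together with \corref{Stable_Height_Functoriality}. The paper packages the argument more directly by applying the enriched Yoneda embedding to $\mathcal{C}^{\op}$ rather than to $\mathcal{C}$: this produces, for each $Z\in\mathcal{C}$, an $m$-semiadditive functor $\hom^{\tsadi^{[m]}}(Z,-)\colon\mathcal{C}\to\tsadi^{[m]}$, and these functors are jointly conservative. One then invokes \corref{Stable_Height_Functoriality} directly for this family, bypassing the intermediate functor category $\fun^{\fin m}(\mathcal{C}^{\op},\tsadi^{[m]})$ and the pointwise bookkeeping you anticipated (which is correct but unnecessary). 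A minor slip: $\fun^{\fin m}(\mathcal{C}^{\op},\tsadi^{[m]})\subseteq\fun(\mathcal{C}^{\op},\tsadi^{[m]})$ is a \emph{full} subcategory, not a non-full one.
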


\begin{proof}
Using the $\tsadi^{[m]}$-enriched Yoneda embedding for $\mathcal{C}^{\op}$,
the functors 
\[
\hom^{\tsadi^{[m]}}(Z,-)\colon\mathcal{C}\to\tsadi^{[m]}\quad,\quad Z\in\mathcal{C}
\]
are $m$-semiadditive and jointly conservative. Thus, by \corref{Stable_Height_Functoriality},
$X$ is of height $\le n$ or $>n-1$ if and only if $\hom^{\tsadi^{[m]}}(Z,X)$
is so for all $Z\in\mathcal{C}$.
\end{proof}
Another useful application of the $\tsadi^{[m]}$-enriched Yoneda
embedding was already used in the proof of \thmref{Height_Decomposition}:
\begin{prop}
\label{prop:Tsadi_Yoneda}For every $\mathcal{C}\in\acat_{\st}^{\sad m}$,
there exists $\widehat{\mathcal{C}}\in\Pr_{\st}^{\sad m}$ and a fully
faithful, $m$-semiadditive and exact embedding $\mathcal{C}\into\widehat{\mathcal{C}}$.
\end{prop}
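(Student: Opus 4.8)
For every $\mathcal{C}\in\acat_{\st}^{\sad m}$, there exists $\widehat{\mathcal{C}}\in\Pr_{\st}^{\sad m}$ and a fully faithful, $m$-semiadditive and exact embedding $\mathcal{C}\into\widehat{\mathcal{C}}$.

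**Approach.** The plan is to use the $\tsadi^{[m]}$-enriched Yoneda embedding constructed just before this proposition, and to realize $\widehat{\mathcal{C}}$ as an appropriate full subcategory of the functor category $\fun(\mathcal{C}^{\op},\tsadi^{[m]})$ that contains the image of $\mathcal{C}$ and is closed under the relevant operations. Recall that the $\tsadi^{[m]}$-enriched Yoneda embedding $\Yo^{\tsadi^{[m]}}\colon\mathcal{C}\to\fun^{\fin m}(\mathcal{C}^{\op},\tsadi^{[m]})\ss\fun(\mathcal{C}^{\op},\tsadi^{[m]})$ is fully faithful, exact, and $m$-semiadditive; this is already established in the excerpt. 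The only thing missing is that the target be presentable, stable, and $m$-semiadditive — so I would replace the (large, possibly-not-presentable) functor category by a presentable stable $m$-semiadditive $\infty$-category through which $\Yo^{\tsadi^{[m]}}$ factors.

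**Key steps, in order.** First, I would note that $\tsadi^{[m]}=\CMon_m(\Sp)$ is presentable (by \lemref{CMon_Pr}, since $\Sp\in\Pr$), stable (as a mode over $\Sp$, being a $\Sp$-module), and $m$-semiadditive (it is a mode classifying stable $m$-semiadditivity, as observed after \propref{CMon_Tensor}). Second, I would choose a regular cardinal $\kappa$ large enough that $\mathcal{C}$ is essentially $\kappa$-small, and consider the $\infty$-category $\widehat{\mathcal{C}}\coloneqq\fun^{\fin m}(\mathcal{C}^{\op},\tsadi^{[m]})$ of $m$-finite-limit-preserving functors — equivalently, by \propref{CMon_Universal_Property} with $\mathcal{D}=\Sp$, this is $\CMon_m(\fun^{\mathrm{lex}_m}(\mathcal{C}^{\op},\Sp))$ where $\mathcal{C}^{\op}$ is regarded as carrying its $m$-finite colimits. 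The cleanest route is: $\mathcal{C}^{\op}$ is a small $\infty$-category with $m$-finite colimits, so $\fun^{\fin m}(\mathcal{C}^{\op},\mathcal{S})$ (functors preserving $m$-finite colimits, i.e. the "$m$-finite-limit-presheaves") is a presentable $\infty$-category; then tensoring with the stable $m$-semiadditive mode $\tsadi^{[m]}$ gives $\widehat{\mathcal{C}}\coloneqq\tsadi^{[m]}\otimes\fun^{\fin m}(\mathcal{C}^{\op},\mathcal{S})$, which by \propref{Mode_Hereditary} and \propref{CMon_Tensor} is presentable, stable, and $m$-semiadditive, i.e. lies in $\Pr_{\st}^{\sad m}$. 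Third, I would identify the composite $\mathcal{C}\xrightarrow{\Yo^{\tsadi^{[m]}}}\fun(\mathcal{C}^{\op},\tsadi^{[m]})$ as landing in this $\widehat{\mathcal{C}}$: the ordinary Yoneda embedding lands in $\fun^{\fin m}(\mathcal{C}^{\op},\mathcal{S})$ (since corepresentable functors on an $m$-semiadditive category preserve $m$-finite colimits in $\mathcal{C}^{\op}$, equivalently $m$-finite limits in $\mathcal{C}$), and the lift provided by \propref{CMon_Universal_Property} / \corref{CMon_Yoneda} upgrades this to the enriched version with values in $\tsadi^{[m]}=\CMon_m(\Sp)$. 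Finally, I would record that this factorization $\mathcal{C}\into\widehat{\mathcal{C}}$ is fully faithful (its composite with the forgetful functor to $\fun(\mathcal{C}^{\op},\mathcal{S})$ is the fully faithful Yoneda embedding, and the forgetful functors $\CMon_m(-)\to(-)$ reflect isomorphisms), exact (the Yoneda embedding of a stable $\infty$-category preserves finite limits and colimits, and these are detected after the forgetful functor and pointwise), and $m$-semiadditive (directly from \corref{CMon_Yoneda}, where $\Yo^{\CMon_m}$ is an $m$-semiadditive functor; the same argument with $\mathcal{S}$ replaced by $\Sp$ gives the $\tsadi^{[m]}$-enriched version, which is exactly the content of the paragraph preceding the proposition).

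**Main obstacle.** The routine part is presentability — one just needs the standard fact that for a small $\infty$-category $\mathcal{E}$ with $m$-finite colimits, $\fun^{\fin m}(\mathcal{E},\mathcal{S})$ is an accessible localization of presheaves, hence presentable; then tensoring with a mode preserves $\Pr$. The only genuinely delicate point is verifying that the enriched Yoneda embedding actually \emph{lands} in the full subcategory of $m$-finite-\emph{limit}-preserving functors with values in $\tsadi^{[m]}$, and that the resulting functor is still $m$-semiadditive — but this is precisely what the excerpt has already arranged via \propref{CMon_Universal_Property} and \corref{CMon_Yoneda} (upgraded from $\mathcal{S}$ to $\Sp$), so in the write-up this amounts to citing the construction of $\Yo^{\tsadi^{[m]}}$ verbatim and observing its target is presentable. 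I expect the proof to be short: pick $\widehat{\mathcal{C}}=\fun^{\fin m}(\mathcal{C}^{\op},\tsadi^{[m]})$, note it is in $\Pr_{\st}^{\sad m}$ by the mode-theoretic facts above, and let the embedding be $\Yo^{\tsadi^{[m]}}$ as already constructed.
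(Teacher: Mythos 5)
Your proposal is correct and uses the same key idea as the paper: embed $\mathcal{C}$ via the $\tsadi^{[m]}$-enriched Yoneda embedding already constructed in the preceding paragraph. The paper's proof is even shorter than yours — it simply takes $\widehat{\mathcal{C}}=\fun(\mathcal{C}^{\op},\tsadi^{[m]})$, the \emph{full} functor category (presentable since $\mathcal{C}$ is small and $\tsadi^{[m]}$ is presentable, and stable and $m$-semiadditive pointwise by \propref{Ambi_Closure}(2)) — so your restriction to the $m$-finite-limit-preserving functors, and the attendant verification that this localization is presentable and agrees with $\tsadi^{[m]}\otimes\fun^{\fin m}(\mathcal{C}^{\op},\mathcal{S})$, is unnecessary, though not incorrect.
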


\begin{proof}
We take 
$$\Yo^{\tsadi^{[m]}}\colon\mathcal{C}\into\fun(\mathcal{C}^{\op},\tsadi^{[m]}) \eqqcolon \widehat{\mathcal{C}},$$
which satisfies all the required properties.
\end{proof}

\subsubsection{Modes of semiadditive height}

We can further concentrate on stable higher semiadditive $\infty$-categories
of particular semiadditive height. We shall now show that this property
is also classified by a mode. 
\begin{thm}
\label{thm:Tsadi_n_Mode}For every prime $p$ and $0\le n\le\infty$,
there exists a mode $\tsadi_{n}$ which classifies the property of
being stable $p$-local $\infty$-semiadditive of height $n$\footnote{We keep $p$ implicit in the notation $\tsadi_{n}$, by analogy with
$\Sp_{T(n)}$ and $\Sp_{K(n)}$.}. 
Moreover, $\tsadi_{n}$ can be canonically identified with $\tsadi_{(p),n}^{[m]}$
for every $n\le m\le\infty$.
\end{thm}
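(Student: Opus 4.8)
The plan is to build $\tsadi_n$ by successive mode localizations starting from $\tsadi = \CMon_\infty(\Sp)$, and then identify the property it classifies using the general machinery of Section 5.3 together with the height theory of Sections 3 and 4. First I would recall that $\tsadi^{[m]}$ is a mode classifying ``stable and $m$-semiadditive'' (a consequence of \propref{CMon_Tensor} and \propref{Mode_Lattice}(4)), and that Bousfield localization at $p$ gives a mode $\Sp_{(p)}$ (\exaref{Bousfield_Modes}); their tensor $\tsadi_{(p)}^{[m]} = \Sp_{(p)} \otimes \tsadi^{[m]}$ is a mode classifying ``stable, $p$-local, $m$-semiadditive''. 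So it remains to cut out the height-$n$ condition inside a mode $\mathcal{M}$ with this property.

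The key move is to realize ``height $\le n$'' and ``height $>n-1$'' as symmetric monoidal localizations of $\mathcal{M}$ in the sense of \propref{Div_Comp_Mode}. For a mode $\mathcal{M}$ which is stable and $p$-typically $m$-semiadditive (with $n \le m$), the natural endomorphism $\pn{n} = |B^n C_p|$ of $\Id_{\mathcal{M}}$ is given by tensoring with $\pn{n} \in \pi_0 \one$ (using the abuse of notation discussed after \propref{Cardinality_Additivity}, valid since $\mathcal{M}$ is compatible with $m$-finite colimits). Hence by \propref{Div_Comp_Mode} the full subcategories $\mathcal{M}[\pn n^{-1}] = \mathcal{M}_{\le n}$ and $\widehat{\mathcal{M}}_{\pn n} = \mathcal{M}_{>n}$ are symmetric monoidal accessible localizations, so by \propref{Mode_Localization} each is again a mode. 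For $n < \infty$ I would define $\tsadi_n$ as the mode obtained from $\tsadi_{(p)}$ by first localizing at $\pn{n-1}$-completeness and then inverting $\pn n$ (equivalently, $\tsadi_n := (\widehat{(\tsadi_{(p)})}_{\pn{n-1}})[\pn n^{-1}]$, matching $\mathcal{C}_n$ in \defref{Height_Cat}); by \propref{Mode_Localization} an $\mathcal{M}$-module $\mathcal{C}$ is a $\tsadi_n$-module iff it is stable, $p$-local, $\infty$-semiadditive, and every object is $\pn{n-1}$-complete and $\pn n$-divisible --- and I must check this last pair of conditions on objects is exactly ``$\Ht(\mathcal{C}) = n$''. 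Here \thmref{Redshift} and especially \propref{Height_Bootstrap} do the real work: once we have $\Ht(\mathcal{C}) \le n$ the $\infty$-semiadditivity comes for free (via \propref{Height_Bootstrap} and \propref{p_Local_n_Semiadd}), so the mode only needs to encode $p$-locality, $1$-semiadditivity (or the relevant $n$-semiadditivity to make $\pn n$ defined) and the divisibility/completeness conditions. For $n = \infty$ I would take $\tsadi_\infty := \colim_k \widehat{(\tsadi_{(p)})}_{\pn k}$, a colimit of modes along the completion localizations, using \propref{Mode_Lattice}(2) to see it classifies the conjunction, i.e. completeness with respect to all $\pn k$, which is $\htt = \infty$.

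For the ``moreover'' clause I would argue that $\tsadi_n$ does not depend on the ambient $m$: the mode $\tsadi_{(p),n}^{[m]}$ obtained by performing the same localizations inside $\tsadi_{(p)}^{[m]}$ classifies ``stable, $p$-local, $m$-semiadditive, with $\Ht \le n$ and $\Ht > n-1$'', and by \propref{Height_Bootstrap} plus \propref{p_Local_n_Semiadd} any such $\infty$-category is automatically $\infty$-semiadditive; conversely restricting an $\infty$-semiadditive height-$n$ $\infty$-category is in particular $m$-semiadditive. So the properties classified by $\tsadi_{(p),n}^{[m]}$ and $\tsadi_{(p),n}^{[m']}$ coincide for all $n \le m, m' \le \infty$, and since (by \propref{Idemp_Poset}) $\mode$ is a poset, two modes classifying the same property are canonically equivalent.

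The main obstacle I anticipate is the careful verification that the localization at $\pn n$ (defined using only the $n$-semiadditive structure of the ambient mode) genuinely interacts well with forcing full $\infty$-semiadditivity --- i.e.\ that inverting $\pn n$ and then applying the $\CMon_\infty \otimes (-)$ / $\infty$-semiadditivization does not change the subcategory, which is where \propref{Height_Bootstrap} (every $p$-typically $n$-semiadditive $\mathcal{C}$ of height $\le n$ admitting $B^{n+1}C_p$-(co)limits is $p$-typically $\infty$-semiadditive) is essential; one must check the localization functors commute appropriately, or equivalently phrase everything at the level of the poset of properties so that the order of operations is irrelevant. A secondary technical point is justifying that $\pn n$ acts by tensoring with a scalar on the \emph{mode} level (so that \propref{Div_Comp_Mode} applies), which follows from compatibility of the cardinality operation with the monoidal structure as recorded after \propref{Cardinality_Additivity}.
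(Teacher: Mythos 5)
Your proposal follows essentially the same route as the paper: define $\tsadi_{n}$ as $\widehat{(\tsadi_{(p)}^{[m]})}_{\pn{n-1}}[\pn n^{-1}]$ via \propref{Div_Comp_Mode} and \propref{Mode_Localization}, use the bootstrap results (\propref{Height_Bootstrap}, \propref{p_Local_n_Semiadd}, i.e.\ \thmref{Height_Everything_p_Local}) for independence of $m$, and handle $n=\infty$ by an intersection/colimit of the completion localizations. The one step you gloss is the translation from ``$\mathcal{M}_{\circ}$-locality'' of an object (a condition on its $\tsadi^{[m]}$-enriched hom-objects, which is what \propref{Mode_Localization} actually delivers) to the divisibility/completeness of the object itself; this is exactly \propref{Yoneda_Height}, proved via the enriched Yoneda embedding and \corref{Stable_Height_Functoriality}.
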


\begin{proof}
We first consider the case $n<\infty$. For every $m\ge n$, we have
that 
\[
\tsadi_{(p),n}^{[m]}=\widehat{(\tsadi_{(p)}^{[m]})}_{\pn{n-1}}[\pn n^{-1}]
\]
is a symmetric monoidal localization of $\tsadi_{(p)}^{[m]}$ (\propref{Div_Comp_Mode}).
By \propref{Yoneda_Height}, for every $p$-local $\mathcal{C}\in\Pr_{\st}^{\sad m}$,
an object $X\in\mathcal{C}$ is $\tsadi_{(p),n}^{[m]}$-local, if
and only if $\htt(X)=n$. Hence, we can apply \propref{Mode_Localization}
to deduce that $\tsadi_{(p),n}^{[m]}$ is itself a mode classifying
the property of being stable $p$-local $m$-semiadditive and of height
$n$. Finally, by \thmref{Height_Everything_p_Local}, every such
$\infty$-category is $\infty$-semiadditive and hence we can take
$\tsadi_{n}$ to be $\tsadi_{(p),n}^{[m]}$. 

For $n=\infty$, we have
\[
\tsadi_{(p),\infty}=\bigcap_{n\in\bb N}\tsadi_{(p),>n}.
\]
Each $\tsadi_{(p),>n}=(\widehat{\tsadi_{(p)}})_{\pn n}$ is a symmetric
monoidal localization of $\tsadi_{(p)}^{[m]}$ (\propref{Div_Comp_Mode}).
Thus, $\tsadi_{(p),\infty}$ is also an accessible reflective subcategory
of $\tsadi_{(p)}$ (\cite[Proposition 5.4.7.10]{htt}), which is compatible with the symmetric
monoidal structure. It follows from \propref{Mode_Localization},
that $\tsadi_{(p),\infty}$ is a mode. Moreover, $\tsadi_{(p),\infty}$
classifies the property of being stable $p$-local $\infty$-semiadditive
and of height $>n$ for all $n$, which is the same as being of height $\infty$. 
\end{proof}
\begin{example}
\label{exa:Tsadi0}In the case $n=0$, we have 
\[
\tsadi_{0}\simeq\tsadi_{(p),0}^{[0]}\simeq\Sp_{(p),0}\simeq\Sp_{\bb Q}.
\]
\end{example}

As with the $\Sp_{T(n)}$-s, the modes $\tsadi_{n}$ are disjoint for different $n$-s.
\begin{prop}
\label{prop:Tasdi_n_Disjoint}For all $n\neq k$, we have $\tsadi_{n}\otimes\tsadi_{k}=0$.
\end{prop}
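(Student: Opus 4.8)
The plan is to reduce the statement $\tsadi_{n}\otimes\tsadi_{k}=0$ to a computation involving the cardinalities $\pn{j}=|B^{j}C_{p}|$ and the fact that a mode is an idempotent algebra in $\Pr$, so that its unit endomorphism algebra governs everything. Recall that $\tsadi_{n}$ classifies the property of being stable, $p$-local, $\infty$-semiadditive of height $n$. By \propref{Idemp_Tensor}, $\tsadi_{n}\otimes\tsadi_{k}$ is again a mode, and it classifies the conjunction of the two properties: being an $\infty$-category which is simultaneously of height $n$ and of height $k$ (as well as stable, $p$-local, $\infty$-semiadditive). So it suffices to show that the only stable $\infty$-semiadditive $\infty$-category which is both of height $n$ and of height $k$ (for $n\neq k$) is the zero category; equivalently, that the mode $\tsadi_{n}\otimes\tsadi_{k}$ is the terminal mode $0$ (\propref{Mode_Lattice}(6)).

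First I would assume WLOG $n<k$, and consider the unit object $\one$ of $\mathcal{M}:=\tsadi_{n}\otimes\tsadi_{k}$, which is a stable $\infty$-semiadditive monoidal $\infty$-category. By \corref{Height_Monoidal} (or rather its stable refinement \corref{Stable_Height_Monoiidal}), it suffices to show $\one=0$; indeed a mode with zero unit is the zero mode since every object $M$ satisfies $M\simeq M\otimes\one\simeq 0$. Now $\mathcal{M}$ receives monoidal colimit-preserving functors from both $\tsadi_{n}$ and $\tsadi_{k}$ (as $\mathcal{M}=\tsadi_{n}\sqcup\tsadi_{k}$ is the coproduct in $\calg(\Pr)$ by \propref{Mode_Lattice}(4)), and these functors send the respective $\pn{j}^{\tsadi_{n}}$ and $\pn{j}^{\tsadi_{k}}$ to $\pn{j}^{\mathcal{M}}$, since higher semiadditive monoidal functors preserve cardinalities (\propref{A_Semiadd}). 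Being of height $n$ forces $\pn{n}^{\mathcal{M}}$ to be invertible on $\one$ and, by \propref{Height_Cardinality}, $\pn{j}^{\mathcal{M}}=(\pn{n}^{\mathcal{M}})^{(-1)^{j-n}}$ for all $j\ge n$; in particular $\pn{k}^{\mathcal{M}}$ is a unit. On the other hand, being of height $k$ means $\one$ is $\pn{k-1}^{\mathcal{M}}$-complete, which by \propref{Height_Sense} implies $\one$ is $p=\pn{0}^{\mathcal{M}},\dots,\pn{k-1}^{\mathcal{M}}$-complete. The key tension: height $n<k$ makes $\pn{n}^{\mathcal{M}}$ invertible, hence also $\pn{n-1}^{\mathcal{M}}$ invertible via $\pn{n-1}^{\mathcal{M}}\pn{n}^{\mathcal{M}}=\Id$ (\propref{Amenable_Descent} applied to $B^{n-1}C_{p}\to\pt\to B^{n}C_{p}$, valid since $n\le k$ and both spaces are $\mathcal{M}$-ambidextrous), and inductively $p=\pn{0}^{\mathcal{M}}$ is invertible; but height $k>0$ (note $k>n\ge 0$) forces $\one$ to be $p$-complete. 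An object on which $p$ acts both invertibly and which is $p$-complete must be zero: invertibility of $p$ puts $\one\in\mathcal{M}[p^{-1}]$, while $p$-completeness puts $\one\in\widehat{\mathcal{M}}_{p}=\mathcal{M}[p^{-1}]^{\perp}$, so $\map(\one,\one)\simeq\pt$ and hence $\one=0$. (In the special case $n=0$ one instead uses that height $0$ makes $p$ invertible directly, and height $k>0$ makes $\one$ $p$-complete, same contradiction.)

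The main obstacle — really the only subtle point — is verifying that the cardinality operations $\pn{j}$ behave coherently in $\mathcal{M}$, i.e. that the $\infty$-semiadditivity of $\mathcal{M}$ together with the monoidal functors from $\tsadi_{n}$ and $\tsadi_{k}$ indeed identify the endomorphisms $\pn{j}^{\mathcal{M}}$ with the images of the corresponding endomorphisms in each factor. This follows from \propref{A_Semiadd} since these functors preserve $\pi$-finite colimits, but one should be slightly careful that $\mathcal{M}$ is genuinely $\infty$-semiadditive (which it is, being an $\infty$-semiadditive mode, as $\tsadi_{n}$ and $\tsadi_{k}$ are). Once this is in place, the argument is a short chain of applications of \propref{Amenable_Descent}, \propref{Height_Sense}, and \propref{Height_Cardinality}. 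An alternative, perhaps cleaner, route: invoke \thmref{Height_Decomposition}(2) or simply observe that $\Ht(\mathcal{M})\le n$ and $\Ht(\mathcal{M})>k-1$ (hence $>n$ as $k-1\ge n$), and since no nonzero object can have height both $\le n$ and $>n$, we get $\mathcal{M}=0$. I would present the proof in this last, most economical form.
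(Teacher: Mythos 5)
Your final, ``economical'' formulation is exactly the paper's proof: $\tsadi_{n}\otimes\tsadi_{k}$ classifies the conjunction of the two properties, so it is an $\infty$-semiadditive stable $\infty$-category in which every object has height both $n$ and $k$, and since (say for $n<k$) height $k$ implies height $>k-1\ge n$ while no nonzero object can have height both $\le n$ and $>n$, it is zero. Since you say you would present the proof in that form, the verdict is: correct, same approach.

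However, the longer route you sketch first contains a genuine error that you should not let stand. You claim that invertibility of $\pn{n}$ forces invertibility of $\pn{n-1}$ ``via $\pn{n-1}\pn{n}=\Id$'' and hence, inductively, invertibility of $p=\pn{0}$. This is backwards: \propref{Amenable_Descent} (and \propref{Box_Semi_Inverse}) only give $\pn{n}\pn{n+1}=\Id$ \emph{when $\pn{n}$ is already invertible}, i.e.\ divisibility propagates \emph{upward} in $n$ (this is the content of \propref{Height_Sense}(1)), never downward. A counterexample to your claim is $\Sp_{T(n)}$ itself for $n\ge1$: there $\pn{n}$ is invertible but $p$ is not --- every object is $p$-complete. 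The correct ``tension'' lives at level $n$, not level $0$: height $\le n$ makes the unit $\pn{n}$-divisible, while height $>k-1$ with $k-1\ge n$ makes it $\pn{n}$-complete by \propref{Height_Sense}(2), and an object that is both is zero. That is precisely what the economical argument exploits.
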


\begin{proof}
The $\infty$-category $\tsadi_{n}\otimes\tsadi_{k}$ is $\infty$-semiadditive
in which every object is of height both $n$ and $k$, and hence must
be the zero object.
\end{proof}
Another aspect in which $\tsadi_{n}$ resembles $\Sp_{T(n)}$, is
that it kills all bounded above spectra:
\begin{prop}
\label{prop:Tsadi_Bounded_Vanishing}For $1\le n < \infty$, the map of modes
$\Sp\to\tsadi_{n}$ vanishes on all bounded above spectra. 
\end{prop}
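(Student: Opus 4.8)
The plan is to show that the composite $\Sp \to \tsadi_n$ annihilates every bounded-above spectrum by reducing, via colimit-preservation and shift, to a single Eilenberg--MacLane spectrum, and then to a completeness argument using the height hypothesis. Recall that the functor $\Sp \to \tsadi_n$ is a colimit-preserving symmetric monoidal functor, hence determined by where it sends the sphere, and that $\tsadi_n$ is stable, $p$-local and of height $n$ with $n \ge 1$. A bounded-above spectrum is a sequential colimit of its Postnikov truncations $\tau_{\le k}X$, and each $\tau_{\le k}X$ is a finite extension (in the stable sense) of shifts of Eilenberg--MacLane spectra $\Sigma^j HM$ for abelian groups $M$. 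Since the functor to $\tsadi_n$ is exact and colimit-preserving, and the subcategory of objects sent to $0$ is closed under colimits, shifts, and extensions, it suffices to prove that the image of $HM$ in $\tsadi_n$ vanishes for every abelian group $M$.

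Next I would reduce to $M$ finitely generated, hence to $M = \bb Z$, $M = \bb Z/\ell^r$ for primes $\ell$, using that $HM$ is a filtered colimit of $HM'$ over finitely generated subgroups $M' \le M$ together with the five-lemma/extension closure. For $\ell \ne p$: since $\tsadi_n$ is $p$-local, multiplication by $\ell$ is invertible on it, so the image of $H(\bb Z/\ell^r)$ is both $\ell^r$-torsion and $\ell^r$-divisible in a way that forces it to be zero; and the image of $H\bb Z$ can be handled by observing (via the arithmetic fracture / the rational and $\ell$-complete pieces) that it reduces to the $\bb Z/\ell^r$ and $\bb Q$ cases — but in fact the cleanest route is first to dispatch $H\bb F_p$ and then bootstrap. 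So the crucial case is $M = \bb F_p$: I claim the image of $H\bb F_p$ in $\tsadi_n$ is zero. Here is where the height hypothesis enters. The spectrum $H\bb F_p$ receives the unit from $\bb S/p = \bb S/p^1$, and more relevantly, $H\bb F_p$ is built from the mod-$p$ Moore spectrum and its own Postnikov tower; but the key structural input is that $H\bb F_p = \colim_k \bb S/(p, v_1^{?}, \dots)$-type constructions is \emph{not} available — instead one should use that in any stable $\infty$-semiadditive $p$-local $\infty$-category $\mathcal{C}$ of height $n \ge 1$, every object is $p$-complete (height $> 0$, by \propref{En_Height}-style reasoning: height $n \ge 1$ implies height $> 0$, i.e. $p$-completeness), so $\tsadi_n$ consists of $p$-complete objects; meanwhile the image of $H\bb F_p$, being a module over the image of $\bb S/p$, is killed by $p$, hence is $p$-complete only if it satisfies a vanishing derived from the fact that $|B^n C_p|$ acts invertibly.

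The main obstacle, and the real content, is showing the image of $H\bb F_p$ vanishes, i.e. extracting a contradiction from: the image is a nonzero $\bb F_p$-module object in $\tsadi_n$. The plan here is to use that $H\bb F_p$ carries an action of the Steenrod algebra, or more efficiently, that $H\bb F_p \simeq \bb F_p \otimes_{\bb S} \bb S$ is the free $\bb E_\infty$-$\bb F_p$-algebra-style object whose homotopy is detected by $C_p$-Tate-type constructions; concretely, one uses that the Tate construction $(\text{--})^{tC_p}$ of a trivial $C_p$-action on $H\bb F_p$ is again built from $H\bb F_p$, and in $\tsadi_n$ the relevant Tate constructions vanish because $|BC_p| = \pn 1$ has a semi-inverse $\pn 2$ (by \propref{Box_Semi_Inverse}, using $n \ge 1 \le m$), making the norm-cofiber sequences split in a way incompatible with the unit map from $\bb S/p$ being nonzero. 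More precisely, I would argue: if $R$ denotes the image of $H\bb F_p$ in $\tsadi_n$ and $R \ne 0$, then $R$ is an $\bb E_\infty$-ring in $\tsadi_n$ which is $p$-torsion and $p$-complete; but a $p$-torsion $p$-complete nonzero object cannot exist unless $p$ acts topologically nilpotently compatibly with completeness, and one checks using the height-$n$ cardinality relations ($\pn k = \pn n^{\pm 1}$ for $k \ge n$, \propref{Height_Cardinality}) that the only way all $|B^k C_p|$ can act coherently on a $p$-torsion ring is $R = 0$. The cleanest packaging: $R \ne 0$ would be a nonzero object of both $\tsadi_n$ and of $\tsadi_0 = \Sp_{\bb Q}$-adjacent torsion categories via a further localization, contradicting \propref{Tasdi_n_Disjoint}-style disjointness — but making this rigorous requires identifying $R$-modules with a height-$0$ or discrete mode, which is the step I expect to demand the most care.
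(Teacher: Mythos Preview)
Your reduction is sound and mirrors the paper's: one reduces by devissage to showing the functor $F\colon \Sp \to \tsadi_n$ kills $H\bb Q$ and $H\bb F_\ell$ for all primes $\ell$, and the cases $H\bb Q$ and $H\bb F_\ell$ for $\ell \ne p$ are dispatched exactly as you say, via $p$-divisibility versus $p$-completeness of objects in $\tsadi_n$ (height $n\ge 1$ implies height $>0$, i.e.\ everything is $p$-complete).

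The gap is entirely in your treatment of $H\bb F_p$. None of the approaches you sketch closes the argument. Being simultaneously $p$-torsion and $p$-complete does \emph{not} force an object to vanish (witness $H\bb F_p$ itself in $\Sp$, or any nonzero object of $\Sp_{K(n)}$ killed by $p$). Your mode-disjointness idea would require placing $\Mod_{F(H\bb F_p)}(\tsadi_n)$ inside some $\tsadi_k$ with $k\ne n$, but you have no mechanism to do that: $F(H\bb F_p)$ is an algebra in $\tsadi_n$, hence its modules are a priori still of semiadditive height $n$. The Tate-vanishing remark is in the right spirit but is not made into an argument.

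The paper's key input, which you are missing, is a concrete presentation of $H\bb F_p$ as a colimit of spectra that the height hypothesis visibly annihilates. Writing $\overline{\bb S}[B^kC_p]$ for the fiber of the fold map $\bb S[B^kC_p]\to\bb S$, one has
\[
H\bb F_p \;\simeq\; \colim_k\,\Sigma^{-k}\,\overline{\bb S}[B^kC_p].
\]
For $k\ge n+1$ the space $B^kC_p$ is $n$-connected, hence $\tsadi_n$-acyclic by \propref{Height_Everything}(2); so the fold map $\one_{\tsadi_n}[B^kC_p]\to\one_{\tsadi_n}$ is an isomorphism and $F(\overline{\bb S}[B^kC_p])=0$. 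Since $F$ preserves colimits, $F(H\bb F_p)=0$. This colimit presentation of $H\bb F_p$ is the missing idea; without it (or an equivalent device linking $H\bb F_p$ to the spaces $B^kC_p$) your argument does not terminate.
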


\begin{proof}
Denote by $F\colon\Sp\to\tsadi_{n}$ the map of modes. The class of
spectra on which $F$ vanishes is closed under colimits and desuspensions in $\Sp$.
Hence, by a standard devissage argument, it suffices to show that
$F$ vanishes on $\bb Q$ and $\bb F_{\ell}$ for all primes $\ell$.
First, $\bb Q$ and $\bb F_{\ell}$ for $\ell\neq p$ are $p$-divisible.
Since $F$ is $0$-semiadditive, $F(\bb Q)$ and $F(\bb F_{\ell})$
are $p$-divisible as well, but all objects of $\tsadi_{n}$ are $p$-complete,
and so $F(\bb Q)=F(\bb F_{\ell})=0$. Thus, it remains to show that $F(\mathbb{F}_p)=0$. For every $k\in \mathbb{N}$ we denote
by $\overline{\bb S}[B^{k}C_{p}]$ the fiber of the fold map $\bb S[B^{k}C_{p}]\to\bb S.$
Applying $F$, we get a fiber sequence 
\[
F(\overline{\bb S}[B^{k}C_{p}])\to\one_{\tsadi_{n}}[B^{k}C_{p}]\to\one_{\tsadi_{n}}.
\]
Since $\tsadi_{n}$ is by definition of height $n$, it follows from
\propref{Height_Everything}(2), that the second map above is an isomorphism
for $k\ge n+1$ and hence $F(\overline{\bb S}[B^{k}C_{p}])=0$. We
observe that $\bb F_{p}$ can be written as a filtered colimit $\bb F_{p}=\colim\Sigma^{-k}\overline{\bb S}[B^{k}C_{p}]$.
Thus, we also get 
\[
F(\bb F_{p})=\colim\Sigma^{-k}F(\overline{\bb S}[B^{k}C_{p}])=0.
\]
\end{proof}

\begin{cor}
\label{cor:Tsadi_Conservative}For $1\le n < \infty$, the right adjoint of
the map of modes $\mathcal{S}\to\tsadi_{n}$ is conservative.
\end{cor}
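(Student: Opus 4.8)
The plan is to factor the functor in question through $\Sp$ and reduce to a statement about bounded above spectra, where \propref{Tsadi_Bounded_Vanishing} applies via a self-orthogonality trick.

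Write $G\colon\tsadi_{n}\to\mathcal{S}$ for the right adjoint of the mode unit $F\colon\mathcal{S}\to\tsadi_{n}$; since $F$ is symmetric monoidal, $F(\pt)=\one_{\tsadi_{n}}$, and hence $G(X)\simeq\map_{\tsadi_{n}}(\one,X)$. Since $\tsadi_{n}$ is stable it is a $\Sp$-module, so by \propref{Idemp_Poset} there is a unique map of modes $L\colon\Sp\to\tsadi_{n}$, and since $\mode$ is a poset (\propref{Mode_Lattice}) the composite $\mathcal{S}\xrightarrow{\Sigma^{\infty}_{+}}\Sp\xrightarrow{L}\tsadi_{n}$ agrees with $F$. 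Passing to right adjoints yields $G\simeq\Omega^{\infty}\circ R$, where $R\colon\tsadi_{n}\to\Sp$ is the right adjoint of $L$; explicitly $R(X)=\hom^{\Sp}(\one,X)$ is the mapping spectrum out of the unit and $G(X)=\Omega^{\infty}R(X)$. I record two facts. First, $R$ is conservative: it is the composite of the fully faithful inclusion $\tsadi_{n}\hookrightarrow\tsadi=\CMon_{\infty}(\Sp)$ with the conservative underlying-spectrum functor $\tsadi\to\Sp$ (evaluation at $\pt$). Second, by \propref{Tsadi_Bounded_Vanishing} the functor $L$ vanishes on every bounded above spectrum, so for $X\in\tsadi_{n}$ and bounded above $W\in\Sp$ we have $\map_{\Sp}(W,R(X))\simeq\map_{\tsadi_{n}}(L(W),X)\simeq\pt$; that is, $R(X)$ is right-orthogonal to every bounded above spectrum.

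The crux is that $G$ \emph{reflects zero objects}. Suppose $Z\in\tsadi_{n}$ satisfies $G(Z)\simeq\pt$. Then $\pi_{j}R(Z)=\pi_{0}\,\map_{\tsadi_{n}}(\Sigma^{j}\one,Z)=\pi_{j}\,\map_{\tsadi_{n}}(\one,Z)=0$ for all $j\ge0$, so the spectrum $R(Z)$ is concentrated in degrees $\le-1$, and in particular is bounded above. Applying the orthogonality of the previous paragraph with $W=R(Z)$ gives $\map_{\Sp}(R(Z),R(Z))\simeq\pt$, which forces $R(Z)$ to be a zero object; since $R$ is conservative, $Z\simeq0$.

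Finally, conservativity of $G$ follows formally from the fact that it reflects zero objects together with the stability of $\tsadi_{n}$: given a morphism $g\colon X\to Y$ in $\tsadi_{n}$ with $G(g)$ an equivalence, let $K$ be the fiber of $g$ in $\tsadi_{n}$; since $G$ preserves limits and $G(0)\simeq\pt$, the space $G(K)$ is a fiber of the equivalence $G(g)$, hence contractible, so $K\simeq0$ by the crux and $g$ is an equivalence. The main obstacle is the implication ``$R(Z)$ bounded above $\implies R(Z)\simeq0$'', and this is exactly where the hypothesis $n\ge1$ enters: for $n=0$ one has $\tsadi_{0}\simeq\Sp_{\bb Q}$, which contains many nonzero bounded above objects, whereas for $n\ge1$ it is precisely \propref{Tsadi_Bounded_Vanishing} that upgrades ``$L$ kills bounded above spectra'' to the orthogonality statement, after which self-orthogonality of $R(Z)$ closes the argument.
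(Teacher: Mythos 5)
Your proposal is correct and follows essentially the same route as the paper: factor the unit through $\Sp$, prove the spectral right adjoint is conservative by factoring through a $\CMon$-category of commutative monoids in spectra, observe that vanishing of $\Omega^{\infty}$ forces the image under $R$ to be bounded above, and then invoke \propref{Tsadi_Bounded_Vanishing} together with a self-orthogonality argument. The only cosmetic differences are that the paper factors through $\CMon_{n}(\Sp_{(p)})$ rather than $\CMon_{\infty}(\Sp)$, and phrases the final step as ``$G(Z)$ is a retract of $GFG(Z)=0$'' via the zig-zag identities rather than as $\map_{\Sp}(R(Z),R(Z))\simeq\map_{\tsadi_{n}}(LR(Z),Z)\simeq\pt$ --- these are the same argument.
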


\begin{proof}
The map of modes $\mathcal{S}\to\tsadi_{n}$ is given by the composition
\[
\mathcal{S}\oto{\bb S[-]}\Sp\oto{F_{1}}\Sp_{(p)}\oto{F_{2}}\tsadi_{(p)}^{[n]}\oto{F_{3}}\tsadi_{n}.
\]
The right adjoints $G_{1}$ and $G_{3}$ of $F_{1}$ and $F_{3}$
respectively, are fully faithful embeddings and hence in particular
conservative. The right adjoint $G_{2}$ of $F_{2}$ can be identified
with the functor
\[
\tsadi_{(p)}^{[n]}=\CMon_{n}(\Sp_{(p)})\to\Sp_{(p)}
\]
which evaluates at the point. Thus, $G_{2}$ is conservative by  \cite[Lemma 5.17]{Harpaz}.
Combining the above, the right adjoint of the functor 
\[
F=F_{3}F_{2}F_{1}\colon\Sp\to\tsadi_{n}
\]
is given by $G=G_{1}G_{2}G_{3}$, and is therefore conservative.

Now, the right adjoint of $\mathcal{S}\to\tsadi_{n}$, is given by
the composition of the right adjoints $\tsadi_{n}\oto G\Sp\oto{\Omega^{\infty}}\mathcal{S}$.
Let $X\oto fY$ be a map in $\tsadi_{n}$ with fiber $Z$, such that
$\Omega^{\infty}G(f)$ is an isomorphism. It follows that $\Omega^{\infty}G(Z)=0$
and hence $G(Z)$ is co-connective. By \propref{Tsadi_Bounded_Vanishing},
we get $FG(Z)=0$ and hence $GFG(Z)=0$. By the zig-zag identities,
$G(Z)$ is a retract of $GFG(Z)$ and hence we also get $G(Z)=0$.
Finally, since $G$ is conservative, we get $Z=0$ and hence $X\oto fY$
is an isomorphism. This concludes the proof that $\Omega^{\infty}G$,
the right adjoint of $\mathcal{S}\to\tsadi_{n}$, is conservative.
\end{proof}

\begin{rem}
\conjref{Finitness} is equivalent to the statement that $\tsadi_{\infty}=0$.

\end{rem}

\subsection{1-Semiadditive Decomposition}

As we recalled in \exaref{Chromatic_Recollement}, while for $n\ge1$ the $\infty$-category
$L_{n}^{f}\Sp$ itself is not even $1$-semiadditive,
it is an iterated recollement of the $\infty$-categories
$\Sp_{T(k)}$ for $k=0,\dots,n$, which are  $\infty$-semiadditive. The theory of modes allows us to
\emph{enforce} $m$-semiadditivity on $L_{n}^{f}\Sp$ in a universal
way, by tensoring it with the $m$-semiadditive mode $\CMon_{m}$.
We shall show that enforcing even $1$-semiadditivity on $L_{n}^{f}\Sp$,
``dissolves the glue'' which holds the monochromatic layers together,
and decomposes it into a product of $\Sp_{T(k)}$ for $k=0,\dots,n$. 
\begin{rem}
The fact that $L_{n}^{f}\Sp$ is a recollement of $\Sp_{T(k)}$ for
$k=0,\dots,n$, is closely related to the fact that for $k_0<k_1$, every
exact functor $\Sp_{T(k_0)}\to\Sp_{T(k_1)}$ must be zero. The fact that
the recollement becomes \emph{split} after imposing $1$-semiadditivity
follows from the fact that every $1$-semiadditive functor
$\Sp_{T(k_1)}\to\Sp_{T(k_0)}$ must be zero (\corref{Tn_One_Semiadditivity}).
Note that by \corref{Stable_Height_Functoriality}, a $k_1$-semiadditive
functor $\Sp_{T(k_1)}\to\Sp_{T(k_0)}$ must be zero because every object
of $\Sp_{T(k_1)}$ is of height $k_1$, and thus must be sent to the only
object of height $k_1$ in $\Sp_{T(k_0)}$, which is zero. The main ``non-formal''
ingredient in the proof of \thmref{1Sad_Decomposition} (which makes
essential use of the theory developed in \cite[Section 4]{Ambi2018}) is that it suffices to assume merely $1$-semiadditivity. 
\end{rem}

\subsubsection{Divisible and complete cardinalities}

We begin with some general observations and constructions. Every $\mathcal{C}\in\Pr_{\st}^{\sad 1}$
is a module over $\tsadi^{[1]}$ and thus, every $\alpha\in\pi_{0}(\one_{\tsadi^{[1]}})$
induces a natural transformation $\Id_{\mathcal{C}}\oto{\alpha_{\mathcal{C}}}\Id_{\mathcal{C}}$.
If in addition $\mathcal{C}$ is presentably symmetric monoidal, then
there is a unique functor $\tsadi^{[1]}\to\mathcal{C}$ in $\calg(\Pr)$
(see \propref{Idemp_Poset}), which induces a map $\pi_{0}(\one_{\tsadi^{[1]}})\to\pi_{0}(\one_{\mathcal{C}})$.
The natural transformation $\alpha_{\mathcal{C}}$ can be identified
with the one induced by the image of $\alpha$ in $\pi_{0}(\one_{\mathcal{C}})$.
We now restrict to a particular subset of $\pi_{0}(\one_{\tsadi^{[1]}})$
consisting of elements which have a simple description, as they are
of an unstable origin.
\begin{defn}
\label{def:R1}Let $\mathcal{R}_{1}\ss\pi_{0}(\one_{\tsadi^{[1]}})$
be the $\bb Z$-linear span of elements of the form $|BG|\in\pi_{0}(\one_{\tsadi^{[1]}})$,
for $G$ a finite group.
\end{defn}

We observe that the action of $\mathcal{R}_{1}$ on the objects of any stable $1$-semiadditive
$\infty$-category $\mathcal{C}$ is natural with respect to $1$-semiadditive
functors.
\begin{prop}
\label{prop:R1_Linearity}Let $F\colon\mathcal{C}\to\mathcal{D}$
be a $1$-semiadditive functor between stable presentable $1$-semiadditive
$\infty$-categories. For every $\alpha\in\mathcal{R}_{1}$, we have
$F(\alpha_{\mathcal{C}})=\alpha_{\mathcal{D}}$.
\end{prop}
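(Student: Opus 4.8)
The plan is to reduce the claim to the universal case and then to the definition of the elements $|BG|$ as cardinalities. First I would observe that by \propref{CMon_Tensor}, both $\mathcal{C}$ and $\mathcal{D}$ are modules over the mode $\tsadi^{[1]} = \CMon_1(\Sp)$, and that the forgetful-like description gives unique colimit-preserving symmetric monoidal functors $u_{\mathcal{C}}\colon\tsadi^{[1]}\to\mathcal{C}$ and $u_{\mathcal{D}}\colon\tsadi^{[1]}\to\mathcal{D}$ (using \propref{Idemp_Poset}, since $\mathcal{C}$ and $\mathcal{D}$ are presentably symmetric monoidal $1$-semiadditive, hence $\tsadi^{[1]}$-modules). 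By uniqueness there is a canonical isomorphism $F\circ u_{\mathcal{C}}\simeq u_{\mathcal{D}}$ in $\calg(\Pr)$. The natural transformation $\alpha_{\mathcal{C}}$ for $\alpha\in\mathcal{R}_1\ss\pi_0(\one_{\tsadi^{[1]}})$ is, by the abuse-of-notation conventions recorded after \remref{Cardinality_Arithmetic} (items (2)--(3)), nothing but tensoring with the image of $\alpha$ under $\pi_0(\one_{\tsadi^{[1]}})\to\pi_0(\one_{\mathcal{C}})$, and $F$ being symmetric monoidal sends $u_{\mathcal{C}}(\alpha)$ to $u_{\mathcal{D}}(\alpha)$, i.e. $F(\alpha_{\mathcal{C}}) = \alpha_{\mathcal{D}}$.

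Unwinding this, it suffices to treat a single generator $\alpha = |BG|$ for $G$ a finite group, and here the point is that $|BG|$ is a \emph{cardinality}, so naturality under $1$-semiadditive functors is exactly \propref{A_Semiadd}. Concretely, the second step is: $BG$ is a $1$-finite space, hence $\mathcal{C}$-ambidextrous and $\mathcal{D}$-ambidextrous (both are $1$-semiadditive), and $F$ preserves $BG$-colimits since it is $1$-semiadditive (it preserves all $1$-finite colimits). Therefore \propref{A_Semiadd} gives $F(|BG|_{\mathcal{C}}) = |BG|_{\mathcal{D}}$ directly, with no recourse to the mode structure at all. Extending from the generators $|BG|$ to arbitrary $\alpha\in\mathcal{R}_1$ is then just $\bb Z$-linearity of $F$ on $\End(\Id)$: $F$ is exact, hence additive on natural endomorphisms of the identity functor, so it respects $\bb Z$-linear combinations, and it fixes the identity transformation.

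Given these two observations, the cleanest writeup is the second one, which avoids invoking the uniqueness of the monoidal functor out of $\tsadi^{[1]}$ and works for the stated hypotheses verbatim. The steps in order: (1) note every element of $\mathcal{R}_1$ is a $\bb Z$-linear combination of classes $|BG|$ with $G$ finite; (2) for each such $G$, the space $BG$ is $1$-finite, hence ambidextrous in both $\mathcal{C}$ and $\mathcal{D}$, and $F$ preserves $BG$-colimits, so \propref{A_Semiadd} yields $F(|BG|_{\mathcal{C}}) = |BG|_{\mathcal{D}}$; (3) since $F$ is exact it acts $\bb Z$-linearly on $\End(\Id_{\mathcal{C}})\to\End(\Id_{\mathcal{D}})$ and fixes $\Id$, so $F(\alpha_{\mathcal{C}}) = \alpha_{\mathcal{D}}$ for all $\alpha\in\mathcal{R}_1$.

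I do not expect a genuine obstacle here; this is a bookkeeping proof. The only mild subtlety is making sure that ``$\alpha_{\mathcal{C}}$ for $\alpha\in\mathcal{R}_1$'' is interpreted consistently with the cardinality conventions — i.e. that the natural transformation induced by the class $|BG|\in\pi_0(\one_{\tsadi^{[1]}})$ acting on a $\tsadi^{[1]}$-module $\mathcal{C}$ really is $|BG|_{\mathcal{C}}$ — which is precisely item (2) of the abuse-of-notation list following \remref{Cardinality_Arithmetic} together with the fact that $\mathcal{C}$ is compatible with $1$-finite colimits. Once that identification is made, the argument is immediate from \propref{A_Semiadd} and exactness of $F$.
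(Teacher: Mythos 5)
Your second, "cleanest" argument is exactly the paper's proof: reduce to $\alpha=|BG|$ by additivity ($0$-semiadditivity) of $F$ on natural endomorphisms of the identity, then use that $F$, being $1$-semiadditive, preserves $BG$-colimits and hence cardinalities of $1$-finite spaces. The proposal is correct and matches the paper's approach.
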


\begin{proof}
By $0$-semiadditivity, it suffices to consider the case $\alpha=|BG|$
for $G$ a finite group, which follows from the fact that $F$ is
$1$-semiadditive.
\end{proof}
\begin{rem}
If $F$ is further assumed to be \emph{colimit} \emph{preserving},
then $F(\alpha_{\mathcal{C}})=\alpha_{\mathcal{D}}$ for all elements $\alpha\in\pi_{0}(\one_{\tsadi^{[1]}})$.
However, we shall be interested in applying \propref{R1_Linearity}
to functors $F$, which are not a priori colimit preserving (e.g.
right adjoints). 
\end{rem}

Recall from \cite[Theorem 4.3.2]{Ambi2018}, that for every $1$-semiadditive stable presentably
symmetric monoidal $\infty$-category $\mathcal{C}$, the ring $\pi_{0}(\one_{\mathcal{C}})$
is equipped with a canonical additive $p$-derivation 
\[
\delta\colon\pi_{0}(\one_{\mathcal{C}})\to\pi_{0}(\one_{\mathcal{C}}).
\]

\begin{prop}
\label{prop:R1_Closure}The subset $\mathcal{R}_{1}\ss\pi_{0}(\one_{\tsadi^{[1]}})$
is closed under multiplication and the additive $p$-derivation $\delta$
inside $\pi_{0}(\one_{\tsadi^{[1]}})$. Consequently, $\mathcal{R}_{1}$
is a semi-$\delta$-ring and the inclusion $\mathcal{R}_{1}\into\pi_{0}(\one_{\tsadi^{[1]}})$
is a homomorphism of semi-$\delta$-rings. 
\end{prop}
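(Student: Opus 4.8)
The plan is to prove Proposition \ref{prop:R1_Closure} in three steps: closure under multiplication, closure under the $p$-derivation $\delta$, and the formal consequence that $\mathcal{R}_1$ is a sub-semi-$\delta$-ring. For multiplicativity, I would use the arithmetic of cardinalities: by \remref{Cardinality_Arithmetic}, $|BG|\cdot|BH| = |BG\times BH| = |B(G\times H)|$ in $\pi_0(\one_{\tsadi^{[1]}})$, and $G\times H$ is again a finite group, so the product of two generators is a generator; by $\bb Z$-bilinearity of multiplication this extends to all of $\mathcal{R}_1$. This step is essentially routine.

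The main work is closure under $\delta$. Since $\delta$ is an \emph{additive} $p$-derivation (so $\delta(x+y) = \delta(x) + \delta(y)$, recalling the defining property from \cite[Theorem 4.3.2]{Ambi2018}), and since $\delta$ of any integer $n$ is again an integer, $\mathbb Z$-linearity reduces the problem to showing $\delta(|BG|) \in \mathcal{R}_1$ for a single finite group $G$. The key input I would invoke is the explicit formula for $\delta$ on classes of the form $|BG|$ established in \cite[Section 4]{Ambi2018}: there $\delta(|BA|)$ (at least for abelian $p$-groups, and more generally) is computed in terms of cardinalities of classifying spaces of certain finite groups built functorially from $G$ — roughly, the "$\delta$-operation" on a higher cardinality is expressed via counting of subgroups/extensions and hence lands back in the $\bb Z$-span of $|BH|$'s. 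Concretely, one writes $|BG|$ in $\tsadi^{[1]}$ as coming (via the universal map from $\span(\mathcal{S}_{\fin 1})$, \exaref{Universal_Cardinality}) from the span $\pt \from BG \to \pt$, and the additive $p$-derivation on such "combinatorial" classes was shown in \cite{Ambi2018} to be computable purely in terms of the finite-group combinatorics of $G$, producing an integer combination of classes $|BH|$. I would cite the relevant computation verbatim and conclude $\delta(\mathcal{R}_1) \subseteq \mathcal{R}_1$.

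Finally, being a semi-$\delta$-ring means $\mathcal{R}_1$ is closed under addition, multiplication, contains $0$ and $1$ (note $1 = |B\{e\}|$ and $0 = 0\cdot|B\{e\}|$ both lie in $\mathcal{R}_1$ by definition), and is closed under $\delta$ — all of which we have verified — and the structure maps are the restrictions of those on $\pi_0(\one_{\tsadi^{[1]}})$, so the inclusion is tautologically a homomorphism of semi-$\delta$-rings once source and target both carry the $\delta$-structure and $\mathcal{R}_1$ is closed under it.

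The hard part will be pinning down exactly which statement from \cite[Section 4]{Ambi2018} gives the combinatorial formula for $\delta$ on classes $|BG|$, and checking it is stated in sufficient generality (arbitrary finite $G$, not just abelian or $p$-groups); if the reference only handles abelian $p$-groups directly, I would need a short reduction — e.g. using that $|BG|$ for general finite $G$ can be related to $|BP|$ for $p$-Sylow subgroups via \propref{Amenable_Descent}-type transfer arguments, noting that the prime-to-$p$ part is invertible in $\tsadi^{[1]}$ only after further localization, so one must instead argue directly that $\delta$ applied to the span class of $BG$ is again a $\bb Z$-combination of span classes of $BH$'s. This is the one place where I expect to lean most heavily on the machinery already developed in \cite{Ambi2018} rather than reprove anything.
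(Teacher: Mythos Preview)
Your approach to closure under multiplication is correct and matches the paper's. However, there is a genuine gap in your treatment of $\delta$: despite the name, an \emph{additive $p$-derivation} is \emph{not} additive. The defining formula (cited in the paper as \cite[Definition 4.1.1(1)]{Ambi2018}) is
\[
\delta(x+y)=\delta(x)+\delta(y)+\frac{x^{p}+y^{p}-(x+y)^{p}}{p},
\]
so your reduction ``by additivity'' to the generators $|BG|$ fails as stated. The correction term $\frac{x^{p}+y^{p}-(x+y)^{p}}{p}$ is an integer polynomial in $x$ and $y$ with no constant or linear part, and this is exactly why the paper first establishes closure of $\mathcal{R}_{1}$ under multiplication: that closure is needed to absorb the cross terms when extending $\delta$ from generators to arbitrary $\bb Z$-linear combinations. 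Your proof sketch treats the multiplicative closure as an independent and ``routine'' step, not realizing it is an essential input to the $\delta$-closure argument.

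Second, you do not identify the formula that actually computes $\delta$ on a generator. The paper uses the explicit identity
\[
\delta|BG|=|B(G\wr C_{p})|-|B(C_{p}\times G)|,
\]
valid for an arbitrary finite group $G$; the wreath product $G\wr C_{p}$ is again a finite group, so the right-hand side lies in $\mathcal{R}_{1}$ directly. There is no need for any Sylow reduction or any restriction to abelian $p$-groups --- your final paragraph worrying about this is addressing a difficulty that does not arise once the wreath-product formula is in hand.
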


\begin{proof}
The closure under multiplication follows from the identity (\remref{Cardinality_Arithmetic}) 
\[
|BG||BH|=|B(G\times H)|
\]
and the closure under $\delta$ follows from the identity 
\[
\delta|BG|=|BG\wr C_{p}|-|B(C_{p}\times G)|,
\]
the formula (see \cite[Definition 4.1.1(1)]{Ambi2018})
\[
\delta(x+y)=\delta(x)+\delta(y)+\frac{x^{p}+y^{p}-(x+y)^{p}}{p}
\]
and the closure of $\mathcal{R}_{1}$ under multiplication. 
\end{proof}

\subsubsection{$1$-semiadditive decomposition}

We would now like to apply the above to the $\infty$-categories $\Sp_{T(n)}$.
By \cite[Proposition 4.3.4]{Ambi2018}, the construction of the additive $p$-derivation $\delta$
is functorial with respect to colimit preserving symmetric monoidal
functors. In particular, assuming that $n\ge1$, both maps 
\[
\pi_{0}(\one_{\tsadi^{[1]}})\to\pi_{0}\bb S_{T(n)}\oto u\pi_{0}E_{n}
\]
commute with $\delta$. By \propref{Goerss_Irena}, the image of $u$
is the canonical $\bb Z_{p}$ copy obtained from the map 
\[
\bb Z_{p}\simeq\pi_{0}\widehat{\bb S}_{p}\into\pi_{0}E_{n}.
\]
Since $\bb Z_p \subset \pi_{0}E_{n}$ is the image of a semi-$\delta$-ring map we obtain a surjective map  of semi-$\delta$-rings
\[u\colon \pi_{0}\bb S_{T(n)} \to \bb Z_p .\]
The semi-$\delta$-ring structure on $\bb Z_p$ can be explicitly described.
\begin{lem}
\label{lem:Zp_delta}
The unique semi-$\delta$-ring structure on $\bb Z_p$ is given by 
\[\delta(a) = \frac{a-a^p}{p},\qquad \forall a \in \mathbb{Z}_p.
\]
In particular, if $v_{p}(a)>0$, then $v_{p}(\delta(a)) = v_{p}(a)-1$.
\end{lem}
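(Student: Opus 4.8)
The plan is to unwind what a semi-$\delta$-ring structure on the ring $\mathbb{Z}_p$ amounts to concretely: it is a function $\delta\colon\mathbb{Z}_p\to\mathbb{Z}_p$ for which $\psi(x)\coloneqq x^p+p\,\delta(x)$ is a unital ring endomorphism of $\mathbb{Z}_p$. Since $\mathbb{Z}_p$ is $p$-torsion-free, the assignment $\delta\mapsto\psi$ is a bijection between such structures and \emph{Frobenius lifts} on $\mathbb{Z}_p$, i.e. ring endomorphisms $\psi$ with $\psi(x)\equiv x^p\pmod p$, with inverse $\psi\mapsto\big(x\mapsto\tfrac{\psi(x)-x^p}{p}\big)$. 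So I would first recast the lemma as the assertion that $\mathbb{Z}_p$ carries a unique Frobenius lift, namely $\mathrm{id}_{\mathbb{Z}_p}$.

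For existence, take $\delta_0(a)=\tfrac{a-a^p}{p}$. I would first check this lands in $\mathbb{Z}_p$: for $a\in\mathbb{Z}_p$ one has $a^p\equiv a\pmod p$, since the residue of $a$ in $\mathbb{F}_p$ is fixed by the Frobenius (Fermat). Then $\psi_0(a)=a^p+p\cdot\tfrac{a-a^p}{p}=a$, so $\psi_0=\mathrm{id}_{\mathbb{Z}_p}$, which is a ring endomorphism reducing to $x\mapsto x^p$ modulo $p$; hence $\delta_0$ is a semi-$\delta$-ring structure.

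For uniqueness, let $\delta$ be any semi-$\delta$-ring structure and set $\psi(x)=x^p+p\,\delta(x)$, a ring endomorphism of $\mathbb{Z}_p$. I would show $\psi=\mathrm{id}$ using the rigidity of $\mathbb{Z}_p$: $\psi$ fixes $1$, hence fixes $\mathbb{Z}$ pointwise, in particular $\psi(p)=p$, so $\psi(p^n\mathbb{Z}_p)\subseteq p^n\mathbb{Z}_p$ for all $n$ and $\psi$ is continuous for the $p$-adic topology; since $\mathbb{Z}$ is dense in the Hausdorff space $\mathbb{Z}_p$, a continuous endomorphism agreeing with $\mathrm{id}$ on $\mathbb{Z}$ must be $\mathrm{id}$. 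Thus $x^p+p\,\delta(x)=x$ for all $x$, and $p$-torsion-freeness forces $\delta(x)=\tfrac{x-x^p}{p}$. (If the working definition records only the additive identity, one first pins down $\delta(1)=0$ from $\psi(1)=1$, equivalently from $p\,\delta(1)=0$; it is the multiplicativity of $\psi$ that rules out spurious additive corrections to $\delta_0$, which would otherwise exist since $\mathbb{Z}_p$ has many additive endomorphisms fixing $\mathbb{Z}$.)

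Finally, for the valuation statement: if $v_p(a)=k\ge 1$, write $a=p^k u$ with $u\in\mathbb{Z}_p^{\times}$. Then
\[
\delta(a)=\frac{a-a^p}{p}=\frac{p^k u-p^{pk}u^p}{p}=p^{k-1}\bigl(u-p^{(p-1)k}u^p\bigr).
\]
Since $(p-1)k\ge 1$, the factor $u-p^{(p-1)k}u^p$ is congruent to $u$ modulo $p$, hence a unit, so $v_p(\delta(a))=k-1=v_p(a)-1$. The main obstacle here is not computational but the identification in the first paragraph — matching the chosen axioms of "semi-$\delta$-ring" with the notion of a Frobenius lift on a $p$-torsion-free ring; once that is in place, everything reduces to the standard fact that $\operatorname{End}_{\mathrm{ring}}(\mathbb{Z}_p)=\{\mathrm{id}\}$.
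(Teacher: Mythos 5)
Your reduction to Frobenius lifts over-assumes the structure. A semi-$\delta$-ring in the sense of \cite[Definition 4.1.1]{Ambi2018} (an \emph{additive} $p$-derivation) only requires the additivity identity $\delta(x+y)=\delta(x)+\delta(y)+\frac{x^{p}+y^{p}-(x+y)^{p}}{p}$ together with the normalizations on $0$ and $1$; the multiplicative $\delta$-ring axiom is precisely what the ``semi'' drops. Consequently $\psi(x)=x^{p}+p\,\delta(x)$ is only guaranteed to be an \emph{additive}, unital self-map of $\mathbb{Z}_{p}$, not a ring endomorphism, and your appeal to $\operatorname{End}_{\mathrm{ring}}(\mathbb{Z}_{p})=\{\mathrm{id}\}$ is not available. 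Your closing parenthetical then gets the situation exactly backwards: you claim multiplicativity is needed to rule out ``spurious additive corrections, which would otherwise exist since $\mathbb{Z}_{p}$ has many additive endomorphisms fixing $\mathbb{Z}$.'' In fact there are none besides the identity: the difference with the identity is an additive map killing $\mathbb{Z}$, hence factors through $\mathbb{Z}_{p}/\mathbb{Z}$, which is a divisible group; its image is therefore a divisible subgroup of $\mathbb{Z}_{p}$, and $\bigcap_{n}p^{n}\mathbb{Z}_{p}=0$ forces it to vanish.

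That observation is also what repairs your proof, and it is essentially the paper's argument: the paper sets $\phi(a)=\frac{a-a^{p}}{p}-\delta(a)$, notes that the additivity axiom alone makes $\phi$ a homomorphism of abelian groups, that by \cite[Proposition 4.1.11]{Ambi2018} it vanishes on $\mathbb{Z}_{(p)}$ and hence factors through $\mathbb{Z}_{p}/\mathbb{Z}_{(p)}$, and that this quotient is $p$-divisible while $\mathbb{Z}_{p}$ has no nonzero $p$-divisible subgroups. Equivalently, in your language: additivity gives $\psi(n)=n\psi(1)=n$ for $n\in\mathbb{Z}$ and $\psi(p^{k}x)=p^{k}\psi(x)$, hence $p$-adic continuity, and density of $\mathbb{Z}$ finishes the uniqueness without ever invoking multiplicativity. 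Your existence check and the valuation computation at the end are correct as written.
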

\begin{proof}
For $a \in \bb Z_p$ we define $\phi(a) =  \frac{a-a^p}{p}-\delta(a)$. By \cite[Definition 4.1.1(1)]{Ambi2018}, the function $\phi\colon \bb Z_p \to \bb Z_p$ is additive. By \cite[Proposition  4.1.11]{Ambi2018}, $\phi$ factors trough a map ${\bb Z_p}/{\bb Z_{(p)}} \to \bb Z_p$. Since ${\bb Z_p}/{\bb Z_{(p)}}$ is $p$-divisible, all such maps are zero.
\end{proof}
For $a\in\pi_{0}\bb S_{T(n)}$, we denote by $v_{p}(a)$ the $p$-adic
valuation of $u(a)\in\bb Z_{p}\ss\pi_{0}E_{n}$. 
\begin{prop}
\label{prop:Inv_Valuation}For every $n\ge1$ and $a\in\pi_{0}\bb S_{T(n)}$
we have:
\begin{enumerate}
\item If $v_{p}(a)=0$, then $\Sp_{T(n)}$ is $a$-divisible.
\item If $v_{p}(a)>0$, then $\Sp_{T(n)}$ is $a$-complete.
\end{enumerate}
\end{prop}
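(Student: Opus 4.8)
The two statements are really about the behavior of the element $a\in\pi_0\bb S_{T(n)}$ under the canonical symmetric monoidal functor, and the key is to reduce questions about $a$ to questions about its image $u(a)\in\bb Z_p\ss\pi_0 E_n$. First I would observe that, by \propref{Goerss_Irena}, the kernel of $u\colon\pi_0\bb S_{T(n)}\to\bb Z_p$ is precisely the nil-radical; so $a$ is invertible in $\pi_0\bb S_{T(n)}$ if and only if $u(a)$ is invertible in $\bb Z_p$, i.e.\ if and only if $v_p(a)=0$. This immediately gives (1): if $v_p(a)=0$ then $a$ acts invertibly on the unit $\one_{\Sp_{T(n)}}$, and since $\Sp_{T(n)}$ is presentably symmetric monoidal with $a$ acting via tensoring with $a_{\one}$, it acts invertibly on every object, so $\Sp_{T(n)}$ is $a$-divisible.

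For (2), the idea is to use the additive $p$-derivation $\delta$ to manufacture, out of $a$ with $v_p(a)>0$, an element that witnesses completeness. The crucial input is \propref{Semi_Inevrse_Divisible}: an object $Y$ is $\alpha$-complete as soon as it is $(1-\alpha\beta)$-divisible for \emph{some} semi-inverse $\beta$ of $\alpha$. So I would take $\alpha=p=\pn 0$ and use that $\pn 1=|BC_p|$ is a semi-inverse of $p$ by \propref{Box_Semi_Inverse} (this is where $1$-semiadditivity of $\Sp_{T(n)}$ enters — $\Sp_{T(n)}$ is $1$-semiadditive since $n\ge1$ and $\Ht(\Sp_{T(n)})=n\ge1$). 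Then $\Sp_{T(n)}$ being $p$-complete means it is $(1-p\cdot\pn 1)$-divisible. To get from $p$-completeness to $a$-completeness for a general $a$ with $v_p(a)>0$, I would instead argue directly: write $v_p(a)=k>0$; by \lemref{Zp_delta}, applying $\delta$ repeatedly to $u(a)$ strictly decreases the $p$-adic valuation, so $v_p(\delta^{k-1}(a))=1$, i.e.\ $u(\delta^{k-1}(a))=p\cdot(\text{unit})$. Since $\delta$ is a well-defined operation on $\pi_0\bb S_{T(n)}$ and $u$ intertwines it, $\delta^{k-1}(a)$ and $p$ differ by a unit modulo the nil-radical; as the nil-radical acts nilpotently, $\delta^{k-1}(a)$-completeness is equivalent to $p$-completeness, which holds. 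Finally, $a$-completeness follows from $\delta^{k-1}(a)$-completeness (and more basically from $a^N$-completeness for $N\gg0$, using that $a^N$ differs from a power of $p$ by a unit mod nil-radical) — completeness with respect to $a$ and with respect to any element generating the same ideal up to units and nilpotents coincide.

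The cleanest route, which I would actually write down, is: $u(a)=p^k\cdot\epsilon$ with $\epsilon\in\bb Z_p^\times$ when $v_p(a)=k$; lift $\epsilon^{-1}$ (mod nil) and multiply, so that after replacing $a$ by a unit multiple we may assume $u(a)=p^k$, hence $a-p^k$ is nilpotent. An object is $a$-complete iff it is $p^k$-complete iff it is $p$-complete (completeness only depends on the radical of the ideal $(a)$, and a nilpotent difference does not change which objects have $\map(Z,-)$ contractible for $a$-divisible $Z$). And $\Sp_{T(n)}$ is $p$-complete by definition of the telescopic localization. The main obstacle is the bookkeeping around ``completeness depends only on the ideal up to units and nilpotents'': one must check carefully that if $a-b$ is nilpotent then the classes of $a$-divisible and $b$-divisible objects coincide (hence so do the complete ones), and that if $b=a\cdot\text{(unit)}$ the same holds — both are elementary but need the identification of $\mathcal{C}[\alpha^{-1}]$ in terms of $\colim(X\xrightarrow{\alpha}X\xrightarrow{\alpha}\cdots)$, where a nilpotent perturbation of $\alpha$ visibly yields an equivalent colimit. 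Everything else is a direct application of \propref{Goerss_Irena}, \propref{Semi_Inevrse_Divisible}, \lemref{Zp_delta}, and the presentably symmetric monoidal structure.
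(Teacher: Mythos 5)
Part (1) is fine in substance: the statement you need --- that $u(a)$ invertible implies $a$ invertible in $\pi_{0}\bb S_{T(n)}$ --- is exactly what nil-conservativity of $\Sp_{T(n)}\to\Sp_{K(n)}$ (resp.\ of $E_{n}\otimes(-)$) provides, and this is how the paper argues. But note that your stated justification, ``by \propref{Goerss_Irena} the kernel of $u\colon\pi_{0}\bb S_{T(n)}\to\bb Z_{p}$ is precisely the nil-radical,'' is not what that proposition says: it concerns $\pi_{0}\bb S_{K(n)}$, not $\pi_{0}\bb S_{T(n)}$.

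That misattribution is harmless for (1) but fatal for (2). Your entire reduction of $a$-completeness to $p$-completeness rests on the claim that, after multiplying $a$ by a unit, $a-p^{k}$ is \emph{nilpotent in} $\pi_{0}\bb S_{T(n)}$, i.e.\ that $\ker(u)$ lies in the nilradical of the $T(n)$-local (not $K(n)$-local) sphere. This is not established in the paper and does not follow formally from what is: nil-conservativity detects invertibility, but it does not show that an element killed by $E_{n}$ (or by $L_{K(n)}$) is nilpotent --- that would require control of $K(j)_{*}\bb S_{T(n)}$ for $j\neq n$, which is exactly the kind of information about the telescopic sphere that is unavailable. The surrounding bookkeeping in your argument (nilpotent perturbations and unit multiples preserve divisibility and hence completeness; $\Sp_{T(n)}$ is $p$-complete because $T(n)$ is $p$-power torsion and $T(n)\otimes(-)$ is conservative) is correct, but it all hangs on this unproven input. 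The paper's proof of (2) is designed precisely to avoid it: one replaces $X$ by $T(n)\otimes X$ with $T(n)$ a ring spectrum, so that $a$ acts through its image $\overline{a}\in\pi_{0}T(n)$; since $T(n)$ is $p$-power torsion, the image of $\overline{a}$ in $\pi_{0}(T(n)\otimes E_{n})$ lands in a finite quotient $\bb Z/p^{r}$ of $\bb Z_{p}$, where $v_{p}>0$ genuinely forces nilpotence; the Nilpotence Theorem (using $T(n)\otimes K(j)=0$ for $j>n$) upgrades this to nilpotence of $\overline{a}$ in $\pi_{0}T(n)$, and conservativity of $T(n)\otimes(-)$ concludes. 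To salvage your ``reduce to $p$'' strategy you must either prove the nilradical statement for $\pi_{0}\bb S_{T(n)}$ (hard) or run the reduction after base change to $T(n)$-modules, which essentially reproduces the paper's argument.
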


\begin{proof}
For (1), if $v_{p}(a)=0$, then $u(a)\in\pi_{0}E_{n}$ is invertible.
It follows that $a\in\pi_{0}\bb S_{T(n)}$ is invertible by nil-conservativity
\cite[Corollary 5.1.17]{Ambi2018}. For (2), let $a\in\pi_{0}\bb S_{T(n)}$
such that $v_{p}(a)>0$. We need to show that for every $0\neq X\in\Sp_{T(n)}$,
the element $a$ acts non-invertibly on $X$. Since tensoring with
$T(n)$ is conservative, we may replace $X$ by $T(n)\otimes X$.
Without loss of generality, we may choose $T(n)$ to be a ring spectrum.
Thus, $T(n)\otimes X$ is a $T(n)$-module and the action of $a$
on it is by its image under the map $\pi_{0}\bb S_{T(n)}\to\pi_{0}T(n)$,
which we denote by $\overline{a}$. Thus, it would suffice to show
that $\overline{a}\in\pi_{0}T(n)$ is \emph{nilpotent}. By the Nilpotence
Theorem, it suffices to show that the image of $\overline{a}$ under
the map 
\[
\pi_{0}T(n)\to\pi_{0}(T(n)\otimes E_{n})
\]
is nilpotent, as $T(n)\otimes K(j)=0$ for all $j=n+1,\dots,\infty$.
Consider the commutative diagram
\[
\xymatrix{\pi_{0}\bb S_{T(n)}\ar[d]\ar[rr]^{u} &  & \pi_{0}E_{n}\ar[d]\\
\pi_{0}T(n)\ar[rr] &  & \pi_{0}(T(n)\otimes E_{n})
}
\]
We see that the image of $\overline{a}$ in $\pi_{0}(T(n)\otimes E_{n})$
is contained in the image of $u$, which lies in $\bb Z_{p}\ss\pi_{0}E_{n}$
(\propref{Goerss_Irena}). Since $T(n)$ is $p$-power torsion,
the map 
\[
\bb Z_{p}\ss\pi_{0}E_{n}\to\pi_{0}(T(n)\otimes E_{n})
\]
factors through a finite quotient $\bb Z_{p}\onto\bb Z/p^{r}$ and
hence the image of $\overline{a}$ in $\pi_{0}(T(n)\otimes E_{n})$
is nilpotent. 
\end{proof}
\begin{prop}
\label{prop:Div_Comp_R1}For every $k\ge0$, there exists an element
$\alpha\in\mathcal{R}_{1}$, such that $\Sp_{T(n)}$ is $\alpha$-complete
for $n\le k$ and $\alpha$-divisible for $n>k$.
\end{prop}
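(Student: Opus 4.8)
The plan is to reduce the statement, via \propref{Inv_Valuation}, to a purely arithmetic problem about $p$-adic valuations of a single cleverly chosen element of $\mathcal{R}_{1}$, and then to build that element out of $|BC_{p}|$ using the additive $p$-derivation. For $n\ge 1$ recall that $\Sp_{T(n)}$ is a module over the mode $\tsadi^{[1]}$, so every $\beta\in\pi_{0}(\one_{\tsadi^{[1]}})$ acts on it, and its action is determined by the composite $\pi_{0}(\one_{\tsadi^{[1]}})\to\pi_{0}\bb S_{T(n)}\oto u\pi_{0}E_{n}$; write $\phi_{n}(\beta)$ for this image and $v_{p}(\beta)$ for its $p$-adic valuation. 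By \propref{A_Semiadd} and \propref{Cardinality_EM} (applied to the symmetric monoidal, $1$-semiadditive functor $\tsadi^{[1]}\to\Theta_{n}$) one has $\phi_{n}(|BC_{p}|)=p^{\,n-1}$, hence $\phi_{n}(p|BC_{p}|)=p^{n}$; and for $n=0$ the relevant cardinality is the homotopy cardinality $\phi_{0}(|BC_{p}|)=p^{-1}$ (\exaref{Homotopy_Cardinality}), so $\phi_{0}(p|BC_{p}|-1)=0$. It therefore suffices to produce $\alpha\in\mathcal{R}_{1}$ with $\phi_{0}(\alpha)=0$, with $v_{p}(\phi_{n}(\alpha))>0$ for $1\le n\le k$, and with $v_{p}(\phi_{n}(\alpha))=0$ for $n>k$: the first condition forces $\alpha$ to act as $0$ on $\Sp_{T(0)}=\Sp_{\bb Q}$, so the only $\alpha$-divisible object there is $0$ and $\Sp_{T(0)}$ is $\alpha$-complete, while for $n\ge 1$ the other two conditions give $\alpha$-completeness (resp.\ $\alpha$-divisibility) by \propref{Inv_Valuation}(2) (resp.\ (1)).

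The candidate is
\[
\alpha\;:=\;\bigl(p\,|BC_{p}|-1\bigr)\cdot\prod_{j=1}^{k}\Bigl(1-\delta^{(j)}\bigl(p\,|BC_{p}|\bigr)\Bigr).
\]
It lies in $\mathcal{R}_{1}$: by \propref{R1_Closure} the set $\mathcal{R}_{1}$ is a subring of $\pi_{0}(\one_{\tsadi^{[1]}})$ (a $\bb Z$-span containing $1=|B\{e\}|$) closed under $\delta$, so it contains $p|BC_{p}|-1$, all iterates $\delta^{(j)}(p|BC_{p}|)$, the elements $1-\delta^{(j)}(p|BC_{p}|)$, and their products. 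Since $\delta$ is functorial along colimit preserving symmetric monoidal functors and the composites $\tsadi^{[1]}\to\Theta_{n}$ are such, $\phi_{n}$ intertwines $\delta$ with the $p$-derivation on $\pi_{0}E_{n}$, which by \lemref{Zp_delta} (together with \propref{Goerss_Irena}) is $a\mapsto(a-a^{p})/p$ on the copy of $\bb Z_{p}$ containing the image. Hence for $n\ge 1$,
\[
\phi_{n}(\alpha)\;=\;(p^{n}-1)\cdot\prod_{j=1}^{k}\bigl(1-\delta^{(j)}(p^{n})\bigr)\quad\in\quad\bb Z_{p},
\]
and $\phi_{0}(\alpha)=0$ since its first factor already vanishes.

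The heart of the matter is the elementary lemma: for $0\le i\le n$ one has $\delta^{(i)}(p^{n})=p^{\,n-i}w_{i}$ with $w_{i}\in\bb Z_{p}^{\times}$ and $w_{i}\equiv 1\pmod p$; in particular $v_{p}(\delta^{(i)}(p^{n}))=n-i$ for $i<n$, while $\delta^{(n)}(p^{n})\equiv 1\pmod p$. This follows by induction on $i$ from $\delta(a)=(a-a^{p})/p$: the step computes
\[
\delta(p^{\,n-i}w_{i})\;=\;p^{\,n-i-1}w_{i}-p^{(n-i)p-1}w_{i}^{p}\;=\;p^{\,n-(i+1)}\,w_{i}\bigl(1-p^{(n-i)(p-1)}w_{i}^{p-1}\bigr),
\]
and $(n-i)(p-1)\ge 1$ for $i<n$; the last assertion is $\delta(pw_{n-1})=w_{n-1}-p^{p-1}w_{n-1}^{p}\equiv w_{n-1}\pmod p$. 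Granting this, I conclude: if $n>k$, then $p^{n}-1\equiv-1$ and every factor $1-\delta^{(j)}(p^{n})\equiv 1\pmod p$ (for $j\le k<n$) are units, so $v_{p}(\phi_{n}(\alpha))=0$ and $\Sp_{T(n)}$ is $\alpha$-divisible; if $1\le n\le k$, then $p^{n}-1$ and all $1-\delta^{(j)}(p^{n})$ lie in $\bb Z_{p}$ while the $n$-th factor satisfies $1-\delta^{(n)}(p^{n})\equiv 0\pmod p$, so $v_{p}(\phi_{n}(\alpha))\ge v_{p}(1-\delta^{(n)}(p^{n}))>0$ and $\Sp_{T(n)}$ is $\alpha$-complete. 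With the $n=0$ case from the first paragraph, this proves the proposition. The main obstacle is simply locating the right element and verifying the inductive lemma — which, pleasantly, works uniformly in $p$ — the key point being to track the congruence $w_{i}\equiv 1\pmod p$ (not merely that $w_{i}$ is a unit), so that $1-\delta^{(n)}(p^{n})$ is actually divisible by $p$; the rest is formal.
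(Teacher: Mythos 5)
Your proof is correct and follows essentially the same route as the paper's: reduce to $p$-adic valuations of images in $\bb Z_p\ss\pi_0E_n$ via \propref{Inv_Valuation}, and manufacture the element from iterates of $\delta$ applied to $|BC_p|$ using \lemref{Zp_delta}, \propref{Cardinality_EM} and \propref{R1_Closure}. The only real difference is that the paper's factor at height $j$ is $\delta^{j-1}|BC_p|-b_j$ for an abstractly chosen integer $b_j$ with $v_p(\gamma_j-b_j)>0$, whereas you prove the sharper congruence $\delta^{(n)}(p^n)\equiv 1\pmod p$ so that the constant $1$ works uniformly in $p$ and $j$.
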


\begin{proof}
First, we observe that it suffices to construct elements $\beta_{(k)}$
such that $\Sp_{T(k)}$ is $\beta_{(k)}$-complete and $\Sp_{T(n)}$
is $\beta_{(k)}$-divisible for $n>k$. Indeed, we can then define
$\alpha=\beta_{(0)}\cdots\beta_{(k)}$, which satisfies the required
properties. To construct $\beta\coloneqq\beta_{(k)}$ for a specific
$k\ge0$, we proceed as follows. For $k=0$, we take $\beta=p|BC_{p}|-1$.
We get that for $\Sp_{T(0)}=\Sp_{\bb Q}$, the element $\beta$ acts as zero,
while for $n>0$, the $\infty$-category $\Sp_{T(n)}$ is $p$-complete,
and hence by \propref{Semi_Inevrse_Divisible}, it is $\beta$-divisible.
Now, we treat the case $k\ge1$. For every $a\in\mathcal{R}_{1}\ss\pi_{0}(\one_{\tsadi^{[1]}})$,
we shall denote by $a_{n}$ its image in $\bb Z_{p}\ss\pi_{0}E_{n}$
under the map $\pi_{0}(\one_{\tsadi^{[1]}})\to\pi_{0}E_{n}$. By \propref{Inv_Valuation},
it suffices to construct $\beta$, such that $v_{p}(\beta_{k})>0$
and $v_{p}(\beta_{n})=0$ for all $n\ge k+1$. Using \propref{R1_Closure},
we define the element 
\[
\gamma=\delta^{k-1}|BC_{p}|\in\mathcal{R}_{1}.
\]
By \propref{Cardinality_EM}, we have $|BC_{p}|_{n}=p^{n-1}$ for all $n\in \mathbb{N}$. Thus, by \lemref{Zp_delta}, we have 
$v_{p}(\gamma_{n})=n-k$ for all $n\ge k$, and in particular $v_{p}(\gamma_{k})=0$.
It follows that there exists an integer $b\in\bb Z$ with $v_{p}(b)=0$,
such that $v_{p}(\gamma_{k}-b)>0$. We define $\beta\coloneqq\gamma-b\in\mathcal{R}_{1}.$
On the one hand, we have by construction  $v_{p}(\beta_{k})>0$, and on the other, $v_{p}(\beta_{n})=0$ for $n>k$ by the ultrametric property
of the $p$-adic valuation.
\end{proof}
\begin{cor}
\label{cor:Tn_One_Semiadditivity}For all $n>k\ge0$, every $1$-semiadditive
functor $F\colon\Sp_{T(n)}\to\Sp_{T(k)}$ is zero.
\end{cor}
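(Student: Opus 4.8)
The plan is to deduce the statement from \propref{Div_Comp_R1} together with the naturality of the $\mathcal{R}_1$-action under $1$-semiadditive functors (\propref{R1_Linearity}). Fix $n>k\ge 0$ and apply \propref{Div_Comp_R1} to the integer $k$: this produces an element $\alpha\in\mathcal{R}_1\ss\pi_0(\one_{\tsadi^{[1]}})$ such that $\Sp_{T(k)}$ is $\alpha$-complete and $\Sp_{T(n)}$ is $\alpha$-divisible (the latter because $n>k$). Unwinding \defref{Div_Comp}, this means that every object of $\Sp_{T(n)}$ is $\alpha$-divisible, while every object of $\Sp_{T(k)}$ is $\alpha$-complete; in particular $\Sp_{T(k)}[\alpha^{-1}]=0$, since an object $X$ that is simultaneously $\alpha$-divisible and $\alpha$-complete satisfies $\map(X,X)\simeq\pt$ and is therefore zero.

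Next I would run $F$ through this dichotomy. The $\infty$-categories $\Sp_{T(n)}$ and $\Sp_{T(k)}$ are stable, presentable, and $\infty$-semiadditive (hence $1$-semiadditive), so \propref{R1_Linearity} applies to the $1$-semiadditive functor $F$ and gives $F(\alpha_{\Sp_{T(n)}})=\alpha_{\Sp_{T(k)}}$ as natural endomorphisms of $F$. Concretely, for every $X\in\Sp_{T(n)}$ the map $\alpha_X\colon X\to X$ is an isomorphism, and since $F$ is a functor $F(\alpha_X)=\alpha_{F(X)}\colon F(X)\to F(X)$ is an isomorphism as well; thus $F(X)$ is $\alpha$-divisible, i.e.\ $F(X)\in\Sp_{T(k)}[\alpha^{-1}]=0$. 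As this holds for all $X$, the functor $F$ is zero.

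The only non-formal ingredient is \propref{Div_Comp_R1}, whose construction of the witnessing element $\alpha$ rests on the additive $p$-derivation on $\pi_0\bb S_{T(n)}$, on \propref{Goerss_Irena}, and on the Nilpotence Theorem via \propref{Inv_Valuation}; the present step is then a short manipulation of divisibility and completeness and contains no real obstacle. One mild point worth flagging explicitly in the write-up: \propref{R1_Linearity} is invoked for a functor $F$ that is \emph{not} assumed to be colimit preserving, which is precisely the generality in which that proposition was stated, so no additional hypothesis on $F$ beyond $1$-semiadditivity is needed.
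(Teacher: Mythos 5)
Your argument is correct and is essentially the paper's own proof: both invoke \propref{Div_Comp_R1} to produce $\alpha\in\mathcal{R}_1$ with $\Sp_{T(n)}$ $\alpha$-divisible and $\Sp_{T(k)}$ $\alpha$-complete, then use \propref{R1_Linearity} to conclude each $F(X)$ is simultaneously $\alpha$-divisible and $\alpha$-complete, hence zero. Your explicit remark that \propref{R1_Linearity} is stated precisely for functors not assumed colimit preserving matches the paper's own emphasis on this point.
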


\begin{proof}
By \propref{Div_Comp_R1}, there exists an element $a\in\mathcal{R}_{1}$,
such that $\Sp_{T(n)}$ is $a$-divisible and $\Sp_{T(k)}$ is $a$-complete.
Since $F$ is $1$-semiadditive, by \propref{R1_Linearity}, it must
preserve the action of $a$. It follows that for every $X\in\Sp_{T(n)}$,
the object $F(X)$ is both $a$-divisible and $a$-complete and hence
must be zero. 
\end{proof}
We are now ready to prove that $1$-semiadditivity forces $L_{n}^{f}\Sp$
to decompose into its monochromatic pieces.
\begin{thm}
[1-Semiadditive Decomposition]\label{thm:1Sad_Decomposition}For
every $n\ge0$, there is an equivalence of modes
\[
\CMon_{1}\otimes L_{n}^{f}\Sp\simeq\prod_{k=0}^{n}\Sp_{T(k)}.
\]
\end{thm}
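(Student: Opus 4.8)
The plan is to identify both sides as modes and compare the properties they classify, using the recollement chain from \exaref{Chromatic_Recollement} together with \corref{Tn_One_Semiadditivity} to show the gluing is trivial after imposing $1$-semiadditivity. First I would recall that $\CMon_1 \otimes L_n^f\Sp$ is a mode by \propref{Mode_Lattice}(4), being the tensor of the modes $\CMon_1$ and $\Sp_{\oplus_{k=0}^n T(k)}$; likewise $\prod_{k=0}^n \Sp_{T(k)}$ is a mode by repeated application of \propref{Mode_Lattice}(7), since the $\Sp_{T(k)}$ are pairwise disjoint (each object of $\Sp_{T(k_1)}$ has support $\{k_1\}$, hence vanishes in $\Sp_{T(k_0)}$, so $\Sp_{T(k_0)} \otimes \Sp_{T(k_1)} = 0$ for $k_0 \ne k_1$). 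So by \propref{Idemp_Poset} it suffices to produce maps of modes in both directions.

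The key step is the following: tensoring the recollement chain $L_0^f\Sp \subseteq L_1^f\Sp \subseteq \dots \subseteq L_n^f\Sp$ with $\CMon_1$. By \propref{Recollement_Base_Change}, $\CMon_1 \otimes L_k^f\Sp$ is a recollement of $\CMon_1 \otimes L_{k-1}^f\Sp$ and $\CMon_1 \otimes \Sp_{T(k)}$ (using that $L_{k-1}^f\Sp \subseteq L_k^f\Sp$ has right orthogonal complement $\Sp_{T(k)}$ by \exaref{Chromatic_Recollement}). Now I claim each such recollement is \emph{split}. By \propref{Split_Recollement}, it is enough to show the gluing functor $L|_{(\CMon_1 \otimes \Sp_{T(k)})} \colon \CMon_1 \otimes \Sp_{T(k)} \to \CMon_1 \otimes L_{k-1}^f\Sp$ is zero. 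This gluing functor is $1$-semiadditive and colimit preserving (being a composite of such). Composing further with the monoidal $1$-semiadditive localization $\CMon_1 \otimes L_{k-1}^f \Sp \to \Sp_{T(j)}$ for any $j < k$ (which exists since $\Sp_{T(j)}$ is stable $1$-semiadditive, so a $\CMon_1$-module, and is an $L_{k-1}^f\Sp$-localization), we get a $1$-semiadditive functor $\CMon_1 \otimes \Sp_{T(k)} \to \Sp_{T(j)}$; restricting along $\Sp_{T(k)} \to \CMon_1 \otimes \Sp_{T(k)}$ gives a $1$-semiadditive functor $\Sp_{T(k)} \to \Sp_{T(j)}$, which is zero by \corref{Tn_One_Semiadditivity}. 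Since $\Sp_{T(k)}$ generates $\CMon_1 \otimes \Sp_{T(k)}$ under colimits and the family of localizations to the $\Sp_{T(j)}$, $j \le k-1$, is jointly conservative on $\CMon_1 \otimes L_{k-1}^f\Sp$ (as the $L_{k-1}^f\Sp$-side recollement is exhausted by its monochromatic strata after applying $\CMon_1$, by downward induction), the gluing functor vanishes. Inducting on $k$ then yields $\CMon_1 \otimes L_n^f\Sp \simeq \prod_{k=0}^n (\CMon_1 \otimes \Sp_{T(k)})$.

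It remains to simplify $\CMon_1 \otimes \Sp_{T(k)}$. Since $\Sp_{T(k)}$ is $\infty$-semiadditive by \cite[Theorem B]{Ambi2018} (being a homotopy-ring localization with singleton support, of height $k$ by \thmref{Height_Chrom}), it is in particular $1$-semiadditive and stable, hence a module over the mode $\tsadi^{[1]} = \CMon_1 \otimes \Sp$, and a fortiori over $\CMon_1$. Therefore tensoring with $\CMon_1$ does nothing: $\CMon_1 \otimes \Sp_{T(k)} \simeq \Sp_{T(k)}$. This gives the equivalence of underlying $\infty$-categories $\CMon_1 \otimes L_n^f\Sp \simeq \prod_{k=0}^n \Sp_{T(k)}$, and since both are modes and all functors in sight are (lax) symmetric monoidal, \propref{Idemp_Poset} (applied to the canonical comparison maps of modes in both directions, each classifying the property of the other) upgrades this to an equivalence of modes. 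The main obstacle, I expect, is the joint-conservativity bookkeeping needed to conclude the gluing functor is zero from its vanishing on the strata — i.e. making precise that after applying $\CMon_1$, the recollement of $L_{k-1}^f\Sp$ by its $k$ telescopic layers remains ``detected'' by the layer-localizations, so that a colimit-preserving functor out of it is zero once it kills each layer; this is where one leans hardest on \propref{Recollement_Base_Change} and an inductive split-recollement argument rather than anything about spectra per se. Everything else is formal mode theory plus the single non-formal input \corref{Tn_One_Semiadditivity}.
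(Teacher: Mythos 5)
Your proposal is correct and follows essentially the same route as the paper: induct on $n$ along the telescopic recollement chain, base-change the recollement along $\CMon_{1}$ via \propref{Recollement_Base_Change}, identify the gluing functor as a $1$-semiadditive functor $\Sp_{T(n)}\to\prod_{k<n}\Sp_{T(k)}$, and kill it with \corref{Tn_One_Semiadditivity} to split the recollement. The only cosmetic differences are that the paper simplifies $\CMon_{1}\otimes\Sp_{T(k)}\simeq\Sp_{T(k)}$ up front (so the ``joint conservativity'' worry reduces to the triviality that a functor into a finite product vanishes iff its components do), and note that the gluing functor is only $1$-semiadditive (a right adjoint followed by a left adjoint), not colimit preserving as you assert — harmless here since you only use $1$-semiadditivity.
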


\begin{proof}
For $n=0$, the claim is $\CMon_{1}\otimes\Sp_{\bb Q}\simeq\Sp_{\bb Q},$
which is true because $\Sp_{\bb Q}$ is $1$-semiadditive. It thus
suffices to prove by induction on $n\in\bb N$, that the functor  
\[
L_{n}^{f}\Sp\to L_{n-1}^{f}\Sp\times\Sp_{T(n)},
\]
given by the product of the respective symmetric monoidal localizations, becomes an equivalence after tensoring with
$\CMon_{1}$. We know that $L_{n}^{f}\Sp$ is a recollement of $L_{n-1}^{f}\Sp\ss L_{n}^{f}\Sp$
and $\Sp_{T(n)}$. It follows that $\CMon_{1}\otimes L_{n}^{f}\Sp$
is a recollement of $\CMon_{1}\otimes L_{n-1}^{f}\Sp$ and $\CMon_{1}\otimes\Sp_{T(n)}=\Sp_{T(n)}$
(\propref{Recollement_Base_Change}). By the inductive hypothesis, we have
\[
\CMon_{1}\otimes L_{n-1}^{f}\Sp\simeq\prod_{k=0}^{n-1}\Sp_{T(k)}.
\]
Thus, $\CMon_{1}\otimes L_{n}^{f}\Sp$ is a recollement of $\prod_{k=0}^{n-1}\Sp_{T(k)}$
and $\Sp_{T(n)}$. It is therefore suffices to show that the gluing
data given by the functor 
\[
L\colon\Sp_{T(n)}\to\prod_{k=0}^{n-1}\Sp_{T(k)}
\]
is zero (\propref{Split_Recollement}). We observe that $L$ is given
as a composition of a right and a left adjoint between $1$-semiadditive
$\infty$-categories and hence is $1$-semiadditive. Thus, by \corref{Tn_One_Semiadditivity},
we must have $L=0$ and hence the corresponding recollement is split.
To conclude, the localization functors $L_{n}^{f}\Sp\to\Sp_{T(k)}$
for $k=0,\dots,n$ induce a functor $L_{n}^{f}\Sp\to\prod_{k=0}^{n}\Sp_{T(k)}$,
which becomes an equivalence after tensoring with $\CMon_{1}$. In
particular, this is a symmetric monoidal equivalence and hence an
equivalence of modes.
\end{proof}
\begin{rem}
By tensoring \thmref{1Sad_Decomposition} with $L_{n}\Sp$, we also
get
\[
\CMon_{1}\otimes L_{n}\Sp\simeq\prod_{k=0}^{n}\Sp_{K(k)}.
\]
\end{rem}

\subsection{Semiadditive vs. Stable Height}

As we recalled in the introduction, for every $n\ge0$, the localization
functors of spectra $L_{n}^{f}$, $L_{F(n)}$ and $L_{T(n)}$ can
be thought of as restriction to heights $\le n$, $\ge n$ and $n$
respectively, as measured by the $v_{n}$-self maps. It is natural
to compare this notion of height with the semiadditive one considered
in this paper. In this subsection, we phrase the notion of height classified
by $L_{n}^{f}$, $L_{F(n)}$ and $L_{T(n)}$ (which for disambiguation we shall call \emph{stable height}) in the language of modes
and establish some comparison results with the notion of semiadditive
height. Using that, we shall prove the bounded version of the
``Bootstrap Conjecture'' (\thmref{Intro_Bounded_Bootstrap}), regarding
$1$-semiadditivity vs. $\infty$-semiadditivity for stable presentable
$\infty$-categories.

\subsubsection{Stable Height}

By \exaref{Bousfield_Modes}, the $\infty$-categories $L_{n}^{f}\Sp$,
$\Sp_{F(n)}$ and $\Sp_{T(n)}$ are themselves \emph{modes.} Our first
goal is to show that the properties classified by them can be profitably
reinterpreted in terms of the following notion:
\begin{defn}
\label{def:Stable_Height}Given a stable $\infty$-category $\mathcal{C}$,
for every $X\in\mathcal{C}$, we define and denote the \textbf{stable
height} of $X$ as follows:
\begin{enumerate}
\item $\htt_{\st}(X)\le n$, if $F(n+1)\otimes X=0$ for some (hence any)
finite spectrum $F(n+1)$ of type $n+1$. 
\item $\htt_{\st}(X)>n$, if $\map(Z,X)\simeq\pt$ for every $Z$ of stable
height $\le n$.
\item $\htt_{\st}(X)=n$, if $\htt_{\st}(X)\le n$ and $\htt_{\st}(X)>n-1$.
\end{enumerate}
By convention, $\htt_{\st}(X)>-1$ for all $X$, and $\htt_{\st}(X)\le-1$
if and only if $X=0$. We also extend the definition to $n=\infty$ as follows: For every $X\in\mathcal{C}$, we write $\htt_{\st}(X)=\infty$
if and only if $\htt_{\st}(X)>n$ for all $n$.
\end{defn}

\begin{rem}
Since $F(n+2)$ can be constructed as a cofiber of a self map of $F(n+1)$,
it is clear that $\htt_{\st}(X)\le n$ implies $\htt_{\st}(X)\le n+1$.
Consequently, $\htt_{\st}(X)>n$ also implies $\htt_{\st}(X)>n-1$.
\end{rem}

As with the semiadditive height, it is useful to consider the corresponding
subcategories of objects of stable height in a certain range:
\begin{defn}
\label{def:Stable_Height_Cat}Let $\mathcal{C}\in\acat_{\st}$ and
let $0\le n\le\infty$. We denote by $\mathcal{C}_{\le^{\st}n}$,
$\mathcal{C}_{>^{\st}n}$, and $\mathcal{C}_{n^{\st}}$ the full subcategories
of $\mathcal{C}$ spanned by objects of stable height $\le n$, $>n$,
and $n$. In addition, we write $\Ht_{\st}(\mathcal{C})\le n$, $>n$, or $n$,
if $\mathcal{C}=\mathcal{C}_{\le^{\st}n}$, $\mathcal{C}_{>^{\st}n}$,
or $\mathcal{C}_{n^{\st}}$ respectively. 
\end{defn}

\begin{example}
In the special case $\mathcal{C}=\Sp_{(p)},$ we have by definition
\[
\Sp_{(p),\le^{\st}n}=L_{n}^{f}\Sp,\qquad\Sp_{(p),>^{\st}n-1}=\Sp_{F(n)},\qquad\Sp_{(p),n^{\st}}=\Sp_{T(n)}.
\]
We note that the subcategory $\Sp_{(p),\infty^{\st}}\ss\Sp$ is rather large. First, for every $X\in L_{n}^{f}\Sp$, we have 
\[
L_{n}^{f}X\otimes\bb F_{p}\simeq L_{n}^{f}X\otimes L_{n}^{f}\bb F_{p}\simeq L_{n}^{f}X\otimes0\simeq0.
\]
Therefore, for every $\bb F_{p}$-module spectrum $M$, we have
\[
\map_{\Sp}(L_{n}^{f}X,M)\simeq\map_{\Mod_{\bb F_{p}}(\Sp)}(\bb F_{p}\otimes L_{n}^{f}X,M)\simeq\map_{\Mod_{\bb F_{p}}(\Sp)}(0,M)\simeq0,
\]
which implies that $M\in\Sp_{(p),\infty^{\st}}$. Since the subcategory $\Sp_{(p),\infty^{\st}}\ss\Sp$
is closed under limits, it contains all bounded below $p$-complete
spectra. In contrast, the $\infty$-category $\mathcal{C}=\prod_{n\in\bb N}\Sp_{T(n)}$
satisfies $\mathcal{C}_{\infty^{\st}}=0$, even though it contains
many objects of unbounded height. 
\end{example}

We now show that the modes $L_{n}^{f}\Sp$, $\Sp_{F(n)}$ and $\Sp_{T(n)}$
\emph{classify} the properties of having the corresponding stable
height.
\begin{prop}
\label{prop:Stable_Modes}For every $0\le n<\infty$, the modes $L_{n}^{f}\Sp$,
$\Sp_{F(n)}$ and $\Sp_{T(n)}$ classify the properties of being stable
$p$-local of stable height $\le n$, $>n-1$ and $n$ respectively. 
\end{prop}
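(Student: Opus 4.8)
The plan is to identify, for each of the three modes, the property it classifies with a tensor--divisibility/completeness condition in $\Sp_{(p)}$, and then promote that to an arbitrary module over the mode. Recall that for a mode $\mathcal{M}\in\mode$ and $\mathcal{C}\in\Pr$, the $\infty$-category $\mathcal{C}$ is an $\mathcal{M}$-module if and only if $\mathcal{M}\otimes\mathcal{C}\iso\mathcal{C}$, equivalently the unit $\mathcal{C}\to\mathcal{M}\otimes\mathcal{C}$ is an equivalence. Since $L_{n}^{f}\Sp$, $\Sp_{F(n)}$ and $\Sp_{T(n)}$ are Bousfield localizations of $\Sp$, hence smashing localizations of $\Sp_{(p)}$ (Smash Product Theorem / definition of $L_n^f$), they are all modules over $\Sp_{(p)}$, and any $\mathcal{M}$-module $\mathcal{C}$ for $\mathcal{M}$ one of these is automatically a $\Sp_{(p)}$-module, i.e. stable and $p$-local.

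First I would treat $L_{n}^{f}\Sp$. For a stable $p$-local $\mathcal{C}$, the endomorphism of $\Id_{\mathcal{C}}$ given by tensoring with $\bb S_{(p)}$ and the idempotent $e\in\pi_0\bb S_{(p)}$-linear structure let us describe objects of stable height $\le n$: by \defref{Stable_Height}, $\htt_{\st}(X)\le n$ iff $F(n+1)\otimes X=0$, and since $L_n^f\Sp\ss\Sp_{(p)}$ is the localization away from $F(n+1)$ (equivalently, $X$ is $F(n+1)$-acyclic), the full subcategory $\mathcal{C}_{\le^{\st}n}$ is exactly the essential image of $L_n^f\Sp\otimes\mathcal{C}\to\mathcal{C}$. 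Concretely, I would invoke \propref{Mode_Localization}: $L_n^f\Sp = (\Sp_{(p)})_{\oplus_{k=0}^n T(k)}$ is an accessible symmetric-monoidal-compatible localization of $\Sp_{(p)}$, so it is a mode, and for any $\Sp_{(p)}$-module $\mathcal{C}$ the $\infty$-category $L_n^f\Sp\otimes\mathcal{C}$ is the full subcategory of $L_n^f\Sp$-local objects. It then remains to check that $X\in\mathcal{C}$ is $L_n^f\Sp$-local (i.e. $\hom^{\Sp_{(p)}}(Z,X)\in L_n^f\Sp$ for all $Z$) if and only if $\htt_{\st}(X)\le n$; the ``if'' direction is immediate and the ``only if'' follows because $F(n+1)\otimes Z$ ranges over a generating set, combined with the smashing property. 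This gives that $L_n^f\Sp$ classifies ``stable $p$-local of stable height $\le n$''.

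Next, $\Sp_{T(n)}$. Again $\Sp_{T(n)}=(\Sp_{(p)})_{T(n)}$ is a symmetric-monoidal-compatible accessible localization, hence a mode by \propref{Mode_Localization}, and for a $\Sp_{(p)}$-module $\mathcal{C}$, the $\mathcal{M}=\Sp_{T(n)}$-modules are exactly the $T(n)$-local $\mathcal{C}$. I would identify $T(n)$-locality of $\mathcal{C}$ with the condition $\Ht_{\st}(\mathcal{C})=n$: an object is $T(n)$-local iff it is $T(n)$-local as a $\Sp_{(p)}$-object, and since $T(n)=F(n)[v_n^{-1}]$ the $T(n)$-local objects are precisely those that are $F(n)$-complete (stable height $>n-1$) and $F(n+1)$-acyclic (stable height $\le n$) --- this is exactly \defref{Stable_Height}(3) unwound, using the telescopic fracture square of \exaref{Chromatic_Recollement}. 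For $\Sp_{F(n)}$ I would proceed symmetrically: $\Sp_{F(n)}=(\Sp_{(p)})_{F(n)}$ is the completion localization, a mode, and its modules are the $F(n)$-complete $\Sp_{(p)}$-modules, which is the condition $\Ht_{\st}(\mathcal{C})>n-1$ by definition of stable height $>n-1$ as $F(n)$-completeness (equivalently, right-orthogonality to $L_{n-1}^f\Sp$).

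The main obstacle I expect is the passage from the Bousfield-theoretic description of $L_n^f\Sp$, $\Sp_{F(n)}$, $\Sp_{T(n)}$ as localizations \emph{of the category of spectra} to the assertion about an arbitrary stable $p$-local presentable $\mathcal{C}$, specifically matching ``$\mathcal{M}_\circ$-local object of $\mathcal{C}$'' in the sense of \propref{Mode_Localization} (a condition on all the enriched mapping spectra $\hom^{\Sp_{(p)}}(Z,X)$) with the \emph{elementwise} condition $\htt_{\st}(X)\le n$ (which a priori only mentions $F(n+1)\otimes X$). Resolving this requires noting that $F(n+1)\otimes X=0$ forces $\hom^{\Sp_{(p)}}(F(n+1)\otimes Z, X)\simeq\pt$ hence, by dualizability of $F(n+1)$ and the projection formula (smashing!), $\hom^{\Sp_{(p)}}(Z,X)$ is $F(n+1)$-acyclic for all $Z$; and conversely. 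Once this compatibility is in hand, everything else is an application of the already-established machinery: \propref{Mode_Localization} for modehood and the classification statement, \exaref{Bousfield_Modes} for the identification of the modes, and \defref{Stable_Height}, \defref{Stable_Height_Cat} for the translation of the local/complete/acyclic conditions into the language of stable height. I would end by recording that the case $n=\infty$ is excluded here precisely because $\Sp_{\oplus_k T(k)}\otimes\Sp$ need not vanish, matching the $1\le n<\infty$ hypotheses in the surrounding results.
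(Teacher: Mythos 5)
Your overall strategy is the paper's: realize each of $L_{n}^{f}\Sp$, $\Sp_{F(n)}$, $\Sp_{T(n)}$ as a symmetric-monoidal-compatible accessible localization of $\Sp_{(p)}$, apply \propref{Mode_Localization} to see that each is a mode classifying the corresponding enriched locality condition, match that condition with the elementwise stable-height condition of \defref{Stable_Height}, and obtain the $T(n)$ case as the conjunction $L_{n}^{f}\Sp\otimes\Sp_{F(n)}$. Your treatment of $L_{n}^{f}\Sp$ is fine: the compatibility $\hom^{\Sp}(Z,F(n+1)\otimes X)\simeq F(n+1)\otimes\hom^{\Sp}(Z,X)$ (which the paper gets from exactness of $\hom^{\Sp}(Z,-)$ and you get from dualizability of $F(n+1)$ --- the same mechanism) together with joint conservativity of the corepresentables handles both directions.

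The gap is in the $\Sp_{F(n)}$ case, where you declare the identification to hold ``by definition of stable height $>n-1$ as $F(n)$-completeness''. That is not the paper's definition: \defref{Stable_Height} defines $\htt_{\st}(X)>n-1$ as right-orthogonality in $\mathcal{C}$ to all $Z$ with $F(n)\otimes Z=0$ (a condition on mapping \emph{spaces} in $\mathcal{C}$), whereas $\Sp_{F(n)}$-locality in the sense of \propref{Mode_Localization} asks that every mapping \emph{spectrum} $\hom^{\Sp}(Z,X)$ be an $F(n)$-local spectrum. Neither implication is formal. For ``orthogonal implies local'' one notes that for any $F(n)$-acyclic spectrum $A$ and any $Z\in\mathcal{C}$ the object $A\otimes Z$ has stable height $\le n-1$, so $\map_{\Sp}(A,\hom^{\Sp}(Z,X))\simeq\map_{\mathcal{C}}(A\otimes Z,X)\simeq\pt$. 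For ``local implies orthogonal'' the paper uses Spanier--Whitehead duality: if $\htt_{\st}(Z)\le n-1$ then $0\simeq\hom^{\Sp}(F(n)\otimes Z,X)\simeq\bb DF(n)\otimes\hom^{\Sp}(Z,X)$, so $\hom^{\Sp}(Z,X)$ is $\bb DF(n)$-acyclic; being also $F(n)$-local, hence $\bb DF(n)$-local (as $\bb DF(n)$ is again a finite spectrum of type $n$), it must vanish, whence $\map_{\mathcal{C}}(Z,X)\simeq\pt$. You correctly isolated this enriched-versus-elementwise matching as the crux for $L_{n}^{f}\Sp$, but the completeness half needs its own, genuinely different, argument, which your proposal skips.
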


\begin{proof}
We begin with $L_{n}^{f}\Sp$. It follows from \propref{Mode_Localization},
that $L_{n}^{f}\Sp$ is a mode, which classifies stable $L_{n}^{f}\Sp$-local
$\infty$-categories. Thus, it suffice to show that an object $X$
in a stable $\infty$-category $\mathcal{C}$ is $L_{n}^{f}\Sp$-local,
if and only if $\htt_{\st}(X)\le n$. By definition, $X$ is $L_{n}^{f}\Sp$-local,
if for every $Z\in\Sp$, the mapping spectrum $\hom_{\mathcal{C}}^{\Sp}(Z,X)$
belongs to $L_{n}^{f}\Sp$. Since the corepresentable functor $\hom_{\mathcal{C}}^{\Sp}(Z,-)\colon\mathcal{C}\to\Sp$
is exact, we have a canonical isomorphism
\[
\hom_{\mathcal{C}}^{\Sp}(Z,F(n+1)\otimes X)\simeq F(n+1)\otimes\hom_{\mathcal{C}}^{\Sp}(Z,X).
\]
It follows that if $\htt_{\st}(X)\le n$, then $X$ is $L_{n}^{f}\Sp$-local.
Since the collection of functors $\hom_{\mathcal{C}}^{\Sp}(Z,-)$
for all $Z\in\mathcal{C}$ is also jointly conservative, the converse
holds as well. 

We now move to $\Sp_{F(n)}$. We first show that if $\htt_{\st}(X)>n-1$,
then $\hom_{\mathcal{C}}^{\Sp}(Z,X)$ is $F(n)$-local for all $Z\in\mathcal{C}$.
For $A$ an $F(n)$-acyclic spectrum, $A\otimes Z$ is $F(n)$-acyclic
as well and hence $\htt_{\st}(A\otimes Z)\le n-1$. Since $\htt_{\st}(X)>n-1$,
it follows that
\[
\map_{\Sp}(A,\hom_{\mathcal{C}}^{\Sp}(Z,X))\simeq\map_{\mathcal{C}}(A\otimes Z,X)\simeq\pt,
\]
and hence $X$ is $\Sp_{F(n)}$-local. Conversely, assume that $X$
is $\Sp_{F(n)}$-local, and let $Z\in\mathcal{C}$ such that $\htt_{\st}(Z)\le n-1$.
We have 
\[
0\simeq\hom_{\mathcal{C}}^{\Sp}(F(n)\otimes Z,X)\simeq\bb DF(n)\otimes\hom_{\mathcal{C}}^{\Sp}(Z,X),
\]

where $\bb DF(n)$ is the Spanier-Whithead dual of $F(n)$, which
is itself of type $n$. Since $\hom_{\mathcal{C}}^{\Sp}(Z,X)$ is $F(n)$-local and hence
$\bb DF(n)$-local, we get $\hom_{\mathcal{C}}^{\Sp}(Z,X)=0$. This
implies that $\map_{\mathcal{C}}(Z,X)=\pt$ and so $\htt_{\st}(X)>n-1$.

Finally, for $\Sp_{T(n)}$ we observe that a spectrum is $T(n)$-local
if and only if it is both $F(n)$-local and $\bigoplus_{k=0}^{n}T(k)$
-local. Thus,
\[
\Sp_{T(n)}\simeq L_{n}^{f}\Sp\otimes\Sp_{F(n)}.
\]
Hence, it classifies the property of being stable $p$-local of both stable
height $\le n$ and $>n-1$, i.e exactly $n$. 
\end{proof}
Similarly, we also treat the case of stable height $\infty$.
\begin{cor}
\label{cor:Height_Inf_Mode}The $\infty$-category $\Sp_{(p),\infty^{\st}}$
is a mode, which classifies the property of being stable of stable
height $\infty$.
\end{cor}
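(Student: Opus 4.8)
The plan is to mirror the treatment of $\tsadi_{(p),\infty}$ in the proof of \thmref{Tsadi_n_Mode}, combining \propref{Mode_Localization} with the characterizations already obtained in \propref{Stable_Modes}. The first step is to rewrite $\Sp_{(p),\infty^{\st}}$ as an intersection of Bousfield localizations inside $\Sp_{(p)}$. By \propref{Stable_Modes}, for each $n\ge 1$ the mode $\Sp_{F(n)}$, regarded as a reflective full subcategory of $\Sp_{(p)}$, is exactly the subcategory of objects of stable height $>n-1$. Intersecting over all $n$ therefore yields the objects $X$ with $\htt_{\st}(X)>n$ for every $n$, which by \defref{Stable_Height} and \defref{Stable_Height_Cat} is precisely $\Sp_{(p),\infty^{\st}}$; thus $\Sp_{(p),\infty^{\st}}=\bigcap_{n\ge 1}\Sp_{F(n)}$.

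Next I would establish the structural properties needed to feed this into \propref{Mode_Localization}. Each $\Sp_{F(n)}\ss\Sp_{(p)}$ is an accessible reflective (Bousfield) localization, and it is an exponential ideal: if $X$ is $F(n)$-local then $\hom^{\Sp}(Y,X)$ is $F(n)$-local for every $Y$, since $F(n)$ is dualizable and the class of $F(n)$-acyclic spectra is closed under tensoring with arbitrary objects (exactly as in the proof of \propref{Div_Comp_Mode}, this shows the localization is compatible with $\otimes$). By \cite[Proposition 5.4.7.10]{htt}, the intersection $\Sp_{(p),\infty^{\st}}=\bigcap_n\Sp_{F(n)}$ is again an accessible reflective subcategory of $\Sp_{(p)}$, and being an intersection of exponential ideals it is itself an exponential ideal, hence compatible with the symmetric monoidal structure. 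Applying \propref{Mode_Localization} with $\mathcal{M}=\Sp_{(p)}$ and $\mathcal{M}_{\circ}=\Sp_{(p),\infty^{\st}}$ then shows that $\Sp_{(p),\infty^{\st}}$ is a mode and that it classifies the property of a stable $p$-local $\infty$-category $\mathcal{C}$ that all of its mapping spectra $\hom^{\Sp}(Z,X)$ lie in $\Sp_{(p),\infty^{\st}}$.

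The last step is to identify this property with ``stable height $\infty$''. Arguing exactly as in the $\Sp_{F(n)}$ part of \propref{Stable_Modes}: for $\mathcal{C}\in\Pr_{\st}$ which is $p$-local and $X\in\mathcal{C}$, the exactness of the corepresentable functors $\hom^{\Sp}(Z,-)\colon\mathcal{C}\to\Sp$ gives $\hom^{\Sp}(Z,F(n+1)\otimes X)\simeq F(n+1)\otimes\hom^{\Sp}(Z,X)$, and joint conservativity of these functors shows that $X$ is $\Sp_{F(n+1)}$-local if and only if $\htt_{\st}(X)>n$. Intersecting over $n$, the object $X$ is $\Sp_{(p),\infty^{\st}}$-local if and only if $\htt_{\st}(X)>n$ for all $n$, i.e.\ $\htt_{\st}(X)=\infty$; hence $\mathcal{C}$ is $\Sp_{(p),\infty^{\st}}$-local precisely when $\Ht_{\st}(\mathcal{C})=\infty$, as desired. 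I expect the only slightly delicate point to be checking that the countable intersection of the $\Sp_{F(n)}$ remains an accessible reflective localization compatible with $\otimes$ — but this is the same bookkeeping carried out for $\tsadi_{(p),\infty}$ in \thmref{Tsadi_n_Mode}, handled by \cite[Proposition 5.4.7.10]{htt} together with the exponential-ideal observation above; everything else is a direct citation of \propref{Mode_Localization} and \propref{Stable_Modes}.
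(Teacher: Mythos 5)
Your proposal is correct and follows essentially the same route as the paper: identify $\Sp_{(p),\infty^{\st}}$ with $\bigcap_{n}\Sp_{F(n)}$, check that this intersection is an accessible reflective subcategory compatible with the symmetric monoidal structure via \cite[Proposition 5.4.7.10]{htt}, apply \propref{Mode_Localization}, and then use \propref{Stable_Modes} to translate locality into the stable-height-$\infty$ condition. The only difference is that you spell out the exponential-ideal verification and the final locality-to-height translation, which the paper handles by direct citation.
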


\begin{proof}
We have 
\[
\Sp_{(p),\infty^{\st}}=\bigcap_{n\in\bb N}\Sp_{F(n)}\ss\Sp_{(p)},
\]
and thus is an accessible reflective subcategory of $\Sp_{(p)}$,
which is compatible with the symmetric monoidal structure \cite[Proposition .5.4.7.10]{htt}.
It follows from \propref{Mode_Localization}, that $\Sp_{(p),\infty^{\st}}$
is a mode. Moreover, $\Sp_{(p),\infty^{\st}}$ classifies the property
that the $\infty$-category is stable such that every object $X$ is $\Sp_{F(n)}$-local
for all $n\in\bb N$. By \propref{Stable_Modes}, the said condition
on $X$ is equivalent to $\htt_{\st}(X)\ge n$ for all $n$, which
is the same as $\htt_{\st}(X)=\infty$. Thus, $\Sp_{(p),\infty^{\st}}$
classifies the property of being stable of stable height $\infty$.
\end{proof}
\propref{Stable_Modes} has the following immediate corollaries regarding
stable height for stable presentable $\infty$-categories:
\begin{cor}
\label{cor:Stable_Height_Localization}For every $p$-local $\mathcal{C}\in\Pr_{\st}$,
we have canonical equivalences
\[
\mathcal{C}_{\le^{\st}n}\simeq\mathcal{C}\otimes L_{n}^{f}\Sp,\qquad\mathcal{C}_{>^{\st}n}\simeq\mathcal{C}\otimes\Sp_{F(n+1)},\qquad\mathcal{C}_{n^{\st}}\simeq\mathcal{C}\otimes\Sp_{T(n)}
\]
 for every $n\in\bb N$.
\end{cor}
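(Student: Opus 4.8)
The plan is to obtain \corref{Stable_Height_Localization} as an essentially formal consequence of \propref{Mode_Localization} together with the identifications already made in the proof of \propref{Stable_Modes}; almost no new computation is needed. First I would record that a $p$-local $\mathcal{C}\in\Pr_{\st}$ is by definition a module over the mode $\Sp_{(p)}$ (equivalently, the localization $\mathcal{C}\to\Sp_{(p)}\otimes\mathcal{C}$ is an equivalence), so that the hypotheses of \propref{Mode_Localization} are met with $\mathcal{M}=\Sp_{(p)}$ and $\mathcal{M}_{\circ}$ any of the three Bousfield localizations $L_{n}^{f}\Sp$, $\Sp_{F(n+1)}$, $\Sp_{T(n)}$ of $\Sp_{(p)}$; each of these is an accessible reflective subcategory of $\Sp_{(p)}$ compatible with the symmetric monoidal structure, as recorded in \exaref{Bousfield_Modes} and used in \propref{Stable_Modes}.

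Next I would apply \propref{Mode_Localization} to identify $\mathcal{C}\otimes L_{n}^{f}\Sp$ with the full subcategory of $\mathcal{C}$ spanned by the $L_{n}^{f}\Sp$-local objects, i.e.\ those $X$ for which $\hom^{\Sp_{(p)}}_{\mathcal{C}}(Z,X)\in L_{n}^{f}\Sp$ for all $Z\in\mathcal{C}$; since $\mathcal{C}$ is $p$-local this $\Sp_{(p)}$-mapping object coincides with the $\Sp$-mapping object $\hom^{\Sp}_{\mathcal{C}}(Z,X)$, so the condition is exactly the one analyzed inside the proof of \propref{Stable_Modes}. Quoting that argument verbatim --- using the natural isomorphism $\hom^{\Sp}_{\mathcal{C}}(Z,F(n+1)\otimes X)\simeq F(n+1)\otimes\hom^{\Sp}_{\mathcal{C}}(Z,X)$ and the fact that the corepresentable functors $\hom^{\Sp}_{\mathcal{C}}(Z,-)$ are exact and jointly conservative --- one gets that $X$ is $L_{n}^{f}\Sp$-local iff $F(n+1)\otimes X=0$, i.e.\ iff $\htt_{\st}(X)\le n$, so that this full subcategory is precisely $\mathcal{C}_{\le^{\st}n}$. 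The same reasoning, again already present in \propref{Stable_Modes}, identifies the $\Sp_{F(n+1)}$-local objects of $\mathcal{C}$ with $\mathcal{C}_{>^{\st}n}$ and, using $\Sp_{T(n)}\simeq L_{n}^{f}\Sp\otimes\Sp_{F(n)}$ so that $T(n)$-locality is the conjunction of the two conditions, the $\Sp_{T(n)}$-local objects with $\mathcal{C}_{n^{\st}}$. This yields the three claimed equivalences, which are canonical because the identification in \propref{Mode_Localization} is.

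I do not expect a genuine obstacle here. The single point that requires a little care is verifying that the notion of $\mathcal{M}_{\circ}$-local object appearing in the definition preceding \propref{Mode_Localization} (phrased through the $\Sp_{(p)}$-enrichment) agrees with the stable-height conditions of \defref{Stable_Height} (phrased through tensoring with finite spectra and the $\Sp$-enrichment); but this is exactly the computation already performed in the proof of \propref{Stable_Modes}, so it suffices to cite it. If one wished to include $n=\infty$ --- which the statement does not ask for --- one would instead invoke \corref{Height_Inf_Mode} for the corresponding identification $\mathcal{C}_{\infty^{\st}}\simeq\mathcal{C}\otimes\Sp_{(p),\infty^{\st}}$.
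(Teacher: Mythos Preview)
Your proposal is correct and follows essentially the same approach as the paper: the paper's proof is the single sentence ``This is a direct consequence of \propref{Stable_Modes} and \propref{Mode_Localization},'' and your write-up simply unpacks this by applying \propref{Mode_Localization} with $\mathcal{M}=\Sp_{(p)}$ and then invoking the object-wise characterization of $\mathcal{M}_{\circ}$-locality already established in the proof of \propref{Stable_Modes}. The one point of care you flag---that the $\Sp_{(p)}$-mapping object agrees with the $\Sp$-mapping object for $p$-local $\mathcal{C}$---is indeed the only thing to check, and it is immediate.
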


\begin{proof}
This is a direct consequence of \propref{Stable_Modes} and \propref{Mode_Localization}. 
\end{proof}
\begin{cor}
\label{cor:Stable_Height_Recollement}Every $p$-local ${\cal C}\in\Pr_{\st}$
is a recollement of ${\cal C}_{\le^{\st}n}$ and ${\cal C}_{>^{\st}n}$
for every $n\in\bb N$.
\end{cor}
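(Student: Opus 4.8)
The plan is to obtain this recollement by base-changing the classical telescopic fracture recollement of $\Sp_{(p)}$ along $\mathcal{C}$, and then to identify the resulting pieces with the stable-height subcategories. Since $\mathcal{C}$ is $p$-local, it is a module over the idempotent algebra $\Sp_{(p)}$ in $\Pr$, so by \propref{Mode_Hereditary} (or directly from the module structure over an idempotent algebra) the canonical functor $\Sp_{(p)}\otimes\mathcal{C}\to\mathcal{C}$ is an equivalence. By \exaref{Chromatic_Recollement}, the inclusion $L_{n}^{f}\Sp\into\Sp_{(p)}$ exhibits $\Sp_{(p)}$ as a recollement of $L_{n}^{f}\Sp$ and its right orthogonal complement, which is $\Sp_{F(n+1)}$. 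Applying \propref{Recollement_Base_Change} with $\mathcal{D}=\mathcal{C}$, the functor $L_{n}^{f}\Sp\otimes\mathcal{C}\to\Sp_{(p)}\otimes\mathcal{C}\simeq\mathcal{C}$ then exhibits $\mathcal{C}$ as a recollement of $L_{n}^{f}\Sp\otimes\mathcal{C}$ and $(L_{n}^{f}\Sp\otimes\mathcal{C})^{\perp}\simeq\Sp_{F(n+1)}\otimes\mathcal{C}$.

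It remains to match the two pieces with $\mathcal{C}_{\le^{\st}n}$ and $\mathcal{C}_{>^{\st}n}$. This is exactly \corref{Stable_Height_Localization}, which gives $\mathcal{C}_{\le^{\st}n}\simeq\mathcal{C}\otimes L_{n}^{f}\Sp$ and $\mathcal{C}_{>^{\st}n}\simeq\mathcal{C}\otimes\Sp_{F(n+1)}$. Plugging these equivalences into the recollement produced above yields the statement for every $n\in\bb N$.

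I do not expect a genuine obstacle here: once \propref{Recollement_Base_Change} and \corref{Stable_Height_Localization} (ultimately \propref{Stable_Modes} and \propref{Mode_Localization}) are available, the argument is purely formal. The one point deserving a line of care is the compatibility of identifications — namely, that the fully faithful inclusion $\mathcal{C}_{\le^{\st}n}\into\mathcal{C}$ of the stable-height-$\le n$ subcategory agrees with the inclusion $L_{n}^{f}\Sp\otimes\mathcal{C}\into\Sp_{(p)}\otimes\mathcal{C}\simeq\mathcal{C}$ coming from base change. This follows from the functoriality of the tensor product on $\Pr$ described just before \lemref{FF_Right_Adj} (the inclusion is induced by the fully faithful inclusion $L_{n}^{f}\Sp\into\Sp_{(p)}$), together with the description of $\mathcal{C}\otimes L_{n}^{f}\Sp$ as the subcategory of $L_{n}^{f}\Sp$-local objects of $\mathcal{C}$ in \propref{Mode_Localization}, which is precisely $\mathcal{C}_{\le^{\st}n}$ by the proof of \propref{Stable_Modes}.
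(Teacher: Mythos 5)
Your proof is correct and follows essentially the same route as the paper: base-change the telescopic recollement $L_{n}^{f}\Sp\subseteq\Sp_{(p)}$ along $\mathcal{C}$ via \propref{Recollement_Base_Change} and identify the pieces using \corref{Stable_Height_Localization}. The extra remark on compatibility of the inclusions is a reasonable point of care but not a gap.
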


\begin{proof}
Since $\Sp$ is a recollement of $L_{n}^{f}\Sp$ and $(L_{n}^{f}\Sp)^{\perp}=\Sp_{F(n+1)}$,
we have by \propref{Recollement_Base_Change}, that $\mathcal{C}\simeq{\cal C}\otimes\Sp$
is a recollement of ${\cal C}\otimes L_{n}^{f}\Sp$ and $\mathcal{C}\otimes\Sp_{F(n+1)}$.
By \corref{Stable_Height_Localization}, we get that $\mathcal{C}$
is a recollement of ${\cal C}_{\le^{\st}n}$ and ${\cal C}_{>^{\st}n}$. 
\end{proof}
Let $\mathcal{C}$ be a stable presentable $\infty$-category. For
each $n\in\bb N,$ let $\mathcal{C}\oto{R_{\le n}}\mathcal{C}_{\le^{\st}n}$
be the right adjoint of the inclusion $\mathcal{C}_{\le^{\st}n}\into\mathcal{C}$
and $\mathcal{C}_{\le^{\st}n}\oto{L_{n}}\mathcal{C}_{n^{\st}}$ the
left adjoint of the inclusion $\mathcal{C}_{n^{\st}}\into\mathcal{C}_{\le^{\st}n}$.
We define
\[
P_{n}\colon\mathcal{C}\oto{R_{\le n}}\mathcal{C}_{\le^{\st}n}\oto{L_{n}}\mathcal{C}_{n^{\st}}.
\]

\begin{prop}
\label{prop:Finite_Stable_Height_Conservative}For every $p$-local
$\mathcal{C}\in\Pr_{\st}$ and $X\in\mathcal{C}$. If $P_{n}(X)=0$
for all $n\in\bb N$, then $X\in\mathcal{C}_{\infty^{\st}}$. In particular,
if $\mathcal{C}_{\infty^{\st}}=0$, the collection of functors $P_{n}\colon\mathcal{C}\to\mathcal{C}_{n^{\st}}$
is jointly conservative. 
\end{prop}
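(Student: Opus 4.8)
The plan is to reduce the statement to the vanishing of $R_{\le n}(X)$ for all $n\in\bb N$, and then to prove this vanishing by descending induction on $n$ using the recollement structure internal to each $\mathcal{C}_{\le^{\st}n}$. First I would record two elementary observations. By \defref{Stable_Height} we have $\mathcal{C}_{\infty^{\st}}=\bigcap_{n\in\bb N}\mathcal{C}_{>^{\st}n}$. Moreover, since $R_{\le n}$ is right adjoint to the fully faithful inclusion $\mathcal{C}_{\le^{\st}n}\into\mathcal{C}$, for every $Z\in\mathcal{C}_{\le^{\st}n}$ we have $\map(Z,X)\simeq\map(Z,R_{\le n}(X))$; hence $X\in\mathcal{C}_{>^{\st}n}$ if and only if $\map(Z,R_{\le n}(X))\simeq\pt$ for all $Z\in\mathcal{C}_{\le^{\st}n}$, which (taking $Z=R_{\le n}(X)$) is equivalent to $R_{\le n}(X)=0$. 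Thus $X\in\mathcal{C}_{\infty^{\st}}$ if and only if $R_{\le n}(X)=0$ for all $n$, and it suffices to deduce this from the hypothesis that $P_n(X)=0$ for all $n$.

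The next step is to produce, for each $n\ge 1$, a recollement internal to $\mathcal{C}_{\le^{\st}n}$. The $\infty$-category $\mathcal{C}_{\le^{\st}n}\simeq\mathcal{C}\otimes L_{n}^{f}\Sp$ is $p$-local and lies in $\Pr_{\st}$, so \corref{Stable_Height_Recollement} applied to it with parameter $n-1$ exhibits it as a recollement of $(\mathcal{C}_{\le^{\st}n})_{\le^{\st}(n-1)}$ and $(\mathcal{C}_{\le^{\st}n})_{>^{\st}(n-1)}$. Since an object of $\mathcal{C}$ of stable height $\le n-1$ automatically has stable height $\le n$, the first subcategory is $\mathcal{C}_{\le^{\st}(n-1)}$; the second consists of objects of $\mathcal{C}_{\le^{\st}n}$ of stable height $>n-1$, that is of stable height exactly $n$, namely $\mathcal{C}_{n^{\st}}$. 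Write $i'\colon\mathcal{C}_{\le^{\st}(n-1)}\into\mathcal{C}_{\le^{\st}n}$ and $k'\colon\mathcal{C}_{n^{\st}}\into\mathcal{C}_{\le^{\st}n}$ for the inclusions, $R'$ for the right adjoint of $i'$, and note that the left adjoint of $k'$ is exactly the functor $L_n$ of the statement. For $Z\in\mathcal{C}_{\le^{\st}n}$, the cofiber $C$ of the counit $i'R'(Z)\to Z$ lies in $\mathcal{C}_{\le^{\st}n}$; mapping into any $Y$ and using the cofiber sequence of mapping spectra together with $\hom(i'R'(Z),Y)=0$ for $Y\in\mathcal{C}_{n^{\st}}$ (which is right-orthogonal to $\mathcal{C}_{\le^{\st}(n-1)}$), one sees that $C$ is right-orthogonal to $\mathcal{C}_{\le^{\st}(n-1)}$, hence $C\in\mathcal{C}_{n^{\st}}$, and that $Z\to C$ exhibits $C$ as the reflection $k'L_{n}(Z)$. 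This gives a fiber sequence
\[
i'R'(Z)\;\longrightarrow\;Z\;\longrightarrow\;k'L_{n}(Z)
\]
(as is anyway built into the recollement formalism of \cite[Section A.8.1]{ha}).

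Now I would run the induction. Set $Y_{n}:=R_{\le n}(X)\in\mathcal{C}_{\le^{\st}n}$; composing right adjoints along $\mathcal{C}_{\le^{\st}(n-1)}\xrightarrow{i'}\mathcal{C}_{\le^{\st}n}\into\mathcal{C}$ yields $R'(Y_{n})=R_{\le(n-1)}(X)=Y_{n-1}$. Applying the fiber sequence to $Z=Y_{n}$ and invoking the hypothesis $L_{n}(Y_{n})=P_{n}(X)=0$, the third term $k'L_n(Y_n)$ vanishes, so the counit $i'R'(Y_{n})\to Y_{n}$ is an isomorphism; that is, $Y_{n}\simeq Y_{n-1}$ as objects of $\mathcal{C}$. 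For the base case $n=0$, note $\mathcal{C}_{\le^{\st}0}=\mathcal{C}_{0^{\st}}=\mathcal{C}_{\bb Q}$, so $L_{0}$ is the identity and $P_{0}(X)=R_{\le0}(X)=Y_{0}$, whence $Y_{0}=0$ by hypothesis. By the induction $Y_{n}\simeq Y_{n-1}\simeq\cdots\simeq Y_{0}=0$, so $R_{\le n}(X)=0$ for all $n$ and therefore $X\in\mathcal{C}_{\infty^{\st}}$. For the ``in particular'' clause, observe that each $P_{n}$ is exact, being a composite of a left adjoint and a right adjoint between stable $\infty$-categories; so if $f$ is a morphism with $P_{n}(f)$ an isomorphism for all $n$, then $P_{n}(\mathrm{cofib}(f))=0$ for all $n$, hence $\mathrm{cofib}(f)\in\mathcal{C}_{\infty^{\st}}=0$ and $f$ is an isomorphism.

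The main obstacle is the middle step: correctly identifying the recollement internal to $\mathcal{C}_{\le^{\st}n}$ — in particular that its ``closed piece'' is precisely $\mathcal{C}_{n^{\st}}$ with reflection $L_{n}$ — and extracting the fracture fiber sequence from it. Once that is in place, the argument is just bookkeeping with the adjoints $R_{\le n}$, $R'$, and $L_n$.
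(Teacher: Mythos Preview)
Your proof is correct and follows essentially the same approach as the paper. Both arguments reduce to showing $R_{\le n}(X)=0$ for all $n$ by induction, using the recollement of $\mathcal{C}_{\le^{\st}n}$ by $\mathcal{C}_{\le^{\st}(n-1)}$ and $\mathcal{C}_{n^{\st}}$; the paper uses the inductive hypothesis $R_{\le n-1}(X)=0$ to place $R_{\le n}(X)$ in $\mathcal{C}_{n^{\st}}$ (whence $L_n$ fixes it and $R_{\le n}(X)=P_n(X)=0$), while you extract the recollement fiber sequence $i'R'(Y_n)\to Y_n\to k'L_n(Y_n)$ and use $P_n(X)=0$ to conclude $Y_n\simeq Y_{n-1}$ --- the same content in a slightly different order.
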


\begin{proof}
Let $X\in\mathcal{C}$, such that $P_{n}(X)=0$ for all $n$. We shall
show by induction that $X\in\mathcal{C}_{>^{\st}n}$ for all $n\in\bb N$,
and hence $X\in\mathcal{C}_{\infty^{\st}}$. Assuming by induction
that $X\in\mathcal{C}_{>^{\st}n-1}$ we have
\[
R_{\le n-1}R_{\le n}(X)=R_{\le n-1}(X)=0.
\]
Therefore $R_{\le n}(X)\in\mathcal{C}_{>^{\st}n-1}$ and hence 
\[
R_{\le n}(X)\in\mathcal{C}_{\le{}^{\st}n}\cap\mathcal{C}_{>^{\st}n-1}=\mathcal{C}_{n^{\st}}.
\]

It follows that 
\[
P_{n}(X)=L_{n}R_{\le n}(X)=R_{\le n}(X).
\]
Hence, for all $Z\in\mathcal{C}_{\le^{\st}n}$ we get 
\[
\map(Z,X)=\map(Z,R_{\le n}X)=\map(Z,P_{n}X)=\map(Z,0)=\pt
\]
and so $X\in\mathcal{C}_{>^{\st}n}$. We can take the base of the
induction to be $n=-1$, in which there is nothing to prove. 
\end{proof}
When further assuming $1$-semiadditivity, we get the following:
\begin{prop}
\label{prop:Bounded_Stable_Height_Decomposition}For every $p$-local
$\mathcal{C}\in\Pr_{\st}^{\sad 1}$, we have a canonical equivalence:
\[
\mathcal{C}_{\le^{\st}n}\simeq\mathcal{C}_{0^{\st}}\times\mathcal{C}_{1^{\st}}\times\cdots\times\mathcal{C}_{n^{\st}}.
\]
\end{prop}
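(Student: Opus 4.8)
The plan is to reduce the statement to \thmref{1Sad_Decomposition} using the mode formalism from Section 5. First I would apply \corref{Stable_Height_Localization} to identify $\mathcal{C}_{\le^{\st}n}$ with $\mathcal{C}\otimes L_{n}^{f}\Sp$, which is legitimate since $\mathcal{C}$ is $p$-local, stable and presentable. Because $\mathcal{C}$ is moreover $1$-semiadditive, it is a module over the mode $\CMon_{1}$ by \propref{CMon_Tensor}; since the forgetful functor $\Mod_{\CMon_{1}}(\Pr)\into\Pr$ is fully faithful with left adjoint $\CMon_{1}\otimes(-)$, an object already lying in its essential image is fixed, so $\mathcal{C}\otimes\CMon_{1}\simeq\mathcal{C}$. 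Combining these two observations,
\[
\mathcal{C}_{\le^{\st}n}\simeq\mathcal{C}\otimes L_{n}^{f}\Sp\simeq\mathcal{C}\otimes\CMon_{1}\otimes L_{n}^{f}\Sp\simeq\mathcal{C}\otimes\bigl(\CMon_{1}\otimes L_{n}^{f}\Sp\bigr).
\]

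Next I would feed in \thmref{1Sad_Decomposition}, which gives an equivalence of modes $\CMon_{1}\otimes L_{n}^{f}\Sp\simeq\prod_{k=0}^{n}\Sp_{T(k)}$. Substituting this and using that the tensor product of presentable $\infty$-categories preserves finite products — because $\Pr$ is $0$-semiadditive, so finite products agree with finite coproducts, and $\otimes$ preserves colimits in each variable — one obtains
\[
\mathcal{C}_{\le^{\st}n}\simeq\mathcal{C}\otimes\prod_{k=0}^{n}\Sp_{T(k)}\simeq\prod_{k=0}^{n}\bigl(\mathcal{C}\otimes\Sp_{T(k)}\bigr).
\]
Finally, \corref{Stable_Height_Localization} identifies each factor $\mathcal{C}\otimes\Sp_{T(k)}$ with $\mathcal{C}_{k^{\st}}$, which yields the asserted equivalence $\mathcal{C}_{\le^{\st}n}\simeq\mathcal{C}_{0^{\st}}\times\cdots\times\mathcal{C}_{n^{\st}}$. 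A last bookkeeping point is to check that all the equivalences above are the canonical ones induced by the monoidal localization functors (tensoring with the relevant idempotent algebras), so that the resulting decomposition is genuinely the one whose projections are the localizations $\mathcal{C}\to\mathcal{C}_{\le^{\st}n}\to\mathcal{C}_{k^{\st}}$; this is a routine unwinding of \propref{Mode_Localization} and \corref{Stable_Height_Localization}.

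I do not anticipate a real obstacle: the entire non-formal content is already packaged in \thmref{1Sad_Decomposition} (which rests on the Nilpotence Theorem and the $p$-derivation machinery of \cite[Section 4]{Ambi2018}), and everything here is formal manipulation of the symmetric monoidal structure on $\Pr$ together with idempotence of the modes involved. The only steps demanding a modicum of care are the identity $\mathcal{C}\otimes\CMon_{1}\simeq\mathcal{C}$ coming from the $\CMon_{1}$-module structure and the compatibility of the finite-product decomposition with $\otimes$, both immediate from the results recorded in Section 5.1.
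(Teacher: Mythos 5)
Your proposal is correct and follows essentially the same route as the paper's own proof: tensor the equivalence of \thmref{1Sad_Decomposition} with $\mathcal{C}$, use $\mathcal{C}\otimes\CMon_{1}\simeq\mathcal{C}$ (valid since $\mathcal{C}$ is already $1$-semiadditive), distribute the tensor product over the finite product, and identify the pieces via \corref{Stable_Height_Localization}. Your extra remarks on why $\otimes$ commutes with finite products and on the canonicity of the resulting projections are just slightly more explicit versions of what the paper leaves implicit.
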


\begin{proof}
By tensoring the equivalence of \thmref{1Sad_Decomposition} with
$\mathcal{C}$, we get an equivalence
\[
\mathcal{C}\otimes\CMon_{1}\otimes L_{n}^{f}\Sp\simeq\mathcal{C}\otimes\prod_{k=0}^{n}\Sp_{T(n)}\simeq\prod_{k=0}^{n}(\mathcal{C}\otimes\Sp_{T(n)}).
\]
Since $\mathcal{C}$ is already $1$-semiadditive, $\mathcal{C}\otimes\CMon_{1}\simeq\mathcal{C}$.
Thus, the claim follows from \corref{Stable_Height_Localization}.
\end{proof}

\subsubsection{Comparing heights}

For a stable higher semiadditive $\infty$-category, it is natural to compare the stable height with the semiadditive height. First, 
\begin{lem}
\label{lem:Height_Order}Let $\mathcal{C}\in\Pr_{\st}^{\sad{\infty}}$
be $p$-local. For all $n,k\in\bb N\cup\{\infty\}$, we have $(\mathcal{C}_{n^{\st}})_{k}\simeq(\mathcal{C}_{k})_{n^{\st}}$.
\end{lem}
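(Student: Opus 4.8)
The plan is to realise both $(\mathcal{C}_{n^{\st}})_{k}$ and $(\mathcal{C}_{k})_{n^{\st}}$ as iterated mode-localisations of $\mathcal{C}$ and then appeal to the commutativity of the tensor product on $\Pr$.

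\textbf{Step 1 (the two localisation formulas).} For $n\in\bb N$ and any $p$-local $\mathcal{D}\in\Pr_{\st}$, \corref{Stable_Height_Localization} gives $\mathcal{D}_{n^{\st}}\simeq\mathcal{D}\otimes\Sp_{T(n)}$; for $n=\infty$, \corref{Height_Inf_Mode} identifies $\Sp_{(p),\infty^{\st}}$ as the mode classifying stable height $\infty$, so \propref{Mode_Localization} yields $\mathcal{D}_{\infty^{\st}}\simeq\mathcal{D}\otimes\Sp_{(p),\infty^{\st}}$. Write $\mathsf{T}_{n}$ for $\Sp_{T(n)}$ if $n<\infty$ and for $\Sp_{(p),\infty^{\st}}$ if $n=\infty$; thus $\mathcal{D}_{n^{\st}}\simeq\mathcal{D}\otimes\mathsf{T}_{n}$ in all cases. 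Dually, for any $p$-local $\mathcal{D}\in\Pr_{\st}^{\sad\infty}$ and any $k\in\bb N\cup\{\infty\}$, \thmref{Tsadi_n_Mode} together with \propref{Yoneda_Height} shows that the subcategory $\mathcal{D}_{k}\ss\mathcal{D}$ of objects of semiadditive height $k$ is exactly the subcategory of $\tsadi_{k}$-local objects, so \propref{Mode_Localization} gives $\mathcal{D}_{k}\simeq\mathcal{D}\otimes\tsadi_{k}$.

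\textbf{Step 2 (the intermediate categories are admissible).} I would check that $(-)_{n^{\st}}$ and $(-)_{k}$ can be applied in either order. The subcategory $\mathcal{C}_{n^{\st}}\ss\mathcal{C}$ is presentable and stable (being $\mathcal{C}\otimes\mathsf{T}_{n}$, or an accessible localisation), is $p$-local, and is closed under limits in $\mathcal{C}$: the condition $F(n+1)\otimes X=0$ defining $\mathcal{C}_{\le^{\st}n}$ is limit-closed since $F(n+1)$ is dualizable, so $F(n+1)\otimes(-)\simeq\hom(\bb DF(n+1),-)$ preserves limits, while the right-orthogonality condition defining $\mathcal{C}_{>^{\st}n-1}$ is always limit-closed; hence $\mathcal{C}_{n^{\st}}$ is $\infty$-semiadditive by \propref{Ambi_Closure}(3), and $(\mathcal{C}_{n^{\st}})_{k}$ is defined with $(\mathcal{C}_{n^{\st}})_{k}\simeq(\mathcal{C}\otimes\mathsf{T}_{n})\otimes\tsadi_{k}$. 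Symmetrically, $\mathcal{C}_{k}\simeq\mathcal{C}\otimes\tsadi_{k}$ is presentable, stable and $p$-local (as $\tsadi_{k}$ is $p$-local), so $(\mathcal{C}_{k})_{n^{\st}}$ is defined with $(\mathcal{C}_{k})_{n^{\st}}\simeq(\mathcal{C}\otimes\tsadi_{k})\otimes\mathsf{T}_{n}$.

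\textbf{Step 3 (conclusion).} Since $\Pr$ is symmetric monoidal \cite[Proposition 4.8.1.15]{ha},
\[
(\mathcal{C}\otimes\mathsf{T}_{n})\otimes\tsadi_{k}\;\simeq\;\mathcal{C}\otimes(\mathsf{T}_{n}\otimes\tsadi_{k})\;\simeq\;\mathcal{C}\otimes(\tsadi_{k}\otimes\mathsf{T}_{n})\;\simeq\;(\mathcal{C}\otimes\tsadi_{k})\otimes\mathsf{T}_{n},
\]
and combining with Step 2 gives $(\mathcal{C}_{n^{\st}})_{k}\simeq(\mathcal{C}_{k})_{n^{\st}}$. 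The only point requiring genuine care is the bookkeeping in Step 2 that the intermediate $\infty$-categories remain presentable, stable, $p$-local and ($\infty$-)semiadditive, so that the identifications of Step 1 apply to them; once this is settled, the statement is a formal consequence of associativity and commutativity of $\otimes$ on $\Pr$.
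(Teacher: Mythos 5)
Your proof is correct and follows essentially the same route as the paper: the paper's own (one-line) argument identifies both sides with $\mathcal{C}\otimes\Sp_{(p),n^{\st}}\otimes\tsadi_{k}$ via \thmref{Tsadi_n_Mode} and \propref{Stable_Modes} (or \corref{Height_Inf_Mode} for $n=\infty$) and concludes by commutativity of $\otimes$ on $\Pr$. Your Step 2 just makes explicit the bookkeeping that the paper leaves implicit.
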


\begin{proof}
Using \thmref{Tsadi_n_Mode} and \propref{Stable_Modes} (or \corref{Height_Inf_Mode}
for $n=\infty$) we get 
\[
(\mathcal{C}_{n^{\st}})_{k}\simeq\mathcal{C}\otimes\Sp_{(p),n^{\st}}\otimes\tsadi_{k}\simeq(\mathcal{C}_{k})_{n^{\st}}.
\]
\end{proof}
We next observe that for $n\in\bb N$ the $\infty$-category $\Sp_{(p),n^{\st}}=\Sp_{T(n)}$,
which is the mode of stable height $n$, is $\infty$-semiadditive
of semiadditive height $n$ (\thmref{Height_Chrom}). Therefore, there
is a map of modes $\tsadi_{n}\to\Sp_{T(n)}$, making $\Sp_{T(n)}$
an algebra over $\tsadi_{n}$.
\begin{prop}
\label{prop:Stable_Sad_Height}Let $\mathcal{C}\in\Pr_{\st}^{\sad{\infty}}$
be $p$-local. For all $n\in\bb N$ and $k\in\bb N\cup\{\infty\}$,
we have 
\[
(\mathcal{C}_{n^{\st}})_{k}\simeq(\mathcal{C}_{k})_{n^{\st}}\simeq\begin{cases}
\mathcal{C}_{n^{\st}} & k=n\\
0 & k\neq n
\end{cases}
\]
\end{prop}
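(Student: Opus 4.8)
The plan is to deduce the statement entirely from the identification $\mathcal{C}_{n^{\st}}\simeq\mathcal{C}\otimes\Sp_{T(n)}$ together with the computation $\Ht(\Sp_{T(n)})=n$. The first of the two displayed equivalences, $(\mathcal{C}_{n^{\st}})_{k}\simeq(\mathcal{C}_{k})_{n^{\st}}$, is precisely \lemref{Height_Order}, so it remains only to compute this common $\infty$-category, and I would work with the description $(\mathcal{C}_{n^{\st}})_{k}$. The task then splits into the case $k=n$, where the answer is $\mathcal{C}_{n^{\st}}$, and the case $k\neq n$, where it is $0$.

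The main step is to observe that $\mathcal{C}_{n^{\st}}$ is already $\infty$-semiadditive and of semiadditive height $n$. Indeed, by \corref{Stable_Height_Localization} we have $\mathcal{C}_{n^{\st}}\simeq\mathcal{C}\otimes\Sp_{T(n)}$, which is presentable and lies in $\LMod_{\Sp_{T(n)}}(\Pr)$. By \thmref{Height_Chrom}, $\Ht(\Sp_{T(n)})=n$, and since $\tsadi_{n}$ classifies the property of being stable, $p$-local, $\infty$-semiadditive of height $n$ (\thmref{Tsadi_n_Mode}), the mode $\Sp_{T(n)}$ is an algebra over $\tsadi_{n}$. Applying \propref{Mode_Hereditary} with the mode $\tsadi_{n}$ and the $\tsadi_{n}$-algebra $\Sp_{T(n)}$, every left $\Sp_{T(n)}$-module in $\Pr$ is a $\tsadi_{n}$-module; in particular $\mathcal{C}_{n^{\st}}$ is. Hence $\mathcal{C}_{n^{\st}}$ is $\infty$-semiadditive with $\Ht(\mathcal{C}_{n^{\st}})=n$, so by \defref{Height_Cat} we get $(\mathcal{C}_{n^{\st}})_{n}=\mathcal{C}_{n^{\st}}$.

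For $k\neq n$ the argument is the elementary fact that a nonzero object cannot carry two distinct heights. Since $\mathcal{C}_{n^{\st}}$ is $\infty$-semiadditive, \propref{Height_Pieces_Sad} shows $(\mathcal{C}_{n^{\st}})_{k}$ is closed under limits in $\mathcal{C}_{n^{\st}}$ and hence itself $\infty$-semiadditive; any object $X$ in it has height $k$ and, lying in $\mathcal{C}_{n^{\st}}$, also height $n$. If $k<n$ then $\htt(X)\le k$ forces $\htt(X)\le n-1$ by \propref{Height_Sense}, contradicting $\htt(X)>n-1$; if $k>n$ (including $k=\infty$) then $\htt(X)=k$ forces $\htt(X)>n$ by \propref{Height_Sense}, contradicting $\htt(X)\le n$. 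In each case $X$ is simultaneously $\pn{\ell}$-divisible and $\pn{\ell}$-complete for $\ell=n-1$ or $\ell=n$, so $X=0$ and $(\mathcal{C}_{n^{\st}})_{k}=0$. There is no serious obstacle: the proof is pure mode-theoretic bookkeeping, the one genuinely nontrivial input being $\Ht(\Sp_{T(n)})=n$ from \thmref{Height_Chrom}, which is already established; the only point needing a little care is the clean vanishing statement for objects of distinct heights, which could equally be phrased as in the proof of \propref{Tasdi_n_Disjoint}.
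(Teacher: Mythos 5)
Your proof is correct and follows essentially the same route as the paper: both rest on the identification $\mathcal{C}_{n^{\st}}\simeq\mathcal{C}\otimes\Sp_{T(n)}$, the fact that $\Sp_{T(n)}$ is a $\tsadi_{n}$-algebra (via $\Ht(\Sp_{T(n)})=n$ from \thmref{Height_Chrom}), and the incompatibility of two distinct heights on a nonzero object. The only cosmetic difference is that for $k\neq n$ the paper tensors the mode-level vanishing $\tsadi_{n}\otimes\tsadi_{k}=0$ of \propref{Tasdi_n_Disjoint} with $\mathcal{C}$, whereas you rerun that lemma's object-level argument directly inside $\mathcal{C}_{n^{\st}}$.
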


\begin{proof}
On the one hand, for $k\neq n$ we have $\tsadi_{n}\otimes\tsadi_{k}=0$
(\propref{Tasdi_n_Disjoint}) and so $\Sp_{T(n)}\otimes\tsadi_{k}=0$.
On the other, $\Sp_{T(n)}\otimes\tsadi_{n}\simeq\Sp_{T(n)}$ as $\Sp_{T(n)}$
is a $\tsadi_{n}$-module. Tensoring these with $\mathcal{C}$ yields the claim.
\end{proof}
Given $\mathcal{C}\in\Pr_{\st}^{\sad{\infty}},$ tensoring the map
of modes $\tsadi_{n}\to\Sp_{T(n)}$ with $\mathcal{C}$, yields a
map $\mathcal{C}_{n}\to\mathcal{C}_{n^{\st}}$. 
\begin{prop}
\label{prop:Stable_Sad_Recollement}Let $\mathcal{C}\in\Pr_{\st}^{\sad{\infty}}$
be $p$-local. For every $n\in\bb N$, the map $\mathcal{C}_{n}\to\mathcal{C}_{n^{\st}}$
admits a fully faithful right adjoint $\mathcal{C}_{n^{\st}}\into\mathcal{C}_{n}$,
which exhibits $\mathcal{C}_{n}$ as a recollement of $\mathcal{C}_{n^{\st}}$
and $(\mathcal{C}_{n})_{\infty^{\st}}$ ($=(\mathcal{C}_{\infty^{\st}})_{n}$).
In particular, if $\mathcal{C}_{\infty^{\st}}=0$, then $\mathcal{C}_{n}\simeq\mathcal{C}_{n^{\st}}$.
\end{prop}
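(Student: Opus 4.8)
The plan is to reduce the claim to the stable-height recollement of $\mathcal{C}_{n}$ (\corref{Stable_Height_Recollement}) and then identify its two pieces with $\mathcal{C}_{n^{\st}}$ and $(\mathcal{C}_{\infty^{\st}})_{n}$ using the decomposition results of this section. First I would record that $\mathcal{C}_{n}=\mathcal{C}\otimes\tsadi_{n}$ is a $\tsadi_{n}$-module and hence, since $\tsadi_{n}$ is stable, $p$-local and $\infty$-semiadditive (\thmref{Tsadi_n_Mode}), a $p$-local object of $\Pr_{\st}^{\sad{\infty}}$; in particular it is $1$-semiadditive. By \corref{Stable_Height_Recollement}, $\mathcal{C}_{n}$ is therefore a recollement of $(\mathcal{C}_{n})_{\le^{\st}n}$ and $(\mathcal{C}_{n})_{>^{\st}n}$.

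Next I would compute the left-hand piece. By \propref{Bounded_Stable_Height_Decomposition} applied to $\mathcal{C}_{n}$, for every $m$ we have $(\mathcal{C}_{n})_{\le^{\st}m}\simeq\prod_{j=0}^{m}(\mathcal{C}_{n})_{j^{\st}}$, and by \propref{Stable_Sad_Height} each factor $(\mathcal{C}_{n})_{j^{\st}}$ vanishes for $j\neq n$ while $(\mathcal{C}_{n})_{n^{\st}}\simeq\mathcal{C}_{n^{\st}}$. Hence $(\mathcal{C}_{n})_{\le^{\st}m}\simeq\mathcal{C}_{n^{\st}}$ for all $m\geq n$ and $(\mathcal{C}_{n})_{\le^{\st}m}\simeq0$ for $m<n$; in particular $(\mathcal{C}_{n})_{\le^{\st}n}\simeq\mathcal{C}_{n^{\st}}$. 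To see that the localization $\mathcal{C}_{n}\to(\mathcal{C}_{n})_{\le^{\st}n}$ is the map in the statement, I would use that by \corref{Stable_Height_Localization} it is $\mathrm{id}_{\mathcal{C}}\otimes(\tsadi_{n}\to\tsadi_{n}\otimes L_{n}^{f}\Sp)$, that $\tsadi_{n}\otimes L_{n}^{f}\Sp$ is the mode $(\tsadi_{n})_{\le^{\st}n}\simeq\Sp_{T(n)}$, and that the localization $\tsadi_{n}\to(\tsadi_{n})_{\le^{\st}n}$, being a morphism of modes, is the \emph{unique} such morphism $\tsadi_{n}\to\Sp_{T(n)}$ by \propref{Idemp_Poset}.

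For the right-hand piece, since the subcategories $(\mathcal{C}_{n})_{\le^{\st}m}\ss\mathcal{C}_{n}$ agree (as subcategories, via the decomposition) for all $m\geq n$, all equal to $(\mathcal{C}_{n})_{n^{\st}}$, and are zero for $m<n$, taking right orthogonal complements gives $(\mathcal{C}_{n})_{>^{\st}m}=(\mathcal{C}_{n})_{>^{\st}n}$ for $m\geq n$ and $(\mathcal{C}_{n})_{>^{\st}m}=\mathcal{C}_{n}$ for $m<n$. Therefore $(\mathcal{C}_{n})_{>^{\st}n}=\bigcap_{m}(\mathcal{C}_{n})_{>^{\st}m}=(\mathcal{C}_{n})_{\infty^{\st}}$, and $(\mathcal{C}_{n})_{\infty^{\st}}\simeq(\mathcal{C}_{\infty^{\st}})_{n}$ by \lemref{Height_Order}. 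This proves the recollement statement; the ``in particular'' follows since if $\mathcal{C}_{\infty^{\st}}=0$ then $(\mathcal{C}_{n})_{\infty^{\st}}\simeq(\mathcal{C}_{\infty^{\st}})_{n}=0$, so $\mathcal{C}_{n}$ is a recollement of $\mathcal{C}_{n^{\st}}$ and $0$, which forces the fully faithful inclusion $\mathcal{C}_{n^{\st}}\into\mathcal{C}_{n}$ to be an equivalence.

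The argument is essentially formal once the cited inputs are available, so the main obstacle I anticipate is bookkeeping: keeping track of which equivalences are equivalences of modes (so that the ``canonical'' adjectives are justified), and checking that the recollement produced by \corref{Stable_Height_Recollement} really is the base change of the map of modes $\tsadi_{n}\to\Sp_{T(n)}$ rather than just abstractly equivalent to it. A cleaner alternative would be to first prove the statement for $\mathcal{C}=\tsadi_{n}$ itself — exhibiting $\tsadi_{n}$ as a recollement of $\Sp_{T(n)}$ and $(\tsadi_{n})_{\infty^{\st}}$ via the map of modes — and then deduce the general case by applying \propref{Recollement_Base_Change} to $\mathcal{C}\otimes(-)$, which simultaneously identifies the complement as $(\tsadi_{n})_{\infty^{\st}}\otimes\mathcal{C}\simeq(\mathcal{C}_{\infty^{\st}})_{n}$.
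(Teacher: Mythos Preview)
Your proof is correct and follows essentially the same route as the paper: apply \corref{Stable_Height_Recollement} to $\mathcal{C}_{n}$, use \propref{Bounded_Stable_Height_Decomposition} together with \propref{Stable_Sad_Height} to identify $(\mathcal{C}_{n})_{\le^{\st}N}\simeq\mathcal{C}_{n^{\st}}$ for all $N\ge n$, and deduce that $(\mathcal{C}_{n})_{>^{\st}N}=(\mathcal{C}_{n})_{\infty^{\st}}$. The paper's version is terser---it does not spell out why $\mathcal{C}_{n}$ satisfies the hypotheses of the cited results, does not explicitly match the localization with the map of modes, and leaves the identification $(\mathcal{C}_{n})_{\infty^{\st}}=(\mathcal{C}_{\infty^{\st}})_{n}$ implicit---so your extra bookkeeping and the invocation of \lemref{Height_Order} are welcome additions rather than deviations.
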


\begin{proof}
By \corref{Stable_Height_Recollement}, for every $N\ge n$, the $\infty$-category
$\mathcal{C}_{n}$ is a recollement of $(\mathcal{C}_{n})_{\le^{\st}N}$
and $(\mathcal{C}_{n})_{>^{\st}N}$. Applying \propref{Bounded_Stable_Height_Decomposition}
and \propref{Stable_Sad_Height} to $\mathcal{C}_{n}$ we obtain
\[
(\mathcal{C}_{n})_{\le^{\st}N}\simeq(\mathcal{C}_{n})_{0^{\st}}\times(\mathcal{C}_{n})_{1^{\st}}\times\cdots\times(\mathcal{C}_{n})_{N^{\st}}\simeq\mathcal{C}_{n^{\st}}.
\]
Consequently, 
\[
(\mathcal{C}_{n})_{>^{\st}N}=(\mathcal{C}_{n})_{>^{\st}N+1}=(\mathcal{C}_{n})_{>^{\st}N+2}=\dots=(\mathcal{C}_{n})_{\infty^{\st}}
\]
and hence, $\mathcal{C}_{n}$ as a recollement of $\mathcal{C}_{n^{\st}}=(\mathcal{C}_{n})_{\le^{\st}N}$
and $(\mathcal{C}_{n})_{\infty^{\st}}=(\mathcal{C}_{n})_{>^{\st}N}$.
\end{proof}
As a consequence, we get a tight connection between $\Sp_{T(n)}$ and
$\tsadi_{n}$.
\begin{cor}
\label{cor:Tsadi_Tn_Smashing}The map of modes $ \tsadi_{n}\to \Sp_{T(n)}$
is a smashing localization.
\end{cor}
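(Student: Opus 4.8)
The plan is to derive \corref{Tsadi_Tn_Smashing} from the recollement statement in \propref{Stable_Sad_Recollement} applied to a well-chosen $\infty$-category, combined with the characterization of smashing localizations of modes in \propref{Mode_Smashing_Localization}. By \propref{Mode_Smashing_Localization}, a localization of modes is smashing if and only if the (fully faithful) right adjoint of the localization functor admits a further right adjoint. So the task reduces to producing a further right adjoint to the fully faithful right adjoint $\Sp_{T(n)}\into\tsadi_{n}$ of the map of modes $L\colon\tsadi_{n}\to\Sp_{T(n)}$.

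First I would observe that $\tsadi_{n}$ is itself a $p$-local object of $\Pr_{\st}^{\sad\infty}$ of semiadditive height $n$, i.e. $\tsadi_{n}=(\tsadi_{n})_{n}$. Applying \propref{Stable_Sad_Recollement} with $\mathcal{C}=\tsadi_{n}$, we get that the canonical map $(\tsadi_{n})_{n}\to(\tsadi_{n})_{n^{\st}}$ — which is exactly $L\colon\tsadi_{n}\to\Sp_{T(n)}$ after identifying $(\tsadi_{n})_{n^{\st}}\simeq\tsadi_{n}\otimes\Sp_{T(n)}\simeq\Sp_{T(n)}$ (using that $\Sp_{T(n)}$ is a $\tsadi_{n}$-module, as noted just before \propref{Stable_Sad_Height}) — admits a fully faithful right adjoint $\Sp_{T(n)}\into\tsadi_{n}$ which exhibits $\tsadi_{n}$ as a recollement of $\Sp_{T(n)}$ and $(\tsadi_{n})_{\infty^{\st}}$. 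A defining property of a recollement $\mathcal{C}_\circ\into\mathcal{C}$ (see \defref{Split_recollement} and the discussion preceding it) is that the inclusion admits \emph{both} a left and a right adjoint. Hence the fully faithful embedding $\Sp_{T(n)}\into\tsadi_{n}$ admits a right adjoint, and since this embedding is the right adjoint of $L$, we conclude by \propref{Mode_Smashing_Localization} that $L$ is a smashing localization.

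To write this cleanly I would include a short proof that reads: ``By \thmref{Tsadi_n_Mode}, $\tsadi_{n}\in\Pr_{\st}^{\sad\infty}$ is $p$-local with $(\tsadi_{n})_{n}=\tsadi_{n}$. Applying \propref{Stable_Sad_Recollement} with $\mathcal{C}=\tsadi_{n}$, the map of modes $\tsadi_{n}\to\Sp_{T(n)}$ admits a fully faithful right adjoint $\Sp_{T(n)}\into\tsadi_{n}$ which exhibits $\tsadi_{n}$ as a recollement of $\Sp_{T(n)}$ and $(\tsadi_{n})_{\infty^{\st}}$. In particular, this fully faithful right adjoint itself admits a right adjoint. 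By \propref{Mode_Smashing_Localization}, the localization $\tsadi_{n}\to\Sp_{T(n)}$ is therefore smashing.''

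The main obstacle — really the only point requiring care — is the bookkeeping identifying the map $\mathcal{C}_{n}\to\mathcal{C}_{n^{\st}}$ of \propref{Stable_Sad_Recollement}, when specialized to $\mathcal{C}=\tsadi_{n}$, with the canonical map of modes $L\colon\tsadi_{n}\to\Sp_{T(n)}$ from \thmref{Tsadi_n_Mode}; both arise by tensoring the map of modes $\tsadi_{n}\to\Sp_{T(n)}$ with $\mathcal{C}$, so for $\mathcal{C}=\tsadi_{n}$ one must check $\tsadi_{n}\otimes\tsadi_{n}\simeq\tsadi_{n}$ (idempotence of the mode) and $\tsadi_{n}\otimes\Sp_{T(n)}\simeq\Sp_{T(n)}$ (since $\Sp_{T(n)}$ is a $\tsadi_{n}$-algebra), both of which are immediate from the mode formalism of \secref{}. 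There is no genuinely hard content beyond invoking \propref{Stable_Sad_Recollement}, which itself packages the nontrivial input (the $1$-semiadditive decomposition \thmref{1Sad_Decomposition} and \thmref{Height_Chrom}).
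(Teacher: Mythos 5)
Your proposal is correct and follows exactly the paper's own route: the paper's proof is precisely the combination of \propref{Mode_Smashing_Localization} and \propref{Stable_Sad_Recollement}, and your write-up just makes explicit the (correct) specialization to $\mathcal{C}=\tsadi_{n}$ and the identification of $(\tsadi_{n})_{n^{\st}}$ with $\Sp_{T(n)}$.
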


\begin{proof}
This follows from \propref{Mode_Smashing_Localization} and \propref{Stable_Sad_Recollement}.
\end{proof}

\subsubsection{Bootstrap of Semiadditivity}

Based on the classification of higher semiadditive localizations of
$\Sp$ with respect to homotopy rings, the authors proposed in \cite[Conjecture 1.1.5]{Ambi2018}
the ``Bootstrap Conjecture'', stating that if a presentable stable
$p$-local $\infty$-category is $1$-semiadditive, then it is automatically
$\infty$-semiadditive. Using the $1$-semiadditive decomposition
of \thmref{1Sad_Decomposition} we now provide some partial results in the direction of proving this conjecture. First, given a $p$-local $\mathcal{C}\in\Pr_{\st}$
and $n\in\bb N$, the $\infty$-category $\mathcal{C}_{n^{\st}}$
is a module over $\Sp_{T(n)}$ and hence over $\tsadi_{n}$. It follows
that $\mathcal{C}_{n^{\st}}$ is $\infty$-semiadditive and of height
$n$. More generally, if $\mathcal{C}$ is $1$-semiadditive, then
by \propref{Bounded_Stable_Height_Decomposition}, we have $\mathcal{C}_{\le^{\st}n}\simeq\prod_{k=0}^{n}\mathcal{C}_{k^{\st}}$.
From this one can deduce that, if $\mathcal{C}$ is $1$-semiadditive and every object of $\mathcal{C}$ is of bounded stable height, then 
$\mathcal{C}$ is $\infty$-semiadditive. However, we shall show that having
such a bound on the stable height of the objects of $\mathcal{C}$
is an unnecessarily strong restriction, and it in fact suffices to
assume merely that $\mathcal{C}_{\infty^{\st}}=0$. 

Since every stable presentable $\infty$-category of stable height
exactly $n$ (for some $n$) is $\infty$-semiadditive, it has an
action of $\pi_{0}\tsadi^{[1]}$, and a fortiori of the subring $\mathcal{R}_{1}\ss\pi_{0}\tsadi^{[1]}$
(see \defref{R1}). We begin with a generalization of \propref{Div_Comp_R1}.
\begin{prop}
\label{prop:Div_Comp_R1_General}For every $n$, there exists $a\in\mathcal{R}_{1}$,
such that every $\mathcal{C}\in\Pr_{\st}^{\sad 1}$ of stable height
$k$ is $a$-complete if $k\le n$ and $a$-divisible if $k>n$.
\end{prop}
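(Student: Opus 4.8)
The plan is to deduce this directly from \propref{Div_Comp_R1}, using the observation that a stable presentable $\infty$-category of stable height exactly $k$ is a module over the mode $\Sp_{T(k)}$. Fix $n$ and apply \propref{Div_Comp_R1} with its threshold taken to be $n$: this produces an element $a\in\mathcal{R}_{1}$ such that $\Sp_{T(k)}$ is $a$-complete for every $k\le n$ and $a$-divisible for every $k>n$. I claim the same $a$ works here. If $\mathcal{C}\in\Pr_{\st}^{\sad 1}$ is $p$-local with $\Ht_{\st}(\mathcal{C})=k$, then $\mathcal{C}=\mathcal{C}_{k^{\st}}\simeq\mathcal{C}\otimes\Sp_{T(k)}$ by \corref{Stable_Height_Localization}; hence $\mathcal{C}$ is a $\Sp_{T(k)}$-module, is in particular enriched over $\Sp_{T(k)}$, and the action of $a\in\mathcal{R}_{1}\ss\pi_{0}\one_{\tsadi^{[1]}}$ on $\Id_{\mathcal{C}}$ is the one induced by the image of $a$ in $\pi_{0}\bb S_{T(k)}$ under the canonical map $\tsadi^{[1]}\to\Sp_{T(k)}$.

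In the case $k>n$, the image of $a$ in $\pi_{0}\bb S_{T(k)}$ is invertible (as $\Sp_{T(k)}$ is $a$-divisible), so the induced natural endomorphism of $\Id_{\mathcal{C}}$ is an isomorphism and $\mathcal{C}$ is $a$-divisible. In the case $k\le n$, take any $a$-divisible $Z\in\mathcal{C}$ and any $X\in\mathcal{C}$. Since the $\Sp_{T(k)}$-enriched mapping functor is bilinear over $\Sp_{T(k)}$, the action of $a$ on $\hom_{\mathcal{C}}^{\Sp_{T(k)}}(Z,X)\in\Sp_{T(k)}$ agrees with $\hom(a_{Z},X)$, which is an isomorphism; thus $\hom_{\mathcal{C}}^{\Sp_{T(k)}}(Z,X)$ is $a$-divisible in $\Sp_{T(k)}$. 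But every object of $\Sp_{T(k)}$ is $a$-complete, and an object that is simultaneously $a$-divisible and $a$-complete vanishes (apply the definition of $a$-completeness with that object in both slots). Hence $\hom_{\mathcal{C}}^{\Sp_{T(k)}}(Z,X)=0$, so $\map_{\mathcal{C}}(Z,X)\simeq\pt$; as $X$ was arbitrary, every object of $\mathcal{C}$ is $a$-complete, i.e.\ $\mathcal{C}$ is $a$-complete.

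The only genuinely non-formal input is \propref{Div_Comp_R1} itself, which already packages the semi-$\delta$-ring computations, \lemref{Zp_delta}, and the Nilpotence Theorem; everything else is bookkeeping with module and enrichment structures. The step needing the most care is the compatibility of the $a$-action with the $\Sp_{T(k)}$-enrichment of $\mathcal{C}$, namely the identity $a_{\hom(Z,X)}=\hom(a_{Z},X)$: this follows from the $\Sp_{T(k)}$-bilinearity of the enriched hom-functor, in the same spirit as the naturality already exploited in \propref{R1_Linearity} and \propref{Yoneda_Height}. (One also uses that $p$-locality of $\mathcal{C}$ is the standing hypothesis of the subsection, so that \corref{Stable_Height_Localization} applies.)
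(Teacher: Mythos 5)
Your proposal is correct and follows essentially the same route as the paper: both take the element $a$ produced by \propref{Div_Comp_R1}, observe that $\mathcal{C}$ is an $\Sp_{T(k)}$-module so that $a$ acts through its image in $\pi_{0}\bb S_{T(k)}$, handle divisibility via invertibility of that image, and handle completeness by transferring the question to $\Sp_{T(k)}$ through the enriched mapping object. The only (cosmetic) difference is in the last step: the paper shows that tensoring with $A=\bb S_{T(k)}/a$ is conservative on $\mathcal{C}$ using $\hom_{\mathcal{C}}^{\Sp_{T(k)}}(A\otimes X,Y)\simeq A^{\vee}\otimes\hom_{\mathcal{C}}^{\Sp_{T(k)}}(X,Y)$, whereas you argue directly that $\hom_{\mathcal{C}}^{\Sp_{T(k)}}(Z,X)$ is $a$-divisible and $a$-complete, hence zero — the same computation repackaged.
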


\begin{proof}
By \propref{Div_Comp_R1}, there exists $a\in\mathcal{R}_{1}$,
such that $\Sp_{T(k)}$ is $a$-complete if $k\le n$ and $a$-divisible
if $k>n$. Since $\mathcal{C}$ is an $\Sp_{T(k)}$-module, the action
of $a\in\mathcal{R}_{1}$ on $\mathcal{C}$ is via its image in $\pi_{0}\bb S_{T(k)}$.
Thus, it suffices to show that for every $a\in\pi_{0}\bb S_{T(k)}$,
if $\Sp_{T(k)}$ is $a$-complete (resp. $a$-divisible), then $\mathcal{C}$
is also $a$-complete (resp. $a$-divisible). We observe that $\Sp_{T(k)}$
is $a$-divisible if and only if $a$ is invertible, in which case
$\mathcal{C}$ is $a$-divisible as well. On the other hand, if $\Sp_{T(k)}$
is $a$-complete, then tensoring with $A\coloneqq\bb S_{T(k)}/a$
is conservative on $\Sp_{T(k)}$. The object $A$ is dualizable with
dual $A^{\vee}\simeq\Sigma^{-1}A$ and so tensoring with $A^{\vee}$
is also conservative. Finally, to show that $\mathcal{C}$ is $a$-complete,
it suffices to show that tensoring with $A$ is conservative on $\mathcal{C}$,
via the left tensoring of $\mathcal{C}$ over $\Sp_{T(k)}$. For every
$X,Y\in\mathcal{C}$, we have isomorphisms
\[
\hom_{\mathcal{C}}^{\Sp_{T(k)}}(A\otimes X,Y)\simeq\hom_{\Sp_{T(k)}}^{\Sp_{T(k)}}(A,\hom{}_{\mathcal{C}}^{\Sp_{T(k)}}(X,Y))\simeq A^{\vee}\otimes\hom_{\mathcal{C}}^{\Sp_{T(k)}}(X,Y).
\]
Hence, if $A\otimes X\simeq0$, then by the conservativity of $A^{\vee}\otimes(-)$,
we get that $\hom_{\mathcal{C}}^{\Sp_{T(k)}}(X,Y)=0$ for all $Y$
and hence $X=0$.
\end{proof}
From this we derive a strengthening of \corref{Tn_One_Semiadditivity}.
\begin{prop}
\label{prop:Stable_Height_Increase_Functoriality}Let ${\cal C},{\cal D}\in\Pr_{\st}^{\sad 1}$
be $p$-local. If $\Ht_{\st}({\cal D})\le n$, $\Ht_{\st}({\cal C})>n$
and $\mathcal{C}_{\infty^{\st}}=0$, then every 1-semidditive functor
$F\colon{\cal C}\to{\cal D}$ is zero. 
\end{prop}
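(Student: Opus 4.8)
The plan is to use \propref{Div_Comp_R1_General} to produce a single element $a\in\mathcal{R}_{1}$ (in the notation of \defref{R1}) such that every object of $\mathcal{C}$ is $a$-divisible while every object of $\mathcal{D}$ is $a$-complete; then any $1$-semiadditive functor $\mathcal{C}\to\mathcal{D}$ must send each object to one that is simultaneously $a$-divisible and $a$-complete, hence to $0$. This is the mechanism of \corref{Tn_One_Semiadditivity}, upgraded from $\Sp_{T(n)}$ to arbitrary $p$-local stable $1$-semiadditive $\infty$-categories. Concretely, fix $a\in\mathcal{R}_{1}$ as in \propref{Div_Comp_R1_General}, so that any $\mathcal{E}\in\Pr_{\st}^{\sad 1}$ of stable height $k$ is $a$-complete when $k\le n$ and $a$-divisible when $k>n$; write $\alpha$ for the induced natural endomorphism of the identity functor on any member of $\Pr_{\st}^{\sad 1}$, all of which are modules over $\tsadi^{[1]}$ and hence carry an $\mathcal{R}_{1}$-action.

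First I would check that $\mathcal{D}$ is $a$-complete. Since $\Ht_{\st}(\mathcal{D})\le n$ we have $\mathcal{D}=\mathcal{D}_{\le^{\st}n}$, and since $\mathcal{D}$ is $p$-local and $1$-semiadditive, \propref{Bounded_Stable_Height_Decomposition} yields $\mathcal{D}\simeq\prod_{k=0}^{n}\mathcal{D}_{k^{\st}}$. Each factor $\mathcal{D}_{k^{\st}}\simeq\mathcal{D}\otimes\Sp_{T(k)}$ is a module over $\Sp_{T(k)}$, so it lies in $\Pr_{\st}^{\sad 1}$ and has stable height $\le k\le n$, whence it is $a$-complete by the choice of $a$. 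Since $\alpha$ acts on a finite product componentwise, a finite product of $a$-complete $\infty$-categories is $a$-complete, so $\mathcal{D}$ is $a$-complete.

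Next I would check that $\mathcal{C}$ is $a$-divisible; this is the step where the hypothesis $\mathcal{C}_{\infty^{\st}}=0$ is used. Because $\mathcal{C}=\mathcal{C}_{>^{\st}n}$, the subcategory $\mathcal{C}_{m^{\st}}$ vanishes for $m\le n$, so \propref{Finite_Stable_Height_Conservative} tells us the functors $P_{m}\colon\mathcal{C}\to\mathcal{C}_{m^{\st}}$ for $m\ge n+1$ are jointly conservative. For such $m$, the target $\mathcal{C}_{m^{\st}}\simeq\mathcal{C}\otimes\Sp_{T(m)}$ is a $\Sp_{T(m)}$-module, hence in $\Pr_{\st}^{\sad 1}$ of stable height $m>n$, hence $a$-divisible. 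Moreover $P_{m}=L_{m}\circ R_{\le m}$ is $1$-semiadditive: $L_{m}$ is a localization, hence colimit-preserving, and $R_{\le m}$ is a right adjoint between the $1$-semiadditive $\infty$-categories $\mathcal{C}$ and $\mathcal{C}_{\le^{\st}m}$ --- the latter being $1$-semiadditive since $\mathcal{C}_{\le^{\st}m}\simeq\prod_{k=0}^{m}\mathcal{C}_{k^{\st}}$ by \propref{Bounded_Stable_Height_Decomposition} --- so it preserves $1$-finite limits and therefore, by \propref{A_Semiadd}, also $1$-finite colimits. Hence \propref{R1_Linearity} applies to $P_{m}$, giving $P_{m}(\alpha_{X})=\alpha_{P_{m}(X)}$, which is an isomorphism since $\mathcal{C}_{m^{\st}}$ is $a$-divisible. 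By joint conservativity, $\alpha_{X}$ is an isomorphism for every $X\in\mathcal{C}$, i.e. $\mathcal{C}$ is $a$-divisible. Finally, for $F\colon\mathcal{C}\to\mathcal{D}$ as in the statement, \propref{R1_Linearity} gives $F(\alpha_{X})=\alpha_{F(X)}$ for all $X$, which is an isomorphism, so $F(X)$ is $a$-divisible; but $F(X)\in\mathcal{D}$ is also $a$-complete, and therefore $\map(F(X),F(X))\simeq\pt$, forcing $F(X)=0$. Hence $F=0$.

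The genuinely non-formal content is entirely packaged in \propref{Div_Comp_R1_General} (hence in the $\delta$-ring computations of \cite[Section 4]{Ambi2018}, \propref{Goerss_Irena}, and the Nilpotence Theorem); everything above is formal bookkeeping. The steps I expect to require the most care are the two appeals to \propref{Bounded_Stable_Height_Decomposition} --- which is where the $1$-semiadditivity of $\mathcal{C}$ and $\mathcal{D}$ is really consumed, through \thmref{1Sad_Decomposition} --- and checking that $R_{\le m}$ is a $1$-semiadditive functor so that \propref{R1_Linearity} is applicable to $P_{m}$.
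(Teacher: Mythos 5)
Your proposal is correct and follows essentially the same route as the paper's proof: choose $a\in\mathcal{R}_{1}$ via \propref{Div_Comp_R1_General}, show $\mathcal{D}$ is $a$-complete using the decomposition of \propref{Bounded_Stable_Height_Decomposition}, show $\mathcal{C}$ is $a$-divisible using the jointly conservative $1$-semiadditive functors $P_{m}$ from \propref{Finite_Stable_Height_Conservative}, and conclude via \propref{R1_Linearity}. The extra details you supply (why $R_{\le m}$ is $1$-semiadditive via \propref{A_Semiadd}, and why the factors $\mathcal{C}_{m^{\st}}$ with $m\le n$ cause no trouble) are points the paper leaves implicit, and they check out.
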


\begin{proof}
We shall construct an element $a\in\mathcal{R}_{1}$ such that $\mathcal{C}$
is $a$-divisible and $\mathcal{D}$ is $a$-complete. Since $F$
is $1$-semiadditive, we shall get that it takes every $X\in{\cal C}$
to an object of ${\cal D}$ which is both $a$-complete and $a$-divisible
(\propref{R1_Linearity}). This will imply that $F(X)=0$ for all $X\in\mathcal{C}$
and hence that $F$ is zero. By \propref{Div_Comp_R1_General}, there is
an $a\in{\cal R}_{1}$ such that $\mathcal{C}_{k}$ is $a$-divisible
for $k>n$ and $\mathcal{D}_{k}$ is $a$-complete for $k\le n$.
By \propref{Bounded_Stable_Height_Decomposition}, we have 
\[
\mathcal{D}=\mathcal{D}_{\le^{\st}n}\simeq\prod_{0\le k\le n}\mathcal{D}_{k^{\st}}
\]
and hence $\mathcal{D}$ itself is $a$-complete. As for $\mathcal{C}$, by \propref{Finite_Stable_Height_Conservative}, we have a jointly conservative collection of functors $P_{n}\colon\mathcal{C}\to\mathcal{C}_{n^{\st}}$ for
$n\in\bb N$. Moreover, all the $P_{n}$-s are $1$-semiadditive, as a composition of a left and a right adjoint,
and thus $\mathcal{C}$ is $a$-divisible as well. 
\end{proof}
As a corollary, we get the following partial result in the direction
of \cite[Conjecture 1.1.5]{Ambi2018}:
\begin{thm}
\label{thm:Bounded_Bootstrap}Let ${\cal C}\in\Pr_{\st}$ be $p$-local,
such that $\mathcal{C}_{\infty^{\st}}=0$. If $\mathcal{C}$ is $1$-semiadditive,
then ${\cal C}$ is $\infty$-semiadditive. Moreover, in this case
$\mathcal{C}_{n^{\st}}=\mathcal{C}_{n}$ for all $0\le n\le\infty$
and there is a canonical decomposition
\[
\mathcal{C}\simeq\prod_{n\in\bb N}\mathcal{C}_{n^{\st}}\simeq\prod_{n\in\bb N}\mathcal{C}_{n}.
\]
\end{thm}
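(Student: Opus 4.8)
The plan is to bootstrap from the results on stable height already established, using the recollement framework. First I would invoke \corref{Stable_Height_Recollement}: for each $n \in \bb N$, the $\infty$-category $\mathcal{C}$ is a recollement of $\mathcal{C}_{\le^{\st}n}$ and $\mathcal{C}_{>^{\st}n}$. Since the assignment $n \mapsto \mathcal{C}_{>^{\st}n}$ is a descending chain of full subcategories, and $\mathcal{C}$ is presentable (hence admits sequential limits and colimits), this is a recollement chain in the sense of \defref{Recollement_Chain}, with $\mathcal{C}_{(\infty)} = \bigcap_n \mathcal{C}_{>^{\st}n} = \mathcal{C}_{\infty^{\st}} = 0$ by hypothesis. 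Applying \corref{Recollement_Chain_Decomposition} then yields $\mathcal{C} \simeq \lim_n (\mathcal{C}_{\le^{\st}n})$, where the limit is taken over the right-adjoint (restriction) functors $\mathcal{C}_{\le^{\st}n} \to \mathcal{C}_{\le^{\st}n-1}$.

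Next I would feed in $1$-semiadditivity. By \propref{Bounded_Stable_Height_Decomposition}, since $\mathcal{C}$ is $1$-semiadditive and $p$-local, we have $\mathcal{C}_{\le^{\st}n} \simeq \prod_{k=0}^{n} \mathcal{C}_{k^{\st}}$ compatibly in $n$ (the transition maps are the evident projections, as in the proof of \thmref{Height_Decomposition}(2)). Therefore
\[
\mathcal{C} \simeq \lim_n \Big( \prod_{k=0}^{n} \mathcal{C}_{k^{\st}} \Big) \simeq \prod_{n\in\bb N} \mathcal{C}_{n^{\st}}.
\]
Each $\mathcal{C}_{n^{\st}} \simeq \mathcal{C} \otimes \Sp_{T(n)}$ (\corref{Stable_Height_Localization}) is a module over $\Sp_{T(n)}$, hence over $\tsadi_n$ via the map of modes $\tsadi_n \to \Sp_{T(n)}$; in particular $\mathcal{C}_{n^{\st}}$ is stable and $\infty$-semiadditive. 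A product of $\infty$-semiadditive $\infty$-categories is $\infty$-semiadditive (higher semiadditivity is a limit-type condition, cf. \propref{Sad_Sad} or simply that $\infty$-semiadditivity is classified by the mode $\CMon_\infty$, so $\Mod_{\CMon_\infty}(\Pr)$ is closed under products), so $\mathcal{C}$ is $\infty$-semiadditive.

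It then remains to identify $\mathcal{C}_{n^{\st}}$ with $\mathcal{C}_n$. For finite $n$ this is \propref{Stable_Sad_Recollement}: $\mathcal{C}_n$ is a recollement of $\mathcal{C}_{n^{\st}}$ and $(\mathcal{C}_n)_{\infty^{\st}} = (\mathcal{C}_{\infty^{\st}})_n = 0$ (using \lemref{Height_Order} and $\mathcal{C}_{\infty^{\st}}=0$), hence $\mathcal{C}_n \simeq \mathcal{C}_{n^{\st}}$. For $n=\infty$, since $\mathcal{C} \simeq \prod_n \mathcal{C}_{n^{\st}}$ with each factor of semiadditive height $n$ (by \thmref{Height_Chrom}, as $\mathcal{C}_{n^{\st}}$ is a $\Sp_{T(n)}$-module and tensoring preserves height bounds via \corref{Stable_Height_Monoiidal} and nil-conservativity arguments — or more directly, it is a $\tsadi_n$-module), the full subcategory $\mathcal{C}_\infty = \bigcap_k \mathcal{C}_{>k}$ is computed factorwise and is zero in each factor, so $\mathcal{C}_\infty = 0 = \mathcal{C}_{\infty^{\st}}$. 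Combining, $\mathcal{C} \simeq \prod_{n\in\bb N}\mathcal{C}_{n^{\st}} \simeq \prod_{n\in\bb N}\mathcal{C}_n$.

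\textbf{Main obstacle.} The delicate point is verifying that the decompositions $\mathcal{C}_{\le^{\st}n} \simeq \prod_{k=0}^n \mathcal{C}_{k^{\st}}$ assemble into a genuine compatible tower whose limit is a product, i.e.\ that the transition maps really are the projections killing the top factor, so that the inverse limit does not collapse or introduce gluing. This is where the force of \thmref{1Sad_Decomposition} (which is the one non-formal input, relying on the $p$-derivation technology of \cite{Ambi2018}) enters: it guarantees the recollement $\mathcal{C}_{\le^{\st}n}$ of $\mathcal{C}_{\le^{\st}n-1}$ and $\mathcal{C}_{n^{\st}}$ is \emph{split} after imposing $1$-semiadditivity, via \corref{Tn_One_Semiadditivity} forcing the gluing functor $\mathcal{C}_{n^{\st}} \to \mathcal{C}_{\le^{\st}n-1}$ to vanish. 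Once splitness is in hand the tower is a product of constant-below, eventually-stabilizing pieces, and the limit computation is routine.
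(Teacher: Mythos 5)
Your overall architecture matches the paper's, and you correctly single out \thmref{1Sad_Decomposition} as the non-formal input. But there is a genuine gap at the very first step: the claim that $\{\mathcal{C}_{>^{\st}n}\}$ is a recollement chain, hence that $\mathcal{C}\simeq\lim_{n}\mathcal{C}_{\le^{\st}n}$. \defref{Recollement_Chain} requires each inclusion $\mathcal{C}_{(n)}\hookrightarrow\mathcal{C}$ to admit \emph{both} adjoints and to exhibit $\mathcal{C}$ as a recollement of $\mathcal{C}_{(n)}$ and $\mathcal{C}_{(n)}^{\perp}$. \corref{Stable_Height_Recollement} gives this for $\mathcal{C}_{\le^{\st}n}$, not for $\mathcal{C}_{>^{\st}n}$: in a recollement the inclusion of the complement admits a left adjoint but in general no right adjoint (e.g.\ $\widehat{\Sp}_{p}\subseteq\Sp_{(p)}$ is not closed under colimits), and $(\mathcal{C}_{>^{\st}n})^{\perp}$ need not coincide with $\mathcal{C}_{\le^{\st}n}$. ``Descending chain of full subcategories in a presentable $\infty$-category'' is nowhere near sufficient, and without the recollement-chain structure \corref{Recollement_Chain_Decomposition} does not apply, so the convergence $\mathcal{C}\simeq\lim_{n}\mathcal{C}_{\le^{\st}n}$ is unjustified.

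The missing ingredient is that each recollement of $\mathcal{C}$ into $\mathcal{C}_{\le^{\st}n}$ and $\mathcal{C}_{>^{\st}n}$ is \emph{split}; only then may one switch the roles of the two pieces and obtain a genuine recollement chain. The splitness you invoke in your ``obstacle'' paragraph --- that of $\mathcal{C}_{\le^{\st}n}$ as a recollement of $\mathcal{C}_{\le^{\st}n-1}$ and $\mathcal{C}_{n^{\st}}$, coming from \propref{Bounded_Stable_Height_Decomposition} --- concerns only the bounded pieces: it controls the transition maps of the tower but says nothing about how the infinite tail $\mathcal{C}_{>^{\st}n}$ is glued onto $\mathcal{C}_{\le^{\st}n}$. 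What is actually needed is the vanishing of every $1$-semiadditive functor $\mathcal{C}_{>^{\st}n}\to\mathcal{C}_{\le^{\st}n}$, which is \propref{Stable_Height_Increase_Functoriality}. That proposition is not covered by \corref{Tn_One_Semiadditivity} (which treats only functors between the $\Sp_{T(k)}$'s): its source $\mathcal{C}_{>^{\st}n}$ contains objects of unbounded stable height, and one needs the hypothesis $(\mathcal{C}_{>^{\st}n})_{\infty^{\st}}=\mathcal{C}_{\infty^{\st}}=0$, via the joint conservativity of the functors $P_{k}$ from \propref{Finite_Stable_Height_Conservative}, to produce an element of $\mathcal{R}_{1}$ acting invertibly on all of $\mathcal{C}_{>^{\st}n}$ and acting completely on $\mathcal{C}_{\le^{\st}n}$. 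This is a second, essential use of the hypothesis $\mathcal{C}_{\infty^{\st}}=0$ beyond the observation that the intersection of the chain vanishes, and your argument omits it. The remainder of your proof (the identification $\mathcal{C}_{n^{\st}}\simeq\mathcal{C}_{n}$ via \propref{Stable_Sad_Recollement} and \propref{Stable_Sad_Height}, and the $\infty$-semiadditivity of the product) is fine.
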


\begin{proof}
By \corref{Stable_Height_Recollement}, ${\cal C}$ is a recollement
of ${\cal C}_{\le^{\st}n}$ and ${\cal C}_{>^{\st}n}$. Since ${\cal C}_{>^{\st}n}$
is $1$-semiadditive and 
\[
({\cal C}_{>^{\st}n})_{\infty^{\st}}\simeq{\cal C}_{\infty^{\st}}=0,
\]
every 1-semiadditive functor $F\colon{\cal C}_{>^{\st}n}\to{\cal C}_{\le^{\st}n}$
is zero by \propref{Stable_Height_Increase_Functoriality}. It follows
that ${\cal C}$ is a \emph{split }recollement of\emph{ }${\cal C}_{\le^{\st}n}$
and ${\cal C}_{>^{\st}n}$, and hence a recollement of ${\cal C}_{>^{\st}n}$
and ${\cal C}_{\le^{\st}n}$ (i.e. we may switch the roles). By \corref{Recollement_Chain_Decomposition},
we get 
\[
{\cal C}\simeq\lim_{n\in\mathbb{N}}({\cal C}_{\le^{\st}n})\simeq\lim_{n\in\mathbb{N}}(\prod_{k\le n}{\cal C}_{k^{\st}})\simeq\prod_{n\in\mathbb{N}}{\cal C}_{n^{\st}}.
\]

For every $n\in\bb N$, the $\infty$-category ${\cal C}_{n^{\st}}$
is $\infty$-semiadditive of semiadditive height $n$, hence ${\cal C}$
itself is $\infty$-semiadditive and for every $0\le k\le\infty$
we have (\propref{Stable_Sad_Height})
\[
{\cal C}_{k}\simeq(\prod_{n\in\mathbb{N}}{\cal C}_{n^{\st}})_{k}\simeq\prod_{n\in\mathbb{N}}({\cal C}_{n^{\st}})_{k}\simeq{\cal C}_{k^{\st}}.
\]
\end{proof}

\bibliographystyle{alpha}
\phantomsection\addcontentsline{toc}{section}{\refname}\bibliography{AmbiHeight}

\def\cprime{$'$}
\begin{thebibliography}{HSSS18}

\bibitem[AKQ19]{RedshiftGood}
Gabriel Angelini-Knoll and JD~Quigley.
\newblock Chromatic complexity of the algebraic {K}-theory of $ y (n) $.
\newblock {\em arXiv preprint arXiv:1908.09164}, 2019.

\bibitem[AR08]{RedshiftRognes}
Christian Ausoni and John Rognes.
\newblock The chromatic red-shift in algebraic {K}-theory.
\newblock {\em L'Enseignement Math{\'e}matique}, 54(2):13--15, 2008.

\bibitem[Bae]{Baez}
John~C. Baez.
\newblock Euler characteristic versus homotopy cardinality.
\newblock http://math.ucr.edu/home/baez/cardinality/cardinality.pdf.

\bibitem[Bal16]{BalmerNil}
Paul Balmer.
\newblock Separable extensions in tensor-triangular geometry and generalized
  {Q}uillen stratification.
\newblock {\em Ann. Sci. {\'E}c. Norm. Sup{\'e}r.(4)}, 49(4):907--925, 2016.

\bibitem[BG16]{barwick2016note}
Clark Barwick and Saul Glasman.
\newblock A note on stable recollements.
\newblock {\em arXiv preprint arXiv:1607.02064}, 2016.

\bibitem[BG18]{BobkovaG}
Irina Bobkova and Paul~G. Goerss.
\newblock Topological resolutions in {$K(2)$}-local homotopy theory at the
  prime 2.
\newblock {\em Journal of Topology}, 11(4):917--956, 2018.

\bibitem[BGH17]{Agnes2017}
Agnes Beaudry, Paul~G Goerss, and Hans-Werner Henn.
\newblock Chromatic splitting for the ${K}(2) $-local sphere at $ p= 2$.
\newblock {\em arXiv preprint arXiv:1712.08182}, 2017.

\bibitem[BK72]{BKCore}
AK~Bousfield and DM~Kan.
\newblock The core of a ring.
\newblock {\em Journal of Pure and Applied Algebra}, 2(1):73--81, 1972.

\bibitem[BNT18]{BNT}
Ulrich Bunke, Thomas Nikolaus, and Georg Tamme.
\newblock The {B}eilinson regulator is a map of ring spectra.
\newblock {\em Advances in Mathematics}, 333:41--86, 2018.

\bibitem[Car84]{carlsson1984}
Gunnar Carlsson.
\newblock Equivariant stable homotopy and segal's burnside ring conjecture.
\newblock {\em Annals of Mathematics}, pages 189--224, 1984.

\bibitem[CM17]{ClausenAkhil}
Dustin Clausen and Akhil Mathew.
\newblock A short proof of telescopic {T}ate vanishing.
\newblock {\em Proceedings of the American Mathematical Society},
  145(12):5413--5417, 2017.

\bibitem[CSY18]{Ambi2018}
Shachar Carmeli, Tomer~M Schlank, and Lior Yanovski.
\newblock Ambidexterity in chromatic homotopy theory.
\newblock {\em arXiv preprint arXiv:1811.02057}, 2018.

\bibitem[GGN16]{GepUniv}
David Gepner, Moritz Groth, and Thomas Nikolaus.
\newblock Universality of multiplicative infinite loop space machines.
\newblock {\em Algebraic \& Geometric Topology}, 15(6):3107--3153, 2016.

\bibitem[GH15]{Rune}
David Gepner and Rune Haugseng.
\newblock Enriched $\infty$-categories via non-symmetric $\infty$-operads.
\newblock {\em Advances in mathematics}, 279:575--716, 2015.

\bibitem[GS96]{GState}
John~P.C. Greenlees and Hal Sadofsky.
\newblock The {T}ate spectrum of {$v_n$}-periodic complex oriented theories.
\newblock {\em Mathematische Zeitschrift}, 222(3):391--405, 1996.

\bibitem[Har17]{Harpaz}
Yonatan Harpaz.
\newblock Ambidexterity and the universality of finite spans.
\newblock arXiv preprint arXiv:1703.09764, 2017.

\bibitem[Hin16]{Hinich}
Vladimir Hinich.
\newblock Dwyer-{K}an localization revisited.
\newblock {\em Homology Homotopy Appl.}, 18(1):27--48, 2016.

\bibitem[HL13]{HopkinsLurie}
Michael Hopkins and Jacob Lurie.
\newblock Ambidexterity in {$K(n)$}-local stable homotopy theory.
\newblock {http://www.math.harvard.edu/~lurie/}, 2013.

\bibitem[HS96]{HState}
Mark Hovey and Hal Sadofsky.
\newblock {T}ate cohomology lowers chromatic {B}ousfield classes.
\newblock {\em Proceedings of the American Mathematical Society},
  124(11):3579--3585, 1996.

\bibitem[HS98]{nilp2}
Michael~J. Hopkins and Jeffrey~H. Smith.
\newblock Nilpotence and stable homotopy theory {II}.
\newblock {\em Ann. of Math. (2)}, 148(1):1--49, 1998.

\bibitem[HSSS18]{HSSS2018}
Marc Hoyois, Pavel Safronov, Sarah Scherotzke, and Nicol{\`o} Sibilla.
\newblock The categorified {G}rothendieck-{R}iemann-{R}och theorem.
\newblock {\em arXiv preprint arXiv:1804.00879}, 2018.

\bibitem[HY17]{AD}
Asaf Horev and Lior Yanovski.
\newblock On conjugates and adjoint descent.
\newblock {\em Topology and its Applications}, 232:140--154, 2017.

\bibitem[Kuh04]{Kuhn}
Nicholas~J. Kuhn.
\newblock Tate cohomology and periodic localization of polynomial functors.
\newblock {\em Inventiones mathematicae}, 157(2):345--370, 2004.

\bibitem[Lura]{ha}
Jacob Lurie.
\newblock Higher algebra.
\newblock {http://www.math.harvard.edu/~lurie/}.

\bibitem[Lurb]{LurieRep}
Jacob Lurie.
\newblock Representation theory in intermediate characteristic.
\newblock http://www.claymath.org/downloads/utah/Notes/Lurie2.pdf.

\bibitem[Lur09]{htt}
Jacob Lurie.
\newblock {\em Higher topos theory}, volume 170 of {\em Annals of Mathematics
  Studies}.
\newblock Princeton University Press, Princeton, NJ, 2009.

\bibitem[Lur19]{Ell3}
Jacob Lurie.
\newblock Elliptic cohomology {III}: Tempered cohomology.
\newblock {\em Available from the author{'}s webpage}, 2019.

\bibitem[MNN17]{MNN17}
Akhil Mathew, Niko Naumann, and Justin Noel.
\newblock Nilpotence and descent in equivariant stable homotopy theory.
\newblock {\em Advances in Mathematics}, 305:994--1084, 2017.

\bibitem[PS17]{Sylow2017}
Matan Prasma and Tomer~M Schlank.
\newblock Sylow theorems for $\infty$-groups.
\newblock {\em Topology and its Applications}, 222:121--138, 2017.

\end{thebibliography}

\end{document}